
%
\documentclass[a4paper,11pt]{article}%
%

\usepackage{amsmath}%
\usepackage{amsfonts}%
\usepackage{amssymb}%
\usepackage{amsthm}
\usepackage{stmaryrd}
\usepackage{graphicx}
\usepackage[utf8]{inputenc}
\usepackage[T1]{fontenc}
\usepackage[french, english]{babel}
\usepackage{enumitem}
\usepackage{tikz}
\usetikzlibrary{decorations.markings}
\usepackage[colorlinks=true]{hyperref}


\newtheorem{thm}{Theorem}

\newtheorem{theorem}{Theorem}[section]

\newtheorem{claim}[theorem]{Claim}

\newtheorem{corollary}[theorem]{Corollary}

\newtheorem{lemma}[theorem]{Lemma}

\newtheorem{proposition}[theorem]{Proposition}
\newtheorem{prop}[thm]{Proposition}

\theoremstyle{definition}
\newtheorem{definition}[theorem]{Definition}

\newtheorem{remark}[theorem]{Remark}

\newlength{\espaceavantspecialthm}
\newlength{\espaceapresspecialthm}
\setlength{\espaceavantspecialthm}{\topsep}
\setlength{\espaceapresspecialthm}{\topsep}

\newcommand{\R}{\mathbb{R}}
\newcommand{\N}{\mathbb{N}}

\newcommand{\Q}{\mathbb{Q}}
\newcommand{\Z}{\mathbb{Z}}
\newcommand{\T}{\mathbb{T}}
\newcommand{\Sp}{\mathbb{S}}
\newcommand{\Hy}{\mathbb{H}}
\newcommand{\F}{\mathcal{F}}
\newcommand{\I}{\mathcal{I}}
\newcommand{\varep}{\varepsilon}
\newcommand{\diam}{\operatorname{diam}}
\newcommand{\conv}{\operatorname{conv}}
\newcommand{\Homeo}{\operatorname{Homeo}}
\newcommand{\card}{\operatorname{Card}}
\newcommand{\dom}{\operatorname{dom}}
\newcommand{\fix}{\operatorname{Fix}}
\newcommand{\Id}{\operatorname{Id}}
\newcommand{\wt}{\widetilde}
\newcommand{\wh}{\widehat}
\newcommand{\X}{\mathcal{X}}
\newcommand{\x}{\mathbf{x}}
\newcommand{\len}{\operatorname{length}}

{\\}

\newenvironment{defi*}[1][]{
\vskip \espaceavantspecialthm \noindent \textbf{D\'efinition.} }%
{\vskip \espaceapresspecialthm}

\addtolength{\voffset}{-1cm}
\addtolength{\textheight}{2cm}
\addtolength{\hoffset}{-1cm}
\addtolength{\textwidth}{2cm} 
\addtolength{\marginparwidth}{.7cm}

\counterwithin{equation}{section}


\tikzset{->-/.style={decoration={
  markings,
  mark=at position .5 with {\arrow{latex}}},postaction={decorate}}}

\newcommand\test[1]{
\pgfmathsetmacro{\var}{#1}
\pgfmathparse{ifthenelse(\var>=0,"positif","négatif")} \pgfmathresult}%

\newcommand{\hgline}[3]{
\pgfmathsetmacro{\thetaone}{#1}
\pgfmathsetmacro{\thetatwo}{#2}
\pgfmathsetmacro{\theta}{(\thetaone+\thetatwo)/2}
\pgfmathsetmacro{\phi}{abs(\thetaone-\thetatwo)/2}
\pgfmathsetmacro{\close}{less(abs(\phi-90),0.0001)}
\ifdim \close pt = 1pt
    \draw[->-, color=#3] (\thetaone:1) -- (\thetatwo:1);
\else
	\pgfmathsetmacro{\R}{tan(\phi)}
	\pgfmathsetmacro{\test}{(\thetaone-\thetatwo)/abs(\thetaone-\thetatwo)}
	\draw[->-, color=#3] (\thetaone:1) arc (\thetaone+\test*90:\thetaone+\test*(270-2*\phi):\R);
\fi
}

\newcommand{\hglinefill}[3]{
\pgfmathsetmacro{\thetaone}{#1}
\pgfmathsetmacro{\thetatwo}{#2}
\pgfmathsetmacro{\theta}{(\thetaone+\thetatwo)/2}
\pgfmathsetmacro{\phi}{abs(\thetaone-\thetatwo)/2}
\pgfmathsetmacro{\close}{less(abs(\phi-90),0.0001)}
\ifdim \close pt = 1pt
    \filldraw[->-, color=#3] (\thetaone:1) -- (\thetatwo:1) ;
\else
	\pgfmathsetmacro{\R}{tan(\phi)}
	\pgfmathsetmacro{\test}{(\thetaone-\thetatwo)/abs(\thetaone-\thetatwo)}
	\filldraw[color=#3, opacity=.2] (\thetaone:1) arc (\thetaone+\test*90:\thetaone+\test*(270-2*\phi):\R) arc (\thetaone+\test*(270-2*\phi)-90:\thetaone+\test*90+90:1);
\fi
}

\begin{document}

\sloppy

\title{Homotopic rotation sets for higher genus surfaces}
\author{Pierre-Antoine Guih\'eneuf, Emmanuel Militon}
\maketitle

\begin{abstract}
This paper states a definition of homotopic rotation set for higher genus surface homeomorphisms, as well as a collection of results that justify this definition. We first prove elementary results: we prove that this rotation set is star-shaped, we discuss the realisation of rotation vectors by orbits or periodic orbits and we prove the creation of new rotation vectors for some configurations.

Then we use the theory developped by Le Calvez and Tal in \cite{MR3787834} to obtain two deeper results: \\
-- If the homotopical rotation set contains the direction of a closed geodesic which has a self-intersection, then there exists a rotational horseshoe and hence infinitely many periodic orbits in many directions. \\
-- If the homotopical rotation set contains the directions of two closed geodesics that meet, there exists infinitely many periodic orbits in many directions.

\end{abstract}

\selectlanguage{english}

\setcounter{tocdepth}{2}
\tableofcontents

\section{Introduction}

The key invariant in the study of circle homeomorphisms dynamics is Poincaré's rotation number, which measures the orbits' asymptotic mean speed of rotation around the circle. It leads to the celebrated Poincaré classification, which asserts -- among others -- that the rotation number is rational if and only if the homeomorphism possesses a periodic orbit.

The generalisation of this invariant to the two dimensional torus leads to the definition of rotation set, as the accumulation set of asymptotic mean speeds of orbits rotation around the torus. More formally, given a homeomorphism $f$ of the torus $\T^2$ homotopic to identity, and $\tilde f : \R^2\to\R^2$ one of its lifts, the rotation set $\rho(\tilde f)$ is the set of all possible limits of sequences $\frac{\tilde f^{n_k}(x_k) - x_k}{n_k}$, for $n_k$ going to $+\infty$ and $x_k\in\R^2$. It is a compact convex subset of $\R^2$ \cite{MR1053617}, invariant under conjugation by isotopically trivial homeomorphisms. As for the circle case, its shape is strongly related to the dynamics: for example, any point with rational coordinates in the interior of the rotation set is associated to a periodic point of the homeomorphism \cite{MR967632}.

The literature exploring the properties of this set is now quite consequent and makes use of a wide range of different techniques, from Brouwer theory and its improvements by Le Calvez \cite{MR2217051}, culminating to the Le Calvez-Tal recent works \cite{MR3787834, 1803.04557, guiheneuf2020theorie}, to Nielsen-Thurston classification \cite{MR1101087}, prime ends or Pesin theories (e.g. \cite{MR4092853})\dots{} Even if there are still quite a lot of open questions (e.g. whether there exists a torus homeomorphism having a rotation set with nonempty interior and smooth boundary), the subject is now mature and rich enough to attempt tackling similar issues in more complex situations.

A natural extension is to study rotation properties of homeomorphisms of higher genus (closed) surfaces. Let us point out that for the torus, first homotopy and homology groups coincide, while this is not the case for higher genus surfaces. Hence in this new setting one expects to get two distinct definitions of rotation sets, both in homotopical and homological senses. The latter, defined formally a long time ago \cite{MR88720}, regained attention in the last years.

\subsection*{Homological rotation sets}

Let us recall one possible definition of the homological rotation set rotation set (see \cite{MR1094554,MR88720}).

Let $S$ be a closed surface, and fix $f \in \mathrm{Homeo}_0(S)$. 
Let us denote by $D$ the diameter of $S$. For any two points $x$ and $y$ of $S$, we choose a geodesic path $g_{x,y}$ of length lower than or equal to $D$ which joins the point $x$ to the point $y$. Fix an isotopy $(f_t)_{ t\in[0,1]}$ between $f_0=\Id_{S}$ and $f_1=f$. As usual, we extend it to an isotopy $(f_t)_{ t\in \R}$ by setting $f_{t}=f_{t-\lfloor t \rfloor}\circ f^{\lfloor t \rfloor}$, where $\lfloor t \rfloor$ denotes the lower integer part of $t$.

For any point $x$ in $S$, we define $l_{n,x}$ as the loop obtained by concatenating the path $(f_{t}(x))_{ t\in[0,n]}$ with the geodesic path $g_{f^{n}(x),x}$. This loop defines a cycle and we denote by $[l_{n,x}]_{H_{1}(S)}$ the class of this cycle in $H_{1}(S)=H_{1}(S,\R)$.

\begin{definition}\label{DefHomolo}
The \emph{homological rotation set} of $f\in\Homeo_0(S)$ is the set $\rho_{H_1}(f)$ of points $\rho\in H_1(S,\R)\simeq \R^{2g}$ such that there exists $(x_k)_k\in S$ and $(n_k)_k$ going to $+\infty$ such that $[l_{n_k,x_k}]_{H_1}/n_k$ tends to $\rho$. 
\end{definition}

As we divide by $n_k$ in this definition, this set does not depend on the chosen geodesic paths $g_{x,y}$.

When $g(S) \geq 2$, this set does not depend on the chosen isotopy, as two such isotopies are homotopic with fixed endpoints. Indeed, the topological space $\mathrm{Homeo}_0(S)$ is contractible. If $g(S)=1$, this set depends on the chosen isotopy but two such sets differ by an integral translation. Indeed, two isotopies between the identity and $f$ are homotopic up to composition with an integral translation. Also, it is possible to associate a homological rotation vector to any $f$-invariant measure (see \cite{lellouch} for more details).
\medskip

Let us describe a few known results about this homological rotation set:
\begin{itemize}
\item Entropy: Is it possible to get sufficient conditions on the homological rotation set for the homeomorphism to have positive topological entropy? Here are some known conditions:
\begin{itemize}
\item If there exist $2g+1$ periodic points whose homological rotation vectors do not lie on a hyperplane of $H_1(S,\R)\simeq \R^{2g}$ \cite{MR1094554}.
\item If $f$ is a $C^1$-diffeomorphism, and if there exist $g+2$ periodic points whose homological rotation vectors form a $g+1$-nondegenerate simplex \cite{MR1444450}.
\item If there exists two invariant probability measures whose homological rotation vectors have a nontrivial intersection \cite{lellouch}. This result implies the two above results.
\end{itemize}
\item Realisation of periodic points: in which cases some vectors of the homological rotation set are realised by periodic points? Such results were obtained under similar hypotheses to the ones for positiveness of entropy:
\begin{itemize}
\item If there exist $2g+1$ periodic points whose homological rotation vectors do not lie on a hyperplane of $H_1(S,\R)\simeq \R^{2g}$, then any rational point in the interior of the simplex spanned by these rotation vectors is the rotation vector of some periodic point \cite{MR1334719}.
\item If $f$ is a $C^1$-diffeomorphism, and if there exist $g+2$ periodic points whose homological rotation vectors form a $g+1$-nondegenerate simplex, then any rational point in the interior of this simplex is the rotation vector of some periodic point \cite{MR1444450}.
\item Under the set of hypotheses called fully essential system of curves by the authors\footnote{That is satisfied under some hypotheses on stable/unstable manifolds of periodic points if $f$ is a $C^{1+\varepsilon}$ diffeomorphism; in particular it implies that the homological rotation set has nonempty interior.}, any rational point in the interior of the rotation set is the rotation vector of some periodic point \cite{MR4190050}. In this case, the authors also get convexity of the rotation set, uniform bounds on displacements, etc.
\item If there exist two invariant probability measures $\mu$ and $\nu$ whose homological rotation vectors $\rho_\mu$ and $\rho_\nu$ have a nontrivial intersection, then any point of the simplex spanned by $0$, $\rho_\mu$ and $\rho_\nu$ is accumulated by rotation vectors of periodic points \cite{lellouch}. In this thesis the author also gets uniform bounds on displacements if $0$ lies in the interior of the rotation set. 
\end{itemize}
\item Generic shape: for a generic homeomorphism, the rotation set is given by a union of at most $2^{5g-3}$ convex sets \cite{alonso2020generic}.
\end{itemize}

Note also the work \cite{MR3820002} which (among others) gives conditions under which the dynamics of an area preserving homeomorphism of $S$ can be decomposed into dynamics of lower genus surface homeomorphisms.

\subsection*{Homotopical rotation sets}

Note that some rotational information is lost when using the homological rotation set: for instance it does not see the difference between the trivial loop and a commutator (see the path $\alpha$ in Figure~\ref{FigCommut}). This incites finding a practical definition of homotopical rotation set in the higher genus context.

\begin{figure}[ht]
\begin{center}
\includegraphics[trim=0 20 0 10, scale=1.1]{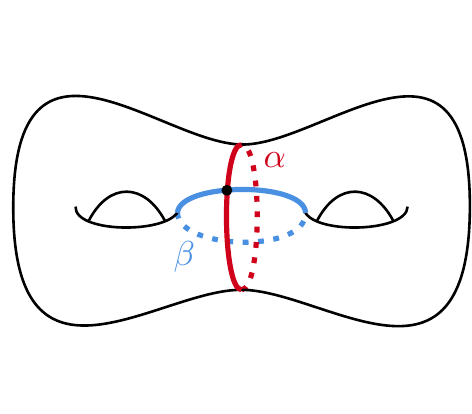}
\caption{\label{FigCommut}The rotation around the path $\alpha$, which is homologically trivial (it is a commutator in the $\pi_1$) is not detected by the homological rotation set. In this paper, we get (among others) the existence of infinitely periodic orbits when there are rotation vectors in both directions $\alpha$ and $\beta$.}
\end{center}
\end{figure}

Unlike what we have seen in the homological context, there is no such commonly accepted definition of a homotopical rotation set. To our knowledge, the only known result is the one of Lessa \cite{MR2846925}, but it has no consequence on the initial surface homeomorphism dynamics. 

In this paper, we propose a new notion of homotopical rotation set for higher genus surfaces homeomomorphisms. Let $S$ be a closed surface of genus $\ge 2$, and $f$  a homeomorphism of $S$ which is homotopic to the identity. The universal cover of $S$ is the hyperbolic plane $\Hy^2$, that we equip with its canonical metric. Let $\tilde f$ be the unique lift of $f$ to $\Hy^2$ that extends to identity to $\partial \Hy^2$. Remark that the set of geodesics of $\Hy^2$ can be parametrized by the set of couples of distinct points of $\partial \Hy^2$. For any $(\alpha,\beta)\in (\partial \Hy^2)^2$ with $\alpha\neq\beta$ and any $v\in \R_+^*$, we will say that the triple $(\alpha,\beta,v)$ is a \emph{rotation vector of $f$} if there exists a sequence $(x_k)_{k\in \N}$ of points of $\Hy^2$, and an increasing sequence $(n_k)_{k\in\N}$ of natural numbers tending to infinity such that, if we denote by $\pi_{\alpha,\beta}$ the orthogonal projection\footnote{In other words, the projection to the closest point of the geodesic $(\alpha,\beta)$.} on the geodesic linking $\alpha$ to $\beta$,
\begin{equation}\label{EqDefRot}
\left(x_k\ ,\ \tilde f^{n_k}(x_k)\ ,\ \frac{d\Big(\pi_{\alpha,\beta}(x_k)\, ,\ \pi_{\alpha,\beta}\big(\tilde f^{n_k}(x_k)\big)\Big)}{n_k}\right) \underset{k\to +\infty}{\longrightarrow} \big(\alpha,\beta,v\big).
\end{equation}
The (homotopical) \emph{rotation set $\rho(f)$ of $f$} is then defined as the collection of rotation vectors of $f$, together with all the singletons $\{(\alpha,\beta,0)\}$ for all the geodesics $(\alpha,\beta)$ of $\Hy^2$ (to emphasize the fact that $f$ has a contractible fixed point, by Lefschetz formula).

Note that this definition is a bit different from the one of Lessa \cite{MR2846925}.

\subsection*{Review of the results}

In this whole paragraph, we consider an orientable closed surface\footnote{In the core of the article, we will mention when our results trivially generalize to the case of a non compact orientable surface of finite type; in this introduction we will simply give the statements in the compact case.} $S$ of genus $g\ge 2$, and a homeomorphism $f$ of $S$ homotopic to the identity.

We will state quite a lot of different results, that in some sense give grounds for our definition of homotopical rotation set, some of them rather elementary, others more difficult. They are mainly of two different types: realisation of ``rational'' vectors by periodic orbits, and convexity-like results (the presence of some kinds of orbits forces the presence of others, which are ``convex combinations'' of the initial orbits). We will get our results from three different techniques. The first one consists in using the property of quasi-convexity of fundamental domains. The second one is also elementary, it uses geodesics in the universal cover and their images by the lift of the dynamics to get separating sets of the hyperbolic plane; it allows to get simple convexity-like results. The third and last main tool we use is the forcing theory of Le Calvez and Tal \cite{MR3787834}; it gives much stronger results at the cost of longer and more difficult proofs.

To start with, we prove quasi-convexity of fundamental domains. This result was already known for the torus \cite{MR1053617}, we extend the proof to the higher genus case:
there exists $R = R(S)>0$ such that for any path connected fundamental domain $D$ of $S$ in its universal cover $\wt{S}$, and any point $x$ in the convex hull $\conv(D)$ of $D$, we have $B(x,R)\cap D\neq\emptyset$ (Proposition~\ref{PropQuasiConvex}).

As a consequence, we get that the rotation set $\rho(f)$ is star-shaped (Theorem~\ref{TheoStar}).

\begin{thm}\label{TheoStarIntro}
For any $(\alpha,\beta,v)\in \rho(f)$, and any $v'\in [0,v]$, one has $(\alpha,\beta,v')\in\rho(f)$.
\end{thm}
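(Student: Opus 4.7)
The case $v'=v$ is trivial and $v'=0$ is included in $\rho(f)$ by definition, so fix $v'\in(0,v)$. Let $(x_k,n_k)$ be a realization of $(\alpha,\beta,v)$, and let $p\colon\Hy^2\to\R$ denote the signed hyperbolic arclength projection onto the oriented geodesic $(\alpha,\beta)$, normalized so that $p(z)\to-\infty$ as $z\to\alpha$ and $p(z)\to+\infty$ as $z\to\beta$. Writing $A_k(m):=p(\tilde f^m(x_k))$ and setting $M:=\sup_{z\in\Hy^2}d(z,\tilde f(z))$, which is finite since $f$ is a homeomorphism of the compact surface $S$, the sequence $A_k$ has bounded jumps $|A_k(m{+}1)-A_k(m)|\le M$, with $A_k(0)\to-\infty$, $A_k(n_k)\to+\infty$, and $(A_k(n_k)-A_k(0))/n_k\to v$.

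The plan is to extract a sub-orbit of the form $(\tilde f^{j_k}(x_k),\,l_k-j_k)$ realizing $(\alpha,\beta,v')$. Indices $0\le j_k<l_k\le n_k$ are chosen by an intermediate-value argument on the bounded-jump discrete function $g_k(m):=A_k(m)-v'm$: one locates $j_k<l_k$ at which $g_k$ takes nearly equal values, so that $(A_k(l_k)-A_k(j_k))/(l_k-j_k)=v'+O(1/(l_k-j_k))$. By tuning the common target level (letting it escape to $+\infty$ slowly) and the two selection windows (near the $\alpha$-end and $\beta$-end of the orbit respectively), one further arranges $l_k-j_k\to+\infty$, $A_k(j_k)\to-\infty$, and $A_k(l_k)\to+\infty$ --- the projection-side conditions necessary for the sub-orbit's endpoints to tend to $\alpha$ and $\beta$.

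The main obstacle is then to upgrade projection-level convergence to convergence in $\bar\Hy^2$: namely $\tilde f^{j_k}(x_k)\to\alpha$ and $\tilde f^{l_k}(x_k)\to\beta$ as points in $\bar\Hy^2$, and not merely that their projections onto $(\alpha,\beta)$ tend to the correct boundary endpoints. This requires controlling the perpendicular hyperbolic distance from the selected orbit points to the geodesic $(\alpha,\beta)$. Here the quasi-convexity of fundamental domains (Proposition~\ref{PropQuasiConvex}) is the decisive input: combined with the bounded step size, it implies that the orbit, viewed as a Lipschitz broken-geodesic path in $\Hy^2$ from $x_k$ to $\tilde f^{n_k}(x_k)$, stays within a uniform tubular neighborhood of the geodesic segment $[x_k,\tilde f^{n_k}(x_k)]$ --- a constant depending only on $S$ and on the ratio $M/v$. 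As $k\to\infty$, this segment converges (in the pointed Hausdorff sense on $\bar\Hy^2$) to the bi-infinite geodesic $(\alpha,\beta)$, so orbit points remain in a uniform tubular neighborhood of $(\alpha,\beta)$. Since a bounded hyperbolic perturbation of a point converging to $\alpha\in\partial\Hy^2$ still converges to $\alpha$, the desired $\bar\Hy^2$-convergence of the sub-orbit endpoints follows, and $(\alpha,\beta,v')\in\rho(f)$.
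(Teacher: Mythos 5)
Your proof breaks at its very first substantive step, and the breakdown is structural, not cosmetic. You propose to realise $(\alpha,\beta,v')$ by a \emph{sub-segment} $(\tilde f^{j_k}(x_k),\,l_k-j_k)$ of the given orbit segment, selected by an intermediate-value argument on $g_k(m)=A_k(m)-v'm$. But nothing in the hypotheses prevents the projected orbit from advancing at (essentially) constant speed $v$: if $A_k(m)=A_k(0)+vm$ up to a bounded error (think of a lift translated along the axis of a deck transformation at speed $v$ per iterate, e.g.\ a time-one map of a constant-speed flow along a closed geodesic), then $g_k$ increases by at least $(v-v')-o(1)>0$ at each step, so any two indices at which $g_k$ takes values within a fixed constant of each other are at bounded distance $l_k-j_k=O(1)$. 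Hence no sub-orbit of unbounded length has average speed close to $v'$, and your selection (which needs $l_k-j_k\to+\infty$ together with $A_k(j_k)\to-\infty$, $A_k(l_k)\to+\infty$) is impossible. This is precisely why the theorem is not proved in the paper by re-parametrising the given orbit: the realising points for the intermediate speed are \emph{new} points. In the paper's proof one takes the fixed point $p_k$ of $\tilde f$ lying in the fundamental domain $T_kD$ containing $x_k$, applies the $R$-quasi-convexity of Proposition~\ref{PropQuasiConvex} to the \emph{image} fundamental domain $D_k=\tilde f^{n_k}(T_kD)$ (which contains both $p_k$ and $\tilde f^{n_k}(x_k)$, hence whose convex hull contains the geodesic segment $[p_k,\tilde f^{n_k}(x_k)]$), picks $z_k\in D_k$ within $R$ of the point of that segment at projected distance $n_kv'$ from $p_k$, and uses $\tilde f^{-n_k}(z_k)\in T_kD$ as the starting point, with the \emph{same} time $n_k$; the case $v'\in(v/2,v]$ is treated this way and the general case follows by iterating with $f$ and $f^{-1}$.

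Your second step is also unjustified. Proposition~\ref{PropQuasiConvex} says that for a path-connected fundamental domain $D$ and $x\in\conv(D)$ one has $B(x,R)\cap D\neq\emptyset$; it says nothing about an orbit with bounded steps staying in a uniform tubular neighbourhood of the geodesic segment joining its endpoints, and that claim is false: with $n_k$ steps of size at most $M$ joining endpoints at distance roughly $n_kv$, the broken path may detour a distance of order $n_k(M-v)$ from the segment, which is unbounded, so no constant ``depending only on $S$ and $M/v$'' exists. The convergence $z_k\to\beta$ in the paper is obtained differently: $z_k$ is within $R$ of a point $y_k$ lying on the genuine geodesic segment $[p_k,\tilde f^{n_k}(x_k)]$, whose endpoints converge to $\alpha$ and $\beta$, and whose projection escapes to the $\beta$-end; convergence to $\alpha$ of the new starting point $\tilde f^{-n_k}(z_k)$ comes from the fact that it lies in $T_kD$, at bounded distance from $x_k$ — and this is exactly where the fixed point $p_k=\tilde f^{n_k}(p_k)$ is needed, an ingredient entirely absent from your argument.
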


As a byproduct of this theorem's proof, we get that rotation vectors are realised by segments of orbits whose endpoints stay at a bounded distance to the corresponding geodesic (Proposition~\ref{ProprealisDist}).

Section~\ref{Secrealis} is devoted to other realisation results for rotation vectors associated to closed geodesics. These rotation vectors are directly related to the rotation set of the annulus homeomorphism obtained by quotienting the universal cover $\wt{S}$ of $S$ by this closed geodesic. Note that this is where the fact that in our definition of rotation set, speeds are measured by means of projections on geodesics, is crucial.
As a direct consequence, an application of already known results for rotation sets of annulus homeomorphisms leads to realisation of rotation vectors by periodic orbits, under ``classical'' conditions (Proposition~\ref{PropRealPtExtRat}).
As an application of \cite{MR2846925}, we also get that (still in the closed geodesic case) the extremal rotation vector is realised by an orbit whose lift to $\wt{S}$ stays at sublinear distance from the geodesic (Proposition~\ref{sublindistance}).

We then get to forcing results. The ones of Section~\ref{SecCreat} use only elementary arguments. To begin with, we consider geodesics of the surface with auto-intersection (see Proposition~\ref{newrot} for a more formal statement).

\begin{prop} \label{newrotIntro}
Let $\tilde \gamma$ be a geodesic of $\wt{S}$ projecting to a geodesic $\gamma$ of $S$ which auto-intersects. Let $\gamma'$ be the geodesic of $S$ obtained as a ``shortcut'' of the geodesic $\gamma$. 

If $(\tilde\gamma,v) \in \rho(f)$, then $(\tilde\gamma',v) \in \rho(f)$.
\end{prop}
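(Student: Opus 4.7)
The approach I would take combines the bounded-tracking strengthening of the rotation vector (Proposition~\ref{ProprealisDist}) with the equivariance $g\circ\tilde f=\tilde f\circ g$ of the lift under deck transformations. Let $\alpha,\beta\in\partial\Hy^2$ denote the endpoints of $\tilde\gamma$, and $p$ an auto-intersection of $\gamma$. Choose a lift $\tilde p\in\tilde\gamma$ of $p$; by the auto-intersection hypothesis there is a deck transformation $g$, not in the stabilizer of $\tilde\gamma$, such that $g\tilde\gamma$ passes through $\tilde p$ transversally to $\tilde\gamma$. The shortcut $\tilde\gamma'$ is interpreted as the geodesic obtained by following the $\alpha$-ray of $\tilde\gamma$ up to $\tilde p$ and then the $g\beta$-ray of $g\tilde\gamma$ beyond $\tilde p$; its endpoints at infinity are thus $\alpha$ and $g\beta$.

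A realising sequence for $(\tilde\gamma',v)$ is then built by splicing. By Proposition~\ref{ProprealisDist}, fix a sequence $(x_k,n_k)$ realising $(\tilde\gamma,v)$ with endpoints at hyperbolic distance at most $D$ from $\tilde\gamma$. Since the projection of $\tilde f^j(x_k)$ on $\tilde\gamma$ sweeps $\tilde\gamma$ at speed $v$ from near $\alpha$ to near $\beta$, there is a time $t_k\in(0,n_k)$ at which $\tilde f^{t_k}(x_k)$ lies within $D$ of $g^{-1}(\tilde p)\in\tilde\gamma$. By equivariance the point $y_k := g\cdot \tilde f^{t_k}(x_k)$ lies within $D$ of $\tilde p$, and its forward $\tilde f$-orbit is the $g$-translate of the tail of the orbit of $x_k$; hence it tracks $g\tilde\gamma$ at speed $v$ and reaches within $D$ of $g\beta$ after $n_k-t_k$ iterations. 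Splicing these two orbit pieces across their common passage near $\tilde p$ gives, for each $k$, a pseudo-orbit of length $n_k$ from near $\alpha$ to near $g\beta$ whose projection on $\tilde\gamma'$ advances by approximately $vn_k$. The key hyperbolic-geometric observation here is that two geodesics sharing an endpoint at infinity have projections agreeing up to $o(1)$ as one approaches the shared endpoint, so the projection speed on $\tilde\gamma'$ matches the speed $v$ on $\tilde\gamma$ near $\alpha$ and the speed $v$ on $g\tilde\gamma$ near $g\beta$.

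The principal obstacle is that the spliced path is only a pseudo-orbit of $\tilde f$, with a bounded jump by $g$ at the junction time $t_k$. I would address this by exploiting the limit nature of the rotation vector definition: a bounded jump contributes only an $O(1)$ error to the projection displacement on $\tilde\gamma'$, which vanishes upon division by $n_k\to\infty$. Producing an actual $\tilde f$-orbit shadowing the pseudo-orbit to this precision would require combining a compactness argument over the freedom in choosing $x_k$ among the realising sequences of $(\tilde\gamma,v)$ with a diagonal extraction; this yields a genuine orbit realising $(\tilde\gamma',v')$ for some $v'$ arbitrarily close to $v$, and the star-shaped Theorem~\ref{TheoStarIntro} if needed then yields $(\tilde\gamma',v)$.
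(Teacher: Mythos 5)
Your proposal has two genuine gaps, and the second one is exactly the difficulty that the paper's argument is designed to circumvent. First, the existence of the splice time $t_k$ is unjustified: Proposition~\ref{ProprealisDist} controls only the distance of the two \emph{endpoints} $\x_k$ and $\tilde f^{n_k}(\x_k)$ to the geodesic $(\alpha,\beta)$; it gives no information about intermediate points $\tilde f^{j}(x_k)$, $0<j<n_k$, which may wander arbitrarily far from $\tilde\gamma$. Sublinear tracking of the whole orbit (Proposition~\ref{sublindistance}) is only available for extremal vectors in closed-geodesic directions, which is not the setting here. So the claim that the orbit passes within bounded distance of $g^{-1}(\tilde p)$ at some time $t_k$ does not follow from anything you have quoted. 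Second, and more seriously, even granting the splice point, what you build is a pseudo-orbit with a jump by the deck transformation $g$ at time $t_k$, whereas the definition \eqref{EqDefRot} requires genuine orbit segments $\bigl(x_k,\tilde f^{n_k}(x_k)\bigr)$. For a general homeomorphism homotopic to the identity there is no shadowing lemma, and "compactness over the realising sequences plus diagonal extraction" provides no mechanism for turning the spliced pseudo-orbit into an actual orbit: the $O(1)$ error estimate you invoke concerns the displacement of the pseudo-orbit, not the existence of an $\tilde f$-orbit nearby.

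The paper resolves both issues at once by a forcing-type topological argument rather than splicing. It takes the geodesics $G_{1,k}$, $G_{2,k}$ orthogonal to $(\alpha,\beta)$ through $x_k$ and $\tilde f^{n_k}(x_k)$, and proves (Lemma~\ref{intersection}, via a Jordan-curve/separation argument using the cyclic order of the intervals $I_{1,k}, TI_{1,k}, I_{2,k}, TI_{2,k}$ on $\partial\Hy^2$) that $\tilde f^{n_k}(TG_{1,k})\cap G_{2,k}\neq\emptyset$. This yields honest points $y_k\in TG_{1,k}$ with $\tilde f^{n_k}(y_k)\in G_{2,k}$, hence $y_k\to T\alpha$ and $\tilde f^{n_k}(y_k)\to\beta$, and the speed is then transferred to the geodesic $(T\alpha,\beta)$ by the hyperbolic comparison Lemma~\ref{compareproj} (your observation about projections onto geodesics sharing an endpoint is essentially this lemma, and that part of your sketch is fine). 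If you want to repair your argument, you should replace the splicing step by such an intersection statement about the image of a translated fiber, which is where the actual content of the proposition lies.
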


The general case of two geodesics intersecting is treated in Proposition~\ref{newrot2}, with weaker conclusions.

\begin{prop}\label{newrot2Intro}
Let $(\alpha_1,\beta_1,v_1) \in\rho(f)$, with $v_1>0$. Let also $(\alpha_2,\beta_2)$ be a geodesic of $\Hy^2$ that intersects $(\alpha_1,\beta_1)$, and such that there exists $(y_k)\in\Hy^2$ and $u_k \in\N$ such that $y_k\to \alpha_2$ and $\tilde f^{u_k}(y_k)\to \beta_2$. Then,  there exist $v',v''\ge 0$ satisfying $v'+v'' = v_1$ such that:
\begin{enumerate}[label=(\roman*)]
\item either $(\alpha_1,\beta_2,v')\in \rho(f)$ or $(\alpha_1,\alpha_2,v')\in \rho(f)$;
\item either $(\beta_2,\beta_1,v'')\in \rho(f)$, or $(\alpha_2,\beta_1,v'')\in \rho(f)$.
\end{enumerate}
\end{prop}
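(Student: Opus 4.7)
The plan is to cut the realising orbit for $(\alpha_1,\beta_1,v_1)$ at a time when it first crosses the geodesic $\gamma_2:=(\alpha_2,\beta_2)$, and then analyse the limit of the midpoint.

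Let $(x_k,n_k)$ be a realising sequence for $(\alpha_1,\beta_1,v_1)$. Since $f$ lifts from a homeomorphism of the compact surface $S$, there is a constant $C$ with $d_{\Hy^2}(y,\tilde f(y))\le C$ for every $y\in\Hy^2$. As $\gamma_1$ and $\gamma_2$ cross, $\alpha_1$ and $\beta_1$ lie in opposite closed half-discs of $\overline{\Hy^2}$ cut out by $\gamma_2$, and for $k$ large the discrete orbit $\tilde f^0(x_k),\ldots,\tilde f^{n_k}(x_k)$ therefore switches sides; let $m_k$ be the first index at which $\tilde f^{m_k}(x_k)$ lands on the $\beta_1$-side. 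Then $d_{\Hy^2}(\tilde f^{m_k}(x_k),\gamma_2)\le C$. Passing to subsequences we may assume $m_k/n_k\to\tau\in[0,1]$ and $\tilde f^{m_k}(x_k)\to z\in\overline{\Hy^2}$. The continuity of $\tilde f^N$ on $\overline{\Hy^2}$ for each fixed $N$ (coming from the fact that $\tilde f$ extends to the identity on $\partial\Hy^2$) together with $\alpha_1\notin\overline{\gamma_2}=\gamma_2\cup\{\alpha_2,\beta_2\}$ rules out $m_k$ bounded along a subsequence; symmetrically $n_k-m_k\to\infty$. The limit $z$ lies in the closure of the hyperbolic $C$-tube around $\gamma_2$, whose only ideal boundary points are $\alpha_2$ and $\beta_2$, so either $z\in\{\alpha_2,\beta_2\}$ or $z\in\Hy^2$.

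Suppose first that $z=\alpha_2$ (the case $z=\beta_2$ being symmetric). Extracting further, the sequences $(x_k,m_k)$ and $(\tilde f^{m_k}(x_k),n_k-m_k)$ realise respective rotation vectors $(\alpha_1,\alpha_2,\tilde v')$ and $(\alpha_2,\beta_1,\tilde v'')$ in $\rho(f)$ for some $\tilde v',\tilde v''\ge 0$. A hyperbolic comparison of projected displacements across the ideal triangle $\alpha_1\alpha_2\beta_1$ (travelling via the vertex $\alpha_2$ is at least as long as going directly along $\gamma_1$, with a nonnegative excess depending on how close the orbit passes to $\alpha_2$) then yields $\tau\tilde v'+(1-\tau)\tilde v''\ge v_1$, hence $\tilde v'+\tilde v''\ge v_1$. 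One can therefore pick $v'\in[0,\tilde v']$ and $v''\in[0,\tilde v'']$ with $v'+v''=v_1$, and star-shapedness (Theorem~\ref{TheoStarIntro}) immediately gives $(\alpha_1,\alpha_2,v'),(\alpha_2,\beta_1,v'')\in\rho(f)$, i.e.\ the second options in (i) and (ii). The case $z=\beta_2$ gives the first options symmetrically.

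The main obstacle is the remaining case, where $z$ is an interior point of the $C$-tube in $\Hy^2$ and stays bounded near $\gamma_2$. Here neither piece of the split produces a rotation vector with positive speed, because the midpoint does not escape to the ideal boundary, and one is forced to concentrate the rotation in a single piece by taking $v'=0$ and $v''=v_1$ (or symmetrically). The option $v'=0$ makes (i) hold trivially by the inclusion of all triples $(\alpha,\beta,0)$ in $\rho(f)$ by definition; the nontrivial point becomes to produce $(\alpha_2,\beta_1,v_1)$ or $(\beta_2,\beta_1,v_1)\in\rho(f)$. This is where the hypothesis on $(y_k,u_k)$ enters crucially: the orbits it provides, travelling from $\alpha_2$ to $\beta_2$, can be combined with the second piece of the orbit of $x_k$ through deck transformations $g_k$ bringing $g_k y_{\ell_k}$ close to $\tilde f^{m_k}(x_k)$, so as to redirect the trajectory towards $\alpha_2$ or $\beta_2$ rather than leave it stranded near $z$. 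Controlling the speed during this concatenation so that it equals exactly $v_1$ is the technical heart of the argument.
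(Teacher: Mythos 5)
Your split-at-the-first-crossing strategy only goes through in the subcase where the crossing points $\tilde f^{m_k}(x_k)$ escape to $\alpha_2$ or $\beta_2$, and that is precisely the atypical situation: if the realising orbits stay (even roughly) along $\gamma_1=(\alpha_1,\beta_1)$, the first crossing of $\gamma_2$ happens at bounded distance from the intersection point, so you land in your ``remaining case'' generically. There the argument has a genuine gap. First, neither piece of the split converges to a pair of ideal points, so you get no rotation vector at all from the decomposition, and your proposal to force $v'=0$, $v''=v_1$ requires producing $(\alpha_2,\beta_1,v_1)$ or $(\beta_2,\beta_1,v_1)$ with the \emph{full} speed $v_1$ --- a statement strictly stronger than what Proposition~\ref{newrot2} asserts (the paper only obtains some pair $v',v''$ with $v'+v''=v_1$, both possibly positive, determined by how the projections of $x_k$ and $\tilde f^{t_k}(x_k)$ onto $\gamma_1$ distribute around the intersection point), and there is no reason it holds in general. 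Second, the sketched mechanism for it does not constitute an argument: you cannot ``redirect'' the orbit of $\tilde f^{m_k}(x_k)$ by bringing a deck translate of $y_{\ell_k}$ close to it, because orbits of a fixed homeomorphism cannot be concatenated and nearby points have no shadowing control over times of order $n_k$. Since the hypothesis on $(y_k,u_k)$ is essential (without it the conclusion fails, e.g.\ for almost annular maps), and it only enters in the case you leave open, the heart of the proposition is missing.

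For comparison, the paper's proof never cuts the orbit at $\gamma_2$. It fixes the intersection point $p$ of the two geodesics, defines $v'$ and $v''$ as the limits of $d(\pi_{\alpha_1,\beta_1}(x_k),\pi_{\alpha_1,\beta_1}(p))/t_k$ and $d(\pi_{\alpha_1,\beta_1}(p),\pi_{\alpha_1,\beta_1}(\tilde f^{t_k}(x_k)))/t_k$, and then runs a plane-topology separation argument: the auxiliary orbit segment from the hypothesis is used to build the set $X=\tilde f^{u_k}(L_b)\cup R_d$ out of half-planes bounded by geodesics orthogonal to $(\alpha_2,\beta_2)$, which separates the half-planes $L_a$ and $R_c$ attached to $x_\ell$ and $\tilde f^{t_\ell}(x_\ell)$; connectedness of $\tilde f^{t_\ell}(L_a)\cup R_c$ forces $\tilde f^{t_\ell}(L_a)$ to meet $X$, yielding points near $\alpha_1$ whose images after $t_\ell$ or $t_\ell-u_k$ iterates land near $\beta_2$ or $\alpha_2$; Lemma~\ref{compareproj} transfers the speed estimate between projections and Theorem~\ref{TheoStar} adjusts the speed to exactly $v'$, with the backward iteration giving conclusion~(ii). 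If you want to salvage your approach, you would need to replace the concatenation idea by some such barrier/separation argument; as written, the bounded-midpoint case is unproved.
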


The proofs of these results are heavily inspired by the forcing theory \cite{MR3787834}, where geodesic play the role of leaves of Brouwer-Le Calvez foliations.

This last proposition is used in Section~\ref{SecPseudo} to study what we call \emph{almost annular} homeomorphisms (Proposition~\ref{notransverseint}).

\begin{prop} \label{notransverseintIntro}
Suppose that the only nonzero rotation vectors of $f$ are associated to the lifts of a single geodesic $\gamma$ of $S$. Then $\gamma$ has no self-intersection.
\end{prop}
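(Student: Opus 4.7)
I would argue by contradiction using Proposition~\ref{newrot2Intro}. Suppose $\gamma$ has a self-intersection; then two distinct lifts $\tilde\gamma_1=(\alpha_1,\beta_1)$ and $\tilde\gamma_2$ of $\gamma$ in $\Hy^2$ cross transversally. By hypothesis the only nonzero rotation vectors lie over lifts of $\gamma$, so there is some nonzero vector $(\alpha_1,\beta_1,v_1)\in\rho(f)$ with $v_1>0$, and we may assume the associated lift is $\tilde\gamma_1$ (indices are just labels).

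The first step is to verify the hypotheses of Proposition~\ref{newrot2Intro} with $(\alpha_2,\beta_2)=\tilde\gamma_2$. Write $\tilde\gamma_2=T\tilde\gamma_1=(T\alpha_1,T\beta_1)$ for some deck transformation $T\in\pi_1(S)$. Because $f$ is homotopic to the identity and $\tilde f$ is the canonical lift fixing $\partial\Hy^2$ pointwise, $\tilde f$ commutes with every deck transformation. Hence, if $(x_k)_k$ and $(n_k)_k$ realise $(\alpha_1,\beta_1,v_1)$ in the sense of \eqref{EqDefRot}, the sequence $y_k:=Tx_k$ satisfies $y_k\to T\alpha_1=\alpha_2$ and $\tilde f^{n_k}(y_k)=T\tilde f^{n_k}(x_k)\to T\beta_1=\beta_2$, which is exactly what the proposition requires. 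Applying Proposition~\ref{newrot2Intro}, we obtain $v',v''\ge 0$ with $v'+v''=v_1>0$, together with a rotation vector lying on one of the geodesics $(\alpha_1,T\beta_1)$ or $(\alpha_1,T\alpha_1)$ (with speed $v'$), and a rotation vector lying on one of the geodesics $(T\beta_1,\beta_1)$ or $(T\alpha_1,\beta_1)$ (with speed $v''$). Since $v_1>0$, at least one of $v',v''$ is strictly positive, producing a nonzero rotation vector on one of these four geodesics.

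The main step, and the one where I expect the essential geometric content, is to check that none of these four geodesics is a lift of $\gamma$. For this I invoke the standard fact about cocompact Fuchsian groups: two hyperbolic elements of $\pi_1(S)$ (with $g(S)\ge 2$) sharing a fixed point on $\partial\Hy^2$ lie in a common cyclic subgroup, so their axes coincide. In particular, any two distinct lifts of the closed geodesic $\gamma$ have disjoint pairs of endpoints in $\partial\Hy^2$. Since $\tilde\gamma_1\ne\tilde\gamma_2$, this gives $\{\alpha_1,\beta_1\}\cap\{T\alpha_1,T\beta_1\}=\emptyset$, so each of the four candidate geodesics above shares exactly one endpoint with $\tilde\gamma_1$ (either $\alpha_1$ or $\beta_1$) while its other endpoint lies in $\{T\alpha_1,T\beta_1\}$ and is therefore different from the remaining endpoint of $\tilde\gamma_1$. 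Consequently each of these four geodesics shares an endpoint with $\tilde\gamma_1$ without being equal to it, and hence cannot be a lift of $\gamma$. This contradicts the assumption that the only nonzero rotation vectors of $f$ are carried by lifts of $\gamma$, and the proof is complete. The only delicate point is making sure that we treat both alternatives in conclusions (i) and (ii) of Proposition~\ref{newrot2Intro} uniformly; the endpoint-disjointness argument handles all four cases without case distinction.
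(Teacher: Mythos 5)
There is a genuine gap: your argument silently assumes that $\gamma$ is a \emph{closed} geodesic, whereas the proposition concerns the projection $p(\alpha,\beta)$ of an arbitrary geodesic line of $\Hy^2$ (the paper even remarks that the only known examples have $p(\alpha,\beta)$ closed, but the statement and its proof must cover the non-closed case). Your key step is the claim that any two distinct lifts of $\gamma$ have disjoint pairs of endpoints on $\partial\Hy^2$, justified by the fact that two deck transformations whose axes share an endpoint have the same axis (Lemma~\ref{commonendpoint}). That fact only applies when the lifts are axes of deck transformations, i.e.\ when $\gamma$ is closed. For a non-closed geodesic it is false: if $\gamma$ spirals onto a closed geodesic $c$ whose deck transformation is $T_c$, then a lift $\tilde\gamma$ and its distinct translate $T_c\tilde\gamma$ share the endpoint of the axis of $T_c$ towards which $\gamma$ accumulates. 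Consequently, once Proposition~\ref{newrot2} (or better, Proposition~\ref{newrot}, which applies here since the second geodesic is a deck translate of the first and yields the full speed $v$ directly) produces a nonzero rotation vector on a geodesic sharing exactly one endpoint with $(\alpha_1,\beta_1)$, you cannot conclude that this geodesic fails to be a lift of $\gamma$; the contradiction does not follow.

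This is precisely where the paper's proof does its real work. After applying Proposition~\ref{newrot} to get, say, $(\alpha,T(\beta),v)\in\rho(f)$, it uses the hypothesis to write $T_1(\alpha,\beta)=(\alpha,T(\beta))$ for some deck transformation $T_1$, deduces that $\alpha$ is an endpoint of the axis of $T_1$ and $\beta$ of the axis of $T_2=T^{-1}T_1$ (treating the swapped case $T_1(\alpha)=T(\beta)$, $T_1(\beta)=\alpha$ separately via a north--south dynamics argument), then passes to the annuli obtained by quotienting by $\langle T_1\rangle$ and $\langle T_2\rangle$ and uses Lemmas~\ref{LemCompareAnneau} and~\ref{compareproj} to show that one of these axes carries a nonzero rotation vector. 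By the hypothesis that axis is a translate of $(\alpha,\beta)$, which forces $p(\alpha,\beta)$ to be a closed geodesic, and only at that point does Lemma~\ref{commonendpoint} yield the contradiction. Your argument is a correct and appreciably shorter proof of the special case where $\gamma$ is already known to be closed, but it does not prove the proposition as stated.
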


After exposing some examples in Section~\ref{SecEx}, we study in more detail the creation of new rotation vectors for closed geodesics. 

As a first step, in Section~\ref{SecHomo}, we get weak consequences when the homotopical rotation set contains two vectors associated to two closed geodesics which intersect, as in Figure~\ref{FigCommut} (Proposition~\ref{lifttorus} and Corollary~\ref{entropy}; in particular we get positive entropy). These statements rely on the notion of \emph{covering map associated to two distinct closed geodesics} (Definition~\ref{DefCovering}). In our case, the covering surface is a single punctured torus, and the homological consequences we mentioned are stated in terms of rotation set of the lift of the initial homeomorphism to this torus.

This formalism is used in Section~\ref{SecClosed} to get the existence of a rotational horseshoe (see Definition~\ref{DefRotHorse}) when $f$ has a rotation vector associated to a closed geodesic with auto-intersection (Theorem~\ref{ExistSuperCheval}).

\begin{thm}\label{ExistSuperChevalIntro}
Let $\gamma$ be a closed geodesic with a geometric auto-intersection (as in Figure~\ref{FigCommut2}) associated to the deck transformation $T_1$ (in the sense of Definition \ref{DefTransverseInter}). Denote $T_2$ the deck transformation such that $T=T_1T_2$ is the deck transformation associated to the closed geodesic $\gamma$. 

Suppose that $(\gamma,\ell(\gamma))\in \rho(f)$.
Then, $f^7$ has a topological horseshoe associated to the deck transformations $T_1$, $T_1^2$, $T_2$, $T_1T_2$, $T_2T_1$ and $T_1T_2T_1$.
\end{thm}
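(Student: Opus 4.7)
The plan is to combine the realisation result Proposition~\ref{sublindistance}, the shortcut creation of new rotation vectors from Proposition~\ref{newrot}, and the forcing theory of Le~Calvez and Tal \cite{MR3787834}, via the covering-space construction of Definition~\ref{DefCovering}. Since the hypothesis $(\gamma,\ell(\gamma))\in\rho(f)$ gives the \emph{extremal} rotation along $\gamma$ (speed equal to the translation length of $T$), Proposition~\ref{sublindistance} furnishes an orbit whose lift $(\tilde f^n(\tilde x))_n$ shadows the axis $\tilde\gamma$ of $T=T_1T_2$ at sublinear distance, so that $\tilde f^n(\tilde x)$ is asymptotic to $T^n(\tilde x)$ along $\tilde\gamma$. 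The geometric self-intersection of $\gamma$ manifests itself in the universal cover as the axis of $T_1$ (and hence of every conjugate $T^kT_1T^{-k}$) crossing $\tilde\gamma$ transversely at points a bounded distance apart.

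First, I would transfer the problem to a smaller surface. The subgroup $\langle T_1,T_2\rangle$ of $\pi_1(S)$ is free of rank two because the axes of $T_1$ and $T_2$ cross transversely. The associated covering space $\wh S$ (Definition~\ref{DefCovering}) is a punctured torus, and the projections of the $T_1$-axis and the $T_2$-axis become two closed geodesics crossing transversely in $\wh S$. The lifted homeomorphism $\wh f$ inherits, from the shadowing orbit together with Proposition~\ref{newrot} applied to the shortcuts of $\gamma$, rotation vectors in two transverse directions of the punctured torus.

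Second, I would apply the Le~Calvez--Tal forcing theorem to $\wh f$. Choose a maximal hereditarily singular isotopy from the identity to $\wh f$, together with the associated transverse foliation; a contractible fixed point is provided by the inclusion of $(\alpha,\beta,0)$ in the rotation set. The shadowing orbit produces a transverse trajectory that crosses leaves corresponding to the axes of $T_1$ and $T_2$ and their $T$-conjugates. Within a single $T$-period of $\tilde\gamma$, this trajectory, together with its images under $T_1$ and $T_2$, accumulates the six distinct transverse intersections indexed by the deck transformations $T_1$, $T_1^2$, $T_2$, $T_1T_2$, $T_2T_1$ and $T_1T_2T_1$. Invoking the forcing lemmas on each of these transverse intersections then produces a topological horseshoe, coded by these six deck transformations, for a suitable iterate of $\wh f$, which descends to the claimed horseshoe for $f^7$.

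The main obstacle is the combinatorial core of the second step: verifying that exactly these six deck transformations appear in the crossing pattern between the shadowing trajectory and the translates of the $T_1$- and $T_2$-axes, and that seven iterates of $f$ are precisely what is needed for the transverse intersection diagram to close up into a Markov coding on six symbols. This amounts to a careful hyperbolic-geometry bookkeeping based on the order in which translates of the $T_1$- and $T_2$-axes cross a fundamental segment of $\tilde\gamma$, an order forced by the geometric self-intersection encoded in Definition~\ref{DefTransverseInter}.
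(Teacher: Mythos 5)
There is a genuine gap: your outline stops exactly where the proof of this theorem actually lives, and two of your preparatory steps are incorrect as stated. First, the realisation step: Proposition~\ref{sublindistance} requires $(\gamma,\ell(\gamma))$ to be an \emph{extremal} point of the rotation set, which is not part of the hypothesis; and a sublinearly shadowing orbit is of no use here anyway, because the whole arithmetic of the statement (the exponent $7$ and the list $T_1,T_1^2,T_2,T_1T_2,T_2T_1,T_1T_2T_1$) rests on having a transverse \emph{loop} admissible of order~$1$. Since the speed is exactly $\ell(\gamma)$, i.e.\ $p/q=1$, the correct move is Proposition~\ref{PropRealPtExtRat} (condition~(iii) is available because the self-intersection provides translates of the geodesic crossing it in both senses), which yields a genuine fixed point of $f$ whose trajectory under the isotopy gives a closed $\F$-transverse loop $\alpha$ homotopic to $\gamma$, admissible of order~$1$. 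Second, the covering you invoke is the wrong one: for a single primitive closed geodesic with a geometric auto-intersection, the covering associated to the two loops at the intersection point is the \emph{three-punctured sphere} (Proposition~\ref{coveringmap2}), not the punctured torus, which only arises for two distinct closed geodesics, one of them simple (Proposition~\ref{coveringmap1}); and in any case lifting the dynamics to a covering and arguing homologically, as in Section~\ref{SecHomo}, can only give entropy and homological periodic orbits, not a rotational horseshoe for a prescribed iterate with prescribed deck transformations.

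More importantly, what you defer as ``hyperbolic-geometry bookkeeping'' is the theorem. The paper's proof proceeds by (i) Proposition~\ref{GeomImpliqFeuill}, which converts the geometric auto-intersection of $\alpha$ into $\F$-transverse self-intersections of all the marked concatenations $\alpha_1\alpha_2^i\cdot\alpha_1^k\alpha_2$, etc.\ (the covering by the three-punctured sphere is used only inside this foliation-theoretic step, not to transfer the dynamics); (ii) admissibility computations via Proposition~\ref{PropFondLCT2} showing, e.g., that $\alpha_1\alpha_2^4\alpha_1\alpha_2$ is admissible of order~$6$ and $\alpha_1\alpha_2^2\alpha_1^2\alpha_2^2\alpha_1\alpha_2$ of order~$7$, which is where the number $7$ comes from; (iii) the construction of an explicit rectangle $R$ in $\wt{\dom}\,\F$ bounded by pieces of the leaves $T_1^{-1}\phi_a$, $\phi_a$ and of $\hat f^{-7}(\phi_b)$, $T_1\hat f^{-7}(\phi_b)$; and (iv) a list of disjointness relations, forced by the transverse intersections of step~(i), which, combined with Homma's theorem (Theorem~\ref{PropHomma}), prove that $\tilde f^{7}(R)$ has Markovian intersections with exactly $T_1R$, $T_1^2R$, $T_2R$, $T_1T_2R$, $T_2T_1R$, $T_1T_2T_1R$, i.e.\ a rotational horseshoe in the sense of Definition~\ref{DefRotHorse}. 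None of this is supplied or replaced by your proposal, so the argument as written does not establish the statement.
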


\begin{figure}[ht]
\begin{center}
\includegraphics[trim=35 20 0 30, scale=1.3]{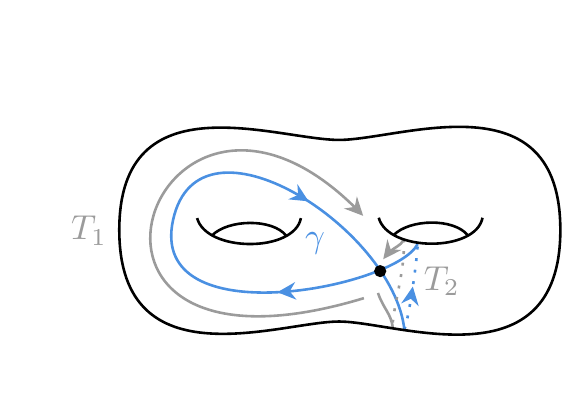}
\caption{\label{FigCommut2}A possible configuration for Theorem~\ref{ExistSuperChevalIntro}: the geodesic $\gamma$ on the surface has a geometric auto-intersection.}
\end{center}
\end{figure}

Note that our definition of rotational horseshoe is different from the one of \cite{MR3784518} and \cite{1803.04557}, as it is stated in terms of Markovian intersection and not of semi-conjugation to a shift (we get this semi-conjugacy as a consequence in Proposition~\ref{PropConjug}). In fact, this kind of rotational horseshoes appears as soon as the homeomorphism has a periodic trajectory under the isotopy to identity which auto-intersects geometrically (see also Proposition~\ref{GeomImpliqFeuill}).

The proof of this theorem is much longer than the previous ones and based on the recent forcing theory \cite{MR3787834,1803.04557}. This is also the case for our last result (Theorem~\ref{Th2transverse} and Corollary~\ref{Cor2transverse}). For any deck transformation $T$ of the universal cover $\wt{S} \rightarrow S$, we denote by $\tilde{\gamma}(T)$ its axis. 

\begin{thm} \label{Th2transverseIntro}
Let ${\gamma}_1$ and ${\gamma}_2$ be two closed geodesics of ${S}$, that lift to $\wt{S}$ to geodesics $\tilde{\gamma}_1$ and $\tilde{\gamma}_2$ that cross (they can be for example the curves $\alpha$ and $\beta$ of Figure~\ref{FigCommut}, note also that they can have auto-intersections). Let $T_1$ and $T_2$ be the deck transformations associated to the respective closed geodesics $\gamma_1$ and $\gamma_2$ and which respectively preserve $\tilde{\gamma}_1$ and $\tilde{\gamma}_2$.

Suppose that there exist nonzero rotation vectors of directions $\tilde{\gamma}_1$ and $\tilde{\gamma}_2$ in $\rho(f)$. Then, for any element $w$ in $\langle T_1,T_2\rangle _+\setminus(\langle T_1\rangle _+ \cup \langle T_2\rangle _+)$, there are nonzero vectors of direction $\tilde{\gamma}(w)$ in $\rho(f)$ which are realised by periodic orbits.

\end{thm}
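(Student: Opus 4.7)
The plan is to derive this theorem as an extension of the forcing argument behind Theorem \ref{ExistSuperChevalIntro}, applied to a configuration of two families of crossing trajectories produced by the hypothesis. The two nonzero rotation vectors of directions $\tilde{\gamma}_1$ and $\tilde{\gamma}_2$ provide, in $\Hy^2$, arbitrarily long $\tilde f$-trajectories that follow (up to bounded hyperbolic error) the geodesics $\tilde{\gamma}_1$ and $\tilde{\gamma}_2$ respectively; pushing these families by the subgroup $\langle T_1,T_2\rangle$ yields a pattern of trajectories that cross transversely at the intersections of the $\langle T_1,T_2\rangle$-translates of $\tilde{\gamma}_1$ and $\tilde{\gamma}_2$.

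I would first translate the hypothesis into a geometric statement: there exist points $\tilde x_1, \tilde x_2 \in \Hy^2$ and integers $N_1, N_2$ such that the orbit segment $\tilde x_i, \tilde f(\tilde x_i), \dots, \tilde f^{N_i}(\tilde x_i)$ stays at bounded hyperbolic distance from $\tilde{\gamma}_i$ and lands close to $T_i^{k_i}(\tilde x_i)$ for some $k_i \geq 1$. This is exactly the content of Proposition~\ref{ProprealisDist}, which ensures that rotation vectors are realised by segments whose endpoints stay at bounded distance from the associated geodesic. By equivariance under $\langle T_1,T_2\rangle$, one then has analogous trajectories along every translate of $\tilde{\gamma}_1$ and $\tilde{\gamma}_2$.

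The core step is to feed this configuration into the Le Calvez--Tal forcing theory \cite{MR3787834,1803.04557}, repeating the strategy of Theorem~\ref{ExistSuperChevalIntro}. Since $\tilde{\gamma}_1$ and $\tilde{\gamma}_2$ cross, the two families of trajectories meet transversely at their geometric intersections, and the forcing theory allows one to concatenate pieces of trajectories at those crossing points. This should produce a topological horseshoe with a Markovian coding on the alphabet $\{T_1,T_2\}$ (or suitable powers of them): a symbolic sequence $(T_{i_j})_{j\in\Z}$ codes an $\tilde f$-orbit whose successive pieces are displaced by the corresponding deck transformations. For any positive word $w=T_{i_1}\cdots T_{i_k}$ containing both letters, the periodic point coded by $\overline{i_1\cdots i_k}$ then yields a periodic orbit of $f$ whose lift is $w$-equivariant, and hence stays at bounded hyperbolic distance from the axis $\tilde{\gamma}(w)$. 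Such a lift produces a rotation vector of direction $\tilde{\gamma}(w)$ and speed equal to the translation length of $w$ divided by the period; this speed is nonzero because $T_1$ and $T_2$ generate a non-elementary subgroup of $\mathrm{Isom}(\Hy^2)$ (a consequence of the crossing hypothesis on $\tilde{\gamma}_1, \tilde{\gamma}_2$), so the translation length of $w$ is positive.

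The main obstacle, and the heart of the work, lies in carrying out the forcing construction so that the coding is genuinely Markovian on \emph{all} positive words of $\langle T_1,T_2\rangle_+\setminus(\langle T_1\rangle_+\cup\langle T_2\rangle_+)$, rather than only alternating ones. This likely requires unfolding the horseshoe of Theorem~\ref{ExistSuperChevalIntro} in a careful topological chart adapted to the quotient by the subgroup $\langle T_1,T_2\rangle$ (presumably a free group of rank two acting on a subsurface, as in the covering formalism of Section~\ref{SecHomo} and Proposition~\ref{lifttorus}), and then verifying that the transverse intersections provided by Proposition~\ref{newrot2Intro} translate into Markovian rectangles indexed by both $T_1$ and $T_2$. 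A secondary technical difficulty is ensuring that the resulting periodic orbit is indeed the primitive one, so that the period we divide by is the combinatorial length $k$ rather than a proper divisor; this can be handled by choosing $w$ to be cyclically reduced and primitive, which loses no generality.
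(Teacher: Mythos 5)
There is a genuine gap at the heart of your plan: you pass from the fact that the geodesics (and hence the realising trajectories) \emph{cross geometrically} to the claim that the forcing theory lets you concatenate pieces of the two trajectories at these crossing points. Forcing (Proposition~\ref{PropFondLCT}) requires an \emph{$\F$-transverse} intersection with respect to the Brouwer--Le Calvez foliation, and a geometric crossing of two distinct transverse trajectories does not imply one. This is exactly Lellouch's example recalled in Figure~\ref{Figlellouch}: the trajectories realising the two rotation vectors can cross geometrically while one is $\F$-equivalent to a subpath of the other, so they have no $\F$-transverse intersection at all. Proposition~\ref{GeomImpliqFeuill}, which is what makes the strategy of Theorem~\ref{ExistSuperCheval} work, converts geometric intersections into $\F$-transverse ones only for a single transverse loop and its deck translates; it does not apply to two distinct trajectories. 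For the same reason your expected output --- a rotational horseshoe with Markovian coding on the alphabet $\{T_1,T_2\}$ --- is precisely what the paper says this situation does \emph{not} yield: the proof of Theorem~\ref{Th2transverse} explicitly does not produce a horseshoe associated to $T_1,T_2$ (and does not even decide whether the two initial rotation vectors are realised by periodic orbits).

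What the paper does instead is a dichotomy on the trajectories realising the rotation vectors (chosen birecurrent, with full orbits at bounded distance from the geodesics, via Proposition~\ref{sublindistance} and Lemma~\ref{LemRecurrentpoint} --- the segment-realisation of Proposition~\ref{ProprealisDist} would not suffice here): either both satisfy condition \ref{C1}, in which case one shows the two trajectories themselves have $\F$-transverse intersections with suitable translates after a delicate analysis of leaves (Lemmas~\ref{LemDisjointleavescase1}--\ref{LemTransadm} and the notion of essential intersection point); or one satisfies condition \ref{C2}, in which case no transverse intersection between the original trajectories is available and one must build a new admissible path $\tilde\alpha$ mixing pieces of both trajectories (Lemma~\ref{LemNotransautointersection}, following Lellouch) that does intersect translates of the second trajectory $\F$-transversally. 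In both cases the periodic orbits for the words $w$ come from an inductive construction of admissible paths transverse to their own translates (Lemmas~\ref{LemTransadmconc} and \ref{LemNotransadmconc}) followed by Theorem~\ref{ExistPasSuperCheval}, not from a single Markov partition. So your proposal identifies the right tools but skips the step that constitutes the actual difficulty of the theorem; as written, the argument would fail exactly on configurations of Lellouch type.
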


The proof of this theorem is quite long and divided in numerous sub-cases. One of the difficulties is that the transverse paths associated to the trajectories realising the rotation vectors do not need to have an $\F$-transverse intersection (see Figure~\ref{Figlellouch} for such an example, due to Lellouch \cite{lellouch}).

\subsection*{Some open questions}

We state here some questions about homotopical rotation sets in higher genus that are still open.

\begin{enumerate}[label=\arabic*)]
\item Clarify the links between homotopical and homological rotation sets. In particular, when does a homological rotation vector gives birth to a homotopical rotation vector? This would certainly bring into play hyperbolic geometry as in \cite{MR2846925}.
\item Get more realisation results: is every rotation vector realised by a single orbit of the homeomorphism? What can be the sets of times $n_k$ appearing in \eqref{EqDefRot}?
\item Get more forcing results, for example: if $(\alpha_1,\beta_1,v_1), (\alpha_2,\beta_2,v_2)\in\rho(f)$, with $v_1,v_2>0$, and if the geodesics $(\alpha_1,\beta_1)$ and $(\alpha_2,\beta_2)$ cross, do we have $(\alpha_1,\beta_2,v')\in\rho(f)$ for some $v'>0$? 
A first step may be to get such results under some recurrence hypotheses about the geodesics, or to get it for a single (non closed) geodesic with auto-intersection.
\item Obtain a wider collection of examples to illustrate the diversity of possible behaviours.
\item Explore more the notion of almost annular homeomorphisms.
\item If $\tilde f$ is transitive, what can be said about the $\rho(f)$ (see \cite{MR2929025})?
\item What is the shape of the rotation set of a generic homeomorphism?
\end{enumerate}

\subsection*{Acknowledgements}

We warmly thank Erwann Aubry for insightful discussions about Proposition~\ref{PropQuasiConvex}, as well as Maxime Wolff and Frédéric Le Roux.
We are indebted to Alejo Garc\'ia who has read a first version of this article in detail and suggested numerous corrections. 
The reference \cite{MR295352} was suggested to us by Indira Chatterji, and the references \cite{PabloUnpublished,MR611385,MR2003742} by Pablo Lessa. 

P.-A. Guih\'eneuf was supported by a PEPS/CNRS grant. E. Militon was supported by the ANR project Gromeov ANR-19-CE40-0007.

\section{Quasi convexity of fundamental domains}

A now well known result is the quasi-convexity of 2-torus fundamental domains, whose first proof was given in \cite{MR1053617}, with an argument due to Douady. Based on the index of a curve, it can be replaced by a very elementary one. Here, we adapt this elementary proof to higher genus surfaces\footnote{A. Passeggi informed us in a private communication that he had a proof of this result, but it stayed unpublished.}.

\begin{definition}
We say that a set $X\subset \Hy^2$ is \emph{$R$-quasi convex} if for any point $x$ of the hyperbolic convex hull $\conv(X)$ of $X$, one has $B(x,R)\cap X \neq\emptyset$.
\end{definition}

In what follows, we identify $\overline{\Hy^{2}}$ with the unit closed disk in the complex plane $\mathbb{C}$. In particular, the complex numbers $i$ and $-i$ are identified with points of $\partial \Hy^2$. We endow $\partial \Hy^2$ with the distance induced by the euclidean distance on $\mathbb{C}$. For any two distinct points of the boundary $\alpha,\beta\in\partial \Hy^2$, we denote $(\alpha,\beta)$ the oriented geodesic of $\Hy^2$ having $\alpha$ as $\alpha$-limit and $\beta$ as $\omega$-limit.

\begin{proposition}\label{PropQuasiConvex}
For any orientable closed surface $S$ of genus $g\ge 2$ there exists $R = R(S)>0$ such that any path connected fundamental domain $D\subset \Hy^2$ of $S$ is $R$-quasi convex.
\end{proposition}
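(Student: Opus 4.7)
My plan is to adapt the index-of-a-curve argument (due to Douady, used in the torus case by Misiurewicz--Ziemian \cite{MR1053617}) to the hyperbolic setting, arguing by contradiction: suppose $D$ is a path-connected fundamental domain and $x\in\conv(D)$ satisfies $d(x,D)>R$, with $R$ to be taken large. By the hyperbolic Carath\'eodory theorem in dimension two, $x$ lies in the interior of a closed geodesic triangle $\triangle(p_1,p_2,p_3)$ with $p_i\in D$; in particular $d(x,p_i)>R$ for each $i$. Path-connectedness of $D$ provides arcs $\gamma_{12},\gamma_{23},\gamma_{31}\subset D$ whose concatenation is a loop $L\subset D$ that is automatically disjoint from $B(x,R)$.

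The heart of the argument is to prove that the winding number $w(L,x)$ of $L$ around $x$ is nonzero, which then forces $\mathrm{length}(L)\geq 2\pi\sinh R$ since a loop of nonzero winding in $\Hy^2\setminus B(x,R)$ must have length at least the hyperbolic circumference of $\partial B(x,R)$. The geodesic triangle $T=\partial\triangle(p_1,p_2,p_3)$ has $w(T,x)=\pm 1$; the difference $w(L,x)-w(T,x)$ is measured by the three ``bigons'' between each $\gamma_{ij}$ and the corresponding geodesic edge of $T$. Choosing the $\gamma_{ij}$ suitably (for instance length-minimizers in $D$ within their homotopy class in $D$), together with the fundamental-domain hypothesis --- extra winding of some $\gamma_{ij}$ around $x$ would put a nontrivial $\Gamma$-translate of $x$ inside the ambient region swept by the bigon and ultimately inside $D$ itself, contradicting the definition --- ensures that each bigon contributes zero to the winding difference, so $|w(L,x)|=1$.

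To close the proof, the lower bound $\mathrm{length}(L)\geq 2\pi\sinh R$ must be shown incompatible with the structure of a path-connected fundamental domain when $R$ is large. A long loop inside $D$ forces $D$ to contain a thin ``ring'' of width at most $\mathrm{area}(S)/(2\pi\sinh R)$ around $B(x,R)$; a thin ring surrounding a large ball cannot be a fundamental domain, because $\Gamma$-translates by a small-translation-length deck transformation of any point near the ring's inner boundary would again lie inside the ring, contradicting the fundamental-domain property. Quantifying this (using the Margulis constant or the systole of $S$) yields $R\leq R(S)$. The main obstacle I expect is making the winding-number computation above fully rigorous: mere path-connectedness is too weak to rule out parasitic wrapping of the arcs $\gamma_{ij}$, and the fundamental-domain hypothesis has to be invoked with care --- most cleanly, I think, via counting of signed intersections of $L$ with a generic geodesic ray from $x$ chosen to avoid the discrete orbit $\Gamma\cdot x\setminus\{x\}$, and using that within a fundamental domain such a ray meets $D$ only a controlled number of times.
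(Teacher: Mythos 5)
There is a genuine gap, and it sits exactly where your plan stops being specific: in converting ``$D$ contains a long loop of nonzero winding around $B(x,R)$'' into a contradiction with the fundamental-domain property. Your two closing arguments do not work. First, the ``thin ring'' step is unfounded: the loop $L$ has measure zero, so nothing forces $D$ to contain an annular neighbourhood of any positive width around $B(x,R)$; comparing $\operatorname{area}(S)$ with $2\pi\sinh R$ therefore yields no constraint on $D$ (path-connected fundamental domains of a fixed closed surface can perfectly well have arbitrarily large diameter and contain arbitrarily long arcs). Second, the systole/Margulis step misuses hyperbolic geometry: a deck transformation of translation length $\operatorname{sys}(S)$ displaces points by roughly $\operatorname{sys}(S)$ only near its axis, and by an amount growing linearly in the distance to the axis elsewhere, so ``a small-translation-length element moves points near the ring's inner boundary back into the ring'' is simply false in general. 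The statement you would need --- that a path-connected fundamental domain cannot contain a loop enclosing a large ball --- is essentially the proposition itself, so it cannot be waved through; it requires producing two $\Gamma$-equivalent points inside $D$. This is what the paper's proof supplies and your proposal lacks: via Lemma~\ref{LemEnvoieDroite} one finds a deck transformation $TT_0$ such that $TT_0(a)\in B(0,R)$ (hence, under the contradiction hypothesis, inside the Jordan domain bounded by $\alpha\cup[a,b]$) while $TT_0$ sends the geodesic $(-i,i)$ strictly into its own right side; then the point of $\alpha$ farthest from $(-i,i)$ is pushed strictly farther away, hence outside the Jordan domain, and by continuity $TT_0(\alpha)$ must cross $\alpha$, giving $x_0$ and $TT_0(x_0)$ both in $D$ with $TT_0\neq\Id$ --- the desired contradiction, with $R$ depending only on $S$ through the finite set $F$ of Lemma~\ref{LemEnvoieDroite}.

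Two smaller remarks on the first half. The winding-number reduction itself is salvageable in spirit (it is the Douady-type index argument the paper explicitly chose to replace by a more elementary one), and your length bound $\len(L)\ge 2\pi\sinh R$ via the $1$-Lipschitz nearest-point projection onto the convex ball is correct; but your mechanism for killing the ``parasitic wrapping'' of the arcs $\gamma_{ij}$ (length-minimizers, translates of $x$ swept by bigons) is not an argument yet, and the paper sidesteps the issue entirely by using only a Jordan curve, no index: since $D$ is path-connected one may take a subarc $\alpha\subset D$ meeting the geodesic $(a,b)$ only at its endpoints, and work with the Jordan domain of $\alpha\cup[a,b]$. Also note that you do not need Carath\'eodory with three points: for a connected set in the (hyperbolic) plane, every point of $\conv(D)$ lies on a geodesic segment $[a,b]$ with $a,b\in D$ (Fenchel--Bunt), which is what the paper uses and which simplifies the topology considerably.
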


\begin{lemma}\label{LemEnvoieDroite}
Let $S$ be a closed surface of genus $g\ge 2$ and $K$ a compact subset of $\Hy^2$. Then, there exists a finite set $F\subset \pi_1(S)$ such that for any oriented geodesic $\tilde \gamma$ of $\Hy^2$ passing through $K$, there exists $T\in F$ such that the right of $T\tilde \gamma$ is a strict subset of the right of the segment $(-i,i)$, where $(-i,i)$ is oriented from $-i$ to $i$.
\end{lemma}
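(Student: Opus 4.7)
The strategy is to combine compactness of the space of oriented geodesics meeting $K$ with the classical minimality of the $\pi_1(S)$-action on the space of oriented geodesics of $\Hy^2$. I would identify the space $\mathcal{G}$ of oriented geodesics of $\Hy^2$ with $(\partial \Hy^2)^2 \setminus \Delta$, the pair $(\alpha, \beta)$ corresponding to the geodesic oriented from $\alpha$ to $\beta$. The subset $\mathcal{G}_K \subset \mathcal{G}$ of oriented geodesics meeting $K$ is compact, since it is the image of the compact unit tangent bundle $T^1 K$ under the continuous endpoint map.

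Next I would introduce an open ``target'' set $U \subset \mathcal{G}$: namely, the set of oriented geodesics whose two endpoints both lie in the open east arc $\{e^{i\theta} : \theta \in (-\pi/2, \pi/2)\}$ of $\partial \Hy^2$, with the angular coordinate of the first endpoint strictly smaller than that of the second. This $U$ is open in $\mathcal{G}$, non-empty (e.g., $(e^{-i\pi/4}, e^{i\pi/4}) \in U$), and a direct inspection in the disk model shows that every $\tilde\eta \in U$ satisfies the desired conclusion: its right half-plane is the ``eastern cap'' bounded by $\tilde\eta$ and an arc strictly contained in the open east arc, hence strictly contained in the right half-plane of $(-i, i)$.

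The central claim is then: for every $\tilde\gamma \in \mathcal{G}$, there exists $T \in \pi_1(S)$ with $T\tilde\gamma \in U$. I would derive this from the classical minimality of the $\pi_1(S)$-action on $\mathcal{G}$ for closed hyperbolic surfaces of genus $\geq 2$ (equivalent to topological mixing of the geodesic flow on the compact unit tangent bundle $T^1 S$): every $\pi_1(S)$-orbit in $\mathcal{G}$ is dense, hence meets the non-empty open set $U$. Given this claim, $\{T^{-1}(U) : T \in \pi_1(S)\}$ forms an open cover of $\mathcal{G}$, hence of the compact $\mathcal{G}_K$; extracting a finite subcover yields the required finite set $F$. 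The main obstacle is the minimality statement, which is strictly stronger than the (elementary) density of $\pi_1(S)$-orbits on $\partial \Hy^2$: for pairs, minimality is equivalent to topological mixing of the geodesic flow. A more elementary alternative would be to argue via the convergence dynamics of a hyperbolic $T_0 \in \pi_1(S)$ with attracting fixed point in the open east arc: $T_0^n$ contracts the endpoints of any generic $\tilde\gamma$ to that point, eventually placing $T_0^n \tilde\gamma$ in $U$; a few auxiliary deck transformations handle the cases of wrong orientation or of endpoints at the repelling fixed point, after which the compactness argument still produces the finite set $F$.
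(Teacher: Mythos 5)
Your reduction to compactness is sound: $\mathcal{G}_K$ is compact, your target set $U$ is open, and if every $\tilde\gamma\in\mathcal{G}_K$ had a translate in $U$, extracting a finite subcover from $\{T^{-1}(U)\}_{T\in\pi_1(S)}$ would indeed produce $F$ — this is essentially the compactness step of the paper's proof in cleaner packaging. The gap is in the central claim and the way you justify it. The action of $\pi_1(S)$ on the space of oriented geodesics $(\partial\Hy^2)^2\setminus\Delta$ is \emph{not} minimal: if $\tilde\gamma$ is the axis of a hyperbolic element $g$, its orbit consists of the axes of the conjugates of $g$, i.e.\ of the lifts of a single closed geodesic of $S$; only finitely many such lifts meet any given compact subset of $\Hy^2$ (finitely many closed geodesics of a given length, properness of the action), so this orbit is a closed, locally finite family of geodesics, nowhere dense in $\mathcal{G}$. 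What is true is topological transitivity (existence of a dense orbit, corresponding to transitivity of the geodesic flow); your claimed equivalence ``minimality $\iff$ topological mixing of the flow'' is also incorrect. So ``every $\pi_1(S)$-orbit meets $U$'' cannot be obtained from minimality.

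Your elementary fallback is the right idea but, as sketched, it skips the genuinely delicate case. Let $A(\tilde\gamma)\subset\partial\Hy^2$ be the boundary arc of the right of $\tilde\gamma$. Iterating a single hyperbolic $T_0$ with attracting point in the east arc pushes $A(\tilde\gamma)$ into the east arc only when the repelling point $T_0^-$ lies outside $A(\tilde\gamma)$; if $T_0^-$ lies in the interior of $A(\tilde\gamma)$ (in particular if both fixed points do), the arcs $T_0^{\pm n}(A(\tilde\gamma))$ never shrink into the east arc, and this failure is not covered by ``wrong orientation or endpoints at the repelling fixed point.'' To treat such $\tilde\gamma$ you need, for each geodesic, a deck transformation whose repelling fixed point lies in the complementary arc and whose attracting fixed point lies in the east arc; the existence of such elements is exactly the density of pairs of fixed points (axes) of deck transformations in $(\partial\Hy^2)^2$, which is the ``classical fact'' the paper's proof invokes (axes with both endpoints arbitrarily close to any prescribed boundary point), there combined with a normalization of $K$ and a quantitative compactness argument on $\partial\Hy^2$. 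Once that ingredient is supplied and the pointwise claim is established for all $\tilde\gamma\in\mathcal{G}_K$, your open-cover argument does finish the proof.
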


\begin{figure}[ht]
\begin{center}
\begin{tikzpicture}[scale=1.2]
\clip (-1.4,-1.32) rectangle (1.6,1.4);

\draw (0,0) circle (1);
\draw[color=blue!50!black](-90:1) node[below]{$-i$};
\draw[color=blue!50!black](90:1) node[above]{$i$};
\draw[color=blue!50!black](130:1) node[left]{$b$};
\draw[color=blue!50!black](40:1) node[right]{$a$};
\draw[color=blue!50!black](5:1) node[right]{$T\gamma$};
\draw[color=red!50!black](60:1) node[right]{$T$};

\hgline{-90}{90}{blue}
\hgline{130}{40}{blue}
\hgline{-10}{20}{blue}
\hgline{50}{10}{red}

\end{tikzpicture}
\end{center}
\caption{Configuration of the proof of Lemma~\ref{LemEnvoieDroite}.}
\end{figure}
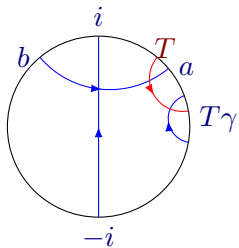

\begin{proof}[Proof of Lemma \ref{LemEnvoieDroite}]
We will use the following classical fact:
\emph{For any point $x\in\partial\Hy^2$, there exists a sequence of geodesic axis of deck transformations whose endpoints both tend to $x$ (and $x$ is between these two endpoints).}
It comes from the following: there exists a constant $\theta_0$ such that any geodesic $\tilde \gamma$ of $\Hy^2$ whose projection to $S$ is non-closed, crosses axis of deck transformations, with an angle $\ge \theta_0$, syndetically. To see this, consider a fundamental domain of $S$ with boundary made of deck transformations axis.
\medskip

First, by applying some iterate of some $T_0\in \pi_1(S)$ with one axis endpoint on the right of $(-i,i)$ if necessary, one can suppose that $K$ is contained in the right of $(-i,i)$ and moreover that the Hausdorff distance between these two sets is at least 1.

Now, take a geodesic $\tilde \gamma$ that crosses $K$. It has to have an endpoint $a$ at the right of $(-i,i)$, and moreover, if we denote by $b$ the other endpoint of this geodesic, the distances\footnote{We have endowed the Poincar\'e circle $\partial \Hy^2$ with its canonical distance.} $d(a,b)$, $d(a,-i)$ and $d(a,i)$ are bigger than some $d_0>0$ which only depends on $K$ and $S$. Take $F_0$ a finite subset of $\pi_1(S)$ such that for any $c\in \partial\Hy^2$, there exists an element of $F_0$ whose axis endpoints belong to respectively $]c-d_0,c[$ and $]c,c+d_0[$. It exists by the above fact. Moreover, we can take $F_{0}$ finite by compactness of $\partial \Hy^2$.

Then, still by compactness of $\partial \Hy^2$, there exists $N\in\N$, depending only on $F_0$ and $d_0$, some $n\in \Z$ with $|n|\le N$ and $T \in F_0$, such that both points $T^n(a)$ and $T^n(b)$ lie on the right of $(-i,i)$ (and thus $T^n\tilde \gamma$ is entirely contained in the right of $(-i,i)$), and that the right of $T^n\tilde \gamma$ is contained in the right of $(-i,i)$. This proves the lemma for
\[F = \big\{T^n \mid T\in F_0,|n|\le N\big\}.\]
\end{proof}

\begin{figure}[ht]
\begin{center}
\includegraphics[scale=1]{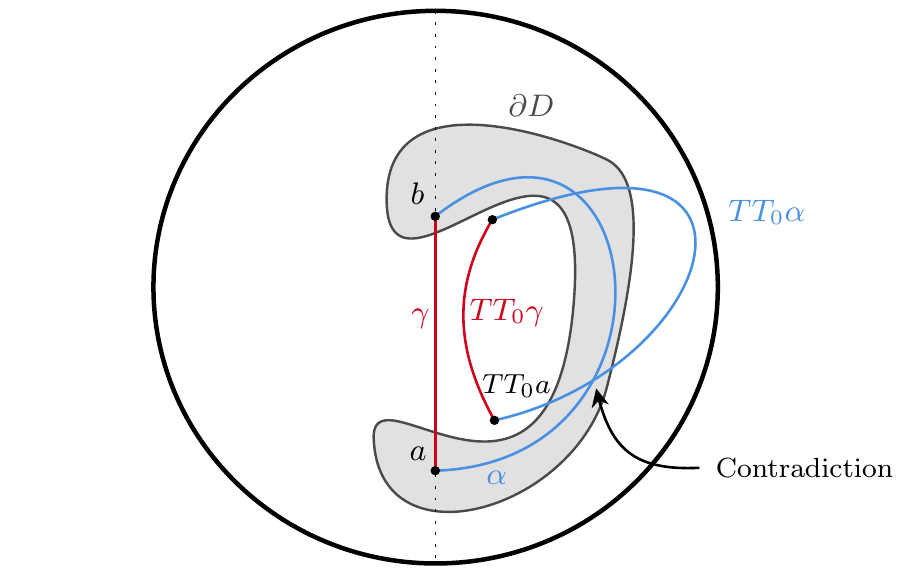}
\caption{Proof of Proposition \ref{PropQuasiConvex}. Here, $\beta=\alpha\cup\gamma$.}
\end{center}
\end{figure}

\begin{proof}[Proof of Proposition \ref{PropQuasiConvex}]
In this proof, $d$ denotes the distance in $\Hy^2$.
Denote by $r_0$ a positive number such that any half-ball of radius $r_0$ in $\Hy^2$ contains some fundamental domain of $S$. Let $B$ be the closure of the connected component of $B(0,r_0)\setminus (-i,i)$ on the right of $(-i,i)$. Let $F\subset \pi_1(S)$ be given by Lemma \ref{LemEnvoieDroite} applied to $K=B$, and set
\[R = \max \big\{d(0,Tx) \mid T\in F, x\in B\big\}.\]

Now, take $D$ a path connected fundamental domain of $S$, and $x\in \conv(D)$. Suppose for a contradiction that $B(x,R) \cap D = \emptyset$.

As $x\in \conv(D)$, there exists $a,b\in D$ such that $x\in [a,b]$. The property of quasi-convexity being invariant under isometry, one can suppose that $x=0$ is the center of the Poincar\'e disk, and that the geodesic line $(a,b)$ is $(-i,i)$. 

As $D$ is path connected, there exists a path $\alpha$ contained in $D$ whose endpoints are $a$ and $b$. By taking a subpath and changing $a$ and $b$ in $\gamma$ if necessary, one can suppose that $\alpha$ does not meet $\gamma$ on its interior. Moreover, by applying a symmetry with respect to $\gamma$ if necessary, one can suppose that the interior of $\alpha$ is included in the right of $\gamma$. Let $\beta$ be the Jordan curve formed by the union of $\alpha$ with the geodesic segment $[a,b]$.

As $B$ contains a fundamental domain, there exists $T_0\in\pi_1(S)$ such that $T_0(a)\in B$. By Lemma \ref{LemEnvoieDroite}, there exists $T\in F$ such that the right of $TT_0(-i,i)$ is included in the right of $(-i,i)$.

By the definition of $R$, one has $TT_0(a)\in B(0,R)$. Then, the hypothesis $B(0,R) \cap D = \emptyset$ implies that $TT_0(a)$ belongs to the Jordan domain bounded by $\beta$. Let $m\in\alpha$ be such that $d(m,(-i,i)) = \max_{y\in\alpha} d(y,(-i,i))$. As $TT_0(-i,i)$ lies on the right of $(-i,i)$, and as $TT_0$ is an isometry, one has that
\[d(TT_0(m),(-i,i)) = d(m,(TT_0)^{-1}(-i,i)) > d(m,(-i,i));\]
so $TT_0(m)$ does not belong to the Jordan domain defined by $\beta$. By continuity, there exists a point $x_0\in \alpha$ such that $TT_0(x_0)\in \beta$. But $TT_0(x_0)$ belongs to the right of $TT_0(-i,i)$, which is included in the right of $(-i,i)$, so $TT_0(x_0)\notin (-i,i)$. This implies that $TT_0(x_0)\in \alpha$. We have found a point $x_0$ and a deck transformation $TT_0\neq\Id$ such that $x_0$ and $TT_0(x_0)$ both lie in $D$. This is a contradiction, thus $B(0,R) \cap D \neq \emptyset$.
\end{proof}

\section{Rotation sets: definition and star shape}

We fix a distance on $\partial\Hy^2$, given by the Euclidean distance on the circle in the Poincar\'e disk model. We denote by $\Delta$ the diagonal in $(\partial\Hy^2)^2$, that is to say
$$\Delta= \left\{ (x,x) \mid x \in \partial \Hy^2 \right\}.$$

In the whole paper, $S$ will be an orientable surface of negative Euler characteristic of finite type. We will specify in each statement when the additional assumption of compactness of $S$ is necessary.

Let $f\in\Homeo_0(S)$ (where $\Homeo_0(S)$ denotes the set of homeomorphisms of $S$ that are homotopic to identity).
We denote by $\tilde f$ the lift of $f$ to $\Hy^2$ which is isotopic to the identity (and thus it extends to the circle $\partial \Hy^2$ by the identity by Lemma 3.8 p.53 in \cite{MR964685}). It is well-known that the homeomorphism $\tilde{f}$ has a fixed point: otherwise, by associating to each point $\tilde{x} \in \Hy^2$ the vector at $\tilde{x}$ pointing towards $\tilde{f}(\tilde{x})$, we would obtain a nowhere vanishing vector field on our surface $S$, a contradiction.

\begin{definition}
A point $(\alpha,\beta,v)\in ((\partial \Hy^2)^2 \setminus \Delta)\times\R_+^*$  is a \emph{rotation vector of $f$} if there exists a sequence $(x_k)_{k\in \N}$ of points of $\Hy^2$, and an increasing sequence $(n_k)_{k\in\N}$ of natural numbers tending to infinity such that, if we denote by $\pi_{\alpha,\beta}$ the orthogonal projection\footnote{In other words, the projection to the closest point of the geodesic $(\alpha,\beta)$.} on the geodesic linking $\alpha$ to $\beta$,
\begin{equation}\tag{\ref{EqDefRot}}
\left(x_k\ ,\ \tilde f^{n_k}(x_k)\ ,\ \frac{d\Big(\pi_{\alpha,\beta}(x_k)\, ,\ \pi_{\alpha,\beta}\big(\tilde f^{n_k}(x_k)\big)\Big)}{n_k}\right) \underset{k\to +\infty}{\longrightarrow} \big(\alpha,\beta,v\big).
\end{equation}

The \emph{rotation set of $f$} is the union of rotation vectors of $f$, together with the singleton $\{(\alpha,\beta,0)\}$, and quotiented by the relation
\[(\alpha,\beta,0) \sim (\alpha',\beta',0).\]
\end{definition}

We add the point $\{(\alpha,\beta,0)\}$ to the rotation set to stress out the fact that the homeomorphism $\tilde{f}$ has a fixed point (by Lefschetz formula), hence an orbit with speed $0$. 

\begin{figure}
\begin{center}
\begin{tikzpicture}[scale=2.5]

\draw[line width=1.1] (0,0) circle (1);
\hgline{-180}{0}{green!50!black}
\hgline{165}{195}{gray}
\hgline{-20}{20}{gray}

\draw[color=green!50!black] (-1,0) node{$\times$} node[left]{$\alpha$};
\draw[color=green!30!black] (1,0) node{$\times$} node[right]{$\beta$};

\draw (-0.8,0.12) node{$\times$} node[above right]{$x_k$};
\draw (0.76,-0.2) node{$\times$} node[below left]{$\tilde f^{n_k}(x_k)$};

\draw[color=gray, dashed] (-0.3,0.5)  node[above]{$\pi_{\alpha,\beta}(x_k)$} to[bend left] (-0.77,0)node{$\times$};
\draw[color=gray, dashed] (0.3,0.3) node[above]{$\pi_{\alpha,\beta}(\tilde f^{n_k}(x_k))$} to[bend right] (0.7,0) node{$\times$};

\draw[<->, >=latex, color=red!50!black] (-0.77,-.05) -- (0.7,-.05);
\draw[color=red!50!black] (0,-.05) node[below]{$\simeq n_k v$};

\end{tikzpicture}
\caption{Definition of the rotation set.}
\end{center}
\end{figure}
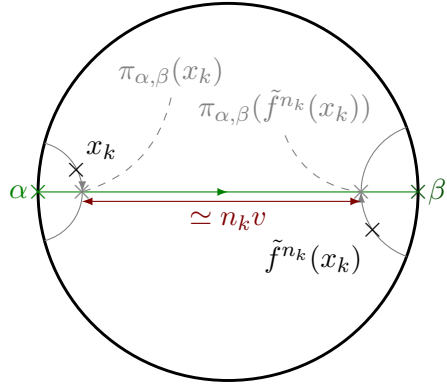

\bigskip

Note that the first two elements of Equation \eqref{EqDefRot} define a geodesic of $\Hy^2$, and the last element corresponds to a speed. Hence, a rotation vector is made of an asymptotic direction and an asymptotic speed.

Be careful, this set is not necessarily closed (see Subsection~\ref{SecExEmmanuel}).

Remark also that, as isotopically trivial homeomorphisms of higher genus surfaces have a canonical lift (the one extending to the identity on $\partial\Hy^2$), the rotation set is uniquely defined for the homeomorphism. It contrasts with rotation sets of torus or annulus homeomorphisms, which depend on the choice of the lift to the universal cover.

\begin{proposition}\label{PropRhoInv}
If $f\in\Homeo_0(S)$, then
\[\rho(f^{-1}) = \big\{(\beta,\alpha,v) \mid (\alpha,\beta,v)\in\rho(f)\},\]
and for any $n\ge 1$,
\[\rho(f^n) = n\rho(f) \doteq \big\{(\alpha,\beta, nv)\mid(\alpha,\beta,v)\in\rho(f)\big\}.\]
For any deck transformation $T$ of $S$, if $(\alpha,\beta,v)\in\rho(f)$, then $(T\alpha,T\beta,v)\in\rho(f)$.
\end{proposition}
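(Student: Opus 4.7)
The plan is to prove the three assertions using the key structural properties of $\tilde f$: it commutes with every deck transformation of $\wt S \to S$ (because it is a lift); it extends continuously to the identity on $\partial \Hy^2$ (by the cited Lemma 3.8 of \cite{MR964685}), which makes it a homeomorphism of the closed disk $\overline{\Hy^2}$; and on a closed surface it has bounded displacement $C = \sup_{x\in\Hy^2} d(x,\tilde f(x)) < +\infty$. I will treat only the nonzero case; the vector $(\alpha,\beta,0)$ belongs by definition to every rotation set, so all three claims are vacuous for it.

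For the inversion formula, let $(\alpha,\beta,v)\in\rho(f)$ with witnesses $(x_k,n_k)$. Set $y_k = \tilde f^{n_k}(x_k)$ and note $\tilde f^{-n_k}(y_k) = x_k$. Then $y_k \to \beta$ and $\tilde f^{-n_k}(y_k)\to\alpha$, and the quantity appearing in \eqref{EqDefRot} is symmetric in its first two arguments, so it is unchanged if we swap $x_k$ with $y_k$ and use the geodesic $(\beta,\alpha)$. Hence $(\beta,\alpha,v)\in\rho(f^{-1})$. The same argument applied to $f^{-1}$ gives the reverse inclusion.

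For the iterate formula, take $(\alpha,\beta,v)\in\rho(f)$ with witnesses $(x_k,n_k)$, write $n_k = n m_k + r_k$ with $r_k\in\{0,\dots,n-1\}$, and extract a subsequence on which $r_k \equiv r$ is constant. Set $z_k = \tilde f^{n m_k}(x_k)$ so that $\tilde f^{r}(z_k) = \tilde f^{n_k}(x_k)\to\beta$. Any subsequential limit $z_\infty$ of $(z_k)$ in $\overline{\Hy^2}$ satisfies $\tilde f^{r}(z_\infty) = \beta$, and since $\tilde f^{r}$ is a homeomorphism of $\overline{\Hy^2}$ fixing $\beta$ (as it is identity on $\partial \Hy^2$), $z_\infty = \beta$; thus $z_k \to \beta$. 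Bounded displacement gives $d(z_k, \tilde f^{r}(z_k))\le rC$ and the projection $\pi_{\alpha,\beta}$ is $1$-Lipschitz in $\Hy^2$, so the projection distances associated to $(x_k, z_k, m_k)$ and $(x_k,\tilde f^{n_k}(x_k), n_k)$ differ by at most $rC$. Since $n_k/m_k \to n$, dividing by $m_k$ yields limit $nv$, and $(\alpha,\beta,nv)\in \rho(f^n)$. Conversely, if $(\alpha,\beta,w)\in\rho(f^n)$ is witnessed by $(y_k,m_k)$, then $(y_k, n m_k)$ is a witness for $(\alpha,\beta,w/n)\in\rho(f)$, and $(\alpha,\beta,w) = n\cdot(\alpha,\beta,w/n)\in n\rho(f)$.

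For the equivariance under deck transformations, let $T\in\pi_1(S)$. Since $T$ is an orientation-preserving isometry of $\Hy^2$ extending continuously to $\partial\Hy^2$, the points $T x_k$ tend to $T\alpha$ in $\overline{\Hy^2}$. Using $\tilde f \circ T = T\circ \tilde f$, we get $\tilde f^{n_k}(T x_k) = T \tilde f^{n_k}(x_k) \to T\beta$. Finally, because $T$ is an isometry sending the geodesic $(\alpha,\beta)$ to $(T\alpha,T\beta)$, one has $\pi_{T\alpha,T\beta}\circ T = T\circ \pi_{\alpha,\beta}$, and therefore the projection distance appearing in \eqref{EqDefRot} is preserved. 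Hence $(T\alpha,T\beta,v)\in\rho(f)$. The only real technical point is the decoupling step in part (2), where one must show that dropping the remainder $r_k$ does not break either the convergence to $\beta$ on the boundary or the rescaled projection distance; this is exactly where the continuous extension to $\overline{\Hy^2}$ and the bounded displacement enter.
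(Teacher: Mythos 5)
Your proof is correct and follows essentially the same route as the paper: the inversion formula by swapping the roles of $x_k$ and $\tilde f^{n_k}(x_k)$, the nontrivial inclusion $n\rho(f)\subset\rho(f^n)$ via Euclidean division $n_k=nm_k+r_k$ together with the uniform bound on $d(\tilde f^{r},\Id)$ and the $1$-Lipschitz projection, and equivariance from the fact that the canonical lift $\tilde f$ commutes with deck transformations. The only cosmetic difference is that you obtain $z_k\to\beta$ from the continuous extension of $\tilde f^{r}$ to $\overline{\Hy^2}$ (after fixing the remainder), whereas the paper deduces it directly from the same displacement bound $d(\tilde f^{n_k}(x),(\tilde f^{n})^{m_k}(x))\le C$; both are fine.
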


\begin{proof}
The first part is immediate.

For the second part, the inclusion $\rho(f^n)\subset n\rho(f)$ is trivial. For the other inclusion, it suffices to remark that any $k\in\N$ can be written as $k = nq+r$, with $0\le r<n$, and that there exists $C>0$ such that $d(\tilde f^r,\Id)\le C$ for any $0\le r <n$. Hence, for any $x\in\Hy^2$, one has $d(\tilde f^k(x), (\tilde f^n)^q(x))\le C$.

The last property comes from the fact that $\tilde{f}$ commutes with deck transformations.
\end{proof}

\begin{remark}
One can translate our definition of rotation set in the case of the torus $\T^2$~; it gives a rotation set which is formed of couples (direction, speed), where the direction is a line of $\R^2$ passing through the origin (which is the same as fixing an angle in $\R/2\pi\Z$) and the speed of orbits of a lift of our homeomorphism is measured \emph{via} an orthogonal projection on this line. From this definition of rotation set, it is possible to recover the classical rotation set for torus homeomorphisms (and reciprocally): fixing one direction $D$ (a line through the origin) in $\R^2$, one can consider the set of points whose projection on $D$ is $v$, where $(D,v)$ is in the rotation set. This set of points is a band of $\R^2$, and the intersection of such bands over all directions $D$ of $\R^2$ gives us the classical rotation set. 
\end{remark}

\begin{remark} [Dependence of the rotation set on the hyperbolic metric]
Fix $f \in \Homeo_0(S)$. Denote by $\tilde{S}$ the universal cover of $S$. We call set of directions of the rotation set of $f$ the set
$$ \mathcal{D}(f)= \left\{ (\alpha, \beta) \in \partial \tilde{S} \times \partial \tilde{S} \ | \ \exists v>0, \ (\alpha,\beta,v) \in \rho(f) \right\}.$$
This set does not depend on the hyperbolic metric we chose on the surface $S$. To understand this better, we describe below another way to see $\mathcal{D}(f)$. We see the fundamental group $\pi_1(S)$  of $S$ as the group of deck transformations of $\tilde{S}$. We embed $\pi_1(S)$ in $\tilde{S}$ by taking one orbit of its action on $\tilde{S}$. The Svar\v{c}-Milnor lemma (see Lemma \ref{svarcmilnor}) ensures that this embedding is a quasi-isometry so that it defines an isomorphism between the Gromov boundary $\partial \pi_1(S)$ of the fundamental group of $S$ and the boundary $\partial \tilde{S}$ of $\tilde{S}$. Denote by $T:\tilde{S} \rightarrow \pi_1(S)$ a quasi-inverse of this quasi-isometry and endow $\pi_1(S)$ with a distance induced by wordlength. Then, via this isomorphism, the set of directions of $\rho(f)$ is
the set of $(\alpha,\beta) \in \partial \pi_1(S) \times \partial \pi_1(S) \setminus \Delta$ such that there exists a sequence $(x_k)_{k \geq 0}$ of points of $\tilde{S}$ a sequence of intergers $n_k \rightarrow +\infty$ such that the sequence $(T(x_k))_k$ converges to $\alpha$, the sequence $T(\tilde{f}^{n_k}(x_k))$ converges to $\beta$ and 
$$\liminf_{k \rightarrow +\infty} \frac{1}{n_k} d(T(\tilde{f}^{n_k}(x_k)),T(x_k)) >0.$$
Changing the hyperbolic metric on $S$ changes the map $T$ into a map $T'$ such that $d(T(x),T'(x))$ is bounded.
Hence it does not change the above set.

Moreover, for any direction $(\alpha,\beta) \in \mathcal{D}(f)$ which corresponds to a closed geodesic on $S$, we understand how the associated speeds change when we change the hyperbolic metric on $S$ thanks to Lemma \ref{LemCompareAnneau}. Indeed, take two hyperbolic metrics $g_1$ and $g_2$ on $S$ and, for $i=1,2$, denote by $\rho_i(f)$ the rotation set of $f$ with respect to the hyperbolic metric $g_i$. Fix an isotopy class of loops on $S$. For $i=1,2$, denote by $\gamma_i$ the unique closed geodesic in this homotopy class and by $(\alpha_i,\beta_i)$ a lift of $\gamma_i$ to $\tilde{S}$. Then, by Lemma \ref{LemCompareAnneau},
$$\left\{ \frac{v}{\ell(\gamma_1)} \ | \ (\alpha_1,\beta_1,v) \in \rho_1(f) \right\}= \left\{ \frac{v}{\ell(\gamma_2)} \ | \ (\alpha_2,\beta_2,v) \in \rho_2(f) \right\}.$$
Note that some our results of Section~\ref{SecHomo} and the next ones only concern closed geodesics: how these results change after a metric modification is completely explicit.

For nonclosed geodesics, how speeds change in directions which correspond to nonclosed geodesics after a modification of the hyperbolic metric depends on the restriction of this metric to the geodesic. However, the general picture of these changes remains quite mysterious to the authors.
\end{remark}

For the torus case, a consequence of the quasi-convexity of fundamental domains is the convexity of rotation sets (see \cite{MR1053617}). In the case of negatively curved surfaces, the outcome is weaker: the rotation set is star-shaped with respect to 0 (Theorem~\ref{TheoStarIntro} of the introduction).

\begin{theorem}\label{TheoStar}
If $S$ is closed, then the rotation set of any $f\in\Homeo_0(S)$ is star-shaped: for any $(\alpha,\beta,v)\in \rho(f)$, and any $v'\in [0,v]$, one has $(\alpha,\beta,v')\in\rho(f)$.
\end{theorem}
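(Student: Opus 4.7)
Fix $(\alpha,\beta,v)\in\rho(f)$ with $v>0$, realized by a sequence $(x_k,n_k)$, and let $v'\in(0,v)$; I seek $(y_k,m_k)$ realizing $(\alpha,\beta,v')$. The two tools I will use are Proposition~\ref{PropQuasiConvex} (quasi-convexity of fundamental domains) and the uniform bound $d(\tilde f(x),x)\le C$ on $\Hy^2$. The latter holds because $\tilde f$ commutes with the deck group and $S$ is compact, so the displacement is determined by its values on a compact fundamental domain.

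First I normalize. By applying deck transformations, I arrange that each $x_k$ and each $\tilde f^{n_k}(x_k)$ lies within a uniformly bounded distance $R$ of the geodesic $(\alpha,\beta)$: one picks a point of the geodesic near $\pi_{\alpha,\beta}(x_k)$, translates $x_k$ by a deck transformation so that this point sits in a fixed path-connected fundamental domain $D_0$, and invokes $R$-quasi-convexity of $D_0$ to locate $x_k$ within distance $R$ of the geodesic; similarly for $\tilde f^{n_k}(x_k)$. This normalization is Proposition~\ref{ProprealisDist}, obtained as a byproduct. Parametrize $\gamma:\R\to\Hy^2$ by arclength with $\gamma(-\infty)=\alpha$, $\gamma(+\infty)=\beta$, and set $s_{k,j}$ to be the arclength coordinate of $\pi_{\alpha,\beta}(\tilde f^j(x_k))$. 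Since $\pi_{\alpha,\beta}$ is $1$-Lipschitz one has $|s_{k,j+1}-s_{k,j}|\le C$; moreover $s_{k,0}\to-\infty$, $s_{k,n_k}\to+\infty$, and $(s_{k,n_k}-s_{k,0})/n_k\to v$.

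I then extract a speed-$v'$ sub-orbit. Consider $\psi_k(j)=s_{k,j}-s_{k,0}-jv'$: it has jumps bounded by $C+v'$, starts at $\psi_k(0)=0$, and ends at $\psi_k(n_k)\approx n_k(v-v')>0$. An intermediate-value / pigeonhole argument produces indices $0\le a_k<b_k\le n_k$ with $b_k-a_k\to\infty$ and $(s_{k,b_k}-s_{k,a_k})/(b_k-a_k)\to v'$; moreover, by imposing slowly growing thresholds (e.g.\ taking $a_k$ to be the first index where $s_{k,j}$ exceeds some $-M_k\to-\infty$ with $M_k=o(n_k)$, and $b_k$ symmetrically on the other side) I further ensure $s_{k,a_k}\to-\infty$ and $s_{k,b_k}\to+\infty$. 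Setting $y_k=\tilde f^{a_k}(x_k)$, $m_k=b_k-a_k$, and $z_k=\tilde f^{m_k}(y_k)$, the projections $\pi_{\alpha,\beta}(y_k)$ and $\pi_{\alpha,\beta}(z_k)$ tend to $\alpha$ and $\beta$ respectively, with the correct asymptotic speed $v'$.

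The main obstacle is that in order to conclude $y_k\to\alpha$ and $z_k\to\beta$ in $\overline{\Hy^2}$, we need these points (not just their projections on the geodesic) to stay within bounded distance of $(\alpha,\beta)$: the intermediate iterates of a segment whose endpoints are tame can in principle wander arbitrarily far from the geodesic. I plan to resolve this with a second application of quasi-convexity, applied at the sub-segment scale: partition the orbit $(\tilde f^j(x_k))_{0\le j\le n_k}$ into shorter blocks, use quasi-convexity of each block (translated into a fixed fundamental domain) to exhibit ``landmark'' indices $j$ at which $\tilde f^j(x_k)$ can be brought within distance $R$ of the geodesic by a single deck transformation, and then restrict the choice of $(a_k,b_k)$ to such landmarks. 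When $(\alpha,\beta)$ is the axis of a deck transformation (closed geodesic case), this step simplifies since one can shift freely along the geodesic by powers of that transformation; the non-closed case is the most delicate and where the argument requires the most care.
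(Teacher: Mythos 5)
The central step of your plan---extracting from the given orbit a sub-segment of average speed $v'$ by an intermediate-value/pigeonhole argument---does not work. Your function $\psi_k(j)=s_{k,j}-s_{k,0}-jv'$ indeed starts at $0$ and ends near $n_k(v-v')>0$, but to obtain a sub-segment of speed $v'$ you need two indices $a_k<b_k$, far apart, at which $\psi_k$ takes (almost) the same value, and nothing forces such a pair to exist: if the projection of the orbit advances at essentially constant speed, $s_{k,j}\approx s_{k,0}+jv$, then $\psi_k$ is strictly increasing with increments bounded below by $v-v'>0$, every long sub-segment has average speed close to $v$, and no choice of $a_k<b_k$ with $b_k-a_k\to\infty$ yields speed $v'$. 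Intermediate speeds simply need not be realized along the \emph{same} orbit; they must be realized by \emph{different} orbits, and this is the idea missing from your proposal. The paper's proof produces them as follows: since $\tilde f$ has a fixed point and commutes with the deck group, the fundamental domain $T_kD$ containing $x_k$ contains a fixed point $p_k$ of $\tilde f$; hence $D_k=\tilde f^{n_k}(T_kD)$ contains both $p_k$ (whose projection has not moved) and $\tilde f^{n_k}(x_k)$ (whose projection has advanced by about $n_kv$), so $D_k$ stretches along the geodesic, and $R$-quasi-convexity of $D_k$ (Proposition~\ref{PropQuasiConvex}) gives a point $z_k\in D_k$ within distance $R$ of the point of $[p_k,\tilde f^{n_k}(x_k)]$ at projection-distance $n_kv'$ from $p_k$. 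Pulling back, $\tilde f^{-n_k}(z_k)\in T_kD$ lies within $\diam(D)$ of $x_k$, so the new segment of orbit from $\tilde f^{-n_k}(z_k)$ to $z_k$, over the same time $n_k$, realizes speed $v'$ (first for $v'\in(v/2,v]$, then for all of $[0,v]$ by iterating the statement). Your proposal never uses the fixed point of $\tilde f$ and never changes orbits, so it cannot be repaired along the lines you describe.

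Two further remarks. The ``main obstacle'' you single out---keeping the points within bounded distance of $(\alpha,\beta)$---is not where the difficulty lies: in the half-plane model with $(\alpha,\beta)$ the positive imaginary axis one has $\pi_{\alpha,\beta}(w)=i|w|$, so the fiber of $\pi_{\alpha,\beta}$ over a point close to an end of the geodesic stays close (in $\overline{\Hy^2}$) to that end; consequently $\pi_{\alpha,\beta}(y_k)\to\alpha$ and $\pi_{\alpha,\beta}(z_k)\to\beta$ already force $y_k\to\alpha$ and $z_k\to\beta$, with no control on the distance of intermediate iterates to the geodesic. Also, your sketch of the normalization step is not correct as stated: translating $x_k$ by a deck transformation replaces $(\alpha,\beta)$ by its image geodesic (so it changes the direction of the rotation vector), and quasi-convexity of a fixed domain $D_0$ says nothing about the distance from $x_k$ to $(\alpha,\beta)$; the bounded-distance realization of Proposition~\ref{ProprealisDist} is itself obtained by the fixed-point-plus-quasi-convexity mechanism described above (applied to $\tilde f^{n_k}(T_kD)$), not by a translation argument, and in any case it only controls the endpoints of the orbit segment.
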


As a byproduct of the proof of Theorem~\ref{TheoStar}, we obtain the realisation of rotation vectors by pieces of orbits whose extremities stay at a finite distance to the geodesic.

\begin{proposition}\label{ProprealisDist}
Suppose that $S$ is closed. Let $f\in\Homeo_0(S)$ and $(\alpha,\beta,v)\in \rho(f)$ such that $v>0$. Then, there exists a sequence $(\x_k)_{k\in \N}$ of points of $\Hy^2$, and an increasing sequence $(n_k)_{k\in\N}$ of natural numbers tending to infinity such that \eqref{EqDefRot} holds, and moreover, 
\[\max\Big( d\big(\x_k, (\alpha,\beta)\big),\, d\big(\tilde f^{n_k}(\x_k), (\alpha,\beta)\big)\Big) \le R+1+\delta,\]
where $R$ is the constant of Proposition~\ref{PropQuasiConvex}, and $\delta$ the smallest diameter of fundamental domains of $S$ in $\Hy^2$.
\end{proposition}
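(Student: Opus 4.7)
The plan is to start from an arbitrary realising sequence $(x_k,n_k)$ of $(\alpha,\beta,v)$ and to extract from it another realising sequence whose endpoints stay within hyperbolic distance $R+1+\delta$ from the geodesic $\gamma:=(\alpha,\beta)$. Two geometric facts will be combined: first, since $x_k\to\alpha$ and $\tilde f^{n_k}(x_k)\to\beta$ in $\overline{\Hy^2}$, the geodesic chord $C_k:=[x_k,\tilde f^{n_k}(x_k)]$ converges to $\gamma$ in the Hausdorff topology on every compact subset of $\Hy^2$; second, Proposition~\ref{PropQuasiConvex} yields $R$-quasi-convexity for any path-connected fundamental domain.

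The main technical step is to construct, for each $k$, a path-connected fundamental domain $D_k$ of $S$ in $\Hy^2$ containing the whole orbit piece $\{\tilde f^j(x_k):0\le j\le n_k\}$ and contained in the $\delta$-neighbourhood of this piece. We would start from a path-connected fundamental domain $P$ of minimal diameter $\delta$, use a lifted isotopy from $\Id_S$ to $f$ to get a continuous orbit path joining the successive orbit points in $\Hy^2$, and then transport appropriate small sub-regions of $P$ along this path by deck transformations: for each $T\in\pi_1(S)$ such that the orbit path enters $TP$, a sub-region of $P$ is removed and its $T$-translate is glued near the corresponding piece of the orbit. The result remains a fundamental domain (each $\pi_1$-orbit still has exactly one representative) and is path-connected (the orbit path acts as a backbone). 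Proposition~\ref{PropQuasiConvex} then gives $C_k\subset\conv(D_k)\subset\{x\in\Hy^2:d(x,D_k)\le R\}$, and by construction every point of $D_k$ is within $\delta$ of some orbit point, so every chord point lies within hyperbolic distance $R+\delta$ of some orbit point.

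To conclude, we combine this with the Hausdorff convergence of $C_k$ to $\gamma$: for any $p\in\gamma$, for $k$ large enough there exists $c\in C_k$ with $d(c,p)<1$, hence an orbit point $\tilde f^{j_k(p)}(x_k)$ with $d(\tilde f^{j_k(p)}(x_k),p)<R+1+\delta$. It then remains to pick two points $p_k^-,p_k^+\in\gamma$ whose signed positions along $\gamma$ tend (in a controlled way) to those of $\alpha$ and $\beta$ respectively, so that the corresponding orbit times $i_k:=j_k(p_k^-)$ and $j_k:=j_k(p_k^+)$ satisfy $j_k-i_k\to+\infty$ and
\[\frac{1}{j_k-i_k}\,d\bigl(\pi_{\alpha,\beta}(\tilde f^{i_k}(x_k)),\pi_{\alpha,\beta}(\tilde f^{j_k}(x_k))\bigr)\longrightarrow v.\]
Setting $\x_k:=\tilde f^{i_k}(x_k)$ and replacing $n_k$ by $j_k-i_k$ produces the desired realising sequence.

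The main obstacle is the careful construction of the fundamental domain $D_k$: it must combine simultaneously the fundamental-domain property, path-connectedness, inclusion of all relevant orbit points, and inclusion in the $\delta$-thickening of the orbit, and the surgery step where small regions of $P$ are transported by deck transformations has to be arranged so that these translates can actually be placed inside the $\delta$-neighbourhood of the orbit. A secondary subtlety is to verify that $p_k^\pm$ can be chosen so that the first two coordinates of the triple in \eqref{EqDefRot} also converge to $\alpha$ and $\beta$ (and not merely that the sweep of $C_k$ along $\gamma$ has growing length).
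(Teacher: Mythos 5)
Your overall plan has two genuine gaps, and both occur exactly where the paper's argument deploys its key trick. First, the central construction — a path-connected fundamental domain $D_k$ containing the orbit segment (or even just the two endpoints $x_k$ and $\tilde f^{n_k}(x_k)$) and contained in the $\delta$-neighbourhood of that segment — is in general impossible: a fundamental domain contains at most one lift of each point of $S$, so if the realising orbit is periodic (e.g. $\tilde f^{n_k}(x_k)=T^m x_k$ for some deck transformation $T$, which happens for natural realising sequences), the two endpoints are congruent under $\pi_1(S)$ and no fundamental domain can contain both; the same objection applies to the intermediate orbit points. Even away from that case, the surgery sketch gives no argument that the transported pieces fit inside the $\delta$-tube while keeping exactly one representative per $\pi_1(S)$-orbit and path-connectedness — precisely when the projected orbit returns close to itself (the typical situation) the glued pieces compete for the same room. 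The paper is built so as never to need such a domain: it uses only the translate $T_kD$ containing $x_k$ and its image $D_k=\tilde f^{n_k}(T_kD)$, which is automatically a path-connected fundamental domain, and the point that makes this useful is a fixed point $p_k$ of $\tilde f$ inside $T_kD$ (it exists since $\tilde f$ has a fixed point and commutes with deck transformations), so that $D_k$ contains both $p_k=\tilde f^{n_k}(p_k)$ and $\tilde f^{n_k}(x_k)$ and Proposition~\ref{PropQuasiConvex} applies to the chord $[p_k,\tilde f^{n_k}(x_k)]$.

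Second, even granting that every chord point is within $R+\delta$ of some orbit point, your final extraction does not preserve the speed. Choosing times $i_k,j_k$ at which the orbit passes near prescribed points $p_k^-,p_k^+$ of $(\alpha,\beta)$ gives no control on the elapsed time: nothing forces $i_k<j_k$, and since each iterate of $\tilde f$ moves points a uniformly bounded hyperbolic distance one only gets $c\,n_k\le j_k-i_k\le n_k$ for some $c>0$; hence after extraction the ratio in \eqref{EqDefRot} converges to some $v'\ge v$ which may well be strictly larger than $v$, whereas the statement requires exactly the given $v$. This is not a secondary subtlety — it is where the prescribed speed is lost. The paper avoids it by keeping the time equal to $n_k$ and only moving the endpoints by controlled amounts: one picks $y_k$ on the chord at sublinear distance from $\tilde f^{n_k}(x_k)$ (so that, by Claim~\ref{ClaimHypGeom}, $y_k$ is within $1/2$ of the geodesic), then $z_k\in B(y_k,R)\cap D_k$, and crucially $\tilde f^{-n_k}(z_k)\in T_kD$ lies within $\delta$ of $x_k$; the new pair $(\tilde f^{-n_k}(z_k),n_k)$ therefore realises the same $v$, and a second pass with $f^{-1}$ brings the other endpoint close to the geodesic. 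Without an idea of this kind, your scheme only realises some vector $(\alpha,\beta,v')$ with $v'\ge v$ at bounded distance, not the given one.
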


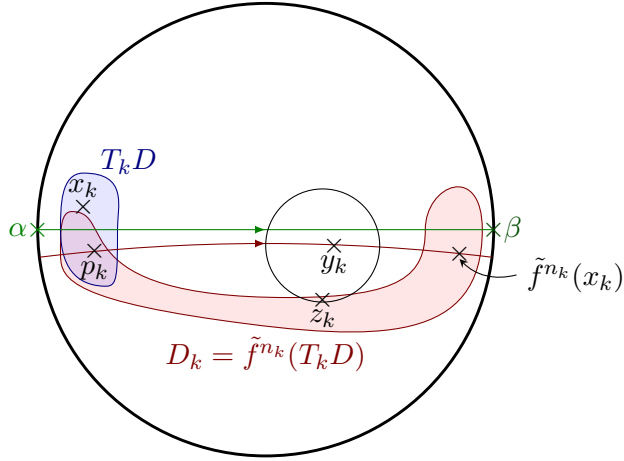
\begin{figure}
\begin{center}
\begin{tikzpicture}[scale=3.]

\draw[fill=red, draw=red!50!black, fill opacity=0.1] (-.9,-.1) .. controls +(0,-0.1) and +(-0.7,0.1).. (-.2,-.4) .. controls +(0.7,-0.1) and +(0,-0.5).. (0.95,0) .. controls +(0,0.3) and +(-0,0.2).. (0.7,0) .. controls +(0.,-0.2) and +(0.5,0).. (0.1,-.3) .. controls +(-1,0) and +(0.2,0).. (-.85,.08) .. controls +(-0.05,0) and +(0,.1).. (-.9,-.1);
\draw[fill=blue, draw=blue!50!black, fill opacity=0.1] (-.9,0) .. controls +(0,-0.2) and +(-0.1,0).. (-.7,-.25) .. controls +(0.05,0) and +(0,-.2).. (-.65,0) .. controls +(0,.1) and +(.2,0).. (-.8,.25) .. controls +(-.1,0) and +(0,.1).. (-.9,0);

\draw[color=blue!50!black] (-.6,.3) node{$T_k D$};
\draw[color=red!50!black] (0,-.55) node{$D_k = \tilde f^{n_k}(T_kD)$};

\draw[line width=1.1] (0,0) circle (1);
\hgline{-180}{0}{green!50!black}
\draw[color=green!50!black] (-1,0) node{$\times$} node[left]{$\alpha$};
\draw[color=green!30!black] (1,0) node{$\times$} node[right]{$\beta$};
\hgline{-173}{-7}{red!50!black}

\draw (-0.8,0.1) node{$\times$} node[above]{$x_k$};
\draw (-0.75,-0.095) node{$\times$} node[below]{$p_k$};
\draw (0.85,-0.105) node{$\times$};
\draw[-stealth] (1.1,-0.2) node[right]{$\tilde f^{n_k}(x_k)$} to[bend left] (0.85,-0.13);
\draw (.3,-.07) node{$\times$} node[below]{$y_k$};
\draw (.25,-.07) circle (.25);
\draw (.25,-.31) node{$\times$} node[below]{$z_k$};

\end{tikzpicture}
\caption{Proof of Theorem \ref{TheoStar}.}
\end{center}
\end{figure}

\begin{proof}[Proof of Theorem \ref{TheoStar}]
We fix a fundamental domain $D$ of $S$ in $\Hy^2$, and choose $(\alpha,\beta,v)\in \rho(f)$, with $v>0$. Let $v'\in (v/2,v)$.

By definition, there exists two sequences $x_k\in \Hy^2$ and $n_k\in\N$, with $\lim n_k = +\infty$, such that Equation \eqref{EqDefRot} holds.

Fix one point $a_0$ on the geodesic defined by $\alpha$ and $\beta$, and $\varep>0$ such that $v/2+\varep<v'$. Then, by taking a subsequence if necessary, at least one of the two sequences $d(a_0,\pi_{\alpha,\beta}(x_k))/n_k$ and  $d(a_0,\pi_{\alpha,\beta}(\tilde f^{n_k}(x_k)))/n_k$ is eventually bigger than or equal to $v/2-\varep$. By taking $f^{-1}$ instead of $f$ and applying Proposition \ref{PropRhoInv} if necessary, one can suppose that it is the second one. Moreover, by taking a subsequence again, one can suppose that $d(a_0,\pi_{\alpha,\beta}(x_k))/n_k$ is eventually smaller than or equal to $v/2+\varep$.

Let $T_k$ be a deck transformation such that $x_k \in T_k(D)$. Then, there exists $p_k\in T_k(D)$ which is a fixed point of $\tilde f$. As deck transformations are isometries, the distance $d(x_k,p_k)$ is uniformly bounded (by $\diam(D)$). In particular $p_k$ tends to $\alpha$, and the distance between $\pi_{\alpha,\beta}(p_k)$ and $\pi_{\alpha,\beta}(x_k)$ is  bounded by $\diam(D)$.

Hence, the fundamental domain $D_k=\tilde f^{n_k}(T_k D)$ contains both points $p_k = \tilde f^{n_k}(p_k)$ and $\tilde f^{n_k}(x_k)$. By Proposition \ref{PropQuasiConvex}, this fundamental domain is $R$-quasi convex for some fixed $R>0$: for any $y\in [p_k,\tilde f^{n_k}(x_k)]$, one has $B(y,R) \cap D_k \neq\emptyset$.

Let us choose $y_k\in [p_k,\tilde f^{n_k}(x_k)]$ such that $d(\pi_{\alpha,\beta}(p_k),\pi_{\alpha,\beta}(y_k)) = n_k v'$, and $z_k\in B(y_k,R) \cap D_k$. This implies that $\lim d(\pi_{\alpha,\beta}(p_k),\pi_{\alpha,\beta}(z_k))/n_k = v'$. As $\lim d(\pi_{\alpha,\beta}(p_k),a_0)/n_k \le v/2+\varep$ (because $p_k$ is at a bounded distance of $x_k$), we have $\lim d(a_0,\pi_{\alpha,\beta}(z_k))/n_k \ge v'-v/2-\varep>0$, so the sequence $z_k$ tends to $\beta$ (this is here where we need $v' > \frac{v}{2}$). 

Moreover, $\tilde f^{-n_k}(z_k)\in T_k D$ is at a bounded distance of $x_k$, so it tends to $\alpha$, and $\lim d(\pi_{\alpha,\beta}(\tilde f^{-n_k}(z_k)),\pi_{\alpha,\beta}(z_k))/n_k = v'$.

We have proved that for any $v'\in (v/2,v]$, one has $(\alpha,\beta,v')\in\rho(f)$. The theorem follows easily from an induction using this property (in fact, one only has to use this property twice: once for $f$ and once for $f^{-1}$).
\end{proof}

\begin{proof}[Proof of Proposition~\ref{ProprealisDist}]
We fix a fundamental domain $D$ of $S$ in $\Hy^2$, with minimal diameter $\delta$, and choose $(\alpha,\beta,v)\in \rho(f)$, with $v>0$. Let $(x_k)$ and $(n_k)$ be such that \eqref{EqDefRot} holds. Fix one point $a_0$ on the geodesic defined by $\alpha$ and $\beta$.

Let $T_k$ be a deck transformation such that $x_k \in T_k(D)$. Then, there exists $p_k\in T_k(D)$ which is a fixed point of $\tilde f$. The distance $d(x_k,p_k)$ is uniformly bounded (by $\diam(D)=\delta$). In particular $p_k$ tends to $\alpha$, and the distance between $\pi_{\alpha,\beta}(p_k)$ and $\pi_{\alpha,\beta}(x_k)$ is bounded by $\delta$.

Hence, the fundamental domain $D_k=\tilde f^{n_k}(T_k D)$ contains both points $p_k = \tilde f^{n_k}(p_k)$ and $\tilde f^{n_k}(x_k)$. By Proposition \ref{PropQuasiConvex}, this fundamental domain is $R$-quasi convex for some fixed $R>0$: for any $y\in [p_k,\tilde f^{n_k}(x_k)]$, one has $B(y,R) \cap D_k \neq\emptyset$.

Let us choose $y_k\in [p_k,\tilde f^{n_k}(x_k)]$ such that 
\begin{equation}\label{EqDefYk}
d\big(a_0,\pi_{\alpha,\beta}(y_k)\big) = d\big(a_0,\pi_{\alpha,\beta}(\tilde f^{n_k}(x_k))\big) - \sqrt{d\big(a_0,\pi_{\alpha,\beta}(\tilde f^{n_k}(x_k))\big)}.
\end{equation}

The following claim is a consequence of basic hyperbolic geometry.

\begin{claim}\label{ClaimHypGeom}
If $k$ is large enough, then $d\big(y_k,\pi_{\alpha,\beta}(y_k)\big) \le 1/2$.
\end{claim}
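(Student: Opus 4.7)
My plan is to work in an explicit hyperbolic model, apply a hyperbolic translation to bring $y_k$ back to a bounded region, and then invoke continuity of geodesics as functions of their ideal endpoints. Place the upper half-plane model so that $\alpha = 0$, $\beta = \infty$, making $\gamma := (\alpha,\beta)$ the positive imaginary axis, $a_0 = i$, and $\pi_{\alpha,\beta}(z) = i|z|$. Let $G_k$ be the full hyperbolic geodesic through $p_k$ and $\tilde{f}^{n_k}(x_k)$, with ideal endpoints $a_k, b_k \in \partial\Hy^2 = \R \cup \{\infty\}$; labelling so that $|a_k| \leq |b_k|$, a standard semicircle computation shows that $|z|$ ranges over $[|a_k|,|b_k|]$ as $z$ varies on $G_k$.

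First I would show $a_k \to \alpha$ and $b_k \to \beta$. From $d(p_k,x_k) \leq \delta$ and the fact that $\pi_{\alpha,\beta}$ is $1$-Lipschitz on $\Hy^2$, one gets $|\ln(|p_k|/|x_k|)| \leq \delta$, so $|p_k| \leq e^\delta |x_k| \to 0$ (using that $x_k \to 0$ in $\overline{\Hy^2}$). Hence $|a_k| \leq |p_k| \to 0$. Similarly, writing $D_k := d(a_0, \pi_{\alpha,\beta}(\tilde{f}^{n_k}(x_k)))$, we have $|\tilde{f}^{n_k}(x_k)| = e^{D_k} \to \infty$, so $|b_k| \geq e^{D_k} \to \infty$.

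Next I would apply the hyperbolic translation $T_k(z) = e^{-(D_k - \sqrt{D_k})} z$ along $\gamma$, which preserves $\gamma$ and sends $\pi_{\alpha,\beta}(y_k) = i e^{D_k - \sqrt{D_k}}$ to $a_0$. The image endpoints satisfy
\[
|T_k(a_k)| \leq \frac{e^\delta |x_k|}{e^{D_k - \sqrt{D_k}}} \longrightarrow 0, \qquad |T_k(b_k)| \geq \frac{e^{D_k}}{e^{D_k - \sqrt{D_k}}} = e^{\sqrt{D_k}} \longrightarrow \infty,
\]
so by continuity of the endpoints-to-geodesic correspondence, $T_k(G_k) \to \gamma$ in $\overline{\Hy^2}$. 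Let $\sigma$ be the geodesic perpendicular to $\gamma$ at $a_0$. The point $T_k(y_k) \in T_k(G_k)$ projects onto $\gamma$ at $a_0$, hence also lies on $\sigma$; since $\sigma$ meets $\gamma$ transversely at $a_0$, the transverse intersections of $T_k(G_k)$ with $\sigma$ converge to $\{a_0\}$, giving $T_k(y_k) \to a_0$. As $T_k$ is an isometry preserving $\gamma$, this means $d(y_k, \pi_{\alpha,\beta}(y_k)) = d(T_k(y_k), a_0) \to 0$, which implies the claim.

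The main obstacle is that $\pi_{\alpha,\beta}(y_k)$ escapes to infinity along $\gamma$, so a naive Hausdorff convergence argument does not apply: $G_k$ only converges to $\gamma$ uniformly on compact subsets, and $y_k$ does not stay in any compact set. The role of the normalising isometry is precisely to pull $y_k$ back to the compact region around $a_0$, and the specific "backing-off" rate $\sqrt{D_k}$ is chosen so that both $\sqrt{D_k} \to \infty$ (forcing $T_k(b_k) \to \beta$) and $D_k - \sqrt{D_k} \to \infty$ (forcing $T_k(a_k) \to \alpha$), ensuring that the translated geodesics really do converge to $\gamma$ at the relevant scale.
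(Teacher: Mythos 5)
Your proof is correct and works in essentially the same way as the paper's: both arguments are explicit computations in the half-plane model with the axis $(\alpha,\beta)$ as the imaginary axis, and both rest on the same quantitative fact, namely that the ideal endpoints of the geodesic through $p_k$ and $\tilde f^{n_k}(x_k)$ lie at Euclidean scales $o(1)$ and $\ge e^{D_k}$ while $y_k$ sits at the intermediate scale $e^{D_k-\sqrt{D_k}}$, separated from both by factors tending to infinity. The only difference is in how the limit is organized: the paper finishes with a direct Euclidean estimate showing the angle between the imaginary axis and the ray through $y_k$ tends to $0$, whereas you renormalize by the hyperbolic translation $z\mapsto e^{-(D_k-\sqrt{D_k})}z$ and use continuity of geodesics in their ideal endpoints (your appeal to ``transversality'' is really just the fact that $\overline{\gamma}\cap\overline{\sigma}=\{a_0\}$ together with compactness of $\overline{\sigma}$), which yields the same, in fact slightly stronger, conclusion $d\big(y_k,\pi_{\alpha,\beta}(y_k)\big)\to 0$.
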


\begin{proof}[Proof of the claim]
Let us consider the half-plane model of $\Hy^2$, such that $\alpha = \infty$ and $\beta=0$ (in this case, $(\alpha, \beta)$ is the positive imaginary axis). The set of points at distance $1/2$ of $(\alpha, \beta)$ is the union of two Euclidean half-lines starting at $0=\beta$, making angle $\theta$ with the imaginary axis, with $|\sin\theta| = \operatorname{tanh}( 1/2)$. As the choice of $a_0$ was arbitrary, one can choose $a_0 = i$.

\begin{figure}[ht]
\begin{center}
\includegraphics[scale=1]{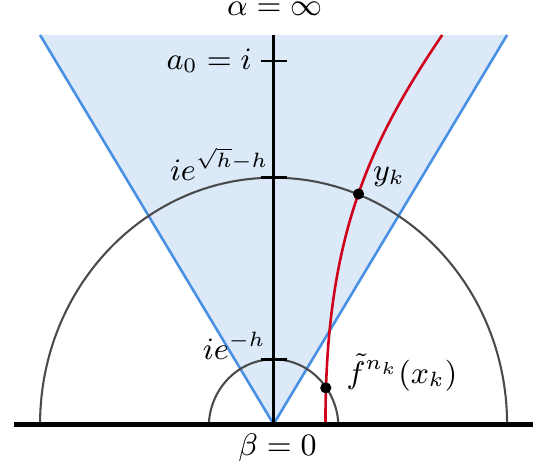}
\caption{Proof of Claim \ref{ClaimHypGeom}.}
\end{center}
\end{figure}

Denote $h=d\big(a_0,\pi_{\alpha,\beta}(\tilde f^{n_k}(x_k))\big)$. Then, $\pi_{a,b}(y_k)$ can be computed in terms of $d$: the hyperbolic distance satisfies, for $p,q>0$, $d(ip, iq) = |\log p - \log q|$. Hence, by \eqref{EqDefYk}
\[\pi_{\alpha,\beta}(\tilde f^{n_k}(x_k)) = i e^{-h} \quad \text{and} \quad \pi_{\alpha,\beta}(y_k) = i e^{-h+\sqrt h} = i e^{\sqrt h}e^{-h}.\]
Hence, the point $y_k$ lies on the Euclidean circle centred at 0 with radius $e^{\sqrt h}e^{-h}$.

Fix $A>0$, then for any $k$ large enough the geodesic passing through $p_k$ and $\tilde f^{n_k}(x_k)$ is either a Euclidean circle with one extremity inside $[-e^{-h}, e^{-h}]$ and the other outside $[-A,A]$ or a line which is orthogonal to the real axis and with one extremity inside $[-e^{-h},e^{-h}]$. Simple Euclidean geometry shows that if $A$ and $h$ are large enough, then the intersection $y_k$ of this geodesic with the Euclidean circle centred at 0 with radius $e^{\sqrt h}e^{-h}$ is inside the Euclidean cone made of the points at distance at most $1/2$ of $(\alpha,\beta)$. Indeed, the angle between the imaginary axis and the straight line between $0$ and $y_k$ tends to $0$ when $A \rightarrow +\infty$ and $h \rightarrow +\infty$. 

Hence, for any $k$ large enough, $d\big(y_k,\pi_{\alpha,\beta}(y_k)\big) \le 1/2$.
\end{proof}

Pick some $z_k\in B(y_k,R) \cap D_k$. This implies that $d(z_k, (\alpha,\beta)) \le R +1/2$. By the definition of $y_k$, one has $\lim_{k\to +\infty} y_k = \beta$, so we also have $\lim_{k\to +\infty} z_k = \beta$. Moreover, $\tilde f^{-n_k}(z_k) \in T_kD$, hence $\lim_{k\to +\infty}\tilde f^{-n_k}(z_k) = \alpha$, and 
\[ \frac{d\Big(\pi_{\alpha,\beta}\big(f^{-n_k}(z_k)\big)\, ,\ \pi_{\alpha,\beta}( z_k)\Big)}{n_k} = v.\]

At this point, we have no information about the distance between $f^{-n_k}(z_k)$ and the geodesic $(\alpha,\beta)$, but we know that $d(f^{-n_k}(z_k), x_k)\le\delta$. To solve this issue, it suffices to make the same argument a second time, with $f^{-1}$ instead of $f$ and $z_k$ instead of $x_k$. We then get a point $\x_k$ with the desired properties.
\end{proof}

\section{Realisation of rotation vectors for closed geodesics}\label{Secrealis}

We now state a realisation result, which is a direct consequence of already known realisation results for annulus homeomorphisms \cite{MR967632,MR2217051}.

\begin{proposition}\label{PropRealPtExtRat}
Let $f\in \Homeo_0(S)$ and take $\alpha \neq \beta \in\partial\Hy^2$ which define a closed geodesic in $S$, of length $\ell>0$. Suppose that $(\alpha,\beta,v_0) \in\rho(f)$, with $v_0 = \ell p/q $ ($p/q$ irreducible), and that one of the following conditions holds:
\begin{enumerate}[label=(\roman*)]
\item $v_0$ is maximal, i.e. $v_0 = \max\big\{v\in\R_+ \mid (\alpha,\beta,v)\in\rho(f)\big\}$;
\item $f$ is chain transitive;
\item there exists $(\alpha_1,\beta_1,v_1), (\alpha_2,\beta_2,v_2) \in \rho(f)$, with $v_1,v_2>0$, such that $(\alpha_1,\beta_1)$ crosses $(\alpha,\beta)$ positively and $(\alpha_2,\beta_2)$ crosses $(\alpha,\beta)$ negatively.
\end{enumerate}
Then $f$ possesses a periodic point of period $q$ and rotation vector $(\alpha,\beta,v_0)$.
\end{proposition}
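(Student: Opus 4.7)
The approach is to reduce the problem to known realisation results for homeomorphisms of an open annulus.

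Let $T \in \pi_1(S)$ be the primitive deck transformation of $\Hy^2 \to S$ whose axis is the geodesic $(\alpha,\beta)$ and whose translation length along this axis equals $\ell$. Since $\tilde f$ is the canonical lift of $f$, it commutes with $T$. Consequently $\tilde f$ descends to a homeomorphism $\hat f$ of the open annulus $A := \Hy^2/\langle T\rangle$, which is isotopic to the identity and admits $\tilde f$ as a canonical lift. We parametrize the axis by $\R$ so that $T$ acts by translation of amplitude $\ell$, which identifies the classical (signed) rotation set $\rho(\hat f) \subset \R$ of $\hat f$ with the set of speeds of orbits whose lifts to $\Hy^2$ remain in a neighbourhood of the axis after orthogonal projection. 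The first step is to verify from the definition of $\rho(f)$ that a rotation vector of the form $(\alpha,\beta,v)$ produces an orbit of $\hat f$ of rotation number $v/\ell$ (up to sign), so that in particular $p/q = v_0/\ell \in \rho(\hat f)$.

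The second step is to translate each of the three hypotheses into a condition on $\hat f$ to which an existing realisation theorem applies. In case~(i), maximality of $v_0$ implies that $p/q$ is an extremal point of $\rho(\hat f)$, so Franks' theorem \cite{MR967632} directly provides a periodic point of $\hat f$ of period $q$ and rotation number $p/q$. In case~(ii), chain transitivity of $f$ passes to $\hat f$ (since any chain-transitive orbit of $f$ lifts to a pseudo-orbit of $\hat f$ that visits every end), which, combined with $p/q \in \rho(\hat f)$, again yields the desired periodic orbit via Franks' or Le Calvez's equivariant foliation theorem \cite{MR2217051}. In case~(iii), the two crossing rotation vectors descend to orbits of $\hat f$ that traverse $A$ from one end to the other in opposite directions; a classical linking argument then forces the essential region of $\hat f$ to be non-wandering, which puts us in the scope of Franks' realisation theorem once more.

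In every case, the final step is to lift a periodic orbit $\hat x$ of $\hat f$, of period $q$ and rotation number $p/q$, back to $\Hy^2$ and check that it projects to a periodic point of $f$ of period $q$. Indeed, a lift $\tilde x$ of $\hat x$ to $\Hy^2$ satisfies $\tilde f^q(\tilde x) = T^p(\tilde x)$, from which one reads off that the rotation vector of the corresponding orbit of $f$ is precisely $(\alpha,\beta,v_0)$.

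The main technical obstacle is case~(iii): one has to argue cleanly that two rotation vectors crossing $(\alpha,\beta)$ with opposite orientations produce an essential non-wandering sub-annulus of $A$ containing $p/q$ in its rotation interval, without accidentally losing the needed orbit realising $v_0$. The other two cases should reduce almost mechanically to the quoted annulus statements.
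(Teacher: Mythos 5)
Your overall strategy is the paper's: pass to the cyclic cover of $S$ associated with the deck transformation $T$ preserving $(\alpha,\beta)$, identify $p/q$ in the rotation set of the induced annulus homeomorphism (this is exactly Lemma~\ref{LemCompareAnneau}), quote known realisation theorems for annulus homeomorphisms, and lift a periodic point $\hat x$ with $\tilde f^q(\tilde x)=T^p\tilde x$ back to $S$. Cases (i) and the final lifting step are fine and match the paper.

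However, two of your reductions have genuine problems. For (ii), the claim that chain transitivity of $f$ passes to the annular lift $\hat f$ is false in general: an $\varepsilon$-chain in $S$ lifts to an $\varepsilon$-chain in the cover, but its endpoint is a \emph{prescribed} lift, so you cannot chain between different lifts of the same point (already the lift to $\R$ of an irrational circle rotation is not chain transitive, although the rotation is). The correct route, and the one the paper takes, is to apply Franks' theorem \cite[Theorem 2.2]{MR967632} in the form where chain transitivity is assumed on the base (compact) surface and the conclusion concerns the lift to the annular cover; no chain transitivity of $\hat f$ is needed or available. For (iii), your sketch (``a linking argument forces the essential region of $\hat f$ to be non-wandering, then Franks again'') does not work as stated: two orbits crossing the annulus in opposite directions do not create any recurrence, and Franks-type realisation results require chain recurrence, area preservation or extremality. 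What is needed here is precisely a Poincar\'e--Birkhoff-type statement for lifts admitting points rotating in both directions; the paper invokes Le Calvez's generalisation \cite[Theorem 9.1]{MR2217051}, which applies directly to the two families of orbits produced by the crossing rotation vectors $(\alpha_1,\beta_1,v_1)$ and $(\alpha_2,\beta_2,v_2)$ and yields the periodic point of rotation number $p/q$ without any auxiliary non-wandering assumption. So the architecture of your proof is right, but case (ii) rests on a false lifting claim and case (iii) is left without a valid argument.
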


We will need the notion of rotation set of an annulus homeomorphism, which we recall now. Let $g : \mathbb{S}^1 \times [-1,1]= \mathbb{R} / \mathbb{Z} \times [-1,1] \rightarrow \mathbb{R} / \mathbb{Z} \times [-1,1]$ be a homeomorphism which is isotopic to the identity. Let us denote by $\tilde{g} : \mathbb{R} \times [-1,1] \rightarrow \mathbb{R} \times [-1,1]$ one of its lifts and by $p_1 : \mathbb{R} \times [-1,1] \rightarrow \mathbb{R}$ the projection. The rotation set $\rho(\tilde{g})$ of $\tilde{g}$ is the set of limits of sequences of the form
$$\left( \frac{p_1(\tilde{g}^{n_k}(x_k))-p_1(x_k)}{n_k} \right)$$
with $n_{k} \rightarrow +\infty$ and $x_{k} \in \mathbb{R} \times [-1,1]$.

We will need another way to see this rotation set of $\tilde{g}$. Denote by $\mathcal{M}(g)$ the set of $g$-invariant probability measures. Then the rotation set of $\tilde{g}$ is also
$$ \left\{ \int_{\mathbb{S}^1 \times [-1,1]} (p_1(\tilde{g}(\tilde{x}))-p_1(\tilde{x})) d\mu(x) \ | \ \mu \in \mathcal{M}(g) \right\}.$$
Indeed, denote by $\rho_{\mu}(\tilde{g})$ this second rotation set. As sequences of probability measures of the form
$$ \left( \frac{1}{n_k} \sum_{i=0}^{n_k-1} \delta_{g^{i}(x_k)} \right)_{k \geq 0}$$
have a limit point, we obtain that $\rho(\tilde{g}) \subset \rho_{\mu}(\tilde{g})$. The other inclusion is the consequence of the connexity of $\rho(\tilde{g})$ and of the following facts.
\begin{enumerate}
\item Extremal points of $\rho_{\mu}(\tilde{g})$ are realised by ergodic measures.
\item The Birkhoff ergodic theorem applied to $x \mapsto p_1(\tilde{g}(\tilde{x}))-p_1(\tilde{x})$ with those ergodic measures implies that those extremal points belong to $\rho(\tilde{g})$.
\end{enumerate} 
We use notation from the proposition. Let $T$ be the deck transformation associated to the closed geodesic $(\alpha,\beta)$. Let $\check f : \Hy^2/T \to \Hy^2/T$ be the quotient map of $\tilde f$; as $\tilde f$ extends by identity to $\partial\Hy^2$, this map $\check f$ can be seen as a map of the closed annulus $\Sp^1\times [-1,1]$, homotopic to the identity. Hence, it has a well defined rotation set $\rho(\check f)\subset\R$.

\begin{lemma}\label{LemCompareAnneau}
Under the above hypotheses,
\[\ell \rho(\check f) = \big\{v\in\R \mid (\alpha,\beta,v)\in\rho(f) \text{ or }  (\beta,\alpha,-v)\in\rho(f)\big\}.\]
\end{lemma}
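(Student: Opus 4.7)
The plan is to compare two natural descriptions of the closed annulus $\bar A$: the quotient $(\overline{\Hy^2}\setminus\{\alpha,\beta\})/\langle T\rangle$ that is natural for $f$, and the standard model $\Sp^1\times[-1,1]$ that is natural for computing $\rho(\check f)$. I would fix a $T$-equivariant homeomorphism $\tilde\Phi\colon\R\times[-1,1]\to\overline{\Hy^2}\setminus\{\alpha,\beta\}$, where $T$ acts on the strip as $(x,y)\mapsto(x+1,y)$, so that the lift of $\check f$ used to compute $\rho(\check f)$ is $\tilde g:=\tilde\Phi^{-1}\circ\tilde f\circ\tilde\Phi$. Fixing a basepoint $a_0$ on the oriented axis $(\alpha,\beta)$, the two candidate ``horizontal coordinates'' on $\overline{\Hy^2}\setminus\{\alpha,\beta\}$ differ by
\[\tilde\phi(\tilde x)=p_1\bigl(\tilde\Phi^{-1}(\tilde x)\bigr)-\frac{1}{\ell}\,d_{\mathrm{sgn}}\bigl(a_0,\pi_{\alpha,\beta}(\tilde x)\bigr),\]
where $d_{\mathrm{sgn}}$ is signed arc length along the axis, positive in the direction from $\alpha$ to $\beta$. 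Both terms on the right increase by exactly $1$ under $T$ (the second because $T$ translates the axis by $\ell$ toward $\beta$), so $\tilde\phi$ is $T$-invariant.

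The main step, which I expect to be the main obstacle, is the boundedness of $\tilde\phi$. For this, observe that $\pi_{\alpha,\beta}$ extends continuously to $\overline{\Hy^2}\setminus\{\alpha,\beta\}$ (in the upper half-plane model with $\alpha=\infty$, $\beta=0$, one has $\pi_{\alpha,\beta}(re^{i\theta})=ir$), so $\tilde\phi$ extends continuously to that same space and, being $T$-invariant, descends to a continuous function on the compact closed annulus $\bar A$. It is therefore bounded. Setting $\tilde y=\tilde\Phi^{-1}(\tilde x)$ and telescoping then yields
\[\frac{p_1(\tilde g^n(\tilde y))-p_1(\tilde y)}{n} - \frac{d_{\mathrm{sgn}}\bigl(\pi_{\alpha,\beta}(\tilde x),\pi_{\alpha,\beta}(\tilde f^n(\tilde x))\bigr)}{\ell n} \underset{n\to+\infty}{\longrightarrow} 0\]
uniformly in $\tilde x$, so the two sides of the claimed equality consist of the same asymptotic limit points.

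It then remains only to translate limits of signed projected displacements into rotation vectors of $f$. For the inclusion $\supseteq$, any $(\alpha,\beta,v)\in\rho(f)$ realised by $(\tilde x_k,n_k)$ forces $\pi_{\alpha,\beta}(\tilde x_k)\to\alpha$, $\pi_{\alpha,\beta}(\tilde f^{n_k}(\tilde x_k))\to\beta$ and the normalised signed displacement to tend to $+v$, giving $v/\ell\in\rho(\check f)$; the case $(\beta,\alpha,v)$ is symmetric and yields $-v/\ell$. For the inclusion $\subseteq$, take a realising sequence $(\tilde y_k,n_k)$ for $\bar\rho\in\rho(\check f)$ with $\bar\rho\neq 0$; after extraction the points $\tilde x_k:=\tilde\Phi(\tilde y_k)$ lie in $\Hy^2$ (on $\partial\Hy^2\setminus\{\alpha,\beta\}$ the map $\tilde f$ is the identity, contributing only $0$ to the rotation set). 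The signed projected displacement then tends to $\pm\infty$, forcing the two projections to converge to opposite ends of the axis, and since $\pi_{\alpha,\beta}^{-1}(\alpha)=\{\alpha\}$ and $\pi_{\alpha,\beta}^{-1}(\beta)=\{\beta\}$ in $\overline{\Hy^2}\setminus\{\alpha,\beta\}$, this upgrades to $\tilde x_k\to\alpha$ and $\tilde f^{n_k}(\tilde x_k)\to\beta$ (or the reverse), producing the required vector in $\rho(f)$.
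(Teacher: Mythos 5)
Your overall strategy is the same as the paper's: compare the projected hyperbolic displacement along $(\alpha,\beta)$ with the horizontal displacement in a strip model of the annulus, show the two differ by a uniformly bounded amount, and translate limits. The paper gets the bounded comparison from a fundamental domain bounded by a geodesic $L$ orthogonal to the axis (inequality \eqref{EqProjAnneau}), whereas you package it as a continuous $T$-invariant comparison function descending to the compact annulus; that step of yours is correct, as is the forward inclusion.

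There is, however, a genuine gap in your inclusion $\ell\rho(\check f)\subseteq\{\dots\}$. From a realising sequence $(\tilde y_k,n_k)$ for $\bar\rho\neq 0$ you keep the specific lifts $\tilde x_k=\tilde\Phi(\tilde y_k)$ and assert that the signed projected displacement tending to $+\infty$ ``forces the two projections to converge to opposite ends of the axis.'' It does not: only the \emph{difference} of the signed coordinates is controlled. Take $\check f$ induced by an irrational rotation of the annulus and $\check x_k$ a fixed point of the base, $n_k=k$: then $\tilde x_k$ is constant, so $\tilde x_k\not\to\alpha$, even though $\pi_{\alpha,\beta}(\tilde f^{n_k}(\tilde x_k))\to\beta$. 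The repair is the freedom you have not used, and which the paper's phrase ``lifts to a sequence $x_k\in\Hy^2$'' implicitly exploits: replace $\tilde x_k$ by $T^{-m_k}\tilde x_k$ with, say, $m_k=\lfloor n_k\bar\rho/2\rfloor$ (for $\bar\rho>0$). Since $T$ commutes with $\tilde f$ and with $\pi_{\alpha,\beta}$ and translates the axis by $\ell$, the projected displacement is unchanged, while now both signed coordinates tend to $\mp\infty$, so your continuity/preimage-singleton argument applies and gives $\tilde x_k\to\alpha$, $\tilde f^{n_k}(\tilde x_k)\to\beta$. With that one-line modification (and the trivial remark that $v=0$ lies in both sets because $\partial\overline A$ is pointwise fixed and $(\alpha,\beta,0)\in\rho(f)$ by convention), your proof is complete.
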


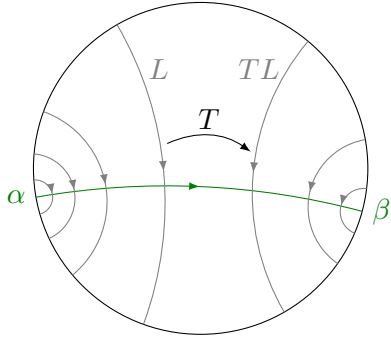
\begin{figure}
\begin{center}
\begin{tikzpicture}[scale=2.2]

\draw (0,0) circle (1);
\hgline{-170}{-15}{green!50!black}
\hgline{-240}{-110}{gray}
\hgline{-200}{-140}{gray}
\hgline{-185}{-155}{gray}
\hgline{-176}{-164}{gray}
\hgline{50}{-60}{gray}
\hgline{10}{-35}{gray}
\hgline{-7}{-23}{gray}

\draw[->,>=latex] (-.2,.15) to[bend left] (.3,.1);
\draw (0.05,.3) node{$T$};
\draw[color=green!50!black] (-170:1) node[left]{$\alpha$};
\draw[color=green!50!black] (-15:1) node[right]{$\beta$};
\draw[color=gray] (-.25,.6) node{$L$};
\draw[color=gray] (.35,.6) node{$TL$};

\end{tikzpicture}
\caption{\label{FigReal}Realisation of periodic points.}
\end{center}
\end{figure}

\begin{proof}[Proof of Lemma \ref{LemCompareAnneau}]
Let $L$ be a geodesic of $\Hy^2$ orthogonal to $(\alpha,\beta)$, and $\check D\subset \Hy^2$ the fundamental domain of the open annulus $\Hy^2/T$, consisting of the points that are between $L$ and $TL$ (see Figure~\ref{FigReal}). For any $x\in\Hy^2$, we denote by $i_x$ the integer satisfying $x \in T^{i_x}(\check D)$. As $L$ is orthogonal to $(\alpha,\beta)$, for any $x,y\in\Hy^2$,
\begin{equation}\label{EqProjAnneau}
\ell(|i_x-i_y| - 1) \le d\big(\pi_{\alpha,\beta}(x)\, ,\ \pi_{\alpha,\beta}(y)\big) \le \ell(|i_x-i_y| + 1).
\end{equation}

Suppose that $(\alpha,\beta,v)\in\rho(f)$. Then, there exists $(x_k)$ and $(n_k)$ such that Equation~\eqref{EqDefRot} holds. Applying Equation \eqref{EqProjAnneau} to $x=x_k$ and $y=\tilde f^{n_k}(x_k)$, one gets that $v / \ell\in\rho(\check f)$. Conversely, any sequence $\check x_k \in\Hy^2/T$ realising a rotation number $v'\in \rho(\check f)$ lifts to a sequence $x_k\in\Hy^2$ with rotation vector of the form $(\alpha,\beta,\ell v')$ if $v'>0$, or $(\beta,\alpha,-\ell v')$ if $v'<0$.
\end{proof}

\begin{proof}[Proof of Proposition \ref{PropRealPtExtRat}]
By applying Lemma \ref{LemCompareAnneau}, and lifting the points to $\Hy^2$, we are reduced to prove the proposition in the case of the closed annulus. Point (i) comes from the fact that any extremal rational rotation number is realised by some periodic orbit (\cite[Theorem 3.5]{MR967632}), point (ii) from \cite[Theorem 2.2]{MR967632}, and point (iii) is a direct consequence of a generalization of Poincar\'e-Birkhoff theorem \cite[Theorem 9.1]{MR2217051}.
\end{proof}

The following proposition means that any extremal point of $\rho(f)$ in a closed geodesic direction is realised by an orbit which stays at sublinear distance from the geodesic line. It uses results from \cite{MR2846925} and \cite{MR1037109}.

\begin{proposition} \label{sublindistance}
Let $(\alpha,\beta)$ be a geodesic line of $\Hy^2$ which projects to a closed geodesic $\gamma$. Suppose $(\alpha,\beta,v)$ is an extremal point of $\rho(\tilde{f})$, with $v \neq 0$. Then there exists a point $\tilde{x}$ in $\tilde{S}=\Hy^2$ such that
$$\left\{ \begin{array}{rcl}
\displaystyle \lim_{n \rightarrow +\infty} \frac{1}{n}d(\tilde{f}^n(\tilde{x}),\tilde{x}) & = & v \\
\displaystyle \lim_{n \rightarrow +\infty}\tilde{f}^n (\tilde{x}) & = & \beta \\
\displaystyle \lim_{n \rightarrow +\infty} \frac{1}{n} d(\tilde{f}^{n}(\tilde{x}),\pi_{\alpha,\beta}(\tilde{f}^{n}(\tilde{x}))) & = & 0
\end{array}
\right. $$

Moreover, either the orbit under $\tilde{f}$ of $\tilde{x}$ stays within a bounded distance of the geodesic line $(\alpha,\beta)$ and its closure (in $\overline{\Hy^2}$) does not contain any fixed point of $\tilde{f}$ or, for any rational number $0<r<\frac{v}{\ell(\gamma)}$, there exist  a periodic orbit realising the rotation vector $(\alpha,\beta,r \ell(\gamma))$.
\end{proposition}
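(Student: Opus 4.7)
The plan is to reduce to the annulus setting via Lemma~\ref{LemCompareAnneau}. Let $T$ denote the deck transformation associated to $(\alpha,\beta)$, set $\check A = \Hy^2/\langle T\rangle$, and let $\check f$ be the induced homeomorphism. Under this reduction, the number $v/\ell(\gamma)$ is an extremal point of the usual annulus rotation set $\rho(\check f) \subset \R$. As recalled in the paragraph following Proposition~\ref{PropRealPtExtRat}, $\rho(\check f)$ coincides with the set of averages $\int (p_1 \circ \tilde f - p_1)\, d\mu$ over $\check f$-invariant probability measures $\mu$, and its extremal points are attained by ergodic measures. I would pick such an ergodic measure $\mu$ realising $v/\ell(\gamma)$; Birkhoff's ergodic theorem then provides a $\mu$-generic point $\check x$ whose orbit has rotation number exactly $v/\ell(\gamma)$. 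Let $\tilde x \in \Hy^2$ be any lift of such a $\check x$.

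Next I would check the three limits. The third, sublinearity of the transverse distance to the geodesic $(\alpha,\beta)$, is the heart of the argument and is precisely where Lessa's result~\cite{MR2846925} enters: it asserts that for generic points of an ergodic measure realising an extremal rotation direction, the transverse deviation to the corresponding geodesic grows sublinearly in $n$. Granted this, the first limit $d(\tilde f^n(\tilde x),\tilde x)/n \to v$ follows by combining inequality~\eqref{EqProjAnneau} (which produces a linear rate of motion $v$ along the geodesic) with the $1$-Lipschitz character of the orthogonal projection $\pi_{\alpha,\beta}$, the $o(n)$ transverse contribution being absorbed. The second limit $\tilde f^n(\tilde x) \to \beta$ then follows by an elementary computation in the upper half-plane model: a point whose projection on $(\alpha,\beta)$ lies at exponentially small Euclidean distance from $\beta$ and whose transverse hyperbolic distance from the geodesic grows sublinearly must converge to $\beta$ in $\overline{\Hy^2}$.

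For the moreover part, I would inspect the orbit of $\tilde x$ just constructed. If it stays within a uniformly bounded transverse distance of $(\alpha,\beta)$ and its closure in $\overline{\Hy^2}$ is disjoint from $\fix(\tilde f)$, the first alternative is established. Otherwise, I aim to produce, for every rational $r\in (0, v/\ell(\gamma))$, a periodic orbit of $\tilde f$ realising $(\alpha,\beta, r\ell(\gamma))$ by applying a Franks-type Poincar\'e--Birkhoff theorem~\cite{MR1037109} on $\check A$. The second rotation number $0 \in \rho(\check f)$ required as input is always available: as recalled in Section~3, $\tilde f$ has fixed points in $\Hy^2$ (by the Lefschetz/Euler characteristic argument), and each such fixed point projects to a fixed point of $\check f$ in $\check A$. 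Combined with the extremal rotation number $v/\ell(\gamma)$, Franks' theorem yields periodic orbits of $\check f$ of any prescribed rational rotation number in between, with period equal to its denominator; lifting them back via Lemma~\ref{LemCompareAnneau} produces the desired periodic orbits of $\tilde f$.

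The main obstacle I anticipate is the third limit: extracting a clean sublinearity statement from Lessa~\cite{MR2846925} and matching it to the rotation framework of the present paper requires careful translation, as Lessa's definition of homotopical rotation set is related to but distinct from ours. The dichotomy is more classical, though some care is needed to verify that Franks' theorem applies in the open annulus $\check A$, with the fixed point of $\check f$ and the orbit of $\check x$ as the two compact invariant ingredients realising rotation numbers $0$ and $v/\ell(\gamma)$.
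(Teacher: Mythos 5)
The first half of your plan (reduction to the annulus via Lemma~\ref{LemCompareAnneau}, an ergodic measure realising the extremal point, a Birkhoff-generic point, and Lessa's theorem for the sublinear transverse deviation) is essentially the paper's proof. One point you should not gloss over: Lessa's result does not give sublinear deviation ``to the corresponding geodesic''; it gives, for almost every point, \emph{some} geodesic $(\alpha_{\tilde{x}},\beta_{\tilde{x}})$ depending on the point. One must still identify it with $(\alpha,\beta)$: the positive annulus rotation number forces $\tilde{f}^n(\tilde{x})\to\beta$, hence $\beta_{\tilde{x}}=\beta$, and then the elementary projection-comparison statement (Claim~\ref{compareproj2} in the paper) transfers the sublinear estimate from $(\alpha_{\tilde{x}},\beta)$ to $(\alpha,\beta)$. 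You correctly flag this as the delicate translation, but it is a step your proof still has to carry out; also note that the pushed-forward measure must live on $S$ for Lessa's theorem to apply.

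The genuine gap is in the ``moreover'' dichotomy. You propose to apply a Franks-type theorem on the open annulus with inputs ``rotation number $v/\ell(\gamma)$'' and ``rotation number $0$ coming from a fixed point of $\check{f}$, which is always available''. This cannot work: on an open annulus, without recurrence-type hypotheses, the mere coexistence of two rotation numbers does not force periodic orbits of intermediate rotation number (one can have a fixed region, a rotating region, and non-recurrent drifting orbits in between). Moreover, if your argument were valid, the second alternative would hold unconditionally --- a fixed point of $\check{f}$ always exists, since $\tilde{f}$ has fixed points --- so the dichotomy in the statement would be vacuous; this is contradicted by the example of Section~\ref{SecExEmmanuel}, where the first alternative is used precisely because no periodic orbits realise nonzero vectors in the direction $\tilde{\alpha}$. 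The result the paper invokes is Handel's Lemma 2.1 of \cite{MR1037109}, whose hypothesis is that the closure of the \emph{single orbit} of $\check{x}$ contains a point with a different rotation number. This is exactly what the failure of the first alternative supplies: either the orbit closure already contains a fixed point of $\tilde{f}$, or the orbit is transversally unbounded, so its closure in the closed annulus $\overline{A}$ meets $\partial\overline{A}$, which is pointwise fixed by $\check{f}$. Your ``otherwise'' branch is missing this observation, and as written it does not go through.
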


To explain why the vector $(\alpha,\beta,v) \in \rho(f)$ is realised by the orbit of $\tilde{x}$, let $T$ be the deck transformation associated to $(\alpha,\beta)$. Fix any sequence of integers $(k_{n})_{ n \geq 0}$ such that 
$$\left\{ \begin{array}{rcl}
\displaystyle \lim_{n \rightarrow +\infty} k_{n} & = & +\infty \\
\displaystyle \lim_{n \rightarrow +\infty} \frac{k_{n}}{n} & = & 0.
\end{array} \right.$$

Then
$$ \lim_{n \rightarrow +\infty} \frac{1}{n}d\Big(\pi_{\alpha,\beta}\big(T^{-k_n}(\tilde{x})\big),\pi_{\alpha,\beta}\big(\tilde{f}^n(T^{-k_{n}}(\tilde{x}))\big)\Big)
= \lim_{n \rightarrow +\infty} \frac{1}{n}d(\pi_{\alpha,\beta}(\tilde{x}),\pi_{\alpha,\beta}(\tilde{f}^n(\tilde{x})))=v$$
and
$$\left\{ \begin{array}{rcl}
\displaystyle \lim_{n \rightarrow +\infty} T^{-k_{n}}(\tilde{x})=\alpha \\
\displaystyle \lim_{n \rightarrow +\infty} \tilde{f}^{n}(T^{-k_{n}}(\tilde{x}))=\beta.
\end{array}
\right.$$
This means that the rotation vector $(\alpha,\beta,v)$ is realised by the orbit of $\tilde{x}$.

\begin{proof}[Proof of Proposition \ref{sublindistance}]
During this proof, we will need the following elementary result of hyperbolic geometry.

\begin{claim} \label{compareproj2}
For any $\alpha_1,\alpha_2,\beta\in\partial\Hy^2$ such that $\alpha_1\neq\beta$ and $\alpha_2\neq\beta$, we have :
$$\lim_{\substack{ y \in \Hy^2\\ y \rightarrow \beta}} d\big(\pi_{\alpha_{2},\beta}(y),\pi_{\alpha_{1},\beta}(y)\big)=0.$$
\end{claim}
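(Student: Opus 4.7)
The plan is to reduce to the upper half-plane model and make a direct computation. Apply an isometry of $\Hy^2$ sending $\beta$ to $\infty$ so that in the resulting half-plane model the two geodesics $(\alpha_1,\beta)$ and $(\alpha_2,\beta)$ become vertical Euclidean half-lines $L_i=\{(\alpha_i,t):t>0\}$ (we slightly abuse notation, identifying $\alpha_i\in\partial\Hy^2$ with a real number). The convergence $y\to\beta$ in $\overline{\Hy^2}$ then translates, via the standard conformal identification between the disk and half-plane models, to $|y|_{\mathrm{Eucl}}\to +\infty$ with $y$ remaining in the open upper half-plane.

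Next I would compute the projections explicitly. Since hyperbolic geodesics perpendicular to a vertical Euclidean line are precisely the Euclidean half-circles centred on the real axis at the foot of that line, a direct verification shows that for $y=(x,h)$ with $h>0$,
\[ \pi_{\alpha_i,\beta}(y) = \bigl(\alpha_i,\, r_i\bigr),\qquad r_i := \sqrt{(x-\alpha_i)^2+h^2}. \]
Using the standard formula
\[ \cosh d(p,q)=1+\frac{(p_1-q_1)^2+(p_2-q_2)^2}{2\,p_2 q_2} \]
for the hyperbolic distance in the half-plane, I get
\[ \cosh d\bigl(\pi_{\alpha_1,\beta}(y),\pi_{\alpha_2,\beta}(y)\bigr)=1+\frac{(\alpha_1-\alpha_2)^2+(r_1-r_2)^2}{2\,r_1 r_2}. \]

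Finally I would estimate the right-hand side as $y\to\beta$. The key observation is that $r_i$ is precisely the Euclidean distance in $\R^2$ from $y$ to the point $(\alpha_i,0)$; hence by the Euclidean triangle inequality $|r_1-r_2|\leq|\alpha_1-\alpha_2|$, so the numerator stays bounded. On the other hand $r_i\geq |y|_{\mathrm{Eucl}}-|\alpha_i|$, so $r_1 r_2\to +\infty$ as $|y|_{\mathrm{Eucl}}\to+\infty$. Thus $\cosh d\to 1$, i.e. the distance tends to $0$, which is the claim.

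There is no real obstacle; the only point requiring a brief justification is the interpretation of ``$y\to\beta$'' in the half-plane model (including the case when $y$ has bounded imaginary part but its real part goes to infinity), but this is immediate from the conformal identification of the two models of $\Hy^2$ and the fact that the bound $r_i\geq|y|_{\mathrm{Eucl}}-|\alpha_i|$ uses only the Euclidean norm, not the imaginary part alone.
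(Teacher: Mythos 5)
Your proof is correct. The verification of the projection formula $\pi_{\alpha_i,\beta}(y)=(\alpha_i,r_i)$ with $r_i=d_{Euc}\big(y,(\alpha_i,0)\big)$ is exactly what the paper does, as is the normalization $\beta=\infty$ in the half-plane model and the use of the Euclidean triangle inequality $|r_1-r_2|\le|\alpha_1-\alpha_2|$. Where you diverge is the concluding estimate: the paper splits
\[
d\big(\pi_{\alpha_1,\beta}(y),\pi_{\alpha_2,\beta}(y)\big)\le d\big(\pi_{\alpha_1,\beta}(y),\pi_{\alpha_2,\beta}(\pi_{\alpha_1,\beta}(y))\big)+d\big(\pi_{\alpha_2,\beta}(\pi_{\alpha_1,\beta}(y)),\pi_{\alpha_2,\beta}(y)\big),
\]
handles the first term by the qualitative fact that a point escaping to $\infty$ along $(\alpha_1,\beta)$ has hyperbolic distance to the asymptotic geodesic $(\alpha_2,\beta)$ tending to $0$, and the second by observing that a bounded Euclidean displacement along a vertical geodesic at height tending to $+\infty$ has hyperbolic length tending to $0$. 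You instead plug both projections into the closed-form formula $\cosh d(p,q)=1+\frac{|p-q|_{Euc}^2}{2p_2q_2}$ and bound numerator and denominator directly, which collapses those two qualitative steps into one explicit computation; this is slightly more self-contained (no appeal to asymptotic geodesics) at the cost of invoking the explicit distance formula, and your remark about interpreting $y\to\beta$ as $|y|_{Euc}\to+\infty$ correctly covers the case where the imaginary part of $y$ stays bounded.
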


\begin{proof}
We see $\Hy^2$ as the upper half-plane in $\R^2$ so that $\partial \Hy^2$ is the union of the line $\R \times \left\{ 0 \right\}$ with the point $\infty$ at infinity. As, for any isometry $\sigma$ of $\Hy^2$,
$$\begin{array}{rcl}
d\big(\pi_{\alpha_{2},\beta}(y),\pi_{\alpha_{1},\beta}(y)\big) & = & d\big(\sigma(\pi_{\alpha_{2},\beta}(y)),\sigma(\pi_{\alpha_{1},\beta}(y))\big) \\
 & = & d\big(\pi_{\sigma(\alpha_{2}),\sigma(\beta)}(i(y)),\pi_{\sigma(\alpha_{1}),\sigma(\beta)}(i(y))\big)
\end{array}$$
and as the group of isometries of $\Hy^2$ acts transitively on the boundary $\partial \Hy^2$, we can suppose that $\beta=\infty$ and that $\alpha_{1}$ and $\alpha_{2}$ are two points of $\R \times \left\{ 0 \right\}$. To carry out this proof, we will use the distance $d_{Euc}$ on $\Hy^2$ which is induced by the Euclidean distance on $\R^2$. In this model, the geodesic lines $(\alpha_{1},\beta)$ and $(\alpha_{2},\beta)$ are respectively the sets $\left\{ \alpha_{1} \right\} \times \R^{*}_{+}$ and $\left\{ \alpha_{2} \right\} \times \R^{*}_{+}$.

\begin{figure}[ht]
\begin{center}
\includegraphics[scale=1]{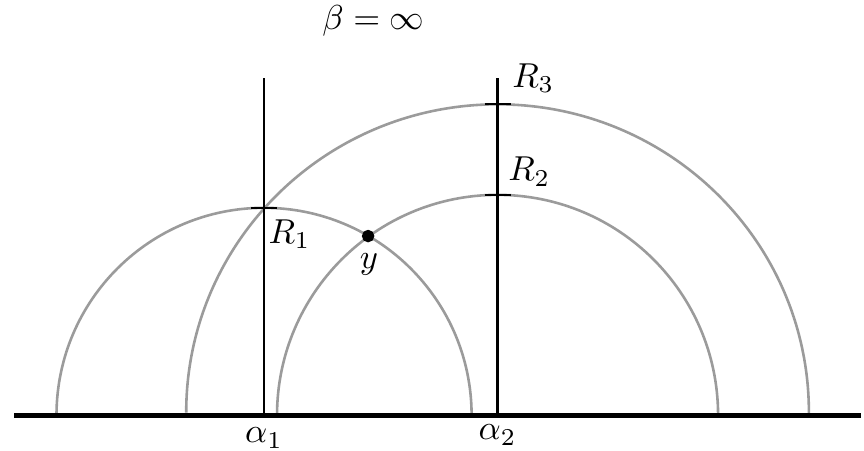}
\caption{Proof of Claim \ref{compareproj2}.}
\end{center}
\end{figure}

Fix $y \in \Hy^{2}$. The geodesic line passing through $y$ and orthogonal to $(\alpha_{1},\beta)$ is the intersection of $\Hy^2$ with the Euclidean circle of center $\alpha_{1}$ and of radius $R_{1}=d_{Euc}(y,\alpha_{1})$. Hence $\pi_{\alpha_{1},\beta}(y)=(\alpha_{1},R_{1})$. In the same way, if $R_{2}=d_{Euc}(\alpha_{2},y)$ and $R_{3}=d_{Euc}(\pi_{\alpha_{1},\beta}(y),\alpha_{2})$, then $\pi_{\alpha_{2},\beta}(y)=(\alpha_{2},R_{2})$ and $\pi_{\alpha_{2},\beta}\circ \pi_{\alpha_{1},\beta}(y)=(\alpha_{2},R_{3})$.

We have
$$d\big(\pi_{\alpha_{1},\beta}(y),\pi_{\alpha_{2},\beta}(y)\big)
\leq d\big(\pi_{\alpha_{1},\beta}(y),\pi_{\alpha_{2},\beta}(\pi_{\alpha_{1},\beta}(y))\big)
+d\big(\pi_{\alpha_{2},\beta}(\pi_{\alpha_{1},\beta}(y)), \pi_{\alpha_{2},\beta}(y)\big).$$
When the point $y$ tends to $\beta=\infty$, the distance $d(\pi_{\alpha_{1},\beta}(y),\pi_{\alpha_{2},\beta}(\pi_{\alpha_{1},\beta}(y)))$, which is the (hyperbolic) distance between the point $\pi_{\alpha_{1},\beta}(y)$ and the geodesic line $(\alpha_{2},\beta)$, tends to $0$ as the point $\pi_{\alpha_{1},\beta}(y)$ remains on the geodesic line $(\alpha_{1},\beta)$.

To prove the claim, it suffices to prove that, when the point $y$ tends to $\infty$, the distance $d(\pi_{\alpha_{2},\beta}(\pi_{\alpha_{1},\beta}(y)), \pi_{\alpha_{2},\beta}(y))$ tends to $0$. To do this, it suffices to prove that the Euclidean distance $d_{Euc}(\pi_{\alpha_{2},\beta}(\pi_{\alpha_{1},\beta}(y)), \pi_{\alpha_{2},\beta}(y))$ remains bounded. We have
$$\begin{array}{rcl}
d_{Euc}\big(\pi_{\alpha_{2},\beta}(\pi_{\alpha_{1},\beta}(y)), \pi_{\alpha_{2},\beta}(y)\big) & = & | R_{2}-R_{3}| \\
 & \leq & |R_{2}-R_{1}| + |R_{1}-R_{3}|.
 \end{array}
$$
However,
$$\left\{ \begin{array}{l}
|R_{2}-R_{1}| =|d_{Euc}(y,\alpha_{2})-d_{Euc}(y,\alpha_{1})| \leq d_{Euc}(\alpha_{2},\alpha_{1}) \\
|R_{3}-R_{1}| = |d_{Euc}(\pi_{\alpha_{1},\beta}(y),\alpha_{2})-d_{Euc}(\pi_{\alpha_{1},\beta}(y),\alpha_{1})| \leq d_{Euc}(\alpha_{1},\alpha_{2})
\end{array}
\right.
$$
and
$$d_{Euc}\big(\pi_{\alpha_{2},\beta}(\pi_{\alpha_{1},\beta}(y)), \pi_{\alpha_{2},\beta}(y)\big) \leq 2 d_{Euc}(\alpha_{1},\alpha_{2}).$$
\end{proof}

Let $T$ be the deck transformation associated to $(\alpha,\beta)$. Let $A$ be the open annulus $\tilde{S} / \langle T\rangle =\Hy^2 /\langle T\rangle $ and $\overline{A}$ be the closed annulus $\overline{\Hy^2}-\left\{ \alpha,\beta \right\} / \langle  T \rangle $. Denote by $\check{f}$ the lift of $f$ to $\overline{A}$ induced by $\tilde{f}$ and recall that, as $f$ is isotopic to the identity, the homeomorphism $\check{f}$ pointwise fixes the boundary of the closed annulus $\overline{A}$. Fix coordinates on $\overline{A}$ so that we can make the following identifications:
$$\left\{ \begin{array}{rcl}
\overline{\Hy^2}-\left\{ \alpha, \beta \right\}& = & [-1,1] \times \R \\
(\alpha,\beta) & = & \left\{ 0 \right\} \times \R \\
\overline{A} & = & [-1,1] \times \R/\Z
\end{array} \right.$$
and the projection $p_{2}: \overline{\Hy^2}- \left\{ \alpha,\beta \right\}=[-1,1] \times \R \rightarrow \R$ on the second coordinate is equal to $\pi_{\alpha,\beta}$. 

By Lemma \ref{LemCompareAnneau}, the point $\frac{v}{\ell(\gamma)}$ is an extremal point of $\rho(\check{f})$. Hence there exists an $\check{f}$-invariant ergodic probability measure $\check{\mu}$ on $\overline{A}$ such that
$$ \int_{\overline{A}} p_{2}\big(\tilde{f}(\tilde{x})-\tilde{x}\big)\,\mathrm{d}\check{\mu}(\check{x})= \frac{v}{\ell(\gamma)}.$$
As the homeomorphism $\check{f}$ pointwise fixes the boundary $\partial \overline{A}$ of $\overline{A}$, observe that $\check{\mu}(\partial A)=0$. Indeed, otherwise, we would have $\check{\mu}=\check{\mu}(A) \check{\mu}_1+ \check{\mu}(\partial A) \check{\mu}_2$, where $\check{\mu}_1= \frac{1}{\check{\mu}(A)} \check{\mu}(A \cap .)$ and $\check{\mu}_2= \frac{1}{\check{\mu}(\partial \overline{A})} \check{\mu}(\partial \overline{A} \cap .)$, which contradicts the ergodicity of $\check{\mu}$ (observe that the definition of $\check{\mu}$ imposes that $\check{\mu}(A) >0$).

By Birkhoff ergodic theorem, the subset $C$ of $A$ consisting of points $\check{x}$ such that 
$$ \lim_{n \rightarrow +\infty} \frac{1}{n}p_{2}(\tilde{f}^{n}(\tilde{x})-\tilde{x})=\frac{v}{\ell(\gamma)},$$
where the point $\tilde{x} \in \Hy^2$ is any lift of $\check{x}$, has full $\check{\mu}$-measure.

Going back to the hyperbolic distance on $A$, this means that, for any point $\tilde{x}$ of $\tilde{S}$ which projects to a point of $C$,
$$\left\{
\begin{array}{rcl}
\displaystyle \lim_{n \rightarrow +\infty} \tilde{f}^n(\tilde{x})& = & \beta \\
 \displaystyle \lim_{n \rightarrow +\infty} \frac{1}{n}d\big(\pi_{\alpha,\beta}(\tilde{f}^{n}(\tilde{x})),\tilde{x}\big) & = & v.
 \end{array}
 \right.$$
Let $\check{\pi}$ be the covering map $A \rightarrow S$ and $\mu=\check{\pi}_{*} \check{\mu}$. The probability measure $\mu$ is $f$-invariant as $\check{\mu}$ is $\check{f}$-invariant. The following lemma is a consequence of Corollary 21 of \cite{MR2846925}. This statement is actually valid for any $f$-invariant ergodic probability measure.

\begin{lemma} \label{Lessa}
There exists a full $\mu$-measure subset $B$ of $S$ such that, for any point $\tilde{x}$ in $\tilde{\pi}^{-1}(B)$, there exists a geodesic line $(\alpha_{\tilde{x}},\beta_{\tilde{x}})$ such that
$$\lim_{n \rightarrow +\infty} \frac{1}{n}d\big(\tilde{f}^n(\tilde{x}),(\alpha_{\tilde{x}},\beta_{\tilde{x}})\big)=0.$$
\end{lemma}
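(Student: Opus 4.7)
The plan is to deduce the statement directly from Corollary~21 of \cite{MR2846925}, a sublinear tracking result of Kaimanovich type: for any ergodic $f$-invariant probability measure $\mu$ on a negatively curved closed surface $S$, that corollary yields a full $\mu$-measure subset $B\subset S$ such that every lift $\tilde{x}\in\tilde{\pi}^{-1}(B)$ either has a bounded forward $\tilde{f}$-orbit in $\Hy^2$, or admits a boundary point $\beta_{\tilde{x}}\in\partial\Hy^2$ for which $\tilde{f}^n(\tilde{x})\to\beta_{\tilde{x}}$ and $\frac{1}{n}\,d\big(\tilde{f}^n(\tilde{x}),[\tilde{x},\beta_{\tilde{x}})\big)\to 0$, where $[\tilde{x},\beta_{\tilde{x}})$ denotes the geodesic ray from $\tilde{x}$ to $\beta_{\tilde{x}}$. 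This is the step where the whole content of the argument lies; the rest is cosmetic.

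To finish, I will upgrade this one-sided ray tracking into tracking by a bi-infinite geodesic. In the bounded case, $d(\tilde{f}^n(\tilde{x}),\tilde{x})=O(1)=o(n)$, so any geodesic line of $\Hy^2$ passing close to $\tilde{x}$ does the job. In the escaping case, I pick any $\alpha_{\tilde{x}}\in\partial\Hy^2\setminus\{\beta_{\tilde{x}}\}$ and define $(\alpha_{\tilde{x}},\beta_{\tilde{x}})$ to be the corresponding bi-infinite geodesic; since any two geodesic rays in $\Hy^2$ sharing a common endpoint at infinity are asymptotic in the strong sense that they lie within bounded Hausdorff distance of each other past some point, one obtains
\[
d\big(\tilde{f}^n(\tilde{x}),(\alpha_{\tilde{x}},\beta_{\tilde{x}})\big)\le d\big(\tilde{f}^n(\tilde{x}),[\tilde{x},\beta_{\tilde{x}})\big)+O(1),
\]
and dividing by $n$ gives the announced limit.

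The main obstacle is not mathematical but bibliographic: one must check that Corollary~21 of \cite{MR2846925} is formulated in a way that applies verbatim to a generic ergodic $f$-invariant probability measure on a closed hyperbolic surface of genus $\ge 2$, lifted to the universal cover $\Hy^2$. Lessa's setting is precisely this one, and we need only the forward direction of his tracking statement, so the reduction to our lemma amounts to combining his corollary with the elementary observation above about asymptotic rays. In the application within the proof of Proposition~\ref{sublindistance}, the measure $\mu$ will moreover be supported in the compact annulus quotient, so any hypothesis of compactness in Lessa's statement is automatically satisfied.
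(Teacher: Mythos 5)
Your proposal matches the paper's own treatment: the paper gives no argument beyond citing Corollary 21 of Lessa's paper \cite{MR2846925} (noting that it holds for any ergodic $f$-invariant probability measure), and your extra step passing from a tracked geodesic ray to a bi-infinite geodesic line is the routine convexity observation. The only nuance is that in the zero-drift case the tracking statement gives sublinear (not necessarily bounded) displacement $d(\tilde{f}^n(\tilde{x}),\tilde{x})=o(n)$, but your argument goes through unchanged since any geodesic line passing through $\tilde{x}$ then satisfies the required limit.
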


Now take any point $\tilde{x}$ in $\Hy^2$ which is a lift of a point in $\check{\pi}^{-1}(B) \cap C$ (this set has full $\check{\mu}$-measure and is hence nonempty). As the point $\tilde{x}$ is a lift of a point of $C$, then
$$ \left\{ \begin{array}{rcl}
\displaystyle \lim_{n \rightarrow +\infty} \frac{1}{n} d\big(\pi_{\alpha,\beta}(\tilde{f}^{n}(\tilde{x})),\pi_{\alpha,\beta}(\tilde{x})\big) & = & v \\
\displaystyle \lim_{n \rightarrow +\infty} \tilde{f}^{n}(\tilde{x})=\beta
\end{array} \right.
$$
so that $\beta_{\tilde{x}}=\beta$. Moreover
$$
d\big(\tilde{f}^{n}(\tilde{x}),\pi_{\alpha,\beta}(\tilde{f}^{n}(\tilde{x}))\big)  \leq  d\big(\tilde{f}^{n}(\tilde{x}),\pi_{\alpha_{\tilde{x}},\beta}(\tilde{f}^{n}(\tilde{x}))\big) + d\big(\pi_{\alpha_{\tilde{x}},\beta}(\tilde{f}^{n}(\tilde{x})),\pi_{\alpha,\beta}(\tilde{f}^{n}(\tilde{x}))\big).$$
However, by Lemma \ref{Lessa},
$$ \lim_{n \rightarrow +\infty} \frac{1}{n} d\big(\tilde{f}^{n}(\tilde{x}),\pi_{\alpha_{\tilde{x}},\beta}(\tilde{f}^{n}(\tilde{x}))\big)=0$$
and, by Claim \ref{compareproj2},
$$ \lim_{n \rightarrow +\infty} d\big(\pi_{\alpha_{\tilde{x}},\beta}(\tilde{f}^{n}(\tilde{x})),\pi_{\alpha,\beta}(\tilde{f}^{n}(\tilde{x}))\big)=0.$$
Hence
$$ \lim_{n \rightarrow +\infty} \frac{1}{n}d\big(\tilde{f}^{n}(\tilde{x}),\pi_{\alpha,\beta}(\tilde{f}^{n}(\tilde{x}))\big)=0.$$ 

The last part of the proposition is a consequence of a result by Handel (see \cite{MR1037109}). If the orbit of $\tilde{x}$ does not stay within a bounded distance of the geodesic $(\alpha,\beta)$, then the closure of this orbit meets the boundary of the closed annulus $\overline{A}$. However, recall that this boundary is fixed under $\check{f}$, so that a fixed point of $\tilde{f}$ lies in the closure of the orbit of $\tilde{x}$, which in particular does not have the same rotation number as $\tilde{x}$. Then, by the proof of Lemma 2.1 p.343 in \cite{MR1037109} and by Lemma \ref{LemCompareAnneau}, for any rational number $0<r < \ell(\gamma)$, there exist periodic orbits for $f$ with rotation number $(\alpha,\beta, r \ell(\gamma))$.
\end{proof}

See Example~\ref{ExNonDenombrable} for an example of a homeomorphism of the genus 2 closed surface with an ergodic probability measure $\mu$ for which an uncountable set of geodesics is necessary to describe the rotation set of $\mu$ almost every point.

\section{Creation of new rotation vectors: elementary results}\label{SecCreat}

In this section, we state forcing results about rotation vectors: the existence of orbits with nontrivial rotation vectors, whose associated geodesics of $\Hy^2$ cross, force the existence of other rotation vectors (and hence other orbits, with different rotation vectors). The two results we get, Propositions~\ref{newrot} and \ref{newrot2}, are heavily inspired by Le Calvez-Tal's fundamental proposition \cite[Proposition 20]{MR3787834}, although they use only basic plane topology (and in particular, no Brouwer-Le Calvez plane dynamical foliation, see Section~\ref{SubSecForcing} for some results of this theory). Note that similar arguments without the use of Brouwer-Le Calvez plane dynamical foliation appeared recently in \cite{PatriceRecent}.

The first proposition concerns geodesics of the surface with auto-intersection (Proposition~\ref{newrotIntro} of the introduction), and the second one treats the general case (with weaker conclusions).

\begin{proposition} \label{newrot}
Let $(\alpha,\beta)$ be a geodesic line of $\Hy^{2}$ and $T$ a nontrivial deck transformation such that $(\alpha,\beta) \cap T^{-1}(\alpha, \beta) \neq \emptyset$. Denote $\{p_0\} = (\alpha,\beta) \cap T^{-1}(\alpha,\beta)$, and suppose that $Tp_{0}\in(\alpha,\beta)$ is such that for the natural order on $(\alpha,\beta)$, the sequence $(\alpha,p_{0},Tp_{0},\beta)$ is increasing.

Suppose there exists $v >0$ such that $(\alpha,\beta,v) \in \rho(f)$. Then $(T\alpha,\beta,v) \in \rho(f)$.
\end{proposition}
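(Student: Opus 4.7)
The plan is to exploit a topological crossing between the trajectory of $x_k$ and its $T$-translate: these must intersect because their asymptotic endpoints $\alpha, T\alpha, \beta, T\beta$ are cyclically interleaved on $\partial\Hy^2$ (a consequence of the fact that $(\alpha,\beta)$ and $T(\alpha,\beta)=(T\alpha,T\beta)$ meet transversely at $Tp_0$). From this crossing, a commutation relation between the two orbits will produce a new orbit reaching near $\beta$ starting from near $T\alpha$.

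I would first apply Proposition~\ref{ProprealisDist} to obtain a realising sequence $(x_k,n_k)$ for $(\alpha,\beta,v)$ with $x_k\to\alpha$, $\tilde f^{n_k}(x_k)\to\beta$, and both endpoints at bounded hyperbolic distance from $(\alpha,\beta)$. Let $\tilde\gamma_k(t)=\tilde f_t(x_k)$ for $t\in[0,n_k]$ be the continuous trajectory, and $\tilde\gamma'_k(t)=\tilde f_t(Tx_k)=T\tilde\gamma_k(t)$ its $T$-translate (using that $\tilde f$ commutes with $T$), which runs from near $T\alpha$ to near $T\beta$. Extending both paths by small geodesic arcs to their asymptotic endpoints in disjoint neighborhoods of $\alpha, T\alpha, \beta, T\beta$ on $\partial\Hy^2$, the Jordan curve theorem applied in $\overline{\Hy^2}$ forces an interior intersection: there exist $t_k, s_k\in (0,n_k)$ with
\[
\tilde f_{t_k}(x_k)=T\tilde f_{s_k}(x_k)=\tilde f_{s_k}(Tx_k).
\]

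Rounding $t_k, s_k$ to nearby integers incurs only a bounded error (from the maximum distortion of $\tilde f_t$ for $t\in[0,1]$), which is harmless for the asymptotic rotation vector computation. Using the $T$-equivariance of the iterates of $\tilde f$ inductively gives $\tilde f^{s_k+j}(Tx_k)=\tilde f^{t_k+j}(x_k)$ for all $j\geq 0$. Setting $N_k=n_k-(t_k-s_k)$ and taking $j=n_k-t_k$ yields $\tilde f^{N_k}(Tx_k)=\tilde f^{n_k}(x_k)\to\beta$, while $Tx_k\to T\alpha$. This exhibits a nonzero rotation vector of $f$ in the direction $(T\alpha,\beta)$.

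The main obstacle is verifying that the rotation speed is exactly $v$. By Claim~\ref{compareproj2} --- projections onto geodesics sharing an endpoint agree asymptotically near that endpoint --- the displacement $d(\pi_{T\alpha,\beta}(Tx_k),\pi_{T\alpha,\beta}(\tilde f^{n_k}(x_k)))$ is asymptotic to $v n_k$. The speed then equals $v$ precisely when $N_k/n_k\to 1$, i.e.\ when $t_k-s_k=o(n_k)$. To achieve this I would select among all intersections of $\tilde\gamma_k$ and $\tilde\gamma'_k$ the one closest to $Tp_0$: heuristically, if the trajectories remained near their respective geodesics, the crossing would occur near $Tp_0$ and $t_k-s_k\simeq d(p_0,Tp_0)/v$ would be bounded. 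Making this rigorous when the trajectories may wander far between their controlled endpoints --- perhaps via a minimality argument on $|t_k-s_k|$, or a sharpening of Proposition~\ref{ProprealisDist} --- is the central technical point of the proof.
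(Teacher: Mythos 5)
There are two genuine gaps, and the second one is fatal to the whole strategy. First, the claimed intersection $\tilde f_{t_k}(x_k)=T\tilde f_{s_k}(x_k)$ of the two isotopy trajectories is not justified: the trajectories are compact arcs in $\Hy^2$ that do not reach $\partial\Hy^2$, and the Jordan-type separation argument only applies after you extend them by the auxiliary rays to $\alpha,\beta,T\alpha,T\beta$. The forced intersection of the extended curves may then occur between a trajectory and one of the rays (nothing in Proposition~\ref{ProprealisDist} controls where the trajectory wanders between its endpoints, so it can enter the small neighbourhoods of $T\alpha$ or $T\beta$), in which case you get no relation between the two trajectories at all. The paper avoids exactly this problem by working with the full geodesics $G_{1,k},G_{2,k}$ orthogonal to $(\alpha,\beta)$ through the endpoints, which are proper lines reaching $\partial\Hy^2$, and by using that $\tilde f$ extends to the identity on $\partial\Hy^2$, so that $\tilde f^{n_k}(TG_{1,k})$ still has controlled boundary endpoints; even then the separation argument (Lemma~\ref{intersection}) is more delicate than a single Jordan-curve application, since one must consider all translates $T^iG_{2,k}$ and produce a $T$-free separating path.

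Second, and more seriously, even granting a crossing $\tilde f_{t_k}(x_k)=\tilde f_{s_k}(Tx_k)$, your "commutation relation'' $\tilde f^{t_k+j}(x_k)=\tilde f^{s_k+j}(Tx_k)$ is false: an isotopy is a family of homeomorphisms, not a flow, so trajectories of two distinct points can cross without their orbits ever being related afterwards (one has $\tilde f_{t+j}=\tilde f_t\circ\tilde f^{\,j}$, so coincidence of $\tilde f_{t_k}(x_k)$ and $\tilde f_{s_k}(Tx_k)$ at generally different, non-integer times says nothing about later iterates; rounding to integers destroys the coincidence). Consequently the point reaching $\beta$ from near $T\alpha$ is never produced, and the speed issue you flag at the end (showing $t_k-s_k=o(n_k)$) never even arises in a repaired argument. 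The paper's route is different and avoids both problems: from $\tilde f^{n_k}(TG_{1,k})\cap G_{2,k}\neq\emptyset$ one takes \emph{any} point $y_k\in TG_{1,k}$ with $\tilde f^{n_k}(y_k)\in G_{2,k}$ (no relation to the original orbit is needed), the travel time is exactly $n_k$, and the speed $v=v_1'+v_2'$ is recovered because points on a geodesic orthogonal to $(\alpha,\beta)$ (resp. $(T\alpha,T\beta)$) have equal projections onto it, combined with Lemma~\ref{compareproj}/Claim~\ref{compareproj2}. If you want to salvage your outline, you should replace the trajectory-crossing step by this kind of statement about images of orthogonal geodesics under the single map $\tilde f^{n_k}$.
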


Of course, under the hypothesis of this proposition we also deduce that $(\alpha,T^{-1}(\beta), v) \in \rho(f)$.

If the sequence $(\alpha,Tp_0,p_0,\beta)$ is increasing instead of $(\alpha,p_{0},Tp_{0},\beta)$, then we can apply Proposition \ref{newrot} to the homeomorphism $f^{-1}$ and use Proposition \ref{PropRhoInv} to obtain that $(\alpha,T\beta,v) \in \rho(f)$ and $(T^{-1} \alpha, \beta,v) \in \rho(f)$ when $(\alpha,\beta,v) \in \rho(f)$.

\begin{figure}[ht]
\begin{center}
\includegraphics[scale=1]{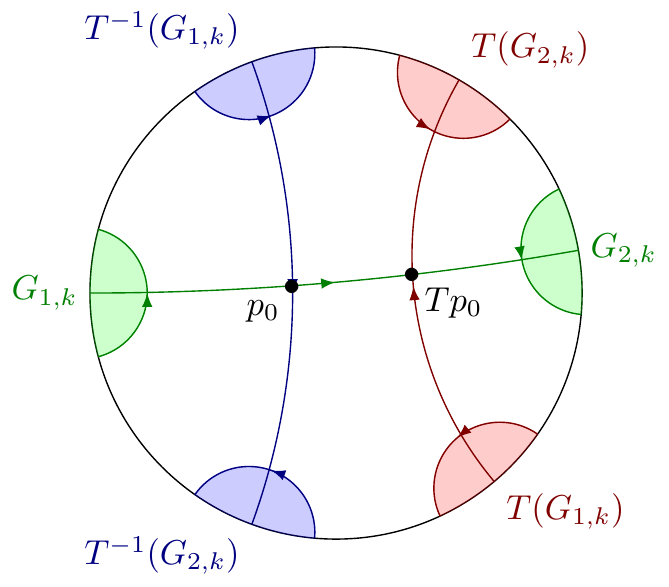}
\caption{\label{FigPropNotransverseint3}Configuration of Proposition \ref{newrot}.}
\end{center}
\end{figure}

\begin{proof}
By definition, there exist a sequence $(x_{k})_{ k \in \N}$ of points in $\Hy^{2}$ and a sequence $(n_{k})_{ k \in \N}$ of integers such that
$$ \left\{
\begin{array}{l}
\displaystyle \lim_{k \rightarrow +\infty} x_{k}  =  \alpha \\
\displaystyle \lim_{ k \rightarrow +\infty} \tilde{f}^{n_{k}}(x_{k})  =  \beta \\
\displaystyle \lim_{ k \rightarrow +\infty} \frac{d(\pi_{\alpha,\beta}(\tilde{f}^{n_{k}}(x_{k})),\pi_{\alpha,\beta}(x_{k}))}{n_{k}}  =  v.
\end{array}
\right.
$$ 
For any $k \geq 0$, denote by $G_{1,k}$ (respectively $G_{2,k}$) the unique geodesic line passing through $x_{k}$ (respectively $\tilde{f}^{n_{k}}(x_{k})$) which is orthogonal to $(\alpha,\beta)$ (see Figure \ref{FigPropNotransverseint3}). Extracting a subsequence if necessary, we can suppose that both sequences
\[\left(\frac{d(\pi_{\alpha,\beta}(x_{k}),\,\pi_{\alpha,\beta}(p_0))}{n_{k}}\right)_{k}
\qquad \text{and}\qquad
\left(\frac{d(\pi_{\alpha,\beta}(p_0),\, \pi_{\alpha,\beta}(\tilde{f}^{n_{k}}(x_{k})))}{n_{k}}\right)_{k}\]
converge with respective limits $v'_{1}$ and $v'_{2}$. Observe that $v'_{1}+v'_{2}=v$.

\begin{lemma} \label{intersection}
For any large enough $k$,
$$\tilde{f}^{n_{k}}(T(G_{1,k})) \cap G_{2,k} \neq \emptyset.$$
\end{lemma}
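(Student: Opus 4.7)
My approach is a Jordan-curve / separation argument in $\overline{\Hy^2}$, exploiting that $\tilde f$ extends to a homeomorphism of $\overline{\Hy^2}$ equal to the identity on $\partial\Hy^2$. For large $k$, let $V_k\subset\overline{\Hy^2}$ be the small crescent near $T\alpha$ bounded by $T(G_{1,k})$ together with the short arc $\sigma_k$ of $\partial\Hy^2$ joining the two endpoints of $T(G_{1,k})$ through $T\alpha$; likewise let $C_k^\beta$ be the crescent near $\beta$ bounded by $G_{2,k}$ and the corresponding short arc of $\partial\Hy^2$ through $\beta$. Both crescents shrink respectively to $\{T\alpha\}$ and $\{\beta\}$ as $k\to\infty$, because the two endpoints of $G_{1,k}$ tend to $\alpha$ (so those of $T(G_{1,k})$ tend to $T\alpha$) and the two endpoints of $G_{2,k}$ tend to $\beta$.

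Since $\tilde f^{n_k}$ fixes $\partial\Hy^2$ pointwise, $\tilde f^{n_k}(V_k)$ is an open connected subset of $\overline{\Hy^2}$ with the same trace $\sigma_k$ on $\partial\Hy^2$, whose boundary in $\Hy^2$ is the curve $\tilde f^{n_k}(T(G_{1,k}))$. Commutation of $\tilde f^{n_k}$ with $T$ yields $\tilde f^{n_k}(Tx_k)=Ty_k\to T\beta$, so this region's closure contains points arbitrarily close to $T\beta$. As $T\alpha$, $T\beta$ and $\beta$ are pairwise distinct, for large $k$ the boundary traces of $\tilde f^{n_k}(V_k)$ and $C_k^\beta$ on $\partial\Hy^2$ are disjoint, hence neither region contains the other. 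By the Jordan curve theorem it then suffices to produce a point in the intersection $\tilde f^{n_k}(V_k)\cap C_k^\beta$ in $\Hy^2$: the bounding arcs $\tilde f^{n_k}(T(G_{1,k}))$ and $G_{2,k}$ must consequently cross, which is the desired conclusion.

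To produce such a point I intend to use the continuous isotopy-trajectory $\gamma^k(t)=\tilde f_t(x_k)$ and its $T$-translate $T\gamma^k(t)=\tilde f_t(Tx_k)$. Since the projection of $\gamma^k$ on $(\alpha,\beta)$ sweeps from near $\alpha$ to near $\beta$ at asymptotic speed $v_1'+v_2'=v>0$ and, by the intermediate value theorem, passes through both $p_0$ and $Tp_0$ in that order (by the ordering hypothesis $(\alpha,p_0,Tp_0,\beta)$), one can locate a time $s_k\in(0,n_k)$ at which the orbit point $T\gamma^k(s_k)=\tilde f_{s_k}(Tx_k)$ crosses into the $\beta$-side of $G_{2,k}$. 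Pulling this point back by $\tilde f^{n_k-s_k}$ along the isotopy and tracking carefully its relation to $T(G_{1,k})$ (using the orthogonality of $G_{1,k}$ to $(\alpha,\beta)$ and the geometry of the crossing at $Tp_0$) should exhibit a point $z_k\in T(G_{1,k})$ with $\tilde f^{n_k}(z_k)\in C_k^\beta$.

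The main obstacle is precisely this last step: carrying out the intermediate-value argument rigorously, and ensuring that the point produced really lies on $T(G_{1,k})$ rather than merely near it. This requires a careful hyperbolic-geometric bookkeeping of the positions of $\pi_{\alpha,\beta}(\gamma^k(t))$ relative to $p_0$ and $Tp_0$, the positivity of both partial speeds $v_1'$ and $v_2'$, and the crossing configuration of the geodesics $(\alpha,\beta)$, $T(\alpha,\beta)$, and $T^{-1}(\alpha,\beta)$ near $p_0$ and $Tp_0$.
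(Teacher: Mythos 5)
There is a genuine gap, and it sits exactly where you flag it. Your crescent/Jordan-curve framing only \emph{restates} the lemma: producing a point of $\tilde f^{n_k}(V_k)\cap C_k^\beta$ (equivalently, a point of $\tilde f^{n_k}(T(G_{1,k}))$ in the small region cut off by $G_{2,k}$ near $\beta$) is precisely the assertion to be proved, since the endpoint data alone do not force it. Indeed, the closure of $\tilde f^{n_k}(T(G_{1,k}))$ meets $\partial\Hy^2$ only near $T\alpha$ and contains $\tilde f^{n_k}(Tx_k)=T\tilde f^{n_k}(x_k)$ near $T\beta$; in the cyclic order $\alpha, T\alpha, \beta, T\beta$ the endpoints of $G_{2,k}$ (which straddle $\beta$) are \emph{not} separated by $\{T\alpha,T\beta\}$, so a curve with these boundary data can perfectly well avoid $G_{2,k}$ and the crescent $C_k^\beta$. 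Consequently your intermediate-value mechanism cannot work as stated: both ends of the translated trajectory $t\mapsto T\tilde f_t(x_k)$, namely $Tx_k$ (near $T\alpha$) and $T\tilde f^{n_k}(x_k)$ (near $T\beta$), lie on the \emph{same} side of $G_{2,k}$ for large $k$, so no crossing of $G_{2,k}$ is forced; the isotopy path between times $0$ and $n_k$ is completely uncontrolled by the definition \eqref{EqDefRot} (only its projection to $(\alpha,\beta)$ takes intermediate values, while the path itself may be arbitrarily far from the geodesic, from $p_0$, $Tp_0$ and from $G_{2,k}$); and even if some $\tilde f_{s_k}(Tx_k)$ did enter $C_k^\beta$, "pulling back along the isotopy" produces information about the orbit of the single point $Tx_k$, not about the image of the geodesic $T(G_{1,k})$, so it cannot exhibit the desired $z_k\in T(G_{1,k})$.

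The missing ingredient is the equivariance $\tilde f^{n_k}T=T\tilde f^{n_k}$ used in a Brouwer-type translation argument, which is how the paper proceeds: assume $\tilde f^{n_k}(G_{1,k})\cap T^{-1}G_{2,k}=\emptyset$, note that $\tilde f^{n_k}(G_{1,k})$ does meet $G_{2,k}$ (at $\tilde f^{n_k}(x_k)$) but misses $T^iG_{2,k}$ for large $i$, take the first parameter $t$ at which the ray $\tilde f^{n_k}(G_{1,k}|_{(-\infty,t]})$ touches some $T^{i_0}G_{2,k}$ with $i_0\ge 0$ (this $i_0$ being unique), and concatenate it with the ray $T^{i_0}G_{2,k}|_{(-\infty,t']}$ to get a proper path $\delta$ joining $\partial\Hy^2$ to $\partial\Hy^2$. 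The uniqueness of $i_0$, the pairwise disjointness of the translates, and the contradiction hypothesis (needed for $i_0=0$) give $\delta\cap T\delta=\emptyset$, while the cyclic order of the intervals $I_{1,k}, TI_{1,k}, I_{2,k}, TI_{2,k}$ forces the two endpoints of $T\delta$ into different components of $\Hy^2\setminus\delta$ — a contradiction. Some argument of this equivariant, global nature (not a local IVT along an orbit) is indispensable, so as written your proof does not go through.
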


\begin{proof}
If $k$ is large enough, the sets $T^i(G_{1,k})$, for $i \in \Z$, are pairwise disjoint  and so are the sets $T^j(G_{2,k})$, for $j \in \Z$. Each set $G_{1,k}$ defines an interval $I_{1,k}$ on $\partial\Hy^2$, as the connected component of $\partial\Hy^2 \setminus \overline{G_{1,k}}$ containing $\alpha$. Similarly, the set $G_{2,k}$ defines an interval $I_{2,k}$ on $\partial\Hy^2$, as the connected component of $\partial\Hy^2 \setminus \overline{G_{2,k}}$ containing $\beta$. There is an orientation on $\partial\Hy^2$ such that the following intervals are all ordered positively:
\begin{equation}\label{OrdreInterval}
I_{1,k},\ TI_{1,k},\ I_{2,k},\ TI_{2,k}.
\end{equation}

These orientations of the $I_{j,k}$'s induce orientations of the $G_{j,k}$'s. For now we fix a large enough $k$ so that the above properties hold.

Suppose for a contradiction that $\tilde{f}^{n_{k}}(G_{1,k}) \cap T^{-1}G_{2,k}$ is empty.

Let us parametrize the oriented geodesic $G_{1,k}$ by the arc length. Let $t\in\R$ be such that 
\[\begin{cases}
\tilde f^{n_k}(G_{1,k}|_{(-\infty,t)}) \cap T^i G_{2,k} = \emptyset\quad &
\text{for all } i\ge 0\\
\tilde f^{n_k}(G_{1,k}|_{(-\infty,t]}) \cap T^{i_0} G_{2,k} \neq\emptyset \quad &
\text{for some } i_0\ge 0.
\end{cases}\]
Such a $t$ exists as the intersection $\tilde{f}^{n_{k}}(G_{1,k}) \cap G_{2,k}$ is nonempty and as $\tilde f^{n_k}(G_{1,k})\cap T^iG_{2,k}=\emptyset$ for any $i$ large enough. Remark that the number $i_0$ satisfying this property is unique.

Let $t'\in\R$ be such that $\tilde f^{n_k}(G_{1,k}|_{(-\infty,t]}) \cap T^{i_0} G_{2,k}|_{(-\infty,t']}$ is reduced to the point $\tilde f^{n_k}(G_{1,k}(t))=T^{i_0}G_{2,k}(t')$ , and denote by $\delta$ the path which is the concatenation of $\tilde f^{n_k}(G_{1,k}|_{(-\infty,t]})$ and $ T^{i_0} G_{2,k}|_{(-\infty,t']}$. It is a path linking $\partial\Hy^2$ to $\partial\Hy^2$. We now prove that this path is disjoint from its translate by $T$. Indeed, the fact that the $T^iG_{1,k}$ and $T^j G_{2,k}$ are pairwise disjoint reduces the possible intersections to
\[\tilde f^{n_k}(G_{1,k}|_{(-\infty,t]}) \cap  T^{i_0+1} G_{2,k}|_{(-\infty,t']} \]
or 
\[ \tilde f^{n_k}(TG_{1,k}|_{(-\infty,t]}) \cap  T^{i_0} G_{2,k}|_{(-\infty,t']}= T \Big( \tilde f^{n_k}(G_{1,k}|_{(-\infty,t]}) \cap  T^{i_0-1} G_{2,k}|_{(-\infty,t']}\Big),\]
but these intersections are empty by uniqueness of $i_0$, and the fact that by contradiction hypothesis, $\tilde f^{n_k}(G_{1,k}) \cap  T^{-1} G_{2,k} = \emptyset$ (to treat the case $i_0=0$).

But, by the ordering of the intervals given by \eqref{OrdreInterval} (this is where we use the order on points $\alpha,p_0,Tp_0,\beta$), if $i_0\ge 0$, then the two endpoints of $T\delta$ lie in different connected components of $\Hy^2\setminus \delta$, a contradiction.
\end{proof}

We now prove that $(T(\alpha),\beta,v) \in \rho(f)$.
By Lemma \ref{intersection}, for any large enough $k$,
$$\tilde{f}^{n_{k}}(TG_{1,k}) \cap G_{2,k} \neq \emptyset.$$

Hence there exists a sequence $(y_{k})_{k}$ of points of $\Hy^{2}$ such that, for any $k$
\begin{enumerate}
\item $y_{k} \in TG_{1,k}$.
\item $\tilde{f}^{n_{k}}(y_{k}) \in G_{2,k}$.
\end{enumerate}
Observe that the sequence of points $(y_{k})$ converges to the point $T(\alpha)$ and that the sequence of points $\tilde{f}^{n_{k}}(y_{k})$ converges to $\beta$.

To prove Proposition \ref{newrot}, we need the following hyperbolic geometry lemma.

\begin{lemma} \label{compareproj}
Let $\alpha_{1}, \alpha_{2}, \beta_{1}, \beta_{2}$ be pairwise distinct points of $\partial \Hy^{2}$. For any point $p_{0}$ of $\Hy^{2}$,
$$ \lim_{\substack{y \in \Hy^2\\y \rightarrow \beta_{1}}}\  \frac{d\big(\pi_{\alpha_{1},\beta_{1}}(y),\pi_{\alpha_{1},\beta_{1}}(p_{0})\big)}{d\big(\pi_{\alpha_{2},\beta_{1}}(y),\pi_{\alpha_{2},\beta_{1}}(p_{0})\big)}
\quad = \quad
\lim_{\substack{y \in \Hy^2\\y \rightarrow \alpha_{1}}}\ \frac{d\big(\pi_{\alpha_{1},\beta_{1}}(y),\pi_{\alpha_{1},\beta_{1}}(p_{0})\big)}{d\big(\pi_{\alpha_{1},\beta_{2}}(y),\pi_{\alpha_{1},\beta_{2}}(p_{0})\big)}\quad = \quad 1.$$
\end{lemma}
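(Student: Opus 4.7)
My plan is to reduce both limits to Claim \ref{compareproj2} via a triangle inequality, observing that the numerators and denominators each diverge but their difference stays bounded.

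First, I fix the first limit (the case $y\to\beta_1$). The key observation is that the projection $\pi_{\alpha_2,\beta_1}(y)$ converges to $\beta_1$ along the geodesic $(\alpha_2,\beta_1)$ as $y\to\beta_1$ in $\overline{\Hy^2}$, so the denominator $d(\pi_{\alpha_2,\beta_1}(y),\pi_{\alpha_2,\beta_1}(p_0))$ tends to $+\infty$ (and likewise the numerator). It therefore suffices to show that the difference
\[
\Big| d\big(\pi_{\alpha_1,\beta_1}(y),\pi_{\alpha_1,\beta_1}(p_0)\big) - d\big(\pi_{\alpha_2,\beta_1}(y),\pi_{\alpha_2,\beta_1}(p_0)\big)\Big|
\]
remains bounded as $y\to\beta_1$; indeed this automatically forces the ratio to $1$.

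To bound this difference, I apply the triangle inequality twice in $\Hy^2$, getting
\[
\Big| d\big(\pi_{\alpha_1,\beta_1}(y),\pi_{\alpha_1,\beta_1}(p_0)\big) - d\big(\pi_{\alpha_2,\beta_1}(y),\pi_{\alpha_2,\beta_1}(p_0)\big)\Big|
\le d\big(\pi_{\alpha_1,\beta_1}(y),\pi_{\alpha_2,\beta_1}(y)\big) + d\big(\pi_{\alpha_1,\beta_1}(p_0),\pi_{\alpha_2,\beta_1}(p_0)\big).
\]
The second term on the right is a fixed constant (depending only on $\alpha_1,\alpha_2,\beta_1,p_0$), while the first term tends to $0$ by Claim \ref{compareproj2} applied with shared endpoint $\beta_1$ (this requires $\alpha_1\neq\beta_1$ and $\alpha_2\neq\beta_1$, which holds by the pairwise distinctness hypothesis). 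So the difference is bounded, and the ratio tends to $1$.

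The second limit (case $y\to\alpha_1$) follows by the exact same argument after noticing that $\pi_{\alpha_1,\beta_i}$ is, as a map, the orthogonal projection onto the geodesic $(\alpha_1,\beta_i)=(\beta_i,\alpha_1)$; thus Claim \ref{compareproj2}, applied with $\beta$ replaced by $\alpha_1$ and $\alpha_1,\alpha_2$ replaced by $\beta_1,\beta_2$, yields $d(\pi_{\alpha_1,\beta_1}(y),\pi_{\alpha_1,\beta_2}(y))\to 0$ as $y\to\alpha_1$, and the denominator $d(\pi_{\alpha_1,\beta_2}(y),\pi_{\alpha_1,\beta_2}(p_0))$ diverges since $\pi_{\alpha_1,\beta_2}(y)\to\alpha_1$. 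I do not expect a real obstacle here: the whole argument is essentially a one-line corollary of Claim \ref{compareproj2}, and the only point to check carefully is that the projection of $y$ onto a geodesic with endpoint $\beta_1$ (resp.\ $\alpha_1$) really does tend to $\beta_1$ (resp.\ $\alpha_1$) as $y\to\beta_1$ (resp.\ $\alpha_1$), which is clear in the upper half-plane model after sending the shared endpoint to $\infty$.
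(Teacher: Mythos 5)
Your proof is correct and follows essentially the same route as the paper: both reduce the statement to Claim \ref{compareproj2} via triangle inequalities showing that the numerator and denominator differ by the fixed constant $d\big(\pi_{\alpha_1,\beta_1}(p_0),\pi_{\alpha_2,\beta_1}(p_0)\big)$ plus a term tending to $0$, and both treat the second limit as the symmetric case with shared endpoint $\alpha_1$. Your explicit remark that the denominator diverges (so a bounded difference forces the ratio to $1$) is left implicit in the paper but is exactly the right point to make.
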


Before proving the lemma, we prove Proposition \ref{newrot}. Recall that we fixed a point $p \in \Hy^2$. For any sufficiently large index $k$,
\begin{multline*}
\frac{d\big(\pi_{T(\alpha),\beta}(\tilde{f}^{n_{k}}(y_{k})),\pi_{T(\alpha),\beta}(y_{k})\big)}{n_{k}}\\
  =  \frac{d\big(\pi_{T(\alpha),\beta}(\tilde{f}^{n_{k}}(y_{k})),\pi_{T(\alpha),\beta}(p)\big)}{n_{k}}+\frac{d\big(\pi_{T(\alpha),\beta}(p),\pi_{T(\alpha),\beta}(y_{k})\big)}{n_{k}} 
\end{multline*}
as the point $\pi_{T(\alpha),\beta}(p)$ lies between the points $\pi_{T(\alpha),\beta}(\tilde{f}^{n_{k}}(y_{k}))$ and $\pi_{T(\alpha),\beta}(y_{k}))$ on the geodesic $(T(\alpha),\beta)$. Hence

\begin{multline*}
\frac{d\big(\pi_{T(\alpha),\beta}(\tilde{f}^{n_{k}}(y_{k})),\pi_{T(\alpha),\beta}(y_{k})\big)}{n_{k}}
  \\
 =  \frac{d\big(\pi_{T(\alpha),\beta}(\tilde{f}^{n_{k}}(y_{k})),\pi_{T(\alpha),\beta}(p)\big)}{d\big(\pi_{\alpha,\beta}(\tilde{f}^{n_{k}}(y_{k})),\pi_{\alpha,\beta}(p)\big)} \  \frac{d\big(\pi_{\alpha,\beta}(\tilde{f}^{n_{k}}(y_{k})),\pi_{\alpha,\beta}(p)\big)}{n_{k}} \\
   + \frac{d\big(\pi_{T(\alpha),\beta}(p),\pi_{T(\alpha),\beta}(y_{k})\big)}{d\big(\pi_{T(\alpha),T(\beta)}(p),\pi_{T(\alpha),T(\beta)}(y_{k})\big)} \ \frac{d\big(\pi_{T(\alpha),T(\beta)}(p),\pi_{T(\alpha),T(\beta)}(y_{k})\big)}{n_{k}}.
\end{multline*}

Now, the points $y_{k}$ and $T(x_{k})$ both belong to the geodesic line $T(G_{1,k})$ which is orthogonal to $(T(\alpha), T(\beta))$. Hence
$$ \begin{array}{rcl} d\big(\pi_{T(\alpha),T(\beta)}(y_{k}),\pi_{T(\alpha),T(\beta)}(p)\big) & = &  d\big(\pi_{T(\alpha),T(\beta)}(T(x_{k})),\pi_{T(\alpha),T(\beta)}(p)\big) \\
 & =  & d\big(\pi_{\alpha,\beta}(x_{k}),\pi_{\alpha,\beta}(T^{-1}(p))\big)
\end{array}$$
so that
$$ \lim_{ k \rightarrow +\infty} \frac{d\big(\pi_{T(\alpha),T(\beta)}(y_{k}),\pi_{T(\alpha),T(\beta)}(p)\big)}{n_{k}}=v'_{1}.$$
In the same way,
$$ \lim_{ k \rightarrow +\infty} \frac{d\big(\pi_{\alpha,\beta}(p),\pi_{\alpha,\beta}(\tilde f^{n_{k}}(y_{k}))\big)}{n_{k}}=v'_{2}.$$
Hence, by Lemma \ref{compareproj},
$$ \lim_{ k \rightarrow +\infty} \frac{d\big(\pi_{T(\alpha), \beta}(\tilde{f}^{n_{k}}(y_{k})), \pi_{T(\alpha),\beta}(y_{k})\big)}{n_{k}}=v'_{1}+v'_{2}=v >0.$$
Recall that the sequence of points $(\tilde{f}^{n_{k}}(y_{k}))_{k}$ converges to the point $\beta$ and that the sequence of points $(y_{k})_{k}$ converges to the point $T(\alpha)$. Therefore
$$(T(\alpha),\beta,v) \in \rho(f)$$
and Proposition \ref{newrot} is proved.
\end{proof}

\begin{proof}[Proof of Lemma \ref{compareproj}]
As the proof of both items are similar, we will only prove the first one.

Let $p_{1}= \pi_{\alpha_{1},\beta_{1}}(p_{0})$ and $p_{2}= \pi_{\alpha_{2},\beta_{1}}(p_{0})$. For any point $y$ of $\Hy^2$, we have
$$d(\pi_{\alpha_{1},\beta_{1}}(y),p_{1}) \geq d(\pi_{\alpha_{2},\beta_{1}}(y),p_{2})-d(p_{1},p_{2})- d(\pi_{\alpha_{1},\beta_{1}}(y),\pi_{\alpha_{2},\beta_{1}}(y)).$$
Similarly,
$$d(\pi_{\alpha_{2},\beta_{1}}(y),p_{2}) \geq d(\pi_{\alpha_{1},\beta_{1}}(y),p_{1})-d(p_{1},p_{2})- d(\pi_{\alpha_{1},\beta_{1}}(y),\pi_{\alpha_{2},\beta_{1}}(y)).$$
Hence Lemma \ref{compareproj} is a consequence of Claim~\ref{compareproj2}.
\end{proof}
\bigskip

We now come to the general case concerning two intersecting geodesics. Our results are weaker than when the second geodesic is the image of the first one by a deck transformation (Proposition~\ref{newrot2Intro} of the introduction).

\begin{proposition}\label{newrot2}
Let $f\in \Homeo_0(S)$, and $(\alpha_1,\beta_1,v_1) \in\rho(f)$, with $v_1>0$. Let also $(\alpha_2,\beta_2)$ be a geodesic of $\Hy^2$ that intersects $(\alpha_1,\beta_1)$, and such that there exists $(y_k)\in\Hy^2$ and $u_k \in\N$ such that $y_k\to \alpha_2$ and $\tilde f^{u_k}(y_k)\to \beta_2$. Then,  there exist $v',v''\ge 0$ satisfying $v'+v'' = v_1$ such that
\begin{enumerate}[label=(\roman*)]
\item either $(\alpha_1,\beta_2,v')\in \rho(f)$ or $(\alpha_1,\alpha_2,v')\in \rho(f)$;
\item either $(\beta_2,\beta_1,v'')\in \rho(f)$, or $(\alpha_2,\beta_1,v'')\in \rho(f)$.
\end{enumerate}
\end{proposition}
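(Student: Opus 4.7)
My plan is to adapt the perpendicular-geodesic intersection technique of Proposition~\ref{newrot}, with the auxiliary orbit $(y_k)_k$ playing the role that the deck translate $T$ played before. I would introduce four perpendicular geodesics: $G_{1,k}, G_{2,k}$ through $x_k$ and $\tilde{f}^{n_k}(x_k)$ orthogonal to $(\alpha_1,\beta_1)$, and $H_{1,k}, H_{2,k}$ through $y_k$ and $\tilde{f}^{u_k}(y_k)$ orthogonal to $(\alpha_2,\beta_2)$. For large $k$, the $\partial\Hy^2$-endpoints of each of these geodesics cluster near one of $\alpha_1,\beta_1,\alpha_2,\beta_2$; since $(\alpha_1,\beta_1)$ and $(\alpha_2,\beta_2)$ cross, these four points alternate cyclically on $\partial\Hy^2$, so the four perpendiculars are pairwise disjoint and cyclically arranged.

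The heart of the proof will be a two-sided analogue of Lemma~\ref{intersection}. I would select an intermediate integer time $m_k \in [0,n_k]$ defined as the first $m$ for which $\tilde{f}^m(G_{1,k})$ meets $H_{1,k} \cup H_{2,k}$. To see that such $m_k$ exists, note that $\tilde{f}^m(G_{1,k})$ has its $\partial\Hy^2$-endpoints fixed near $\alpha_1$ (as $\tilde{f}$ extends by the identity on $\partial\Hy^2$) and, as $m$ grows to $n_k$, deforms from a small arc near $\alpha_1$ into an arc passing through $\tilde{f}^{n_k}(x_k) \in G_{2,k}$, hence one that must cross $(\alpha_2,\beta_2)$ since $\alpha_1,\beta_1$ lie on opposite sides. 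The $y_k$-trajectory together with $H_{1,k}$ and $H_{2,k}$ provides a topological barrier which confines the crossing to the window near $\alpha_2$ or near $\beta_2$. Passing to a subsequence, I would arrange $m_k/n_k \to t \in [0,1]$ and assume that the ``first hit'' is consistently with the same $H_{i,k}$.

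From the intersection I extract a sequence $w_k \in G_{1,k}$ (so $w_k \to \alpha_1$) with $\tilde{f}^{m_k}(w_k) \in H_{i,k}$ (hence tending to $\alpha_2$ or $\beta_2$ depending on $i$), producing the first rotation vector. A symmetric argument, running backward from $G_{2,k}$ and working with the ``second half'' of the orbit of length $n_k - m_k$, produces the second rotation vector. Lemma~\ref{compareproj} then allows the projection speeds on the relevant geodesics, $(\alpha_1,\beta_1)$ versus $(\alpha_1,\alpha_2)$ or $(\alpha_1,\beta_2)$ and versus $(\alpha_2,\beta_1)$ or $(\beta_2,\beta_1)$, to be compared asymptotically; since the sum of the projection displacements over the two time intervals equals the projection displacement of the original $x_k$-orbit up to bounded error, the speeds satisfy $v' = t v_1$, $v'' = (1-t) v_1$, hence $v' + v'' = v_1$.

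The main obstacle will be the intermediate intersection argument: one has to show that the deformed arc $\tilde{f}^{n_k}(G_{1,k})$ necessarily crosses one of $H_{1,k}, H_{2,k}$ rather than crossing $(\alpha_2,\beta_2)$ only at points far from both $\alpha_2$ and $\beta_2$. This is delicate because the times $n_k$ and $u_k$ are a priori uncoupled; the $y_k$-trajectory and its perpendiculars must be arranged to topologically close off the ``middle'' of $(\alpha_2,\beta_2)$. A secondary difficulty is that the pivot point may be $\alpha_2$ on one side and $\beta_2$ on the other, which is precisely why the conclusion is stated as a disjunction in each of the two clauses; several configurations of crossings must be kept track of simultaneously when verifying that the speeds add up to $v_1$.
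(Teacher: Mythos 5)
There is a genuine gap, and it sits exactly where you flag your ``main obstacle'': the barrier you propose does not separate. With the four points in cyclic order $\alpha_1,\alpha_2,\beta_1,\beta_2$, the strip of $\Hy^2$ lying between $H_{1,k}$ and $H_{2,k}$ has closure meeting $\partial\Hy^2$ in two arcs, one containing $\alpha_1$ and one containing $\beta_1$; hence the arc $\tilde f^{n_k}(G_{1,k})$, which runs from near $\alpha_1$ to near $\alpha_1$ through a point near $\beta_1$, can perfectly well cross $(\alpha_2,\beta_2)$ only in its ``middle'' portion and never meet $H_{1,k}\cup H_{2,k}$, so your stopping time $m_k$ need not exist. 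Invoking ``the $y_k$-trajectory'' to close this gap does not help as stated: the only data is the pair $y_k,\tilde f^{u_k}(y_k)$, and neither the discrete orbit nor the isotopy trajectory of $y_k$ is known to stay near $(\alpha_2,\beta_2)$ or to be usable as a connected wall. The paper's proof repairs precisely this point by taking as barrier the set $X=\tilde f^{u_k}(L_{2,k})\cup R_{2,k}$, where $L_{2,k}$ and $R_{2,k}$ are the \emph{half-planes} bounded by $H_{1,k}$ and $H_{2,k}$: since $\tilde f$ is the identity on $\partial\Hy^2$, the set $\tilde f^{u_k}(L_{2,k})$ keeps the boundary trace of $L_{2,k}$ (near $\alpha_2$) while containing $\tilde f^{u_k}(y_k)$ (near $\beta_2$), so $X$ is connected, separates a neighbourhood of $\alpha_1$ from one of $\beta_1$, and is reached by $\tilde f^{t_\ell}(L_{1,\ell})$ for $t_\ell\gg u_k$; the two ways of hitting $X$ give directly the alternatives of the statement, with explicit times $t_\ell$ or $t_\ell-u_k\sim t_\ell$, so no intermediate stopping time is ever needed.

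A second, independent problem is your speed bookkeeping. The displacements of the new points $w_k$ over the two sub-intervals are displacements of \emph{different} orbits, not of the original $x_k$-orbit, so they do not add up to $v_1 n_k$ up to bounded error, and the identities $v'=tv_1$, $v''=(1-t)v_1$ are unjustified. The correct accounting, as in the proofs of Proposition~\ref{newrot} and of this proposition in the paper, is spatial rather than temporal: one splits $v_1=v'+v''$ at the intersection point $p$ of the two geodesics (limits of $d(\pi_{\alpha_1,\beta_1}(x_k),\pi_{\alpha_1,\beta_1}(p))/t_k$ and of the complementary quantity), and since the new point lies beyond the perpendicular through $x_\ell$ while its image lies beyond $p$ on the target geodesic, Lemma~\ref{compareproj} gives the lower bound $v'$ (resp.\ $v''$) for the projected speed; Theorem~\ref{TheoStar} then converts the $\limsup$ lower bound into exact membership. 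Your temporal split could at best be salvaged into lower bounds $v_1'/t$ and $v_1''/(1-t)$ followed by the same star-shapedness reduction, but only after the separation step above is fixed.
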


Remark that this result can be applied when moreover $(\alpha_2,\beta_2,v_2)\in\rho(f)$, with $v_2>0$.

\begin{proof}
By definition, there exist a sequence $(x_{k})_{ k \in \N}$ of points in $\Hy^{2}$, a sequence $(t_{k})_{ k \in \N}$ of integers, a sequence $(y_{k})_{ k \in \N}$ of points in $\Hy^{2}$ and a sequence $(u_{k})_{ k \in \N}$ of integers such that
$$ \left\{
\begin{array}{ll}
\displaystyle \lim_{k \rightarrow +\infty} x_{k}  =  \alpha_1
& \displaystyle \lim_{k \rightarrow +\infty} y_{k}  =  \alpha_2 \\
\displaystyle \lim_{ k \rightarrow +\infty} \tilde{f}^{t_{k}}(x_{k})  =  \beta_1 
& \displaystyle \lim_{ k \rightarrow +\infty} \tilde{f}^{u_{k}}(y_{k})  =  \beta_2.\\
\displaystyle \lim_{ k \rightarrow +\infty} \frac{d\big(\pi_{\alpha_1,\beta_1}(x_{k}),\, \pi_{\alpha_1,\beta_1}(\tilde{f}^{t_{k}}(x_{k}))\big)}{t_{k}}  =  v_1\ 
\end{array}
\right.
$$ 
For any $k \geq 0$ and $i=1,2$, denote by $L_{i,k}$ (respectively $R_{i,k}$) the left of the unique geodesic line passing through $x_{k}$ if $i=1$, $y_k$ if $i=2$ (respectively the right of the unique geodesic line passing through $\tilde{f}^{t_{k}}(x_{k})$ if $i=1$ and through $\tilde{f}^{u_{k}}(y_{k})$ if $i=2$) which is orthogonal to $(\alpha_i,\beta_i)$), the left/right being defined with the help of the oriented geodesics $(\alpha_i,\beta_i)$. Let $p\in\Hy^{2}$ be the intersection between the geodesics $(\alpha_1,\beta_1)$ and $(\alpha_2,\beta_2)$. Extracting a subsequence if necessary, we can suppose that the sequences
\[\left(\frac{d(\pi_{\alpha_1,\beta_1}(x_{k}),\, \pi_{\alpha_1,\beta_1}(p))}{t_{k}}\right)_{k}
\quad \text{and}\quad
\left(\frac{d(\pi_{\alpha_1,\beta_1}(p),\, \pi_{\alpha_1,\beta_1}(\tilde{f}^{t_{k}}(x_{k})))}{t_{k}}\right)_{k}\]
converge with respective limits $v'$ and $v''$. Observe that those limits do not depend on the chosen point $p$ and that $v'+v'' = v_1$.

The reader can refer to Figure \ref{FignoInter2}. Fix $k\in\N$, and set $L_b = L_{2,k}$ and $R_d = R_{2,k}$. If $k$ is large enough, the closure of these sets in the disk $\overline\Hy^2$ contains neither $\alpha_1$ nor $\beta_1$. Then there exists $L\in\N$ such that for any $\ell\ge L$, the sets $L_a = L_{1,\ell}$ and $R_c = R_{1,\ell}$ are disjoint from the set
\[X = \tilde f^{u_k}(L_b) \cup R_d.\]
Note that the set $X$ separates the sets $L_a$ and $R_c$ (that is, these sets lie in different connected components of $X^\complement$). Moreover, the trace of the closure of $X$ on $\partial\Hy^2$ is the same as the trace of the closure of $L_b\cup R_d$. 

\begin{figure}[ht]
\begin{center}

\includegraphics[scale=1]{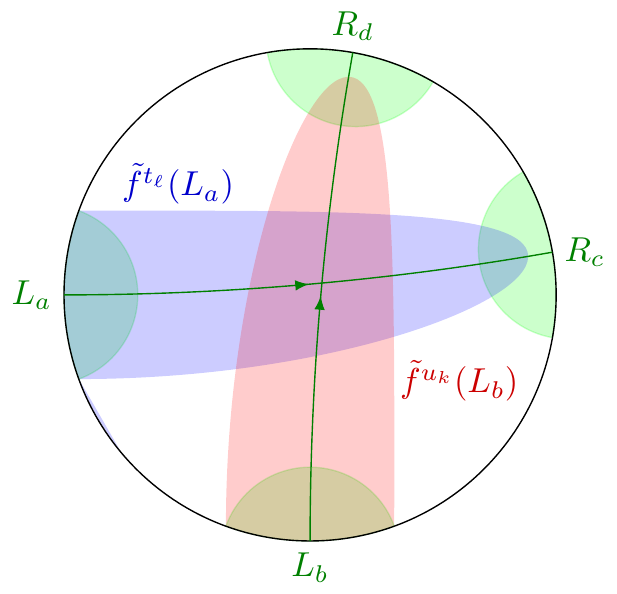}
\caption{\label{FignoInter2} Proof of Proposition \ref{newrot2}.}
\end{center}
\end{figure}

Take $\ell\ge L$ such that $t_\ell \gg u_k$. As $x_\ell \in L_a$ and $\tilde f^{t_\ell}(x_\ell) \in R_c$, the set $Y = \tilde f^{t_\ell}(L_a) \cup R_c$ is (path) connected. This implies that $X\cap \tilde f ^{t_\ell}(L_a)\neq\emptyset$, hence one of the following intersections is nonempty:
\[\tilde f^{t_\ell}(L_a) \cap R_d \qquad\text{or}\qquad \tilde f^{t_\ell-u_k}(L_a) \cap L_b.\]

Remark that this conclusion is similar to the one of Lemma \ref{intersection}. Reasoning as in the proof of Proposition \ref{newrot}, and in particular using Lemma \ref{compareproj}, we deduce that there exist a sequence of points $(z_i)$ in $\Hy^2$, and a sequence of times $n_i$ going to infinity such that $\lim z_i = \alpha_1$ and
\[\begin{split}
\text{either}\quad \tilde f^{n_i}(z_i) \underset{i\to+\infty}{\longrightarrow} \beta_2
\qquad \text{and} \qquad
\limsup_{i\to +\infty} \frac{d(\pi_{\alpha_1,\beta_2}(z_i), \pi_{\alpha_1,\beta_2}(\tilde f^{n_i}(z_i)))}{n_i}\ge v',\\
\text{or}\quad \tilde f^{n_i}(z_i) \underset{i\to+\infty}{\longrightarrow} \alpha_2
\qquad \text{and} \qquad
\limsup_{i\to +\infty} \frac{d(\pi_{\alpha_1,\alpha_2}(z_i), \pi_{\alpha_1,\alpha_2}(\tilde f^{n_i}(z_i)))}{n_i}\ge v',
\end{split}\]
which implies, together with Theorem \ref{TheoStar}, that either $(\alpha_1,\beta_2,v')\in \rho(f)$, or $(\alpha_1,\alpha_2,v')\in \rho(f)$.

Considering $\tilde f^{-t_\ell}(R_c)$ instead of $\tilde f^{t_\ell}(L_a)$ gives the following similar conclusion: one of the following intersections is nonempty:
\[\tilde f^{-t_\ell}(R_c) \cap R_d
\qquad\text{or}\qquad
\tilde f^{-t_\ell-u_k}(R_c) \cap L_b.\]
As before, this implies that either $(\beta_2,\beta_1,v'')\in \rho(f)$, or $(\alpha_2,\beta_1,v'')\in \rho(f)$.
In conclusion 
\begin{equation*}
\begin{split}
\text{either }(\alpha_1,\beta_2,v')\in \rho(f),&\quad \text{or } (\alpha_1,\alpha_2,v')\in \rho(f)\text{, and}\\
\text{either }(\beta_2,\beta_1,v'')\in \rho(f),&\quad \text{or } (\alpha_2,\beta_1,v'')\in \rho(f).
\end{split}
\end{equation*}
\end{proof}

\section{Almost annular homeomorphisms}\label{SecPseudo}

In this section, we study the situation where the rotation vectors are all associated to a single geodesic of the surface. We will prove that this implies that the geodesic has no auto-intersection.

Let $\alpha$ and $\beta$ be points of $\partial \Hy^2$ and $p: \tilde{S}_{g}= \Hy^{2} \rightarrow S$ be a covering map. For $f \in \mathrm{Homeo}_{0}(S)$, we say that the rotation set of $f$ is \emph{contained in the geodesic} $p(\alpha,\beta)$ if
$$ \rho(f) \subset \bigcup_{T \in \pi_{1}(S)} T(\alpha,\beta) \times \R_{+}.$$
Next proposition is Proposition~\ref{notransverseintIntro} of the introduction.

\begin{proposition} \label{notransverseint}
Let $f \in \mathrm{Homeo}_{0}(S)$ and $\alpha,\beta \in \partial \Hy^{2}$. Suppose that the rotation set of $f$ is not reduced to $\left\{ 0 \right\}$ and is contained in $p(\alpha,\beta)$. Then, for any $T\in\pi_1(S)$, the geodesics $(\alpha,\beta)$ and $T(\alpha,\beta)$ have no common point in $\Hy^2$.
\end{proposition}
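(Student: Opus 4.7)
The plan is to argue by contradiction: assuming some $T \in \pi_{1}(S)$ satisfies $T(\alpha,\beta) \neq (\alpha,\beta)$ as subsets of $\Hy^{2}$ yet $(\alpha,\beta) \cap T(\alpha,\beta) = \{p_{0}\}$ for some $p_{0} \in \Hy^{2}$, I will produce a rotation vector whose direction is not a lift of $\gamma = p(\alpha,\beta)$, contradicting the hypothesis. First, I reduce to the case $(\alpha,\beta,v) \in \rho(f)$ for some $v > 0$: since $\rho(f) \neq \{0\}$ is $\pi_{1}(S)$-invariant by Proposition \ref{PropRhoInv} and is contained in lifts of $\gamma$, up to applying a deck transformation to $(\alpha,\beta)$ and possibly passing to $f^{-1}$, I can choose such a representative while preserving the intersection hypothesis (after conjugating $T$ accordingly).

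Second, I will apply Proposition \ref{newrot} to $(\alpha,\beta,v)$ with the deck transformation $T^{-1}$, noting that $T^{-1}p_{0} \in (\alpha,\beta)$. Depending on the order in which $\alpha, p_{0}, T^{-1}p_{0}, \beta$ appear along the oriented geodesic, I will apply the proposition directly or to $f^{-1}$ (via Proposition \ref{PropRhoInv}). In every configuration, after relabelling $T \leftrightarrow T^{-1}$ and possibly exchanging the roles of $\alpha$ and $\beta$, I will obtain $(T\alpha,\beta,v) \in \rho(f)$. By the hypothesis on $\rho(f)$, the geodesic $(T\alpha,\beta)$ is then a lift of $\gamma$, so there is $R \in \pi_{1}(S)$ with $\{R\alpha, R\beta\} = \{T\alpha, \beta\}$.

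Third and crucially, I will derive a contradiction from the existence of such an $R$, splitting on the pairing. If $R\alpha = T\alpha$ and $R\beta = \beta$, then $R \in \mathrm{Stab}(\beta)$. Using that the stabilizer of any boundary point in $\pi_{1}(S)$ (a Fuchsian group of an orientable closed surface of genus $\geq 2$) is cyclic, generated by the primitive hyperbolic element whose axis ends at that point, I handle the subcases: if $\mathrm{Stab}(\beta)$ is trivial then $R = \mathrm{Id}$ and $T$ fixes $\alpha$, so $(\alpha,\beta)$ and $T(\alpha,\beta)$ share only the boundary point $\alpha$ and cannot meet in $\Hy^{2}$, a contradiction; if $\mathrm{Stab}(\beta) = \langle T_{\beta}\rangle$ is nontrivial then $R$ fixes both endpoints of $T_{\beta}$'s axis, and combined with $R^{-1}T \in \mathrm{Stab}(\alpha)$ one deduces $T(\alpha,\beta) = (\alpha,\beta)$ (cleanest when $\gamma$ itself is the closed geodesic associated to $T_{\beta}$, where $\mathrm{Stab}(\alpha) = \mathrm{Stab}(\beta) = \langle T_{0}\rangle$ forces $T \in \langle T_{0}\rangle$), again contradicting our assumption.

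If instead $R\alpha = \beta$ and $R\beta = T\alpha$, then $T^{-1}R^{2}$ fixes $\alpha$, yielding $T = R^{2}$ in the trivial-stabilizer subcase. The key geometric input is that $R$ is a hyperbolic isometry preserving the cyclic orientation of $\partial\Hy^{2}$, with its $\langle R \rangle$-orbit lying monotonically on $\partial\Hy^{2}$ between its two fixed points. Consequently the four points $\alpha,\ R\alpha = \beta,\ R^{2}\alpha = T\alpha,\ R^{3}\alpha = T\beta$ appear in this cyclic order on $\partial\Hy^{2}$, so the pairs $\{\alpha,\beta\}$ and $\{T\alpha,T\beta\}$ do not interlink — meaning the geodesics $(\alpha,\beta)$ and $T(\alpha,\beta)$ cannot cross in $\Hy^{2}$, contradicting our starting hypothesis. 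The main obstacle will be the bookkeeping in this last step: uniformly handling all the orientation choices coming from Proposition \ref{newrot}, and the edge cases where $\alpha$ or $\beta$ has a nontrivial stabilizer associated to a closed geodesic distinct from $\gamma$ (which should reduce to the arguments above).
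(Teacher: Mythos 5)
There is a genuine gap, and it sits exactly where you wrote ``one deduces $T(\alpha,\beta)=(\alpha,\beta)$'' and ``which should reduce to the arguments above.'' Your first two steps (reducing to $(\alpha,\beta,v)\in\rho(f)$ with $v>0$ and applying Proposition~\ref{newrot} to get a new vector, hence a deck transformation $R$ with $R(\alpha,\beta)=(T\alpha,\beta)$) match the paper, and your trivial-stabilizer subcases are fine — your monotone $\langle R\rangle$-orbit argument in the pairing $R\alpha=\beta$, $R\beta=T\alpha$ is essentially the paper's north–south argument for $T=T_1^2$. But the hypothesis does not say that $p(\alpha,\beta)$ is a closed geodesic, so in the nontrivial-stabilizer cases the element of $\mathrm{Stab}(\beta)$ (resp.\ $\mathrm{Stab}(\alpha)$) may have an axis ending at $\beta$ (resp.\ $\alpha$) that is \emph{not} $(\alpha,\beta)$. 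In that situation the relations you extract, $R\in\mathrm{Stab}(\beta)$ and $R^{-1}T\in\mathrm{Stab}(\alpha)$, only give $T=T_\beta^nT_\alpha^m$ with the two axes distinct from $(\alpha,\beta)$, and this is perfectly compatible with $(\alpha,\beta)$ and $T(\alpha,\beta)$ crossing transversally: no purely group-theoretic contradiction follows, and the cyclic-order bookkeeping you announce cannot be carried out from these constraints alone. This ``mixed'' case is precisely the hard part of the statement (it is where a non-closed geodesic whose ideal endpoints happen to be fixed points of hyperbolic elements must be excluded), and your proposal gives no mechanism for it.

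The paper closes this case with a second \emph{dynamical} input that your outline lacks. Writing $T_1$ for the deck transformation with $T_1(\alpha,\beta)=(\alpha,T\beta)$ and $T_2=T^{-1}T_1$, it splits the speed at the crossing point as $v=v_1'+v_2'$ and, via Lemmas~\ref{LemCompareAnneau} and~\ref{compareproj} applied in the annuli $\big(\overline{\Hy}^2\setminus\{\alpha,\alpha_1\}\big)/\langle T_1\rangle$ and $\big(\overline{\Hy}^2\setminus\{\beta,\beta_1\}\big)/\langle T_2\rangle$, converts whichever of $v_1',v_2'$ is positive into a nonzero rotation vector in the direction of the axis of $T_1$ or of $T_2$. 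The hypothesis on $\rho(f)$ then forces that axis to be a translate of $(\alpha,\beta)$, so $p(\alpha,\beta)$ is a \emph{closed} geodesic; only after that do Lemma~\ref{commonendpoint} and the orientation arguments (including the delicate subcase $T_1\alpha=T\beta$, $T_1\beta=\alpha$, where one must show $(\alpha,\beta)$ is the common axis of $T^{-1}T_1^2$ and $T_1T^{-1}T_1$ and rule out $T^{-1}T_1^2(\alpha)=\alpha$) yield the contradiction. To repair your proof you would need to add this step — producing rotation vectors along the axes of the stabilizing elements and invoking the hypothesis a second time — rather than hoping the stabilizer algebra alone forces $T(\alpha,\beta)=(\alpha,\beta)$.
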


In particular, this proposition implies that the geodesic $p(\alpha,\beta)$ has no transverse auto-intersection.

This proposition is still true (with the same proof) if we suppose that

$$ \rho(f) \subset \left(\bigcup_{T \in \pi_{1}(S)} T(\alpha,\beta) \times \R_{+}\right)
\cup
\left(\bigcup_{T \in \pi_{1}(S)} T(\beta,\alpha) \times \R_{+}\right).$$

\begin{remark}
The only examples we know of homeomorphisms $f$ such that the rotation set of $f$ is not reduced to $\left\{ 0 \right\}$ and is contained in $p(\alpha,\beta)$ are those for which $p(\alpha,\beta)$ is a closed geodesic. We can wonder whether there are other examples.
\end{remark}

\begin{proof}
Take $v >0$ such that $(\alpha,\beta,v) \in \rho(f)$. We take the same notation as in the proof of Proposition \ref{newrot}: there exist a sequence $(x_{k})_{ k \in \N}$ of points in $\Hy^{2}$ and a sequence $(n_{k})_{ k \in \N}$ of integers such that
$$ \left\{
\begin{array}{l}
\displaystyle \lim_{k \rightarrow +\infty} x_{k}  =  \alpha \\
\displaystyle \lim_{ k \rightarrow +\infty} \tilde{f}^{n_{k}}(x_{k})  =  \beta \\
\displaystyle \lim_{ k \rightarrow +\infty} \frac{d\big(\pi_{\alpha,\beta}(\tilde{f}^{n_{k}}(x_{k})\big),\,\pi_{\alpha,\beta}(x_{k}))}{n_{k}}  =  v.
\end{array}
\right.
$$ 
For any $k \geq 0$, denote by $G_{1,k}$ (respectively $G_{2,k}$) the unique geodesic line passing through $x_{k}$ (respectively $\tilde{f}^{n_{k}}(x_{k})$) which is orthogonal to $(\alpha,\beta)$. Fix a point $p$ in $\Hy^{2}$. Extracting a subsequence if necessary, we can suppose that both sequences $\left(\frac{d(\pi_{\alpha,\beta}(x_{k}),\pi_{\alpha,\beta}(p))}{n_{k}}\right)_{k}$ and $\left(\frac{d(\pi_{\alpha,\beta}(p),\pi_{\alpha,\beta}(\tilde{f}^{n_{k}}(x_{k})))}{n_{k}}\right)_{k}$ converge with respective limits $v'_{1}$, $v'_{2}$. Observe that those limits do not depend on the chosen point $p$ and that $v'_{1}+v'_{2}=v$.

Suppose for a contradiction that the geodesic line $p(\alpha,\beta)$ has a transverse autointersection: there exists $T \in \pi_{1}(S)$ such that 
\begin{equation}\label{EqCard}
\card\big(T(\alpha,\beta) \cap (\alpha,\beta) \big) = 1.
\end{equation}
By Proposition \ref{newrot}, either $(\alpha,T(\beta),v) \in \rho(f)$ or $(T(\alpha), \beta,v) \in \rho(f)$.

Let us finish the proof of Proposition \ref{notransverseint} in the first case $(\alpha,T(\beta),v) \in \rho(f)$. The second case is similar. We will use the following classical lemma.

\begin{lemma} \label{commonendpoint}
Let $\eta_{1}$ and $\eta_{2}$ be nontrivial transformations in $\pi_{1}(S)$. If the respective axis $A_{1}$ and $A_{2}$ of $\eta_{1}$ and $\eta_{2}$ have a common endpoint, then $A_{1}=A_{2}$ and there exists nonzero integers $n_{1}$ and $n_{2}$ such that
$$ \eta_{1}^{n_{1}}=\eta_{2}^{n_{2}}.$$
\end{lemma}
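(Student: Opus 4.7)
The plan is to exploit the fact that $\pi_1(S)$ acts on $\Hy^2$ as a discrete, torsion-free group of orientation-preserving isometries. Since $\eta_1,\eta_2$ admit axes they are loxodromic, hence each fixes exactly two points of $\partial\Hy^2$: the endpoints of its axis. Assuming $A_1$ and $A_2$ share an endpoint $\xi$, I would pass to the upper half-plane model sending $\xi$ to $\infty$, so that
\[ \eta_i(z) = \lambda_i z + b_i,\qquad \lambda_i>0,\ \lambda_i\neq 1, \]
and the other endpoint of $A_i$ is $b_i/(1-\lambda_i)$.

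A direct computation of the commutator gives
\[ [\eta_1,\eta_2](z) \;=\; z \;+\; \bigl(b_1(1-\lambda_2)+b_2(\lambda_1-1)\bigr), \]
which equals the identity if and only if the other fixed points of $\eta_1$ and $\eta_2$ coincide, i.e.\ $A_1=A_2$. So suppose for contradiction that $A_1\neq A_2$; then $\tau := [\eta_1,\eta_2]$ is a nontrivial parabolic element of $\pi_1(S)$ fixing $\infty$, say $\tau(z)=z+c$ with $c\neq 0$. A short computation shows $\eta_1^{n} \tau \eta_1^{-n}(z) = z + \lambda_1^{n} c$. Choosing the sign of $n$ so that $\lambda_1^{n} c \to 0$ produces a sequence of pairwise distinct nontrivial elements of $\pi_1(S)$ acting by arbitrarily small translations, hence sending any fixed compact set of $\Hy^2$ into itself for infinitely many $n$. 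This contradicts the proper discontinuity of the action of $\pi_1(S)$ on $\Hy^2$. Hence $A_1=A_2$.

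Once $A_1=A_2=A$, the subgroup of $\pi_1(S)$ stabilizing $A$ setwise is a discrete subgroup of the stabilizer of $A$ in $\mathrm{Isom}^{+}(\Hy^2)$. The latter consists of the one-parameter group of translations along $A$ together with the half-turns about points of $A$; but $\pi_1(S)$ is torsion-free (it acts freely on $\Hy^2$), so the half-turns are excluded. The subgroup therefore embeds as a discrete subgroup of $\R$, hence is infinite cyclic, generated by some $\eta\in\pi_1(S)$. Writing $\eta_1=\eta^{a}$ and $\eta_2=\eta^{b}$ with $a,b$ nonzero (since $\eta_1,\eta_2$ are nontrivial), setting $n_1:=b$ and $n_2:=a$ yields
\[ \eta_1^{n_1} \;=\; \eta^{ab} \;=\; \eta_2^{n_2}, \]
which is the desired relation.

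The main obstacle I anticipate is the discreteness argument for ruling out the parabolic commutator: one must know that in a properly discontinuous group of isometries of $\Hy^2$, no hyperbolic element can share a single boundary fixed point with a parabolic element. The self-contained conjugation computation above is the cleanest way to turn this into an explicit accumulation at the identity; once that step is in place, the commutator identity and the classification of discrete subgroups of the stabilizer of a geodesic are routine.
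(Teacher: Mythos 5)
Your proof is correct. It rests, like the paper's, on proper discontinuity of the $\pi_1(S)$-action on $\Hy^2$, but the route is genuinely different in its details: the paper's argument is a one-line observation that if the conclusion failed one could produce infinitely many points of the form $\eta_1^n\eta_2^m(p)$ in a compact set, contradicting properness, whereas you normalize the common endpoint to $\infty$, compute that the commutator $[\eta_1,\eta_2]$ is a translation which is trivial exactly when $A_1=A_2$, and in the contrary case conjugate this parabolic by powers of $\eta_1$ to obtain distinct nontrivial elements with displacement tending to $0$ — the classical ``a hyperbolic and a parabolic cannot share a fixed point in a discrete group'' argument. Your version is more explicit and self-contained, and it also spells out the second conclusion (that the setwise stabilizer of the common axis is infinite cyclic by torsion-freeness plus discreteness of the translation-length subgroup of $\R$, whence $\eta_1^{n_1}=\eta_2^{n_2}$), a step the paper leaves implicit. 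Two cosmetic points: the small translations give $g(K)\cap K\neq\emptyset$ for a fixed compact $K$ (rather than $g(K)\subset K$), which is what proper discontinuity forbids; and the discreteness of the image in $\R$ in the last step deserves the same one-line appeal to properness, applied to a point of the axis. Neither affects the validity of the argument.
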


To prove this lemma, observe that, otherwise, for any point $p\in \Hy^2$, there would be infinitely many points of the form $\eta_{1}^{n}\eta_{2}^{m}(p)$ in a compact subset of $\Hy^2$. This is not possible as the group $\pi_{1}(S)$ acts properly on $\Hy^2$. 
\medskip

By hypothesis on $\rho(f)$, there exists a deck transformation $T_{1}$ such that $T_{1}(\alpha,\beta)=(\alpha,T(\beta))$. Then 
\begin{enumerate}
\item either $T_{1}(\alpha)=\alpha$ and $T_{1}(\beta)=T(\beta)$;
\item or $T_{1}(\alpha)=T(\beta)$ and $T_{1}(\beta)=\alpha$.
\end{enumerate}

Suppose first that $T_{1}(\alpha)=\alpha$ and $T_{1}(\beta)=T(\beta)$. Then $\alpha$ is an endpoint of the axis of $T_{1}$. Let $T_{2}=T^{-1} T_{1}$. As the deck transformation $T_{2}$ fixes the point $\beta$, either $T_{2}$ is trivial or $\beta$ is an endpoint of the axis of $T_{2}$. If $T_{2}$ was trivial, we would have $T_{1}=T$ but this is not possible as $T(\alpha) \neq \alpha = T_{1}(\alpha)$.

Hence $T_{2} \neq 1$. Now, let us prove that $p(\alpha,\beta)$ is a closed geodesic. This will lead to a contradiction: by Lemma \ref{commonendpoint}, the axis of $T_{1}$ would be $(\alpha,\beta)$, which is not possible as, by \eqref{EqCard}, $T_{1}(\beta)=T(\beta) \neq \beta$.

Denote by $(\alpha,\alpha_{1})$ the axis of $T_{1}$ and by $(\beta,\beta_{1})$ the axis of $T_{2}$. Let $A_{1}$ (respectively $A_{2}$) be the closed annulus obtained by quotienting the closed band $\overline{\Hy}^{2} \setminus \left\{ \alpha,\alpha_{1} \right\}$ (respectively $\overline{\Hy}^{2} \setminus \left\{ \beta,\beta_{1} \right\}$) by the action of the group generated by $T_{1}$ (respectively $T_{2}$). For $i=1,2$, denote by $\overline{f}_{i}$ the homeomorphism induced by $\tilde{f}$ on $A_{i}$. Denote by $\rho(\overline{f}_{i})$ the rotation set of $\overline{f}_{i}$ on $A_{i}$.

Recall that both sequences $\left(\frac{d(\pi_{\alpha,\beta}(x_{k}),\pi_{\alpha,\beta}(p))}{n_{k}}\right)_{k}$ and $\left(\frac{d(\pi_{\alpha,\beta}(p),\pi_{\alpha,\beta}(\tilde{f}^{n_{k}}(x_{k})))}{n_{k}}\right)_{k}$  converge with respective limits $v'_{1}$, $v'_{2}$ with $v'_{1}+v'_{2}=v>0$. Therefore, by Lemma \ref{LemCompareAnneau} and Lemma \ref{compareproj}, $v'_{1} \in \rho(\overline{f}_{1})$ and $v'_{2} \in \rho(\overline{f}_{2})$. Hence, one of those two rotation sets contains a nonzero rotation number. But this implies that there exists $v'>0$ such that either $(\alpha,\alpha_{1},v') \in \rho(f)$ or $(\beta_{1},\beta,v') \in \rho(f)$. As we supposed that $\rho(f)$ was contained in a geodesic line, this means that either $(\alpha,\alpha_{1})$ or $(\beta_{1},\beta)$ is the image of $(\alpha,\beta)$ under a deck transformation. Hence, as $p(\alpha,\alpha_{1})$ and $p(\beta_{1},\beta)$ are closed geodesics, so is $p(\alpha,\beta)$, what we wanted to prove.
\medskip

Now, let us suppose that $T_{1}(\alpha)=T(\beta)$ and $T_{1}(\beta)=\alpha$. Then $T^{-1}T_{1}^2 (\beta)=\beta$ and $T_{1}T^{-1} T_{1}(\alpha)=\alpha$. If one of the deck transformations $T^{-1} T_{1}^2$ or $ T_{1} T^{-1} T_{1}$ is trivial, then $T=T_{1}^{2}$. Then there exists an orientation of the circle $\partial \Hy^2$ such that, when one follows this orientation of the circle, one successively meets the points $\beta$, $T_{1}^{2}(\beta)=T(\beta)$, $T_{1}(\beta)=\alpha$ and $T_{1}^{3}(\beta)=T(\alpha)$. This is not possible as the dynamics of $T_{1}$ on $\partial \Hy^{2}$ is a north-south dynamic (see Figure~\ref{Fignotransverseint}, left).

\begin{figure}[ht]
\begin{center}
\begin{tikzpicture}[scale=1]
\draw (0,0) circle (1);
\hgline{-180}{10}{green!50!black}
\hgline{-90}{80}{blue!50!black}
\draw[color=green!50!black] (-180:1) node[left]{$\alpha = T_1(\beta)$};
\draw[color=green!50!black] (10:1) node[right]{$\beta$};
\draw[color=blue!80!black] (-90:1) node[below]{$T(\alpha) = T_1^3(\beta)$};
\draw[color=blue!80!black] (80:1) node[above]{$T(\beta) = T_1^2(\beta) = T_1(\alpha)$};
\end{tikzpicture}
\hspace{70pt}
\begin{tikzpicture}[scale=1]
\draw (0,0) circle (1);
\hgline{-180}{10}{green!50!black}
\hgline{-90}{80}{blue!50!black}
\hgline{25}{65}{gray!80!black}
\draw[color=green!50!black] (-180:1) node[left]{$\alpha = T_1(\beta)$};
\draw[color=green!50!black] (10:1) node[right]{$\beta$};
\draw[color=gray!80!black] (50:1) node[right]{$T_1$ axis};
\draw[color=blue!80!black] (-90:1) node[below]{$T(\alpha)$};
\draw[color=blue!80!black] (80:1) node[above]{$T(\beta) = T_1(\alpha)$};
\end{tikzpicture}
\caption{\label{Fignotransverseint} End of proof of Proposition \ref{notransverseint}.}
\end{center}
\end{figure}
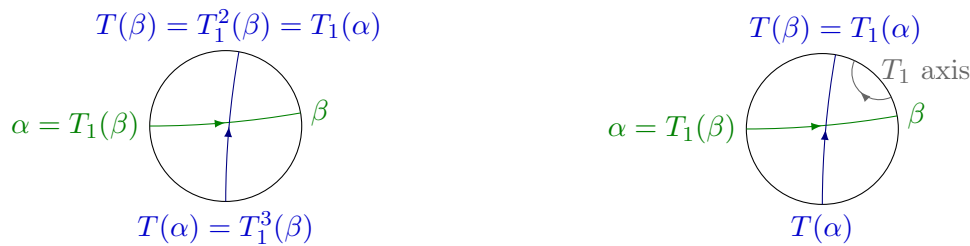

So both deck transformations $T^{-1} T_{1}^2$ and $ T_{1} T^{-1} T_{1}$ are nontrivial. Remark that the point $\beta$ (respectively $\alpha$) is an endpoint of the axis of $T^{-1} T_{1}^{2}$ (respectively $T_{1} T^{-1} T_{1}$). As in the above case, this implies that the geodesic line $p(\alpha,\beta)$ is closed and that $(\alpha,\beta)$ is the axis of both deck transformations $T^{-1} T_{1}^2$ and $T_{1} T^{-1} T_{1}$. Then $T^{-1}T_{1}^{2}(\alpha)=\alpha$. However, we will see that this relation is not possible. This will complete the proof of Proposition \ref{notransverseint}.

Indeed, orient the circle $\partial \Hy^2$ in such a way that, following this orientation, we successively meet the points $\alpha$, $T(\alpha)$, $\beta$, $T(\beta)$ (see Figure~\ref{Fignotransverseint}, right). As $T_{1}(\beta)=\alpha$ and $T_{1}(\alpha)= T(\beta)$, both endpoints of the axis of $T_{1}$ have to belong to the oriented open interval $(\beta, T(\beta))$ of $\partial \Hy^{2}$ and the point $T_{1}^{2}(\alpha)= T_{1}(T(\beta))$ belongs to the open interval $(\beta, T(\beta)) \subset (T(\alpha), T(\beta))\subset\partial \Hy^2$. Hence the point $T^{-1}(T_{1}^{2}(\alpha))$ belongs to the open interval $(\beta,\alpha)$ and, in particular, $T^{-1}(T_{1}^{2}(\alpha)) \neq \alpha$.
\end{proof}

\section{Examples}\label{SecEx}

See also Section~10 of \cite{alonso2020generic} for other interesting examples, based on a technique due to Kwapisz \cite{MR1176627}.

\subsection{A uniquely ergodic diffeomorphism with uncountable rotational directions}
\label{ExNonDenombrable}

Let us give an example of a homeomorphism of the punctured torus with an ergodic probability measure $\mu$ for which an uncountable set of geodesics is necessary to describe the rotation set of $\mu$ almost every point.

Let $\alpha\in\R\setminus\Q$ and $X\equiv(1,\alpha)$ be the constant vector field on $\T^2 = \R^2/\Z^2$. Let $\kappa : \T^2\to\R_+$ be a continuous nonnegative function such that $\kappa(x)=0 \iff x = 0$, and that $\kappa(x)\sim \|x\|^a$ for some $0<a<2$ ($\|\cdot\|$ is the Euclidean norm). Let $(\phi_t)$ be the flow associated to the vector field $\kappa X$ (Cauchy-Lipschitz theorem applies on $\T^2\setminus\{0\}$ as $\kappa X$ is locally Lipschitz on it; we set $\phi^t(0)=0$ for any $t$). It is of class $C^a$, and its flow curves are straight lines with slope $\alpha$ (apart from two half-lines). By Section 2 of \cite{MR2670926}, this flow has two ergodic probability invariant measures: $\delta_0$ and an absolutely continuous one $\mu$ with total support. The rotation number of this measure $\mu$ is nonzero, hence the rotation set (as a homeomorphism of $\T^2$) of this flow is a nontrivial segment. 
As 0 is a fixed point for it, it induces a flow on the noncompact manifold $\T^2\setminus\{0\}$; we denote by $f$ the time 1 of this flow. Then, seen as a homeomorphism of $\T^2\setminus\{0\}$, $f$ is uniquely ergodic.

By Lemma \ref{Lessa} (from \cite{MR2846925}), to $\mu$-almost every point $x$ and every lift $\tilde x$ of $x$ to the universal cover of $\T^2\setminus\{0\}$, there exists a geodesic line $(\alpha_{\tilde{x}},\beta_{\tilde{x}})$ such that
$d(\tilde{f}^n(\tilde{x}),(\alpha_{\tilde{x}},\beta_{\tilde{x}}))=o(n)$.

Let $(x_i)_{i\in I}$ an uncountable set of $\mu$-typical points, such that any two of them are on a different flow line. Using Svar\v{c}-Milnor lemma (Lemma \ref{svarcmilnor}), one can easily see that for $i\neq j$, the orbits of $x_i$ and $x_j$ move away one from the other at least at linear speed. This implies in particular that that $(\alpha_{\tilde{x_i}},\beta_{\tilde{x_i}}) \neq (\alpha_{\tilde{x_j}},\beta_{\tilde{x_k}})$. Indeed, any two lifts of the flow curves to $\mathbb{R}^2$ are separated by a lift of the point $0 \in \mathbb{T}^2$.

If $1\le a<2$, it is possible to blow up the fixed point of the homeomorphism to a circle and gluing two such examples along the two obtained circles; it gives a homeomorphism of the compact surface of genus 2. The ergodic invariant measures of this homeomorphism are either one of the two absolutely continuous invariant probability measures, each one supported in a domain homeomorphic to a punctured torus, or supported in the fixed point set which is a circle. Then there is no positive Lebesgue-measure set $A$ such that any two points $x$ and $y$ of $A$ have the same rotational direction (i.e. the geodesic defined by the flow line passing through $x$ or $y$).

\subsection{A diffeomorphism with trivial rotation set but unbounded displacements in all directions}\label{ExUnbounded0}

This example is the higher genus counterpart of the torus example of Koropecki and Tal \cite{MR3238423}. In this paper, the authors build an open topological disk embedded in $\T^2$ whose lift meets each fundamental domain of $\T^2$ in $\R^2$, and whose complement has zero measure. They then define a smooth Lebesgue measure preserving Bernoulli\footnote{And hence ergodic.} diffeomorphism of the torus which is equal to the identity outside this disk, using a method due to Katok \cite{MR554383}: there exists a smooth Bernoulli diffeomorphism of the unit disk, equal to identity on the boundary of the disk, which gives a smooth Bernoulli diffeomorphism of the torus via the embedding of the disk.

The torus example of Koropecki and Tal \cite{MR3238423} can be generalized to any positive Euler characteristic connected surface, with an (almost) identical proof. The construction of the example from the existence of the embedded disk follows from Katok \cite{MR554383} and is not specific to the torus. For the construction of the embedded disk, the only part of the proof that is specific to the torus is Lemma 4, that can be replaced by the following.

\begin{lemma}
If $\wt x$ and $\wt y$ are two points of $\wt S \simeq \Hy^2$ that do not project on the same point of $S$, then there is an arc $\alpha$ joining $\wt x$ to $\wt y$ whose projection to $S$ is injective.
\end{lemma}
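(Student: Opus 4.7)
The plan is to start with the hyperbolic geodesic segment $\alpha_0 = [\wt x, \wt y]$ in $\wt S = \Hy^2$, which is automatically an embedded simple arc, and perturb it inside a small tubular neighborhood to obtain an arc whose projection to $S$ is injective. The projection of an arc $\alpha$ from $\wt x$ to $\wt y$ fails to be injective exactly when $\alpha \cap T\alpha \neq \emptyset$ for some non-trivial deck transformation $T \in \pi_1(S)$.

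I would first collect three observations about $\alpha_0$. By proper discontinuity of the $\pi_1(S)$-action on $\wt S$, only finitely many non-trivial $T$ satisfy $T\alpha_0 \cap \alpha_0 \neq \emptyset$; call this finite set $F$. Since two distinct hyperbolic geodesics in $\Hy^2$ meet in at most one point, for each $T \in F$ the intersection $\alpha_0 \cap T\alpha_0 = \{p_T\}$ consists of a single transverse point. The hypothesis $p(\wt x) \neq p(\wt y)$ guarantees that the four points $\wt x, \wt y, T\wt x, T\wt y$ are pairwise distinct for every $T \in F$, and hence each $p_T$ lies strictly in the interior of both $\alpha_0$ and $T\alpha_0$, giving room to perturb. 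Moreover, since $\pi_1(S)$ is torsion-free we have $T^2 \neq \Id$, and combined with the fact that $T$ acts freely on $\wt S$ this forces $Tp_T \notin \alpha_0$: otherwise $Tp_T$ would be a second point of the single-element set $\alpha_0 \cap T\alpha_0$.

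The second step is to fix a tubular neighborhood $U$ of $\alpha_0$ small enough that, again by proper discontinuity, the set
\[F^* := \{T \neq \Id : T\bar U \cap \bar U \neq \emptyset\}\]
is still finite, and that the finitely many points $\{p_T, Tp_T : T \in F\}$ admit pairwise disjoint small disks around them. Any arc contained in $U$ can interact only with translates indexed by $F^*$. Inside $U$ I would construct the desired arc $\alpha$ by iteratively pushing $\alpha_0$ off each translate $T\alpha_0$ via a local isotopy supported in a small disk $D_T$ around $p_T$; since the intersection is transverse and interior, $\alpha_0$ can be displaced to one side of $T\alpha_0$ locally. The intersections corresponding to the pair $\{T, T^{-1}\}$ are removed in tandem, performing perturbations at both $p_T$ and $p_{T^{-1}} = T^{-1} p_T$.

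The main obstacle is to ensure that these successive perturbations do not create new intersections with other translates. Here the observation $Tp_T \notin \alpha_0$, and more generally the fact that for each $T' \in F^*\setminus\{\Id, T^{-1}\}$ the response point $T'p_T$ is (in generic position, which one can arrange by a preliminary perturbation of endpoints or of $\alpha_0$) at positive distance from $\alpha_0$, is crucial: perturbing $\alpha$ in $D_T$ induces via the $T'$-action an equivariant displacement of $T'\alpha$ supported in $T'D_T$, and if $T'p_T$ is far from $\alpha$ this displacement is harmless. Choosing each $D_T$ of radius smaller than the minimum of the finitely many relevant distances then ensures that the whole sequence of local perturbations produces an embedded arc $\alpha$ from $\wt x$ to $\wt y$ disjoint from all its non-trivial translates, whose projection to $S$ is therefore injective.
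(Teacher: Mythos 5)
The decisive step of your argument fails: a transverse intersection of $\alpha_0$ with a translate $T\alpha_0$ cannot be removed by an isotopy of $\alpha_0$ supported in a small disk $D_T$ around $p_T$. Choose $D_T$ so small that $T\alpha_0\cap D_T$ is a single arc crossing $D_T$, and note (as you do) that the equivariant modification of $T\alpha$ induced by your perturbation takes place near $Tp_T$, which is off $\alpha_0$ and hence far from $D_T$; so inside $D_T$ you are perturbing one strand of $\alpha_0$ while $T\alpha_0\cap D_T$ stays fixed. But the two points of $\alpha_0\cap\partial D_T$ lie in different components of $D_T\setminus T\alpha_0$ precisely because the crossing at $p_T$ is transverse, so every arc in $D_T$ joining them must still meet $T\alpha_0$: the mod $2$ intersection number is a local obstruction, and transversality makes the crossing stable, not removable. (It is tangential, not transverse, contacts that can be pushed off locally.) To kill such an intersection you would have to reroute $\alpha$ around one of the endpoints $T\wt x$ or $T\wt y$ of the translate, a non-local modification; controlling the new crossings this creates with the other translates, and with the translates of the rerouted piece itself, is exactly the difficulty your scheme leaves untouched.

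Two smaller points: $\alpha_0\cap T\alpha_0$ need not be a single transverse point, since $T$ may preserve the geodesic line through $\wt x$ and $\wt y$ and then the intersection can be a subsegment; and pairwise distinctness of $\wt x,\wt y,T\wt x,T\wt y$ does not prevent $T\wt x$ from lying in the interior of $\alpha_0$, in which case $p_T$ is an endpoint of $T\alpha_0$. These could be repaired by a preliminary perturbation of $\alpha_0$ (the endpoints themselves are prescribed and cannot be moved), but the obstruction above cannot. For comparison, the paper avoids intersection removal altogether: it considers the set $A_{\wt x}$ of points reachable from $\wt x$ by an arc with injective projection, proves it is open inside each set $E_\epsilon$ of points whose $\epsilon$-ball projects injectively --- by iterating a diffeomorphism supported in $B(\wt y,\epsilon)$ that pushes the existing arc toward $\partial B(\wt y,\epsilon)$, so the arc is only ever extended inside an embedded ball and no crossing with a translate is ever created --- and then concludes by connectedness of $E_\epsilon$.
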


\begin{proof}
Fix a point $\wt x\in \wt S$, and consider the set 
\[A_{\wt x} = \big\{\wt y\in\wt S \mid \text{$\exists \wt \alpha$ arc joining $\wt x$ to $\wt y$ s.t. $\alpha$ is injective}\big\}.\]
Equip $S$ with a metric that lifts to the canonical metric on $\Hy^2$. For $\epsilon>0$, let
\[E_\epsilon = \big\{\wt y\in\wt S \mid \text{the projection of $B(\wt y,\epsilon)$ to $S$ is injective}\big\}.\]

Let $\wt y\in A_{\wt x} \cap E_\epsilon$. Then there exists a path $\wt \alpha$ joining $\wt x$ to $\wt y$ whose projection to $S$ is injective. Remark that it is possible to make this path smooth if necessary. Then, consider a diffeomorphism $h$ of $S$, equal to identity out of $B(y,\epsilon)$, such that any point in $B(y,\epsilon) \setminus\{y\}$ has its $\omega$-limit included in $\partial B(y,\epsilon)$. Then, for $n$ large enough, $h^n (\alpha) \cap B(y,\epsilon/2)$ is a connected path. In particular, for any point $\wt z\in B(\wt y,\epsilon/2)\setminus \wt h^n (\wt \alpha) $, it is easy to build a path joining $\wt y$ to $\wt z$ and included in $B(\wt y,\epsilon/2)\setminus \wt h^n (\wt \alpha) $. This shows that $B(\wt y,\epsilon/2) \subset A_{\wt x}$.

The uniformity of $\epsilon$ in this property shows that the connected component of $\wt x$ in $E_\epsilon$ is included in $A_{\wt x}$. But for any $\wt x,\wt y\in \wt S$, there exists $\epsilon_0>0$ such that for any $0<\epsilon<\epsilon_0$, $\wt x$ and $\wt y$ lie in the same (path) connected component of $E_\epsilon$. Hence, $A_{\wt x} = \Hy^2$.
\end{proof}

Finally, we get the following result.

\begin{proposition}\label{KoroTal}
For any surface $S$ with negative Euler characteristic with finite measure, for any fixed compact connected fundamental domain $D\subset\Hy^2$ of $S$, there is a $C^\infty$ area-preserving diffeomorphism $f : S\to S$ homotopic to the identity, with a lift $\wt f : \Hy^2\to\Hy^2$ such that
\begin{itemize}
\item $\rho(f)$ is reduced to a single rotation vector (with zero speed):
\item $f$ is metrically isomorphic to a Bernoulli shift (in particular, $f$ is ergodic)
with Lebesgue measure;
\item For Lebesgue almost every point $\wt x\in\Hy^2$, the forward and backward orbits
of $\wt x$ accumulates in every direction at infinity, i.e. 
\[\partial_\infty\{\wt f^n (\wt x)\mid n\in \N\} = \Sp^1 = \partial_\infty\{\wt f^{-n} (\wt x)\mid n\in \N\}.\]
Moreover, the forward and backward orbits of $\wt x$ visit every fundamental domain $TD$, with $T\in\pi_1(S)$.
\end{itemize}
\end{proposition}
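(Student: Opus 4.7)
The plan is to follow verbatim the construction of Koropecki--Tal \cite{MR3238423} in the torus case, with the preceding lemma as the unique higher-genus-specific replacement (substituting for Lemma~4 of \cite{MR3238423}). Two ingredients are needed and are assembled as follows.

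First, construct an open topological disk $U \subset S$ with $\operatorname{Leb}(S \setminus U) = 0$ whose lift to a distinguished connected component $\wt U_0 \subset p^{-1}(U)$ is a topological disk meeting every fundamental translate $TD$, $T \in \pi_1(S)$. This is done inductively: enumerate the deck transformations $(T_n)_n$, and at step $n$ attach to $U_{n-1}$ a thin tubular neighborhood of an embedded arc in $S$ whose lift extends $\wt U_{0,n-1}$ into $T_n D$. The preceding lemma, providing an injective arc in $S$ between the projections of any two distinct points of $\Hy^2$, is exactly the substitute for the torus-specific Lemma~4 of \cite{MR3238423}. Taking tubular neighborhoods sufficiently thin at each step arranges $S \setminus U$ to have zero Lebesgue measure, while the inductive extension guarantees that $\wt U_0$ meets every $TD$.

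Second, apply Katok's construction \cite{MR554383} to produce a smooth Bernoulli area-preserving diffeomorphism of the standard disk equal to the identity on a neighborhood of its boundary, and transport it through a smooth area-preserving embedding $\mathbb{D} \hookrightarrow U$ (adjusting the area form by Moser's theorem) to obtain a diffeomorphism $f \in \Homeo_0(S)$ that is smooth, area-preserving, equal to the identity outside $U$, and metrically isomorphic to a Bernoulli shift on $(S,\operatorname{Leb})$. The canonical lift $\wt f$ is the identity outside $p^{-1}(U) = \bigsqcup_T T\wt U_0$ and preserves each component individually (being isotopic to the identity, it cannot permute distinct open disks). Hence $\wt f|_{\wt U_0}$ is conjugate via $p$ to $f|_U$, so by Birkhoff's theorem almost every $\wt x \in \wt U_0$ has an orbit equidistributed on $U$; via the unbounded homeomorphism $p^{-1} : U \to \wt U_0$, this orbit visits every $TD$ and, since $\wt U_0$ accumulates at every point of $\partial \Hy^2$, accumulates in every direction at infinity.

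The main subtlety is verifying that $\rho(f)$ is reduced to a single zero-speed vector. Any sequence $(x_k, n_k)$ with $n_k \to +\infty$ realizing a rotation vector must have $x_k \in T_k \wt U_0$ for some deck transformations $T_k$ (otherwise $x_k$ is fixed by $\wt f$ and does not converge to a $\beta \neq \alpha$), and then the whole orbit segment $\{\wt f^j(x_k)\}_{0 \le j \le n_k}$ stays inside $T_k \wt U_0$, which is the image of the finite-measure space $(U,\operatorname{Leb})$ under $p^{-1}$. The Koropecki--Tal Euclidean estimate then adapts: combining area preservation, ergodicity, and the \v{S}varc--Milnor quasi-isometry (Lemma~\ref{svarcmilnor}) between $(\pi_1(S),\text{word length})$ and $\Hy^2$, the hyperbolic displacement $d(x_k, \wt f^{n_k}(x_k))$ is forced to be $o(n_k)$, so the projected speed vanishes and $\rho(f) = \{(\alpha,\beta,0)\}$. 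This translation of the argument from the Euclidean to the hyperbolic setting, through \v{S}varc--Milnor, is the principal obstacle; every other point is a direct application of the cited tools.
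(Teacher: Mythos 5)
Your overall route coincides with the paper's: the paper's own proof of Proposition~\ref{KoroTal} consists precisely of the arc lemma (the substitute for Lemma~4 of \cite{MR3238423}) together with the assertion that Koropecki--Tal's disk construction and Katok's embedding \cite{MR554383} then carry over verbatim, and your first two paragraphs reproduce this reduction faithfully. Two small slips in your summary of the cited construction: attaching thin tubes to an initial disk does not by itself make $S\setminus U$ of zero measure (in \cite{MR3238423} the disk must also exhaust the area of the surface; the thin tubes only serve to reach new fundamental domains while keeping $U$ a disk); and ``being isotopic to the identity'' is not by itself a reason why $\wt f$ cannot permute the components $T\wt U_0$ (an isotopically trivial homeomorphism can permute disjoint disks) --- the correct reason is that the isotopy may be chosen supported in $U$, so its lift, which starts at the identity, preserves each component of $p^{-1}(U)$ by continuity.

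The genuine gap is in your treatment of the first bullet, the triviality of $\rho(f)$. Ergodicity, or any almost-everywhere statement, cannot control the sequences $(x_k,n_k)$ appearing in the definition of the homotopical rotation set, because the $x_k$ may be atypical points chosen anew for each $k$; and the mechanism that bridges exactly this gap in the torus case --- the Misiurewicz--Ziemian identification of the rotation set with rotation vectors of invariant measures \cite{MR1053617}, i.e.\ the additive $\R^2$-valued displacement cocycle (or, alternatively, the fully essential fixed point set theory) --- is torus-specific and has no analogue established in this paper for the homotopical rotation set, so it does not simply ``adapt via \v{S}varc--Milnor''. A correct argument must bound \emph{all} orbit segments, for instance as follows: $F_n(x)=d(\tilde x,\tilde f^n(\tilde x))$ is a well-defined continuous subadditive cocycle on $S$; every ergodic invariant measure is carried either by $S\setminus U\subset\operatorname{Fix}(f)$, where $F_n\equiv 0$, or by $U$, where Poincar\'e recurrence together with the fact that $p|_{\wt U_0}$ is a homeomorphism onto $U$ (so recurrence in $U$ lifts to recurrence in $\wt U_0$) forces $F_n/n\to 0$ almost everywhere; a uniform subadditive ergodic theorem then gives $\sup_x F_n(x)=o(n)$, which excludes every rotation vector with $v>0$ since the projected displacement is dominated by $F_{n_k}(x_k)$. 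Some argument of this kind (or another bridge from measures to arbitrary sequences) is needed; as written, your appeal to ``area preservation, ergodicity and \v{S}varc--Milnor'' does not yield it.
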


This example can be modified in a simple way to get other rotational behaviours. For example, consider a simple essential closed annulus $A\subset S$, such that $S\setminus A$ is connected. Then, one can apply Proposition~\ref{KoroTal} to $S\setminus A$ to get a homeomorphism $h$ of $S\setminus A$ which is equal to identity on $\partial A$, and extend $h$ to $A$ such that $h|_A$ has a nontrivial rotation set. This gives an example of an almost annular (in the sense of Section~\ref{SecPseudo}) that has unbounded displacements in all directions not intersecting the direction of $A$.

\subsection{An example of homeomorphism with non closed rotation set}\label{SecExEmmanuel}

Fix a closed surface $S$ with negative Euler characteristic which is endowed with a hyperbolic metric.
Take two disjoint closed simple geodesics $\alpha$ and $\beta$ of $S$ and a simple geodesic $\gamma$ whose $\alpha$-limit is $\alpha$ and whose $\omega$-limit is $\beta$ and which is disjoint from both $\alpha$ and $\beta$. Let $A_\beta$ be a tubular neighbourhood of $\beta$ which is disjoint from $\alpha$ and such that $A_{\beta} \cap \gamma$ is connected. Let $B_\gamma$ be a tubular neighbourhood of $\gamma$ which is disjoint from both $\alpha$ and $\beta$ such that $B_{\gamma} \cap A_{\beta}$ is connected. We denote by $\tilde{\alpha}=(\alpha_1,\alpha_2)$, $\tilde{\beta}=(\beta_1,\beta_2)$ and $\tilde{\gamma}$ the respective lifts of $\alpha$, $\beta$ and $\gamma$ to the universal cover $\tilde{S} \equiv \Hy^2$ of $S$ such that $\tilde{\gamma}$ joins the points $\alpha_1$ and $\beta_2$ of $\partial \Hy^2$.

Let $f_1$ be a homeomorphism in $\mathrm{Homeo}_0(S)$, which is supported on $A_{\beta}$, with the following properties.
\begin{enumerate}
\item The canonical lift $\tilde{f}_1$ of $f_1$ preserves $\tilde{\beta}$ and acts as a translation of $\theta \in \mathbb{R}$ on $\tilde{\beta}$.
\item There exists an open neighbourhood $U_{\beta}$ of $\beta$ such that
$$\left\{ \begin{array}{l}
\overline{f_1(U_{\beta})} \subset U_{\beta} \\
\bigcap_{n \geq 0} f_1^{n}(U_{\beta})=\beta \\
\bigcup {n \in \Z} f_1^{n}(U_{\beta})= A_{\beta}
\end{array} \right.
$$
\item For any $n \in \mathbb{Z}$, the set $f_1^{n}(U_{\beta}) \cap B_{\gamma}$ is connected.
\end{enumerate}

\begin{figure}[ht]
\begin{center}
\includegraphics[width=.5\linewidth]{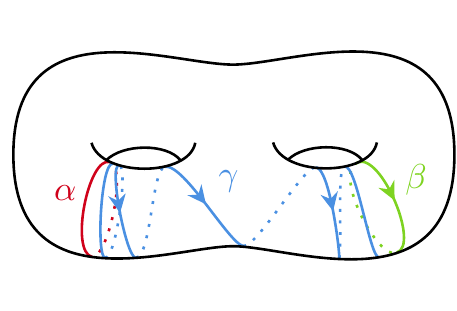}
\hfill
\includegraphics[width=.33\linewidth]{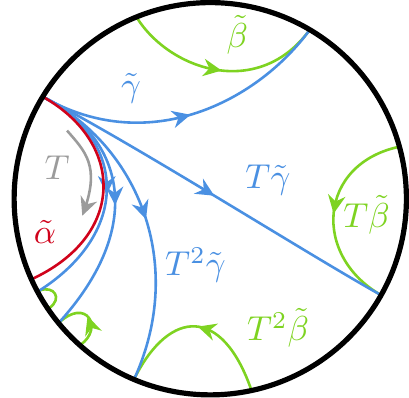}
\caption{The right drawing is the universal cover of the left one. On the left, the red closed curve $\alpha$ (left) is made of fixed points, and the green closed curve $\beta$ (right) rotates. The blue geodesic $\gamma$ is heteroclinic to $\alpha$ and $\beta$. On the right, $T$ is the deck transformation associated to a lift $\tilde \alpha$ of $\alpha$.}\label{FigExEmmanuel}
\end{center}
\end{figure}

Let $f_2$ be a homeomorphism which is supported in $\overline{B_\gamma}$ with the following properties. 
\begin{enumerate}
\item For any point $x \in B_\gamma$, the sequence $(f_2^{n}(x))_{n \geq 0}$ accumulates to $\beta$ and the sequence $(f_2^{-n}(x))_{n \geq 0}$ accumulates to $\alpha$.
\item For any $n \in \mathbb{Z}$, $f_2(f_1^{n}(U_{\beta})) \subset f_1^{n}(U_{\beta})$. 
\end{enumerate}
Observe that the homeomorphism $f_2$ pointwise fixes $\alpha$ and $\beta$.

Finally, let $f_3=f_2 \circ f_1$. The dynamics of $f_3$ is described on Figure \ref{FigExEmmanuel}. Observe that the recurrent orbits of $f_3$ consist of its fixed points and the points of $\beta$. Observe also that $(\beta_1,\beta_2,\theta) \in \rho(f_3)$ and, because of the orbits on $\gamma$ with asymptotic speed $\theta$, $(\alpha_1,\beta_2,\theta) \in \rho(f_3)$. Let $T$ be the deck transformation associated to $\tilde{\alpha}$. As the set $\rho(f_3)$ is invariant under deck transformations, we obtain that, for any $n \geq 0$,
$$ (T^n \alpha_1, T^{n} \beta_2,\theta)=(\alpha_1, T^{n} \beta_2,\theta) \in \rho(f_3).$$
Observe also that 
$$ \lim_{n \rightarrow +\infty} T^{n} \beta_2=\alpha_2.$$
However, $(\alpha_1,\alpha_2,\theta) \notin \rho(f_3)$. Indeed, otherwise, by Proposition \ref{sublindistance}, there would exist a recurrent orbit of $\tilde{f}_3$ with a nontrivial rotation vector which stays at a bounded distance from the geodesic $\tilde{\alpha}$. But there exists no such orbit as the only nontrivial recurrent orbits of $f_3$ are contained in $\beta$. Hence the rotation set of $f_3$ is not closed.

\section{Intersections of closed geodesics: consequences on the homological rotation set}\label{SecHomo}

In this section, we get the first consequences of the fact that two closed geodesics that cross and are associated to a nontrivial rotation vector. In this case, we get the existence of a toral covering in which the induced rotation set has nonempty interior (Proposition~\ref{lifttorus}). This implies positive topological entropy and the existence of an infinite number of periodic orbits (Corollary~\ref{entropy}). This is somehow a first step towards the results of Section~\ref{LastSection}, in which we will get stronger conclusions under weaker hypotheses. It will be the occasion to introduce and study the notion of associated covering map (Definition~\ref{DefCovering}) that will be used in the three last sections.

\subsection{Background on covering maps}

In the sequel, $S$ will be chosen as either the closed surface $S$ of genus $g\ge 2$, or the domain of an isotopy $\dom I\subset S$ of $f$ (see Subsection~\ref{SubSecForcing}).
The surface $S$ is endowed with a complete hyperbolic metric so that its universal cover $\tilde{S}$ is identified with the hyperbolic plane $\Hy^2$.

Let $x_0\in S$, and $\alpha_{1}$ and $\alpha_{2}$ two loops of $S$. Denote by $\tilde x_0$, $\tilde \alpha_{1}$ and $\tilde \alpha_{2}$ lifts of respectively $x_0$, $\alpha_1$ and $\alpha_2$ to the universal cover $\tilde{S}$ of $S$.
Suppose that $\tilde x_0\in \tilde\alpha_{1} \cap \tilde\alpha_{2}$; we take $\tilde x_0$ as a basepoint for both those loops. 

Note that each of the paths $\tilde \alpha_{1}$ and $\tilde \alpha_{2}$ stay at a finite distance to a closed geodesic of $\tilde S$ (and this geodesic is determined by the deck transformation associated to the loops $\alpha_1$ and $\alpha_2$).

\begin{definition}\label{DefTransverseInter}
We say that $\alpha_{1}$ and $\alpha_{2}$ have \emph{a geometric transverse intersection at $x_0$} if some geodesics associated to their lifts to $\tilde S$ intersect in $\Hy^2$.

We say that a loop $\alpha$ of $S$ has a \emph{geometric transverse autointersection at $x_0$} if there exists a deck transformation $T$ of $\tilde S \to S$ such that $\tilde\alpha$ and $T\tilde\alpha$ intersect transversally.
\end{definition}

Note that by definition, a transverse intersection stays transverse in any covering space.

\begin{lemma}\label{LemTransverseRevet}
Let $F\subset S$ be a closed set, and $\alpha$ a loop of $S\setminus F$ which has a transverse geometric transverse auto-intersection at $x_0$ for a deck transformation $T$ of $\tilde S \to S$. Then, there exists a deck transformation $T_1$ of $\wt{S\setminus F} \to S\setminus F$, projecting to $T$ in $\tilde S \to S$, such that $\alpha$ has a geometric transverse auto-intersection at $x_0$ for $T_1$.
\end{lemma}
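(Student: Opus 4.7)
The plan is to transfer the intersection from $\tilde S$ up to $\wt{S\setminus F}$ by using the canonical comparison map between these two universal covers. Because $\wt{S\setminus F}$ is simply connected, the composition $\wt{S\setminus F}\to S\setminus F\hookrightarrow S$ lifts through $\tilde S\to S$ to a map $\pi\colon\wt{S\setminus F}\to\tilde S$, which is a local homeomorphism landing in $\tilde S\setminus\tilde F$ (where $\tilde F$ denotes the preimage of $F$). Moreover, $\pi$ is equivariant with respect to the group homomorphism $\iota_{*}\colon\pi_{1}(S\setminus F,x_{0})\to\pi_{1}(S,x_{0})$ induced by inclusion: for every deck transformation $T_1$ of $\wt{S\setminus F}\to S\setminus F$, one has $\pi\circ T_{1}=\iota_{*}(T_{1})\circ\pi$. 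Fix once and for all basepoints $\hat x_{0}\in\wt{S\setminus F}$ and $\tilde x_{0}\in\tilde S$ with $\pi(\hat x_{0})=\tilde x_{0}$, and let $\hat\alpha$, $\tilde\alpha$ be the lifts of $\alpha$ through these points, so that $\pi\circ\hat\alpha=\tilde\alpha$.

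By hypothesis, $\tilde\alpha$ and $T\tilde\alpha$ meet transversally at some point $p\in\tilde S$. Since $\alpha$ lies in $S\setminus F$, both $\tilde\alpha$ and $T\tilde\alpha$ are contained in $\tilde S\setminus\tilde F$, and in particular $p\in\tilde S\setminus\tilde F$, so $p$ admits a unique lift $\hat p\in\hat\alpha$ (the one with $\pi(\hat p)=p$). The right candidate $T_{1}$ will be identified by following the natural concatenation path: let $\tilde\eta$ be the path in $\tilde S\setminus\tilde F$ consisting of the subarc of $\tilde\alpha$ from $\tilde x_{0}$ to $p$, followed by the subarc of $T\tilde\alpha$ from $p$ to $T\tilde x_{0}$. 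The projection of $\tilde\eta$ to $S\setminus F$ is a loop at $x_{0}$ whose class in $\pi_{1}(S,x_{0})$ equals $T$.

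Lifting $\tilde\eta$ to $\wt{S\setminus F}$ starting at $\hat x_{0}$ produces a path $\hat\eta$ ending at a well-defined point of $\pi^{-1}(T\tilde x_{0})$, and that endpoint is of the form $T_{1}\hat x_{0}$ for a unique deck transformation $T_{1}$ of $\wt{S\setminus F}\to S\setminus F$; the equivariance of $\pi$ then gives $\iota_{*}(T_{1})=T$, i.e. $T_{1}$ projects to $T$. By construction, the first half of $\hat\eta$ is a subarc of $\hat\alpha$ passing through $\hat p$, while the second half is a lift of $T\tilde\alpha$ starting at $\hat p$ and ending at $T_{1}\hat x_{0}$. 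Since $\pi(T_{1}\hat\alpha)=T\tilde\alpha$ and $T_{1}\hat\alpha$ passes through $T_{1}\hat x_{0}$, that second half is necessarily a subarc of $T_{1}\hat\alpha$; hence $\hat p\in\hat\alpha\cap T_{1}\hat\alpha$. Finally, transversality is preserved because $\pi$ is a local homeomorphism at $\hat p$: the transverse crossing of $\tilde\alpha$ with $T\tilde\alpha$ at $p$ pulls back to a transverse crossing of $\hat\alpha$ with $T_{1}\hat\alpha$ at $\hat p$.

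The only substantive step is the identification of $T_{1}$, and it is essentially forced on us: the concatenation $\tilde\eta$ lives in $\tilde S\setminus\tilde F$ precisely because the intersection point $p$ already lies off $\tilde F$, which is where we use that $\alpha\subset S\setminus F$. Everything else is a direct consequence of the universal property of covering spaces and the equivariance of $\pi$.
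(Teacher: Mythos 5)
Your covering-space bookkeeping (the comparison map $\pi\colon\wt{S\setminus F}\to\tilde S$, its equivariance, and the identification of $T_1$ by lifting the concatenated path through the intersection point) is fine, and it is essentially the same mechanism the paper uses to see that the lifted deck transformation projects to $T$. The gap is in what you transfer. In Definition~\ref{DefTransverseInter} a \emph{geometric} transverse (auto-)intersection is a statement about the geodesics associated to the lifts — i.e.\ an essential, homotopy-invariant crossing — not about a particular intersection point of the chosen representative paths. Your argument only produces one transverse intersection point $\hat p$ of $\hat\alpha$ with $T_1\hat\alpha$. Such a crossing may be inessential in $S\setminus F$ (for instance one corner of a bigon between $\hat\alpha$ and $T_1\hat\alpha$), in which case the geodesics of $\wt{S\setminus F}$ (for a hyperbolic metric on $S\setminus F$) associated to these two lifts need not meet, and the conclusion of the lemma in the sense in which it is later used (e.g.\ in Proposition~\ref{GeomImpliqFeuill}, where the geometric intersection is exploited precisely because it cannot be removed by homotopy) is not established. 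There is also a secondary issue at the start: under the geodesic reading of the hypothesis you only know that the axis of the deck transformation of $\tilde\alpha$ crosses its $T$-translate, so the existence of a genuinely transverse intersection point $p$ of the paths $\tilde\alpha$ and $T\tilde\alpha$ themselves is not given for free.

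The paper's proof avoids both problems by replacing $\alpha$ with the closed geodesic $\gamma$ of $S\setminus F$ (for a complete hyperbolic metric on $S\setminus F$) in its free homotopy class: a lift $\check\gamma$ to $\tilde S$ stays at bounded distance from the axis, so the hypothesis (crossing of the axes) forces $\check\gamma\cap T\check\gamma\neq\emptyset$; lifting that intersection to $\wt{S\setminus F}$ yields $T_1$ with $\tilde\gamma\cap T_1\tilde\gamma\neq\emptyset$, and since $\tilde\gamma$ and $T_1\tilde\gamma$ are distinct geodesics of the hyperbolic plane $\wt{S\setminus F}$, their intersection is automatically transverse and essential. To repair your argument you would need an additional step showing that your crossing at $\hat p$ is essential (e.g.\ that the ends of $\hat\alpha$ and $T_1\hat\alpha$ are linked in the boundary of $\wt{S\setminus F}$, possibly after changing $T_1$ among the lifts of $T$), which is exactly the content the geodesic representative delivers for free.
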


\begin{proof}
Let $\gamma$ be a geodesic of $S\setminus F$ (for a hyperbolic metric on $S\setminus F$), and $\check\gamma$ a lift of $\gamma$ to $\tilde S$. By hypothesis, the lift to $\tilde S$ of the geodesic of $S$ associated to $\gamma$ intersects its translate by $T$. Hence, $\check\gamma$ and $T\check\gamma$ intersect in $\tilde S$. Hence, there is a deck transformation $T_1$ of $\wt{S\setminus F} \to S\setminus F$, projecting to $T$, such that if $\tilde\gamma$ is a lift of $\gamma$ to $\wt{S\setminus F}$, then $\tilde\gamma$ and $T_1\tilde\gamma$ intersect.
\end{proof}

Recall that there is a bijective correspondence between subgroups of the fundamental group $\pi_{1}(S,x_0)$
of $S$ at $x_0$ and the covering maps of $S$: to any subgroup $G$ of $\pi_{1}(S,x_0)$ can be associated the covering map $\hat{S}=\tilde{S}/G \rightarrow S$, where $G$ is seen as a subgroup of the group of deck transformations of $\pi: \tilde{S} \rightarrow S$. Moreover, $G=\pi_{1}(\hat{S},\hat{x_0})$ for some lift $\hat x_0$ of $x_0$..

Denote by $[\alpha_{1}]_{x_0}$ and $[\alpha_{2}]_{x_0}$ the respective classes of the loops $\alpha_{1}$ and $\alpha_{2}$ in $\pi_{1}(S,x_0)$. Recall that any nontrivial class of $\pi_1(S)$ contains a unique closed geodesic.

\begin{definition}\label{DefCovering}
We call \emph{covering map associated to $(\alpha_{1},\alpha_{2},x_0)$} a covering map $\hat{S} \rightarrow S$ associated to the subgroup $\langle[\alpha_{1}]_{x_0},[\alpha_{2}]_{x_0}\rangle$ of $\pi_{1}(S,x_0)$ generated by $[\alpha_{1}]_{x_0}$ and $[\alpha_{2}]_{x_0}$. The loops $\hat{\alpha}_{1}$ and $\hat{\alpha}_{2}$ which represent respectively $[\alpha_{1}]_{x_0}$ and $[\alpha_{2}]_{x_0}$ in $G=\pi_{1}(\hat{S},\hat{x_0})$ and respectively lift the loops $\alpha_{1}$ and $\alpha_{2}$ are called \emph{canonical lifts} of $\alpha_{1}$ and $\alpha_{2}$ in $\hat{S}$. 
\end{definition}

A covering map associated to $(\alpha_{1},\alpha_{2},x_0)$ depends on the choice of the lift $\hat{x_0}$ of the point $x_0$. However, two such covering maps are isomorphic one to each other.

A closed geodesic $\gamma:[0,1] \rightarrow S$ of $S$ is called \emph{primitive} if no strict restriction of $\gamma$ defines a closed geodesic of $S$\footnote{Note that it does not force the geodesic to be simple.}. Observe that it amounts to saying that the element of $\pi_1(S)$ induced by $\gamma$ is not of the form $a^n$, with $n \geq 2$ and $a \in \pi_1(S)$.  A loop of a surface is called \emph{essential} if its free homotopy class is non-trivial and if it is not freely homotopic to a small loop around a puncture.

\begin{proposition} \label{coveringmap1}
Suppose $\gamma_{1}$ and $\gamma_{2}$ are two primitive closed geodesics which meet transversely at the point $x_0$. Denote by $\hat{S} \rightarrow S$ a covering map associated to $(\gamma_{1},\gamma_{2},x_0)$. Then
\begin{enumerate}
\item The surface $\hat{S}$ is homeomorphic to either the torus with one puncture or the sphere with three punctures.
\item If one of the loops $\gamma_{1}$ or $\gamma_{2}$ is simple, then the surface $\hat{S}$ is homeomorphic to the torus with one puncture.
\item If the surface $\hat{S}$ is homeomorphic to the torus with one puncture, then the canonical lifts of $\gamma_{1}$ and $\gamma_{2}$ in $\hat{S}$ are essential loops of $\hat{S}$ generating the homology of $\hat{S}$.
\item If $\hat{S}$ is homeomorphic to the sphere with three punctures, then none of the lifts of the loops $\gamma_{1}$ or $\gamma_{2}$ is simple.
\end{enumerate}
\end{proposition}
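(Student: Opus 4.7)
The plan is to identify the subgroup $G := \langle T_1, T_2\rangle < \pi_1(S,x_0)$, where $T_i := [\gamma_i]_{x_0}$, as a free group of rank $2$; this gives claim (1) by the classification of orientable surfaces with fundamental group $F_2$, while claims (2)--(4) are then deduced from the hyperbolic geometry of the cover $\hat{S}$ equipped with the pullback metric.

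For claim (1): since $\gamma_1, \gamma_2$ meet transversely at $x_0$, the translation axes of the hyperbolic isometries $T_1, T_2$ cross transversely in $\Hy^2$. Two hyperbolic isometries of $\Hy^2$ commute iff they share an axis, so $T_1$ and $T_2$ do not commute and $G$ is non-abelian. As a finitely generated subgroup of $\pi_1(S)$, $G = \pi_1(\hat{S})$ is either a surface group or free. The only $2$-generated closed surface group is $\pi_1(T^2) \cong \Z^2$, ruled out by non-commutativity. Hence $G$ is free of rank $2$, and $\hat{S}$ is an orientable surface with $\pi_1 \cong F_2$, which by classification is either the torus with one puncture or the sphere with three punctures.

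For claims (2) and (4): $\pi_1(S)$ is cocompact in $\operatorname{Isom}(\Hy^2)$, so contains no parabolic elements, and neither does $G$. Every finitely generated Fuchsian group is geometrically finite, so $G$ is convex cocompact: in the pullback metric, $\hat{S}$ has a compact convex core whose boundary consists of one simple closed geodesic per funnel end, each being the unique geodesic representative of its peripheral conjugacy class, and every other closed geodesic of $\hat{S}$ lies strictly in the interior of the convex core. In particular the boundary components are pairwise disjoint from each other and from every non-peripheral closed geodesic. Now suppose $\gamma_1$ is simple in $S$. Since distinct lifts of $\gamma_1$ in $\Hy^2$ are pairwise disjoint, the canonical lift $\hat\gamma_1$ is an embedded simple closed geodesic in $\hat{S}$: a self-intersection would force some $g \in G \setminus \langle T_1 \rangle$ to stabilize the axis $\tilde\gamma_1$ of $T_1$, contradicting primitivity of $T_1$. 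If $\hat{S}$ were the thrice-punctured sphere, every essential simple closed curve would be peripheral, so $\hat\gamma_1$ would be a convex-core boundary component and hence disjoint from the closed geodesic $\hat\gamma_2$; this contradicts $\hat x_0 \in \hat\gamma_1 \cap \hat\gamma_2$. This proves claim (2), and claim (4) is the contrapositive of the same argument applied to each canonical lift.

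For claim (3): free groups are Hopfian, so any $2$-element generating set of $F_2$ is a free basis; in particular $\{T_1, T_2\}$ is a free basis of $\pi_1(\hat{S}) \cong F_2$, and abelianizing yields a $\Z$-basis of $H_1(\hat{S}) \cong \Z^2$, giving the homological statement. In the once-punctured torus the peripheral conjugacy class is that of $[T_1, T_2]^{\pm 1}$, whose abelianization vanishes, so it cannot be conjugate to either $T_1$ or $T_2$ (whose abelianizations generate $\Z^2$); essentiality of $\hat\gamma_1, \hat\gamma_2$ follows. The main obstacle throughout is the distinction between the two topological types of $\hat{S}$, which the convex-core argument handles by exploiting the cocompactness of $\pi_1(S)$ in $\operatorname{Isom}(\Hy^2)$ and the fact that essential simple closed curves on the thrice-punctured sphere are forced to be peripheral and hence boundary components of the convex core.
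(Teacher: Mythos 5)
Your argument is correct in substance but follows a genuinely different route from the paper's. For (1), you identify $\langle T_1,T_2\rangle$ as a nonabelian two-generated subgroup of a surface group, hence free of rank $2$, and then invoke the classification of orientable surfaces with free $\pi_1$ of rank $2$; the paper instead bounds $\dim H_1(\hat S,\R)\le 2$ via the Hurewicz map and rules out the sphere with $0,1,2$ punctures and the closed torus one by one (that is where it uses primitivity, which your non-commutativity argument bypasses — a small gain in economy). For (2) and (4), the paper argues without any geodesic geometry on $\hat S$: a simple lift in the thrice-punctured sphere would be freely homotopic to a small loop around a puncture, disjoint from $\hat\gamma_2$, and lifting that homotopy to $\Hy^2$ yields a $T_1$-invariant curve at bounded distance from $\tilde\gamma_1$, which must cross $\tilde\gamma_2$, a contradiction; you reach the same contradiction through the convex core, identifying the peripheral simple closed geodesic with a boundary component of the convex core and using that boundary geodesics are disjoint from all other closed geodesics. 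Point (3) is handled essentially identically in both treatments.

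One point to flag: you invoke cocompactness of $\pi_1(S)$ in $\operatorname{Isom}(\Hy^2)$ to exclude parabolics and conclude that $G$ is convex cocompact. In the paper this proposition is also applied when $S$ is not closed (the section allows $S$ to be the domain $\dom I$ of the isotopy, a complete hyperbolic surface which may have cusps or funnels), so $G$ may contain parabolic elements and need not be convex cocompact. This does not sink your approach, but it requires a small repair: in the thrice-punctured-sphere case, the peripheral class carried by the simple closed geodesic $\hat\gamma_1$ must correspond to a funnel end, since a cusp class has no geodesic representative, and $\hat\gamma_1$ is then the boundary geodesic of the convex core of the (geometrically finite) group $G$, which is still disjoint from every other closed geodesic of $\hat S$; the rest of your argument goes through unchanged. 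The paper's small-loop argument sidesteps this issue entirely because it never uses geodesic representatives inside $\hat S$.
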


Figure~\ref{FigExemSph} displays an example showing that case 4. of this proposition is nonempty: there exists two based loops of the genus 2 closed surface whose associated covering map is the sphere with three punctures. Indeed, Proposition~\ref{coveringmap2} shows that the covering map associated to the two loops formed by the red loop with auto-intersection is the sphere with three punctures. Hence, if we call $a$ and $b$ some generators of the fundamental group of the sphere with three punctures such that the lift of the red curve is homotopic to $ab$, then the lift of the blue curve to the sphere with three punctures is homotopic to $ab^2$. In this case, the lifts of the curves generate the fundamental group of the sphere with 3 punctures. This proves that the associated covering map is the sphere with three punctures.

\begin{figure}[ht]
\includegraphics[width=.47\linewidth]{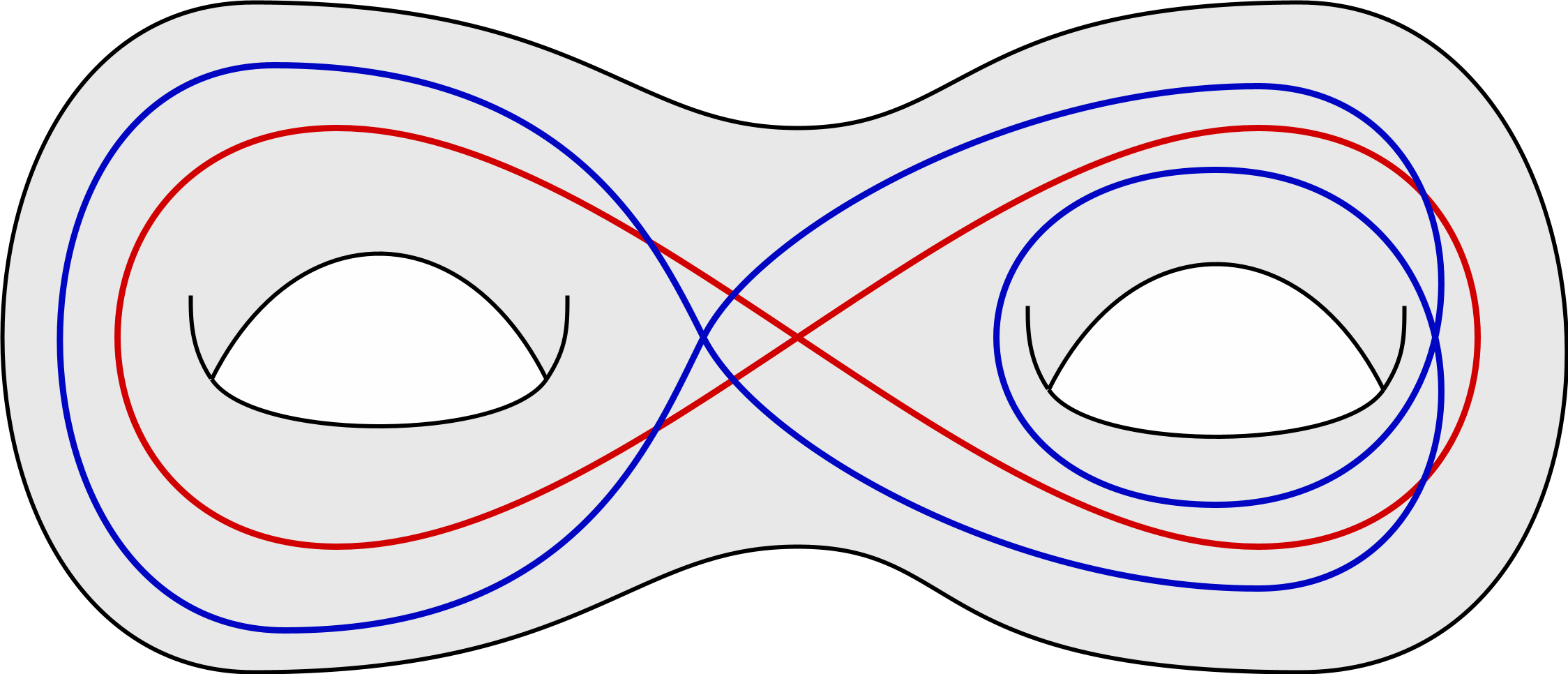}
\hfill
\includegraphics[width=.47\linewidth]{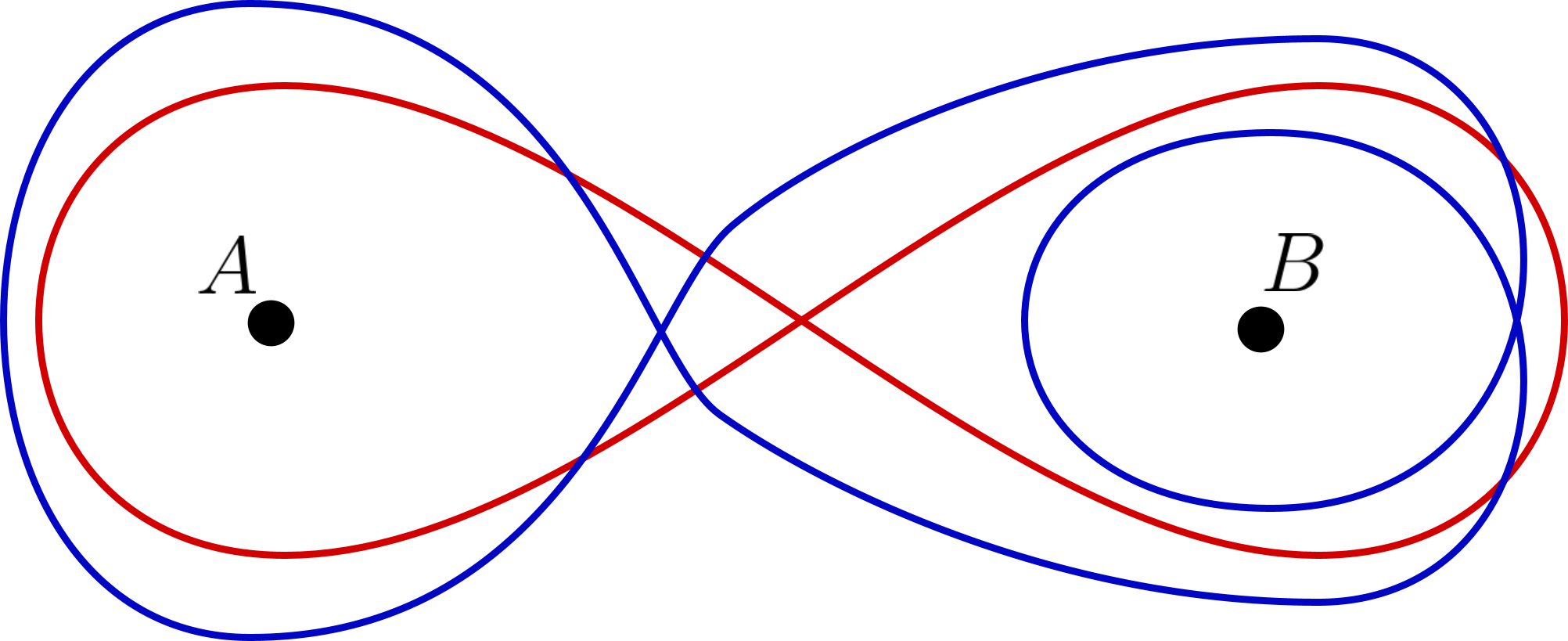}
\caption{\label{FigExemSph}Left: for some generators $a,b,c,d$ of $\pi_1(S)$, where $S$ is the genus 2 closed surface, the left red curve is homotopic to $ab$ and the left blue curve is homotopic to $ab^2$. On the right, the associated covering map is the sphere with three punctures, and for some generators $a,b$ of the fundamental group of the three punctured sphere, the red loop is homotopic to $ab$ and the blue one to $ab^2$.
}
\end{figure}

\begin{proposition} \label{coveringmap2}
Suppose $\gamma_{1}$ and $\gamma_{2}$ are loops based at a point $x_0$ and that the concatenation of $\gamma_{1}$ and $\gamma_{2}$ is a closed primitive geodesic of $S$ with a geometric transverse autointersection at the point $x_0$. Denote by $\hat{S} \rightarrow S$ the covering map associated to $(\gamma_{1},\gamma_{2},x_0)$. Then
\begin{enumerate}
\item The surface $\hat{S}$ is homeomorphic to the sphere with three punctures.
\item Suppose that the lifts $\hat \gamma_{1}$ and $\hat \gamma_{2}$ of $\gamma_{1}$ and $\gamma_{2}$ to $\hat{S}$ relatively to $\hat x_0$ are homotopic to simple curves. Denote $a,b,c$ the canonical classes of the curves, each one winding once around one of the three punctures $A$, $B$ and $C$ of $\hat S$, and not winding around the others, and such that $ab=c$. Then there exists a homeomorphism of $\hat S$ sending $([\hat\gamma_1]_{\hat x_0},\, [\hat\gamma_2]_{\hat x_0})$ to $(a,b^{-1})$ (see Figure \ref{FigModifQ}, left).
\end{enumerate}
\end{proposition}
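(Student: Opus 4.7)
Let $c_i = [\gamma_i]_{x_0} \in \pi_1(S,x_0)$ and $G = \langle c_1, c_2\rangle$, so that $G = \pi_1(\hat S, \hat x_0)$. The first step is to rule out $G$ being cyclic: if $G = \langle c\rangle$, then $c_1$, $c_2$ and $c_1 c_2$ are all powers of $c$ and share the axis of $c$ in $\Hy^2$, whereas the geometric transverse auto-intersection at $x_0$ forces the axis of $c_1 c_2$ to cross its translate under $c_1$ transversely (Definition~\ref{DefTransverseInter}) --- impossible, since $c_1$ preserves the axis of $c$ setwise. Hence $G$ is a torsion-free $2$-generated non-cyclic subgroup of $\pi_1(S)$, acting freely and properly discontinuously on $\Hy^2$, and $\hat S = \Hy^2/G$ is an orientable hyperbolic surface with rank-$2$ fundamental group, i.e.\ the once-punctured torus or the thrice-punctured sphere. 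To exclude the first, note that since $c_1 \in G$, the transverse crossing $\tilde\gamma \cap c_1\tilde\gamma$ in $\Hy^2$ descends to a transverse self-intersection of the closed geodesic $\hat\gamma \subset \hat S$ representing $c_1 c_2$; but $(c_1 c_2, c_2)$ is another free basis of $F_2 = \pi_1(\hat S)$ (since $c_1 = (c_1 c_2) c_2^{-1}$), so $c_1 c_2$ is primitive in $F_2$, and by the classical correspondence between primitive elements of $F_2$ and essential simple closed curves on the once-punctured torus, $\hat\gamma$ would be simple --- contradicting the self-intersection.

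\textbf{Plan for Part 2.} Assuming $\hat\gamma_1,\hat\gamma_2$ are freely homotopic to simple curves on the thrice-punctured sphere, each $c_i$ is conjugate in $F_2$ to one of $a^{\pm 1}, b^{\pm 1}, c^{\pm 1}$ since essential simple closed curves there are puncture-parallel. They cannot both be parallel to the same puncture loop: the normal closure of a single puncture loop is a proper subgroup of $F_2$ (the kernel of a surjection $F_2 \to \Z$), hence could not contain $\langle c_1,c_2\rangle = F_2$. Using the $S_3$-action of $\mathrm{MCG}(\hat S)$ permuting punctures, one can reduce to the case $c_1$ conjugate to $a^{\epsilon_1}$ and $c_2$ to $b^{\epsilon_2}$ for some $\epsilon_i \in \{\pm 1\}$. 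A short cyclic-word analysis in $F_2$ then shows that the conjugacy class of $a^{\epsilon_1}b^{\epsilon_2}$ is a puncture loop (hence simple) exactly when $\epsilon_1 = \epsilon_2$, and is the non-simple class of $ab^{-1}$ (or its inverse) otherwise. The self-intersection of $\hat\gamma$ established in Part~1 rules out $\epsilon_1 = \epsilon_2$, and an orientation-reversing involution of $\hat S$ (inverting all puncture loops) handles the remaining sign ambiguity, yielding $c_1$ conjugate to $a$ and $c_2$ to $b^{-1}$.

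\textbf{Main obstacle.} The delicate step is to upgrade these individual conjugacy statements to the \emph{simultaneous} one required by the proposition --- a single homeomorphism of $\hat S$ whose induced map on $\pi_1$ sends the pair $(c_1, c_2)$ to $(a, b^{-1})$ up to inner automorphism (absorbed in the choice of basepoint). Both pairs are free bases of $F_2$ and hence related by an automorphism of $F_2$, but the image of $\mathrm{MCG}(\hat S)$ in $\mathrm{Out}(F_2)$ is only the finite subgroup generated by the $S_3$ of puncture permutations and the orientation-reversing involution, so arbitrary Nielsen transformations are unavailable. The plan is to verify that the particular change of basis we need is already of this form --- a puncture permutation composed with at most one involution --- and to realise it explicitly by permuting the punctures of $\hat S$ and possibly reversing orientation.
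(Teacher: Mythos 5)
Your Part 1 is correct and is essentially the paper's own route: the paper first shows $\hat S$ is a once-punctured torus or a thrice-punctured sphere, then excludes the torus by realising a Nielsen change of basis by a homeomorphism and noting that the canonical generators can be represented by curves meeting once, so the concatenation would be freely homotopic to a simple curve, contradicting the transverse self-intersection of the geodesic $\hat\gamma$; your appeal to the correspondence between primitive elements of $F_2$ and simple closed curves on the once-punctured torus is the same fact in different packaging, and your exclusion of the cyclic case is fine.

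In Part 2 there is a genuine gap, and it is the one you yourself flag. After the $S_3$-reduction you only know the \emph{individual} conjugacy classes: $c_1=[\hat\gamma_1]_{\hat x_0}$ is conjugate to $a^{\epsilon_1}$ and $c_2=[\hat\gamma_2]_{\hat x_0}$ to $b^{\epsilon_2}$. Your step ruling out $\epsilon_1=\epsilon_2$ analyses the conjugacy class of $a^{\epsilon_1}b^{\epsilon_2}$, but $c_1c_2$ is a product of a conjugate of $a^{\epsilon_1}$ with a conjugate of $b^{\epsilon_2}$ and need not be conjugate to $a^{\epsilon_1}b^{\epsilon_2}$; knowing that it is, is exactly the simultaneous normalisation you defer to the ``main obstacle'' paragraph, so the plan as written is circular. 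Moreover the proposed resolution (``verify that the particular change of basis is a puncture permutation composed with an involution'') is not an argument: the automorphism of $F_2$ taking $(a,b^{\epsilon_2})$ to $(c_1,c_2)$ is a priori arbitrary, and what must be proved is precisely that the peripheral constraints force it to be inner up to the finite mapping-class subgroup. The missing ingredient is classical and slots in cleanly: since $(c_1,c_2)$ is a basis, pick $\phi\in\mathrm{Aut}(F_2)$ with $\phi(a)=c_1$, $\phi(b)=c_2$; the conjugacy hypotheses force its abelianisation to be $\mathrm{diag}(\epsilon_1,\epsilon_2)$, and since the natural map $\mathrm{Out}(F_2)\to\mathrm{GL}_2(\Z)$ is injective (Nielsen), $\phi$ differs by an inner automorphism from the evident automorphism $a\mapsto a^{\epsilon_1}$, $b\mapsto b^{\epsilon_2}$. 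Hence $(c_1,c_2)$ is \emph{simultaneously} conjugate to $(a^{\epsilon_1},b^{\epsilon_2})$, after which your cyclic-word computation, the exclusion of $\epsilon_1=\epsilon_2$ via the self-intersection of $\hat\gamma$, the orientation-reversing involution, and a point-pushing homeomorphism absorbing the remaining inner automorphism do complete the proof. For comparison, the paper disposes of this step with the terse assertion that, up to homeomorphism, the only generating pairs of peripheral classes are $(a,b)$ and $(a,b^{-1})$, of which only the latter yields a self-intersecting concatenation; your outline is in the same spirit, but the simultaneous-conjugacy step must actually be supplied.
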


\begin{figure}[ht]
\begin{center}
\begin{minipage}{.3\linewidth}
\begin{center}
\begin{tikzpicture}[scale=1]
\draw[color=blue!70!black, thick, ->] (-.5,.5) .. controls +(-1,1) and +(-1,-1) .. (-.5,-.5) -- (0,0) -- (-.5,.5);
\draw[color=red!70!black, thick, ->] (.5,.5) .. controls +(1,1) and +(1,-1) .. (.5,-.5) -- (0,0) -- (.5,.5);
\draw[color=blue!70!black] (-.2,.9) node[left]{$\wh\alpha_1$};
\draw[color=red!70!black] (.2,.9) node[right]{$\wh\alpha_2$};
\draw (0,0) node{$+$} node[above]{$\wh p$};
\draw (.6,0) node{$\bullet$} node[right]{$B$};
\draw (-.6,0) node{$\bullet$} node[left]{$A$};
\end{tikzpicture}
\end{center}
\end{minipage}
\hspace{30pt}
\begin{minipage}{.3\linewidth}
\begin{center}
\begin{tikzpicture}[scale=1]
\draw[color=blue!70!black, thick, ->] (-.5,.5) .. controls +(-1,1) and +(-1,-1) .. (-.5,-.5) -- (0,0) -- (-.5,.5);
\draw[color=red!70!black, thick, ->] (.5,-.5) .. controls +(1,-1) and +(1,1) .. (.5,.5) -- (0,0) -- (.5,-.5);
\draw[color=blue!70!black] (-.2,.9) node[left]{$\wh\alpha_1$};
\draw[color=red!70!black] (.2,.9) node[right]{$\wh\alpha_2$};
\draw (0,0) node{$+$} node[above]{$\wh p$};
\draw (.6,0) node{$\bullet$} node[right]{$B$};
\draw (-.6,0) node{$\bullet$} node[left]{$A$};
\end{tikzpicture}
\end{center}
\end{minipage}
\caption{\label{FigModifQ} Configurations of Proposition \ref{coveringmap2}. The right one is impossible.}
\end{center}
\end{figure}
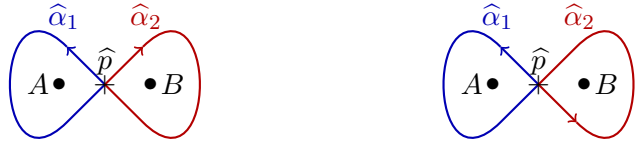 

\begin{proof}[Proof of Proposition \ref{coveringmap1}]
1. Denote by $\varphi$ the map $\pi_{1}(\hat{S},\hat{x_0})\rightarrow H_1(\hat{S},\R)$ which is the composition of the Hurewicz map $\pi_{1}(\hat{S},\hat{x_0}) \rightarrow H_{1}(\hat{S},\Z) \equiv \pi_{1}(\hat{S},\hat{x_0})/[\pi_{1}(\hat{S},\hat{x_0}),\pi_{1}(\hat{S},\hat{x_0})]$ with the map $H_{1}(\hat{S},\Z) \rightarrow H_{1}(\hat{S},\R) \equiv H_{1}(\hat{S},\Z) \otimes_{\Z} \R$. As the elements $[\gamma_{1}]_{x_0}$ and $[\gamma_{2}]_{x_0}$ generate the group $\pi_{1}(\hat{S},\hat{x_0})$, the vectors $\varphi([\gamma_{1}]_{x_0})$ and $\varphi([\gamma_{2}]_{x_0})$ generate the vector space $H_{1}(\hat{S},\R)$ and $\dim(H_{1}(\hat{S},\R))\leq 2$. Hence the surface $\hat{S}$ is either the sphere with $0$, $1$, $2$ or $3$ punctures or the $2$-torus with $0$ or $1$ puncture, as $\hat{S}$ is orientable.

However, a closed geodesic of $S$ has a non trivial free homotopy class: otherwise, there would exist closed geodesics in $\Hy^2$. Hence $\pi_{1}(\hat{S},\hat{x_0})$ is not trivial and $\hat{S}$ is not homeomorphic to the sphere nor to the sphere with one puncture.

Moreover, the classes $[\gamma_{1}]_{x_0}$ and $[\gamma_{2}]_{x_0}$ are distinct and $[\gamma_{1}]_{x_0} \neq [\gamma_{2}]_{x_0}^{-1}$ as the geodesics $\gamma_{1}$ and $\gamma_{2}$ are transverse and there is a unique geodesic representative in a free homotopy class of a loop. More generally, as $\gamma_{1}$ and $\gamma_{2}$ are primitive closed geodesics, it is not possible that there exist nontrivial integers $n_{1}$ and $n_{2}$ such that $[\gamma_{1}]_{x_0}^{n_{1}}=[\gamma_{2}]_{x_0}^{n_{2}}$: otherwise, a geodesic representative of this class is obtained by turning $\mathrm{lcm}(n_{1},n_{2})$ times around a closed geodesic and $\gamma_{1}$ as well as $\gamma_{2}$ would not be primitive. Hence $\pi_{1}(\hat{S},\hat{x_0})$ is not isomorphic to $\Z$ either and $\hat{S}$ is not homeomorphic to the sphere with two punctures.

Finally, as the surface $S$ is endowed with a hyperbolic metric, the group $\pi_{1}(S,x_0)$ does not contain $\Z^2$. The surface $\hat{S}$ cannot be homeomorphic to the $2$-torus. Hence the surface $\hat{S}$ is either the sphere with three punctures or the torus with one puncture.
\medskip

2. We will prove point 4., point 2. being a consequence of it. For $i=1,2$, denote by $\tilde{\gamma}_{i}$ the lift of $\hat \gamma_{i}$ to $\tilde S$ which contains the point $\tilde{x_0}$, \emph{i.e.} the connected component of $\pi^{-1}(\gamma_{i})$ which contains the point $\tilde{x_0}$. In what follows, we will use a fixed covering map $\tilde{S} \rightarrow \hat{S}$.

Suppose for a contradiction that the loop $\hat{\gamma}_{1}$ is simple and that the surface $\hat{S}$ is homeomorphic to the sphere with three punctures. Then the loop $\hat{\gamma}_{1}$ is freely homotopic to a small loop $\hat\gamma'_{1}$ around a puncture which is disjoint from $\hat{\gamma}_{2}$. However, it is impossible. Indeed, the geodesic lines $\tilde{\gamma}_{1}$ and $\tilde{\gamma}_{2}$ meet transversely at the point $p$ and hence the lift of $\hat\gamma'_{1}$ to $\tilde S$, which is as a finite distance to $\tilde{\gamma}_{1}$, has to meet the geodesic line $\tilde{\gamma}_{2}$.
\medskip

3. Suppose the surface $\hat{S}$ is homeomorphic to the torus with one puncture. If one of the loops $\hat{\gamma}_{1}$ or $\hat{\gamma}_{2}$ is not an essential loop of $\hat{S}$, then it represents the trivial class in $H_{1}(\hat{S},\R)$. But this is not possible as $\dim(H_{1}(\hat{S},\R))=2$ and the classes of $\hat{\gamma}_{1}$ and $\hat{\gamma}_{2}$ in $H_{1}(\hat{S},\R)$ generate the vector space $H_{1}(\hat{S},\R)$.
\end{proof}

We now come to the proof of Proposition \ref{coveringmap2}. As a first step, we get a weak version of it, similar to the conclusion of Proposition \ref{coveringmap1}.

\begin{lemma}\label{Lemcoveringmap2}
Under the hypotheses of Proposition \ref{coveringmap2}, the surface $\hat S$ is homeomorphic to either the sphere with three punctures, or the torus with one puncture.
\end{lemma}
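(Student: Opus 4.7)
The plan is to mimic part 1 of Proposition \ref{coveringmap1} to narrow down the topological type of $\hat{S}$ to a short list of candidates, then eliminate all but the two claimed ones using the autointersection hypothesis. Since $\pi_{1}(\hat{S},\hat{x_0}) = \langle [\gamma_{1}]_{x_0}, [\gamma_{2}]_{x_0}\rangle$ is generated by two elements, composing with the Hurewicz map gives $\dim H_{1}(\hat{S},\R)\le 2$. The orientable surfaces of finite type satisfying this bound are exactly the sphere with $0,1,2$ or $3$ punctures, the closed torus, and the torus with one puncture.

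Next I rule out four of these six candidates. The sphere and the once-punctured sphere fail because their fundamental group is trivial, whereas $[\gamma_{1}\gamma_{2}]_{x_0}$ is nontrivial as a closed geodesic class of a hyperbolic surface. The closed torus fails because $\pi_{1}(S)$, being the fundamental group of a hyperbolic surface of genus $\ge 2$, contains no copy of $\Z^{2}$.

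It remains to rule out the sphere with two punctures, i.e.\ the case $\pi_{1}(\hat{S},\hat{x_0})\cong \Z$. The idea is to show that the concatenation $\hat{\gamma}=\hat{\gamma}_{1}\cdot\hat{\gamma}_{2}$ is a closed geodesic of $\hat{S}$ that inherits a transverse self-intersection of $\gamma_{1}\gamma_{2}$ at $\hat{x_0}$: since $[\gamma_{1}]_{x_0}$ and $[\gamma_{2}]_{x_0}$ both belong to $\pi_{1}(\hat{S},\hat{x_0})$, the canonical lifts $\hat{\gamma}_{1},\hat{\gamma}_{2}$ are both loops based at $\hat{x_0}$, so $\hat{\gamma}$ passes through $\hat{x_0}$ at times $0$ and $1/2$, with tangents that are transverse because $\hat{S}\to S$ is a local isometry. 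A hyperbolic metric on the twice-punctured sphere is a hyperbolic annulus, and every closed geodesic of such an annulus is a multiple of the core curve, hence has all tangent vectors parallel to a single direction, which forbids any transverse self-intersection. The delicate step is to translate the autointersection hypothesis, formulated as $\tilde{\gamma}\cap T\tilde{\gamma}\neq\emptyset$ transversally for some deck transformation $T\in\pi_{1}(S)$ of $\tilde S\to S$, into a self-intersection of $\hat{\gamma}$ inside $\hat{S}$; tracking basepoints along the lift of $\gamma_{1}\gamma_{2}$ shows that one may take $T=[\gamma_{1}]_{x_0}$, and since this $T$ lies in $G=\pi_{1}(\hat{S},\hat{x_0})$, it acts as a deck transformation of $\tilde{S}\to\hat{S}$, so the transverse crossing of $\tilde{\gamma}$ and $T\tilde{\gamma}$ projects to the required transverse self-intersection of $\hat{\gamma}$ at $\hat{x_0}$, yielding the contradiction.
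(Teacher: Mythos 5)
Your proof is correct, but the key step differs from the paper's. Both arguments start identically: $\pi_1(\hat S,\hat x_0)$ is generated by two elements, so $\dim H_1(\hat S,\R)\le 2$, which (with orientability and finite type) leaves the sphere with at most three punctures and the torus with at most one puncture; the closed sphere, once-punctured sphere and closed torus are discarded as in Proposition~\ref{coveringmap1}. The divergence is in excluding the twice-punctured sphere, i.e.\ the case $\pi_1(\hat S,\hat x_0)\cong\Z$. The paper argues algebraically: if the group were cyclic there would be a relation $[\gamma_1]_{x_0}^{n_1}=[\gamma_2]_{x_0}^{n_2}$, and then the free homotopy class of $\gamma=\gamma_1\gamma_2$ would be a proper power of a closed geodesic, contradicting the \emph{primitivity} hypothesis; the transverse autointersection is not used at this stage (it is only used afterwards, to rule out the punctured torus in Proposition~\ref{coveringmap2}.1). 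You instead argue geometrically: since the crossing of $\gamma$ at $x_0$ is witnessed by the deck transformation $[\gamma_1]_{x_0}\in G$ (this is indeed how the paper itself reads the hypothesis, cf.\ the end of the proof of Proposition~\ref{coveringmap2}.1 and Section~\ref{Secsetting}), the closed geodesic $\hat\gamma$ obtained by lifting $\gamma$ to $\hat S$ keeps a transverse self-intersection, which is impossible in $\Hy^2/\Z$ whose closed geodesics are all multiples of the simple core curve. So your route trades primitivity for the autointersection hypothesis, and avoids the lcm bookkeeping; the paper's route keeps the lemma strictly parallel to Proposition~\ref{coveringmap1} and independent of the crossing. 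One small point worth making explicit in your version: a priori $\Hy^2/\Z$ could be a parabolic cylinder, which carries no closed geodesic at all; here the generator of $G$ must be hyperbolic because $[\gamma_1\gamma_2]_{x_0}\in G$ is represented by a closed geodesic (and in the parabolic case the mere existence of the closed geodesic $\hat\gamma$ already gives the contradiction), so this is a one-line fix rather than a gap.
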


\begin{proof}[Proof of Lemma \ref{Lemcoveringmap2}]
Quite similarly to the proof of Proposition \ref{coveringmap1}, one can get that the surface $\hat S$ is homeomorphic to either the sphere with three punctures, or the torus with one puncture. As in Proposition \ref{coveringmap1}, we can prove that $\dim(H_{1}(\hat{S},\R)) \leq 2$.

Denote by $\gamma$ the primitive closed geodesic which is the concatenation of the loops $\gamma_{1}$ and $\gamma_{2}$ based at $p$. As the loop $\gamma$ has minimal length in its free homotopy class, it is not possible that either $[\gamma_{1}]_{x_0}$ or $[\gamma_{2}]_{x_0}$ is trivial. Moreover, if there existed integers $n_{1}$ and $n_{2}$ such that $[\gamma_{1}]_{x_0}^{n_{1}}=[\gamma_{2}]_{x_0}^{n_{2}}$, then the free homotopy class of $[\gamma_{1}]_{x_0}^{n_{1}}$ is represented geodesically by turning $n=\mathrm{lcm}(n_{1},n_{2})$ times around a closed geodesic $\gamma'$. Then the free homotopy class of $\gamma$ would be represented by turning $n/n_{1}+n/n_{2}$ times around $\gamma'$. Hence $\gamma$ would not be primitive, a contradiction.

The rest of the proof goes through as in the proof of Proposition \ref{coveringmap1}.
\end{proof}

Hence, to prove the first point of Proposition \ref{coveringmap2}, it remains to prove that $\hat S$ cannot be homeomorphic to the torus with one puncture. We will need the following classical result of \cite{MR1512188}.

\begin{lemma}[Nielsen]\label{LemNielsen}
Cardinal two generating families of the free group $\langle a,b\rangle$ on two generators are obtained from the canonical one $(a,b)$ by so-called Nielsen transformations: permutations of elements of the basis, inversion of one of them, and multiplication of one of them by the other one (on the left or on the right).
\end{lemma}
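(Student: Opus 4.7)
The plan is to prove Nielsen's theorem via the classical method of \emph{Nielsen reduction}. Since Nielsen transformations are invertible and preserve the property of being a generating pair, it suffices to show that any generating pair that cannot be shortened (in a suitable sense) by Nielsen transformations must be of the form $(a^{\pm 1}, b^{\pm 1})$ up to order. Throughout, $|w|$ denotes the length of the reduced word representing $w\in F_{2}=\langle a,b\rangle$, and we set $\ell(u,v)=|u|+|v|$.

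First, I would call a pair $(u,v)$ of nontrivial elements \emph{Nielsen reduced} when $\ell(u',v')\geq\ell(u,v)$ for every pair $(u',v')$ obtained from $(u,v)$ by a single Nielsen transformation (swap, inversion of one entry, or multiplication of one entry by the other or its inverse on either side). Since $\ell$ takes values in $\mathbb{N}$, every orbit of the group generated by Nielsen transformations contains a Nielsen reduced pair, and such a pair still generates $F_{2}$ if the original one does.

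The technical heart of the proof is the following cancellation lemma: if $(u,v)$ is Nielsen reduced with $u,v$ written as reduced words, then for any choice of signs $\epsilon,\eta\in\{\pm 1\}$, the amount of cancellation between $u^{\epsilon}$ and $v^{\eta}$ is strictly less than $\min(|u|,|v|)/2$ (otherwise, say if $u=xy$ and $v=y^{-1}z$ with $|y|\geq|u|/2$, the transformation $(u,v)\mapsto(uv,v)=(xz,v)$ would strictly reduce $\ell$). From this, I would deduce by induction on the length of a reduced word $w$ in two formal letters $X,Y$ that the evaluation $w(u,v)\in F_{2}$ satisfies $|w(u,v)|\geq\max(|u|,|v|)$: one writes $w(u,v)$ as a concatenation of blocks, each one a copy of $u^{\pm 1}$ or $v^{\pm 1}$, observes that consecutive blocks cancel each other by at most half of the shorter one by the above bound, so the surviving middle portion of each block remains in the final reduced word.

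To finish, since $(u,v)$ generates $F_{2}$, write $a=w(u,v)$ for some nontrivial reduced word $w$. The inductive estimate gives $1=|a|\geq\max(|u|,|v|)$, forcing $|u|=|v|=1$, so $u,v\in\{a^{\pm 1},b^{\pm 1}\}$. Since $(u,v)$ generates the whole of $F_{2}$ and not merely a cyclic subgroup, $\{u,v\}$ must meet both $\{a^{\pm 1}\}$ and $\{b^{\pm 1}\}$, hence $(u,v)$ is obtained from $(a,b)$ by a swap and inversions, concluding the proof. The main difficulty is the cancellation lemma and the inductive bound, where one has to carefully enumerate the several configurations of initial/terminal segments that could collide (and check in each case that a Nielsen transformation exists strictly decreasing $\ell$); the rest of the argument is then formal.
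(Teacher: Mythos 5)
The paper does not actually prove this lemma; it invokes Nielsen's classical theorem with a citation to \cite{MR1512188}, so there is no in-paper argument to compare with, and your proposal must stand on its own. Your overall route (Nielsen reduction: pass to a pair of minimal total length in its Nielsen class, bound cancellations, deduce a length inequality for words in $u,v$, conclude $|u|=|v|=1$) is the standard one, but the step you yourself identify as the technical heart is not correct as stated, and the error is not cosmetic.

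Minimality of $\ell(u,v)=|u|+|v|$ under Nielsen moves only yields the \emph{non-strict} bound: the cancellation $|y|$ between $u^{\epsilon}$ and $v^{\eta}$ satisfies $|y|\le \min(|u|,|v|)/2$ (this is condition (N1) in the classical terminology). Your justification of the strict bound is miscomputed: if $u=xy$, $v=y^{-1}z$, then $\ell(uv,v)-\ell(u,v)=|v|-2|y|$, so the move $(u,v)\mapsto(uv,v)$ strictly decreases $\ell$ only when $|y|>|v|/2$, not under your hypothesis $|y|\ge |u|/2$ (the move adapted to $|y|>|u|/2$ is $(u,v)\mapsto(u,uv)$); and in the borderline case $|y|=\min(|u|,|v|)/2$ neither move decreases $\ell$, so a total-length-minimal pair can perfectly well exhibit exactly-half cancellation. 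This borderline case is precisely where your inductive estimate breaks: with only the non-strict bound, a block $v_i$ of even length can lose exactly half of its letters to each of its two neighbours and disappear entirely from the reduced form, so ``the surviving middle portion of each block remains'' is unjustified, and with it the inequality $|w(u,v)|\ge\max(|u|,|v|)$ that your final step relies on. This is exactly why the classical treatment (e.g. Lyndon--Schupp, Ch.~I.2) introduces the further condition (N2) (no factor is swallowed in a triple product $v_1v_2v_3$), and (N2) is \emph{not} a consequence of minimizing total length: one must either break ties among length-minimal pairs by a finer well-ordering (comparing left halves of the words), or use a peak-reduction type argument. Your sketch, as written, therefore has a genuine gap at the cancellation lemma; the approach is salvageable, but only by adding the (N2)-type analysis (or an alternative argument, e.g. via the abelianization map to $\mathrm{GL}_2(\Z)$ together with a treatment of the kernel), none of which is indicated.
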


\begin{proof}[Proof of Proposition \ref{coveringmap2}]
1. Suppose by contradiction that $\hat S$ is homeomorphic to the torus with one puncture. In this case, the classes of the loops $\hat\gamma_1$ and $\hat\gamma_2$ span the fundamental group $\pi_1(\hat S,\hat x_0)$.

Observe that any Nielsen transformation corresponds to a homeomorphism of the torus with one puncture. Hence, by Lemma~\ref{LemNielsen}, there exists a homeomorphism of $\hat S$ sending the pair of classes $([\hat\gamma_1]_{\hat x_0},\,[\hat\gamma_2]_{\hat x_0})$ to the canonical generators of $\pi_1(\hat S,\hat x_0)$. So from now we suppose that these classes are the canonical ones. An example of configuration of the loops $\hat\gamma_1$ and $\hat\gamma_2$ is depicted in Figure \ref{FigHomotopTor}. The curves $\hat \alpha_1$ and $\hat\alpha_2$ can be homotoped to curves having only one intersection. In particular, the curve $\hat\gamma$ can be homotoped to a one with no self intersection (see Figure~\ref{FigModifP})
has no transverse intersection in $\hat S$. This implies that $\gamma$ has no transverse intersection in $S$ for one of the deck transformations $T_1$ or $T_2$ associated respectively to the loops $\gamma_1$ and $\gamma_2$, contradiction.

\begin{figure}[ht]
\begin{minipage}{.45\textwidth}
\begin{center}
\begin{tikzpicture}[scale=3]
\clip (-.5,.5) rectangle (.5,1.5);
\foreach \i in {-1,...,0}{
\draw[color=blue!60!black, thick] (-.25+\i,-.25-\i) .. controls +(.2,-.2) and +(-.2,.2) .. (1.25+\i,.25-\i);
\draw[color=blue!60!black, thick] (.25+\i,1.25-\i) .. controls +(.2,-.2) and +(-.2,.2) .. (-.25+\i,-.25-\i);
\draw[color=blue!60!black, thick] (.25+\i,2.25-\i) .. controls +(.2,-.2) and +(-.2,.2) .. (-.25+\i,.75-\i);
}
\fill [white] (-.5,.5) -- (-.5,1.5) -- (0,1) -- cycle;

\draw (.32,1.32) node{$\times$};
\draw[color=blue!60!black] (.18,1.18) node{$\hat\alpha_1$};
\draw[color=green!60!black] (-.3,.7) node{$\hat\alpha_2$};
\draw[thick] (-.5,.5) rectangle (.5,1.5);

\foreach \i in {-1,...,0}{
\draw[color=green!60!black, thick] (-.25+\i,.75-\i) .. controls +(.2,-.2) and +(-.2,.2) .. (1.25+\i,1.25-\i);
}
\clip (-.5,.5) -- (.5,.5) -- (-.5,1.5) -- cycle;
\foreach \i in {-1,...,0}{
\draw[color=green!60!black, thick] (-.25+\i,-.25-\i) .. controls +(.2,-.2) and +(-.2,.2) .. (1.25+\i,.25-\i);
\draw[color=green!60!black, thick] (.25+\i,2.25-\i) .. controls +(.2,-.2) and +(-.2,.2) .. (-.25+\i,.75-\i) ;
}
\end{tikzpicture}
\caption{\label{FigHomotopTor}Example of trajectory $\hat\alpha$ in $\hat S\simeq \T^2\setminus\{0\}$, obtained as the concatenation of $\hat\alpha_1$ and $\hat\alpha_2$ (left).}
\end{center}
\end{minipage}
\hfill
\begin{minipage}{.5\textwidth}
\begin{minipage}{.48\linewidth}
\begin{center}
\begin{tikzpicture}[scale=1]
\draw[dotted, color=blue!70!black, thick, ->] (-.7,.7) .. controls +(-1,1) and +(-1,-1) .. (-.7,-.7) -- (-.5,-.5);
\draw[color=blue!70!black, thick, ->] (-.7,-.7) -- (0,0) -- (-.7,.7);
\draw[dotted, color=red!70!black, thick, <-] (.7,.7) .. controls +(1,1) and +(1,-1) .. (.7,-.7) -- (.5,-.5);
\draw[color=red!70!black, thick, <-] (.7,-.7) -- (0,0) -- (.7,.7);
\draw[color=blue!70!black] (-.2,0) node[left]{$\hat \gamma_1$};
\draw[color=red!70!black] (.2,0) node[right]{$\hat \gamma_2$};
\draw (0,0) node{$\bullet$} node[above]{$\hat x_0$};
\end{tikzpicture}
\end{center}
\end{minipage}
\hfill
\begin{minipage}{.48\linewidth}
\begin{center}
\begin{tikzpicture}[scale=1]
\draw[dotted, color=blue!70!black, thick, ->] (-.7,.7) .. controls +(-1,1) and +(-1,-1) .. (-.7,-.7) -- (-.5,-.5);
\draw[color=blue!70!black, thick] (-.7,-.7) -- (-.5,-.5) to[out=45,in=180] (0,-.3);
\draw[color=blue!70!black, thick] (-.7,.7) -- (-.5,.5) to[out=-45,in=180] (0,.3);
\draw[dotted, color=red!70!black, thick, <-] (.7,.7) .. controls +(1,1) and +(1,-1) .. (.7,-.7) -- (.5,-.5);
\draw[color=red!70!black, thick] (.7,-.7) -- (.5,-.5) to[out=135,in=0] (0,-.3);
\draw[color=red!70!black, thick] (.7,.7) -- (.5,.5) to[out=-135,in=0] (0,.3);
\draw (0,0) node{$\bullet$} node[left]{$\hat x_0$};
\end{tikzpicture}
\end{center}
\end{minipage}
\caption{\label{FigModifP}Modification of $\hat \gamma_1$ and $\hat \gamma_2$ close to $\hat x_0$ in proof of Proposition \ref{coveringmap2}.}
\end{minipage}
\end{figure}

\medskip

2. It is a classical fact that the only homotopy classes of simple loops in the three punctured sphere are the ones winding once around one puncture and not around the others (and the trivial one), in other words there are 7 of them: $0,a,a^{-1},b,b^{-1},c,c^{-1}$.  As $([\hat\gamma_1]_{\hat x_0},\,[\hat\gamma_2]_{\hat x_0})$ generates $\pi_1(\hat S,\hat x_0)$, there are two distinct classes of such couples of classes up to homeomorphism, represented by $(a,b)$ and $(a,b^{-1})$. But only one of them corresponds to a curve $\hat\gamma$ with a transverse self-intersection (see Figure~\ref{FigModifQ}), which concludes the proof.
\end{proof}

\subsection{Preliminaries on homological rotation sets}

As the main result of this section (Proposition~\ref{lifttorus}) is stated in terms of homological rotation sets, we state here some facts about these sets, which were defined in the introduction (Definition~\ref{DefHomolo}).

Suppose $S=\T^2=\R^2 / \Z^2$. Then the homology classes of the loops $t \mapsto (t,0)$ and $t \mapsto (0,t)$ form a basis of $H_1(\T^2)$ and induce an identification $H_1(\T^2) \equiv \R^2$. Via this identification, the set $\rho_{H_1}(f)$ is identified with the rotation set $\rho(f)$ as defined by Misiurewicz and Ziemian \cite{MR1053617}.

There is no obvious relationship between homotopical and homological rotation sets. However, we have the following proposition.

\begin{proposition} \label{topiclogic}
Suppose that $S$ is closed, and let $(\alpha,\beta)$ be a geodesic line which projects to a closed geodesic $\gamma$ of $S$. Suppose $(\alpha,\beta,v)$, with $v>0$, is an extremal point of $\rho(\tilde{f})$. Then the homological vector $\frac{v}{\ell(\gamma)}[\gamma]_{H_1}$ belongs to the homological rotation set $\rho_{H_1}(f)$. Moreover, there exists a point $x \in S$ such that the orbit of $x$ realises $\frac{v}{\ell(\gamma)}[\gamma]_{H_1}$ in $\rho_{H_1}(f)$ and $(\gamma,v)$ in $\rho(f)$.
\end{proposition}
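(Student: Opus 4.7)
The plan is to use Proposition \ref{sublindistance} to obtain an orbit travelling along $(\alpha,\beta)$ at asymptotic speed $v$ while staying at sublinear distance from it, and then to compare the homology class of the associated loops in $S$ with that of an iterate of the deck transformation $T$ preserving $(\alpha,\beta)$. I apply Proposition \ref{sublindistance} to obtain $\tilde{x} \in \Hy^2$, and let $x \in S$ be its projection. The discussion following Proposition \ref{sublindistance} already shows that the orbit of $x$ realises $(\alpha,\beta,v)$ in $\rho(f)$, so the bulk of the work concerns the homological statement.

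Let $T \in \pi_1(S)$ be the deck transformation translating $(\alpha,\beta)$ by $\ell(\gamma)$ in the direction from $\alpha$ to $\beta$; its image in $H_1(S,\R)$ is $[\gamma]_{H_1}$. For each $n \geq 1$, lift the loop $l_{n,x}$ to a path starting at $\tilde{x}$ and define $T_n \in \pi_1(S)$ to be the unique deck transformation such that this lift ends at $T_n\tilde{x}$. Then $[l_{n,x}]_{H_1}$ is exactly the image of $T_n$ under $\pi_1(S) \to H_1(S,\R)$, and since the closing geodesic has length at most the diameter $D$ of $S$, we have $d(T_n\tilde{x}, \tilde{f}^n(\tilde{x})) \leq D$. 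Setting $k_n = \lfloor nv/\ell(\gamma) \rfloor$, the identity $\pi_{\alpha,\beta}(T^{k_n}\tilde{x}) = T^{k_n}\pi_{\alpha,\beta}(\tilde{x})$ combined with the facts that $\pi_{\alpha,\beta}(\tilde{f}^n(\tilde{x}))$ sits at distance $nv + o(n)$ from $\pi_{\alpha,\beta}(\tilde{x})$ and that $d(\tilde{f}^n(\tilde{x}), \pi_{\alpha,\beta}(\tilde{f}^n(\tilde{x}))) = o(n)$ yields, via a triangle-inequality computation in $\Hy^2$, the estimate $d(T^{k_n}\tilde{x}, \tilde{f}^n(\tilde{x})) = o(n)$. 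Consequently $d(T^{-k_n}T_n\,\tilde{x}, \tilde{x}) = o(n)$.

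The key step is to pass from this geometric displacement bound to a homological one. By the Svar\v{c}-Milnor lemma (Lemma \ref{svarcmilnor}), the word length of $T^{-k_n}T_n$ with respect to any fixed finite generating set of $\pi_1(S)$ is also $o(n)$; as the map $\pi_1(S) \to H_1(S,\R)$ is bounded on generators, one gets $\|[T^{-k_n}T_n]_{H_1}\| = o(n)$. Dividing
\[
[l_{n,x}]_{H_1} = [T_n]_{H_1} = k_n[\gamma]_{H_1} + [T^{-k_n}T_n]_{H_1}
\]
by $n$ and letting $n \to +\infty$ gives $\frac{1}{n}[l_{n,x}]_{H_1} \to \frac{v}{\ell(\gamma)}[\gamma]_{H_1}$, which proves at once that this vector belongs to $\rho_{H_1}(f)$ and that it is realised by the orbit of $x$. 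The main obstacle I foresee is the transition from displacement in $\Hy^2$ to word length in $\pi_1(S)$: everything hinges on the Svar\v{c}-Milnor lemma providing a two-sided comparison, but once that is granted the proof is essentially a bookkeeping of errors which are all $o(n)$.
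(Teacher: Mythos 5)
Your proof is correct and follows essentially the same route as the paper: apply Proposition~\ref{sublindistance}, compare $T_n$ with $T^{k_n}$ via the triangle inequality to get $d(T^{k_n}\tilde{x},T_n\tilde{x})=o(n)$, and convert this to a homological estimate through the Svar\v{c}-Milnor lemma. The only cosmetic difference is that you spell out the passage from word length to the $H_1$-norm (boundedness of the Hurewicz map on generators), which the paper leaves implicit.
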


\begin{proof} 
Fix a generating set $\mathcal{G}$ of the group $\pi_{1}(S)$, which we see as the group of deck transformations of the covering map $\tilde{S}=\Hy^2 \rightarrow S$. We denote by $l_{\mathcal{G}}$ the wordlength with respect to this generating set $\mathcal{G}$. The proof of Proposition \ref{topiclogic} is a consequence of Proposition \ref{sublindistance} and of the well-known Svar\v{c}-Milnor lemma.

\begin{lemma}[Svar\v{c}-Milnor] \label{svarcmilnor}
Fix $\tilde{x} \in \Hy^{2}=\tilde{S}$. Then there exists a constant $C>1$ such that, for any $T \in \pi_{1}(S)$,
$$ \frac{1}{C} d(\tilde{x},T\tilde{x}) \leq l_{\mathcal{G}}(T) \leq C d(\tilde{x},T\tilde{x}).$$
\end{lemma}

Take a point $\tilde{x}$ of $\Hy^2$ given by Proposition \ref{sublindistance} and let $x =\pi(\tilde{x})$. Recall we called $l_{n,x}$ the loop which is the concatenation of the path $(f_{t}(x))_{t \in [0,n]}$ with a geodesic path $g_{f^{n}(x),x}$ between the points $f^{n}(x)$ and $x$ of length lower than or equal to $D=\mathrm{diam}(S)$. Denote by $T_n \in \pi_{1}(S)$ the deck transformation corresponding to the loop $l_{n,x}$ with basepoint $\tilde{x}$. By definition of $l_{n,x}$, for any $n \geq 1$,
$$d\big(T_{n}(\tilde{x}),\tilde{f}^{n}(\tilde{x}))\big) \leq D.$$
For any $n \geq 1$, let $k_{n}=\lfloor \frac{nv}{\ell(\gamma)} \rfloor$. Denote by $T$ the deck transformation corresponding to $(\alpha,\beta)$. Then, for any $n \geq 1$,
$$ \begin{array}{rcl}
d\big(T^{k_{n}}(\tilde{x}),T_{n}(\tilde{x})\big) &  \leq &  d\big(T^{k_{n}}(\tilde{x}),T^{k_{n}}(\pi_{\alpha,\beta}(\tilde{x}))\big)
+d\big(T^{k_{n}}(\pi_{\alpha,\beta}(\tilde{x})),\pi_{\alpha,\beta}(\tilde{f}^{n}(\tilde{x}))\big) \\
& & + d\big(\pi_{\alpha,\beta}(\tilde{f}^{n}(\tilde{x})),\tilde{f}^{n}(\tilde{x})\big)
+d\big(\tilde{f}^{n}(\tilde{x}),T_{n}(\tilde{x})\big).
\end{array}$$

However, for any $n \geq 1$,
$$ \left\{ \begin{array}{rcl}
d\big(\tilde{f}^{n}(\tilde{x}),T_{n}(\tilde{x})\big) & \leq & D \\
d\big(T^{k_{n}}(\tilde{x}),T^{k_{n}}(\pi_{\alpha,\beta}(\tilde{x}))\big)& = & d\big(\tilde{x},\pi_{\alpha,\beta}(\tilde{x})\big)
\end{array}
\right.$$
and, by Proposition \ref{sublindistance},
\[\lim_{n \rightarrow +\infty} \frac{1}{n}d(\pi_{\alpha,\beta}(\tilde{f}^{n}(\tilde{x})),\tilde{f}^{n}(\tilde{x})) = 0 \]
and 
\begin{align*}
\lim_{n \rightarrow +\infty} \frac{1}{n} & d\big(T^{k_{n}}(\pi_{\alpha,\beta}(\tilde{x})),\pi_{\alpha,\beta}(\tilde{f}^{n}(\tilde{x}))\big) \\
 & = \lim_{n \rightarrow +\infty} \frac{1}{n}\Big|d\big(T^{k_{n}}(\pi_{\alpha,\beta}(\tilde{x})),\pi_{\alpha,\beta}(\tilde{x})\big)-d\big(\pi_{\alpha,\beta}(\tilde{x}),\pi_{\alpha,\beta}(\tilde{f}^{n}(\tilde{x}))\big)\Big| & \\
& = |v-v|=0.
\end{align*}
Hence $$ \lim_{n \rightarrow +\infty} \frac{1}{n} d (T^{k_{n}}(\tilde{x}),T_{n}(\tilde{x}))=0.$$
By the Svar\v{c}-Milnor lemma, this implies that $ \displaystyle \lim_{n \rightarrow +\infty} \frac{1}{n}l_{\mathcal{G}}(T^{-k_{n}}T_n)=0$ so that
$$ \lim_{n \rightarrow +\infty} \frac{1}{n} [l_{n,x}]_{H_1}=\lim_{n \rightarrow +\infty} \frac{1}{n} [\gamma^{k_{n}}]_{H_1}=\lim_{n \rightarrow +\infty} \frac{k_{n}}{n} [\gamma]_{H_1}=\frac{v}{\ell(\gamma)}[\gamma]_{H_1}$$
and the orbit of $x$ realises $\frac{v}{\ell(\gamma)}[\gamma]_{H_1}$. The orbit of $x$ also realises $(\gamma,v)$ in $\rho(f)$ by the remark below Proposition \ref{sublindistance}.
\end{proof}

\subsection{Homological consequences when two geodesics intersect}

Let $f \in \mathrm{Homeo}_0(S)$ and let $\gamma_0$ and $\gamma_1$ be two closed geodesics of $S$, one of which is simple. Suppose that these closed geodesics meet at a point $x_0$ of $S$. Let us denote by $\hat{\pi}:(\hat{S},\hat{x}_{0}) \rightarrow (S,x_0)$ a covering map associated to $(\gamma_0,\gamma_1,x_0)$. By Proposition \ref{coveringmap1}, the surface $\hat{S}$ is homeomorphic to $\T^2 \setminus \left\{ x_{\infty} \right\}$, where $x_\infty$ is a point of $\T^2$. We set $\hat{S}=\T^2 \setminus \left\{ x_{\infty} \right\}$. We fix a lift $\tilde{x}_0$ of the point $x_0$.

By the lifting theorem, there exists a homeomorphism $\hat{f}$ in $\mathrm{Homeo}_{0}(\T^2 \setminus \left\{ x_{\infty} \right\})$ such that $\hat{\pi} \circ \hat{f}=f \circ \hat{\pi}$. Observe that $\T^2$ is the Alexandroff compactification of $\T^2 \setminus \left\{ x_{\infty} \right\}$ so that the homeomorphism $\hat{f}$ extends to an element of $\mathrm{Homeo}_0(\T^2)$ which fixes the point $x_\infty$. By abuse of notation, we also denote by $\hat{f}$ this element of $\mathrm{Homeo}_0(\T^2)$ and we also call it a lift of $f$.

As there is only one homotopy class of isotopy $(f_t)_{ t \in [0,1]}$ between $\Id_S$ and $f$, there is only one homotopy class of isotopy between $\Id_{\T^2}$ and $\hat{f}$ which lifts $(f_t)_{t \in [0,1]}$. We fix such a lift $(\hat{f}_t)_{ t \in [0,1]}$. We denote $\rho(\hat{f})=\rho_{H_{1}(\T^2)}(\hat{f})$ the rotation set of $\hat{f}$ with respect to this isotopy. We denote by $(\tilde{f}_t)_{ t\in [0,1]}$ a lift of the isotopy $(f_t)_{ t \in [0,1]}$ to $\Hy^2$. Of course, all those isotopies can be extended in the usual way to any $t \in \R$.

\begin{proposition} \label{lifttorus}
Let $\gamma_0$ and $\gamma_1$ be two closed geodesics of $S$, one of which is simple. Suppose that there exist $v_0>0$ and $v_1>0$ such that $(\gamma_0,v_0)$ and $(\gamma_1,v_1)$ belong to $\rho(f)$.

Then the convex hull in $H_1(\T^2)$ of $0$, $\frac{v_0}{\ell(\gamma_0)}[\hat{\gamma}_0]_{H_{1}(\T^2)}$ and $\frac{v_1}{\ell(\gamma_1)}[\hat{\gamma}_1]_{H_{1}(\T^2)}$ is contained in $\rho(\hat{f})$.
\end{proposition}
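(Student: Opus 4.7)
The plan is to exploit that $\rho(\hat f) \subset H_{1}(\T^{2},\R) \simeq \R^{2}$ is compact and convex (being the Misiurewicz--Ziemian rotation set of a torus homeomorphism) and contains $0$ (since $\hat f$ fixes $x_{\infty}$), so that it suffices to show $\frac{v_{i}}{\ell(\gamma_{i})}[\hat\gamma_{i}]_{H_{1}(\T^{2})} \in \rho(\hat f)$ for each $i \in \{0,1\}$; convexity will then supply the whole convex hull.

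Fix $i$. Let $T_{i} \in G = \pi_{1}(\hat S,\hat x_{0})$ be the deck transformation of $\Hy^{2} \to \hat S$ corresponding to $\hat\gamma_{i}$, with axis $(\alpha_{i},\beta_{i})$. By Proposition \ref{PropRhoInv} one may assume $(\alpha_{i},\beta_{i},v_{i}) \in \rho(\tilde f)$, and Proposition \ref{ProprealisDist} then provides sequences $(\x_{k})_{k}$ in $\Hy^{2}$ and $n_{k} \to +\infty$ realising this vector with both $\x_{k}$ and $\tilde f^{n_{k}}(\x_{k})$ at hyperbolic distance at most $R+1+\delta$ from $(\alpha_{i},\beta_{i})$. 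Setting $k_{k} = \lfloor n_{k} v_{i}/\ell(\gamma_{i}) \rfloor$, and using that $T_{i}$ acts on its axis by translation of length $\ell(\gamma_{i})$, a standard hyperbolic estimate yields $d_{\Hy^{2}}(\tilde f^{n_{k}}(\x_{k}), T_{i}^{k_{k}} \x_{k}) \le C$ for some constant $C$ independent of $k$. Projecting to $\hat x_{k} = \hat\pi(\x_{k}) \in \hat S$ and using $\hat\pi(T_{i}^{k_{k}}\x_{k}) = \hat x_{k}$ gives $d_{\hat S}(\hat f^{n_{k}}(\hat x_{k}),\hat x_{k}) \le C$, so one can close the $\hat f$-orbit by a path of length at most $C+1$ in $\hat S$ to form a loop $l_{n_{k},\hat x_{k}}$; lifting from $\x_{k}$ ends at $h_{k}\x_{k}$, and the correction $e_{k} = T_{i}^{-k_{k}}h_{k} \in G$ satisfies $d(\x_{k}, e_{k}\x_{k}) \le 2C+1$.

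The hard part will be to show that $[e_{k}]_{H_{1}(\hat S)}$ stays bounded in $\Z^{2}$. The element $e_{k}$ is represented in $\pi_{1}(\hat S,\hat x_{k})$ by the projection of the geodesic segment from $\x_{k}$ to $e_{k}\x_{k}$, which is a loop in $\hat S$ of length at most $2C+1$. Since $G$ is a finitely generated subgroup of $\pi_{1}(S)$ and $\pi_{1}(S)$ contains no parabolic elements ($S$ being closed hyperbolic), $G$ is a convex-cocompact Fuchsian group; consequently the quotient $\hat S = \Hy^{2}/G$ carries an induced hyperbolic structure with no cusps and only finitely many free homotopy classes of closed curves of length at most $2C+1$. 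This yields the required boundedness, which would fail in surfaces with parabolics and is the crucial geometric input.

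Combining, $[l_{n_{k},\hat x_{k}}]_{H_{1}(\hat S)} = k_{k}[\hat\gamma_{i}]_{H_{1}(\hat S)} + [e_{k}]_{H_{1}(\hat S)}$, and dividing by $n_{k}$ gives the limit $\frac{v_{i}}{\ell(\gamma_{i})}[\hat\gamma_{i}]_{H_{1}(\hat S)}$. The inclusion $\hat S \hookrightarrow \T^{2}$ induces an isomorphism $H_{1}(\hat S) \simeq H_{1}(\T^{2})$ sending $[\hat\gamma_{i}]_{H_{1}(\hat S)}$ to $[\hat\gamma_{i}]_{H_{1}(\T^{2})}$, and since the isotopy $\hat f_{t}$ preserves $\hat S$ this limit coincides with the homological (and hence Misiurewicz--Ziemian) rotation vector of $\hat x_{k}$ for $\hat f$ in $H_{1}(\T^{2})$. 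Thus $\frac{v_{i}}{\ell(\gamma_{i})}[\hat\gamma_{i}]_{H_{1}(\T^{2})} \in \rho(\hat f)$, and the proposition follows from convexity.
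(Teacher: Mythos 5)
Your overall strategy is legitimate and genuinely different from the paper's: you work with the orbit segments of Proposition~\ref{ProprealisDist} and do homological bookkeeping in the covering $\hat S$ (thereby avoiding the paper's reduction to extremal speeds and its use of Proposition~\ref{sublindistance} together with the comparison $g_{Euc}\le C\,g_{Hyp}$). However, there is a genuine gap at the step ``a standard hyperbolic estimate yields $d_{\Hy^2}(\tilde f^{n_k}(\x_k),T_i^{k_k}\x_k)\le C$ with $C$ independent of $k$''. Proposition~\ref{ProprealisDist} only gives that $\x_k$ and $\tilde f^{n_k}(\x_k)$ lie at bounded distance from the axis and that $\frac{1}{n_k}d\big(\pi_{\alpha_i,\beta_i}(\x_k),\pi_{\alpha_i,\beta_i}(\tilde f^{n_k}(\x_k))\big)\to v_i$; hence the projected displacement is $n_kv_i+o(n_k)$, whereas $T_i^{k_k}$ with $k_k=\lfloor n_kv_i/\ell(\gamma_i)\rfloor$ translates the axis by $n_kv_i+O(1)$. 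The discrepancy is therefore only $o(n_k)$, not $O(1)$: nothing in \eqref{EqDefRot} excludes a displacement like $n_kv_i+\sqrt{n_k}$. Since your bound on $[e_k]_{H_1(\hat S)}$ rests entirely on the closing loops having length at most $2C+1$ (finitely many free homotopy classes of bounded length in the convex cocompact surface $\hat S$), this unbounded error breaks the argument as written.

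The gap is repairable within your approach. The cheapest fix is to define $k_k=\big\lfloor d\big(\pi_{\alpha_i,\beta_i}(\x_k),\pi_{\alpha_i,\beta_i}(\tilde f^{n_k}(\x_k))\big)/\ell(\gamma_i)\big\rfloor$ instead: then $T_i^{k_k}\pi_{\alpha_i,\beta_i}(\x_k)$ lies within $\ell(\gamma_i)$ of $\pi_{\alpha_i,\beta_i}(\tilde f^{n_k}(\x_k))$, so $d(\tilde f^{n_k}(\x_k),T_i^{k_k}\x_k)\le 2(R+1+\delta)+\ell(\gamma_i)$ is genuinely bounded, while still $k_k/n_k\to v_i/\ell(\gamma_i)$; after that your finiteness argument (which is correct: $G$ has no parabolics, so $\hat S$ has no cusps and only finitely many free homotopy classes admit representatives of bounded length) goes through. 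Alternatively, keep your $k_k$ but replace the finiteness argument by an undistortion argument: since $G$ is convex cocompact and the points involved stay near the axis of $T_i$, an element of $G$ moving such a point by $o(n_k)$ has word length, hence $H_1(\hat S)$-norm, of size $o(n_k)$, which suffices after dividing by $n_k$. With either repair your proof is correct; the remaining small points (that the homology class of the loop closed by a path of bounded hyperbolic, hence bounded Euclidean, length computes a Misiurewicz--Ziemian vector of $\hat f$) are fine as you state them.
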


With this proposition, we can use known theorems about the rotation set of a homeomorphism of the 2-torus in order to deduce the following corollary.

\begin{corollary} \label{entropy}
Let $\gamma_0$ and $\gamma_1$ be two closed geodesics of $S$, one of which is simple. Suppose these closed geodesics meet and that there exist $v_0>0$ and $v_1>0$ such that $(\gamma_0,v_0)$ and $(\gamma_1,v_1)$ belong to $\rho(f)$. Then
\begin{enumerate}
\item The topological entropy of $f$ is positive.
\item Any rational point of $H_1(S,\R)$ in the interior of the triangle $T$ formed by the points $0$, $\frac{v_0}{\ell(\gamma_0)}[\gamma_0]_{H_{1}(S)}$ and $\frac{v_1}{\ell(\gamma_1)}[\gamma_1]_{H_{1}(S)}$ is realised by a periodic orbit.
\end{enumerate}
\end{corollary}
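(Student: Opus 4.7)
The plan is to transfer the problem to the toral covering $\hat S \cong \T^2 \setminus \{x_\infty\}$ by invoking Proposition~\ref{lifttorus}, then apply classical theorems for torus homeomorphisms and push the conclusions back down via the covering map $\hat\pi$.

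First, by Proposition~\ref{lifttorus}, the triangle $\hat T \subset H_1(\T^2)$ with vertices $0$, $\tfrac{v_0}{\ell(\gamma_0)}[\hat\gamma_0]_{H_1(\T^2)}$ and $\tfrac{v_1}{\ell(\gamma_1)}[\hat\gamma_1]_{H_1(\T^2)}$ is contained in the Misiurewicz--Ziemian rotation set $\rho(\hat f)$. Since one of $\gamma_0,\gamma_1$ is simple, Proposition~\ref{coveringmap1} ensures that $\hat S$ is the once-punctured torus and that the two classes $[\hat\gamma_i]_{H_1(\T^2)}$ form a basis of $H_1(\T^2) \cong \R^2$; in particular $\hat T$ has nonempty interior, and so does $\rho(\hat f)$.

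I then apply two classical results to $\hat f \in \Homeo_0(\T^2)$: the Llibre--MacKay theorem~\cite{MR1101087} yields $h_{\mathrm{top}}(\hat f) > 0$ together with a topological horseshoe, and Franks' theorem~\cite{MR967632} provides, for every rational point in the interior of $\rho(\hat f)$, a periodic orbit of $\hat f$ realising it. These conclusions transfer to $f$ through $\hat\pi$: any periodic orbit of $\hat f$ on $\T^2$ with nonzero homological rotation vector cannot contain the fixed point $x_\infty$ (whose rotation vector is $0$), so it lies in $\hat S$ and projects to a periodic orbit of $f$. The induced map $\hat\pi_*: H_1(\T^2) \cong H_1(\hat S) \to H_1(S)$ sends $[\hat\gamma_i]$ to $[\gamma_i]$ and intertwines homological rotation vectors, so rational points in the interior of $\hat T$ correspond to rational points in the interior of the image triangle $T \subset H_1(S,\R)$, yielding conclusion~(2). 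Likewise, the Llibre--MacKay horseshoe is built around periodic orbits with nonzero rotation vectors and therefore lies in $\hat S$; its image under $\hat\pi$ is a horseshoe for $f$, giving conclusion~(1).

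The main technical point to check is that the horseshoe furnished by~\cite{MR1101087} can indeed be taken to avoid the puncture $x_\infty$; this should follow from the fact that the construction is localised near periodic orbits with prescribed nonzero rotation vectors. If this localisation turned out to be delicate, an alternative would be to extract positive entropy of $f$ directly from the coexistence in $S$ of infinitely many periodic orbits with pairwise distinct homological rotation vectors (provided by Franks via $\hat\pi$), combined with a forcing-type argument in $S$.
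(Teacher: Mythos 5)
Your overall route --- lift to the once-punctured torus via Proposition~\ref{lifttorus}, apply Llibre--MacKay and Franks there, and project back through $\hat{\pi}$ --- is exactly the paper's, and your treatment of item (2) matches the paper's proof: a periodic orbit of $\hat{f}$ realising a nonzero rational vector cannot contain the fixed point $x_\infty$, hence lies in $\hat{S}$ and projects to a periodic orbit of $f$ whose homological rotation vector is the image of $\hat{\eta}$ under $\hat{\pi}_*$.

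The gap is in item (1), and it is not the point you flag. That the Llibre--MacKay invariant set $\hat{K}$ avoids $x_\infty$ is immediate (the set constructed in \cite{MR1101087} contains no fixed point of $\hat{f}$, and $x_\infty$ is fixed). The real issue is the transfer of entropy down the covering. The covering $\hat{\pi}\colon \hat{S}\to S$ has infinite degree, and $\hat{\pi}$ restricted to $\hat{K}$ is only a semiconjugacy from $\hat{f}|_{\hat{K}}$ onto $f|_{K}$, where $K=\hat{\pi}(\hat{K})$; thus $f|_K$ is a \emph{factor} of $\hat{f}|_{\hat{K}}$, and the trivial inequality goes the wrong way: $h_{top}(f|_K)\le h_{top}(\hat{f}|_{\hat{K}})$. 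In particular, ``its image under $\hat{\pi}$ is a horseshoe for $f$'' is not automatic: the semiconjugacy of $\hat{f}|_{\hat{K}}$ with a subshift does not descend through $\hat{\pi}$, since $\hat{\pi}$ may identify points of $\hat{K}$, and a factor map can a priori kill entropy. What rescues the argument is that $\hat{\pi}|_{\hat{K}}$ is a local isometry with uniformly bounded finite fibres ($\hat{K}$ is compact and each fibre of $\hat{\pi}$ is discrete), and for such maps topological entropy is preserved; this is precisely Proposition~\ref{entropycovering} of the paper, proved via the Bowen-ball argument of Claim~\ref{ballcover} (equivalently, one could invoke Bowen's inequality for bounded-to-one factors). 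Without this ingredient your step from $h_{top}(\hat{f}|_{\hat{K}})>0$ to $h_{top}(f)>0$ is unjustified; with it, your proof closes and coincides with the paper's. Your fallback suggestion (infinitely many periodic orbits with distinct homological rotation vectors plus an unspecified ``forcing-type argument'') is not a proof as stated.
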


Note that point 1. of this corollary was already known, even in the case where both geodesics are not simple (we will handle this case in next section). It was done in  \cite{PabloUnpublished, MR611385, MR2003742} by using a different technique based on Nielsen-Thurston classification as in \cite{MR1101087} for the torus case: one removes some fixed points of the homeomorphism and proves that the homeomorphism defined relative to these fixed points has a pseudo-Anosov component. However, it seems to us that this technique cannot give informations about the existence of new rotation vectors as in point 2. of this corollary, or as in the results in the next two sections.

\begin{proof}[Proof of Proposition \ref{lifttorus}]
We can suppose that $(\gamma_0,v_0)$ and $(\gamma_1,v_1)$ are extremal points of $\rho(f)$ as the set $\rho(\hat{f})$ is convex by a result by Misiurewicz and Ziemian (see \cite{MR1053617}). We denote by $(\alpha_0,\beta_0)$ the lift of $\gamma_0$ containing the point $\tilde{x}_0$ and by $T$ the deck transformation associated to $(\alpha_0,\beta_0)$. By Proposition \ref{sublindistance}, there exists a point $\tilde{x}$ in $\tilde{S}=\Hy^2$ such that
$$\left\{ \begin{array}{rcl}
\displaystyle \lim_{n \rightarrow +\infty} \frac{1}{n}d\big(\tilde{f}^n(\tilde{x}),\tilde{x}\big) & = & v_0\\
\displaystyle \lim_{n \rightarrow +\infty} \frac{1}{n}d\big(\pi_{\alpha_0,\beta_0}(\tilde{f}^n(\tilde{x})),\tilde{f}^n(\tilde{x})\big) & = & 0.
\end{array} \right.$$
Let us set, for any $n \geq 0$, $k_{n}=\lfloor \frac{nv_0}{\ell(\gamma_0)} \rfloor$. Then
\begin{align*}
\lim_{n \rightarrow +\infty} & \frac{1}{n} d\big(\pi_{\alpha_0,\beta_0}(\tilde{f}^n(\tilde{x})),T^{k_{n}}(\pi_{\alpha_0,\beta_0}(\tilde{x}))\big) \\
 & = \lim_{n \rightarrow +\infty} \frac{1}{n} \Big|d\big(\pi_{\alpha_0,\beta_0}(\tilde{f}^n(\tilde{x})),\pi_{\alpha_0,\beta_0}(\tilde{x}))- d(\pi_{\alpha_0,\beta_0}(\tilde{x}),T^{k_{n}}(\pi_{\alpha_0,\beta_0}(\tilde{x}))\big)\Big| \\
 &  =    |v_{0}-v_{0}|=0.
\end{align*}

Hence
$$ \lim_{n \rightarrow +\infty}  \frac{1}{n} d\big(\tilde{f}^n(\tilde{x}),T^{k_{n}}(\pi_{\alpha_0,\beta_0}(\tilde{x}))\big)=0.$$
Let $\hat{x}$ be the projection of $\tilde{x}$ on $\T^2 \setminus \left\{ x_{\infty} \right\}$. We endow $\T^2$ with the Euclidean metric $g_{Euc}$ which turn the simple closed curves $\hat{\gamma}_{0}$ and $\hat{\gamma}_1$ into length $1$ orthogonal geodesics. Let us call $g_{hyp}$ the hyperbolic metric on $\T^2 \setminus \left\{ x_{\infty} \right\}$ induced by the hyperbolic metric on $\tilde{S}= \Hy^2$.

Recall that the metric $g_{Hyp}$ is complete so that $g_{Hyp} \rightarrow \infty$ as we get closer to the point $x_{\infty}$. Hence there exists $C>0$ such that $g_{Euc} \leq C g_{Hyp}$. Therefore, if we call $d_{Euc}$ the Euclidean distance on $\tilde{S}$ induced by $g_{Euc}$, we have, for any $n \geq 1$,
$$ d_{Euc}\big(\tilde{f}^{n}(\tilde{x}), T^{k_{n}}(\pi_{\alpha_0,\beta_0}(\tilde{x})\big)) \leq C d\big(\tilde{f}^{n}(\tilde{x}),T^{k_{n}}(\pi_{\alpha_0,\beta_0}(\tilde{x}))\big)$$
so that
$$ \lim_{n \rightarrow +\infty} \frac{1}{n} d_{Euc}\big(\tilde{f}^{n}(\tilde{x}),T^{k_{n}}(\pi_{\alpha_0,\beta_0}(\tilde{x}))\big)=0$$
and
$$ \lim_{n \rightarrow +\infty} \frac{k_{n}}{n}[\hat{\gamma}_0]_{H_1}=\frac{v_0}{\ell(\gamma_0)}[\hat{\gamma}_0]_{H_1} \in \rho(\hat{f}).$$
In the same way, we prove that $\frac{v_0}{\ell(\gamma_1)}[\hat{\gamma}_1]_{H_1} \in \rho(\hat{f})$. Observe also that the point $x_{\infty}$ is fixed under $\hat{f}$ with homological rotation number $0$. It suffices to recall that the rotation set $\rho_{H^1(\T^2)}(\hat{f})$ is convex \cite{MR1053617} to conclude.
\end{proof}

To prove Corollary \ref{entropy}, we need to recall some facts about topological entropy. Fix a compact metric space $(X,d)$ and a homeomorphism $h$ of $X$. In our specific case, $X=S$ and $h=f$. For any integer $n \geq 1$, we define the Bowen distance
$$\begin{array}{rrcl}
d_{n}: & X \times X & \longrightarrow & \R_{+} \\
 & (x,y) & \longmapsto & \displaystyle \max_{0\leq k \leq n-1} d\big(h^{k}(x),h^{k}(y)\big)
\end{array}$$
which is topologically equivalent to $d$. For any $\epsilon >0$ and $n \geq 1$, a subset $A$ of $X$ is said to be \emph{$(n,\epsilon)$-separated} if, for any distinct points $x$ and $y$ of $A$, $d_{n}(x,y) \geq \epsilon$. By compactness of $X$, such a set has to be finite. Denote by $a_{n,\epsilon}$ the maximal cardinality of an $(n,\epsilon)$-separated subset of $X$. Then
 $$h_{top}(h)=\lim_{\epsilon \rightarrow 0} \liminf_{n \rightarrow +\infty} \frac{\log(a_{n,\epsilon})}{n}=\lim_{\epsilon \rightarrow 0} \limsup_{n \rightarrow +\infty} \frac{\log(a_{n,\epsilon})}{n}.$$

Let us recall another way to compute the topological entropy. For any integer $n>0$ and any $\epsilon >0$, we call $(n,\epsilon)$-ball any open ball of radius $\epsilon$ for the distance $d_{n}$. We will denote by $B_{n}(x,\epsilon)$ the $(n,\epsilon)$-ball of center $x \in X$. Denote by $b_{n,\epsilon}$ the minimal cardinality of a cover of $X$ by $(n,\epsilon)$-balls. Then   
$$h_{top}(h)=\lim_{\epsilon \rightarrow 0} \liminf_{n \rightarrow +\infty} \frac{\log(b_{n,\epsilon})}{n}=\lim_{\epsilon \rightarrow 0} \limsup_{n \rightarrow +\infty} \frac{\log(b_{n,\epsilon})}{n}.$$

We will use the two following classical properties of the topological entropy.
\begin{enumerate}
\item If $Y$ is an $h$-invariant closed subset of $X$, then $h_{top}(h) \geq h_{top}(h_{|Y})$.
\item If $\hat X$ is a compact metric space, if $\hat{\pi}:\hat{X} \rightarrow X$ is an onto continuous map, and $\hat{h}:\hat{X} \rightarrow \hat{X}$ is a homeomorphism such that $\hat{\pi}\hat{h}=h \hat{\pi}$, then $h_{top}(h) \leq h_{top}(\hat{h})$.
\end{enumerate}

To prove Corollary \ref{entropy}, we need the following general result about topological entropy.

\begin{proposition} \label{entropycovering}
Let $\hat{\pi}: \hat{X} \rightarrow X$ be an onto local isometry between the compact metric spaces $\hat{X}$ and $X$. Let $\hat{h}: \hat{X} \rightarrow \hat{X}$ and $h:X \rightarrow X$ be homeomorphisms such that $\hat{\pi}\hat{h}=h\hat{\pi}$. Then
$$h_{top}(\hat{h})=h_{top}(h).$$
\end{proposition}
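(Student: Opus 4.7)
The plan is to prove the two inequalities separately; the hard one will be $h_{top}(\hat h) \le h_{top}(h)$.

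The inequality $h_{top}(h) \le h_{top}(\hat h)$ is the general property already stated as property 2 in the text, that any topological factor has no more entropy than its extension. Concretely, by uniform continuity of $\hat \pi$ (which holds since both spaces are compact), every $\epsilon>0$ admits $\delta>0$ such that $d(\hat u, \hat v)<\delta$ implies $d(\hat \pi(\hat u), \hat \pi(\hat v))<\epsilon$; the $\hat \pi$-image of an $(n,\delta)$-spanning subset of $\hat X$ is then $(n,\epsilon)$-spanning in $X$ and no larger in cardinality. No local isometry is needed here.

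For the reverse inequality, I would use the local isometry hypothesis to extract $\epsilon_0>0$ such that, for every $\hat x\in\hat X$, $\hat\pi$ is an isometry from $B(\hat x,\epsilon_0)$ onto $B(\hat\pi(\hat x),\epsilon_0)$. In particular, distinct lifts of a given $x\in X$ lie at mutual distance at least $\epsilon_0$, so each fiber is finite of uniformly bounded cardinality $N$. Given an $(n,\epsilon)$-spanning subset $E$ of $X$, I would form $\hat E$ by adjoining all lifts of all points of $E$, so $|\hat E|\le N|E|$. The key claim is that, for $\epsilon$ small enough, $\hat E$ is $(n,\epsilon)$-spanning in $\hat X$. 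To prove it, fix $\hat y\in\hat X$, set $y=\hat\pi(\hat y)$, and pick $x\in E$ with $d(h^k y,h^k x)<\epsilon$ for $0\le k\le n-1$. For each $k$, the local isometry property yields a unique lift $\hat z_k$ of $h^k x$ inside $B(\hat h^k\hat y,\epsilon_0)$, and moreover $d(\hat z_k,\hat h^k\hat y)<\epsilon$. It remains to check that $\hat z_k=\hat h^k\hat z_0$ for all $k$, so that $\hat y\in B_n(\hat z_0,\epsilon)$ with $\hat z_0\in\hat E$. This is done by induction, using the uniform continuity of $\hat h$: choose $\delta>0$ with $d(\hat u,\hat v)<\delta\Rightarrow d(\hat h\hat u,\hat h\hat v)<\epsilon_0/2$, and take $\epsilon<\min(\delta,\epsilon_0)$. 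If $\hat z_k=\hat h^k\hat z_0$, then $d(\hat h^{k+1}\hat z_0,\hat h^{k+1}\hat y)<\epsilon_0/2$; since $\hat h^{k+1}\hat z_0$ and $\hat z_{k+1}$ are two lifts of $h^{k+1} x$ both lying in $B(\hat h^{k+1}\hat y,\epsilon_0)$, and distinct lifts are at mutual distance $\ge \epsilon_0$, they must coincide. Once the claim is proven, $b_{n,\epsilon}(\hat h)\le N\cdot b_{n,\epsilon}(h)$, and the inequality $h_{top}(\hat h)\le h_{top}(h)$ follows by taking $\log$, dividing by $n$, and letting $n\to\infty$ and then $\epsilon\to 0$.

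The main obstacle is precisely this inductive argument, i.e.\ showing that the ``natural'' local lifts $\hat z_k$ of the trajectory of $x$ glue together into the genuine orbit $\hat h^k\hat z_0$ of a \emph{single} lift of $x$. Without the combined use of the local isometry (to bound distances between distinct lifts from below by $\epsilon_0$) and the uniform continuity of $\hat h$ (to propagate closeness between $\hat z_k$ and $\hat h^k\hat z_0$ from step $k$ to step $k+1$), one would only know that each $\hat h^k\hat y$ is close to some lift of $h^k x$, with the lift possibly changing with $k$, which is insufficient to compare Bowen balls in $\hat X$ and in $X$.
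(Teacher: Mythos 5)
Your proof is correct and follows essentially the same route as the paper: the easy inequality via the factor property, and the reverse one by lifting every point of a spanning (in the paper, maximal separated) set through the fibers, which are uniformly finite and $\epsilon_0$-separated, and then showing by induction — combining the local-isometry lower bound on distances between distinct lifts with the uniform continuity of $\hat h$ — that the locally produced lifts $\hat z_k$ are precisely $\hat h^k(\hat z_0)$, so the lifted set is $(n,\epsilon)$-spanning upstairs. The only adjustment needed is quantitative: in your induction you should take $\epsilon<\min(\delta,\epsilon_0/2)$ (not merely $\epsilon<\epsilon_0$), since the coincidence of $\hat h^{k+1}\hat z_0$ and $\hat z_{k+1}$ comes from their mutual distance being at most $\epsilon_0/2+\epsilon<\epsilon_0$, not from both merely lying in a ball of radius $\epsilon_0$; this is exactly the role of the paper's condition $\epsilon<\alpha/2$.
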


Now, we use this proposition to prove Corollary \ref{entropy}. We will prove Proposition \ref{entropycovering} afterwards.

\begin{proof}[Proof of Corollary \ref{entropy}]
1. By Proposition \ref{lifttorus}, the rotation set of $\hat{f}$ has nonempty interior. By Theorem 1 of the article \cite{MR1101087} by Llibre and MacKay, there exists an $\hat{f}$-invariant compact subset $\hat{K}$ of $\T^2$ such that:
\begin{enumerate}
\item The set $\hat{K}$ does not contain any fixed point of $\hat{f}$. Hence the point $x_{\infty}$ does not belong to $\hat K$ and $\hat K \subset \hat{S}= \T^2 \setminus \left\{ x_{\infty} \right\}$.
\item The homeomorphism $\hat{f}_{|\hat{K}}$ is conjugated to a subshift with positive topological entropy.
\end{enumerate}
In particular, $h_{top}(\hat{f}_{|\hat{K}})>0$. Now, apply Proposition \ref{entropycovering} to $\hat{h}=\hat{f}_{|\hat{K}}$ and $h=f_{|K}$, where $K$ is the image of $\hat{K}$ under the covering map $\hat{S} \rightarrow S$. Then
$$ h_{top}(f_{|K})=h_{top}(\hat{f}_{|\hat{K}}).$$
However $h_{top}(f) \geq h_{top}(f_{|K})$ so that $h_{top}(f) >0$.
\medskip

2. Take a rational point $\eta$ of $H_1(S,\R)$ such that there exist real numbers $0< \lambda_0<1$ and $0< \lambda_1<1$ with $\lambda_0+ \lambda_1 <1$ such that
$$\eta= \lambda_0 \frac{v_{0}}{l(\gamma_{0})}[\gamma_{0}]_{ H_{1}(S)}+ \lambda_1 \frac{v_{1}}{l(\gamma_{1})}[\gamma_{1}]_{ H_{1}(S)}.$$
Let $$\hat{\eta}=\lambda_0 \frac{v_{0}}{l(\gamma_{0})}[\hat{\gamma}_{0}]_{ H_{1}(\T^2)}+ \lambda_1 \frac{v_{1}}{l(\gamma_{1})}[\hat{\ell}_{1}]_{ H_{1}(\T^2)} $$
and observe that the class $\hat{\eta}$ is a rational point of $H_1(\T^2)$.

Write $\hat{\eta}=\frac{p}{q}[\hat{\gamma}]_{H_1(\T^2)}$, where $p$ and $q$ are positive integers and $[\hat{\gamma}]_{H_1(\T^2)}$ is an integral undivisible class in $H_1(\T^2)$ and is hence represented by a simple loop $\hat{\gamma}$ of $\T^2$.

Then the class $\hat{\eta}$ is a rational point which lies in the interior of $\rho(\hat{f})$ by Proposition~\ref{lifttorus}. By Theorems by Franks \cite{MR958891} and Llibre-MacKay \cite{MR1101087}, the class $\hat{\eta}$ is realised by a primitive periodic orbit, that is:
\begin{enumerate}
\item there exists a point $\hat{x}$ of $\T^2 \setminus \left\{ x_{\infty} \right\}$ such that $\hat{f}^q(\hat{x})=\hat{x}$;
\item the loop $(\hat{f}_{t}(\hat{x}))_{t \in[0,q]}$ is homologous to the class $p[\hat{\gamma}]_{H_1(\T^2)}$.
\end{enumerate}
Let $x=\hat{\pi}(\hat{x})$ be the projection of the point $\hat{x}$ on the surface $S$. Then $f^{q}(x)=x$ and the loop $(f_t(x))_{ t \in [0,q]}$ is homologous to the class
$$ p [\hat{\pi} \circ \hat{\gamma}]_{H_1(S)}=q\eta$$
so that the vector $\eta \in H_1(S)$ is realised by a (primitive) periodic orbit.
\end{proof}

\begin{proof}[Proof of Proposition \ref{entropycovering}]
The relation $\hat{\pi} \hat{h}=h \hat{\pi}$ gives immediately that $h_{top}(\hat{h}) \geq h_{top}(h)$.  Hence it suffices to prove that $h_{top}(\hat{h}) \leq h_{top}(h)$.
Let $\displaystyle \delta = \sup_{x \in X} \# \hat{\pi}^{-1}(\left\{x \right\})$. Note that $\delta$ is finite by compactness of $\hat{X}$ and as $\hat{\pi}$ is a local isometry.

As $\hat{\pi}$ is a local homeomorphism  and $\hat{X}$ is compact, there exists $\alpha >0$ such that, for any distinct points $\hat{x}$ and $\hat{y}$ of $\hat{X}$ such that $\pi(\hat{x})=\pi(\hat{y})$, we have $\hat{d}(\hat{x},\hat{y}) \geq \alpha$.

Take $\epsilon >0$ small enough so that the following properties hold:
\begin{enumerate}
\item $\epsilon < \frac{\alpha}{2}$.
\item For any points $\hat{x}$ and $\hat{y}$ of $\hat{X}$ with $\hat{d}(\hat{x},\hat{y}) < \epsilon$, we have $\hat{d}(\hat{h}(\hat{x}),\hat{h}(\hat{y})) < \frac{\alpha}{2}$.
\item The restriction of the map $\hat{\pi}$ to any ball of radius $\epsilon$ is an isometry onto a  ball of $X$.
\end{enumerate} 

Fix $n>0$ and take a maximal $(n,\epsilon)$-separated subset $A_{n,\epsilon}$ of $X$ for $h$. The central point of the proof is the following claim.

\begin{claim} \label{ballcover}
$$\hat{X}= \bigcup_{\hat{x} \in \hat{\pi}^{-1}(A_{n,\epsilon})} B_{n}(\hat{x},\epsilon).$$
\end{claim}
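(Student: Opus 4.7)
The plan is to show that any $\hat{y}\in\hat{X}$ lies in $B_n(\hat{x},\epsilon)$ for some $\hat{x}\in\hat{\pi}^{-1}(A_{n,\epsilon})$. First I would project: set $y=\hat{\pi}(\hat{y})$. By maximality of the $(n,\epsilon)$-separated set $A_{n,\epsilon}$, one can find $x\in A_{n,\epsilon}$ such that $d(h^i(x),h^i(y))<\epsilon$ for every $0\le i\le n-1$. Next, using property (3), the map $\hat{\pi}$ restricts to an isometry from $B(\hat{y},\epsilon)$ onto $B(y,\epsilon)$; since $d(x,y)<\epsilon$, this provides a unique preimage $\hat{x}\in B(\hat{y},\epsilon)$ of $x$, satisfying $\hat{d}(\hat{x},\hat{y})=d(x,y)<\epsilon$.

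The heart of the proof is then an induction on $i$ showing that $\hat{d}(\hat{h}^i(\hat{x}),\hat{h}^i(\hat{y}))<\epsilon$ for all $0\le i\le n-1$. The base case $i=0$ was just established. For the inductive step, assuming the bound at level $i$, property (2) yields $\hat{d}(\hat{h}^{i+1}(\hat{x}),\hat{h}^{i+1}(\hat{y}))<\alpha/2$. On the other hand, since $d(h^{i+1}(x),h^{i+1}(y))<\epsilon$, property (3) applied to the ball $B(\hat{h}^{i+1}(\hat{y}),\epsilon)$ produces a unique preimage $\hat{z}\in B(\hat{h}^{i+1}(\hat{y}),\epsilon)$ of $h^{i+1}(x)$, with $\hat{d}(\hat{z},\hat{h}^{i+1}(\hat{y}))<\epsilon$.

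The final step is to identify $\hat{z}$ with $\hat{h}^{i+1}(\hat{x})$. Both points project to $h^{i+1}(x)$ under $\hat{\pi}$; by the definition of $\alpha$, any two distinct preimages of the same point are at distance at least $\alpha$. But the triangle inequality, together with $\hat{d}(\hat{z},\hat{h}^{i+1}(\hat{y}))<\epsilon<\alpha/2$ and $\hat{d}(\hat{h}^{i+1}(\hat{x}),\hat{h}^{i+1}(\hat{y}))<\alpha/2$, forces $\hat{d}(\hat{z},\hat{h}^{i+1}(\hat{x}))<\alpha$, hence $\hat{z}=\hat{h}^{i+1}(\hat{x})$. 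This closes the induction, and taking the maximum over $0\le i\le n-1$ gives $\hat{y}\in B_n(\hat{x},\epsilon)$, with $\hat{x}\in\hat{\pi}^{-1}(A_{n,\epsilon})$ by construction.

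The only delicate point is this uniqueness argument in the inductive step, which is where all three constraints on $\epsilon$ must be used simultaneously: the smallness $\epsilon<\alpha/2$ separates distinct fibers, property (2) transfers proximity under one iterate of $\hat{h}$, and property (3) is what lets us realise $h^{i+1}(x)$ as an actual point inside a prescribed small ball in $\hat{X}$. Beyond the claim itself, I would then deduce $b_{n,\epsilon}^{\hat{X}}\le\delta\cdot a_{n,\epsilon}^{X}$ (since $\#\hat{\pi}^{-1}(A_{n,\epsilon})\le\delta\cdot\#A_{n,\epsilon}$), which yields $h_{top}(\hat{h})\le h_{top}(h)$ and hence equality, completing Proposition \ref{entropycovering}.
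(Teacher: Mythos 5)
Your proof is correct and follows essentially the same route as the paper: choose $x\in A_{n,\epsilon}$ by maximality, lift it near $\hat{y}$ using that $\hat{\pi}$ is an isometry on $\epsilon$-balls, and propagate the estimate through the iterates by combining property (2), the $\epsilon$-ball isometry, and the $\alpha$-separation of fibers (the paper does the first step explicitly and then invokes the same induction you carry out). No gaps.
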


Before proving the claim, let us see why it yields Proposition \ref{entropycovering}. The claim implies that
$$\begin{array}{rcl}
h_{top}(\hat{h}) & \leq & \displaystyle \lim_{\epsilon \rightarrow 0} \liminf_{n \rightarrow +\infty} \frac{\# \hat{\pi}^{-1}(A_{n,\epsilon})}{n} \\
 & \leq & \displaystyle \lim_{\epsilon \rightarrow 0} \liminf_{n \rightarrow +\infty} \frac{\log(\delta \# A_{n,\epsilon})}{n} \\
 & \leq & h_{top}(h).
\end{array}$$
\end{proof}

\begin{proof}[Proof of Claim \ref{ballcover}]
Let $\hat{y} \in \hat{X}$ and $y=\hat{\pi}(\hat{y}) \in X$. By maximality of the set $A_{n,\epsilon}$, there exists a point $x$ of $A_{n,\epsilon}$ such that, for any $0 \leq k \leq n-1$,
$$ d\big(h^{k}(y),h^{k}(x)\big) < \epsilon.$$

Let
$$ \hat{\pi}^{-1}(\left\{ x \right\})= \big\{ \hat{x}_{i} \ | \ 1 \leq i \leq l \big\},$$
where the points $\hat{x}_{i}$ are pairwise distinct. As $d(x,y) < \epsilon$, there exists an index $i$ such that $\hat{d}(\hat{y},\hat{x}_i) < \epsilon$. Likewise, as $d(h(y),h(x)) < \epsilon$, there exists an index $j$ such that $\hat{d}(\hat{h}(\hat{y}),\hat{h}(\hat{x}_j)) < \epsilon$.

Let us prove that $i=j$. Indeed, we have
$$\hat{d}(\hat{h}(\hat{x}_i),\hat{h}(\hat{x}_j)) \leq \hat{d}(\hat{h}(\hat{x}_i),\hat{h}(\hat{y}))+\hat{d}(\hat{h}(\hat{y}),\hat{h}(\hat{x}_j))< \frac{\alpha}{2}+\frac{\alpha}{2}=\alpha.$$
Moreover, $\hat{\pi}(\hat{h}(\hat{x}_i))=h(x)=\hat{\pi}(\hat{h}(\hat{x}_j))$. By the definition of $\alpha$, this implies that $\hat h(\hat x_i) = \hat h(\hat x_j)$ and thus that $\hat x_i=\hat x_j$. 

In the same way, an induction proves that, for any $0 \leq k < n$, 
$$\hat{d}(\hat{h}^k(\hat{y}), \hat{h}^k(\hat{x}_i)) < \epsilon.$$
Hence 
$$ \hat{y} \in \bigcup_{1 \leq i \leq l} B_{n}(\hat{x}_i,\epsilon) \subset \bigcup_{\hat{x} \in \hat{\pi}^{-1}(A_{n,\epsilon})} B_{n}(\hat{x},\epsilon).$$
\end{proof}

\section{Closed geodesics with auto-intersection}\label{SecClosed}

This section is the first one where we use le Calvez and Tal forcing theory. Its aim is to prove Theorem~\ref{ExistSuperChevalIntro} if the introduction, that we will state as Theorem~\ref{ExistSuperCheval}.

After introducing some tools of forcing theory in Subsection~\ref{SubSecForcing}, we will define rotational horseshoes and prove some properties of them (Subsection~\ref{SubSecmarkov}). We will then get the fact that geometric auto-intersections of closed trajectories give rise to $\F$-transverse intersections in Subsection~\ref{SubSecGeomtrans}. This will lead us to the main theorem of this section in Subsection~\ref{SubSecHorse}, which will be preceded by a first step of independent interest, about the forcing of new periodic orbits, performed in Subsection~\ref{SubSecCrea}.

\subsection{Some results of forcing theory for transverse trajectories}\label{SubSecForcing}

This paragraph is a short introduction to the techniques and the results of Le Calvez and Tal \cite{MR3787834,1803.04557} that will be used in the sequel.

In the sequel, we will call \emph{line} any properly embedded topological line of the plane. For any surface $S$, we call \emph{singular foliation of $S$} any foliation $\F$ of an open subset $\dom \F$ of $S$. The set $S \setminus \dom \F$ is called the \emph{set of singularities} of $\F$. We will call \emph{end of a leaf $\phi$} either its $\alpha$ or its $\omega$-limit in $S$ or in $\tilde S$ (depending on the context).

Let $\F$ be an oriented nonsingular foliation of the plane. By classical plane topology (see Haefliger-Reeb \cite{MR89412}), each leaf $\phi$ of the foliation is a line, hence its complement possesses two connected components: the left of $\phi$, denoted by $L(\phi)$, and the right of $\phi$, denoted by $R(\phi)$ (that are chosen according to a fixed orientation of the plane and the orientation of $\phi$).

\begin{definition}
Let $\F$ be an oriented nonsingular foliation of a surface\footnote{Not necessarily closed.} $S$ and $\alpha : [0,1]\to S$ be a path. For $x\in S$, we denote by $\phi_x$ the leaf of $\F$ passing by $x$. We say that $\alpha$ is \emph{positively transverse} to $\F$ (abbreviated by $\F$-transverse) if for any $t\in[0,1]$, in the universal cover\footnote{This universal cover is always homeomorphic to $\R^2$, as there is no nonsingular foliation on the sphere.} of $S$ one has
\[\wt\alpha([0,t)) \subset L(\wt \phi_{\wt \alpha(t)})
\qquad \text{and} \qquad
\alpha((t,1]) \subset R(\wt \phi_{\wt \alpha(t)}).\]
\end{definition}

Let $\F$ be a (singular) foliation of a surface.
The following result can be obtained as a combination of \cite{MR2217051} with \cite{bguin2016fixed}.

\begin{theorem}\label{ThExistIstop}
Let $S$ be a surface and $f\in\Homeo_0(S)$.
Then there exists an isotopy $I$ linking $\Id$ to $f$, a transverse topological oriented singular foliation $\F$ of $S$ with $(\dom \F)^\complement = \bigcap_{t\in[0,1]} \fix I^t \subset \fix f$, and for any $z\in \dom\F$, a $\F$-transverse path linking $z$ to $f(z)$ which is homotopic in $\dom \F$, relative to its endpoints, to the arc $(I^t(z))_{t\in[0,1]}$.
\end{theorem}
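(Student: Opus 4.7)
The plan is to combine two key results from the literature, as suggested by the theorem's attribution. The first ingredient is the existence of a \emph{maximal isotopy} due to B\'eguin--Crovisier--Le Roux \cite{bguin2016fixed}: starting from any isotopy from $\Id$ to $f$, one can enlarge the set of fixed points kept fixed throughout the isotopy until no further contractible fixed point of $f$ can be added. More precisely, the result produces an isotopy $I=(I^t)_{t\in[0,1]}$ from $\Id$ to $f$ whose set of fully fixed points $X = \bigcap_{t\in[0,1]} \fix(I^t)$ is contained in $\fix(f)$ and has the following maximality property: for every fixed point $z$ of $f$ with $z\notin X$, the loop $(I^t(z))_{t\in[0,1]}$ is not contractible in $S\setminus X$. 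I would first invoke this result to obtain the isotopy $I$ and the closed set $X$.

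The second ingredient is Le Calvez's equivariant foliation theorem \cite{MR2217051}. Applied to the maximal isotopy $I$ restricted to the open surface $S\setminus X$, it yields an oriented singular topological foliation $\F$ with singular set exactly $X$, together with the following dynamical transversality property: for every $z\in \dom\F = S\setminus X$, the trajectory arc $(I^t(z))_{t\in[0,1]}$ is homotopic, with fixed endpoints and within $\dom\F$, to a path that is positively transverse to $\F$. I would then take this foliation as $\F$, and for each $z$ choose such a transverse representative as the desired transverse path from $z$ to $f(z)$. The identity $(\dom\F)^\complement = \bigcap_{t\in[0,1]}\fix(I^t)\subset\fix(f)$ follows directly from the construction, combining the singular set description given by Le Calvez's theorem with the maximality given by B\'eguin--Crovisier--Le Roux.

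The only real content beyond quoting these two theorems is checking that their hypotheses match up coherently: Le Calvez's original statement is formulated for an isotopy on a surface whose set of contractible fixed points is \emph{unlinked}, which is precisely the configuration supplied by a maximal isotopy in the sense of \cite{bguin2016fixed}. I would therefore devote a short paragraph to recalling the notion of unlinked set of fixed points, verifying that the maximal isotopy furnished in step one satisfies this hypothesis, and then apply Le Calvez's construction on the open surface $S\setminus X$.

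The main obstacle, if one wished to give a self-contained proof rather than cite, would be the construction of the transverse foliation itself: this is the technical heart of \cite{MR2217051} and uses Brouwer theory together with a delicate passage to the universal cover, and I would not reproduce it here. For the purposes of this paper, the appropriate strategy is to state Theorem \ref{ThExistIstop} as a black box obtained by combining \cite{bguin2016fixed} and \cite{MR2217051}, as the authors already indicate.
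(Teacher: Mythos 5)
Your proposal matches the paper exactly: the authors give no proof of Theorem~\ref{ThExistIstop}, stating only that it "can be obtained as a combination of \cite{MR2217051} with \cite{bguin2016fixed}", which is precisely the two-step argument (maximal isotopy from B\'eguin--Crovisier--Le Roux, then Le Calvez's equivariant transverse foliation on the complement of the fully fixed set) that you outline. Treating it as a black box with the compatibility check you describe is the intended reading.
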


\begin{definition}\label{DefDessus}
Let $\phi$, $\phi_1$ and $\phi_2$ three oriented lines of the plane. We will say that \emph{$\phi_2$ is above $\phi_1$ relative to $\phi$} if 
\begin{itemize}
\item these three lines are pairwise disjoint;
\item none of these lines separates the two others;
\item if $\alpha_i$, $i=1,2$, are two disjoint paths linking a point of $\phi_i$ to a point $\phi(t_i)$, then\footnote{Each line is parametrized according to its orientation.} $t_2>t_1$.
\end{itemize}
\end{definition}

Let $\F$ be an oriented nonsingular foliation of the plane, $J_1$, $J_2$ be two intervals and $\alpha_i = J_i\to\R^2$, $i=1,2$, two $\F$-transverse paths.

\begin{definition}\label{DefInterTrans}
We say that $\alpha_1 : J_1\to\R^2$ and $\alpha_2 : J_2\to\R^2$ \emph{intersect $\F$-transversally and positively} if there exists $a_i<t_i< b_i\in J_i$ such that
\begin{itemize}
\item $\phi_{\alpha_1(t_1)} = \phi_{\alpha_2(t_2)} = \phi$ ;
\item $\phi_{\alpha_1(a_1)}$ is above $\phi_{\alpha_2(a_2)}$ relative to $\phi$ ;
\item $\phi_{\alpha_2(b_2)}$ is above $\phi_{\alpha_1(b_1)}$ relative to $\phi$.
\end{itemize} 
\end{definition}

The same notion can be defined in $\dom \F$, by asking that some lifts of the paths to $\wt\dom\F$ intersect $\F$-transversally.

In the sequel, when it is obvious from the context, we will omit the mention ``and positively'' when talking about $\F$-transverse intersection.

Fix a homeomorphism $f \in \Homeo_0(S)$ and let $\F$ be a singular foliation of $S$ given by Theorem \ref{ThExistIstop}. We denote by $\hat{f}$ the canonical lift of $f$ to the universal cover $\wt \dom \F$ of $\dom \F$.

\begin{definition}\label{DefAdmis}
We say that a $\F$-transverse path $\alpha : [a,b] \to \dom \F$ is \emph{admissible of order $n$} if there exists a lift $\hat{\alpha}$ of $\alpha$ to $\wt\dom \F$ such that $\hat{f}^n(\phi_{ \hat{\alpha}(a)}) \cap \phi_{\hat{\alpha}(b)} \neq \emptyset$.
\end{definition}

The following is the fundamental proposition of \cite{MR3787834} (Proposition 20).

\begin{proposition}\label{PropFondLCT}
Suppose that $\alpha_1 : [a_1,b_1] \to \dom\F$ and $\alpha_2 : [a_2,b_2] \to \dom\F$ are transverse paths that intersect $\F$-transversally at $\alpha_1(t_1) = \alpha_2(t_2)$. If $\alpha_1$ is admissible of order $n_1$, and $\alpha_2$ is admissible of order $n_2$, then $\alpha_1|_{[a_1,t_1]}\alpha_2|_{[t_2,b_2]}$ and $\alpha_2|_{[a_2,t_2]}\alpha_1|_{[t_2,b_2]}$ are both admissible of order $n_1+n_2$.
\end{proposition}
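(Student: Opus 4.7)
The strategy is to transfer the entire configuration to the universal cover $\widetilde{\dom \F}$, where $\F$ lifts to a nonsingular oriented foliation by pairwise disjoint topological lines, and then to combine the admissibility data via plane topology. First I would fix lifts $\tilde\alpha_1,\tilde\alpha_2$ that realise the $\F$-transverse intersection at a common point on a leaf $\phi=\phi_{\tilde\alpha_1(t_1)}=\phi_{\tilde\alpha_2(t_2)}$, as in Definition~\ref{DefInterTrans}. Since the canonical lift $\hat f$ commutes with deck transformations and these permute leaves of the lifted foliation, the lifts witnessing admissibility can be replaced by suitable deck-translates, so that the same $\tilde\alpha_1,\tilde\alpha_2$ witness both the transverse intersection and $\hat f^{n_i}(\phi_{ia})\cap\phi_{ib}\neq\emptyset$, with the shorthand $\phi_{ia}=\phi_{\tilde\alpha_i(a_i)}$ and $\phi_{ib}=\phi_{\tilde\alpha_i(b_i)}$.

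Once the lifts are aligned, the concatenation $\tilde\alpha_1|_{[a_1,t_1]}\tilde\alpha_2|_{[t_2,b_2]}$ is a legitimate $\F$-transverse lift of $\alpha_1|_{[a_1,t_1]}\alpha_2|_{[t_2,b_2]}$, since the two pieces agree at the common point on $\phi$ and the transversality character is preserved along each half. Consequently, admissibility of order $n_1+n_2$ reduces to proving $\hat f^{n_1+n_2}(\phi_{1a})\cap\phi_{2b}\neq\emptyset$. The key geometric input is the configuration forced by Definition~\ref{DefInterTrans}: the five pairwise disjoint lines $\phi_{1a},\phi_{1b},\phi_{2a},\phi_{2b},\phi$ sit in a specific braided pattern in the universal cover, with $\phi$ separating $\phi_{ia}$ from $\phi_{ib}$ for each $i$, with $\phi_{1a}$ lying ``above'' $\phi_{2a}$ on the starting side and $\phi_{2b}$ lying ``above'' $\phi_{1b}$ on the target side in the sense of Definition~\ref{DefDessus}.

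The step I expect to be the main obstacle is the plane-topology bookkeeping needed to chain the two known intersections into the desired third one. Applying $\hat f^{n_2}$ to the admissibility of $\alpha_1$ yields that $\hat f^{n_1+n_2}(\phi_{1a})$ meets $\hat f^{n_2}(\phi_{1b})$, while applying $\hat f^{n_1}$ to the admissibility of $\alpha_2$ yields that $\hat f^{n_1+n_2}(\phi_{2a})$ meets $\hat f^{n_1}(\phi_{2b})$. The plan is then to argue by a Jordan-type separation: the line $\hat f^{n_1+n_2}(\phi_{1a})$ enters and exits the planar region delimited by $\phi_{2b}$ and by translates of the reference leaves in an unavoidable way, so that it must cross $\phi_{2b}$. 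Concretely, the monotone crossings imposed by $\F$-transversality together with the ``above'' relations should show that the ends of $\hat f^{n_1+n_2}(\phi_{1a})$ lie on opposite sides of $\phi_{2b}$, forcing the crossing. The swapped concatenation $\tilde\alpha_2|_{[a_2,t_2]}\tilde\alpha_1|_{[t_1,b_1]}$ is handled by exchanging the roles of $\alpha_1$ and $\alpha_2$, an operation under which Definition~\ref{DefInterTrans} is symmetric.
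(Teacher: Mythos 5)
You should first note that the paper contains no proof of Proposition~\ref{PropFondLCT} at all: it is imported verbatim as Proposition~20 of Le Calvez--Tal \cite{MR3787834}, so the only argument to measure yours against is theirs. Your reduction is the correct first move and matches the standard setup: pass to $\wt\dom\F$, use equivariance of $\hat f$ under deck transformations to make one pair of lifts witness simultaneously the transverse intersection and the two admissibility hypotheses, and reduce admissibility of the concatenation to the single claim $\hat f^{n_1+n_2}(\phi_{1a})\cap\phi_{2b}\neq\emptyset$; the symmetry remark for the swapped concatenation is also fine.

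The gap is in the only step that actually carries the content. From the hypotheses you obtain $\hat f^{n_1+n_2}(\phi_{1a})\cap\hat f^{n_2}(\phi_{1b})\neq\emptyset$ and $\hat f^{n_1+n_2}(\phi_{2a})\cap\hat f^{n_1}(\phi_{2b})\neq\emptyset$, but the curves $\hat f^{n_2}(\phi_{1b})$ and $\hat f^{n_1}(\phi_{2b})$ have, with the tools you list, no relation whatsoever to the original configuration of leaves, so no Jordan-type separation can be run on them and the assertion that the two ends of $\hat f^{n_1+n_2}(\phi_{1a})$ lie on opposite sides of $\phi_{2b}$ is exactly the statement to be proved, not a consequence of the ``braided pattern''. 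What makes the forcing true is the dynamical transversality of $\F$ coming from Theorem~\ref{ThExistIstop}: every leaf $\phi'$ of $\wh\F$ is a Brouwer line for $\hat f$, i.e.\ $\hat f(\phi')$ is disjoint from $\phi'$ and the corresponding closed half-plane is mapped strictly into itself, whence nested half-planes for all iterates; combined with the nesting of half-planes of the leaves met along a positively transverse path, this is what converts the above/below relations of Definition~\ref{DefInterTrans} into the desired intersection. Your sketch never invokes any property of $\F$ relative to $f$ beyond the bare definition of admissibility, and without the Brouwer-line property the statement fails, so the argument as written is missing its essential ingredient; this is precisely where Le Calvez--Tal's proof (via their admissibility criteria, Proposition~19 of \cite{MR3787834}, and a genuine case analysis) does real work. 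A secondary inaccuracy: Definition~\ref{DefInterTrans} only provides the above/below pattern for leaves through suitable \emph{interior} parameters of the two paths, not for the endpoint leaves $\phi_{ia},\phi_{ib}$, so even the configuration you start from has to be propagated to the endpoints by the same half-plane nesting before any separation argument can begin.
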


A consequence of this proposition is the following (\cite{MR3787834}, Proposition 23).

\begin{proposition}\label{PropFondLCT2}
Suppose that $\alpha : [a, b] \to\dom \F$ is a transverse path admissible of order $n$ and that $\alpha$ intersects itself $\F$-transversally at $\alpha(s)=\alpha(t)$, with $s < t$. Then $\alpha|_{[a,s]} \alpha|_{[t,b]}$ is admissible of order $n$ and $\alpha|_{[a,s]} \big(\alpha|_{[s,t]}\big)^q \alpha|_{[t,b]}$ is admissible of order $nq$ for every $q > 1$.
\end{proposition}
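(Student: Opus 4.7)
The strategy is to view the $\F$-transverse self-intersection of $\alpha$ as an $\F$-transverse intersection between two copies of $\alpha$ in $\wt{\dom\F}$, one being a deck-translate of the other, and then feed this into Proposition~\ref{PropFondLCT}, iterating for higher $q$. Concretely, I fix a lift $\hat\alpha:[a,b]\to\wt{\dom\F}$; by hypothesis there is a deck transformation $T$ with $\hat\alpha(s)=T\hat\alpha(t)$ such that $\hat\alpha$ and $T\hat\alpha$ intersect $\F$-transversally at this common point in the sense of Definition~\ref{DefInterTrans}. Since $\hat f$ commutes with $T$, both lifts $\hat\alpha$ and $T\hat\alpha$ witness admissibility of $\alpha$ of order $n$.

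For the iterated concatenation $\alpha_q:=\alpha|_{[a,s]}\cdot(\alpha|_{[s,t]})^q\cdot\alpha|_{[t,b]}$, the plan is to proceed by induction on $q\ge 2$. For $q=2$, I apply Proposition~\ref{PropFondLCT} with $\alpha_1=\alpha$ lifted to $\hat\alpha$ at $t_1=s$ and $\alpha_2=\alpha$ lifted to $T\hat\alpha$ at $t_2=t$, both admissible of order $n$. Its second conclusion yields that the concatenation $T\hat\alpha|_{[a,t]}\cdot\hat\alpha|_{[s,b]}$ is admissible of order $2n$, and this path projects in $\dom\F$ to $\alpha|_{[a,t]}\cdot\alpha|_{[s,b]}=\alpha|_{[a,s]}\cdot(\alpha|_{[s,t]})^2\cdot\alpha|_{[t,b]}$, establishing the base case. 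For the inductive step, I lift $\alpha_q$ from $\hat\alpha(a)$; this lift agrees with $\hat\alpha|_{[a,s]}$ on its initial segment, so it still meets $T\hat\alpha$ at $\hat\alpha(s)=T\hat\alpha(t)$ with the same $\F$-transverse configuration. Applying Proposition~\ref{PropFondLCT} to $\alpha_q$ (admissible of order $qn$ by induction) and a fresh copy of $\alpha$ at this intersection produces a concatenation of order $(q+1)n$ that projects to $\alpha_{q+1}$.

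The shortcut claim is subtler. A natural lift of $\beta=\alpha|_{[a,s]}\cdot\alpha|_{[t,b]}$ from $\hat\alpha(a)$ is $\hat\alpha|_{[a,s]}\cdot T\hat\alpha|_{[t,b]}$, ending at $T\hat\alpha(b)$, so admissibility of order $n$ amounts to showing $\hat f^n(\phi_{\hat\alpha(a)})\cap T\phi_{\hat\alpha(b)}\neq\emptyset$, whereas the hypothesis only directly gives $\hat f^n(\phi_{\hat\alpha(a)})\cap \phi_{\hat\alpha(b)}\neq\emptyset$. A direct application of Proposition~\ref{PropFondLCT} to $\alpha_1=\hat\alpha$ and $\alpha_2=T\hat\alpha$ would produce order $2n$ rather than $n$, so this step requires a more delicate argument. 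The approach I would take is to revisit the topological argument underlying Proposition~\ref{PropFondLCT} in $\wt{\dom\F}$: starting from a witness $\hat x\in\phi_{\hat\alpha(a)}$ with $\hat f^n(\hat x)\in\phi_{\hat\alpha(b)}$, I would exploit the ``above'' relations of Definition~\ref{DefInterTrans} at the common leaf $\phi_{\hat\alpha(s)}=\phi_{T\hat\alpha(t)}$ to force the topological line $\hat f^n(\phi_{\hat\alpha(a)})$ to cross $T\phi_{\hat\alpha(b)}$ in addition to $\phi_{\hat\alpha(b)}$. Establishing that the two pairs of leaves arranged around $\phi_{\hat\alpha(s)}$ pin down the dynamical image enough to guarantee this extra intersection without paying the usual doubling cost is the main obstacle I anticipate, and is the point that truly distinguishes Proposition~\ref{PropFondLCT2} from a mere corollary of Proposition~\ref{PropFondLCT}.
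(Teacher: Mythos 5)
Your handling of the second claim is essentially the correct (and standard) route: lifting the self-intersection to an $\F$-transverse intersection of $\hat\alpha$ with $T\hat\alpha$ and applying Proposition~\ref{PropFondLCT} inductively, and your base case $q=2$ is right. One point in the inductive step is asserted rather than proved: the lift of $\alpha_q$ from $\hat\alpha(a)$ coincides with $\hat\alpha$ only up to the parameter $t$, so if the witnessing parameter $b_1$ of Definition~\ref{DefInterTrans} for the intersection of $\hat\alpha$ with $T\hat\alpha$ exceeds $t$, the leaf $\phi_{\hat\alpha(b_1)}$ need not be met by the lift of $\alpha_q$, and you must exhibit a replacement witness before you can say the crossing is ``with the same $\F$-transverse configuration''. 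This persistence of the transverse intersection under replacing the tail of the path by translated copies of $\alpha|_{[s,t]}$ is exactly the kind of verification carried out in \cite{MR3787834}; it is fixable, but it is a real step, not a remark.

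The genuine gap is the first claim. As you yourself point out, Proposition~\ref{PropFondLCT} as quoted only gives that $\alpha|_{[a,s]}\alpha|_{[t,b]}$ is admissible of order $2n$, and your proposal stops at announcing that one should ``revisit the topological argument underlying Proposition~\ref{PropFondLCT}'' — no argument is given, so half of Proposition~\ref{PropFondLCT2} is left unproven. For comparison: the paper under review does not prove this statement either; it quotes it from \cite{MR3787834} (Proposition 23 there). In that reference the order-$n$ shortcut is not a formal consequence of the $n_1+n_2$ statement alone: it rests on the finer form of the fundamental proposition (Proposition 20 of \cite{MR3787834} also controls which of the two concatenations is admissible of order $\min(n_1,n_2)$, resp.\ $\max(n_1,n_2)$) together with the accompanying analysis of admissibility in the leaf space. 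So your diagnosis of where the difficulty sits is accurate, but the right way to close it is to invoke the full statement and proof from \cite{MR3787834} rather than to hope the crossing of $\hat f^{n}(\phi_{\hat\alpha(a)})$ with $T\phi_{\hat\alpha(b)}$ can be extracted ad hoc; as written, the proposal establishes only the $nq$ half of the proposition.
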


We finish this crash course by a result on admissibility of trajectories. It uses the following definition.

\begin{definition}
We say that a transverse path $\alpha : J\to \R^2$ has a \emph{leaf on its right} if there exists $a < b$ in $J$ and a leaf $\phi$ in $L(\phi_{\alpha(a)}) \cap R(\phi_{\alpha(b)})$ that lies in the right of $\alpha|_{[a,b]}$. Similarly, one can define the notion
of having a \emph{leaf on its left}.
\end{definition}

The following is Proposition~19 of \cite{MR3787834}.

\begin{proposition}\label{PropPasFondLCT}
Let $\alpha : [a, b] \to \dom \F$ be an $\F$-transverse path that is not admissible of order $n$ but is a subpath of an $\F$-transverse path of order $n$.
Then any lift $\hat\alpha$ of $\alpha$ to $\wt\dom\F$ has no leaf on its right and no leaf on its left.
\end{proposition}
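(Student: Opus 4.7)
The plan is to prove the contrapositive: assuming that some lift $\hat\alpha$ has a leaf on its right (the case of a leaf on the left is entirely symmetric, obtained by reversing the orientations of $\F$ and of the paths), I will show that $\alpha$ itself is admissible of order $n$, contradicting the hypothesis.

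First, fix the data. By hypothesis there is an $\F$-transverse path $\beta:[a',b'] \to \dom \F$ with $a' \le a \le b \le b'$, admissible of order $n$, such that $\alpha = \beta|_{[a,b]}$. Lift $\beta$ to $\hat\beta$ in $\wt{\dom \F}$ so that $\hat\beta|_{[a,b]} = \hat\alpha$, and pick a point $\hat y \in \hat f^n(\phi_{\hat\beta(a')}) \cap \phi_{\hat\beta(b')}$ witnessing admissibility. The assumption provides $s<t$ in $[a,b]$ and a leaf $\phi$ of $\wt\F$ with $\phi \subset L(\phi_{\hat\alpha(s)}) \cap R(\phi_{\hat\alpha(t)})$, lying entirely in the right complementary component of $\hat\alpha|_{[s,t]}$.

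The key topological step is to exploit the trapping produced by $\phi$. Using the classical structure of oriented plane foliations (Haefliger--Reeb \cite{MR89412}), two disjoint leaves of $\wt\F$ are always comparable: one is above the other. The $\F$-transversality of $\hat\alpha$ forces the family $(\phi_{\hat\alpha(u)})_{u\in[s,t]}$ to be totally ordered in $u$, and the existence of $\phi$ means that for $u$ close to $s$ the leaf $\phi_{\hat\alpha(u)}$ lies in $R(\phi)$, while for $u$ close to $t$ it lies in $L(\phi)$, in such a way that $\hat\alpha|_{[s,t]}$ does \emph{not} cross $\phi$ (since $\phi$ is on its right). I would then propagate this trapping along the extended path, using the $\F$-transversality of $\hat\beta$ on $[a',a]$ and on $[b,b']$: this forces $\phi_{\hat\beta(a')}$ to sit on the same side of $\phi$ as $\phi_{\hat\alpha(a)}$, and $\phi_{\hat\beta(b')}$ to sit on the same side as $\phi_{\hat\alpha(b)}$, with the leaves $\phi_{\hat\alpha(a)}$ and $\phi_{\hat\alpha(b)}$ comparable respectively to $\phi_{\hat\beta(a')}$ and $\phi_{\hat\beta(b')}$ via the total order induced by $\phi$.

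Finally, the dynamical step: since $\hat f$ is an orientation-preserving homeomorphism of $\wt{\dom \F}$ sending each leaf of $\wt\F$ to a disjoint leaf (for the canonical lift given by Theorem~\ref{ThExistIstop}), $\hat f^n$ preserves the relative position of disjoint leaves. Combining this with the comparability established above, the leaf $\hat f^n(\phi_{\hat\beta(a')})$ contains the point $\hat y \in \phi_{\hat\beta(b')}$, and it must also meet $\phi_{\hat\alpha(b)}$ by a connectedness/comparability argument at infinity in $\wt{\dom \F}$; a symmetric argument, transporting via $\hat f^{-n}$, gives that $\phi_{\hat\alpha(a)}$ and $\hat f^{-n}(\phi_{\hat\alpha(b)})$ meet, i.e.\ $\hat f^n(\phi_{\hat\alpha(a)}) \cap \phi_{\hat\alpha(b)} \neq \emptyset$. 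This contradicts the non-admissibility of $\alpha$ at order $n$, and completes the argument. The main obstacle is making this last step precise: one must control the behaviour of leaves at infinity in $\wt{\dom \F}$ (which need not be simply connected of nice shape) and verify that the trapping leaf $\phi$ truly forces the dynamical intersection to descend from $\hat\beta$ to $\hat\alpha$, rather than only yielding a weaker qualitative comparison.
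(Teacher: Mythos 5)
Note first that the paper gives no proof of this statement: it is quoted directly from Le Calvez--Tal (\cite[Proposition 19]{MR3787834}), so your attempt has to be measured against that argument. As it stands, your proposal is a plan rather than a proof, and the decisive step is missing — you say so yourself at the end. After passing to the contrapositive, everything hinges on showing that a leaf $\phi$ on the right of $\hat\alpha$ forces $\hat f^n(\phi_{\hat\alpha(a)})\cap\phi_{\hat\alpha(b)}\neq\emptyset$. What the hypotheses give for free is the nesting of half-planes along a transverse path, $L(\phi_{\hat\beta(a')})\subset L(\phi_{\hat\alpha(a)})$ and $R(\phi_{\hat\beta(b')})\subset R(\phi_{\hat\alpha(b)})$, and together with the admissibility witness $\hat y\in\hat f^n(\phi_{\hat\beta(a')})\cap\phi_{\hat\beta(b')}$ this only excludes one of the possible mutual positions of the two disjoint lines $\hat f^n(\phi_{\hat\alpha(a)})$ and $\phi_{\hat\alpha(b)}$. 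The remaining configurations — in particular the non-comparable one in which each of these two lines lies in the right of the other — are exactly what the side leaf has to rule out, and your sketch never engages with them; excluding them is the actual content of Le Calvez--Tal's proof and requires a genuine separation argument built from $\phi$, half-leaves and pieces of the path, not a soft ``preservation of relative position'' statement.

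Moreover, two of the claims you lean on are incorrect as stated. First, the canonical lift $\hat f$ does \emph{not} send leaves of the lifted foliation to leaves: the foliation produced by Theorem~\ref{ThExistIstop} is transverse to the isotopy, not invariant under $f$, so ``$\hat f^n$ preserves the relative position of disjoint leaves'' is not available. The correct dynamical input is that every leaf $\phi$ of the lifted foliation is a Brouwer line for $\hat f$ (with the paper's conventions, $\hat f(\overline{R(\phi)})\subset R(\phi)$), a property your argument never invokes. Second, two disjoint leaves of a nonsingular oriented plane foliation need not be comparable: the leaf space is a simply connected but generally non-Hausdorff one-manifold, and each of two disjoint leaves may lie in the right (or in the left) of the other. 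Hence the ``trapping/total order'' propagation in your topological step (for instance deducing that $\phi_{\hat\alpha(u)}\subset R(\phi)$ for $u$ near $s$ from $\phi\subset L(\phi_{\hat\alpha(s)})$) is unjustified. The contrapositive set-up and the comparison of the four leaves $\phi_{\hat\beta(a')},\phi_{\hat\alpha(a)},\phi_{\hat\alpha(b)},\phi_{\hat\beta(b')}$ are reasonable starting points, but to close the argument you would essentially have to reproduce the proof of \cite[Proposition 19]{MR3787834}.
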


\subsection{Markovian intersections}\label{SubSecmarkov}

We now study Markovian intersections. They will be used to get topological rotational horseshoes.

\begin{definition}\label{Defmarkov}
Let $S$ be a surface. We call \textit{rectangle} of $S$ a subset $R \subset S$ satisfying $R = h([0,1]^2)$ for some homeomorphism $h: [0,1]^2\to h([0,1]^2)\subset S$. We call \textit{sides} of $R$ the image by $h$ of the sides of $[0,1]^2$. We call \textit{horizontal} the sides $R^- = h([0,1] \times \left\{ 0 \right\})$ and $R^+ = h([0,1] \times \left\{ 1 \right\})$ and \textit{vertical} the two others. We say that a rectangle $R' \subset R$ is a \emph{strict horizontal} (resp. vertical) \emph{subrectangle} of $R$ if the horizontal (resp. vertical) sides of $R'$ are strictly disjoint from those of $R$ and the vertical (resp. horizontal) sides of $R'$ are included in those of $R$.
\end{definition}

\noindent
Given $x \in \R^2$, we will denote by $\pi_2(x)$ its second coordinate. Following \cite{MR2060531}, we define Markovian intersections in the following way:

\begin{definition}\label{def:markov}
Let $R_1$ and $R_2$ be two rectangles of a surface $S$. We say that the intersection $R_1 \cap R_2$ is \emph{Markovian} if there exists a homeomorphism $h$ from a neighbourhood of $R_1\cup R_2$ to an open subset of $\R^2$ such that:
\begin{itemize}
\item $h(R_2) = [0,1]^2$;
\item either $h(R_1^+) \subset \left\{x\mid \pi_2(x) > 1 \right\}$ and $h(R_1^-) \subset \left\{x\mid \pi_2(x) < 0 \right\}$, or $h(R_1^-) \subset \left\{x\mid \pi_2(x) > 1 \right\}$ and $h(R_1^+) \subset \left\{x\mid \pi_2(x) < 0 \right\}$;
\item $h(R_1) \subset \left\{x\mid \pi_2(x) < 0 \right\} \cup [0,1]^2 \cup \left\{x\mid \pi_2(x) > 1 \right\}$.
\end{itemize}
\end{definition}

\begin{figure}[h!]
\begin{center}
\includegraphics[scale=1]{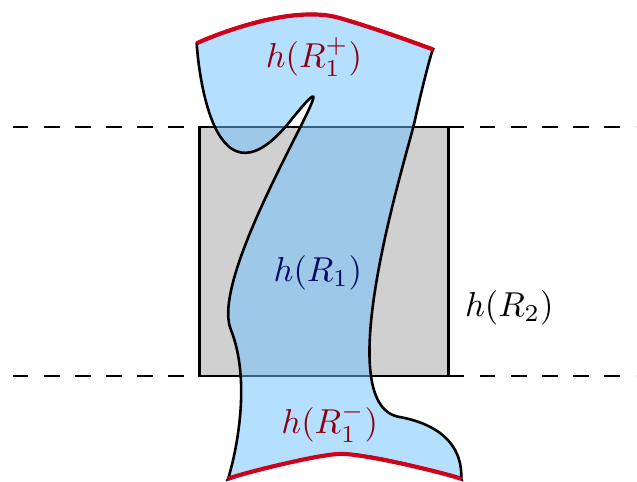}
\caption{A Markovian intersection\label{FigExMarkov}}
\end{center}
\end{figure}

The proofs of the following two results can be obtained as a combination of Theorem 16 and Corollary 12 of \cite{MR2060531}.

\begin{proposition}\label{LemChain}
Given a homeomorphism $f$ of a surface $S$ and three rectangles $R_1$, $R_2$ and $R_3$, if the intersections $f(R_1) \cap R_2$ and $f(R_2) \cap R_3$ are Markovian, then the intersection $f^2(R_1) \cap R_3$ is Markovian too (and in particular is nonempty).
\end{proposition}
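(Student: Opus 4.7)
The plan is to extract, from each Markovian hypothesis, a pair of subrectangles linked by $f$ in a topologically controlled way, and then to compose the two.

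First I would use the hypothesis on $f(R_{1})\cap R_{2}$ to produce a strict horizontal subrectangle $H\subset R_{1}$ and a strict vertical subrectangle $V\subset R_{2}$ such that $f|_{H}\colon H\to V$ is a homeomorphism sending the horizontal (resp.\ vertical) sides of $H$ bijectively onto the horizontal (resp.\ vertical) sides of $V$; moreover the horizontal sides of $V$ lie on $R_{2}$'s horizontal boundary, and the vertical sides of $H$ coincide with $R_{1}$'s vertical sides. This reformulation of a Markovian intersection is essentially Theorem~16 of \cite{MR2060531}. Applied to $f(R_{2})\cap R_{3}$, the same argument gives a strict horizontal subrectangle $H'\subset R_{2}$ and a strict vertical subrectangle $V'\subset R_{3}$ with $f(H')=V'$ and the analogous side correspondences.

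Next I would examine the intersection $W := V\cap H'$ inside $R_{2}$. Since $V$ connects the two horizontal sides of $R_{2}$ while $H'$ connects the two vertical sides, $W$ is a nonempty rectangle which is simultaneously a strict horizontal subrectangle of $V$ (its vertical sides coincide with those of $V$) and a strict vertical subrectangle of $H'$ (its horizontal sides coincide with those of $H'$). Setting $\widetilde{H} := (f|_{H})^{-1}(W)$, the side-preserving property of $f|_{H}$ makes $\widetilde{H}$ a strict horizontal subrectangle of $H$, and hence of $R_{1}$; applying $f$ to $W\subset H'$ and using the side-preserving property of $f|_{H'}$ gives that $f^{2}(\widetilde{H}) = f(W)$ is a strict vertical subrectangle of $V'$, hence of $R_{3}$, with horizontal sides on $R_{3}$'s horizontal boundary. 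In particular this already shows $f^{2}(R_{1})\cap R_{3}\neq\emptyset$.

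The last step, which I expect to be the main technical obstacle, is to promote this crossing of $R_{3}$ by $\widetilde{H}$ to the global chart condition of Definition~\ref{def:markov} for the pair $(f^{2}(R_{1}), R_{3})$: one must produce a single homeomorphism on a neighbourhood of $f^{2}(R_{1})\cup R_{3}$ in which $R_{3}=[0,1]^{2}$, the sides $f^{2}(R_{1}^{\pm})$ lie strictly above and below $[0,1]^{2}$, and all of $f^{2}(R_{1})$ avoids the horizontal strip $[0,1]\times\R$ outside $[0,1]^{2}$. The difficulty is that $f^{2}(R_{1})$ may wander far from $f^{2}(\widetilde{H})$ in complicated ways, so one cannot simply restrict a Markovian chart. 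The remedy is to chain the two given Markovian charts along $f$: transport the chart for $(f(R_{2}),R_{3})$ back through $f^{-1}$ to a chart near $V\subset R_{2}$, glue it with the chart for $(f(R_{1}),R_{2})$ near $H\subset R_{1}$, and then apply a plane-topological straightening of the excursions of $f^{2}(R_{1})$ outside $f^{2}(\widetilde{H})$ using the rectangle structure of $R_{1}$. This chaining construction is precisely the content of Corollary~12 of \cite{MR2060531}, which combined with Theorem~16 yields the proposition.
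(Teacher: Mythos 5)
The paper does not actually prove Proposition~\ref{LemChain}: it only records that the statement ``can be obtained as a combination of Theorem 16 and Corollary 12 of \cite{MR2060531}'', and since your concluding paragraph rests on exactly that citation, your proposal ultimately lands in the same place as the paper --- both defer the substance of the argument to the reference.

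Be careful, though, with the intermediate structure you interpose, because it is not something Definition~\ref{def:markov} hands you and you should not present it as a mere ``reformulation''. A Markovian intersection provides a chart in which $h(R_2)=[0,1]^2$, the horizontal sides of $f(R_1)$ lie above and below the unit square, and $f(R_1)$ avoids the two lateral half-strips; it does not provide a strict horizontal subrectangle $H\subset R_1$ with $f(H)$ equal to a strict vertical subrectangle $V\subset R_2$ and with horizontal and vertical sides matched. Such a $V$ would have to be a disk bounded by two arcs contained in the horizontal sides of $R_2$ and two arcs contained in the vertical sides of $f(R_1)$, i.e.\ an exactly ``straightened'' crossing piece; for topological rectangles the set $f(R_1)\cap R_2$ and its trace on $\partial R_2$ can be quite wild, so this is a genuinely stronger statement that requires proof, and it is far from clear that it is what Theorem~16 of \cite{MR2060531} asserts. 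Since the composition step $W=V\cap H'$ and the rectangle $\widetilde{H}$ are built entirely on that unestablished claim, the only complete justification in your write-up is the final appeal to Theorem~16 and Corollary~12 of \cite{MR2060531} --- which matches the paper, but it means the preceding construction is not doing any work. If you want a self-contained proof rather than the paper's bare citation, the step you still owe is precisely the chaining of the two Markovian charts (or, equivalently, a proof of the subrectangle-extraction statement you attribute to the reference), for instance by working directly with the crossing/chart formulation of Definition~\ref{def:markov} rather than with exactly-mapped subrectangles.
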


\begin{proposition}\label{LemPointFixe}
Let $f$ be a homeomorphism of $S$ and $R$ a rectangle such that $f(R) \cap R$ is Markovian. Then there exists a fixed point for $f$ in $R$.
\end{proposition}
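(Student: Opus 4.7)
The plan is to work in local coordinates via the homeomorphism $h$ from Definition \ref{def:markov}, setting $g = h \circ f \circ h^{-1}$, so that finding a fixed point of $f$ in $R$ amounts to finding a fixed point of $g$ in $h(R) = [0,1]^2$. Swapping roles if necessary, I may assume that $g$ sends $[0,1] \times \{0\}$ into $\{y < 0\}$ and $[0,1] \times \{1\}$ into $\{y > 1\}$, and that $g([0,1]^2) = h(R_1)$ (where $R_1 = f(R)$) is contained in $\{y < 0\} \cup [0,1]^2 \cup \{y > 1\}$.

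The first step is to isolate a topological subrectangle $H$ of $[0,1]^2$ on which the displacement $v(z) = g(z) - z$ has useful sign behaviour along the boundary. The Markov conditions force $h(R_1)$ to connect $\{y < 0\}$ to $\{y > 1\}$ while staying inside the union $\{y < 0\} \cup [0,1]^2 \cup \{y > 1\}$, so there exists at least one connected component $V$ of $h(R_1) \cap [0,1]^2$ whose closure meets both $\{y = 0\}$ and $\{y = 1\}$. Set $H = g^{-1}(V)$. Since $g$ sends the left and right vertical sides of $[0,1]^2$ to the corresponding vertical sides of $h(R_1)$, and since the $g$-preimages of $\{y = 0\}$ and $\{y = 1\}$ are arcs lying in the open strip $[0,1] \times (0,1)$, the set $H$ is a topological rectangle whose left and right vertical sides lie on $\{x=0\}$ and $\{x=1\}$ respectively, and whose top and bottom sides are arcs in the interior of $[0,1]^2$ mapped by $g$ onto subsets of $\{y = 1\}$ and $\{y = 0\}$.

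The second step applies a Miranda-type argument to locate a zero of $v$ in $H$. On the bottom side of $H$ the second coordinate of $g(z)$ is $0$ while that of $z$ is strictly positive, so $v_2 < 0$; symmetrically $v_2 > 0$ on the top side. On the left side of $H$, $z$ has first coordinate $0$ while $g(z) \in V \subset [0,1]^2$ forces $g_1(z) \geq 0$, so $v_1 \geq 0$; symmetrically $v_1 \leq 0$ on the right side. Identifying $H$ with $[0,1]^2$ via a homeomorphism that respects the side labels and applying Miranda's theorem to $v$ pulled back this way produces a zero of $v$ in $H$, that is, a fixed point of $g$ in $[0,1]^2$, and hence a fixed point of $f$ in $R$.

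The main delicate point will be verifying rigorously the topological-rectangle structure of $V$ (and hence of $H$), in particular the possibility that a vertical side of $h(R_1)$ meets $\{x = 0\}$ or $\{x = 1\}$. I plan to handle this either by mild perturbation, replacing $V$ by a slightly smaller topological rectangle in its interior, or by passing to a direct winding number computation for $v$ along $\partial H$: the four sign conditions above force that winding number to equal $\pm 1$, and any zero of $v$ occurring on $\partial H$ itself would already provide the required fixed point.
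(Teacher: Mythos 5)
Your overall strategy (sign conditions on a displacement vector plus a Miranda/degree argument in the chart given by Definition~\ref{def:markov}) is the right kind of argument — note that the paper itself does not prove this proposition but refers to Theorem 16 and Corollary 12 of \cite{MR2060531}, so your proof has to stand alone — but its central construction has a genuine gap. The claim that the crossing component $V$ of $h(R_1)\cap[0,1]^2$, and hence $H=g^{-1}(V)$, is a topological rectangle with vertical sides on $\{x=0\}$ and $\{x=1\}$ and horizontal sides mapped into $\{y=0\}$ and $\{y=1\}$ is unjustified and in general false. (i) A connected component of the intersection of two topological rectangles need not be a disk: if $\partial h(R_1)$ is tangent to $\{y=0\}$ along a Cantor set, the crossing component can even have disconnected interior, so neither ``shrink $V$ to a slightly smaller rectangle in its interior'' nor ``compute a winding number along $\partial H$'' is meaningful, and the $g$-preimages of $\{y=0\}$, $\{y=1\}$ need not be arcs. (ii) Nothing guarantees that the crossing component you pick contains points of the vertical sides of $h(R_1)$: the two lateral sides of $h(R_1)$ may cross the square inside different components, so $H$ need not meet $\{x=0\}\cup\{x=1\}$ at all. (iii) Conversely $\partial V$ may contain pieces of $\{x=0\}$ or $\{x=1\}$ where $h(R_1)$ bumps against the lateral sides of the square; their $g$-preimages are arcs sitting somewhere inside $[0,1]^2$ on which the only information is $v_1\le 0$ (resp.\ $v_1\ge 0$), with no control on where they lie on $\partial H$, so the four-sided Miranda hypotheses are not verified.

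The good news is that the sub-rectangle is unnecessary: run the degree argument directly on $\partial[0,1]^2$. Set $v(z)=g(z)-z$. On the bottom side $v_2<0$ and on the top side $v_2>0$ (this uses, as you do implicitly, that the horizontal sides of $f(R)$ are the $f$-images of those of $R$, which is the paper's convention). On the left side ($z_1=0$), the inclusion $h(f(R))\subset\{y<0\}\cup[0,1]^2\cup\{y>1\}$ gives a dichotomy: either $g(z)\in[0,1]^2$, whence $v_1\ge 0$, or $g_2(z)\notin[0,1]$, whence $v_2\neq 0$; so $v$ never points due left there, and symmetrically $v$ never points due right on the right side. If $v$ vanishes on $\partial[0,1]^2$ we are done; otherwise an elementary angle chase with these four constraints shows the winding number of $v$ along $\partial[0,1]^2$ is $\pm 1$, so $v$ vanishes inside the square, giving the fixed point of $f$ in $R$. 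As written, however, the construction of $H$ is a genuine gap in your proof.
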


The following is a particular case of Homma's generalization \cite{MR58194} of Schoenflies theorem, it will be used to find rectangles and Markovian intersections.

\begin{theorem}[Homma]\label{PropHomma}
Any homeomorphism of 
\[\Big(\big((\R\times\{0\})\cup (\R\times \{1\}) \cup (\{0\}\times [0,1]) \cup (\{1\}\times [0,1]) \big)\cap B(0,10)\Big)\cup \partial B(0,10)\]
to its image in $\R^2$ can be extended to a self-homeomorphism of $\R^2$.
\end{theorem}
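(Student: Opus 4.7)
The plan is to extend $h$ iteratively, using the classical Jordan--Schoenflies theorem to extend across one simple arc of the $1$-complex at a time. Decompose the domain $X$ as $X = C \cup L_0 \cup L_1 \cup V_0 \cup V_1$, where $C=\partial B(0,10)$, $L_i = (\R\times\{i\})\cap \overline{B(0,10)}$ for $i=0,1$ are the two horizontal chords of the ball, and $V_i=\{i\}\times[0,1]$ for $i=0,1$ are the two short vertical segments. Added in the order $C, L_0, L_1, V_0, V_1$, each new arc has both endpoints on the previously built sub-complex and interior properly contained in a single face of its complement (for instance $V_0$ runs from $(0,0)\in L_0$ to $(0,1)\in L_1$ inside the strip $\{0<y<1\}\cap B(0,10)$).

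The key tool is the following direct consequence of the Jordan--Schoenflies theorem: given a closed topological disk $D\subset\R^2$ and an arc $\alpha$ properly embedded in $D$ (endpoints on $\partial D$, interior in the open disk), any embedding of $\partial D \cup \alpha$ into $\R^2$ whose restriction to $\partial D$ is a Jordan curve bounding a closed disk $D'$ containing the image of $\alpha$ extends to a homeomorphism $D\to D'$. Indeed, $\alpha$ splits $D$ into two Jordan sub-disks and its image splits $D'$ into the corresponding two Jordan sub-disks, so the classical Jordan--Schoenflies theorem applies to each pair. Starting with an extension $H_0$ of $h|_C$ produced by Jordan--Schoenflies (choosing, if necessary, the orientation that sends the interior of $C$ to the component of $\R^2\setminus h(C)$ which contains $h(X\setminus C)$), I iteratively extend across each new arc: at step $k$, I modify the current extension $H_k$ only inside the single face of $\R^2\setminus H_k(X_k)$ in whose closure the image of the new arc lies, using the lemma above. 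After four such steps, one for each of $L_0, L_1, V_0, V_1$, one obtains a self-homeomorphism of $\R^2$ extending $h$.

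The main obstacle is to verify, at each iteration, that the $h$-image of the new arc actually lies in the topological disk of $\R^2\setminus H_k(X_k)$ which corresponds, under $H_k$, to the face containing the interior of the new arc in $\R^2\setminus X_k$. This is a combinatorial rigidity statement about the planar embedding of $X$: one must show that the embedding $h(X)$ realises the same rotation system and face structure as $X$. For our specific $X$ this holds because $X$ is $3$-connected as an abstract planar graph (a direct vertex-connectivity check on its $8$ vertices and $12$ edges), so by Whitney's theorem a $3$-connected planar graph admits a unique embedding in the sphere up to reflection. The residual reflection and outer-face ambiguities are absorbed by the initial freedom in the choice of $H_0$, which completes the extension.
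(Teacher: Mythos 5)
Your overall strategy---iterated Schoenflies extension across one arc at a time, with a combinatorial rigidity input to guarantee that each new image arc sits in the correct face---is reasonable, and it is in any case a different route from the paper, which gives no proof at all but simply quotes the statement as a special case of Homma's extension theorem. The arc-by-arc lemma, the order $C,L_0,L_1,V_0,V_1$, and the identification of the complex with the $3$-connected cube graph ($8$ vertices, $12$ edges, six quadrilateral faces) are all correct, so Whitney's theorem does pin down the facial structure of the image embedding in the sphere, up to reflection.

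The genuine gap is the claim that the ``outer-face ambiguity is absorbed by the initial freedom in the choice of $H_0$'', and it already shows in your parenthetical ``choosing, if necessary, the orientation that sends the interior of $C$ to the component of $\R^2\setminus h(C)$ which contains $h(X\setminus C)$''. You have no such freedom in the plane: a self-homeomorphism of $\R^2$ sends bounded sets to bounded sets, so \emph{every} extension of $h|_C$ must send $B(0,10)$ onto the bounded Jordan domain of $h(C)$; you cannot choose the other side. Whitney's theorem controls the embedding of the cube graph in $S^2$, but not which face of $h(X)$ is the unbounded one in $\R^2$, and this is an actual obstruction rather than an ambiguity to normalise away: if $h$ is the restriction to $X$ of a homeomorphism of $S^2=\R^2\cup\{\infty\}$ sending an interior point of the square face $(0,1)^2$ to $\infty$, then $h(X)\subset\R^2$, $h$ is a homeomorphism onto its image, yet $h(X\setminus C)$ lies in the \emph{unbounded} component of $\R^2\setminus h(C)$, so by the boundedness remark no extension to a self-homeomorphism of $\R^2$ can exist. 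In other words, at the level of generality of the statement you are proving, the extension requires the (implicit, and satisfied in the paper's applications) hypothesis that $h\big(X\setminus\partial B(0,10)\big)$ lies in the closed disk bounded by $h(\partial B(0,10))$, or else the statement should be formulated on $S^2$, where orientation and the point at infinity can indeed be adjusted. With that hypothesis added (or on the sphere), your induction does close up: the face correspondence needed at each step follows from Whitney or from a direct check of the six facial $4$-cycles, and the Schoenflies-across-an-arc lemma is applied correctly. As written, however, the crucial step is not proved, and it cannot be proved without the missing hypothesis.
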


The following definition is a variation over the concept of rotational horseshoe defined in \cite{MR3784518} and used in \cite{1803.04557}.

\begin{definition}\label{DefRotHorse}
Let $S$ be a surface with negative Euler characteristic and $f$ a homeomorphism of $S$. We denote by $\tilde f$ the canonical lift of $f$ to $\wt S\simeq\Hy^2$.

We say that $f$ has a \emph{rotational horseshoe with deck transformations $U_1,\dots,U_k$} if there exists a rectangle $R$ of $\wt S$ such that, for any $1\le i \le k$, the intersections $U_i R \cap \tilde f(R)$ are Markovian.
\end{definition}

For any finite set $\{1,\dots,k\}^\Z$, we denote by $\sigma : \{1,\dots,k\}^\Z \rightarrow \{1,\dots,k\}^\Z$ the shift map, \emph{i.e} the map which, to a sequence $(a_{i})_{i \in \Z}$, associates the sequence $(a_{i+1})_{i \in \Z}$. 

From Propositions \ref{LemChain} and \ref{LemPointFixe}, it follows the following ``semi-conjugacy'' result (which allows to link our notion of horseshoe with the one of \cite{1803.04557}).

\begin{proposition}\label{PropConjug}
Suppose that $f$ has a rotational horseshoe with deck transformations $U_1,\dots,U_k$, and suppose that these transformations form a free group. Then there exists $q\ge 0$, a compact invariant subset $\tilde Y$ of $\tilde S$, a homeomorphism $g$ of $\tilde Y$ and a surjective continuous map $h_1 : \tilde Y \to \{1,\dots,k\}^\Z$ such that the following diagram commutes:
\begin{center}
\begin{tikzpicture}[scale=1]
\node (A) at (0,0) {$Y$};
\node (B) at (3,0) {$Y$};
\node (C) at (0,1.5) {$\tilde Y$};
\node (D) at (3,1.5) {$\tilde Y$};
\node (E) at (0,3) {$\{1,\dots,k\}^\Z$};
\node (F) at (3,3) {$\{1,\dots,k\}^\Z$};

\draw[->,>=latex,shorten >=3pt, shorten <=3pt] (A) to node[midway, above]{$f^q$} (B);
\draw[->,>=latex,shorten >=3pt, shorten <=3pt] (C) to node[midway, above]{$\tilde g$} (D);
\draw[->,>=latex,shorten >=3pt, shorten <=3pt] (E) to node[midway, above]{$\sigma$} (F);
\draw[->,>=latex,shorten >=3pt, shorten <=3pt] (C) to node[midway, left]{$h_1$} (E);
\draw[->,>=latex,shorten >=3pt, shorten <=3pt] (D) to node[midway, right]{$h_1$} (F);
\draw[->,>=latex,shorten >=3pt, shorten <=3pt] (C) to node[midway, left]{$\pi$} (A);
\draw[->,>=latex,shorten >=3pt, shorten <=3pt] (D) to node[midway, right]{$\pi$} (B);
\end{tikzpicture}
\end{center}
($\pi$ denotes the canonical projection)
and moreover
\begin{itemize}
\item the preimage by $h_1$ of every $p$-periodic sequence for $\sigma$ contains a point which projects to a $p$-periodic sequence for $f^q$;
\item for any $\tilde y\in \tilde Y$ and any $n>0$, one has
\begin{align*}
\tilde f^{qn}(\tilde y) & \in U^q_{h_1(\tilde f^{q(n-1)}(\tilde y))}U^q_{h_1(\tilde f^{q(n-2)}(\tilde y))}\dots U^q_{h_1(\tilde y)}(\tilde Y), \quad \text{and}\\
\tilde f^{-qn}(\tilde y) & \in U_{h_1(\tilde f^{-qn}(\tilde y))}^{-q}U_{h_1(\tilde f^{q(-n+1)}(\tilde y))}^{-q}\dots U_{h_1(f^{-q}(\tilde y))}^{-q}(\tilde Y).
\end{align*}
\end{itemize}
\end{proposition}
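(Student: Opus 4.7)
The plan is to build a symbolic coding of a compact set, mimicking the Smale horseshoe construction, by iterating the Markovian intersection property.

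First I would iterate Proposition~\ref{LemChain} to establish that, for every word $w=(a_0,\ldots,a_{n-1})\in\{1,\ldots,k\}^n$, the intersection
\[\tilde{f}^n(R)\cap U_{a_0}U_{a_1}\cdots U_{a_{n-1}}R\]
is Markovian. The induction step uses that $\tilde{f}$ commutes with every deck transformation (a special feature of the canonical lift): the Markovian intersection $\tilde{f}(R)\cap U_{a_{n-1}}R$ transports by the deck transformation $U_{a_0}\cdots U_{a_{n-2}}$ to $\tilde{f}(U_{a_0}\cdots U_{a_{n-2}}R)\cap U_{a_0}\cdots U_{a_{n-1}}R$, which chains with the induction hypothesis via Proposition~\ref{LemChain}.

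Next, I would choose $q$ large enough to separate the branches. Since $\langle U_1,\ldots,U_k\rangle$ is a free group acting properly discontinuously on $\tilde{S}$, only finitely many $U\in\pi_1(S)$ satisfy $UR\cap R\neq\emptyset$. For $q$ sufficiently large the $k$ translates $U_1^q R,\ldots,U_k^q R$ are pairwise disjoint, and the horizontal subrectangles $H_i=R\cap\tilde{f}^{-q}(U_i^q R)$, as well as the vertical subrectangles $V_i=U_i^{-q}\tilde{f}^q(R)\cap R$, are then pairwise disjoint subrectangles of $R$. This provides a clean Smale-type horseshoe for $\tilde{f}^q$ with $k$ disjoint branches. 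For each bi-infinite sequence $s=(a_n)_{n\in\Z}\in\{1,\ldots,k\}^\Z$, taking decreasing intersections of Markovian subrectangles yields a nonempty compact set $\tilde{Y}_s\subset R$ consisting of the points whose orbit under $\tilde{f}^q$ visits $U_{a_0}^q\cdots U_{a_{n-1}}^q R$ at step $n$ for $n\ge 1$ and the corresponding translates for negative times. The disjointness just established ensures that distinct codes $s$ give disjoint $\tilde{Y}_s$, so setting $\tilde{Y}=\bigcup_s\tilde{Y}_s$ and $h_1(\tilde{y})=s$ for $\tilde{y}\in\tilde{Y}_s$ yields a compact set $\tilde{Y}\subset R$ and a continuous surjective map $h_1:\tilde{Y}\to\{1,\ldots,k\}^\Z$. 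I would then define $\tilde{g}:\tilde{Y}\to\tilde{Y}$ by $\tilde{g}(\tilde{y})=U_{h_1(\tilde{y})_0}^{-q}\tilde{f}^q(\tilde{y})$; this is a homeomorphism satisfying $h_1\circ\tilde{g}=\sigma\circ h_1$ and descending to $f^q$ on $Y=\pi(\tilde{Y})$.

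Finally, for a $p$-periodic sequence $s$ the composition $T=U_{a_{p-1}}^{-q}\cdots U_{a_0}^{-q}\tilde{f}^{pq}$ has a Markovian self-intersection $T(R)\cap R$ (it is a translate of one of the iterated Markovian intersections built above), so Proposition~\ref{LemPointFixe} produces a fixed point of $T$ in $R$; restricting beforehand to the horizontal strip with prescribed forward code $a_0\cdots a_{p-1}$ ensures that this fixed point lies in $\tilde{Y}_s$, giving a $p$-periodic point of $\tilde{g}$ that projects to a $p$-periodic point of $f^q$. The explicit dynamical formula in the last bullet then follows by induction on $n$ from the relation $\tilde{f}^q(\tilde{y})=U_{h_1(\tilde{y})_0}^q\tilde{g}(\tilde{y})$ and the commutation of $\tilde{f}$ with deck transformations. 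I expect the main difficulty to lie in the branch separation step: verifying that the choice of $q$ yields pairwise disjoint $\tilde{Y}_s$ for distinct codes $s$ requires combining the freeness of $\langle U_1,\ldots,U_k\rangle$ with proper discontinuity of the action, and after this is done the rest is a direct adaptation of the classical Smale symbolic dynamics to our equivariant setting.
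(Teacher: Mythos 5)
Your overall strategy is the same as the paper's: choose $q$ using freeness and proper discontinuity, build nested Markovian rectangles indexed by words via Proposition~\ref{LemChain}, code points by the translates their $\tilde f^q$-orbit visits, define $\tilde g=U_{h_1(\cdot)_0}^{-q}\circ\tilde f^q$ piecewise, and extract periodic points from Proposition~\ref{LemPointFixe}. However, there is a genuine gap at the step you yourself flag as the main difficulty: pairwise disjointness of the $k$ translates $U_1^qR,\dots,U_k^qR$ does \emph{not} suffice to make the translate-based coding well defined, nor to make the sets $\tilde Y_s$ disjoint for distinct codes $s$. It handles forward codes that first differ at index $n$ (the comparison reduces, after cancelling the common prefix, to $U_{a}^qR\cap U_{a'}^qR$), but for backward times, or more generally for two admissible products differing after a common prefix, the relevant intersection is of the form $A^{-1}U_{a}^{q}U_{a'}^{-q}A\,R\cap R$ with $A$ a product of several $U_j^{\pm q}$; disjointness of the generator translates says nothing about such conjugated, longer words. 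Even your auxiliary claim that the vertical strips $V_i=U_i^{-q}\tilde f^q(R)\cap R$ are disjoint already requires $U_i^{q}U_j^{-q}R\cap R=\emptyset$, which is not the same condition as $U_j^{-q}U_i^{q}R\cap R=\emptyset$.

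What is actually needed (and what the paper uses) is the stronger choice of $q$: $TR\cap R=\emptyset$ for \emph{every} nontrivial element $T$ of the group generated by $U_1^q,\dots,U_k^q$. This does hold for $q$ large: any nontrivial reduced word in $U_1^{\pm q},\dots,U_k^{\pm q}$ has word length at least $q$ in the letters $U_1,\dots,U_k$ (no cancellation occurs between consecutive blocks in a free group), so by the \v{S}var\v{c}--Milnor lemma (Lemma~\ref{svarcmilnor}) and proper discontinuity of the action, only finitely many deck transformations fail to move the compact rectangle $R$ off itself, and none of them can be such a word once $q$ is large. With this condition, each point of $\tilde Y$ sits in a unique translate $TR$, $T\in\langle U_1^q,\dots,U_k^q\rangle$, at each time $qi$, the itinerary map $h_1$ is well defined, and your argument goes through exactly as in the paper; without it, the separation claim as written fails, so you should replace your disjointness statement by this one before the rest of the construction (which is otherwise correct) can be carried out.
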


Similar properties hold for $f$ instead of $f^q$: we get some classical consequences of the semi-conjugacy to a shift without the semi-conjugacy property itself.

\begin{proposition}\label{PropConjug2}
Suppose that $f$ has a rotational horseshoe with deck transformations $U_1,\dots,U_k$. Then 
\begin{itemize}
\item For any word $T\in \langle U_1,\dots,U_k\rangle_+$ of length $q$, there exists $\tilde x\in \tilde S$ such that $\tilde f^q(\tilde x) = T\tilde x$ (in other words, $x$ is a $q$-periodic point associated to the deck transformation $T$);
\item There exists $D>0$ such that, for any word $(w_i)_i\in \{1,\dots,k\}^\Z$, there exists $\tilde x \in \tilde S$ such that, for any $i\ge 0$, 
\[d\big(\tilde f^i(\tilde x),U_{w_i}\dots U_{w_0}\tilde x\big) \le D, \qquad
d\big(\tilde f^{-i}(\tilde x),U_{w_{-i}}^{-1}\dots U_{w_{-1}}^{-1} \tilde x\big) \le D.\]
\item if $U_1,\dots,U_k$ form a free group, then the topological entropy of $f$ is bigger than $\log k$.
\end{itemize}
\end{proposition}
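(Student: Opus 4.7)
All three conclusions follow from iterating the Markov property $\tilde f(R)\cap U_i R$ using the fact that $\tilde f$ commutes with every deck transformation. First, for (1), given $T=U_{i_1}\cdots U_{i_q}$, I would set $A_j=U_{i_1}\cdots U_{i_j}R$ and note that $\tilde f(A_{j-1})\cap A_j = U_{i_1}\cdots U_{i_{j-1}}\bigl(\tilde f(R)\cap U_{i_j}R\bigr)$ is Markovian (the Markov property is preserved by isometries). Iterating Proposition~\ref{LemChain} along the chain $A_0,A_1,\dots,A_q$ produces a Markov intersection $\tilde f^q(R)\cap TR$, and pulling back by the isometry $T^{-1}$ shows that $(T^{-1}\tilde f^q)(R)\cap R$ is Markovian; Proposition~\ref{LemPointFixe} applied to the homeomorphism $T^{-1}\tilde f^q$ then yields a fixed point $\tilde x\in R$, equivalently $\tilde f^q(\tilde x)=T\tilde x$.

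For (2), the same chaining applied to a finite block $(w_{-m},\dots,w_m)$ of the coding produces, for every $m\ge 1$, a point $\tilde x_m\in R$ whose orbit under $\tilde f$ visits $U_{w_0}\cdots U_{w_{i-1}}R$ at time $i$ for $0\le i\le m$ (and the corresponding inverse translates at negative times). Compactness of $R$ gives a limit point $\tilde x$, and continuity of $\tilde f$ ensures that its entire bi-infinite orbit obeys the coding. Since deck transformations are isometries, one then has $d(\tilde f^i(\tilde x),U_{w_0}\cdots U_{w_{i-1}}\tilde x)\le \diam(R)$ for every $i\ge 0$ and symmetrically for $i\le 0$, so $D=\diam(R)$ works (the index convention in the statement differs from the natural one by a shift of one, which I would absorb into $D$).

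For (3), the free group hypothesis allows me to invoke Proposition~\ref{PropConjug}: there is a surjective continuous map $h_1:\tilde Y\to\{1,\dots,k\}^\Z$ semi-conjugating the homeomorphism $\tilde g$ of the compact set $\tilde Y$ to the shift $\sigma$. Since topological entropy does not decrease under factor maps, $h_{\mathrm{top}}(\tilde g)\ge h_{\mathrm{top}}(\sigma)=\log k$, and Proposition~\ref{entropycovering} applied to the covering projection $\tilde Y\to Y$ (a local isometry between compact spaces) identifies $h_{\mathrm{top}}(\tilde g)$ with $h_{\mathrm{top}}(f^q|_Y)$, yielding the claimed lower bound on $h_{\mathrm{top}}(f)$ after absorbing the factor $q$.

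The main obstacle is precisely this last step. The semi-conjugacy furnished by Proposition~\ref{PropConjug} involves $f^q$, not $f$, so one obtains directly only $h_{\mathrm{top}}(f)\ge(\log k)/q$; to recover the claimed $\log k$ I would either refine the rectangle construction so that $q=1$ can be taken, or build $(n,\varepsilon)$-separated sets of cardinality $k^n$ directly in $S$. The second approach is delicate because two points with distinct codings lie in different translates $UR$ and $U'R$ of $R$ in $\tilde S$ but can project to arbitrarily close points of $S$; uniform separation would require choosing $R$ so that its projection is injective and using the free-group assumption to guarantee that the relevant translates $U_{w_0}\cdots U_{w_{i-1}}R$ remain pairwise well-separated in a uniform way.
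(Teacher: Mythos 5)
Your treatment of the first two items is essentially the paper's: point (1) is exactly the chaining of Proposition~\ref{LemChain} followed by Proposition~\ref{LemPointFixe} applied to $T^{-1}\tilde f^q$, and point (2) is the same nested-compact-sets argument (the paper intersects $R$ with $\tilde f^{-i}(U_{w_{i-1}}\cdots U_{w_0}R)$ and $\tilde f^{i}(U_{w_{-i}}^{-1}\cdots U_{w_{-1}}^{-1}R)$ and takes a point in the nonempty intersection, with $D=\diam(R)$), so those parts are fine.

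For point (3) there is a genuine gap, which you partly acknowledge: the semi-conjugacy of Proposition~\ref{PropConjug} concerns $f^q$ and only yields $h_{top}(f)\ge(\log k)/q$, and neither of your proposed fixes works as stated. You cannot ``refine the rectangle construction so that $q=1$'': $R$ is given by the horseshoe hypothesis, and the only obstruction to taking $q=1$ is that short nontrivial words $U$ may satisfy $UR\cap R\neq\emptyset$; likewise you cannot ``choose $R$ so that its projection is injective.'' The paper avoids Proposition~\ref{PropConjug} entirely here and uses two ideas missing from your proposal. First, a padding trick: since the group is free, $\pi_1(S)$ acts properly discontinuously and $R$ is compact, there is $N_0$ with $TR\cap R=\emptyset$ for every nontrivial word of length $>N_0$; restricting to codings whose first $N_0$ symbols are forced to be $1$, the forward sets $\overline R^n_{(w_i)}=R\cap\bigcap_{0\le i<n}\tilde f^{-i}(U_{w_{i-1}}\cdots U_{w_0}R)$ attached to distinct codings are pairwise disjoint in $\tilde S$, and one only loses a factor $\frac{n-N_0}{n}$. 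Second, the projection problem you identify (disjoint translates in $\tilde S$ projecting to nearby points of $S$) is solved by residual finiteness of surface groups \cite{MR295352}: there is a \emph{finite} cover $\hat S\to S$ on which the given $R$ projects injectively, so the finitely many disjoint compacta $\overline R^n_{(w_i)}\subset R$ project to disjoint compacta in $\hat S$, giving a uniform $\varepsilon>0$ and $(\ell n,\varepsilon)$-separated sets of cardinality $k^{\ell(n-N_0)}$ for the lifted map $\hat f$; hence $h_{top}(\hat f)\ge\frac{n-N_0}{n}\log k\to\log k$, and Proposition~\ref{entropycovering} (applied to the finite covering, a local isometry between compact surfaces) transfers this to $h_{top}(f)=h_{top}(\hat f)\ge\log k$. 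Without the padding and the finite-cover step your separation argument does not go through, so as written the third item is not proved.
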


\begin{proof}[Proof of Proposition~\ref{PropConjug}]
We use notation from Definition~\ref{DefRotHorse}.

As $R$ is compact, as the group generated by $U_1,\dots,U_k$ is free, and as $\pi_1(S)$ acts properly discontinuously on $\tilde S$, there exists $q\in\N^*$ such that for any nontrivial word $T$ in $U_1^q,\dots,U_k^q$, one has $T R\cap R = \emptyset$.

For $(w_i)\in\{1,\dots,k\}^{\Z}$, let us define 
\[R_{(w_i)}^n = R \cap \bigcap_{0\le i < n}\left( \tilde{f}^{-qi}(U_{w_{i-1}}^{q}\dots U_{w_0}^{q} R) \, \cap \, \tilde{f}^{qi}(U_{w_{-i}}^{-q}\dots U_{w_{-1}}^{-q} R)\right)\]
and 
\[\tilde Y = \bigcap_{n \geq 0}\  \bigcup_{(w_i)\in\{1,\dots,k\}^\Z} R_{(w_i)}^n.\]
We will denote by $Y$ the projection of $\tilde Y$ on $S$. Note that $\tilde Y$ is a decreasing intersection of compact subsets of $R$, so it is compact.
 
Remark that, if $(w_i)\in\{1,\dots,k\}^{\Z}$ and $  \tilde{x}\in \bigcap_{n\ge 0} R_{(w_i)}^n$, then (because of the definition of $q$) for any $i\in\Z$ there is a unique deck transformation $T_i\in\langle U_1^q,\dots,U_k^q\rangle$ such that $\tilde{f}^{qi}(\tilde{x})\in T_i R$. By the very definition of $R_{(w_i)}^n$, one has $\tilde{f}^{q(i+1)}(x) \in U_{w_i}^qT_i R$. Then $T_{i+1}=U_{w_i}^qT_i$ and there exists a unique sequence $(w_i)\in\{1,\dots,k\}^{\Z}$ such that $ \tilde x\in \bigcap_{n\ge 0} R_{(w_i)}^n$.

Moreover, the previous remark also implies the following equality:
\[\wt Y \overset{\text{def.}}{=} \bigcap_{n\ge 0}\  \bigcup_{(w_i)\in\{1,\dots,k\}^\Z} R_{(w_i)}^n =  \bigcup_{(w_i)\in\{1,\dots,k\}^\Z}\ \bigcap_{n\ge 0} R_{(w_i)}^n.\]
Indeed, if a point $\tilde{x}$ belongs to the left-hand side set, then, for any $i \in \Z$, there exists a unique $T_i$ such that $\tilde{f}^{qi}(\tilde{x}) \in T_i R$ and a unique $w_i$ such that $T_{i+1}=U_{w_i}^q T_i$. Hence the point $\tilde{x}$ belongs to $\bigcap_{n\ge 0} R_{(w_i)}^n$. The other inclusion is trivial.

Then, for any $\tilde x\in\wt Y$, there exists a unique sequence $(w_i)\in\{1,\dots,k\}^{\Z}$ such that $ \tilde x\in \bigcap_{n\ge 0} R_{(w_i)}^n$. This allows to talk about the \emph{trajectory} of a point $\tilde x\in \tilde Y$, which we define as the unique sequence $(w_i)\in\{1,\dots,k\}^{\Z}$ such that $\tilde x\in \bigcap_{n\ge 0} R_{(w_i)}^n$. We define
\begin{align*}
h_1 : \tilde Y & \longrightarrow \{1,\dots,k\}^{\Z}\\
x & \longmapsto (w_i).
\end{align*}
as the map which, to any point of $\tilde Y$, associates its trajectory.

Repeated applications of Proposition \ref{LemChain} imply that for any $(w_i)\in\{1,\dots,k\}^{\Z}$, the sets $R_{(w_i)}^n$ are nonempty (and compact), hence $\bigcap_{n\ge 0} R_{(w_i)}^n$ is also a nonempty compact set. This shows that the map $h_1$ is surjective.

We define the map $\tilde g$ by $\tilde g|_{\tilde f^{-q}(U_{w_i}^q R)\cap R} = U_{w_i}^{-q}\circ \tilde f^q$. As the sets $\tilde f^{-q}(U_{w_i}^q R)\cap R$ are all at positive distance one to the others, the map $\tilde g$ defines a homeomorphism of $\tilde Y$.
By the very construction of $h_1$, the diagram of Proposition \ref{PropConjug} commutes.

Fix $(w_{i})_{i \in \Z} \in \{1,\dots,k\}^{\Z}$. Let $n \geq 2$. Observe that $R^{n}_{(w_i)}$  is a neighbourhood of $R^{n+1}_{(w_i)}$ so that $R^{n}_{(w_i)} \cap \tilde{Y}$ is an open subset of $\tilde{Y}$ and projects to an open subset of $Y$. Observe that the projection on the coordinates between $-n+2$ and $n-1$ of the map $h_1$ is constant on this open subset so that the map $h_1$ is continuous.

Finally, Proposition~\ref{LemPointFixe} implies that in the preimage by $h_1$ of any periodic word, there is a periodic point of $f$ of the same period. This finishes the proof.
\end{proof}

\begin{proof}[Proof of Proposition~\ref{PropConjug2}]
The first point of the proposition is a simple application of Propositions~\ref{LemChain} and \ref{LemPointFixe}.
\medskip

For the second point, we can use again the strategy of the proof of Proposition~\ref{PropConjug2}, by considering the compact set
\[R_{(w_i)}^n = R \cap \bigcap_{0\le i < n}\left( \tilde{f}^{-i}(U_{w_{i-1}}\dots U_{w_0} R) \, \cap \, \tilde{f}^{i}(U_{w_{-i}}^{-1}\dots U_{w_{-1}}^{-1} R)\right).\]
One gets easily as a consequence of Proposition~\ref{LemChain} that the set $\bigcap_{n \geq 0} R_{(w_i)}^n$ is nonempty, and any element of it can be used to get the desired conclusion (taking $D$ as the diameter of $R$ for instance).
\medskip

Concerning the entropy, let us change a bit the definition of $R_{(w_i)}^n$ to consider only positive times:
\[\overline R_{(w_i)}^n = R \cap \bigcap_{0\le i < n} \tilde{f}^{-i}(U_{w_{i-1}}\dots U_{w_0} R).\]
Note that, as $R$ is compact, as the group generated by $U_1,\dots,U_k$ is free, and as $\pi_1(S)$ acts properly discontinuously on $\tilde S$, there exists $N_0\in\N$ such that for any nontrivial word $T$ in $U_1,\dots,U_k$ of length bigger than $N_0$, one has $T R\cap R = \emptyset$.

This implies that, for any words $(w_i)_{0\le i\le n},(w'_i)_{0\le i\le n} \in \{1,\dots,k\}^{n+1}$, if there exists $N_0\le i_0\le n$ such that $w_{i_0}\neq w'_{i_0}$, then $U_{w_n}\dots U_{w_0}R \, \cap \, U_{w'_n}\dots U_{w'_0}R = \emptyset$, and so $\overline R_{(w_i)}^n \cap \overline R_{(w'_i)}^n = \emptyset$. 

By \cite{MR295352}, there exists a finite cover $\hat S$ of $S$ such that $R$ projects injectively on\footnote{A group is \emph{residually finite} if, for any finite subset $F\subset G$, there exists a finite quotient $G/H$ in which $F$ projects injectively.  By \cite{MR295352}, any surface group is residually finite, and we can apply this property to the finite set $F$ of deck transformations $T$ such that $TR\cap R\neq\emptyset$. The finite index resulting subgroup $H$ corresponds to a finite cover of the surface in which $R$ projects injectively.} $\hat S$. We denote by $\hat f$ the map induced by $\tilde f$ on $\hat S$. Fix $n >N_0$. Consider $\varepsilon$ the minimum distance between the projections of the compact sets $\overline R_{(w_i)}^n$ on $\hat S$, with $(w_i)\in \{1\}^{N_0}\times \{1,\dots,k\}^{n-N_0}$. By the previous paragraph, these sets are pairwise disjoint in $\tilde S$, and as they are subsets of $R$, they project injectively on $\hat S$: we have $\varep>0$.

Now, take $\ell\ge 0$ and consider the family of words $(w_i)\in (\{1\}^{N_0}\times \{1,\dots,k\}^{n-N_0})^\ell$. Taking one point in each of these sets, we get a subset of $\hat S$ of cardinality $k^{\ell(n-N_0)}$ which is $(\ell n,\varepsilon)$ separated. This implies that (taking $\ell\to+\infty$)
\[h_{top}(\hat f) \ge \frac{n-N_0}{n} \log	 k,\]
and so, taking $n\to+\infty$, that $h_{top}(\hat f) \ge \log k$.

We get the conclusion of the proposition by using Proposition~\ref{entropycovering} which tells us that $h_{top}(f)  = h_{top}(\hat f)$.
\end{proof}

\subsection{Geometric vs. $\F$-transverse intersections}\label{SubSecGeomtrans}

In this subsection, we prove that an $\F$-transverse loop on a surface that has a geometric auto-intersection (in the geometric meaning of Definition~\ref{DefTransverseInter}) must have an $\F$-transverse auto-intersection, associated to a deck transformation that projects to the deck transformation of the geometric auto-intersection.

In the sequel, we will denote paths with marked points to denote their lifts to the universal cover starting on the common marked point. For instance, $\alpha\cdot$ and $\beta\cdot$ denote some lifts of respectively $\alpha$ and $\beta$ whose right ends coincide.

We fix a surface $S$ (not necessarily closed) of negative Euler characteristic, a singular foliation $\F$ of $S$, and an $\F$-transverse loop $\alpha : \R\to \dom \F$ (which means that for every $t$, one has $\alpha(t) = \alpha(t+1)$).
We suppose that $\alpha$ auto-intersects geometrically (see Definition~\ref{DefTransverseInter}) at $\alpha(t_1) = \alpha(t_2)$, with $t_1<t_2<t_1+1$. We let $\alpha_1=\alpha|_{[t_1,t_2]}$ and $\alpha_2= \alpha|_{[t_2,t_1+1]}$. Let $\wt\alpha$ be a lift of $\alpha$ to $\wt S$ and $\wt \alpha_1$ be the lift of $\alpha_1$ which starts from the same point as $\wt \alpha$.  Also, let $T$ and $T_1$ be the deck transformations of the universal cover $\wt S\to S$ respectively associated to $\alpha|_{[t_1,t_2]}$ and $\alpha$ and which respectively preserve $\wt \alpha$ and $\wt \alpha_{1}$. Let $T_2$ be the deck transformation such that $T=T_2T_1$. Finally, let $\wt{\F}$ be the lift of $\F$ to $\wt S$.

\begin{proposition}\label{GeomImpliqFeuill}
If $\alpha$ auto-intersects geometrically, then there exists $u_1$ and $u_2$ such that the paths $T_1 \wt \alpha|_{[u_2,u_2+1]}$ and $\wt \alpha|_{[u_1,u_1+1]}$ intersect $\wt \F$-transversally at $T_1\wt\alpha(t_1) = \wt\alpha(t_2)$.

In particular, the paths $\alpha_1\alpha_2\cdot\alpha_1\alpha_2$ and $\alpha_2\alpha_1\cdot\alpha_2\alpha_1$ intersect $\F$-transversally at the marked point. More generally, for $i,j,k,\ell\ge 1$, the paths $\alpha_1\alpha_2^i\cdot\alpha_1^k\alpha_2$ and $\alpha_2\alpha_1^j\cdot\alpha_2^\ell\alpha_1$ intersect $\F$-transversally at the marked point. 
\end{proposition}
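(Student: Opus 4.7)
The proof naturally splits into two parts: establishing the main $\wt\F$-transverse intersection of $T_1\wt\alpha$ and $\wt\alpha$ at $q := T_1\wt\alpha(t_1) = \wt\alpha(t_2)$, and then deducing the statements about the concatenations.

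For the first part, the plan is to work in a foliation chart around $q$ in which $\wt\F$ appears as a family of parallel oriented vertical leaves and $\phi := \phi_q$ is the vertical leaf through $q$. Since $\wt\alpha$ and $T_1\wt\alpha$ are both $\wt\F$-transverse and pass through $q$, each locally enters from $L(\phi)$ and exits into $R(\phi)$. The geometric transversality of the crossing at $q$ (lifted from the auto-intersection of $\alpha$) forces the four local arcs --- the left branches $\wt\alpha^-, T_1\wt\alpha^-$ and the right branches $\wt\alpha^+, T_1\wt\alpha^+$ --- to appear in an interleaved cyclic order on a small circle around $q$, meaning the two branches of each path alternate with those of the other. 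I then select $u_1 \in (t_2-1, t_2)$ and $u_2 \in (t_1-1, t_1)$ so that the subpaths $\wt\alpha|_{[u_1,u_1+1]}$ and $T_1\wt\alpha|_{[u_2,u_2+1]}$ each span nearly a full period of $\alpha$ on either side of $q$, and verify the conditions of Definition~\ref{DefInterTrans}: the leaves $\phi_{\wt\alpha(u_1)}$, $\phi_{T_1\wt\alpha(u_2)}$, and $\phi$ must be pairwise disjoint with none separating the other two, and the \textquotedblleft above\textquotedblright\ relation between $\phi_{\wt\alpha(u_1)}$ and $\phi_{T_1\wt\alpha(u_2)}$ relative to $\phi$ must reverse at the other two endpoints $u_1+1, u_2+1$.

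The main obstacle is verifying the \textquotedblleft above\textquotedblright\ conditions from Definition~\ref{DefDessus}, because in any local foliation chart, nearby leaves on the same side of $\phi$ are parallel and nested, which fails the \textquotedblleft none separates the other two\textquotedblright\ requirement. Overcoming this requires using the global topology of $\wt\dom\F$. By pushing $u_i$ close to $t_i-1$, the endpoint-leaves $\phi_{\wt\alpha(u_1)}$ and $\phi_{T_1\wt\alpha(u_2)}$ are carried into distinct regions of $\wt\dom\F$. Both $\wt\alpha$ and $T_1\wt\alpha$ are $\wt\F$-transverse lines, hence intersect each leaf at most once; a leaf which separated $\phi_{\wt\alpha(u_1)}$ from both $\phi_{T_1\wt\alpha(u_2)}$ and $\phi$ would have to be crossed in a definite way by both $\wt\alpha$ and $T_1\wt\alpha$ near $q$, and the interleaved cyclic order of the four arcs then contradicts this via a Jordan-curve argument in $\wt\dom\F$. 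The same interleaving determines the direction of the \textquotedblleft above\textquotedblright\ relation on each side of $\phi$, which is necessarily reversed from the left to the right, so that Definition~\ref{DefInterTrans} is satisfied.

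For the second part, the two concatenations $\alpha_1\alpha_2^i\cdot\alpha_1^k\alpha_2$ and $\alpha_2\alpha_1^j\cdot\alpha_2^\ell\alpha_1$ are lifted so that their marked points both coincide with $q$ in $\wt\dom\F$. Tracking the lifts using the identities $T = T_2 T_1$ and $T_1\wt\alpha(t_1)=\wt\alpha(t_2)$, one checks that regardless of $i,j,k,\ell \geq 1$, the last lifted copy of $\alpha_2$ before the marked point in the first concatenation runs from $T_2^{-1}q$ to $q$, and the first lifted copy of $\alpha_1$ after the marked point runs from $q$ to $T_1 q$; together, these two consecutive lifts form exactly $T_1\wt\alpha|_{[t_2-1, t_2]}$, a full period of $T_1\wt\alpha$ with $q$ in its interior. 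Symmetrically, the two lifted segments adjacent to the marked point in the second concatenation form $\wt\alpha|_{[t_1, t_1+1]}$, one period of $\wt\alpha$. These are precisely the subpaths appearing in the first part with the choice $u_1=t_1$ and $u_2=t_2-1$ (both in their required intervals since $t_1<t_2<t_1+1$), so the $\wt\F$-transverse intersection at $q$ transfers directly to give the $\F$-transverse intersection of the two concatenations at their marked point.
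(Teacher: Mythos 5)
There is a genuine gap, in fact two. First, the key step of your first part is unjustified: a \emph{geometric} transverse auto-intersection in the sense of Definition~\ref{DefTransverseInter} is a statement about the geodesics associated to $\wt\alpha$ and $T_1\wt\alpha$ (equivalently about the deck transformations $T_1$ and $T$), not about the local behaviour of the actual transverse paths at the common point $q=\wt\alpha(t_2)=T_1\wt\alpha(t_1)$. Nothing in the hypothesis forces the four local branches at $q$ to be interleaved in a foliation chart; the two paths may pass through $q$ with locally nested branches, or meet again nearby, and even a genuine topological crossing at $q$ would not yield the above/below conditions of Definition~\ref{DefInterTrans}, which concern leaves far from $q$. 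The paper's proof is therefore global: it passes to the covering associated to $(\alpha_1,\alpha_2,x_0)$, a three-punctured sphere by Proposition~\ref{coveringmap2}, shows that up to $\F$-equivalence the transverse loop is a word in two disjoint simple transverse loops $\beta_A,\beta_B$ whose bands of leaves overlap along an interval of leaves, and then compares the two periodic words attached to $\wt\alpha$ and $T_1\wt\alpha$, locating the first and last letters where they differ; the free (binary-tree) structure of the lifts of $\beta_A,\beta_B$ is what produces the above/below relations and what pins the intersection inside windows of length at most one period. Your appeal to ``a Jordan-curve argument in $\wt\dom\F$'' is too vague to replace this, and I do not see how to extract the required leafwise conditions from purely local data at $q$.

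Second, your deduction of the general case $i,j,k,\ell\ge 1$ is incorrect. You take $u_1=t_1$ and $u_2=t_2-1$, i.e.\ you assume that $\alpha_1\cdot\alpha_2$ and $\alpha_2\cdot\alpha_1$ (half a period on each side of the marked point) already intersect $\F$-transversally; but the remarks following the proposition state that this stronger conclusion fails in general (it holds only when neither of $T_1,T_2$ is a prefix/suffix of the other, cf.\ the example $w_1=1221$, $w_2=21\,1221$), and the first part only provides \emph{some} one-period windows, not these particular ones. Since for $i,k\ge 2$ the concatenation $\alpha_1\alpha_2^i\cdot\alpha_1^k\alpha_2$ agrees with $T_1\wt\alpha$ only along $\alpha_2\cdot\alpha_1$ around the marked point, a transverse intersection located in an arbitrary one-period window of $\wt\alpha$ and $T_1\wt\alpha$ cannot be transferred by subpath containment. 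This is exactly why the paper runs a separate argument for the last statement, comparing the leaves through the left ends of $\alpha_1\cdot$ and $\alpha_2\cdot$ and, in the case where one is equivalent to a suffix of the other, writing $\alpha_2\cdot=\beta_2\alpha_1^p\cdot$ and arguing on that decomposition; your proposal omits this entirely.
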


\begin{remark}
The proof of this proposition shows that if none of the deck transformations $T_1$ and $T_2$ is a prefix/suffix of the other, then the conclusion is stronger: the paths $\alpha_2\cdot\alpha_1$ and $\alpha_1\cdot\alpha_2$ intersect $\F$-transversally at the marked point.
\end{remark}

\begin{remark}
In the end of the proof one has to consider the case where $T_1$ is a suffix of $T_2$, and $T_2$ is a suffix of $T_2T_1$. This case can happen, as can be seen by considering the words $w_1 = 1221$ and $w_2 = 21\,1221$: $w_1$ is a suffix of $w_2$, and $w_2$ is a suffix of $w_2w_1$. This suggests that in general, the conclusion of the proposition cannot be improved.
\end{remark}

\begin{proof}
We denote $\alpha_1 = \alpha|_{[t_1,t_2]}$ and $\alpha_2 = \alpha|_{[t_2,t_1+1]}$. Let $\check{\dom}(\F)$ be the covering of $\dom(\F)$ associated to $(\alpha_1,\alpha_2,x_0)$. By Proposition~\ref{coveringmap2}, the surface $\check{\dom}(\F)$ is homeomorphic to the three punctured sphere $\Sp^2\setminus\{A,B,C\}$. The lifts of $\alpha$, $\alpha_1$ and $\alpha_2$ to $\check{\dom}(\F)$ are respectively denoted by $\check{\alpha}$, $\check{\alpha_1}$ and $\check{\alpha_2}$. We denote $a$ resp. $b$ two simple loops generating $\pi_1(\Sp^2\setminus\{A,B,C\})$, winding once around $A$ and not around $B$ or $C$ (resp. once around $B$ and not around $A$ or $C$).

During the proof we will use the following fact: if $\F$ is a singular foliation on $\Sp^2$, and $\alpha$ is a $\F$-transverse Jordan curve in $\Sp^2$, then each connected component of $\alpha^\complement$ has to contain at least one singularity of the foliation $\F$. 

Let $\beta_1 = \check{\alpha}|_{[s_1,s'_1]}$ be a subpath of $\check{\alpha}$ which is a simple loop. As $\check{\alpha}$ is transverse, $\beta_1$ is essential: indeed, this Jordan curve separates $\Sp^2$ in two connected components, and each of them has to contain a singularity of the lift $\check{\F}$ of $\F$ to $\check{\dom}(\F)\subset \Sp^2$, because $\beta_1$ is transverse; it then suffices to remember that the only singularities of $\check\F$ in $\Sp^2$ are the punctures. Hence, as the only essential simple loops in the three punctured sphere are the ones winding once around one puncture and not around the others, we can suppose (up to a permutation of $A$, $B$ and $C$) that the loop $\beta_1$ is homotopic to $a$.

Consider the loop $\overline{\beta_1} \doteq \check{\alpha}|_{[s'_1,s_1+1]}$. It is not contractible : otherwise, the loop $\check{\alpha}$ would be homotopic to $\beta_1$, which is not possible as $\check{\alpha}$ has a geometric self intersection. Again, let $\beta_2 = \overline{\beta_1}|_{[s_2,s'_2]}$ be a subpath of $\overline{\beta_1}$ which is an essential simple loop. If $\beta_2$ is homotopic to $a$ or $a^{-1}$, we can iterate the process by considering the loop $\overline{\beta_1}|_{[s'_2,s_2+1]}$ or the loop $\overline{\beta_1}|_{[s'_1,s_2]}$: one of them is homotopically non trivial as $\check{\alpha}$ cannot be homotopic to a power of $\beta_1$, by definition of $\check{\dom}(\F)$)\dots{} Eventually, we find an essential simple loop $\beta_n$, which is a concatenation of pieces of the path $\check{\alpha}$, which is neither homotopic to $a$ nor to $a^{-1}$. As before, we can suppose that this loop $\beta_n$ is homotopic to $b$ (changing $b$ to $b^{-1}$ if necessary).

From now on we will denote $\beta_A = \beta_1$ and $\beta_B = \beta_n$. Let $\Phi_A$ be the union of the leaves met by $\beta_A$, and $\Phi_B$ the union of the leaves met by $\beta_B$. These are open annuli, $\Phi_A$ separating $A$ from $B$ and $C$, and $\Phi_B$ separating $B$ from $A$ and $C$. Remark that the complement of $\Phi_A$ (resp. $\Phi_B$) in $\Sp^2$ is made of two connected components, that are closed.

\begin{claim}\label{ClaimDisjoint}
The loops $\beta_A$ and $\beta_B$ are $\check{\F}$-equivalent to disjoint loops.
\end{claim}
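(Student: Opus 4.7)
The strategy is to deform $\beta_A$ and $\beta_B$ within the open annuli $\Phi_A$ and $\Phi_B$ to loops confined in disjoint neighborhoods of the punctures $A$ and $B$ respectively, and argue that such deformations preserve the $\check{\F}$-equivalence class.

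First, I would analyse the structure of $\Phi_A$ (the argument for $\Phi_B$ being symmetric). Since $\beta_A$ is a simple $\check{\F}$-transverse loop in the three-punctured sphere separating $A$ from $\{B,C\}$, the flow-box theorem for transverse simple loops implies that $\Phi_A$ is an open annulus, and there is a homeomorphism $\Phi_A \simeq S^1 \times (0,1)$ sending $\beta_A$ to $S^1 \times \{1/2\}$ and sending each leaf of $\check{\F}|_{\Phi_A}$ to a fiber $\{x\}\times (0,1)$, oriented so that $t \to 0$ corresponds to the $A$-side boundary component of $\Phi_A$ in $\Sp^2$ (the side containing $A$ in the complement) and $t \to 1$ to the $\{B,C\}$-side. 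Any loop of the form $S^1 \times \{t\}$ is a simple $\check{\F}$-transverse loop crossing the same leaves in the same order as $\beta_A$, and is therefore $\check{\F}$-equivalent to it.

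Second, I would argue that $\Phi_A$ accumulates on $A$, so the loops $S^1 \times \{t\}$ can be made arbitrarily close to $A$ by taking $t$ small enough. Indeed, from the separation properties of $\beta_A$ in $\Sp^2$ (separating $A$ from $B,C$), each leaf of $\Phi_A$ is an arc in $\Sp^2$ whose two ends, lying on opposite boundary components of the annulus $\overline{\Phi_A}$, must have one endpoint at the singularity $A$ of $\check{\F}$ (as $A$ is the only singularity on the $A$-side) and the other in $\{B,C\}$. Thus $A \in \overline{\Phi_A}$, and by the flow-box parametrisation one sees that for any neighbourhood $U_A$ of $A$ there exists $t_A\in (0,1/2)$ such that $S^1 \times (0,t_A] \subset U_A$. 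The same argument applied to $\Phi_B$ gives, for any neighbourhood $U_B$ of $B$, a loop $\beta'_B$ contained in $U_B$ and $\check{\F}$-equivalent to $\beta_B$.

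Finally, choosing $U_A, U_B$ to be disjoint small open disk neighbourhoods of $A$ and $B$ in $\Sp^2$, the resulting loops $\beta'_A \subset U_A$ and $\beta'_B \subset U_B$ are disjoint, and each is $\check{\F}$-equivalent to the corresponding original loop by the first step. The main obstacle I anticipate is rigorously establishing that every leaf in $\Phi_A$ has an endpoint at $A$ (and not, e.g., spiraling on a more complicated minimal set in the boundary of $\Phi_A$); this relies on the fact that $\check{\F}$ is a singular foliation of $\Sp^2$ whose only singularities are $A$, $B$, $C$, so any continuum in $\overline{\Phi_A}\setminus \Phi_A$ on the $A$-side is a union of closures of leaves and of the single singularity $A$, and the transverse simplicity of $\beta_A$ prevents any such leaf in the boundary from separating $A$ from $\beta_A$ inside the closed disk on the $A$-side.
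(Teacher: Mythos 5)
Your reduction to a product structure on $\Phi_A$ is fine (this is how the paper also uses $\Phi_A$, with leaves indexed by the points of $\beta_A$), but the second step contains a genuine gap: it is not true in general that every leaf of $\Phi_A$ has an end at the puncture $A$, nor that the loops $S^1\times\{t\}$ enter an arbitrarily small neighbourhood of $A$ as $t\to 0$. The only singularities of $\check{\F}$ are the punctures, but the $A$-side limit set of a half-leaf need not be a singularity: it can be a closed leaf of $\check{\F}$ encircling $A$ (or a leaf homoclinic to $A$). In that situation your argument breaks down irreparably: a positively transverse loop in $\Sp^2$ can never cross a closed leaf $L$ (its algebraic intersection number with $L$ is $0$, while transversality forces all crossings to have the same sign), so $\beta_A$ and every leaf it meets lie entirely on one side of $L$; if $\beta_A$ lies outside the disk bounded by $L$ containing $A$, then $\Phi_A$ accumulates on $L$, not on $A$, and no loop $\check{\F}$-equivalent to $\beta_A$ can be confined to a small neighbourhood of $A$. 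Your own "anticipated obstacle" is thus a real one, and the sentence you offer to dispose of it ("transverse simplicity of $\beta_A$ prevents any such leaf in the boundary from separating $A$ from $\beta_A$") is exactly what fails: a limit cycle around $A$ does separate $A$ from $\beta_A$, and nothing forbids it. Note that the paper's later arguments (e.g.\ the proof of the claim that $\check\alpha$ stays in $\Phi_A\cup\Phi_B$) explicitly allow closed leaves and homoclinic leaves, so you cannot assume them away here.

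The paper's proof sidesteps this entirely: it never pushes the loops toward the punctures. It shows that the portion of $\beta_A$ entering the $B$-side of $\beta_B$ must stay inside $\Phi_B$ (a complementary region with no singularity forces leaves entering through $\beta_A$ to exit through $\beta_B$), hence $\beta_A$ keeps a positive distance from the connected component of $\Phi_B^\complement$ containing $B$; flowing $\beta_B$ along the leaves of $\Phi_B$ towards the $B$-end far enough then yields an equivalent loop disjoint from $\beta_A$. If you want to salvage your symmetric "push both loops to opposite ends" picture, you would still need exactly this kind of quantitative separation statement (that $\beta_A$ does not travel arbitrarily far toward the $B$-end of $\Phi_B$, or vice versa), i.e.\ you would be reproving the paper's argument rather than avoiding it.
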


\begin{proof}
Replacing $\beta_A$ and $\beta_B$ by $\check{\F}$-equivalent loops if necessary, we can suppose that the number of intersections between them is finite. 

Suppose that $\beta_A$ and $\beta_B$ are disjoint, otherwise there is nothing to do. the only nontrivial case is when $\beta_A$ meets the connected component $O$ of $\beta_B^\complement$ that contains $B$.

Let $t_1<t_2$ be such that $\beta_A|_{[t_1,t_2]}$ meets $\beta_B$ at its endpoints, and that $\beta_A|_{]t_1,t_2[}$ is included in $O$. Then $\beta_A|_{]t_1,t_2[}$ separates $O$ into two connected components, one of them containing $B$ and the other one, denoted by $O'$, not containing it. Suppose that $O'$ is locally on the right of $\beta_A|_{]t_1,t_2[}$ (the other case being identical). Each leaf meeting $O'$ has to get out of it as $O'$ does not contain any singularity of $\check\F$. In particular, each leaf entering in $O'$ through $\beta_A|_{]t_1,t_2[}$ has to get out of $O'$ by $\beta_B$. This implies that $\beta_A|_{]t_1,t_2[}$ stays in $\Phi_B$, and hence that $\beta_A$ does not meet the connected component of $\Phi_B^\complement$ containing $B$.

By local compactness, the distance between $\beta_A$ and the connected component of $\Phi_B^\complement$ containing $B$ is positive. Remark that the leaves of resp. $\Phi_A$ and $\Phi_B$ are naturally indexed by the transverse loops $\beta_A$ and $\beta_B$, and in particular are endowed with a natural cyclic order. By considering a continuous parametrization of the leaves of $\Phi_B$ by $\Sp^1\times \R$ ($\Sp^1$ corresponding to the point of $\beta_B$ met by the leaf and $\R$ to the parametrization of the leaf itself), by flowing $\beta_B$ along the leaves of $\Phi_B$ in the direction of $B$, one can easily find a loop $\F$-equivalent to $\beta_B$ and which is disjoint from $\beta_A$.
\end{proof}

\begin{claim}\label{ClaimStay}
The loop $\check{\alpha}$ stays in $\Phi_A \cup\Phi_B$. In particular, $\Phi_A \cap\Phi_B\neq\emptyset$.
\end{claim}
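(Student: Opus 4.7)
My strategy is to argue by contradiction. Suppose $\check{\alpha}(t_0) \notin \Phi_A \cup \Phi_B$ for some $t_0$, and let $\phi$ be the leaf of $\check{\F}$ through $\check{\alpha}(t_0)$. By the definitions of $\Phi_A$ and $\Phi_B$, the leaf $\phi$ is disjoint from $\beta_A \cup \beta_B$. Using Claim~\ref{ClaimDisjoint}, we may replace $\beta_A$ and $\beta_B$ by $\check{\F}$-equivalent disjoint loops, so that $\Sp^2 \setminus (\beta_A \cup \beta_B)$ splits into three connected regions $R_A$, $R_B$, $R_C$ containing respectively $A$, $B$, $C$. Being connected and disjoint from $\beta_A \cup \beta_B$, the leaf $\phi$ lies entirely in one of these regions, say $R_X$ with $X \in \{A,B,C\}$.

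The core of the proof is to show that $\phi$ separates $\check{\dom}(\F)$. Since $\phi$ is properly embedded in $\check{\dom}(\F) \cong \Sp^2 \setminus \{A,B,C\}$, each of its two ends accumulates on $\{A,B,C\}$. But the three regions and singularities are pairwise disjointly configured, so $\overline{R_X} \cap \{A,B,C\} = \{X\}$, and both ends of $\phi$ must converge to the single point $X$. Consequently $\overline{\phi} = \phi \cup \{X\}$ is a topologically embedded circle in $\Sp^2$, which splits $\Sp^2$ into two open disks by the Jordan curve theorem. Each of these disks remains connected after removing the at most two remaining singularities from $\{A,B,C\} \setminus \{X\}$, so $\phi$ separates $\check{\dom}(\F)$ into two open connected components $L$ and $R$.

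The contradiction then comes from positive transversality. Since $\check{\dom}(\F)$ is orientable and $\check{\F}$ is oriented, the left and right of the oriented leaf $\phi$ are globally well-defined and coincide with $L$ and $R$. Positive transversality forces every crossing of $\check{\alpha}$ with $\phi$ to go from $L$ to $R$. But $\check{\alpha}$ is a loop, and for any loop in an orientable surface its algebraic intersection with a separating embedded curve vanishes: the number of $L \to R$ crossings must equal the number of $R \to L$ crossings. Since all crossings are of the first type, there can be none at all, contradicting $\check{\alpha}(t_0) \in \phi$. The ``in particular'' statement then follows at once: the connected loop $\check{\alpha}$, now contained in $\Phi_A \cup \Phi_B$, meets both open sets $\Phi_A \supset \beta_A$ and $\Phi_B \supset \beta_B$, so $\Phi_A$ and $\Phi_B$ cannot be disjoint.

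The main obstacle I expect is the separation claim in the second paragraph. The two remaining singularities may lie either both in the same component of $\Sp^2 \setminus \overline{\phi}$, or be split one in each component, yielding two sub-configurations to check in parallel; in both sub-cases one must verify that after deleting a finite set of interior points the open disks stay connected, so that $\phi$ truly separates $\check{\dom}(\F)$. This step is elementary but must be laid out carefully for the positive transversality argument to bite.
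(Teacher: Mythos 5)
Your concluding argument (an embedded circle through at most one puncture separates the three\,-punctured sphere, positive transversality forces all crossings of $\check{\alpha}$ with $\phi$ to have the same sign, and a loop has zero algebraic intersection with a null-homologous curve) is fine, and it is essentially the contradiction the paper also uses. The genuine gap is earlier: you assert that $\phi$ is \emph{properly embedded} in $\check{\dom}(\F)\cong\Sp^2\setminus\{A,B,C\}$, so that each end accumulates only on $\{A,B,C\}$ and hence converges to a single puncture $X$, giving $\overline{\phi}=\phi\cup\{X\}$ a circle. Leaves of a (nonsingular) foliation of the three-punctured sphere are not properly embedded in general: an end of $\phi$ can spiral onto a closed leaf (a limit cycle), or its limit set can be a graph consisting of a puncture together with leaves homoclinic to that puncture, and $\phi$ itself could even be a closed leaf. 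The Haefliger--Reeb statement that leaves are properly embedded lines holds for foliations of the plane (or of the universal cover), not for $\Sp^2\setminus\{A,B,C\}$, so this step needs a Poincar\'e--Bendixson analysis of the possible limit sets rather than a one-line appeal to properness.

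This is exactly what the paper's proof supplies and what your proposal is missing: it splits into the cases where $\phi$ is a circle, where both ends contain the (necessarily common) singularity $D$ possibly together with homoclinic leaves --- handled by collapsing the complementary components bounded by those homoclinic leaves so as to reduce to $\overline{\phi}=\phi\cup\{D\}$ --- and where an end of $\phi$ is a closed leaf $L$, in which case one shows the other end is a closed leaf $L'$ whose complement separates $\phi$ from $\Phi_A\cup\Phi_B$, contradicting that the transverse loop $\check{\alpha}$ meets $\phi$ but cannot cross $L'$. To repair your proof you must either justify why no end of $\phi$ can limit onto a closed leaf or onto a homoclinic graph in this particular situation, or treat those configurations separately as the paper does; as written, the limit-cycle configurations are simply not covered by your Jordan-curve argument.
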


\begin{proof}
The second part of the claim follows from the first one as the union of the leaves met by $\check{\alpha}$ is connected.

Suppose for a contradiction that $\check{\alpha}$ does not stay in $\Phi_A \cup\Phi_B$. Let $x$ be a point of $\check{\alpha}$ outside of $\Phi_A \cup\Phi_B$, and $\phi\doteq \phi_x$. Then, by Poincaré-Bendixson theorem, the ends of $\phi$ are either topological circles, or contain singularities of $\check{\F}$. In the circle case, either it contains a singularity, or both connected components of its complement contain a singularity. 

Remark that $\phi$ and its ends are disjoint from $\Phi_A\cup \Phi_B$, and that $\Phi_A$ and $\Phi_B$ separate all singularities of $\check{\F}$.

In the case where $\phi$ is a circle, then it separates $\Sp^2$ into two disjoint connected components and so it prevents transverse trajectories passing through it (e.g. $\check{\alpha}$) to be recurrent, which is a contradiction.

The same argument can be applied when both ends of $\phi$ contain a singularity: in this case, as $\phi$ is contained in a single connected component of $(\Phi_A\cup\Phi_B)^\complement$, this singularity $D\in\{A,B,C\}$ is the same for both ends of $\phi$. Then, the ends of $\phi$ are made of the union of $D$ with possibly leaves of $\check{ \F}$ that are homoclinic to $D$. In the case such leaves exist, they all separate $\Sp^2$ in two connected components, one of which containing the whole loop $\check{\alpha}$. So it does not change anything dynamically to quotient by these connected components, and this crushing allows to reduce to the case where both ends are reduced to $\{D\}$. Replacing the circle by $\phi\cup \{D\}$ in the previous paragraph leads to a contradiction.

Suppose now that $\phi$ is not a circle and that at least one end of it is a closed leaf. Then this leaf $L$ separates $\Sp^2$ into two disjoint connected components. As $\check{ \alpha}$ is a transverse loop, it cannot meet $L$.
As the loop $\check{\alpha}$ meets $\Phi_A$, $\Phi_B$ and $\phi$, one of the connected components of $L^\complement$ contains $\Phi_A$, $\Phi_B$ and $\phi$, and the other one contains a singularity. Observe that the other end of $\phi$ cannot contain a singularity: it is a closed leaf $L'$. Moreover, $\phi$ belongs to the connected component of the complement of $L'$ that does not contain $\Phi_A$ and $\Phi_B$, a contradiction as $\check{ \alpha}$ cannot meet $L'$. 
\end{proof}

Recall that the leaves of resp. $\Phi_A$ and $\Phi_B$ are naturally indexed by the transverse loops $\beta_A$ and $\beta_B$, and in particular are endowed with a natural cyclic order.

\begin{claim}\label{ClaimInterval}
The set of leaves of $\Phi_A\cap\Phi_B$ is an interval of leaves of $\Phi_A$ (resp. $\Phi_B$).
\end{claim}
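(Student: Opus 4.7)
The plan is to identify the leaf space of $\Phi_A$ with a topological circle $S^1_A$ carrying a natural cyclic order (using that $\Phi_A$ is a $\check\F$-saturated annulus and $\beta_A$ is a $\check\F$-transverse essential loop, which we may assume after an $\check\F$-equivalence to be a cross-section of $\Phi_A$). Writing $D_A, D_{BC}$ for the two components of $\beta_A^\complement$ and $D_B, R_C$ for the two components of $D_{BC}\setminus \beta_B$, the set $I_A \subset S^1_A$ of leaves of $\Phi_A$ that also meet $\beta_B$ is open, because $\Phi_B$ is open and $\check\F$-saturated; the claim amounts to showing that $I_A$ is connected.

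I would argue by contradiction: suppose there exist points $x_1, x_3, x_2 \in \beta_A$ in this cyclic order with $\phi_{x_1}, \phi_{x_2} \in \Phi_B$ and $\phi_{x_3} \notin \Phi_B$. Since $\beta_B \subset D_{BC}$, the ``outer'' half-leaves $\phi_{x_1}^+, \phi_{x_2}^+$ issued from $x_1,x_2$ into $D_{BC}$ meet $\beta_B$ at some first hitting points $y_1, y_2$, while $\phi_{x_3}^+$ stays in $R_C$. Concatenating the initial segments of $\phi_{x_1}^+$ and $\phi_{x_2}^+$ up to $y_1,y_2$, a suitable subarc of $\beta_B$ from $y_1$ to $y_2$, and the subarc of $\beta_A$ from $x_2$ to $x_1$ not containing $x_3$, I would form a Jordan curve $J\subset \overline{R_C}$; let $R$ be the connected component of $\Sp^2\setminus J$ whose boundary contains the arc of $\beta_A$ through $x_3$.

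The outer half-leaf $\phi_{x_3}^+$ enters $R$ at $x_3$ and cannot escape it: it cannot meet the leaf-pieces of $\phi_{x_1}^+$ or $\phi_{x_2}^+$ (distinct leaves of $\check\F$ are disjoint), it cannot cross the $\beta_B$-subarc of $J$ (because $\phi_{x_3}\cap\beta_B=\emptyset$), and it cannot re-cross the $\beta_A$-subarc of $J$ (by $\check\F$-transversality, every leaf crosses $\beta_A$ in one coherent direction, so $\phi_{x_3}$ meets $\beta_A$ only at $x_3$). Applying Poincar\'e--Bendixson inside $\overline{R}$, the $\omega$-limit of $\phi_{x_3}^+$ is either a singularity in $R$ or a closed leaf of $\check\F$ in $R$. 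The only candidate singularity in $\overline{R}\subset\overline{R_C}$ is $C$, and one arranges (by choosing the $\beta_B$-subarc so that $C$ lies on the other side of $J$) that $C\notin R$; a closed-leaf limit in $R$ would, as in the proof of Claim \ref{ClaimStay}, produce an essential transverse Jordan curve around $C$ avoided by $\check\alpha$, which is incompatible with the fact that $\check\alpha$ stays in $\Phi_A\cup\Phi_B$ and meets both $\beta_A$ and $\beta_B$. Either way this is a contradiction, so $I_A$ is an interval. The statement for the leaf space of $\Phi_B$ is symmetric.

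The main obstacle I expect is making the Jordan curve $J$ genuinely simple: this forces one to stop $\phi_{x_i}^+$ at its first intersection with $\beta_B$, and to select the $\beta_B$- and $\beta_A$-subarcs in a way that produces no spurious self-intersection (one may need to iteratively shrink to innermost subarcs). A secondary subtlety is carefully ruling out the closed-leaf case for $\omega(\phi_{x_3}^+)$, which must be done by combining the Poincar\'e--Bendixson analysis used in Claim \ref{ClaimStay} with the classification of essential simple loops in the three-punctured sphere.
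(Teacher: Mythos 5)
Your Jordan curve $J$ is indeed simple (each leaf meets $\beta_A$ at most once, the leaf segments stop at their first hit of $\beta_B$, and $\beta_A\cap\beta_B=\emptyset$ after Claim~\ref{ClaimDisjoint}), but the overall logic cannot work because it proves too much. The configuration you assume for contradiction --- $x_1,x_3,x_2\in\beta_A$ in cyclic order with $\phi_{x_1},\phi_{x_2}\in\Phi_B$ and $\phi_{x_3}\notin\Phi_B$ --- is \emph{not} the negation of the claim: in a cyclic order such a triple exists as soon as $I_A=\Phi_A\cap\Phi_B$ contains at least two leaves and $\Phi_A\setminus\Phi_B\neq\emptyset$, and both facts hold here ($\Phi_A\cap\Phi_B$ is open and nonempty by Claim~\ref{ClaimStay}, and the nonemptiness of $\Phi_A\setminus\Phi_B$ is explicitly used right after the claim). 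So if your argument derived a genuine contradiction from that triple, it would show $\Phi_A\subset\Phi_B$, which is false. The correct statement is asymmetric: for two leaves of $\Phi_A\cap\Phi_B$, \emph{one} of the two arcs of leaves of $\Phi_A$ between them lies in $\Phi_B$, and a proof must identify which one.

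The step where the argument concretely breaks is the exclusion of $C$ from $R$. First, $R$ is not contained in $\overline{R_C}$: the arc of $\beta_A$ through $x_3$ is not part of $J$, so the component $R$ entered by $\phi_{x_3}^+$ automatically contains that free arc, hence $D_A$ and the singularity $A$. More decisively, $\phi_{x_3}^+$ is disjoint from $J$ for \emph{every} admissible choice of the $\beta_B$-subarc (it never meets $\beta_B$, meets $\beta_A$ only at $x_3$, and is disjoint from the two other leaves), so its entire forward orbit, and therefore its $\omega$-limit set, lies in $\overline{R}$ no matter how $J$ is chosen; in the genuine configuration (think of a foliation of the three-punctured sphere with one family of leaves from $A$ to $B$, one from $A$ to $C$, one from $C$ to $B$), this $\omega$-limit is exactly $C$, so $C\in\overline{R}$ and no choice of the $\beta_B$-subarc can put $C$ ``on the other side''. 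The paper avoids this trap by arguing directly: taking $\phi_1,\phi_2\in\Phi_A\cap\Phi_B$, the disjoint representatives $\beta'_A,\beta'_B$ of Claim~\ref{ClaimDisjoint} and the leaf segments of $\phi_1,\phi_2$ between them cut $\Sp^2$ into four pieces, one of which, $D$, contains \emph{no} singularity; every leaf entering $D$ through its $\beta'_A$-side must then exit through $\beta'_B$, which exhibits the singularity-free disk on the side of the two good leaves instead of postulating one on the side of the bad leaf.
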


\begin{proof}
Les us reason in the plane $\Sp^2\setminus\{C\}$.

By Claim~\ref{ClaimDisjoint}, there exist transverse loops $\beta'_A$ and $\beta'_B$, which are respectively equivalent to $\beta_A$ and $\beta_B$ and which are disjoint. Note that in this plane, the two Jordan curves $\beta'_A$ and $\beta'_B$ bound bounded domains that are disjoint: if one bounded domain was included in the other one, it would have to contain both $A$ and $B$ which is impossible. Also, by considering a leaf of $\Phi_A\cap\Phi_B$, which meets both $\beta'_A$ and $\beta'_B$, we see that $\beta'_A$ and $\beta'_B$ turn in opposite directions (relative to a fixed orientation of $\Sp^2\setminus \{C\}$).

Now, let $\phi_1$ and $\phi_2$ be two distinct leaves of $\Phi_A\cap\Phi_B$; we want to show that a one of the intervals of leaves of $\Phi_A$, $[\phi_1,\phi_2]_{\Phi_A}$ or $[\phi_2,\phi_1]_{\Phi_A}$, is contained in $\Phi_A\cap\Phi_B$ (the proof is identical for $\Phi_B$). Denote by $\phi'_1$ and $\phi'_2$ the leaf segments of resp. $\phi_1$ and $\phi_2$ that are bounded by $\beta'_A$ and $\beta'_B$. In this case, the open set
\[\Big(\beta'_A \cup \beta'_B \cup \phi'_1 \cup \phi'_2\Big)^\complement\]
is made of four connected components (the considered paths bound four different Jordan curves), one of which, denoted by $D$, is containing no singularity. Part of its boundary is made of a segment of $\beta'_A$ between $\phi_1$ and $\phi_2$. All the leaves crossing this segment have to get out of $D$ (because $D$ is a disk containing no singularity), and it can make it only by crossing $\beta'_B$. Hence, a whole interval of leaves between $\phi_1$ and $\phi_2$ is included in $\Phi_A\cap\Phi_B$.
\end{proof}

\begin{figure}[ht]
\begin{minipage}{.65\linewidth}
\includegraphics[width=\linewidth]{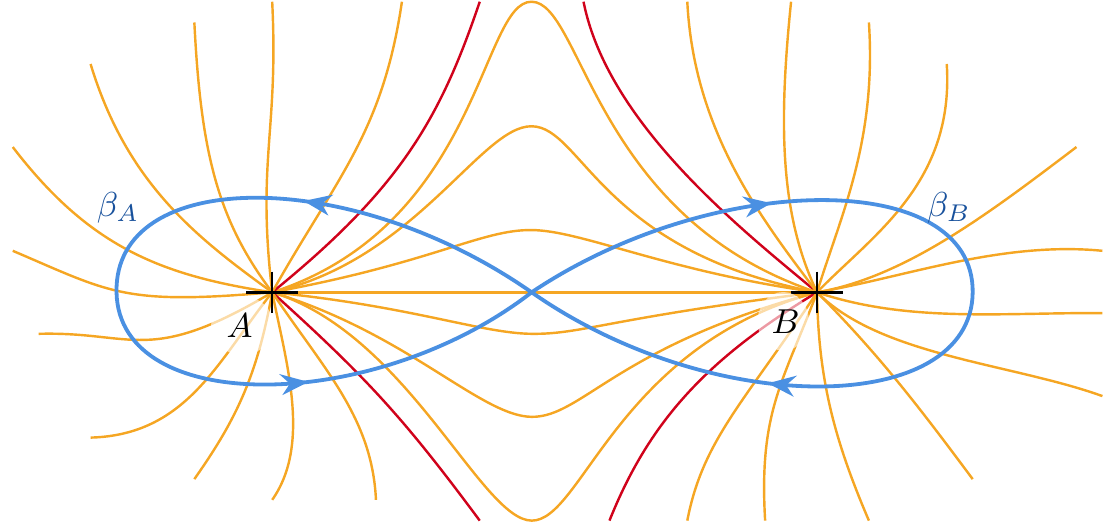}
\caption{One possible shape for the set of leaves that meet $\beta_A$ and $\beta_B$. The four red leaves are the boundaries of the sets $\Phi_A\cap\Phi_B$, $\Phi_A\setminus\Phi_B$ and $\Phi_B\setminus\Phi_A$.}\label{ShapeLeaves3Sphere}
\end{minipage}
\hfill
\begin{minipage}{.33\linewidth}
\includegraphics[width=\linewidth]{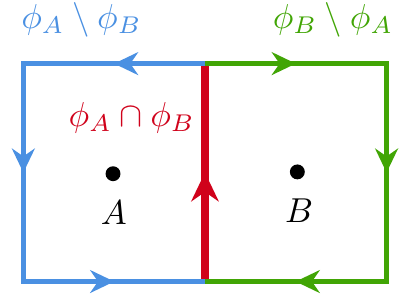}
\caption{The set of leaves of $\Phi_A\cap\Phi_B$ has topologically this shape (be careful, this space is not Hausdorff: the boundaries of $\Phi_A\setminus\Phi_B$ and $\Phi_B\setminus\Phi_A$ do not coincide).}\label{ShapeLeaves3Sphere2}
\end{minipage}
\end{figure}

From now on, replacing the transverse loops $\beta_A$ and $\beta_B$ by $\check{\F}$-equivalent ones if necessary, we suppose that they are indexed by $\R/\Z$, and that they meet at a single point $\beta_A(0)=\beta_B(0)$. By changing $\alpha$ to an $\F$-equivalent loop if necessary, we can suppose that $\check\alpha(t_1) = \beta_A(0) = \beta_B(0) = \check\alpha(t_2)$.

Let $\phi_a\in\Phi_A\setminus\Phi_B$ and $\phi_b\in\Phi_B\setminus\Phi_A$ (these sets are nonempty, otherwise it would contradict the fact that $\beta_A$ and $\beta_B$ are simple and bound different singularities of $\check{ \F}$). Changing the speeds of $\beta_A$ and $\beta_B$ if necessary, we suppose that $\beta_A(1/2)\in \phi_a$ and $\beta_B(1/2)\in \phi_b$. By Claims \ref{ClaimDisjoint} and \ref{ClaimInterval}, the set $\Phi_A\cap\Phi_B$ is a nonempty open topological disk.

\begin{claim} \label{ClaimEquivalencetoword}
For any transverse loop $\check \gamma$ contained in $\Phi_A\cup\Phi_B$, there exists a unique word $a_1 \ldots a_n$ on the letters $A$ and $B$ such that $\check \gamma$ is $\check{\F}$-equivalent to the loop $\beta_{a_1}\dots\beta_{a_n}$.
In particular, there exists a unique word $w=w_1,\dots,w_k \in\{A,B\}^k$ such that $\check{\alpha}|_{[t_1,t_1+1]}$ is $\check{\F}$-equivalent to the loop $\beta_{w_1}\dots\beta_{w_k}$. 
\end{claim}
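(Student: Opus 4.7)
The plan is to exploit the topological description of $\Phi_A \cup \Phi_B$ that follows from Claims~\ref{ClaimDisjoint}, \ref{ClaimStay}, and \ref{ClaimInterval}. Since $\Phi_A \cap \Phi_B$ is a saturated open disk bounded (as a set of leaves) by two leaves of $\check{\F}$, and $\Phi_A \setminus \Phi_B$ (resp.\ $\Phi_B \setminus \Phi_A$) is a saturated half-annulus containing $\beta_A$ (resp.\ $\beta_B$), the union $\Phi_A \cup \Phi_B$ deformation retracts onto the wedge $\beta_A \vee \beta_B$ based at $p_0 := \beta_A(0) = \beta_B(0)$. Its fundamental group is therefore free on the two generators $a := [\beta_A]_{p_0}$ and $b := [\beta_B]_{p_0}$.

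For the existence of the word, first homotope $\check{\gamma}$ through $\check{\F}$-equivalence so that it crosses the two boundary leaves of $\Phi_A \cap \Phi_B$ transversally and at only finitely many points; this is possible because $\check{\gamma}$ is itself already transverse to $\check{\F}$. The loop then decomposes into finitely many arcs, each contained in $\Phi_A$ or in $\Phi_B$ and with endpoints in $\Phi_A \cap \Phi_B$. On an arc contained in $\Phi_A$, the transversality of $\check{\gamma}$ and the fact that $\Phi_A$ is a foliated annulus with preferred closed transversal $\beta_A$ force the arc to wind around $\Phi_A$ a positive number $m$ of times (in the direction fixed by the orientation of $\beta_A$); then, by sliding the arc along the leaves of $\Phi_A$ (a procedure that realises $\check{\F}$-equivalence inside the annulus $\Phi_A$) it is equivalent to $\beta_A^m$, and hence to the concatenation $\beta_A \dots \beta_A$ of $m$ copies. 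The analogous argument applies to arcs in $\Phi_B$. Concatenating the resulting pieces yields a word $a_1 \ldots a_n \in \{A,B\}^n$ with $\check{\gamma}$ $\check{\F}$-equivalent to $\beta_{a_1} \dots \beta_{a_n}$. In particular, applying this to $\check{\alpha}|_{[t_1,t_1+1]}$ (which is a based loop at $p_0$ by our choice of parametrisation above Claim~\ref{ClaimEquivalencetoword}) yields the word $w$ of the second statement.

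For uniqueness, observe that $\check{\F}$-equivalence of based loops implies based homotopy inside $\dom(\check{\F})$, and a fortiori inside $\Phi_A \cup \Phi_B$. Thus if $\beta_{a_1} \dots \beta_{a_n}$ and $\beta_{a'_1} \dots \beta_{a'_{n'}}$ are $\check{\F}$-equivalent based loops at $p_0$, they represent the same element of $\pi_1(\Phi_A \cup \Phi_B, p_0) = \langle a, b \rangle$. But two positive words in a free group representing the same element are identical, since positive words are already reduced. Hence $(a_1,\dots,a_n) = (a'_1,\dots,a'_{n'})$.

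The main obstacle will be the sliding step in the existence part: one must argue carefully that, inside the annular region $\Phi_A$, any transverse arc with endpoints on the two boundary leaves of $\Phi_A \cap \Phi_B$ is $\check{\F}$-equivalent to a concatenation $\beta_A^m$ for a specific positive integer $m$. This requires knowing that transverse arcs in a foliated annulus $\Phi_A$ admit a well-defined winding number with respect to the closed transversal $\beta_A$, which is positive by the coherence of orientations, and that $\check{\F}$-equivalence realises the reduction to a positive power; all of this should follow from elementary covering-space arguments applied to the universal cover of $\Phi_A$ (an $\R$-foliated band), together with the fact that transverse arcs in such a band have monotone projection on the leaf space.
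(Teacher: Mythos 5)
Your proposal is correct and takes essentially the same route as the paper: both arguments rest on Claims~\ref{ClaimDisjoint}--\ref{ClaimInterval} together with the monotone projection of a positively transverse loop onto the leaf spaces of the two foliated annuli $\Phi_A$, $\Phi_B$ (the paper phrases this via the non-Hausdorff leaf space of Figure~\ref{ShapeLeaves3Sphere2}), and uniqueness comes in both cases from the fact that distinct positive words in the free group generated by $[\beta_A]$ and $[\beta_B]$ represent distinct homotopy classes. Two minor glosses worth noting: the frontier of $\Phi_A\cap\Phi_B$ consists of four leaves, not two, precisely because that leaf space is not Hausdorff; and passing from $\check\F$-equivalence of based loops to equality in $\pi_1$ uses that the leaf through the basepoint is not closed, which holds here since a closed leaf in the punctured sphere separates and hence cannot be crossed by a positively transverse loop.
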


\begin{proof}
Such a loop $\check\gamma$ cannot be contained in $\Phi_A\cap\Phi_B$, as it is recurrent. Similarly, it cannot be contained in $\Phi_A\setminus\Phi_B$, nor in $\Phi_B\setminus\Phi_A$. Hence, the projection of this loop $\check\gamma$ on the set of leaves of $\Phi_A\cap\Phi_B$ has to follow the oriented paths of Figure~\ref{ShapeLeaves3Sphere2}.

Hence, the homotopy class of the transverse loop $\check{\gamma}$ is determined by the sequence of leaves $\phi_a,\phi_b$ met by $\check{\gamma}$: for instance, if $\check{ \gamma}$ meets successively $\phi_a,\phi_a,\phi_b$ and $\phi_a$, then the homotopy type of $\check{\gamma}$ is the one of $\beta_A^2\beta_B\beta_A$. So the homotopy type of $\check{\alpha}$ is a word in $\beta_A$ and $\beta_B$ (it does not contain neither $\beta_A^{-1}$ nor $\beta_B^{-1}$). This implies the claim.
\end{proof}

\begin{claim}
The transverse paths
\[\beta_A|_{[1/2,1]}\beta_B|_{[0,1/2]} \quad \text{and} \quad \beta_B|_{[1/2,1]}\beta_A|_{[0,1/2]}\]
have an $\F$-transverse intersection at $\beta_A(0) = \beta_B(0)$.
\end{claim}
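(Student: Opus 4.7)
The plan is to verify Definition~\ref{DefInterTrans} directly, by lifting both paths to the universal cover $\wt{\check{\dom\F}}$ of the three-punctured sphere $\check{\dom\F}$. Parametrize $\alpha_1 = \beta_A|_{[1/2,1]}\beta_B|_{[0,1/2]}$ and $\alpha_2 = \beta_B|_{[1/2,1]}\beta_A|_{[0,1/2]}$ on $[0,1]$ with $\alpha_i(1/2) = p_0$, and choose lifts $\tilde\alpha_1, \tilde\alpha_2$ passing through a common lift $\tilde p_0$ of $p_0$ at time $1/2$. Setting $t_1 = t_2 = 1/2$, both lifts share the leaf $\tilde\phi_0 \in \wt{\check\F}$ through $\tilde p_0$, so the first condition of Definition~\ref{DefInterTrans} is satisfied.

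For the two remaining conditions, my plan is to take $a_i = 0$ and $b_i = 1$ (the endpoints of the paths). Then $\tilde\alpha_1(0), \tilde\alpha_2(1)$ lie on specific lifts of $\phi_a$, while $\tilde\alpha_2(0), \tilde\alpha_1(1)$ lie on specific lifts of $\phi_b$. (Using the deck transformations $T_A, T_B$ associated to the loops $\beta_A,\beta_B$, these four lifts are $T_A^{-1}\tilde\phi_a$, $T_B^{-1}\tilde\phi_b$, $\tilde\phi_b$, $\tilde\phi_a$ respectively, where $\tilde\phi_a, \tilde\phi_b$ are the lifts containing $\tilde\beta_A(1/2)$, $\tilde\beta_B(1/2)$.) The two conditions then reduce to two ``above'' relations between lifts of $\phi_a$ and of $\phi_b$ relative to $\tilde\phi_0$, after possibly reversing the orientation of $\phi_0$.

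To establish these relations, I would exploit the structure of the disk $D := \Phi_A \cap \Phi_B$ containing $\phi_0$, together with the transverse crossing of $\beta_A$ and $\beta_B$ at $p_0$. The key combinatorial input is that on each leaf of $D$ other than $\phi_0$, the curves $\beta_A$ and $\beta_B$ meet at two distinct points whose relative order along the leaf orientation reverses as the leaf parameter crosses $\phi_0$ in the leaf space of $D$ (this swap is precisely the consequence of the topological transverse crossing at $p_0$). Combined with the fact that $\phi_a \in \Phi_A \setminus \Phi_B$ and $\phi_b \in \Phi_B \setminus \Phi_A$ are reached across boundary leaves of $\partial\Phi_B$ and $\partial\Phi_A$ respectively -- which sit on opposite non-Hausdorff sides of $\phi_0$ in the leaf space of $\Phi_A \cup \Phi_B$ (see Figure~\ref{ShapeLeaves3Sphere2}) -- this positions the corresponding lifts on opposite sides of $\tilde\phi_0$ in $\wt{\check{\dom\F}}$.

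The main obstacle will be translating this combinatorial input into a rigorous verification of Definition~\ref{DefDessus} in the universal cover: one must confirm that each relevant triple of lifted leaves is pairwise disjoint with none separating the other two, and that the landing order on $\tilde\phi_0$ is consistent with the transverse crossing orientation. The leaves being in a neighborhood of the lifted disk $\wt{D}$ containing $\tilde p_0$, and the swap structure inherited from the transverse crossing, should suffice to read off the configuration; but tracking how the deck transformations $T_A,T_B$ act on lifts of $\phi_a, \phi_b$ and checking the separation conditions will require some care.
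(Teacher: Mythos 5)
Your plan stands or falls on the ``key combinatorial input'': that $\beta_A$ and $\beta_B$ cross topologically transversally at $p_0$, so that the relative order of $\beta_A\cap\phi$ and $\beta_B\cap\phi$ along a leaf $\phi$ of $\Phi_A\cap\Phi_B$ swaps as $\phi$ crosses $\phi_0$. This is exactly where the argument breaks: nothing in the setup gives a transverse crossing at $p_0$, and in fact one cannot exist. In $\check\dom\F$, the loop $\beta_A$ is homotopic to a simple loop around the puncture $A$ and $\beta_B$ to one around $B$, so their algebraic intersection number is $0$; since after the $\F$-equivalence modification they meet at the single point $p_0$, that intersection has local index $0$, i.e.\ it is a tangential touch. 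Concretely, near $p_0$ both curves are graphs over the leaf direction (they are positively transverse to $\F$), the maps $\phi\mapsto\beta_A\cap\phi$ and $\phi\mapsto\beta_B\cap\phi$ are continuous, and the difference does \emph{not} change sign at $\phi_0$ -- so the swap you want to exploit simply does not happen. The $\F$-transverse intersection of the two concatenations is not a local phenomenon at $p_0$ at all.

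The missing ingredient, which is what the paper uses, is global and homotopical: because $\beta_A$ and $\beta_B$ wind once around the disjoint Jordan domains containing $A$ and $B$ while crossing a common leaf of $\Phi_A\cap\Phi_B$ positively, they turn in opposite directions in $\Sp^2\setminus\{C\}$, and therefore \emph{every} pair of curves homotopic to $\beta_A|_{[1/2,1]}\beta_B|_{[0,1/2]}$ and $\beta_B|_{[1/2,1]}\beta_A|_{[0,1/2]}$ must intersect. Combined with the fact that $\phi_a\in\Phi_A\setminus\Phi_B$ and $\phi_b\in\Phi_B\setminus\Phi_A$, which makes the relevant lifts of $\phi_a$ and $\phi_b$ non-comparable (the paper's identity $R(\phi_{\wh{\beta_B}(1/2)})\cap R(\phi_{T_A\wh{\beta_A}(1/2)})=L(\phi_{T_B^{-1}\wh{\beta_B}(1/2)})\cap L(\phi_{\wh{\beta_A}(1/2)})=\emptyset$ -- this is the part of your plan that is essentially correct), this linking property forces the above/below dichotomy of Definition~\ref{DefInterTrans} in the universal cover. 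Without replacing your ``swap at $p_0$'' step by this homotopical linking argument, the proof cannot be completed.
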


\begin{proof}
Let $\wh{\beta_A}$ and $\wh{\beta_B}$ be two lifts of resp. $\beta_A|[0,1]$ and $\beta_B|_{[0,1]}$ to $\wt\dom(\F)$ that meet at $\wh{\beta_A}(1) = \wh{\beta_B}(0)$. We denote $T_A$ (resp. $T_B$) the deck transformation of $\wt\dom(\F)\to\check{\dom}(\F)$ corresponding to the essential loop $\beta_A$ (resp. $\beta_B$) which preserves $\wh{\beta_A}$ (resp. $\wh{\beta_B}$). Then (see Figure~\ref{FigGeomImpliqFeuill}) we have $\wh{\beta_A}(1) = \wh{\beta_B}(0) = T_A\wh{\beta_A}(0) = T_B^{-1}\wh{\beta_B}(1)$.
As $\phi_a\in\Phi_A\setminus\Phi_B$ and $\phi_b\in\Phi_B\setminus\Phi_A$, we deduce that 
\[R(\phi_{\wh{\beta_B}(1/2)}) \cap R(\phi_{T_A\wh{\beta_A}(1/2)}) = L(\phi_{T_B^{-1}\wh{\beta_B}(1/2)}) \cap L(\phi_{\wh{\beta_A}(1/2)}) =  \emptyset.\]
Moreover, the fact that all pairs of curves that are homotopic to $\beta_A|_{[1/2,1]}\beta_B|_{[0,1/2]}$, resp. $\beta_B|_{[1/2,1]}\beta_A|_{[0,1/2]}$ meet (this comes from the fact that the curves $\beta_A$ and $\beta_B$ turn in different directions in $\Sp^2\setminus \{C\}$) implies that 
\begin{itemize}
\item Either $\phi_{\wh{\beta_B}(1/2)}$ is above $\phi_{T_B^{-1}\wh{\beta_B}(1/2)}$ relative to $\phi_{\wh{\beta_B}(0)}$ and $\phi_{T_A\wh{\beta_B}(1/2)}$ is above $\phi_{\wh{\beta_B}(1/2)}$ relative to $\phi_{\wh{\beta_B}(0)}$.
\item Or $\phi_{\wh{\beta_B}(1/2)}$ is below $\phi_{T_B^{-1}\wh{\beta_B}(1/2)}$ relative to $\phi_{\wh{\beta_B}(0)}$ and $\phi_{T_A\wh{\beta_B}(1/2)}$ is below $\phi_{\wh{\beta_B}(1/2)}$ relative to $\phi_{\wh{\beta_B}(0)}$.
\end{itemize}
In both cases we have an $\F$-transverse intersection between $\wh{\beta_A}|_{[1/2,1]}\wh{\beta_B}|_{[0,1/2]}$ and $T_B^{-1}\wh{\beta_B}|_{[1/2,1]}T_A\wh{\beta_A}|_{[0,1/2]}$.
\end{proof}

\begin{figure}[ht]
\begin{center}
\includegraphics[scale=1]{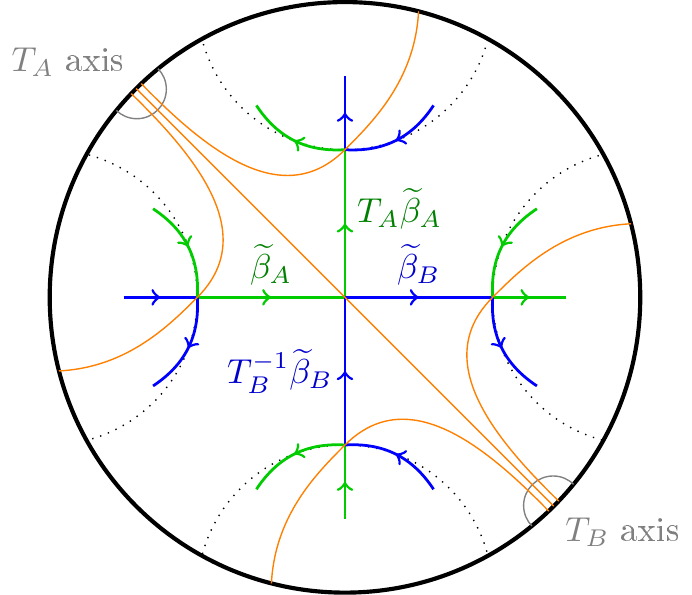}
\caption{The configuration of Proposition~\ref{GeomImpliqFeuill} in $\wt\dom\F\simeq \Hy^2$: the trajectories that are lifts of $\beta_A$ are in green, and the ones that are lifts of $\beta_B$ in blue; the leaves of the foliation are in orange.}\label{FigGeomImpliqFeuill}
\end{center}
\end{figure}

We are ready to prove that the paths $T_1 \wt \alpha|_{[u_2,u_2+1]}$ and $\wt \alpha|_{[u_1,u_1+1]}$ intersect $\wt \F$-transversally at $T_1\wt\alpha(t_1) = \wt\alpha(t_2)$.

By Claim \ref{ClaimEquivalencetoword}, the transverse loop $\check{\alpha}|_{[t_1,t_2]}$ is $\check{\F}$-equivalent to a subword $\beta_{w_1}\dots\beta_{w_\ell}$ of $w=\beta_{w_1}\dots\beta_{w_k}$.

Let us periodize the word $w$, and consider the word $w'$ such that $w'_i = w_{i-\ell}$ for any $i\in\Z$. Observe that the loop $\check{\alpha|}_{[t_1,t_1+1]}$ is $\check{\F}$-equivalent to $\beta_{w_1}\beta_{w_2} \ldots \beta_{w_k}$ and that the loop $\check{\alpha}|_{[t_2,t_2+1]}$ is $\check{\F}$-equivalent to $\beta_{w'_1}\beta_{w'_2} \ldots \beta_{w'_k}$.  As $\check{\alpha}$ and $T_1\check{\alpha}$ have a geometric transverse intersection, we cannot have $w=w'$. As both words $w$ and $w'$ are periodic of period $k$, this implies that there exists $i_0\le 0$, $j_0>0$, with $j_0-i_0\le k +1$, such that 
\[w|_{\{i_0+1,\dots,j_0-1\}} = w'|_{\{i_0+1,\dots,j_0-1\}}\,, \quad w_{i_0}\neq w'_{i_0} \quad \text{and} \quad w_{j_0}\neq w'_{j_0}.\]

As the curves $\beta_A$ and $\beta_B$ intersect at a single point, and as the homotopy group they generate is free, the union of lifts of these two loops to $\wt\dom(\F)$ is a complete binary tree (as in Figure~\ref{FigGeomImpliqFeuill}).

Because of this, and because the intersection between $\hat{\alpha}$ and $T_1\hat{\alpha}$ is geometrically transverse, either $w_{i_0}=w'_{j_0}=A$ and $w_{j_0}=w'_{i_0}=B$, or $w_{i_0}=w'_{j_0}=B$ and $w_{j_0}=w'_{i_0}=A$. In particular, denoting $u_{-2}$, $u_{-1}$, $u_1$ and $u_2$ the times in $\wh\alpha$ corresponding to resp. $\beta_{w_{i_0}}(1/2)$, $\beta_{w_{i_0}}(1)$, $\beta_{w_{j_0}}(0)$ and $\beta_{w_{j_0}}(1/2)$, and $u'_{-2}$, $u'_{-1}$, $u'_1$ and $u'_2$ the times in $T_1\wh\alpha$ corresponding to resp. $\beta_{w'_{i_0}}(1/2)$, $\beta_{w'_{i_0}}(1)$, $\beta_{w'_{j_0}}(0)$ and $\beta_{w'_{j_0}}(1/2)$
\begin{itemize}
\item Either $\phi_{\wh{\alpha}(u_{-2})}$ is above $\phi_{T_1\wh{\alpha}(u'_{-2})}$ relative to $\phi_{\wh{\alpha}(u_{-1})} = \phi_{T_1\wh{\alpha}(u'_{-1})}$, and $\phi_{\wh{\alpha}(u_{2})}$ is below $\phi_{T_1\wh{\alpha}(u'_{2})}$ relative to $\phi_{\wh{\alpha}(u_{1})} = \phi_{T_1\wh{\alpha}(u'_{1})}$.
\item Or $\phi_{\wh{\alpha}(u_{-2})}$ is below $\phi_{T_1\wh{\alpha}(u'_{-2})}$ relative to $\phi_{\wh{\alpha}(u_{-1})} = \phi_{T_1\wh{\alpha}(u'_{-1})}$, and $\phi_{\wh{\alpha}(u_{2})}$ is above $\phi_{T_1\wh{\alpha}(u'_{2})}$ relative to $\phi_{\wh{\alpha}(u_{1})} = \phi_{T_1\wh{\alpha}(u'_{1})}$.
\end{itemize}
In both cases the two transverse paths $\wh{\alpha}|_{[u_{-2},u_2]}$ and $T_1\wh{\alpha}|_{[u'_{-2},u'_2]}$ intersect $\F$-transversally. Because $j_0-i_0\le k{+1}$, we have that $u_2-u_{-2} \le 1$ and $u'_2-u'_{-2} \le 1$. In particular, this implies that the two paths $\alpha_1\alpha_2\cdot\alpha_1\alpha_2$ and $\alpha_2\alpha_1\cdot\alpha_2\alpha_1$ intersect $\F$-transversally at the marked point.
\bigskip

We now prove that for $i,j,k,\ell\ge 1$, the paths $\alpha_1\alpha_2^i\cdot\alpha_1^k\alpha_2$ and $\alpha_2\alpha_1^j\cdot\alpha_2^\ell\alpha_1$ intersect $\F$-transversally at the marked point. To fix notations, we suppose that the leaf passing through the left end of $\alpha_1\alpha_2\cdot$ is over the leaf passing through the left end of $\alpha_2\alpha_1\cdot$ relative to the leaf passing through the right end of both paths $\alpha_1\alpha_2\cdot$ and $\alpha_2\alpha_1\cdot$. We want to prove that the leaf passing through the left end of $\alpha_1\alpha_2^i\cdot$ is over the leaf passing through the left end of $\alpha_2\alpha_1^j\cdot$ relative to the leaf passing through the right end of both paths $\alpha_1\alpha_2^i\cdot$ and $\alpha_2\alpha_1^j\cdot$. This will prove the proposition, as the reasoning for the right parts of the paths $\cdot \alpha_1\alpha_2$ and $\cdot\alpha_2\alpha_1$ is identical.

Suppose first that the leaves passing through the left ends of respectively $\alpha_1\cdot$ and $\alpha_2\cdot$ are not comparable (meaning that none of them is in the left of the other). Then, the leaf passing through the left end of $\alpha_2\cdot$ has to be above  the leaf passing through the left end of $\alpha_1\cdot$ relative to the leaf passing through their common right end, as by hypothesis the leaf passing through the left end of $\alpha_1\alpha_2\cdot$ is over the leaf passing through the left end of $\alpha_2\alpha_1\cdot$ relative to the leaf passing through their common right end. This suffices to get the desired property.

Suppose now that the leaves passing through the left ends of respectively $\alpha_1\cdot$ and $\alpha_2\cdot$ are comparable (meaning that one of them is contained in the left of the other). This means that one of the two paths $\alpha_1\cdot$ and $\alpha_2\cdot$ is homotopic (relative to endpoints) to a subpath of the other; more precisely, exchanging $\alpha_1$ and $\alpha_2$ if necessary, there exists a path $\beta_2$ and $p>0$ such that (up to homotopy) $\alpha_2\cdot = \beta_2\alpha_1^p\cdot$, that $\alpha_1\cdot$ is not equivalent to a suffix of $\beta_2\cdot$, and that $\beta_2$ is not homotopically trivial (otherwise the homotopy type of $\alpha_2$ would be a power of the one of $\alpha_1$, which is impossible). 

When $\alpha_1$ and $\beta_2$ are seen as words in $\beta_A$ and $\beta_B$ (the loops generating the fundamental group of $\check\dom\F$), the length of $\alpha_1\beta_2\cdot$ is bigger than the length of $\alpha_1\cdot$. Moreover, $\alpha_1\cdot$ is not a suffix of $\beta_2\cdot$. This implies that the leaves at the left end of $\alpha_1\cdot$ and $\alpha_1\beta_2\cdot$ are not comparable. Recall that by hypothesis, the leaf passing through the left end of $\alpha_1\beta_2\alpha_1^p\cdot$ is over the leaf passing through the left end of $\beta_2\alpha_1^{p+1}\cdot$ relative to the leaf passing through their common right end, so  the leaf passing through the left end of $\alpha_1\beta_2\cdot$ is above the leaf passing through the left end of $\alpha_1\cdot$ relative to the leaf passing through their common right end.

Now, let us compare the leaves on the left ends of $\alpha_1\alpha_2^i\cdot$ and $\alpha_2\alpha_1^j\cdot$. As $\alpha_1\alpha_2^i\cdot = \alpha_1(\beta_2\alpha_1^p)^i\cdot$, if we compare successfully the leaves on the left ends of suffixes of them, namely $\alpha_1\beta_2\alpha_1^p\cdot$ and $\alpha_1^{p+j}\cdot$, we are done. But we already know that the leaf on the left end of $\alpha_1\beta_2\cdot$ is above the leaf on the left end of $\alpha_1\cdot$ relative to the leaf passing through their common right end, so the leaf on the left end of $\alpha_1\alpha_2^i\cdot$ is above the leaf on the left end of $\alpha_2\alpha_1^j\cdot$ relative to the leaf passing through their common right end. This proves the proposition.
\end{proof}

\subsection{Setting}\label{Secsetting}

We set here some notations for the two next paragraphs.

Let $f\in \Homeo_0(S)$, and $\gamma$ a closed geodesic with a geometric auto-intersection associated to the deck transformation $T_1$ (in the sense of Definition \ref{DefTransverseInter}). Denote $T_2$ the deck transformation so that $T=T_2T_1$ is a deck transformation associated to the closed geodesic $\gamma$. We suppose that $(\gamma,\ell(\gamma))\in \rho(f)$.

By (iii) of Proposition \ref{PropRealPtExtRat}, there exists a fixed point $x$ of $f$ having rotation vector $(\gamma,\ell(\gamma))$. Consider the foliation $\F$ and the isotopy $I$ given by Theorem~\ref{ThExistIstop}. We denote by $\hat{f}$ the canonical lift of $f$ to $\wt\dom \F$. As lifts of $x$ to $\tilde{S}$ are not fixed by the lift $\tilde f$ of $f$ to $\tilde{S}$, the point $x$  belongs to $\dom\F$; this allows to consider a closed transverse loop $\alpha$ of $S$ associated to the trajectory of $x$ which is homotopic to the closed geodesic $\gamma$. This loop is admissible of order $1$. We denote by $\tilde{\alpha}$ a lift of $\alpha$ to $\tilde{S}$ which corresponds to the deck transformation $T$ and by $\hat{\alpha}$ a lift of $\tilde{\alpha}$ to $\wt \dom(\F)$.

By Proposition~\ref{GeomImpliqFeuill}, the loops $\tilde \alpha$ and $T_1 \tilde\alpha$ intersect $\F$-transversally at $\tilde \alpha(t_1) = T_1^{-1} \tilde \alpha(t_2)$, for $t_1<t_2<t_1+1$. We denote $\alpha_1 = \alpha|_{[t_1,t_2]}$ and $\alpha_2 = \alpha_{[t_2,t_1]}$. 

Note that, for any $n \geq 1$, the transverse paths $(\alpha_1 \alpha_2)^n$ and $(\alpha_2 \alpha_1)^n$ are admissible of order $n+1$.

\subsection{Creation of new periodic points}\label{SubSecCrea}

We use notation from Subsection~\ref{Secsetting}.

As a preliminary to the existence of a rotational horseshoe (Theorem~\ref{ExistSuperCheval}), we prove the existence, for any finite word $(w_i)\in \{1,2\}^k$, of periodic orbits rotating in the direction $T_{w_1}\dots T_{w_k}$ (Proposition~\ref{PropExistPeriodic}). Note that the periods we get for these periodic orbits are better than the ones that can be obtained from Theorem~\ref{ExistSuperCheval}.

Let $(w_i)\in \{1,2\}^{\Z/k\Z}$ be a periodic word of length $k$. We suppose that its period $k$ is minimal.

Let us consider the smallest periodic word $(\overline w_j)\in \{1,2\}^{\Z/m\Z}$ of the form $(1\, 2)^\ell$ or $(2\, 1)^\ell$ obtained from $(w_i)$ by adding some letters (hence $m=2\ell$).

It can be seen that if we break the word $(w_i)$ into blocks $(b_j)_{j=1}^{j_0}$ of consecutive identical letters (counting the first and the last blocks together as only one block if they contain the same letters), then (notice that $j_0$ is even)
\[\ell = \frac12\left(k+\sum_{j=1}^{j_0} (\operatorname{length}(b_j)-1) \right) = k-\frac{j_0}{2} < k.\]
For example, if $(w_i) = (1\, 2\, 2\, 1\, 2\, 1)$, then the smallest word of the form $(1\, 2)^\ell$ or $(2\, 1)^\ell$ obtained from $(w_i)$ by adding some letters is $(1\, 2\,{\color{gray}\underline 1\,} 2\, 1\, 2\, 1\, {\color{gray}\underline 2})$ and so $\ell  = 4$.

\begin{proposition}\label{PropExistPeriodic}
Let $w=(w_i)\in \{1,2\}^{\Z/k\Z}$ be a periodic word of length $k$, such that $w_1=w_k$. Let $\ell=\ell(w_i)$ be defined as above.
For any $r\ge 1$ and any $q\ge r\ell + 3$, there exists a periodic point $x$ for $f$, with a lift $\tilde{x}$ to $\tilde{S}$ such that
\[\tilde f^{q}(\tilde x) = \big(T_{w_1}\dots T_{w_k}\big)^r(\tilde x).\]
\end{proposition}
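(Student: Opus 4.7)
The plan is to construct, for each $q \geq r\ell + 3$, a closed $\F$-transverse loop $\beta_q$ in $S$ based at the self-intersection point $\alpha(t_1) = \alpha(t_2)$ that is admissible of order $q$ and whose lift $\hat\beta_q$ to $\widetilde{\dom \F}$ satisfies $\hat\beta_q(1) = (T_{w_1}\cdots T_{w_k})^r \cdot \hat\beta_q(0)$. A periodic point of $f$ of period $q$ with the prescribed lifted rotation is then extracted from $\beta_q$ by the realization principle of Le Calvez-Tal forcing theory.

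The construction of $\beta_q$ takes as input the admissible paths $(\alpha_1\alpha_2)^n$ and $(\alpha_2\alpha_1)^n$ of order $n+1$ recalled in the setting, and iteratively reshapes them using the two forcing Propositions~\ref{PropFondLCT} and~\ref{PropFondLCT2} at the $\F$-transverse intersections furnished by Proposition~\ref{GeomImpliqFeuill}. Combinatorially, the word $(w_i)^r$ differs from the alternating word $(\overline w_j)^r=(1,2)^{\ell r}$ (or $(2,1)^{\ell r}$, depending on the parity of the block structure) by the insertion of $r(k-\ell) = rj_0/2$ extra letters, one for each cyclic block of length $> 1$. Each such insertion is realized by invoking Proposition~\ref{PropFondLCT2} at a suitable $\F$-transverse self-intersection to iterate the relevant sub-path, thereby producing the correct deck-transformation contribution while increasing the order multiplicatively; alternatively, one can glue segments of paths of the form $\alpha_1\alpha_2^i\cdot\alpha_1^k\alpha_2$ and $\alpha_2\alpha_1^j\cdot\alpha_2^\ell\alpha_1$ through Proposition~\ref{PropFondLCT}, which adds orders additively. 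Tracking both contributions yields a baseline of $r\ell + 3$; any larger prescribed $q$ is then attained by a compensating choice of extra iterations at well-chosen self-intersections.

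To extract a periodic orbit, one uses the following classical consequence of the forcing theory in \cite{MR3787834}: an $\F$-transverse path in $S$ admissible of order $q$, whose lifted endpoints in $\widetilde{\dom \F}$ are related by a deck transformation $U$, forces the existence of a point $y \in S$ with $f^q(y) = y$ and a lift $\tilde y$ satisfying $\tilde f^q(\tilde y) = U \tilde y$. Applying this to $\beta_q$ with $U = (T_{w_1}\cdots T_{w_k})^r$ gives the periodic point sought in the proposition.

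The main technical difficulty is the combinatorial bookkeeping at the construction step: because no transverse loop in $S$ is homotopically trivial, each modification of the current path carries a nontrivial and essentially irreversible change to its deck transformation, and these changes must be arranged to accumulate to exactly the word $(T_{w_1}\cdots T_{w_k})^r$ rather than something merely homologous. Achieving this exact matching simultaneously with hitting the admissibility order at any prescribed value $q \geq r\ell+3$, and with keeping all intermediate concatenations $\F$-transverse and admissible, is where the bulk of the work lies.
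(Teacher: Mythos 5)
There is a genuine gap at the extraction step. The ``classical consequence of the forcing theory'' you invoke --- that an $\F$-transverse path admissible of order $q$ whose lifted endpoints differ by a deck transformation $U$ forces a point with $\tilde f^q(\tilde y)=U\tilde y$ --- is not a theorem and is false as stated: admissibility of order $q$ only says that $\hat f^q$ of the leaf through one endpoint meets the leaf through the other (Definition~\ref{DefAdmis}), i.e.\ some orbit segment crosses from a leaf to its $U$-translate; it gives no fixed point of $U^{-1}\tilde f^q$ (think of an irrational rotation of an essential annulus, where such admissible loops exist in every order but there are no periodic points at all). What actually produces the periodic point in this situation is an $\F$-transverse intersection between the constructed path and its translate by the relevant deck transformation, which is exactly why the paper arranges the path $\beta$ to begin with $\alpha_1\alpha_2$ and end with $\alpha_2\alpha_1\alpha_2$, so that Proposition~\ref{GeomImpliqFeuill} yields a transverse intersection of $\hat\beta$ with $T_1T_2(T_{w_2}\dots T_{w_k}T_{w_1})^rT_2^{-1}T_1^{-1}\hat\beta$, and then applies Theorem~\ref{ExistPasSuperCheval} (Theorem M of \cite{1803.04557}) to get, for every $q\ge r\ell+3$, a point with the conjugated rotation, finally conjugating back by $T_1T_2^{-1}T_1^{-1}$. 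Your proposal never secures such a transverse intersection, so the last step does not go through.

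Your construction step also runs in the wrong direction. You propose to reach $(w_i)^r$ from the alternating word by \emph{inserting} letters via iterations at self-intersections; but Proposition~\ref{PropFondLCT2} multiplies admissibility orders when you iterate a subpath, and gluing via Proposition~\ref{PropFondLCT} adds them, so this bookkeeping cannot land on the sharp order $r\ell+3$ (and chasing an exact order $q$ for each $q$ is unnecessary anyway, since the final forcing theorem already covers all $q$ at least the admissibility order). The paper's trick is the opposite: start from the \emph{longer} alternating path $(\alpha_1\alpha_2)^{r\ell+2}$, which is admissible of order $r\ell+3$, and \emph{delete} letters one at a time using the self-transverse intersections of $\alpha_2\alpha_1^{k}\cdot\alpha_2\cdot\alpha_1^{k'}\alpha_2$ (and its mirror) together with the deletion clause of Proposition~\ref{PropFondLCT2}, which preserves the admissibility order; one also needs the preliminary cyclic permutation and block normalization of $w$ that the paper performs. (As a minor point, your count of inserted letters is off: passing from $(w_i)$ to $(\overline w_j)$ adds $2\ell-k$ letters per period, i.e.\ $\operatorname{length}(b_j)-1$ per block, not one per block of length greater than one.)
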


\begin{proof}
If $k\ge 2$, as $(w_i)$ is periodic of period $k\ge 2$, either it is equal to $(12)^\infty$, and there is nothing to prove (as this word corresponds to the initial rotation vector), or it contains at least one block $b_j$ of length $\ge 2$. By cyclically permuting the letters of the word $w$, we can suppose this block is $b_1$. Indeed, if we find a point $\tilde{y}$ such that
$$\tilde{f}^{q}(\tilde{y})=T_{w_{\xi}}T_{w_{\xi+1}} \ldots T_{w_{k}} T_{w_1} \ldots T_{w_{\xi-1}}\tilde{y}$$ for some integer $\xi \in [1,k]$, then the point $\tilde{x}=T_{w_1} \ldots T_{w_{\xi-1}}\tilde{y}$ will satisfy the proposition. Changing the roles of $1$ and $2$ if necessary, we can suppose that this block $b_1$ is made of 1s (hence the word $(w_i)$ starts as $1^{\operatorname{length}(b_1)}$).

Consider the finite word 
\[a=a_1 a_2 \ldots a_n= 1\,2\big(\overline w_3 \dots \overline w_m \overline w_1 \overline w_2 \big)^r 1\,2\]
(note that $\overline w_3 = w_2 = 1$ and $\overline w_2 = 2$).
As $a=(1\,2)^{rl+2}$, the associated transverse path $\alpha_{a_1}\dots \alpha_{a_n}=(\alpha_1 \alpha_2)^{rl+2}$ is admissible of order $r\ell+3$.

The last statement of Proposition~\ref{GeomImpliqFeuill} implies that for any $k,k'\ge 1$, the marked path $\alpha_2\alpha_1^{k}\cdot \alpha_2\cdot\alpha_1^{k'}\alpha_2$ has self $\F$-transverse intersection at its marked points (and the same holds for $\alpha_1\alpha_2^{k}\cdot \alpha_1\cdot\alpha_2^{k'}\alpha_1$). Using Proposition \ref{PropFondLCT2} and the first self transverse intersection, we can remove a letter $2$ between the positions $2$ and $n-3$ from the word $a_1 \ldots a_n$ to obtain a new word which is admissible of order $r\ell +3$. In the same way, by using the second self transverse intersection, we can remove a letter $1$ between the positions $2$ and $n-3$ from the word $a_1 \ldots a_n$ to obtain a new word which is admissible of order $r\ell +3$.
So, by successive applications of Proposition~\ref{PropFondLCT2}, we get that the path 
\[\beta = \alpha_{1}\alpha_2\big(\alpha_{w_2}\dots \alpha_{w_k} \alpha_{w_1} \big)^r \alpha_{\overline{w}_2}\alpha_1\alpha_2
= \alpha_{1}\alpha_2\big(\alpha_{1}\dots \alpha_{w_k} \alpha_{1} \big)^r \alpha_{2}\alpha_1\alpha_2\]
is also admissible of order $r\ell+3$. Let $\hat{\beta}$ be the lift of $\beta$ corresponding to the deck transformation $T_1T_2\big(T_{w_2}\dots T_{w_k} T_{w_1} \big)^r T_2 T_1 T_2$. Remark that the paths $\hat{\beta}$ and $T_1T_2  \big(T_{w_2}\dots T_{w_k} T_{w_1} \big)^r T_2^{-1} T_1^{-1} \hat{\beta}$ intersect $\widehat{\F}$-transversally, simply because $\beta$ starts with $\alpha_1\alpha_2 \cdot \alpha_1^i\alpha_2\dots$ and ends with $\dots\alpha_2\alpha_1^j \cdot\alpha_2\alpha_1\alpha_2$ (Proposition~\ref{GeomImpliqFeuill}).
Thus, by Theorem~\ref{ExistPasSuperCheval} (which is Theorem M of \cite{1803.04557}), for any $q \ge r\ell +3$, there exists a point $\tilde y\in\wt S$ such that
\[\tilde f^{q}(\tilde y) = T_1T_2\big(T_{w_2}\dots T_{w_k} T_{w_1}\big)^r T_2^{-1} T_1^{-1}(\tilde y).\]
We can then take $\tilde{x}= T_1 T_2^{-1}T_1^{-1}(\tilde{y})$. to obtain a point $\tilde{x}$ which satisfies the conclusion of the proposition.

The arguments work identically when the initial word $(w_i)$ is constant (\emph{i.e.} is equal to either $1^\infty$ or $2^\infty$).
\end{proof}

\subsection{Horseshoe}\label{SubSecHorse}

We now come to the main theorem of this section, that concerns the existence of a rotational horseshoe (Theorem~\ref{ExistSuperChevalIntro} of the introduction). Again, we use notation from Subsection~\ref{Secsetting}.

\begin{theorem}\label{ExistSuperCheval}
Let $f\in \Homeo_0(S)$, and $\gamma$ a closed geodesic with a geometric auto-intersection associated to the deck transformation $T_1$ (in the sense of Definition \ref{DefTransverseInter}). Denote $T_2$ the deck transformation such that $T=T_1T_2$ is the deck transformation associated to the closed geodesic $\gamma$. 

Suppose that $(\gamma,\ell(\gamma))\in \rho(f)$.
Then, $f^7$ has a topological horseshoe associated to the deck transformations $T_1$, $T_1^2$, $T_2$, $T_1T_2$, $T_2T_1$ and $T_1T_2T_1$.
\end{theorem}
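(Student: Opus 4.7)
The plan is to convert the geometric auto-intersection of $\gamma$ into a rich supply of $\F$-transverse intersections and then feed them into the Le Calvez--Tal forcing machinery \cite{MR3787834, 1803.04557}.

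First, starting from the admissible-of-order-$1$ loop $\alpha = \alpha_1\alpha_2$, iteration of Proposition~\ref{PropFondLCT} shows that $(\alpha_1\alpha_2)^7$ is admissible of order $7$, with associated deck transformation $(T_1T_2)^7$. This is the common ``template'' path.

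Second, for each target $U_i \in \{T_1, T_1^2, T_2, T_1T_2, T_2T_1, T_1T_2T_1\}$, let $W_i$ denote its spelling in the letters $\alpha_1,\alpha_2$, and consider the padded path $\beta_i = \alpha_1\alpha_2 \cdot W_i \cdot \alpha_1\alpha_2$, whose associated deck transformation is $(T_1T_2)\, U_i\, (T_1T_2)$. Proposition~\ref{GeomImpliqFeuill} supplies $\F$-transverse self-intersections in the two marked configurations $\alpha_2\alpha_1^k \cdot \alpha_2 \cdot \alpha_1^{k'}\alpha_2$ and $\alpha_1\alpha_2^k \cdot \alpha_1 \cdot \alpha_2^{k'}\alpha_1$; via Proposition~\ref{PropFondLCT2} these let us remove an interior $\alpha_2$-letter lying between two $\alpha_1$-blocks, or symmetrically an interior $\alpha_1$-letter lying between two $\alpha_2$-blocks, without decreasing the admissibility order. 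I would next carry out a short combinatorial check, very much in the spirit of the proof of Proposition~\ref{PropExistPeriodic}, to verify that starting from $(\alpha_1\alpha_2)^7$ each $\beta_i$ can indeed be produced by such a sequence of letter removals; each $\beta_i$ is therefore admissible of order $7$. As a bonus, the six resulting paths all share the same first two and last two letters $\alpha_1\alpha_2$, and at the two interior prefix/middle and middle/suffix marked junctions, Proposition~\ref{GeomImpliqFeuill} provides pairwise $\F$-transverse crossings between any two distinct $\beta_i, \beta_j$ (since near each junction one recognizes the configuration $\alpha_1\alpha_2^k\alpha_1^{k'}\alpha_2$ versus $\alpha_2\alpha_1^j\alpha_2^\ell\alpha_1$).

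Third, I would apply the equivariant multi-symbol horseshoe construction of \cite{1803.04557} (of which Theorem~\ref{ExistPasSuperCheval} is the one-orbit version) to the family $\{\beta_i\}_{i=1}^6$ together with its shared boundary leaves and interior braided crossings. This gives a rectangle $R_0 \subset \wt S$ such that, for each $i$, the intersection $\tilde f^{7}(R_0) \cap \bigl((T_1T_2)\, U_i\, (T_1T_2)\bigr) R_0$ is Markovian. A suitable deck-transformation translation (placing the witnessing rectangle $R$ at the leaf between the prefix $\alpha_1\alpha_2$ and the core $W_i$ of each $\beta_i$, rather than at the very start of $\beta_i$) absorbs the fixed padding on each side, so that the deck transformations seen in the Markovian intersections are exactly the $U_i$'s.

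The hard part will be this final step: invoking the full multi-symbol construction requires checking that the six $\beta_i$'s, with their pairwise $\F$-transverse crossings, satisfy the combinatorial/geometric hypotheses needed to package them into a single rectangle $R$ which simultaneously witnesses a Markovian intersection with each of the six $U_i R$'s. The integer $7$ is the natural outcome of padding the longest target word $T_1T_2T_1$ (length $3$) with $\alpha_1\alpha_2$ on each side, giving total admissibility order $3+2+2=7$; the shared padding is precisely what enables a single rectangle to work for all six deck transformations at once, and this is where the abundance of $\F$-transverse crossings furnished by Proposition~\ref{GeomImpliqFeuill} is fully exploited.
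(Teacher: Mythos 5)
Your first two steps (the admissible template $(\alpha_1\alpha_2)^k$ and the use of Proposition~\ref{GeomImpliqFeuill} together with Proposition~\ref{PropFondLCT2} to produce admissible words of order $6$ or $7$ with prescribed letter patterns) do match the opening of the paper's argument. But the final step, where all the content lies, has a genuine gap, and the reduction you propose to set it up is incorrect. First, the ``equivariant multi-symbol horseshoe construction'' you invoke from \cite{1803.04557} does not exist as a citable statement: Theorem~\ref{ExistPasSuperCheval} (the reformulation of Section~3 of \cite{1803.04557}) only handles one admissible path having an $\F$-transverse intersection with a \emph{single} translate $T\hat\alpha$, and yields deck transformations $T,T^2,\dots,T^r$, i.e.\ powers of one element. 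Producing a single rectangle whose image under $\hat f^{7}$ has Markovian intersections with translates by the six different elements $T_1,T_1^2,T_2,T_1T_2,T_2T_1,T_1T_2T_1$ is precisely what the paper must build by hand: it chooses the leaves $\phi_a=T_2^{-2}T_1^{-1}\phi_{\hat\alpha(t_2)}$ and $\phi_b=T_2^{2}T_1T_2\phi_{\hat\alpha(t_2)}$ (the ``padding'' is encoded in this choice of leaves, not in the target words), bounds $R$ by pieces of $T_1^{-1}\phi_a$, $\phi_a$, $\hat f^{-7}(\phi_b)$, $T_1\hat f^{-7}(\phi_b)$ via Lemmas~\ref{Lemma10} and \ref{LemOrientBordR}, and then checks each of the six Markovian intersections using the disjointness relations \eqref{EqDisjImgPhiq}--\eqref{EqDisjImgPhic} (which come from Proposition~\ref{GeomImpliqFeuill}) and Homma's theorem (Theorem~\ref{PropHomma}). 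None of this is supplied by your family $\{\beta_i\}$ and its pairwise crossings.

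Second, the step meant to make the reduction work --- ``a suitable deck-transformation translation absorbs the fixed padding'' --- fails. If $\tilde f^{7}(R_0)\cap\bigl((T_1T_2)U_i(T_1T_2)\bigr)R_0$ is Markovian and you replace $R_0$ by a translate $WR_0$, equivariance of $\tilde f$ replaces the deck transformation by the conjugate $W(T_1T_2)U_i(T_1T_2)W^{-1}$; two-sided padding changes the conjugacy class (for instance $T_1T_2T_1T_1T_2$ is cyclically reduced of length $5$, hence not conjugate to $T_1$ in the free group $\langle T_1,T_2\rangle$), so no choice of $W$ can turn these into Markovian intersections with exactly $U_iR$. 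Underlying this is a conceptual confusion: the deck transformation attached to a Markovian intersection (and to the horseshoe) is the one $T$ for which the image rectangle meets the $T$-translate of $R$, i.e.\ it is governed by the $\F$-transverse intersection pattern of the admissible path with its translates (as in Theorem~\ref{ExistPasSuperCheval} and in the conjugation trick of Proposition~\ref{PropExistPeriodic}, where the relevant element is of the form $VU V^{-1}$ and can be undone by translating the point), not by the total deck transformation of the padded word $\beta_i$. Your count $3+2+2=7$ for the admissibility order rests on the same misidentification; in the paper, $7$ arises because $\alpha_1\alpha_2^2\alpha_1^2\alpha_2^2\alpha_1\alpha_2$ is admissible of order $7$, extracted from $(\alpha_1\alpha_2)^6$ by Proposition~\ref{PropFondLCT2}.
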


In particular, this implies that $h_{top}(f)\ge \log 7/5$.

\begin{proof}
The configuration of the beginning of the proof is depicted in Figure~\ref{FigExistSuper1}. In particular, $\hat{\alpha}_1$ and $\hat{\alpha}_2$ are two lifts of $\alpha_1$ and $\alpha_2$ to $\wt \dom(\F)$ that have $\hat{\alpha}(t_2)$ as final point. By abuse of notation, we denote by $T_1$ and $T_2$ the lifts to $\wt\dom(\F)$ of the corresponding deck transformations of $\tilde{S}$ which are respectively associated to $\hat{\alpha}_1$ and $\hat{\alpha}_2$.

In $\wt\dom \F$, denote $\phi_a = T_2^{-2}T_1^{-1}(\phi_{\hat \alpha(t_2)})$ and $\phi_b = T_2^2T_1T_2(\phi_{\hat\alpha(t_2)})$.

\begin{figure}
\begin{center}
\includegraphics[width=\linewidth]{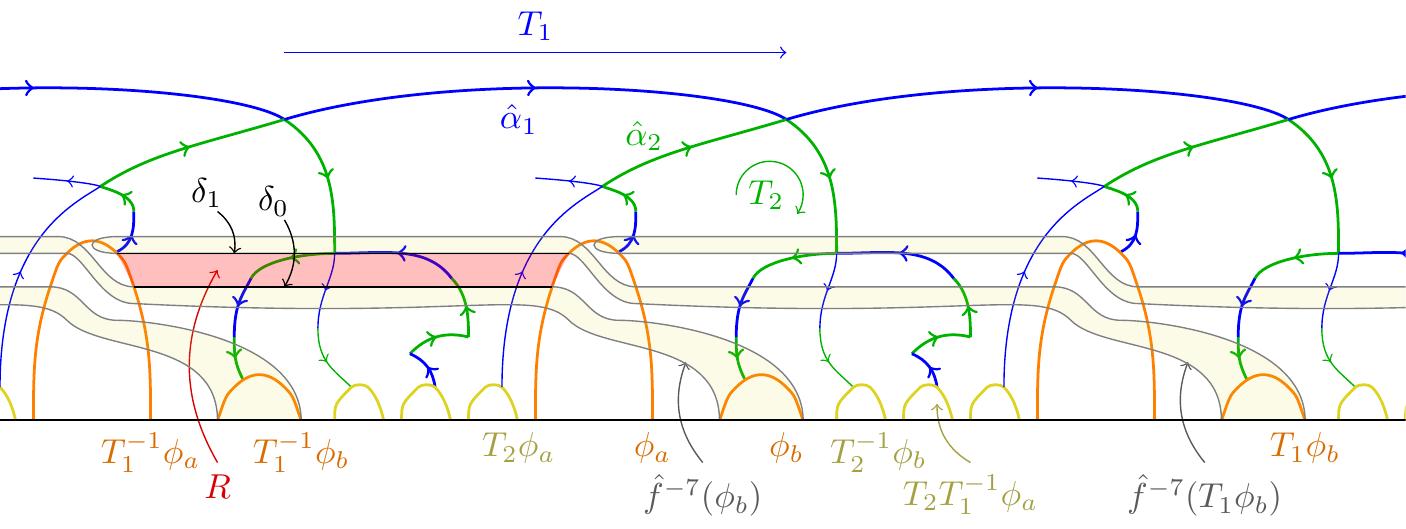}
\caption{Configuration of Theorem \ref{ExistSuperCheval}. Note that we do not know \emph{a priori} whether the leaf $\phi_a$ is located in the left of $T_2 \phi_a$ or is below $T_2\phi_a$ relative to $T_1T_2^2\phi_a$.\label{FigExistSuper1}}
\end{center}
\end{figure}

Remark that any lift of $(\alpha_1\alpha_2)^k$ or $(\alpha_2\alpha_1)^k$ is admissible or order $k+1$. By successive applications of Proposition~\ref{PropFondLCT2}, allowed by Proposition~\ref{GeomImpliqFeuill}, we get that
\begin{itemize}
\item $\hat f^6(\phi_a)\cap \phi_b \neq\emptyset$ because $\alpha_1 \alpha_2^4 \alpha_1 \alpha_2$ is admissible of order $6$;
\item $\hat f^6(\phi_a)\cap T_1\phi_b \neq\emptyset$ because $\alpha_1 \alpha_2^2 \alpha_1 \alpha_2^{2} \alpha_1 \alpha_2$ is admissible of order $6$;
\item $\hat f^7(\phi_a)\cap T_1^2\phi_b \neq\emptyset$ because $\alpha_1 \alpha_2^2 \alpha_1^2 \alpha_2^{2} \alpha_1 \alpha_2$ is admissible of order $7$.
\end{itemize}
For example, for the first one, the path $(\alpha_1\alpha_2)^5$ is admissible of order 6, hence so does $\alpha_1 \alpha_2^4 \alpha_1 \alpha_2$. 

Note that, by Proposition~\ref{PropPasFondLCT}, we also have $\hat f^7(\phi_a)\cap \phi_b \neq\emptyset$ and $\hat f^7(\phi_a)\cap T_1\phi_b \neq\emptyset$.

\paragraph{Construction of the rectangle $R$ --}

As in Section 3.1 of \cite{1803.04557}, one can define
\[R_a = \bigcap_{k\in \Z} R(T_1^k\phi_a),\]
and the set $\X_{p}$ of paths joining $T_1^{-1}\phi_a$ to $\phi_a$ whose interior is a connected component of $T_1^p \hat f^{-7}(\phi_b) \cap R_a$. The following lemma proves that $\X_0, \X_1\neq\emptyset$.

\begin{lemma}\label{Lemma10}
Every simple path $\delta : [c,d]\to\wt\dom\F$ that joins $T_1^{-p_0}\phi_a$ to $T_1^{p_1} \phi_a$, with $p_0,p_1>0$, and which is $T_1$-free, meets $L(\phi_a)$.

Similarly, for any $t\in\R$, if $\hat f^{-7}(\phi_b((-\infty, t]))$ meets $T_1^{-p}\phi_a$ for some $p>0$, then it also meets $L(\phi_a)$.
\end{lemma}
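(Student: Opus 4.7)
The plan is to reduce both assertions to a structural ordering fact about the family of translates $\{T_1^k\phi_a\}_{k\in\mathbb{Z}}$ of the leaf $\phi_a$ under the deck transformation $T_1$ of $\wt{\dom\F}\to\dom\F$.

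The first step will be to establish that these translates are pairwise disjoint, properly embedded lines in the plane $\wt{\dom\F}\simeq\mathbb{R}^2$: they are distinct leaves of the lifted foliation $\wt\F$ (they all project to the same leaf of $\F$, but the action of $T_1$ on $\wt{\dom\F}$ is free), so they are disjoint and properly embedded. A family of pairwise disjoint properly embedded lines in the plane is automatically linearly ordered by separation, since each line separates $\mathbb{R}^2$ into two components and every other line in the family lies entirely in one of them. Since $T_1$ acts freely on this indexed family as an orientation-preserving homeomorphism, this action must be a shift of the linear order rather than an order-reversing involution. Up to replacing $T_1$ by $T_1^{-1}$, I will therefore have $T_1^k\phi_a\subset R(\phi_a)$ for every $k\geq 1$ and $T_1^k\phi_a\subset L(\phi_a)$ for every $k\leq -1$, in accordance with the conventions of Subsection~\ref{Secsetting} and with the requirement that $R_a=\bigcap_{k\in\mathbb{Z}} R(T_1^k\phi_a)$ be a nontrivial region for the subsequent construction of the rectangle $R$ to be meaningful.

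Once this ordering is in place, both assertions follow essentially for free. For the first, I would simply note that the starting endpoint $\delta(c)$ lies on $T_1^{-p_0}\phi_a$ with $p_0>0$, hence in the open region $L(\phi_a)$; this already gives $\delta\cap L(\phi_a)\neq\emptyset$. For the second, any point of intersection between $\hat f^{-7}(\phi_b((-\infty,t]))$ and $T_1^{-p}\phi_a$ (with $p>0$) lies in the subset $T_1^{-p}\phi_a\subset L(\phi_a)$, so the curve meets $L(\phi_a)$.

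The main (and only mild) obstacle is the rigorous justification of the linear order and of the correct sign for the $T_1$-shift. Note that once the ordering is fixed, the hypotheses of simplicity and $T_1$-freeness of $\delta$ do not enter the argument; they are nevertheless included in the statement because this lemma is designed for use in the construction of the rectangle $R$ in the proof of Theorem~\ref{ExistSuperCheval}, where these additional properties of $\delta$ are essential further down the line.
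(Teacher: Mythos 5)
Your argument has a genuine gap, and it is located exactly where you flag "the main (and only mild) obstacle": the claimed linear ordering of the translates $T_1^k\phi_a$. A family of pairwise disjoint properly embedded lines in the plane is \emph{not} automatically linearly ordered by separation: each line has all the others on one of its two sides, but the resulting relation need not be total (think of the three sides of an ideal triangle in $\Hy^2$, none of which separates the other two; this extends to a $\Z$-family arranged like the sides of an ideal polygon, invariant under a parabolic-like shift). In such a configuration every nontrivial translate $T_1^k\phi_a$, for $k>0$ \emph{and} $k<0$, lies in $R(\phi_a)$, so your key assertion $T_1^{-p_0}\phi_a\subset L(\phi_a)$ fails, and with it both of your one-line deductions. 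This non-comparable configuration is not a pathology to be excluded: it is precisely the situation the lemma (following Lemma 10 of \cite{1803.04557}) is designed to handle, and nothing in the setting of Subsection~\ref{Secsetting} rules it out (compare the caption of Figure~\ref{FigExistSuper1}, where the authors stress that relative positions of leaves are unknown a priori). Note also that one cannot "replace $T_1$ by $T_1^{-1}$": $T_1$ is fixed by the geometric auto-intersection and by the later use of the lemma, and the sides $L,R$ are fixed by the orientation of the foliation.

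The fact that your argument never uses simplicity or $T_1$-freeness of $\delta$ is the telltale sign that it cannot be correct, because without those hypotheses the statement is false: in the ideal-polygon-type configuration, a path contained in $\bigcap_{k}R(T_1^k\phi_a)$ joining $T_1^{-p_0}\phi_a$ to $T_1^{p_1}\phi_a$ never meets $L(\phi_a)$; the content of the lemma is that such a path cannot be simple and $T_1$-free. The paper's proof is a separation (Jordan curve) argument that uses these hypotheses essentially: if $\delta$ met neither $L(\phi_a)$ nor $T_1^{p_0+p_1}\phi_a$, then $\delta\cup T_1^{-p_0}\phi_a\cup T_1^{p_1}\phi_a$ would separate $\phi_a$ from $T_1^{p_0+p_1}\phi_a$, so $T_1\delta$ (which joins $T_1^{-p_0+1}\phi_a$ to $T_1^{p_1+1}\phi_a$) would have to cross $\delta$, contradicting $T_1$-freeness; the cases where $\delta$ meets $T_1^{p_0+p_1}\phi_a$ but not $T_1^{p_1+2p_0}\phi_a$, etc., are handled by iterating the same argument, and the second assertion is proved identically. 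To repair your proposal you would have to either carry out this separation argument or prove the linear ordering of the $T_1$-translates of $\phi_a$ in this specific situation, and the latter is simply not true in general.
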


For the first part of the lemma, the idea of proof is that, if the path $\delta$ meets neither $L(\phi_a)$ nor $T_1^{p_1+p_0} \phi_a$, then this path, together with the leaves $T_1^{-p_0}\phi_a$ and $T_1^{p_1} \phi_a$, separates the leaves $\phi_a$ and $T_{1}^{p_0+p_1}\phi_a$, which implies (by an application of Jordan theorem) that $\delta$ is not $T_1$-free. The case where the path $\delta$ meets $T_1^{p_1+p_0} \phi_a$ but not $T_1^{p_1+2p_0} \phi_a$ leads to a similar contradiction, and so on. For a more detailed proof, see Lemma 10 of \cite{1803.04557}. The proof of the second part of the lemma is identical.
\bigskip

By what we have just said, using Lemma \ref{Lemma10} (and similarly to Lemma~11 of \cite{1803.04557}), the sets $\X_0$ and $\X_1$ are nonempty. Moreover, because the sets $T_1^k \hat f^{-7}(\phi_b)$ are pairwise disjoint, two elements of respectively $\X_0$ and $\X_1$ are disjoint.

\begin{lemma}\label{LemOrientBordR}
There is a path $\delta_1\in \X_1$, and a path $\delta_0 \in \X_0$ lying in the connected component of the complement of $R_a^\complement \cup \delta_1$ containing $T_1^{-1}\phi_b$.
\end{lemma}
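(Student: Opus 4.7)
The plan is to first establish the combinatorial structure of $\X_0 \cup \X_1$ inside the strip $R_a$, then pick $\delta_1 \in \X_1$ by an extremality argument, and finally produce the desired $\delta_0 \in \X_0$ by a separation argument.

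First, I would observe that $R_a$ is a closed topological strip in $\wt\dom\F$ bounded by the two adjacent, disjoint, properly embedded leaves $T_1^{-1}\phi_a$ and $\phi_a$, since the orbit of $\phi_a$ under $\langle T_1\rangle$ consists of pairwise disjoint leaves. Every element of $\X_p$ is a simple arc in $R_a$ whose two endpoints lie on the two boundary leaves, so it crosses the strip and separates it into two connected components. Moreover, for $p \neq p'$ the sets $T_1^p \hat f^{-7}(\phi_b)$ and $T_1^{p'}\hat f^{-7}(\phi_b)$ are disjoint (as $\hat f$ commutes with $T_1$ and the translates $T_1^k \phi_b$ are pairwise disjoint), and two distinct elements of the same $\X_p$ are distinct connected components of the same set, hence disjoint. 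So $\X_0 \cup \X_1$ consists of pairwise disjoint arcs crossing $R_a$, and it is naturally totally ordered by ``transverse position'' in $R_a$.

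Second, I would pick $\delta_1 \in \X_1$ to be extremal in the direction of $T_1^{-1}\phi_b$. For each $\delta \in \X_1$, the two connected components of $R_a \setminus \delta$ can be labeled $U_-(\delta)$ and $U_+(\delta)$; one of them (say $U_-(\delta)$) is the one whose closure in $\wt\dom\F$ meets $T_1^{-1}\phi_b$. Which side this is can be read off from the explicit descriptions $\phi_a = T_2^{-2}T_1^{-1}\phi_{\hat\alpha(t_2)}$ and $\phi_b = T_2^2 T_1 T_2 \phi_{\hat\alpha(t_2)}$, together with the positions of the translates $T_1^k\phi_a$ and $T_1^{-1}\phi_b$ near the common boundary leaves. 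I then choose $\delta_1$ to be extremal in $\X_1$ in the sense that no other $\delta \in \X_1$ lies in $U_-(\delta_1)$. Such an extremal element exists because, along any compact sub-arc of $\phi_a$, only finitely many arcs of $T_1\hat f^{-7}(\phi_b)$ can cross, so the totally ordered set $\X_1$ has a well-defined extremum on the $T_1^{-1}\phi_b$ side.

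Third, I would argue by contradiction that at least one $\delta_0 \in \X_0$ lies in $U_-(\delta_1)$. If not, every arc of $\X_0$ lies in $U_+(\delta_1)$, and combined with the extremality of $\delta_1$ this allows one to build a Jordan curve in $\wt\dom\F$ from $\delta_1$, suitable sub-arcs of $\phi_a$ and $T_1^{-1}\phi_a$, and a sub-arc of $T_1^{-1}\phi_b$, which separates $\hat f^{-7}(\phi_b)$ from one of the translates $\phi_b$, $T_1\phi_b$, $T_1^2\phi_b$. This contradicts one of the three non-empty intersections $\hat f^7(\phi_a) \cap \phi_b$, $\hat f^7(\phi_a) \cap T_1\phi_b$, $\hat f^7(\phi_a) \cap T_1^2\phi_b$ established just before the lemma via repeated applications of Proposition~\ref{PropFondLCT2}.

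The main obstacle is carefully identifying which component of $R_a \setminus \delta$ accumulates on $T_1^{-1}\phi_b$ for a generic $\delta \in \X_1$, since it requires tracking the interlacing of translates of $\phi_a$ and $\phi_b$ by powers of $T_1$ and by $\hat f^7$. This is a plane-topology bookkeeping very similar to Lemma~12 of \cite{1803.04557}, and once it is pinned down the Jordan-curve separation argument proceeds routinely.
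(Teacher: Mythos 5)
There is a genuine gap, and it sits exactly at the two places your sketch leaves open. First, the step you defer as ``bookkeeping'' --- deciding which component of $R_a\setminus\delta$ has $T_1^{-1}\phi_b$ on its side --- is not something that can be ``read off'' from the formulas $\phi_a = T_2^{-2}T_1^{-1}\phi_{\hat\alpha(t_2)}$, $\phi_b = T_2^{2}T_1T_2\phi_{\hat\alpha(t_2)}$: it is the actual content of the lemma, and the paper settles it not by inspecting these expressions but by a localization argument. The paper takes $\delta_1$ to be the \emph{first} crossing component along the curve $T_1\hat f^{-7}(\phi_b)$ (not an extremal one for the transverse order of the band, as you do), uses the second part of Lemma~\ref{Lemma10} to see that the initial segment $T_1\hat f^{-7}(\phi_b|_{(-\infty,t_2]})$ meets $T_1^{-1}\phi_a$ only once, forms the region $A$ bounded by $L(T_1^{-1}\phi_a)$ and this initial segment, and then follows $\hat f^{-7}(\phi_b)$: its first exit from $A$ must occur through $T_1^{-1}\phi_a$ because $\hat f^{-7}(\phi_b)\cap T_1\hat f^{-7}(\phi_b)=\emptyset$ by \eqref{EqDisjImgPhiq}, and Lemma~\ref{Lemma10} again produces a crossing arc $\delta_0$ inside $A$, hence on the correct side of $\delta_1$. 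Your proposal uses neither \eqref{EqDisjImgPhiq} nor Lemma~\ref{Lemma10}, which are precisely the tools that control these relative positions, so the ``obstacle'' you name is not routine and is not overcome. (A smaller inaccuracy: $R_a=\bigcap_k R(T_1^k\phi_a)$ is not a strip bounded by the two leaves $T_1^{-1}\phi_a$ and $\phi_a$ alone; its frontier in general contains pieces of all translates $T_1^k\phi_a$. The total order on crossing arcs survives, but the picture you rely on is oversimplified.)

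Second, the contradiction step would not go through as stated. You propose to build a Jordan curve ``which separates $\hat f^{-7}(\phi_b)$ from one of the translates $\phi_b$, $T_1\phi_b$, $T_1^2\phi_b$'' and claim this contradicts the admissibility intersections $\hat f^7(\phi_a)\cap\phi_b$, $\hat f^7(\phi_a)\cap T_1\phi_b$, $\hat f^7(\phi_a)\cap T_1^2\phi_b\neq\emptyset$. But those intersections say that $\hat f^{-7}(T_1^j\phi_b)$ meets the \emph{leaf} $\phi_a$; they say nothing about $\hat f^{-7}(\phi_b)$ meeting the translates $T_1^j\phi_b$ --- indeed $\hat f^{-7}(\phi_b)\cap T_1^j\phi_b=\emptyset$ for $j\neq 0$ by \eqref{EqDisjImgPhiq}, so such a separation is perfectly consistent with all three intersections and yields no contradiction. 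Moreover, since you pick $\delta_1$ by a different extremality criterion than the paper, the assertion that the absence of a $\delta_0$ on the $T_1^{-1}\phi_b$ side of \emph{your} $\delta_1$ is contradictory would require its own proof; nothing in your sketch supplies it, and I do not see how to do so without reproving, in essence, the region-$A$ argument of the paper.
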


Before proving the lemma, let us point out that because of $\F$-transverse intersections (last conclusion of Proposition~\ref{GeomImpliqFeuill}), we have, for any $k\in\Z^*$ and any $n\in\Z$,
\begin{equation}\label{EqDisjImgPhiq}
\hat f^n(T_1^k\phi_b) \cap \phi_b  = \emptyset.
\end{equation}
This implies that $\delta_1$ is disjoint from $T_1^{-1}\phi_b$, and hence that $T_1^{-1}\phi_b$ lies in the complement of $R_a^\complement \cup \delta_1$.

\begin{proof}
Note that the union of elements of $\X_1$ forms a compact subset of $T_1\hat f^{-7}(\phi_b)$, so there are finitely many elements of $\X_1$. Consider the first one, $\delta_1$, for the order on $T_1\hat f^{-7}(\phi_b)$ induced by some parametrization of $\phi_b$, and denote $\hat f^7(\delta_1) = T_1\phi_b|_{[t_1,t_2]}$. By Lemma~\ref{Lemma10}, second part, $T_1 \hat f^{-7}(\phi_b(t_2))$ is the first intersection point of $T_1 \hat f^{-7}(\phi_b)$ (again, for the order induced by some oriented parametrization of $\phi_b$) with $T_1^{-1}\phi_a$ ; in particular the path $T_1\hat f^{-7}(\phi_b|_{(-\infty,t_2]})$ meets $T_1^{-1}\phi_a$ at a single point. The complement of $L(T_1^{-1} \phi_a)\cup T_1\hat f^{-7}(\phi_b|_{(-\infty,t_2]})$ has two connected components. We denote by $A$ the one containing $\phi_b$.

As the set $\hat{f}^{-7}(\phi_b)$ meets $T_1^{-1} \phi_a$, this set is not contained in $A$. Consider the first intersection point $\hat{f}^{-7}(\phi_b)(t'_1)$ between $\partial A$ and $\hat{f}^{-7}(\phi_b)$. This point must belong to $T_1^{-1}(\phi_a)$ as $\hat{f}^{-7}(\phi_b)\cap T_1\phi_b= \emptyset$ by \eqref{EqDisjImgPhiq}. Lemma~\ref{Lemma10}, second part implies that $\hat{f}^{-7}(\phi_{b|(-\infty,t'_1]}) \cap R(\phi_a) \neq \emptyset$, which gives a path $\delta_0$ and proves Lemma~\ref{LemOrientBordR}. 
\end{proof}

Consider two paths $\delta_0\in \X_0$ and $\delta_1$ in $\X_1$ given by Lemma~ \ref{LemOrientBordR}. Similarly to what is done in the proof of Proposition 12 of \cite{1803.04557}, take $\beta$ the path made of the bounded connected component of $T_1^{-1}\phi_a\setminus(\delta_0\cup\delta_1)$, and $\beta'$ the path made of the bounded connected component of $\phi_a \setminus (\delta_0\cup\delta_1)$.

It allows to define the topological rectangle $R$ bounded by the four curves $\beta$, $\beta'$, $\delta_0$ and $\delta_1$. Lemma \ref{LemOrientBordR} implies that in the direct orientation, the paths are ordered as: $\delta_1\beta \delta_0 \beta'$.
Note also that the set $\hat f^7(R)$ is a topological rectangle, with two edges which are subsets of resp. $\phi_b$ and $T_1 \phi_b$, the two others being images of pieces of resp. $T^{-1} \phi_a$ and $\phi_a$.

\paragraph{Proof of the existence of Markovian intersections --}

Because of $\F$-transverse intersections (last conclusion of Proposition~\ref{GeomImpliqFeuill}), we have, for any $k\ge 0$ and any $n\in\Z$, 
\begin{equation}\label{EqDisjImgPhiz}
\hat f^n(\phi_a) \cap T_1^kT_2T_1^{-1}\phi_a =\emptyset;
\end{equation}
for any $k\in\Z^*$ and any $n\in\Z$,
\begin{equation}\label{EqDisjImgPhia}
\hat f^n(\phi_a) \cap T_1^kT_2\phi_a
\quad = \quad
\hat f^n(T_1^k\phi_b) \cap T_2^{-1}\phi_b
\quad = \quad 
\hat f^n(T_2\phi_b) \cap T_1^k\phi_b
\quad  = \quad \emptyset,
\end{equation}
and similarly,
for any $k, n\in\Z$ with $(k,n)\neq (0,0)$,
\begin{equation}\label{EqDisjImgPhic}
\hat f^n(\phi_a) \cap T_1^k\phi_a = \emptyset.
\end{equation}

\begin{figure}
\begin{center}
\includegraphics[scale=1]{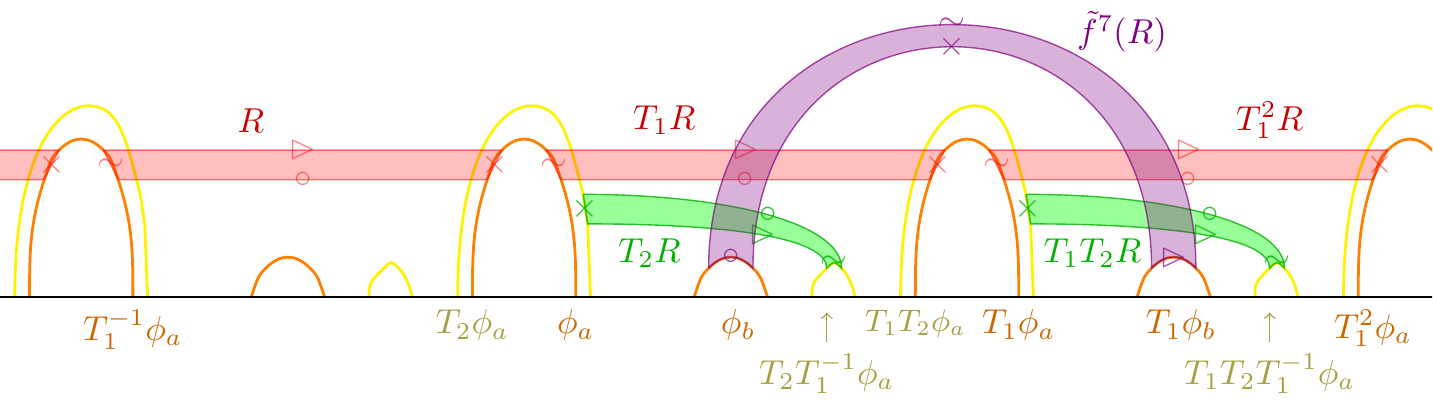}
\caption{Configuration of Theorem \ref{ExistSuperCheval}, the image of the rectangle $R$ by $\tilde f^7$ has Markovian intersections with $T_1R$, $T_1^2R$, $T_2R$ and $T_1T_2R$. It also has Markovian intersections with $T_2T_1R$ and $T_1T_2T_1R$ (not represented in the figure). Note that the relative position of $\phi_a$ and $T_2\phi_a$ is different from Figure~\ref{FigExistSuper1}, but \emph{a priori} possible.}\label{FigExistSuper2}
\end{center}
\end{figure}

The rectangle $\hat f^7(R)$ is disjoint from :
\begin{itemize}
\item $\phi_a$, $T_1\phi_a$ and $T_1^2\phi_a$, by \eqref{EqDisjImgPhic};
\item $T_1T_2\phi_a$ and $T_1^2T_2\phi_a$, by the first intersection of \eqref{EqDisjImgPhia}; 
\item $T_2\phi_a$. Indeed, the closure of the set $L(\phi_b)\cup \hat f^7(R) \cup L(T_1\phi_b)$ has two unbounded connected components in its complement, we denote by $C$ the one containing $\phi_a$. Because of the the orientation of $\partial R$ (which is a consequence of Lemma \ref{LemOrientBordR}), the closure of $C$ contains $\hat f^7(\beta)$, but is disjoint from $\hat f^7(\beta')$ (recall that $\beta$ is a piece of $T_1^{-1}\phi_a$). But by the first intersection of \eqref{EqDisjImgPhia}, $T_2\phi_a$ is disjoint from $\hat f^7(\beta)$, so $\hat f^7(R)$ is disjoint from $T_2\phi_a$, as $T_2 \phi_a \cap C \neq \emptyset$.
\item $T_2T_1^{-1}\phi_a$ and $T_1T_2T_1^{-1}\phi_a$. Indeed, by the same argument about orientation as before, it suffices to prove that the intersections $\hat f^7(\phi_a) \cap T_2T_1^{-1}\phi_a$ and $\hat f^7(T_1^{-1}\phi_a) \cap T_1T_2T_1^{-1}\phi_a$ are empty, which is true by \eqref{EqDisjImgPhiz}.
\end{itemize}

The rectangles $T_1^k R$ (for $k\in\Z$) are disjoint from the sets $T_1^\ell \phi_b$, by \eqref{EqDisjImgPhiq}.

The rectangle $T_2 R$ is disjoint from the sets $\phi_b$ and $T_1\phi_b$. Indeed, by the same reasoning about orientation as before, we just have to prove that the intersections $T_2\hat f^{-7}(T_1\phi_b)\cap \phi_b$ and $T_2\hat f^{-7}(\phi_b) \cap T_1\phi_b$ are empty, which is true by the two last intersections of \eqref{EqDisjImgPhia} .
\medskip

All these facts, combined with Homma's theorem (Theorem~\ref{PropHomma}), imply that the intersections of $\tilde f^7(R)$ with the following sets are Markovian (see Figures~\ref{FigExistSuper2} and \ref{FigExMarkov}): $T_1R$, $T_1^2R$, $T_2 R$, $T_1T_2R$, $T_2T_1R$ and $T_1T_2T_1R$. For example, Homma's theorem asserts that there exists a homeomorphism $h : \wt S\to \R^2$ such that
\[h(\phi_a) = \{0\}\times\R,\ h(T_1\phi_a) = \{1\}\times\R,\ h(\delta_0) = [0,1]\times\{0\},\ h(\delta_1) = [0,1]\times\{1\}.\]
Hence, because $R(\phi_a)\cup T_1R\cup L(T_1\phi_a)$ separates $\phi_b$ and $T_1\phi_b$ (this is a consequence of the previous listed facts),
\[h(\hat f^7(\delta_0))\subset h(\phi_b) \subset ]0,1[ \times (-\infty,0[, \quad 
h(\hat f^7(\delta_1))\subset h(T_0\phi_b) \subset ]0,1[ \times ]1,+\infty[,\]
and similarly, because $R(\phi_b)\cup \hat f^7(R) \cup L(T_1\phi_b)$ separates $\phi_a$ and $T_1\phi_a$, $h(\hat f^7(R)) \subset (0,1)\times\R$. The fact that the other intersections are Markovian can be proved similarly, using the previous listed facts. 
\end{proof}

By a proof which is very similar, we can get the following statement, which is a reformulation of \cite[Section 3]{1803.04557} to fit with the definition of rotational horseshoe we use here.

\begin{theorem}\label{ExistPasSuperCheval}
Let $S$ be an orientable surface, $f\in \Homeo_0(S)$, $\F$ a transverse foliation in the sense of Theorem~\ref{ThExistIstop} and $\alpha : [0,1]\to\dom(\F)$ an $\F$-transverse curve. Denote $\hat\alpha$ a lift of $\alpha$ to the universal cover $\wt\dom(\F)$ of $\dom(\F)$.

Suppose that $\alpha$ is admissible of order 1, and that there exists a deck transformation $T$ of $\wt\dom(\F)$ and $0<t_1<t_2<1$ such that $\hat\alpha$ and $T\hat\alpha$ have an $\F$-transverse intersection at $\hat \alpha(t_2) = T\hat\alpha(t_1)$.

Then, for any $r\ge 2$, $f^{r}$ has a topological horseshoe associated to the deck transformations $T, T^2, \dots, T^r$.
\end{theorem}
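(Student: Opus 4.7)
The statement is essentially Theorem~M of \cite{1803.04557} rephrased in our terminology; my plan is to follow that argument, which also parallels the structure of the proof of Theorem~\ref{ExistSuperCheval} just given, specialized to the single deck transformation $T$.

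I would first build up the necessary admissibilities. The hypothesis that $\hat\alpha$ and $T\hat\alpha$ have an $\F$-transverse intersection at $\hat\alpha(t_2) = T\hat\alpha(t_1)$ projects to an $\F$-transverse self-intersection of $\alpha$ at $\alpha(t_1) = \alpha(t_2)$ in $\dom(\F)$; the loop $\alpha_\circ := \alpha|_{[t_1,t_2]}$ then represents the deck transformation $T$. Proposition~\ref{PropFondLCT2} yields, for every $q \ge 1$, that $\alpha|_{[0,t_1]}(\alpha_\circ)^q\alpha|_{[t_2,1]}$ is admissible of order $q$, and its lift goes from $\hat\alpha(0)$ to $T^{q-1}\hat\alpha(1)$. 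Setting $\phi_a := \phi_{\hat\alpha(0)}$ and $\phi_b := \phi_{\hat\alpha(1)}$, this gives $\hat{f}^q(\phi_a) \cap T^{q-1}\phi_b \neq \emptyset$ for every $q$. Further applications of Proposition~\ref{PropPasFondLCT} (as in the proof of Theorem~\ref{ExistSuperCheval}) allow one to upgrade/translate these to non-empty intersections $\hat{f}^r(\phi_a) \cap T^i\phi_b \neq \emptyset$ for all indices $i$ in the range needed to build the rectangle.

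I would then construct the rectangle $R$ following Section~3.1 of \cite{1803.04557}: define $R_a := \bigcap_{k\in\Z} R(T^k\phi_a)$ and, for each $i \in \{0,\dots,r\}$, let $\X_i$ be the collection of paths joining $T^{-1}\phi_a$ to $\phi_a$ which arise as connected components of $T^i\hat{f}^{-r}(\phi_b)\cap R_a$. Non-emptiness of the $\X_i$ is ensured by the admissibilities of the previous step together with a Jordan-type argument in the spirit of Lemma~\ref{Lemma10}. Choosing disjoint paths $\delta_0 \in \X_0$ and $\delta_r \in \X_r$ bounding an appropriate subregion (in the spirit of Lemma~\ref{LemOrientBordR}), I would take $R$ to be the region bounded by $\delta_0$, $\delta_r$, and pieces of $\phi_a$ and $T^{-1}\phi_a$.

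It remains to verify that $\hat{f}^r(R) \cap T^iR$ is Markovian for every $i = 1, \dots, r$. The image $\hat{f}^r(R)$ is a topological rectangle whose horizontal sides sit on $\phi_b$ and $T^r\phi_b$, while $T^iR$ has vertical sides on $T^{i-1}\phi_a$ and $T^i\phi_a$. Using the $\F$-transverse intersection hypothesis one derives disjointness relations analogous to \eqref{EqDisjImgPhiq}--\eqref{EqDisjImgPhic} in the proof of Theorem~\ref{ExistSuperCheval}, namely $\hat{f}^n(\phi_a) \cap T^k\phi_a = \emptyset$ for $(k,n)\neq(0,0)$, $\hat{f}^n(T^k\phi_b) \cap \phi_b = \emptyset$ for $k \neq 0$, and similar mixed relations; the Markovian conclusion then follows from Homma's theorem (Theorem~\ref{PropHomma}). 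The main obstacle I expect is this simultaneous Markovian verification for all $r$ indices: one must carefully control the cyclic order of the leaves $\{T^k\phi_a, T^k\phi_b\}_{k\in\Z}$ and of the image $\hat{f}^r(\phi_a)$ inside the strip $R_a$, which is the topological core of the construction in \cite{1803.04557}.
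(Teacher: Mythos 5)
Your overall strategy is the same as the paper's (which itself only sketches the $r=2$ case and defers the topological core to \cite{1803.04557}): derive the admissibilities from Proposition~\ref{PropFondLCT2} and Proposition~\ref{PropPasFondLCT}, build a rectangle inside $R_a=\bigcap_k R(T^k\phi_a)$ from components of preimages of the $T^i\phi_b$, and verify the Markovian intersections via the disjointness relations and Homma's theorem. Your first step is correct: inserting $(\alpha_\circ)^q$ gives $\hat f^q(\phi_a)\cap T^{q-1}\phi_b\neq\emptyset$, hence $\hat f^r(\phi_a)\cap T^i\phi_b\neq\emptyset$ for $0\le i\le r-1$ (and, if one also uses the shortcut $\alpha|_{[0,t_1]}\alpha|_{[t_2,1]}$ of Proposition~\ref{PropFondLCT2}, for $i=-1$ as well, which you did not record but which the paper's $r=2$ sketch uses).

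The gap is in your choice of the rectangle. You take $\delta_0\in\X_0$ and $\delta_r\in\X_r$. First, non-emptiness of $\X_r$ does not follow from the admissibilities you established: a component of $T^r\hat f^{-r}(\phi_b)\cap R_a$ joining $T^{-1}\phi_a$ to $\phi_a$ requires $\hat f^r(\phi_a)\cap T^{r}\phi_b\neq\emptyset$ and $\hat f^r(\phi_a)\cap T^{r+1}\phi_b\neq\emptyset$, while your argument only reaches $T^{r-1}\phi_b$ at time $r$ (and nothing forces the orbit to advance faster than one ``copy of $T$'' per application of the loop, so these extra intersections can genuinely fail). Second, even granting it, this is not how the construction of Theorem~\ref{ExistSuperCheval} (or of \cite{1803.04557}, Section~3) proceeds: there the two transversal sides of $R$ come from two \emph{adjacent} families, $\delta_0\in\X_0$ and $\delta_1\in\X_1$, so that $\hat f^r(R)$ has its two short sides on the adjacent leaves $\phi_b$ and $T\phi_b$, and the Markovian crossings of $TR,\dots,T^rR$ are produced by the stretching of the images of the two leaf-sides (this is where $\hat f^r(\phi_a)\cap T^{r-1}\phi_b\neq\emptyset$ is used), together with separation statements of the type ``$R(\phi_a)\cup T R\cup L(T\phi_a)$ separates $\phi_b$ from $T\phi_b$''. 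With your far-apart sides on $\phi_b$ and $T^r\phi_b$, the analogous separation needed for each $i=1,\dots,r$ is not available, so the Markovian verification as in the paper would not go through. The fix is simply to revert to the adjacent pair $\X_0,\X_1$ (adding the $i=-1$ admissibility when $r=2$), after which your plan coincides with the paper's.
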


The proof of a similar statement can be found in \cite{1803.04557}, however, as it is very similar to the one of Theorem~\ref{ExistSuperCheval}, we include a short sketch of proof. 

\begin{proof}[Sketch of proof]
We give a sketch of proof for $r=2$.

\begin{figure}
\begin{center}
\includegraphics[scale=1]{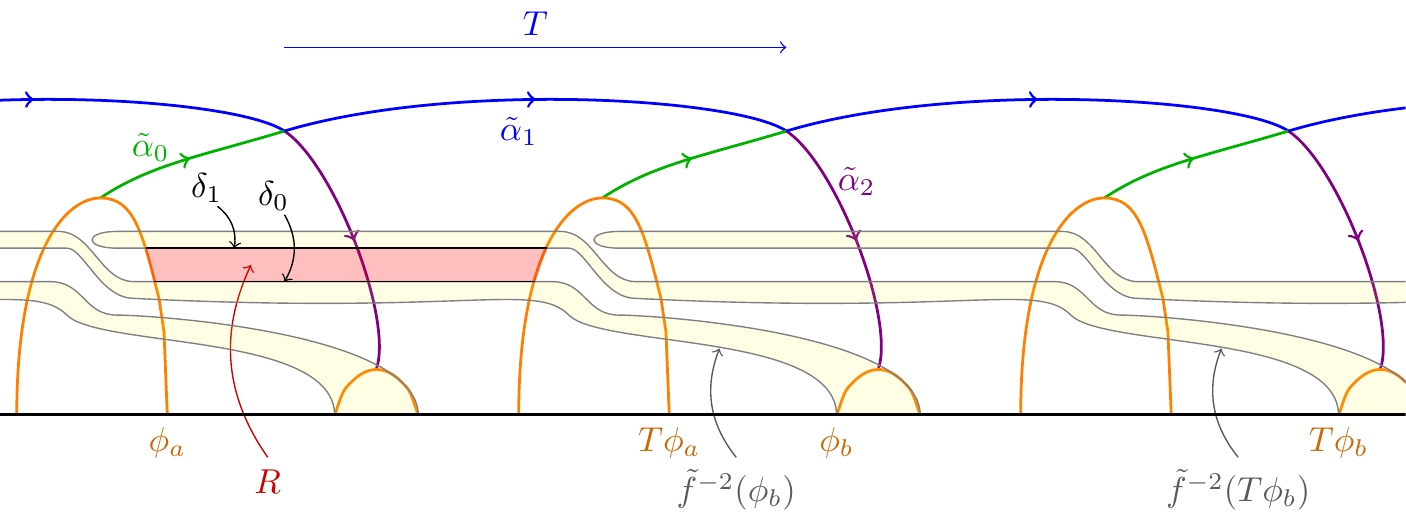}
\caption{Configuration of Theorem \ref{ExistPasSuperCheval}.}\label{FigExistSuper3}
\end{center}
\end{figure}

Denote $\hat\alpha_0 = \hat\alpha|_{[0,t_1]}$, $\hat\alpha_1 = \hat\alpha|_{[t_1,t_2]}$ and $\hat\alpha_2 = \hat\alpha|_{[t_2, 1]}$ (see Figure~\ref{FigExistSuper3}). We also set $\phi_a = \phi_{\hat\alpha(0)}$ and $\phi_b = \phi_{\hat\alpha(1)}$

Then, applying Proposition~\ref{PropFondLCT2}, the paths $\hat\alpha_0\tilde\alpha_1\tilde\alpha_2$ and $\hat\alpha_0 T^{-1}\tilde\alpha_2$ are admissible of order 1, and the path $\hat\alpha_0\hat\alpha_1(T\hat\alpha_1)(T\hat\alpha_2)$ is admissible of order 2. As in the proof of Theorem~\ref{ExistSuperCheval}, it is possible to find two pieces of resp. $\hat f^{-2}(\phi_b)$ and $\hat f^{-2}(T\phi_b)$, each of one meeting $\phi_a$ and $T\phi_a$ only at its endpoints. The set $R$ bounded by these paths and the bounded pieces of $\phi_a$ and $T\phi_a$ linking their ends is a rectangle, such that the intersections $\hat f^2(R) \cap TR$ and $\hat f^2(R) \cap T^2R$ are Markovian.
\end{proof}

\section{Two transverse closed geodesics}\label{LastSection}

Recall that $S$ is an orientable surface of finite type and negative Euler characteristic.
Let $\tilde{\gamma}_1$ and $\tilde{\gamma}_2$ be two geodesic lines of $\tilde{S} = \Hy^2$ which project to closed geodesics of $S$. We denote by $T_i$ the deck transformation associated to $\tilde{\gamma}_i$, for $i=1,2$. For any element $w$ of the semigroup $\langle T_1,T_2\rangle _+$ generated by $T_1$ and $T_2$, we denote by $\tilde{\gamma}(w)$ the geodesic axis of the deck transformation $w$.

In this last section, we prove Theorem~\ref{Th2transverseIntro} of the introduction. As the previous one, it is based on forcing theory of le Calvez-Tal. It deals with the case where in the rotation set, there are two closed geodesics with geometric intersection, each one associated with nonzero rotation speed. Contrary to the last section where we got the existence of a rotational horseshoe, here the proof does not give such an object associated to the deck transformations $T_1$ and $T_2$ (in fact, we even do not know if the two initial rotation vectors are realised by periodic orbits or not), but we get similar consequences.

The following is Theorem~\ref{Th2transverseIntro} of the introduction. 

\begin{theorem} \label{Th2transverse}
Suppose that there exist nonzero rotation vectors of directions $\tilde{\gamma}_1$ and $\tilde{\gamma}_2$ in $\rho(f)$ and that the geodesics $\tilde{\gamma}_1$ and $\tilde{\gamma}_2$ cross. Then, for any element $w$ in $\langle T_1,T_2\rangle _+$, there are nonzero vectors of direction $\tilde{\gamma}(w)$ in $\rho(f)$. 
\end{theorem}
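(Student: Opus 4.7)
The plan is to use the forcing theory of Le Calvez--Tal. Let $I$ be an isotopy from $\Id_S$ to $f$ and $\F$ the associated transverse singular foliation from Theorem~\ref{ThExistIstop}, with canonical lift $\hat f$ to $\wt\dom\F$. For each $i=1,2$, the hypothesis $(\tilde\gamma_i,v_i)\in\rho(f)$ with $v_i>0$ supplies, for arbitrarily large $n$, an $\F$-transverse admissible path $\tilde\beta_n^{(i)}\subset\wt\dom\F$ whose endpoints are separated by approximately $nv_i$ along $\tilde\gamma_i$ and which, by an argument along the lines of Proposition~\ref{ProprealisDist}, remains at bounded distance from $\tilde\gamma_i$. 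A pigeonhole argument then extracts from $\tilde\beta_n^{(i)}$ an elementary admissible sub-path $\tilde\alpha^{(i)}$ of bounded order that realises the deck transformation $T_i$. The overall goal is then, given any word $w=T_{i_1}\cdots T_{i_k}\in\langle T_1,T_2\rangle_+$, to splice appropriate translates of these elementary pieces into a single admissible $\F$-transverse path whose associated deck transformation is $w$.

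The main obstacle will be to produce $\F$-transverse intersections between translates of $\tilde\alpha^{(1)}$ and $\tilde\alpha^{(2)}$: as the introduction explicitly warns (see Figure~\ref{Figlellouch}), the geometric intersection of $\tilde\gamma_1$ and $\tilde\gamma_2$ does not a priori yield an $\F$-transverse intersection of the associated transverse paths. My approach would be to lift everything to the covering $\hat S\to S$ associated to $(\gamma_1,\gamma_2,x_0)$ of Definition~\ref{DefCovering}: by Proposition~\ref{coveringmap1}, $\hat S$ is either a once-punctured torus or a thrice-punctured sphere. In that simpler surface the combinatorial structure of the leaves of the lifted foliation $\hat\F$ reachable from the lifted paths is very constrained, enabling a case analysis in the spirit of Proposition~\ref{GeomImpliqFeuill}: for each possible local configuration of the lifted paths with respect to $\hat\F$ near a lift of an intersection point of $\gamma_1$ and $\gamma_2$, one shows that either the paths themselves or suitable $\F$-equivalent modifications or sub-paths cross $\F$-transversally. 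This is expected to be the technically longest step, divided into numerous sub-cases, as announced in the introduction.

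Once such $\F$-transverse intersections are in place, given $w=T_{i_1}\cdots T_{i_k}\in\langle T_1,T_2\rangle_+$, iterated applications of Proposition~\ref{PropFondLCT} splice translates of the $\tilde\alpha^{(i_j)}$ along the intersections into an admissible transverse loop $\tilde\alpha_w$ of some bounded order $N$ realising $w$. Arranging the construction so that $\tilde\alpha_w$ has an $\F$-transverse self-intersection corresponding to the deck transformation $w$, Proposition~\ref{PropFondLCT2} shows that its $m$-th concatenation is admissible of order $mN$ and realises $w^m$. Admissibility then yields, for each $m$, a point $\tilde y_m$ with $\hat f^{mN}(\tilde y_m)$ at bounded distance from $w^m(\tilde y_m)$. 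Since $w$ is a loxodromic isometry of $\Hy^2$ with axis $\tilde\gamma(w)$, setting $x_m=w^{-\lfloor m/2\rfloor}(\tilde y_m)$ and $n_m=mN$ produces sequences whose endpoints converge to the two endpoints of $\tilde\gamma(w)$ while the projected displacement along $\tilde\gamma(w)$ grows at the positive rate $\ell(\tilde\gamma(w))/N$. This exhibits a nonzero rotation vector of direction $\tilde\gamma(w)$ in $\rho(f)$, concluding the proof.
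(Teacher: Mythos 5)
Your overall skeleton (recurrent orbits realising the two rotation vectors, transverse trajectories for the Le Calvez--Tal foliation, $\F$-transverse intersections, splicing via Proposition~\ref{PropFondLCT}/\ref{PropFondLCT2} to realise any positive word, then extracting the rotation vector) matches the paper's strategy, but there is a genuine gap at exactly the point you flag as the main obstacle. Your proposed resolution --- pass to the covering associated to $(\gamma_1,\gamma_2,x_0)$ and run a case analysis ``in the spirit of Proposition~\ref{GeomImpliqFeuill}'' to show that the paths, or $\F$-equivalent modifications or sub-paths of them, intersect $\F$-transversally --- does not work. First, Proposition~\ref{GeomImpliqFeuill} is about a \emph{closed} transverse loop (a periodic trajectory): its proof decomposes the loop as a word in the two simple loops $\beta_A,\beta_B$ of the punctured covering, and this decomposition is not available here because the orbits realising the rotation vectors need not be periodic (the paper stresses that one does not even know whether the two initial vectors are realised by periodic orbits). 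Second, and more fundamentally, Lellouch's example (Figure~\ref{Figlellouch}) is not just a warning that transversality is hard to detect: it shows that the two transverse trajectories may have \emph{no} $\F$-transverse intersection at all, one being equivalent to a sub-path of the other, so no case analysis on sub-paths or $\F$-equivalent modifications of the given trajectories can succeed.

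What the paper does instead is a dichotomy on the trajectories (conditions \ref{C1}/\ref{C2}). In the bad case \ref{C2} it abandons the idea of intersecting the two trajectories and builds a \emph{new} transverse path $\tilde\alpha$ obtained by concatenating an initial piece of $T_1^{-k_0}\I^{\Z}_{\tilde\F}(\tilde x_2)$, a long segment of $\I^{\Z}_{\tilde\F}(\tilde x_1)$, and a terminal piece of a translate of $\I^{\Z}_{\tilde\F}(\tilde x_2)$; its admissibility is \emph{not} obtained from Proposition~\ref{PropFondLCT} but from a separation argument in $\wt{\dom}\,\F$ (Lemma~\ref{LemNotransautointersection}, using the set $\hat B$ of \eqref{EqDefBHat} and the fact that $\hat B$ and its translate both separate $\hat x_1$ from $\hat f^{m_1q_1-m_2q_2}(\hat x_1)$), together with a notion of essential intersection point (Lemma~\ref{LemEssentialpoints}) to certify the $\F$-transverse intersections of $\tilde\alpha$ with translates of $[\phi_2,T_2^{r_2}\phi'_2]_2$. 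This construction, and the delicate choice of the leaves $\phi_i,\phi'_i$ and of the recurrent points via Proposition~\ref{sublindistance} (needed so that Lemma~\ref{LemRecurrenceleaves} applies and so that the trajectories stay in bounded neighbourhoods of the geodesics), is the core of the proof and is missing from your proposal; your ``pigeonhole extraction of an elementary admissible sub-path realising $T_i$'' is also not what the argument can deliver. Your final limiting step is replaced in the paper by a direct application of Theorem~\ref{ExistPasSuperCheval}, which produces genuine periodic points $\tilde f^{q}(\tilde x)=w(\tilde x)$ and hence Corollary~\ref{Cor2transverse}, from which the theorem follows; that part of your sketch could be repaired, but the treatment of case \ref{C2} cannot, as proposed.
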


In the course of the proof of the theorem, we can also recover the fact that, in this situation, the topological entropy of $f$ is positive. It was already known as a consequence of Theorem \ref{ExistSuperCheval} in the case where one of the closed geodesics $\gamma_1$ or $\gamma_2$ has an autointersection and a consequence of Corollary \ref{entropy} otherwise.

Observe that, if the element $w$ does not belong to the cyclic groups $\langle T_1\rangle $ nor $\langle T_2\rangle $, such a geodesic $\tilde{\gamma}(w)$ has a positive transverse intersection with one geodesic among $\tilde{\gamma}_1$ and $\tilde{\gamma}_2$ and a negative transverse intersection with the other one.  By Proposition \ref{PropRealPtExtRat}, this implies the following corollary.

\begin{corollary} \label{Cor2transverse}
Suppose that there exist nonzero rotation vectors of directions $\tilde{\gamma}_1$ and $\tilde{\gamma}_2$ in $\rho(f)$. Then, for any element $w$ in $\langle T_1,T_2\rangle _+$ which does belong neither to the cyclic group $\langle T_1\rangle $ nor to the cyclic group $\langle T_2\rangle $, there are infinitely many periodic orbits whose rotation vector is in the direction $\tilde{\gamma}(w)$.
\end{corollary}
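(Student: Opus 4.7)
The plan is to deduce the corollary directly from Theorem~\ref{Th2transverse}, the star-shape Theorem~\ref{TheoStar} and case~(iii) of the realisation result Proposition~\ref{PropRealPtExtRat}. Fix $w\in\langle T_1,T_2\rangle_+\setminus(\langle T_1\rangle\cup\langle T_2\rangle)$. First I would verify the geometric observation noted in the paragraph above the statement: since $\tilde{\gamma}_1$ and $\tilde{\gamma}_2$ cross, their endpoints alternate on $\partial\Hy^2$ in the cyclic order $T_1^-,T_2^-,T_1^+,T_2^+$; writing $w$ as an alternating product of nonzero powers of $T_1$ and $T_2$, a standard ping-pong argument on $\partial\Hy^2$ locates the attracting and repelling fixed points $w^\pm$ of $w$ in two distinct ping-pong regions, so that for the orientation of $\tilde{\gamma}(w)$ going from $w^-$ to $w^+$ the oriented geodesics $\tilde{\gamma}_1$ and $\tilde{\gamma}_2$ cross $\tilde{\gamma}(w)$ with opposite signs.

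Next, Theorem~\ref{Th2transverse} produces a nonzero rotation vector $(\tilde{\gamma}(w),v_w)\in\rho(f)$. Since $\tilde{\gamma}(w)$ is the axis of the deck transformation $w$, it projects to a closed geodesic $\gamma(w)$ of $S$ of finite length $\ell=\ell(\gamma(w))$. The star-shape Theorem~\ref{TheoStar} then places the entire segment $\{(\tilde{\gamma}(w),v'):v'\in[0,v_w]\}$ inside $\rho(f)$, so that the set
\[
V \;=\; \big\{\,\ell\,p/q\in(0,v_w] \;\big|\; p,q\in\N,\ \gcd(p,q)=1\,\big\}
\]
is infinite and each $v_0\in V$ is realised in $\rho(f)$ in the direction $\tilde{\gamma}(w)$.

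Finally, for every $v_0=\ell\,p/q\in V$, condition~(iii) of Proposition~\ref{PropRealPtExtRat} holds with $(\alpha,\beta)=\tilde{\gamma}(w)$: the rotation vectors $(\tilde{\gamma}_i,v_i)$ provided by the corollary's hypothesis play the roles of the two $(\alpha_i,\beta_i,v_i)$, with the opposite-orientation crossings ensured by the first step (and, should the natural orientations coming from $\rho(f)$ happen to be the wrong ones, one appeals to Proposition~\ref{PropRhoInv} and works with $f^{-1}$, noting that any periodic orbit of $f^{-1}$ is a periodic orbit of $f$). The proposition then yields a periodic point of $f$ of period $q$ realising the rotation vector $(\tilde{\gamma}(w),v_0)$; since distinct elements of $V$ produce distinct rotation vectors and hence distinct periodic orbits, we obtain infinitely many periodic orbits in the direction $\tilde{\gamma}(w)$. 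The only step that requires genuine work is the hyperbolic-geometric verification of the opposite-orientation crossings; every other step is an immediate invocation of a previously established result.
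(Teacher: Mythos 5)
Your deduction is correct and is essentially the paper's own proof of the corollary: the authors likewise derive it from Theorem~\ref{Th2transverse} via the observation that $\tilde{\gamma}(w)$ crosses one of $\tilde{\gamma}_1,\tilde{\gamma}_2$ positively and the other negatively, followed by Proposition~\ref{PropRealPtExtRat}(iii), and your explicit appeal to Theorem~\ref{TheoStar} merely supplies the infinitely many rational speeds that the paper leaves implicit (the fallback to $f^{-1}$ via Proposition~\ref{PropRhoInv} is never needed, since opposite-sign crossings are exactly what condition~(iii) asks for). One small remark: Theorem~\ref{TheoStar} is stated for closed $S$, whereas this section allows finite-type surfaces; in the closed-geodesic direction $\tilde{\gamma}(w)$ the same segment of speeds follows from Lemma~\ref{LemCompareAnneau} together with the interval structure of the closed-annulus rotation set (which contains $0$ because the boundary is fixed), so nothing essential changes.
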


Observe that the above corollary is equivalent to Theorem \ref{Th2transverse}. Actually, to prove Theorem \ref{Th2transverse}, we will prove Corollary \ref{Cor2transverse}. The rest of this section is devoted to the proof of Theorem \ref{Th2transverse}. In the first subsection, we will distinguish two cases in which the proof must be carried out. The second subsection is devoted to a useful notion that we use. The two following subsections are devoted to the proof of Theorem \ref{Th2transverse} in each of those cases.

\subsection{Two cases}

Fix $i=1,2$. Take a rational number $\frac{p_i}{q_i}>0$ such that $(\tilde{\gamma}_i,\frac{p_i}{q_i} \ell(\gamma_i))$ belongs to $\rho(\tilde{f})$ but is not an extremal point of $\rho(\tilde{f})$.

Recall that $\tilde{f}$ extends continuously to $\tilde S \simeq \overline{ \Hy^2}$ by fixing all the points of $\partial \Hy^2$. Denote by $\gamma_{i,-}$ and $\gamma_{i,+}$ the endpoints of $\tilde{\gamma}_i$ on $\partial \Hy^2$, where $\tilde{\gamma}_i$ is oriented from $\gamma_{i,-}$ to $\gamma_{i,+}$. Denote by $A_{i}$ the closed annulus $\big(\overline{ \Hy^2} \setminus \left\{ \gamma_{i,-},\gamma_{i,+} \right\} \big)/ \langle T_i\rangle $, by $\pi_{i}$ the projection from the interior of $A_i$ to $S$, and by $f_i$ the homeomorphism of $A_i$ induced by $\tilde{f}$ on $A_i$.

\begin{lemma} \label{LemRecurrentpoint}
There exists a $f_{i}$-birecurrent point $x_i$ of $A_i$ with lift $\tilde{x}_{i}$ to $\Hy^2$ such that  the two following properties are satisfied.
\begin{enumerate}
\item $$ \lim_{n \rightarrow \pm \infty} \tilde{f}^n (\tilde{x}_i)=\gamma_{i,\pm}$$
\item $$ \lim_{n \rightarrow \pm \infty} \frac{1}{n} d\big(\pi_{\tilde{\gamma}_i}(\tilde{x}_i),\pi_{\tilde{\gamma}_i}(\tilde{f}^n(\tilde{x}_i))\big) = v_i > \frac{p_i}{q_i} \ell(\gamma_i).$$ 
\item The orbit of $\tilde{x}_i$ under $\tilde{f}$ stays within a bounded distance from the geodesic $\tilde{\gamma}_i$.
\item The closure of the orbit of ${x}_i$ does not contain fixed points of\footnote{Meaning that the closure of the orbit in $S$ does not contain points that lift to fixed points of $\tilde f$.} $\tilde{f}$.
\end{enumerate}
Moreover, if the closed geodesic $\gamma_i$ has an autointersection, we also require the point $x_i$ to have a periodic orbit.
\end{lemma}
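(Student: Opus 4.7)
My plan is to extract $x_i$ either from an ergodic invariant measure realising the maximal speed in direction $\tilde\gamma_i$ (via Proposition~\ref{sublindistance}) or from a periodic orbit (via Proposition~\ref{PropRealPtExtRat}), branching on whether $\gamma_i$ has a self-intersection. Let $v_i := \ell(\gamma_i)\cdot\max\rho(f_i)$. Since $f_i$ is a homeomorphism of the compact annulus $A_i$, its rotation set $\rho(f_i)\subset\R$ is a compact interval and the maximum is attained; by Lemma~\ref{LemCompareAnneau} this shows $(\tilde\gamma_i,v_i)\in\rho(\tilde f)$ and it is an extremal point of $\rho(\tilde f)$. The hypothesis that $(\tilde\gamma_i,\tfrac{p_i}{q_i}\ell(\gamma_i))$ is not extremal, combined with Theorem~\ref{TheoStar}, forces $v_i>\tfrac{p_i}{q_i}\ell(\gamma_i)$.

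Suppose first that $\gamma_i$ has no self-intersection. I would apply Proposition~\ref{sublindistance} to the extremal vector $(\tilde\gamma_i,v_i)$. This yields a point $\tilde x\in\Hy^2$, obtained in the proof as a lift of a $\mu$-typical point of an ergodic $f_i$-invariant probability $\mu$ of rotation number $v_i/\ell(\gamma_i)$; this point already satisfies conditions~1 and~2. The dichotomy in Proposition~\ref{sublindistance} then asserts that either the orbit of $\tilde x$ stays at bounded hyperbolic distance from $\tilde\gamma_i$ with closure in $\overline{\Hy^2}$ disjoint from the interior fixed points of $\tilde f$, in which case we directly get conditions~3 and~4 and set $x_i=\pi(\tilde x)$ (birecurrent by Poincar\'e recurrence applied to $\mu$ on the compact annulus), or else for every rational $r\in(0,v_i/\ell(\gamma_i))$ there is a periodic orbit realising $(\tilde\gamma_i,r\ell(\gamma_i))$, and then choosing $r\in(p_i/q_i,v_i/\ell(\gamma_i))$ yields a periodic $x_i$ for which all four properties are immediate.

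Now suppose $\gamma_i$ has a self-intersection. The last clause of the lemma additionally forces $x_i$ to be periodic, so I would avoid Proposition~\ref{sublindistance} and appeal directly to Proposition~\ref{PropRealPtExtRat}(iii). The self-intersection provides a deck transformation $U\neq\Id$ with $U\tilde\gamma_i\cap\tilde\gamma_i\neq\emptyset$. By equivariance of the rotation set (Proposition~\ref{PropRhoInv}) both $(U\tilde\gamma_i,v_i)$ and $(U^{-1}\tilde\gamma_i,v_i)$ lie in $\rho(\tilde f)$, and a short orientation argument---applying the orientation-preserving isometry $U^{-1}$ to the intersection point $\tilde p\in\tilde\gamma_i\cap U\tilde\gamma_i$ while swapping the roles of the two geodesics---shows that $U\tilde\gamma_i$ and $U^{-1}\tilde\gamma_i$ cross $\tilde\gamma_i$ in opposite senses. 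Fixing any rational $p/q\in(p_i/q_i,v_i/\ell(\gamma_i))$, Theorem~\ref{TheoStar} gives $(\tilde\gamma_i,p\ell(\gamma_i)/q)\in\rho(\tilde f)$, and Proposition~\ref{PropRealPtExtRat}(iii) then yields a periodic point $x_i$ of period $q$ with this rotation vector; its finite orbit has nontrivial rotation (hence is disjoint from the projections of $\tilde f$-fixed points), sits within bounded hyperbolic distance of $\tilde\gamma_i$, and converges to $\gamma_{i,\pm}$ in forward and backward time, so all of conditions~1--4 together with the periodicity requirement hold.

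The step I expect to require the most care is the orientation computation in the self-intersection case, since it is exactly what turns the self-intersection hypothesis into the transversality input needed by Proposition~\ref{PropRealPtExtRat}(iii); the remainder of the argument is assembling results already established in the paper.
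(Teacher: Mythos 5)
Your proposal follows essentially the same route as the paper: in the simple case it applies Proposition~\ref{sublindistance} to an extremal vector in the direction $\tilde\gamma_i$ and uses its dichotomy (with the ergodic measure giving birecurrence and the two-sided limits, and the periodic alternative handled by picking a rational speed above $p_i/q_i\,\ell(\gamma_i)$), while in the self-intersecting case it invokes Proposition~\ref{PropRealPtExtRat}(iii) via deck-invariance of $\rho(f)$ and the opposite crossing senses of $U\tilde\gamma_i$ and $U^{-1}\tilde\gamma_i$, exactly as the paper does (only making explicit the crossing argument the paper leaves implicit). This is correct.
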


\begin{proof}
Suppose first that $\gamma_i$ has no autointersection. Take $w>0$ such that $(\tilde{\gamma}_i,w)$ is an extremal point of $\rho(f)$.
Use Proposition \ref{sublindistance} so that one of the following is true. 
\begin{enumerate}
\item Either there exists a point ${x}_i$, which lifts to a recurrent point of $A_i$, with one lift ${\tilde x}_i\in\Hy^2$ realising the rotation vector $(\tilde{\gamma}_i,w)$. Moreover, the orbit of ${\tilde x}_i$ stays at a bounded distance from the geodesic $\tilde{\gamma}_i$ and the closure of the orbit of ${x}_i$ does not contain fixed point of $\tilde{f}$. In this case, take $v_i=w$.
\item Or, for any $r$ rational strictly smaller than $\frac{w}{\ell(\gamma)}$, there exist a periodic orbit whose rotation number is $(\tilde{\gamma}_i,r \ell(\gamma))$. In this case, take $v_i =r_{0} \ell(\gamma)> \frac{p_i}{q_{i}} \ell(\gamma)$, for some $r_{0}> \frac{p_i}{q_i}$ to find a point which satisfies the lemma. 
\end{enumerate}

If the closed geodesic $\gamma_i$ has a transverse autointersection, fix a number $v=r_i \ell(\gamma)>\frac{p_i}{q_i} \ell(\gamma)$ where $r_i$ is rational and $(\tilde{\gamma}, r_i \ell(\gamma))$ belongs to $\rho(f)$. Then, by Proposition \ref{PropRealPtExtRat}, there exists a point $x_i$ whose orbit is periodic and which realises this rotation vector. This point $x_i$ satisfies the requirements of the lemma.
\end{proof}

We denote by $\F$ a foliation of $S$ so that Theorem \ref{ThExistIstop} is satisfied, by $\mathcal{F}_i$ the lift of the foliation $\mathcal{F}$ to $A_i$, by $\tilde{\F}$ its lift to $\tilde{S}$ and by $\hat{\F}$ its lift to $\widetilde{\dom}\F$. Denote by $\mathcal{I}^{\mathbb{Z}}_{\mathcal{F}_i}(x_i)$ a $\F_i$-transverse trajectory associated to the orbit of $x_i$ under $f_i$. We use similar notation for $\F$, $\tilde{\F}$ or $\hat{\F}$-transverse trajectories. Choose respective lifts $\tilde{x}_1$ and $\tilde{x}_2$ of $x_1$ and $x_2$ to $\tilde{S}$ such that the trajectories $\mathcal{I}^{\mathbb{Z}}_{\tilde{\mathcal{F}}}(\tilde{x}_1)$ and $\mathcal{I}^{\mathbb{Z}}_{\tilde{\mathcal{F}}}(\tilde{x}_2)$ have the same endpoints on $\overline{\Hy}^2$ as the geodesics $\tilde{\gamma}_1$ and $\tilde{\gamma}_2$. In particular, the trajectories $\mathcal{I}^{\mathbb{Z}}_{\tilde{\mathcal{F}}}(\tilde{x}_1)$ and $\mathcal{I}^{\mathbb{Z}}_{\tilde{\mathcal{F}}}(\tilde{x}_2)$ meet at some point $\tilde{y}_0$. Finally, choose respective lifts $\hat{x}_1$ and $\hat{x}_2$ of $\tilde{x}_1$ and $\tilde{x}_2$ to $\widetilde{\dom} \F$ so that the trajectories $\mathcal{I}^{\mathbb{Z}}_{\hat{\mathcal{F}}}(\hat{x}_1)$ and $\mathcal{I}^{\mathbb{Z}}_{\hat{\mathcal{F}}}(\hat{x}_2)$ meet at some lift $\hat{y}_0$ of $\tilde{y}_0$.
We will use the following properties of the transverse trajectory.

\begin{lemma} \label{LemBoundeddistance}
For $i=1,2$, the transverse trajectory $\mathcal{I}^{\mathbb{Z}}_{\tilde{\F}}(\tilde{x}_i)$ can be chosen in such a way that the following properties are satisfied.
\begin{enumerate}
\item There exists a closed neighbourhood $K_i$ of $\mathcal{I}^{\mathbb{Z}}_{{\F}}({x}_i)$ such that the supremum $M_i$ of the diameters of $\I_{\tilde{\F}}(\tilde{z})$, with ${z} \in K_i$, is finite.
\item There exists $R_i>0$ such that the transverse trajectory $\mathcal{I}^{\mathbb{Z}}_{\tilde{\F}}(\tilde{x}_i)$ stays in the $R_i$-neighbourhood of the geodesic $\tilde{\gamma}_i$. 
\end{enumerate}
\end{lemma}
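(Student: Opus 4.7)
Both parts of the lemma should be consequences of a single uniform bound on the diameters of the elementary transverse arcs $\I_{\tilde\F}(\tilde z)$, obtained by constraining $z$ to range over a compact subset of $\dom\F$. The plan is first to locate the orbit of $x_i$ inside such a compact set, then to extract the diameter bound from compactness, and finally to combine this with property~(3) of Lemma~\ref{LemRecurrentpoint} to control the full trajectory.

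For part~1, I would argue as follows. Property~(4) of Lemma~\ref{LemRecurrentpoint} asserts that the closure in $S$ of the orbit of $x_i$ does not meet the projection of $\fix \tilde f$. Since the singular set $S \setminus \dom\F$ equals $\bigcap_{t \in [0,1]} \fix I^t$ and in particular lifts to fixed points of $\tilde f$, this closure is entirely contained in $\dom\F$. By compactness, I can choose a compact neighbourhood $K_i$ of this closure that is still contained in $\dom\F$. On $K_i$, the construction of the transverse trajectory supplied by Theorem~\ref{ThExistIstop} (via the plaque chains of \cite{MR2217051,bguin2016fixed}) can be made to depend continuously on the base point. Lifting this family to $\widetilde{\dom}\,\F$ and then projecting to $\tilde S$ yields a continuous family of paths $\tilde z \mapsto \I_{\tilde\F}(\tilde z)$ over a fundamental domain $\tilde K_i$ of $K_i$. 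Since diameter is upper semicontinuous on the space of continuous paths in $\tilde S$ equipped with the $C^0$ topology, and since $\tilde K_i$ is compact, the supremum $M_i$ of these diameters is finite, which is exactly part~1.

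The main technical point here, and the step I expect to be the most delicate, is the continuity of the choice of transverse path; it should follow from the fact that on a single trivialising chart of $\F$ transverse paths can be selected in an obviously continuous fashion, and the global trajectory is obtained by concatenating such local pieces along a plaque chain whose combinatorial length is uniformly bounded when $z$ stays in $K_i$.

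Part~2 then follows quickly. Property~(3) of Lemma~\ref{LemRecurrentpoint} provides a constant $C_i > 0$ such that $d(\tilde f^n(\tilde x_i), \tilde\gamma_i) \le C_i$ for every $n \in \Z$. The full transverse trajectory $\I^\Z_{\tilde\F}(\tilde x_i)$ is the concatenation of the elementary pieces $\I_{\tilde\F}(\tilde f^n(\tilde x_i))$, each of which has diameter at most $M_i$ by part~1 (as $f^n(x_i) \in K_i$ for every $n \in \Z$) and joins two consecutive points of the orbit of $\tilde x_i$. Therefore every such piece lies inside the $(C_i + M_i)$-neighbourhood of $\tilde\gamma_i$, and setting $R_i := C_i + M_i$ concludes the proof.
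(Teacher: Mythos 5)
Your argument is correct and follows essentially the same route as the paper's proof: Lemma~\ref{LemRecurrentpoint}(4) places the compact closure of the orbit away from the projections of the fixed points of $\tilde f$, a local uniform bound on the (lifts of the) chosen transverse arcs combined with compactness yields $M_i$, and part~2 is then deduced from part~1 together with Lemma~\ref{LemRecurrentpoint}(3). The only cosmetic differences are that the paper works inside $U$, the complement of the projection of $\fix \tilde f$ (which is exactly what Lemma~\ref{LemRecurrentpoint}(4) provides), rather than merely inside $\dom\F$, and it arranges $K_i$ to be a closed neighbourhood of the whole trajectory $\I^{\Z}_{\F}(x_i)$ --- which, once the arcs based at the orbit points are uniformly bounded, lies in a compact set near the orbit closure --- rather than only of the orbit closure, as the statement of the lemma requires.
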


\begin{proof}
Denote by $U \subset S$ the complement of the projection on $S$ of fixed points of $\tilde{f}$. For any point $x\in U$, it is possible to find a neighbourhood $V$ of $x$ so that, changing the trajectories to equivalent ones if necessary, the diameters of the lifts to $\tilde{S}$ of $\I_{\F}(y)$ are uniformly bounded for $y \in V$. As the closure of the orbit of the point $x_i$ is compact and projects to $U$ (Lemma~\ref{LemRecurrentpoint}), we can choose the transverse trajectories of $x_1$ and $x_2$ so that they project on compact sets contained in $U$. Compactness of those trajectories and the fact that the transverse trajectories can be chosen locally bounded yields the first point. 

The second point is a consequence of the first point and the fact that the orbit of $\tilde{x}_i$ stays at a bounded distance from the geodesic $\tilde{\gamma}_i$.
\end{proof}

As $\mathcal{I}_{\hat{\F}}^{\Z}(\tilde{x}_i)$ is an oriented immersed line, there is a natural order relation $<_i$ on this set. By abuse of notation, we also denote by $<_i$ the order relation on $\mathcal{I}_{\tilde{\F}}^{\Z}(\hat{x}_i)$ induced by the orientation on those immersed lines. For any two points $x$ and $y$ on $\mathcal{I}_{\hat{\F}}^{\Z}(\tilde{x}_i)$ (respectively on $\mathcal{I}_{\tilde{\F}}^{\Z}(\hat{x}_i)$), we let
$$ [x,y]_{i}=\left\{ z  \in \mathcal{I}_{\hat{\F}}^{\Z}(\hat{x}_i) \ | \ x \leq_i z \leq_i y \right\}$$
(respectively
 $$ [x,y]_{i}=\left\{ z  \in \mathcal{I}_{\tilde{\F}}^{\Z}(\tilde{x}_i) \ | \ x \leq_i z \leq_i y \right\}).$$
 
For any two leaves $\phi_1$ and $\phi_2$ of $\hat{\F}$ which meet $\mathcal{I}_{\hat{\F}}^{\Z}(\hat{x}_i)$, we set $\phi_1 <_i \phi_2$ if $\phi_1\cap \mathcal{I}_{\hat{\F}}^{\Z}(\hat{x}_i) <_i \phi_2 \cap \mathcal{I}_{\hat{\F}}^{\Z}(\hat{x}_i)$. In this way, we can also define $[\phi_1,\phi_2]_i=[\phi_1\cap \mathcal{I}_{\hat{\F}}^{\Z}(\hat{x}_i) ,\phi_2 \cap \mathcal{I}_{\hat{\F}}^{\Z}(\hat{x}_i)]_i$. We use a similar notation for leaves of $\tilde{\F}$. When we use it in the case of leaves of $\tilde{\F}$, we tacitly choose points in the intersection between the leaf and the trajectory. Each time we use this notation, the choice of those points will be irrelevant.

Finally, for any segment $J$ contained in $\mathcal{I}_{\tilde{\F}}^{\Z}(\tilde{x}_i)$, and for any leaves $\phi_1, \phi_2, \ldots, \phi_n$ of $\tilde{\F}$, we say that $J$ meets the leaves $\phi_1, \phi_2, \ldots, \phi_n$ in this order if there exist points $\tilde{y}_1,\tilde{y}_2,\ldots, \tilde{y}_n$ which belong respectively to $\phi_1 \cap \mathcal{I}_{\tilde{\F}}^{\Z}(\tilde{x}_i), \phi_2 \cap \mathcal{I}_{\tilde{\F}}^{\Z}(\tilde{x}_i), \ldots, \phi_n \cap \mathcal{I}_{\tilde{\F}}^{\Z}(\tilde{x}_i)$ such that the segment $J$ meets the points $\tilde{y}_1,\tilde{y}_2,\ldots ,\tilde{y}_n$ in this order.

The following lemma, which is roughly a simple consequence of Lemma \ref{LemRecurrentpoint} in terms of transverse trajectories, will be useful.

\begin{lemma} \label{LemRecurrenceleaves}
Let $k >0$, $\epsilon =\pm 1$, $i=1,2$ and $0 < v'_i<v_i$. Let $(\phi_j)_{1 \leq j \leq \ell}$ be a sequence of leaves such that the segment $[\tilde{x}_i,\tilde{f}^{k}(\tilde{x}_i)]_i$ meets the leaves $\phi_1,\phi_2,\ldots,\phi_{\ell}$ outside $\tilde{x}_i$ and $\tilde{f}^k(\tilde{x}_i)$ and in this order. Then, for infinitely many $n_i > 0$, there exists $r \geq \frac{v'_i n_i}{\ell(\gamma_i)}$ such that the segment $[\tilde{f}^{\epsilon n_i}(\tilde{x}_i),\tilde{f}^{\epsilon n_i+k}(\tilde{x}_i)]_i$ meets the leaves $T_i^{\epsilon r}\phi_1,T_i^{\epsilon r}\phi_2,\ldots,T_i^{\epsilon r}\phi_{\ell}$ in this order.
\end{lemma}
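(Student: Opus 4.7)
The point $x_i$ provided by Lemma~\ref{LemRecurrentpoint} is $f_i$-birecurrent on $A_i$ and its lift $\tilde{x}_i$ has rotation speed $v_i>\frac{p_i}{q_i}\ell(\gamma_i)$ along $\tilde{\gamma}_i$. I will focus on the case $\epsilon = +1$; the case $\epsilon = -1$ follows by applying the same argument to $f^{-1}$, using the recurrence of $x_i$ under $f_i^{-1}$.

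First, I would use the birecurrence of $x_i$ in $A_i$ to produce an infinite sequence $n_i \to +\infty$ such that $f_i^{n_i}(x_i) \to x_i$ in $A_i$. Lifting to $\Hy^2$, this gives integers $r_i \in \Z$ with $T_i^{-r_i}\tilde{f}^{n_i}(\tilde{x}_i) \to \tilde{x}_i$, equivalently $\tilde{f}^{n_i}(\tilde{x}_i)$ converges to $T_i^{r_i}\tilde{x}_i$ in $\Hy^2$. Since projections on $\tilde{\gamma}_i$ are at bounded Hausdorff distance from points lying close, and since $T_i$ translates $\tilde{\gamma}_i$ by exactly $\ell(\gamma_i)$, we get
\[
\frac{r_i \ell(\gamma_i)}{n_i} = \frac{d\big(\pi_{\tilde{\gamma}_i}(\tilde{x}_i),\,\pi_{\tilde{\gamma}_i}(T_i^{r_i}\tilde{x}_i)\big)}{n_i} \xrightarrow[n_i\to\infty]{} v_i,
\]
using point~(2) of Lemma~\ref{LemRecurrentpoint}. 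In particular, for $n_i$ large enough, $r_i \geq \frac{v'_i n_i}{\ell(\gamma_i)}$, which is the quantitative statement required.

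Next, I would transfer this dynamical recurrence to the transverse trajectory. Since $\tilde{f}$ commutes with $T_i$, we have $\tilde{f}^{n_i+k}(\tilde{x}_i) \to T_i^{r_i}\tilde{f}^{k}(\tilde{x}_i)$ as well. The key regularity input is Lemma~\ref{LemBoundeddistance}(1): on the compact neighbourhood $K_i$ of the projected trajectory, transverse trajectories have uniformly bounded diameter and depend, in a suitable sense, continuously on the base point. More concretely, the standard construction of Theorem~\ref{ThExistIstop} allows one to choose, for any point $z$ close enough to $\tilde{x}_i$, the $\tilde{\F}$-transverse segment from $z$ to $\tilde{f}^k(z)$ to be arbitrarily close to the segment $[\tilde{x}_i, \tilde{f}^k(\tilde{x}_i)]_i$ in the Hausdorff sense, after translation by a deck transformation. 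Applied to $z = T_i^{-r_i}\tilde{f}^{n_i}(\tilde{x}_i)$, this produces a $\tilde{\F}$-transverse segment from $\tilde{f}^{n_i}(\tilde{x}_i)$ to $\tilde{f}^{n_i+k}(\tilde{x}_i)$ arbitrarily close to $T_i^{r_i}[\tilde{x}_i,\tilde{f}^k(\tilde{x}_i)]_i$.

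Finally, because the leaves $\phi_1,\ldots,\phi_\ell$ are met by $[\tilde{x}_i, \tilde{f}^k(\tilde{x}_i)]_i$ \emph{in the open interior}, the translated leaves $T_i^{r_i}\phi_1,\ldots,T_i^{r_i}\phi_\ell$ are met transversally in their order by the translated segment $T_i^{r_i}[\tilde{x}_i, \tilde{f}^k(\tilde{x}_i)]_i$, with positive distance between consecutive crossings and between the first/last crossing and the endpoints. A standard openness argument for transverse intersections of a finite ordered family of leaves with a segment then shows that any sufficiently close $\tilde{\F}$-transverse segment with endpoints close to $T_i^{r_i}\tilde{x}_i$ and $T_i^{r_i}\tilde{f}^k(\tilde{x}_i)$ still crosses $T_i^{r_i}\phi_1,\ldots,T_i^{r_i}\phi_\ell$ in the same order. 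Applying this for all $n_i$ in a cofinal subsequence gives the conclusion.

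The main delicate step is the continuity statement for transverse trajectories: one must justify that ``close base point implies close (for an ordered family of transverse leaf crossings) transverse segment'', using the local boundedness furnished by Lemma~\ref{LemBoundeddistance}(1) together with the canonical choice of transverse trajectories from Theorem~\ref{ThExistIstop}. Once this is secured, the rest is book-keeping on the rotation speed.
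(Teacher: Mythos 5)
Your argument has the same skeleton as the paper's proof: use the birecurrence of $x_i$ in the annulus together with the rotation speed $v_i>v'_i$ to find infinitely many $n_i$ with $\tilde f^{\epsilon n_i}(\tilde x_i)\in T_i^{\epsilon r}(\tilde D)$ for some $r\ge v'_i n_i/\ell(\gamma_i)$ and a small disk $\tilde D$ around $\tilde x_i$, then invoke a local stability property of length-$k$ transverse trajectories and translate by $T_i^{\epsilon r}$. The only real difference is how the stability step is justified. The paper does not prove it: it cites Lemma~17 of \cite{MR3787834}, whose statement is exactly what is needed, namely a disk $\tilde D$ around $\tilde x_i$ such that for every $\tilde y\in\tilde D$ the transverse path $\I^{k}_{\tilde\F}(\tilde y)$ meets $\phi_1,\dots,\phi_\ell$ in this order. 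You instead try to re-derive this from a claim of Hausdorff continuity of a \emph{choice} of transverse trajectory in the base point; that claim is stronger than what is needed, is not what Lemma~\ref{LemBoundeddistance}(1) gives (that lemma only bounds diameters, it contains no continuity), and transverse trajectories are only canonical up to equivalence, so ``choose the segment arbitrarily close in the Hausdorff sense'' is precisely the delicate point you leave unproved. Your subsequent openness argument (a path uniformly close to one that crosses the separating leaves $T_i^{r}\phi_1,\dots,T_i^{r}\phi_\ell$ at interior points still crosses them, and positive transversality forces the same order) is fine, but the cleanest repair is simply to replace the continuity claim by the citation of Lemma~17 of \cite{MR3787834}, as the paper does. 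One more small correction: for $\epsilon=-1$ you should not pass to $f^{-1}$ (the segment $[\tilde f^{-n_i}(\tilde x_i),\tilde f^{-n_i+k}(\tilde x_i)]_i$ is still a \emph{forward} $\tilde\F$-transverse trajectory for $f$); you only need the backward recurrence of the base point, i.e.\ $\tilde f^{-n_i}(\tilde x_i)\in T_i^{-r}(\tilde D)$, and then the same translation argument.
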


\begin{proof}
By Lemma 17 in \cite{MR3787834}, there exists a small disk $\tilde{D}$ around $\tilde{x}_i$ such that, for any point $\tilde{y}$ in $\tilde{D}$, the transverse trajectory $\mathcal{I}^{k}_{\tilde{\F}}(\tilde{y})$ associated to $(\tilde{f}_t(y))_{0 \leq t \leq k}$ meets the leaves $\phi_1,\phi_2, \ldots \phi_{\ell}$ in this order. By Lemma \ref{LemRecurrentpoint}, for infinitely many $n_i >0$, $\tilde{f}^{\epsilon n_i}(\tilde{x}_i) \in T_{i}^{\epsilon r}(\tilde{D})$, with $r \geq \frac{v'_i n_i}{\ell(\gamma_i)}$. Hence the trajectory $\mathcal{I}^{k}_{\tilde{\F}}(\tilde{f}^{\epsilon n_i}(T_{i}^{-\epsilon r}\tilde{x}_i))$ meets the leaves $\phi_1,\phi_2, \ldots \phi_{\ell}$ in this order. Taking the image under $T_{i}^{\epsilon r}$ proves the lemma.
\end{proof}

For $i=1,2$, observe that the trajectory $\mathcal{I}^{\Z}_{\tilde{\F}}(\tilde{x}_i)$ satisfies one of the following conditions:
\begin{enumerate}[label=($C_\arabic*$), ref=($C_\arabic*$)]
\item\label{C1} either there exists a deck transformation $\tau_i \in \pi_1(S) \setminus \langle T_i\rangle $ and a leaf $\phi$ of $\tilde{\F}$ such that $\phi$ meets the trajectories $\mathcal{I}^{\Z}_{\tilde{\F}}(\tilde{x}_i)$ and $\tau_i \mathcal{I}^{\Z}_{\tilde{\F}}(\tilde{x}_i)$;
\item\label{C2} or, for any deck transformation $\tau \in \pi_1(S) \setminus \langle T_i\rangle $, any leaf which meets $\mathcal{I}^{\Z}_{\tilde{\F}}(\tilde{x}_i)$ does not meet $\tau \mathcal{I}^{\Z}_{\tilde{\F}}(\tilde{x}_i)$.
\end{enumerate}

To give some geometric intuition around this notion, observe that condition \ref{C2} amounts to saying that, on the surface $S$, the union of leaves which meet $\mathcal{I}^{\Z}_{\F}(\pi_i(x_i))$ is contained in an annulus which is embedded in $S$.

To carry out the proof of the theorem, we will distinguish the two following cases :
\begin{enumerate}[label=Case \arabic*]
\item\label{Case1}: both trajectories $\mathcal{I}_{\tilde{\mathcal{F}}}^{\mathbb{Z}}(\tilde{x}_1)$ and $\mathcal{I}_{\tilde{\mathcal{F}}}^{\mathbb{Z}}(\tilde{x}_2)$ satisfy the condition \ref{C1}.
\item\label{Case2}: one of the trajectories satisfies \ref{C2}. 
\end{enumerate}

The two following sections are devoted to the proof of Theorem \ref{Th2transverse} in each of those two cases. In the first case, we prove that the trajectories $\mathcal{I}_{\tilde{\F}}^{\Z}(\tilde{x}_1)$ and $\mathcal{I}_{\tilde{\F}}^{\Z}(\tilde{x}_2)$ have an $\tilde{\F}$-transverse intersection, which allows us to prove Theorem~\ref{Th2transverse}. In the second case, however, it is possible that such trajectories never have $\tilde{\F}$-transverse intersection, but it is possible to change one of the trajectories to obtain two trajectories with $\tilde{\F}$-transverse intersection.

The next paragraph is devoted to a notion which will be useful for our proof.

\subsection{Essential intersection points}

Take any two points $\tilde{x}$ and $\tilde{y}$ of $\tilde{S}$ which are not singularities of the foliation $\tilde{\F}$. For any point $\tilde{z}=\I_{\tilde{\F}}^{\Z}(\tilde{x})(t_1)=\I_{\tilde{\F}}^{\Z}(\tilde{y})(t_2)$, with $t_1,t_2 \in \mathbb{R}$, of $\I_{\tilde{\F}}^{\Z}(\tilde{x}) \cap \I_{\tilde{\F}}^{\Z}(\tilde{y})$, we call \emph{lifts of $\I_{\tilde{\F}}^{\Z}(\tilde{x})$ and $\I_{\tilde{\F}}^{\Z}(\tilde{y})$ associated to $\tilde{z}$} any respective lifts $\hat{\I}_x$ and $\hat{\I}_y$ of $\I_{\tilde{\F}}^{\Z}(\tilde{x})$ and $\I_{\tilde{\F}}^{\Z}(\tilde{y})$ to $\widetilde{\dom}\F$ which meet at a lift $\hat{z}=\hat{\I}_x(t_1)=\hat{\I}_y(t_2)$ of $\tilde{z}$. In case of multiple intersection points, \emph{e.g.} when $\tilde{z}$ is an autointersection point of $\I_{\tilde{\F}}^{\Z}(\tilde{x})$, notice that the values of the parameters $t_1$ and $t_2$ are important in this definition. However, to simplify notation, we will frequently drop the mention of those parameters when we use this notion.

\begin{definition}
A point $\tilde{z}$ on $\I_{\tilde{\F}}^{\Z}(\tilde{x}) \cap \I_{\tilde{\F}}^{\Z}(\tilde{y})$ is an \emph{essential intersection point} between $\I_{\tilde{\F}}^{\Z}(\tilde{x})$ and $\I_{\tilde{\F}}^{\Z}(\tilde{y})$ if there exist lifts $\hat{\I}_{x}$ and $\hat{\I}_y$ of $\I_{\tilde{\F}}^{\Z}(\tilde{x})$ and $\I_{\tilde{\F}}^{\Z}(\tilde{y})$ to $\widetilde{\dom}\F$ associated to $\tilde{z}$ such that $\hat{\I}_{x} \setminus \big(\hat{\I}_{x}\cap \hat{\I}_{y}\big)$ has two unbounded components which lie in different connected components of $\widetilde{\dom}\F \setminus \hat{\I}_{y}$.
\end{definition}

Note that this definition is supported by the fact that all transverse trajectories in $\wt{\dom}(\F)$ are proper.
Observe that, if this definition holds, then any two lifts $\hat{\I}_{x}$ and $\hat{\I}_y$ of $\I_{\tilde{\F}}^{\Z}(\tilde{x})$ and $\I_{\tilde{\F}}^{\Z}(\tilde{y})$ associated to $\tilde{z}$ will satisfy the above property.

Two trajectories $\I_{\tilde{\F}}^{\Z}(\tilde{x})$ and $\I_{\tilde{\F}}^{\Z}(\tilde{y})$, with $\tilde{x} \in \tilde{S}$ and $\tilde{y} \in \tilde{S}$, are said to be \emph{geometrically transverse} if there exist $\alpha_x,\alpha_y,\beta_x,\beta_y\in\partial\Hy^2$ such that the three following conditions are satisfied: 
\begin{enumerate}
\item The sequence $\tilde{f}^n(\tilde{x})$ converges to $\alpha_x \in \partial \Hy^2$ when $n \rightarrow -\infty$ and to $\omega_x \in \partial \Hy^2$ when $n \rightarrow +\infty$.
\item The sequence $\tilde{f}^n(\tilde{y})$ converges to $\alpha_y \in \partial \Hy^2$ when $n \rightarrow -\infty$ and to $\omega_y \in \partial \Hy^2$ when $n \rightarrow +\infty$.
\item The geodesic lines $(\alpha_x,\omega_x)$ and $(\alpha_y,\omega_y)$ meet in $\Hy^2$.
\end{enumerate}

\begin{lemma}[Properties of essential intersection points] \label{LemEssentialpoints}
$ $
\begin{enumerate}
\item (symmetry) Let $\tilde{z}$ be an essential intersection point between $\I_{\tilde{\F}}^{\Z}(\tilde{x})$ and $\I_{\tilde{\F}}^{\Z}(\tilde{y})$. Then, for any two lifts $\hat{\I}_{x}$ and $\hat{\I}_y$ of $\I_{\tilde{\F}}^{\Z}(\tilde{x})$ and $\I_{\tilde{\F}}^{\Z}(\tilde{y})$ to $\widetilde{\dom}\F$ associated to $\tilde{z}$, the two unbounded components of $\hat{\I}_{y} \setminus \big(\hat{\I}_{x}\cap \hat{\I}_{y}\big)$ lie in different connected components of $\widetilde{\dom}\F \setminus \hat{\I}_{x}$.
\item (geometrically transverse implies essential) If the trajectories $\I_{\tilde{\F}}^{\Z}(\tilde{x})$ and $\I_{\tilde{\F}}^{\Z}(\tilde{y})$ are geometrically transverse, then there exists an essential intersection point between $\I_{\tilde{\F}}^{\Z}(\tilde{x})$ and $\I_{\tilde{\F}}^{\Z}(\tilde{y})$.
\item ($\F$-transverse intersections) Let $\tilde{z}$ be an essential intersection point between $\I_{\tilde{\F}}^{\Z}(\tilde{x})$ and $\I_{\tilde{\F}}^{\Z}(\tilde{y})$ and let $\hat{\I}_{x}$ and $\hat{\I}_y$ be two lifts of $\I_{\tilde{\F}}^{\Z}(\tilde{x})$ and $\I_{\tilde{\F}}^{\Z}(\tilde{y})$ to $\widetilde{\dom}\F$ associated to $\tilde{z}$. Suppose that the unbounded components $C_2$ and $C'_2$ of $\hat{\I}_{y} \setminus \big(\hat{\I}_{x}\cap \hat{\I}_{y}\big)$ meet respectively leaves $\phi_2$ and $\phi'_2$ of $\hat{\F}$  which do not meet $\hat{\I}_{x}$ and that the unbounded components $C_1$ and $C'_1$ of $\hat{\I}_{x} \setminus \big(\hat{\I}_{x}\cap \hat{\I}_{y}\big)$ meet respectively leaves $\phi_1$ and $\phi'_1$ of $\hat{\F}$  which do not meet $\hat{\I}_{x}$. Then the trajectories $\I_{\tilde{\F}}^{\Z}(\tilde{x})$ and $\I_{\tilde{\F}}^{\Z}(\tilde{y})$ intersect $\tilde{\F}$-transversally at $\tilde{z}$. More precisely, any segment on $\I_{\tilde{\F}}^{\Z}(\tilde{x})$ joining $\phi_1$ and $\phi'_1$ is $\tilde{\F}$-transverse to any segment on $\I_{\tilde{\F}}^{\Z}(\tilde{y})$ joining $\phi_2$ and $\phi'_2$.
\end{enumerate}
\end{lemma}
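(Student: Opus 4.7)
The plan is to handle the three items in turn, each being a different flavour of planar topology argument on the universal cover $\widetilde{\dom}\F$, which is homeomorphic to $\R^2$.

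For item 1 (symmetry), the content is purely topological: $\hat{\I}_x$ and $\hat{\I}_y$ are both properly embedded oriented lines in $\widetilde{\dom}\F \simeq \R^2$, and the essentiality condition says that the two ends of $\hat{\I}_x$ sit in distinct complementary components of $\hat{\I}_y$. I would first pick an auxiliary proper arc $\alpha$ from one end of $\hat{\I}_x$ to the other running through $\hat z$. If the symmetry failed, the two unbounded components of $\hat{\I}_y \setminus (\hat{\I}_x \cap \hat{\I}_y)$ would lie in a single component of $\widetilde{\dom}\F \setminus \hat{\I}_x$. Then $\hat{\I}_y$, which is a proper line, could be completed by a proper arc in that single component into a proper line disjoint from $\hat{\I}_x \cap \hat{\I}_y$ except at $\hat z$; a Jordan-type computation of intersection numbers with $\alpha$ at $\hat z$ contradicts the essentiality hypothesis on $\hat{\I}_x$.

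For item 2 (geometrically transverse $\Rightarrow$ essential), I would combine Lemma \ref{LemBoundeddistance}, which provides bounded neighbourhoods $R_i$ of the geodesics $\tilde\gamma_i$ containing the trajectories, with the fact that $\tilde f^n(\tilde x)$ and $\tilde f^n(\tilde y)$ converge to the prescribed endpoints on $\partial \Hy^2$. Because $(\alpha_x,\omega_x)$ and $(\alpha_y,\omega_y)$ cross, the four points are linked on $\partial \Hy^2$; the trajectories, staying a bounded distance from their geodesics, inherit this linking at infinity, and in particular they must intersect in $\Hy^2$. At any such intersection point the two ends of (a suitable lift of) $\I_{\tilde\F}^\Z(\tilde x)$ approach two points on $\partial\Hy^2$ which are separated by the corresponding lift of $\I_{\tilde\F}^\Z(\tilde y)$, yielding essentiality.

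For item 3 ($\F$-transverse intersection), the idea is to translate the topological separation provided by essentiality into a statement about the order of leaves relative to $\phi = \phi_{\hat z}$. By item 1, both $\hat{\I}_x \setminus (\hat{\I}_x \cap \hat{\I}_y)$ and $\hat{\I}_y \setminus (\hat{\I}_x \cap \hat{\I}_y)$ have unbounded components in different complementary components of each other. Since the leaves $\phi_1, \phi'_1$ on the two unbounded parts of $\hat{\I}_x$ do not meet $\hat{\I}_y$ (and symmetrically for $\phi_2, \phi'_2$), these four leaves lie on a definite side of the respective trajectories. Matching sides via the common leaf $\phi$ and using Definition \ref{DefDessus} then gives, up to swapping primes, that $\phi_2$ is above $\phi_1$ relative to $\phi$ while $\phi'_1$ is above $\phi'_2$ relative to $\phi$ (or the reverse), which is precisely Definition \ref{DefInterTrans} of an $\F$-transverse intersection.

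The main obstacle I expect is item 3: verifying the correct vertical order of the four leaves $\phi_1,\phi'_1,\phi_2,\phi'_2$ relative to $\phi$ requires carefully exploiting both the topological separation of the unbounded components and the hypothesis that the auxiliary leaves avoid the opposite trajectory, so as to rule out the ``tangential'' configuration where $\hat{\I}_x$ and $\hat{\I}_y$ merely share the leaf $\phi$ without actually crossing transversally in the foliated sense.
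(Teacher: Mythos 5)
Your treatments of items 1 and 3 are essentially the paper's own arguments: for item 1 you argue by contradiction by joining the two unbounded pieces of $\hat{\I}_y$ inside the single complementary component of $\hat{\I}_x$ and then running a Jordan-type separation argument (the paper does exactly this; it needs an auxiliary disk to produce a connecting arc meeting $\hat{\I}_y$ only at its endpoints, a point you gloss over but which is a routine fix), and for item 3 you compare the four auxiliary leaves with the leaf $\phi_{\hat z}$ via the ``above/below'' relation, which is precisely the paper's proof.

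The genuine gap is in item 2. Essentiality at an intersection point $\tilde z$ is a condition on the pair of lifts of the two trajectories to $\widetilde{\dom}\F$ that meet at a lift $\hat z$ of $\tilde z$; it is not a condition that can be read off in $\Hy^2$, and it is simply false that \emph{every} intersection point of two geometrically transverse trajectories is essential. Different intersection points of the same two curves in $\tilde{S}$ determine different pairs of lifts in $\widetilde{\dom}\F$ (they differ by deck transformations of $\widetilde{\dom}\F \to \dom\F$): a crossing created by a detour of one trajectory around a non-trivial loop of $\dom\F$ corresponds to a translated lift of the other trajectory whose two ends need not separate the ends of $\hat{\I}_x$ at all. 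The linking of $\alpha_x,\omega_x,\alpha_y,\omega_y$ on $\partial\Hy^2$ does force the trajectories to meet, but the resulting separation statement lives in $\overline{\Hy^2}$ (where, moreover, the trajectories need not be embedded) and does not transfer to the specific lifts attached to an arbitrarily chosen intersection point; note also that Lemma~\ref{LemBoundeddistance} concerns the particular recurrent points $x_1,x_2$ and is not available for the arbitrary trajectories appearing in the statement.

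What is needed, and what the paper does instead, is a global counting argument: because of the linking at infinity the total algebraic intersection number of $\I^{\Z}_{\tilde{\F}}(\tilde{x})$ and $\I^{\Z}_{\tilde{\F}}(\tilde{y})$ equals $\pm 1$; this number decomposes as a sum, over equivalence classes of intersection points (two points being equivalent when they share the same associated pair of lifts), of the algebraic intersection numbers of the corresponding lifted lines; and every inessential class contributes $0$, since for such a class the two ends of $\hat{\I}_x$ lie on the same side of $\hat{\I}_y$. Hence some class must be essential. Your step ``at any such intersection point \dots{} yielding essentiality'' would fail, and without the class decomposition your proposal has no mechanism for locating an essential intersection point.
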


\begin{proof}
\begin{enumerate}
\item Fix two lifts $\hat{\I}_{x}$ and $\hat{\I}_y$ associated to $\tilde{z}$. Note that the fact that $\hat{\I}_{x}$ and $\hat{\I}_y$ are proper implies that if $\hat{\I}_{x} \setminus \big(\hat{\I}_{x}\cap \hat{\I}_{y}\big)$ has two unbounded components, then $\hat{\I}_{y} \setminus \big(\hat{\I}_{x}\cap \hat{\I}_{y}\big)$ also has two unbounded components.

Suppose that the property we want to prove does not hold: the two unbounded components $C_2$ and $C'_2$ of $\hat{\I}_{y} \setminus \big(\hat{\I}_{x}\cap \hat{\I}_{y}\big)$ are contained in the same connected component $L$ of $ \widetilde{\dom\F} \setminus \hat{\I}_{x}$.  

\begin{claim}
There exists an arc $\alpha$ joining the two components $C_2$ and $C'_2$, which meets $\hat{\I}_{y}$ only at its endpoints and which is disjoint from $\hat{\I}_{x}$. 
\end{claim}

\begin{proof}
See Figure~\ref{FigLemEssentialpoints1}.
Take a closed disk $D$ whose interior contains the closure of the union of the bounded components of $\hat{\I}_{y} \setminus \hat{\I}_{x}\cap \hat{\I}_{y}$ and of $\hat{\I}_{x} \setminus \hat{\I}_{x}\cap \hat{\I}_{y}$. Then we claim that the closure of some connected component of $\partial D \setminus (\hat{\I}_{x}\cup \hat{\I}_{y})$ gives the desired path. Indeed, the closure of some connected component of $\partial D \setminus \hat{\I}_{x}$ is contained in $L$ and joins the two unbounded components of $\hat{\I}_{x} \setminus \hat{\I}_{x}\cap \hat{\I}_{y}$: otherwise, the endpoints of those unbounded components would be contain in the exterior of the disk $D$, a contradiction. Among those connected components of $\partial D \setminus \hat{\I}_{x}$ which are contained in $L$ and which join the two unbounded components of $\hat{\I}_{x} \setminus \hat{\I}_{x}\cap \hat{\I}_{y}$, one of those has to meet both $C_2$ and $C'_2$, otherwise the ends of both $C_2$ and $C'_2$ would be contained in the exterior of $D$. Now, the closure of some connected components of $S \setminus (C_2 \cup C'_2)$ has to join $C_2$ and $C'_2$, giving the path $\alpha$ we want.
\end{proof}

\begin{figure}
\begin{center}
\includegraphics[scale=1]{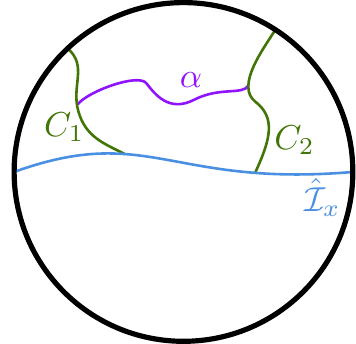}
\caption{Configuration of the proof of 1. of Lemma~\ref{LemEssentialpoints}.}\label{FigLemEssentialpoints1}
\end{center}
\end{figure}

Then some unbounded component $K_1$ of $\widetilde{\dom}\F \setminus \hat{\I}_{y}$ contains $\alpha$. Hence $K_1 \setminus \alpha$ has one bounded component $K_{1,1}$ and an unbounded component $K_{1,2}$  which is surrounded by $C_2 \cup \alpha\cup C'_2$. As the latter set does not meet $\hat{\I}_{x}$, the trajectory $\hat{\I}_{x}$ does not meet $K_{1,2}$. Hence the unbounded components of $\hat{\I}_{x} \setminus \hat{\I}_{x}\cap \hat{\I}_{y}$ have both to lie in the other unbounded component $K_2$ of $\widetilde{\dom}\F \setminus \hat{\I}_{y}$, in contradiction with the definition of essential intersection point.

\item Observe that the algebraic intersection number between the trajectories $\I_{\tilde{\F}}^{\Z}(\tilde{x})$ and $\I_{\tilde{\F}}^{\Z}(\tilde{y})$ is equal to $1$: it is well-defined as those trajectories meet in a compact subset of $\tilde{S}$. If those trajectories had only inessential intersection points, then the algebraic intersection number between those two trajectories would be equal to $0$.

Let us give some details. We define an equivalence relation on intersection points between $\I_{\tilde{\F}}^{\Z}(\tilde{x})$ and $\I_{\tilde{\F}}^{\Z}(\tilde{y})$ (or more precisely on couple of parameters which correspond to an intersection point). Two such intersection points $\tilde{z}_1$ and $\tilde{z}_2$ are equivalent if there exist lifts of $\I_{\tilde{\F}}^{\Z}(\tilde{x})$ and $\I_{\tilde{\F}}^{\Z}(\tilde{y})$ associated to $\tilde{z}_1$ which are also associated to $\tilde{z}_2$. Observe that, if this property is true, then it holds for any lifts of $\I_{\tilde{\F}}^{\Z}(\tilde{x})$ and $\I_{\tilde{\F}}^{\Z}(\tilde{y})$ associated to $\tilde{z}_1$. For each equivalence class $C$ of this equivalence relation, fix lifts $\hat{\I}_{1,C}$ and $\hat{\I}_{2,C}$ of $\I_{\tilde{\F}}^{\Z}(\tilde{x})$ and $\I_{\tilde{\F}}^{\Z}(\tilde{y})$ associated to this class $C$. The algebraic intersection number between $\I_{\tilde{\F}}^{\Z}(\tilde{x})$ and $\I_{\tilde{\F}}^{\Z}(\tilde{y})$ is then the sum over such classes $C$ of the algebraic intersection numbers $n_C$ between $\hat{\I}_{1,C}$ and $\hat{\I}_{2,C}$. For any class corresponding to an inessential intersection point, $n_C=0$.

\item Denote by $\phi_1$ and $\phi'_1$ (respectively $\phi_2$ and $\phi'_2$)  the two leaves met by $\hat{\I}_x$ (respectively $\hat{\I}_y$) mentioned in the statement of the lemma. We choose them in such a way that $\hat{\I}_x$ meets $\phi_1$ first and $\hat{\I}_y$ meets $\phi_2$ first. Denote by $\phi$ the leaf going though the lift $\hat{z}$ of $\tilde{z}$ which belongs to $\hat{\I}_x \cap \hat{\I}_{y}$. The definition of essential intersection point guarantees that, if the leaf $\phi_1$ is above $\phi_2$ with respect to $\phi$, then $\phi'_2$ is above $\phi'_1$ with respect to $\phi$: otherwise $\phi_1$ and $\phi'_1$ would be in the same connected component of the complement of $\hat{\I}_{y}$, a contradiction with the definition of an essential intersection point. In the same way, if the leaf $\phi_1$ is below $\phi_2$ with respect to $\phi$, then $\phi'_2$ is below $\phi'_1$ with respect to $\phi$. This implies that we have an $\tilde{\F}$-transverse intersection.
\end{enumerate}
\end{proof}

\subsection{Case 1}

In this subsection, we suppose that both trajectories $\mathcal{I}_{\tilde{\mathcal{F}}}^{\mathbb{Z}}(\tilde{x}_1)$ and $\mathcal{I}_{\tilde{\mathcal{F}}}^{\mathbb{Z}}(\tilde{x}_2)$ satisfy condition
\begin{enumerate}
\item[\ref{C1}] there exists a deck transformation $\tau_i \in \pi_1(S) \setminus \langle T_i\rangle $ and a leaf $\phi$ of $\tilde{\F}$ such that $\phi$ meets the trajectories $\mathcal{I}^{\Z}_{\tilde{\F}}(\tilde{x}_i)$ and $\tau_i \mathcal{I}^{\Z}_{\tilde{\F}}(\tilde{x}_i)$.
\end{enumerate}
For $i=1, 2$, recall that, by Lemma \ref{LemBoundeddistance}, the trajectory $\mathcal{I}_{\tilde{\mathcal{F}}}^{\mathbb{Z}}(\tilde{x}_i)$ stays at distance strictly less than $R_{i}$ from the geodesic $\tilde{\gamma}_i$. For any subset $A$ of $\Hy^2$ and any real number $R>0$, we let
\begin{equation}\label{EqDesNbh}
A_{R}=\left\{ \tilde{x} \in \Hy^2 \ | \ d(\tilde{x},A) < R \right\}.
\end{equation}

For notational convenience, we will identify the indices $i=1,2$ with an element of $\Z/2$.

The heart of the proof is to find suitable leaves of the transverse foliation in $\tilde S$ so that some orbits realising the rotation vectors in resp. directions $T_1$ and $T_2$ have an $\F$-transverse intersection (Paragraph~\ref{SubSecLeaves1}). Once finished this preparatory step, the two following paragraphs --- still quite technical -- are rather straightforward.

\subsubsection{Leaves and trajectories}\label{SubSecLeaves1}

In this section, we state some preliminary results on the possible behaviours of the leaves. 

Take a point $\tilde{x} \in \Hy^2$ and suppose that the $\tilde{\F}$-transverse trajectory $\I^{\Z}_{\tilde{\F}}(\tilde{x})$ has an $\omega$-limit set in $\overline{\Hy^2}$ which is reduced to a point $\gamma_+$ of $\partial \Hy^2$ and has an $\alpha$-limit set in $\overline{\Hy^2}$ which is reduced to a point $\gamma_-$ of $\partial \Hy^2$ which is different from $\gamma_{+}$. 

Denote by $S_L$ (respectively $S_{R}$) the segment of $\partial \Hy^2$ which is on the left (resp. on the right) of the geodesic joining $\gamma_{-}$ to $\gamma_{+}$. Denote by $L(\I^{\Z}_{\tilde{\F}}(\tilde{x}))$ (resp. $R(\I^{\Z}_{\tilde{\F}}(\tilde{x}))$) the unbounded connected component of $\Hy^{2} \setminus \I_{\tilde{\F}}^{\Z}(\tilde{x})$ whose trace on $\partial{\Hy^2}$ coincides with $S_L$ (resp. $S_R$).

\begin{lemma} \label{LemBoundedend}
Let $\phi$ be a leaf of $\tilde{\F}$. Suppose that the leaf $\phi$ meets the trajectory $\I_{\tilde{\F}}^{\Z}(\tilde{x})$ in two points $\tilde{z}_1$ and $\tilde{z}_2$. Then one unbounded component of $\phi \setminus \left\{ \tilde{z}_1,\tilde{z}_2 \right\}$ is contained in a disk bounded by a closed piece of $\I^{\Z}_{\tilde{\F}}(\tilde{x})$.
\end{lemma}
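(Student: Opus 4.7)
The plan is to exhibit a Jordan curve made of a subarc of $\phi$ and a subarc of $\I^{\Z}_{\tilde{\F}}(\tilde{x})$, apply Jordan's theorem in $\Hy^2 \simeq \R^2$ to obtain a bounded topological disk $D$, and use the $\tilde{\F}$-transversality to show that one unbounded component of $\phi \setminus \{\tilde{z}_1, \tilde{z}_2\}$ is trapped inside $D$. First, I would parametrize the trajectory so that $\tilde{z}_1 = \I(s_1)$ and $\tilde{z}_2 = \I(s_2)$ with $s_1 < s_2$, denote $I_0 := \I([s_1, s_2])$, and let $\phi_0$ denote the bounded subarc of $\phi$ between $\tilde{z}_1$ and $\tilde{z}_2$. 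The set $\phi_0 \cup I_0$ is a compact continuum in $\Hy^2$; being the image of a closed loop, its complement has at least one bounded connected component. If $\phi_0 \cup I_0$ is not itself a simple closed curve, I would extract a Jordan sub-loop $J$ from it by walking along $\phi_0$ from $\tilde{z}_1$ until meeting $I_0$ for the first time, switching to $I_0$ there, and iterating this procedure. The resulting Jordan curve $J$ bounds a topological closed disk $D \subset \Hy^2$ whose boundary splits into a subarc $\phi' \subset \phi_0$ and a closed subarc $I' \subset I_0 \subset \I^{\Z}_{\tilde{\F}}(\tilde{x})$.

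The heart of the argument is then a local analysis at the two endpoints of $\phi'$ on $J$: at each such endpoint, $\I^{\Z}_{\tilde{\F}}(\tilde{x})$ crosses $\phi$ transversally from $L(\phi)$ to $R(\phi)$, so the four local quadrants cut out by $\phi$ and the trajectory are in well-determined positions relative to $D$ and its complement. Tracking which quadrant coincides locally with $D$, I would identify which of the two half-lines extending $\phi$ beyond $\tilde{z}_1$ or beyond $\tilde{z}_2$ (away from $\phi_0$) must enter the interior of $D$ in a neighbourhood of its emanation point. This half-line then cannot exit $D$: crossing $\phi' \subset \phi$ is forbidden by injectivity of $\phi$, and crossing $I'$ would force an additional intersection of $\phi$ with $\I^{\Z}_{\tilde{\F}}(\tilde{x})$ lying on this half-line, which we rule out (after possibly replacing $\tilde{z}_1$ and $\tilde{z}_2$ by two outermost intersection points of $\phi$ with the trajectory on either side of $\phi_0$).

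The main obstacle will be the orientation bookkeeping in the local-quadrant step: depending on whether $\tilde{z}_1$ precedes $\tilde{z}_2$ along $\phi$ in the positive or negative direction, and on which of the two endpoints of $\phi'$ inherits the label $\tilde{z}_1$ versus $\tilde{z}_2$ after the extraction of the Jordan sub-loop, several configurations appear, and each of them must be checked separately to confirm that the corresponding half-line genuinely enters $D$ rather than escaping through the opposite quadrant.
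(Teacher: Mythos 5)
Your first half is essentially the paper's first step, but your proof stops short of what the lemma actually asserts, and the missing part is not cosmetic. You end with a half-leaf trapped in a disk $D$ whose boundary is a Jordan curve made of a leaf arc $\phi'$ \emph{and} a trajectory arc $I'$. The lemma claims containment in a disk bounded by a closed piece of $\I^{\Z}_{\tilde{\F}}(\tilde{x})$ alone, and this distinction is used later: a bounded complementary component of a compact sub-arc of the trajectory is automatically disjoint from the unbounded components $R(\I^{\Z}_{\tilde{\F}}(\tilde{x}))$ and $L(\I^{\Z}_{\tilde{\F}}(\tilde{x}))$ (a connected unbounded set entering it would have to cross the trajectory), whereas your disk $D$ can perfectly well meet $R(\I^{\Z}_{\tilde{\F}}(\tilde{x}))$ through the leaf part $\phi'$ of its boundary; this disjointness is exactly what is invoked in the proofs of Lemmas~\ref{LemEndleaves} and \ref{LemGoodleaves}. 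To pass from your conclusion to the stated one, the paper adds a second step that your proposal omits entirely: by the same local crossing analysis, one of the two trajectory rays beyond the segment $[\tilde{z}_1,\tilde{z}_2]$ of $\I^{\Z}_{\tilde{\F}}(\tilde{x})$ also enters the region bounded by the leaf segment and the trajectory segment near $\tilde{z}_1$ or $\tilde{z}_2$; since the $\alpha$- and $\omega$-limits of the trajectory are single points of $\partial\Hy^2$, this ray cannot stay in that bounded region and must meet the trajectory segment again at a first point $\tilde{z}'_i$, and the resulting compact piece of trajectory is what bounds the disk containing the trapped half-leaf. Tellingly, your argument never uses the standing hypothesis that the trajectory converges to two points of $\partial\Hy^2$, while the paper's proof uses it precisely at this step; a proof of the statement as written cannot avoid some such input.

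A secondary issue: your Jordan-curve extraction may produce $\phi'$ whose endpoints are intersection points of $\phi_0$ with $I_0$ interior to $\phi_0$, in which case $\tilde{z}_1$ and $\tilde{z}_2$ need not lie on $\partial D$ at all, and the quadrant analysis no longer concerns the half-lines of $\phi$ beyond $\tilde{z}_1$, $\tilde{z}_2$ but sub-arcs of $\phi_0$. Your fallback of replacing $\tilde{z}_1,\tilde{z}_2$ by outermost intersection points handles extra crossings on the outer half-leaves, not the crossings inside $\phi_0$ that cause this problem (and outermost points need not exist if the crossings are infinite in number). The paper's own write-up is also somewhat cavalier about extra intersection points, so this is a lesser concern than the first one, but as written your local step does not control the objects the lemma speaks about in that configuration.
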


\begin{proof}
Let $S_I$ be the segment of trajectory between $\tilde{z}_1$ and $\tilde{z}_2$ and $S_\phi$ be the segment between $\tilde{z}_1$ and $\tilde{z}_2$ on the leaf $\phi$ (see Figure~\ref{FigLemBoundedend}). Denote by $Ext$ the unbounded component of the complement in $\Hy^2$ of $S_I \cup S_\phi$ and $Int$ be the complement in $\Hy^2$ of $Ext$. Observe that one of the two connected components of $\phi \setminus S_\phi$, which we call $\psi$, is contained in $Int$. Indeed, otherwise both connected components of $\phi \setminus S_\phi$ would be contained in $Ext$, and we would find two simple paths $\alpha$ and $\alpha'$ contained in $Ext$ and such that $\alpha\cup\alpha'\cup \phi$ separates $\Hy^2$ in two connected components, each one intersecting $\I^{\Z}_{\tilde{\F}}(\tilde{x})$. This would lead to a contradiction, as $S_I$ cannot cross $\alpha$ nor $\alpha'$, and has to cross $\phi$ positively twice.

Moreover, one of the connected components of $\I_{\tilde{\F}}^{\Z}(\tilde{x}) \setminus S_I$ meets $Int$ in a neighbourhood of its end $\tilde{z}_i$, with $i=1$ or $i=2$. Take the closest point $\tilde{z}'_i$, for the order on the trajectory, to $\tilde{z}_i$ on this connected component which meets  $S_{I}$ (such a point exists as the $\alpha$-limit and $\omega$-limit sets of the trajectory lie on $\partial \Hy^2$ so that this connected component cannot remain in $Int$).  Let $S'_{I}$ be the segment joining $\tilde{z}_i$ to $\tilde{z}'_i$ on the trajectory. Then the half-leaf $\psi$ does not meet the unbounded component of $S_{I} \cup S'_I$.
\end{proof}

\begin{figure}[ht]
\begin{minipage}{.47\linewidth}
\begin{center}
\includegraphics[scale=1]{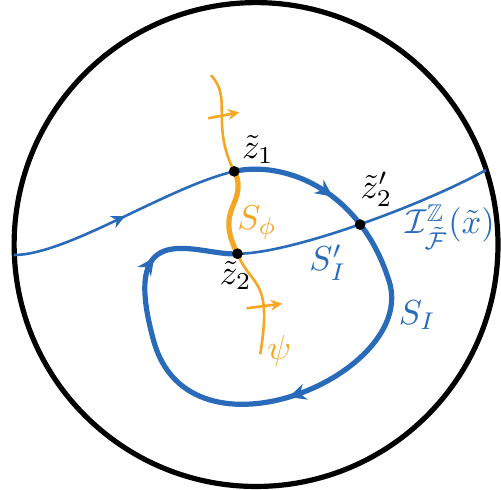}
\caption{\label{FigLemBoundedend}Configuration of the proof of Lemma~\ref{LemBoundedend}.}
\end{center}
\end{minipage}\hfill
\begin{minipage}{.47\linewidth}
\begin{center}
\includegraphics[scale=1]{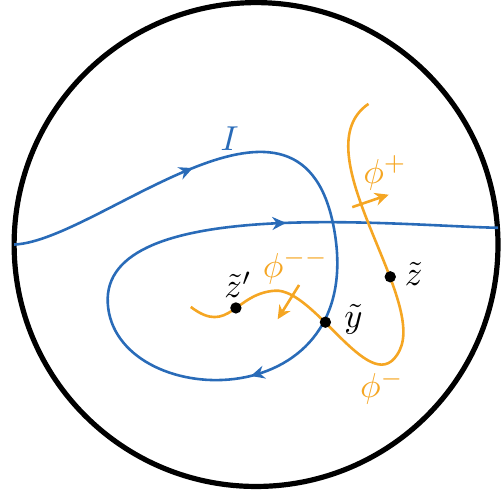}
\caption{\label{FigEndleaves}A possible configuration of the proof of Lemma~\ref{LemEndleaves}.}
\end{center}
\end{minipage}
\end{figure}

For any leaf $\phi$ of $\tilde{\F}$, we call \emph{neighbourhood of $+\infty$} (respectively $-\infty$) in $\phi$ any half-leaf contained in $\phi$ which contains all the points after (resp. before) some point of $\phi$ for the order relation induced by the orientation of $\phi$.

\begin{lemma} \label{LemEndleaves}
Let $\phi$ be a leaf of $\tilde{\F}$ which contains some point $\tilde{z}$ of $R(\I^{\Z}_{\tilde{\F}}(\tilde{x}))$. Let $\phi_{+}$ be the connected component of $\phi \setminus \left\{ \tilde{z} \right\}$ which contains the points after $\tilde{z}$ on $\phi$ and let $\phi_-$ be the other connected component of $\phi \setminus \left\{ \tilde{z} \right\}$.
\begin{enumerate}
\item If the half-leaf $\phi_+$ meets $\I_{\tilde{\F}}^{\Z}(\tilde{x})$, then  either $\phi_{+}$ is bounded in $\Hy^2$  or a neighbourhood of $+\infty$ in $\phi$ is contained in $L(\I^{\Z}_{\tilde{\F}}(\tilde{x}))$. Moreover, the intersection $\overline{\phi_+}\cap \overline{R(\I^{\Z}_{\tilde{\F}}(\tilde{x}))}$ is a segment of $\phi_+$.
\item If the half-leaf $\phi_-$ meets $\I_{\tilde{\F}}^{\Z}(\tilde{x})$, then the $\alpha$-limit set  of $\phi$ in $\overline{\Hy^2}$ does not meet $\partial \Hy^2$. Moreover, $\phi_-$ does not meet $L(\I^{\Z}_{\tilde{\F}}(\tilde{x}))$, and the intersection $\overline{\phi_-}\cap \overline{R(\I^{\Z}_{\tilde{\F}}(\tilde{x}))}$ is a segment of $\phi_-$.
\end{enumerate}
\end{lemma}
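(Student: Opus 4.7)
The proof rests on two tools: the local structure of positively $\tilde{\F}$-transverse crossings and Lemma~\ref{LemBoundedend}. The key local observation is that at any crossing $\tilde w$ of $\phi$ with $\I^{\Z}_{\tilde{\F}}(\tilde x)$, the positive transversality (past of $\I^{\Z}_{\tilde{\F}}(\tilde x)$ in $L(\phi_{\tilde w})$, future in $R(\phi_{\tilde w})$) combined with the canonical orientation of $\Hy^2$ forces the forward direction of $\phi$ to cross $\I^{\Z}_{\tilde{\F}}(\tilde x)$ from the right-local side to the left-local side; the backward direction reverses these roles.

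For point 1, since $R(\I^{\Z}_{\tilde{\F}}(\tilde x))$ is open and contains $\tilde z$, a neighbourhood of $\tilde z$ in $\phi_+$ is contained in $R(\I^{\Z}_{\tilde{\F}}(\tilde x))$. Assuming $\phi_+$ meets $\I^{\Z}_{\tilde{\F}}(\tilde x)$, I set $\tilde w_1 = \inf(\phi_+ \cap \I^{\Z}_{\tilde{\F}}(\tilde x))$; then $\phi|_{[\tilde z, \tilde w_1]} \subset \overline{R(\I^{\Z}_{\tilde{\F}}(\tilde x))}$, and just after $\tilde w_1$ the leaf enters a connected component $C$ of $\Hy^2\setminus \I^{\Z}_{\tilde{\F}}(\tilde x)$ on the left-local side. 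If $\phi_+$ has no further crossing, then $\phi_+|_{(\tilde w_1,+\infty)} \subset C$; if $C$ is bounded this yields case~(a), and if $C$ is unbounded then $C = L(\I^{\Z}_{\tilde{\F}}(\tilde x))$, giving case~(b). If instead $\phi_+$ has another crossing $\tilde w_2 > \tilde w_1$, then Lemma~\ref{LemBoundedend} applied to $\{\tilde w_1, \tilde w_2\}$ forces one of the two unbounded components of $\phi \setminus \{\tilde w_1, \tilde w_2\}$ to be bounded in $\Hy^2$; iterating over successive crossings, the tail past the last crossing is either eventually contained in a bounded disk (case~(a)) or escapes to $L(\I^{\Z}_{\tilde{\F}}(\tilde x))$ (case~(b)). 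The equality $\overline{\phi_+}\cap \overline{R(\I^{\Z}_{\tilde{\F}}(\tilde x))} = \phi|_{[\tilde z, \tilde w_1]}$ follows from the local rule: once $\phi$ leaves $R(\I^{\Z}_{\tilde{\F}}(\tilde x))$ at $\tilde w_1$ in the forward direction, any re-entry would require a crossing in the forbidden $L$-to-$R$ direction.

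For point 2, I run the analogous argument in the backward direction. Assume $\phi_-$ meets $\I^{\Z}_{\tilde{\F}}(\tilde x)$, and let $\tilde w = \sup(\phi_- \cap \I^{\Z}_{\tilde{\F}}(\tilde x))$ be the crossing closest to $\tilde z$. Then $\phi|_{[\tilde w, \tilde z]} \subset \overline{R(\I^{\Z}_{\tilde{\F}}(\tilde x))}$, and just before $\tilde w$ in the backward orientation the leaf enters the right-local side, which globally is either $R(\I^{\Z}_{\tilde{\F}}(\tilde x))$ or a bounded component; in particular $\phi_-$ avoids $L(\I^{\Z}_{\tilde{\F}}(\tilde x))$. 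A symmetric application of Lemma~\ref{LemBoundedend} to pairs of backward crossings shows that $\overline{\phi_-}\cap\overline{R(\I^{\Z}_{\tilde{\F}}(\tilde x))}$ is a segment. For the $\alpha$-limit statement, suppose for contradiction that the $\alpha$-limit of $\phi$ meets $\partial \Hy^2$ at a point $x$; since $\phi_-$ avoids $L(\I^{\Z}_{\tilde{\F}}(\tilde x))$ and bounded components of $\Hy^2 \setminus \I^{\Z}_{\tilde{\F}}(\tilde x)$ cannot accumulate on $\partial \Hy^2$, we must have $x \in \overline{S_R}$. Combining this end behaviour with the crossing $\tilde w$ and the analysis of $\phi_+$ provided by point~1, I exhibit a pair of crossings of $\phi$ with $\I^{\Z}_{\tilde{\F}}(\tilde x)$ both of whose complementary ends are unbounded in $\Hy^2$, contradicting Lemma~\ref{LemBoundedend}.

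The principal technical subtlety is that when $\I^{\Z}_{\tilde{\F}}(\tilde x)$ has self-intersections, the connected components of $\Hy^2 \setminus \I^{\Z}_{\tilde{\F}}(\tilde x)$ include bounded regions, and at a given crossing of $\phi$ the ``left-local'' and ``right-local'' sides may coincide with $L(\I^{\Z}_{\tilde{\F}}(\tilde x))$, $R(\I^{\Z}_{\tilde{\F}}(\tilde x))$, or a bounded component. Carefully tracking this local-to-global correspondence through each crossing, and combining it with Lemma~\ref{LemBoundedend}, is the crux of the argument.
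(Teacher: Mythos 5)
Your skeleton (first/last crossing, then Lemma~\ref{LemBoundedend}) is the same as the paper's, but the step you lean on throughout — passing from the \emph{local} left/right sides of $\I^{\Z}_{\tilde{\F}}(\tilde{x})$ at a crossing to the \emph{global} components $L(\I^{\Z}_{\tilde{\F}}(\tilde{x}))$ and $R(\I^{\Z}_{\tilde{\F}}(\tilde{x}))$ — is a genuine gap. When the trajectory has self-intersections, the global component sitting on the right-local side of a point of the trajectory may perfectly well be part of $L(\I^{\Z}_{\tilde{\F}}(\tilde{x}))$, and the left-local side part of $R(\I^{\Z}_{\tilde{\F}}(\tilde{x}))$: along a loop or hairpin of the trajectory poking into $L$ (resp.\ $R$), the ``wrong'' global component lies on one of the two local sides. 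So your claims ``if $C$ is unbounded then $C=L$'' (point 1), ``any re-entry would require a crossing in the forbidden $L$-to-$R$ direction'' (the segment claim), and above all ``the right-local side \dots\ globally is either $R$ or a bounded component; in particular $\phi_-$ avoids $L$'' (point 2) are exactly the statements that need proof, and the local crossing rule alone does not give them — the last one is even false as a statement about local sides. What the paper uses instead is global: positive transversality forces all crossings of $\phi$ with the trajectory to have the same sign, and since $\I^{\Z}_{\tilde{\F}}(\tilde{x})$ is a proper line, the algebraic intersection number of a subarc of $\phi$ with it depends only on the components of the subarc's endpoints. Hence a subarc of $\phi$ with both endpoints in $R(\I^{\Z}_{\tilde{\F}}(\tilde{x}))$ has zero crossings, and a subarc joining a point of $L(\I^{\Z}_{\tilde{\F}}(\tilde{x}))$ to $\tilde{z}\in R(\I^{\Z}_{\tilde{\F}}(\tilde{x}))$ would have intersection number of the forbidden sign. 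This is what excludes the forward tail returning to $R$, and excludes the unbounded backward piece of $\phi_-$ lying in $R$ or in $L$; you never invoke it.

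Two further problems in point 2. Your contradiction for the $\alpha$-limit statement needs \emph{two} crossings to feed to Lemma~\ref{LemBoundedend}, but nothing in point 2 guarantees that $\phi_+$ meets the trajectory at all, so $\phi$ may have a single crossing and your argument does not start; the paper handles precisely this case by the trichotomy above (the unbounded backward piece is neither in $L$ nor in $R$, hence lies in a bounded complementary component, so the $\alpha$-limit stays in $\Hy^2$). Finally, in the several-crossings case of point 1 you still have to say which unbounded end Lemma~\ref{LemBoundedend} traps: this uses $\tilde{z}\in R(\I^{\Z}_{\tilde{\F}}(\tilde{x}))$ (if the backward end were the trapped one, $\tilde{z}$ would lie in a bounded component of the complement, which is absurd), a point your ``iteration over successive crossings'' — which also tacitly assumes a last crossing exists — leaves open.
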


 Of course, we have a symmetric statement for a leaf which contains a point of $L(\I^{\Z}_{\tilde{\F}}(\tilde{x}))$ by exchanging $R$ with $L$, $\alpha$ with $\omega$ and $+$ with $-$.

\begin{proof}
A possible configuration for this proof is depicted in Figure~\ref{FigEndleaves}.

If the half-leaf $\phi_+$ has at two intersection points with the trajectory $\I_{\tilde{\F}}^{\Z}(\tilde{x})$, then, by Lemma \ref{LemBoundedend} and as $\tilde{z} \in R(\I_{\tilde{\F}}^{\Z}(\tilde{x}))$, the $\omega$-limit set of $\phi$ is bounded and the first point holds. If the leaf $\phi_{+}$ has exactly one intersection point $\tilde{y}$ with the trajectory $\I_{\tilde{\F}}^{\Z}(\tilde{x})$ then the unbounded component of $\phi_{+} \setminus \left\{ \tilde{y} \right\}$ has to be contained in a connected component of $\Hy^2 \setminus \I_{\tilde{\F}}^{\Z}(\tilde{x})$ which is different from $R(\I^{\Z}_{\tilde{\F}}(\tilde{x}))$: either it is contained in $L(\I^{\Z}_{\tilde{\F}}(\tilde{x}))$, or it stays in a bounded connected component of $\Hy^2\setminus \I^{\Z}_{\tilde{\F}}(\tilde{x})$. This proves the first point.

If the half-leaf $\phi_{-}$ has two intersection points with the trajectory $\I_{\tilde{\F}}^{\Z}(\tilde{x})$, then Lemma~\ref{LemBoundedend} implies the second point. Suppose that there is exactly one intersection point $\tilde{y}$ between the half-leaf $\phi_{-}$ and the trajectory $\I_{\tilde{\F}}^{\Z}(\tilde{x})$. Let $\phi_{--}$ be the unbounded component of $\phi_- \setminus \left\{ \tilde{y} \right\}$ and take a point $\tilde{z}'$ in $\phi_{--}$. The half-leaf $\phi_{--}$ cannot be contained in $R(\I_{\tilde{\F}}^{\Z}(\tilde{x}))$. Let us call $\phi_{\tilde{z}'\tilde{z}}$ the segment of $\phi$ between the points $\tilde{z}'$ and $\tilde{z}$. If $\phi_{--}$ was contained in $L(\I_{\tilde{\F}}^{\Z}(\tilde{x}))$, then the algebraic intersection number, relative to endpoints, of the trajectory $\I_{\tilde{\F}}^{\Z}(\tilde{x})$ with the segment $\phi_{\tilde{z}'\tilde{z}}$ would be equal to $-1$, which is not possible as  the trajectory $\I_{\tilde{\F}}^{\Z}(\tilde{x})$ is positively $\tilde{\F}$-transverse.
\end{proof}

The two previous lemmas did not use condition \ref{C1}. The next one is the first one which is specific to case 1.
	
\begin{lemma} \label{LemDisjointleavescase1}
Fix $i \in \Z /2$. For any neighbourhoods $U_{i,-}, U_{i,+}$ of resp. $\gamma_{i,-}$ and $\gamma_{i,+}$ in $\overline{\Hy^2}$, there exist leaves $\phi_i$ and $\phi'_i$ of $\tilde{\F}$ that meet the trajectory $\mathcal{I}^{\Z}_{\tilde{\F}}(\tilde{x}_i)$ with the following properties.
\begin{enumerate}
\item For any $n \geq 0$, we have $T_{i}^{-n}\phi_i\in U_{i,-}$ and $T_{i}^{n}\phi'_i\in U_{i,+}$.
\item The half-trajectory $(-\infty,\phi_i]_i$ belongs to $U_{i,-}$  and the half-trajectory $[\phi'_i,+\infty)_i$ belongs to $U_{i,+}$.
\end{enumerate}
\end{lemma}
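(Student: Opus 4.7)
By replacing $f$ by $f^{-1}$, it suffices to construct $\phi'_i$ satisfying the right-hand conditions; the leaf $\phi_i$ is produced by the symmetric argument at the opposite end of the trajectory. First, since $\gamma_{i,+}$ is the attracting fixed point of the hyperbolic isometry $T_i$ acting on $\overline{\Hy^2}$, I shrink $U_{i,+}$ (keeping it a neighborhood of $\gamma_{i,+}$) so that $T_i(U_{i,+})\subset U_{i,+}$. After this reduction, the condition $T_i^n\phi'_i\in U_{i,+}$ for every $n\geq 0$ follows automatically from the single condition $\phi'_i\subset U_{i,+}$. Next, because $\mathcal{I}^{\Z}_{\tilde{\F}}(\tilde{x}_i)$ stays within distance $R_i$ of the geodesic $\tilde\gamma_i$ (Lemma~\ref{LemBoundeddistance}.2) and $\tilde f^n(\tilde x_i)\to \gamma_{i,+}$ (Lemma~\ref{LemRecurrentpoint}), I can fix $n_0$ large enough so that the forward half-trajectory $[\tilde f^{n_0}(\tilde x_i),+\infty)_i$ is entirely contained in $U_{i,+}$; this will take care of the second condition.

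The heart of the argument is to find a leaf $\phi'_i$ through some point of $[\tilde f^{n_0}(\tilde x_i),+\infty)_i$ that is itself entirely contained in $U_{i,+}$. I plan to exploit the birecurrence of $\bar x_i=\pi_i(\tilde x_i)$ in the annulus $A_i$: there exist sequences $m_k\to+\infty$ and $\rho_k\to+\infty$ of integers with $T_i^{-\rho_k}\tilde f^{m_k}(\tilde x_i)\to \tilde x_i$ in $\Hy^2$. Choose a closed disk $D\subset \dom\tilde\F$ around $\tilde x_i$, small enough that any leaf meeting $D$ is close to the leaf $\phi_0$ through $\tilde x_i$ on a prescribed large compact piece of $\Hy^2$. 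For $k$ sufficiently large, $T_i^{-\rho_k}\tilde f^{m_k}(\tilde x_i)\in D$, and so the leaf $\phi_{m_k}$ through $\tilde f^{m_k}(\tilde x_i)$ is close, on a large piece, to the translate $T_i^{\rho_k}\phi_0$. Since $T_i^{\rho_k}$ uniformly contracts bounded subsets of $\Hy^2$ into the shrinking neighborhoods of $\gamma_{i,+}$, the chosen large compact piece of $\phi_{m_k}$ lies inside $U_{i,+}$ for $k$ large.

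The main obstacle is the global control of $\phi'_i=\phi_{m_k}$: a priori, the leaf $\phi_0$ through $\tilde x_i$ may be unbounded, so the piece of $\phi_{m_k}$ outside the chosen compact approximation is not directly controlled by the convergence argument above. The key technical ingredient here is Lemma~\ref{LemEndleaves}: applied to points slightly displaced from $\tilde f^{m_k}(\tilde x_i)$ into $R(\mathcal{I}^{\Z}_{\tilde{\F}}(\tilde{x}_i))$ and $L(\mathcal{I}^{\Z}_{\tilde{\F}}(\tilde{x}_i))$, it forces each half-leaf of $\phi_{m_k}$ to be either bounded in $\Hy^2$ or to escape at $\partial\Hy^2$ through a point of the closure of $L$ or $R$ — never through $\gamma_{i,\pm}$. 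Using that $\tilde f^{m_k}(\tilde x_i)$ lies deep in $U_{i,+}$ and that, by hyperbolic geometry, the portion of $\partial\Hy^2$ that a half-leaf starting there can reach while staying off the trajectory shrinks toward $\gamma_{i,+}$, one can upgrade the compact-piece control to control over the entire leaf, so that $\phi_{m_k}\subset U_{i,+}$ as needed. Once this is established, setting $\phi'_i=\phi_{m_k}$ for any sufficiently large $k$ concludes the construction.
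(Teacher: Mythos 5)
There is a genuine gap, and it is exactly at the step you flag as ``the main obstacle.'' Your whole construction (birecurrence, a small disk $D$ around $\tilde x_i$, continuity of the foliation on a large compact piece, contraction by $T_i^{\rho_k}$) only controls a compact piece of the leaf $\phi_{m_k}$; to control the rest you appeal to Lemma~\ref{LemEndleaves} and assert that the ends of $\phi_{m_k}$ can reach $\partial\Hy^2$ only through $\overline{L(\mathcal{I}^{\Z}_{\tilde{\F}}(\tilde x_i))}$ or $\overline{R(\mathcal{I}^{\Z}_{\tilde{\F}}(\tilde x_i))}$, ``never through $\gamma_{i,\pm}$.'' But both of these closures contain $\gamma_{i,-}$ and $\gamma_{i,+}$, so Lemma~\ref{LemEndleaves} gives no such exclusion: a half-leaf entering $L(\mathcal{I}^{\Z}_{\tilde{\F}}(\tilde x_i))$ may perfectly well run parallel to the trajectory and accumulate at $\gamma_{i,-}$ or $\gamma_{i,+}$. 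This is not a removable vagueness: in a configuration of type \ref{C2} (the union of leaves meeting the trajectory forming a $T_i$-invariant band from $\gamma_{i,-}$ to $\gamma_{i,+}$), every leaf meeting $\mathcal{I}^{\Z}_{\tilde{\F}}(\tilde x_i)$ has $\gamma_{i,+}$ (or $\gamma_{i,-}$) in its closure, and then no leaf works at all, since $T_i^{\pm n}$ fix these points and so no translate of such a leaf ever enters a small $U_{i,\mp}$. Hence the conclusion cannot be reached by the ingredients you use; some hypothesis beyond Lemmas~\ref{LemRecurrentpoint}, \ref{LemBoundeddistance}, \ref{LemRecurrenceleaves} and \ref{LemEndleaves} is indispensable.

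What is missing is precisely condition \ref{C1}, which is the standing hypothesis of this subsection and which your proof never invokes. The paper's proof uses the deck transformation $\tau_i\in\pi_1(S)\setminus\langle T_i\rangle$ to produce, first, a leaf meeting the trajectory whose closure in $\overline{\Hy^2}$ avoids $\gamma_{i,-}$ (resp.\ $\gamma_{i,+}$): when $\gamma_i$ is non-simple this comes from the $\F$-transverse self-intersection (Proposition~\ref{GeomImpliqFeuill}, with $x_i$ chosen periodic) and the translated trajectories $T_i^{pj}\tau_i\mathcal{I}^{\Z}_{\tilde{\F}}(\tilde x_i)$ used as barriers; when $\gamma_i$ is simple this is the content of Lemma~\ref{LemGoodleaves}, again a case analysis trapping the leaf ends between $\mathcal{I}^{\Z}_{\tilde{\F}}(\tilde x_i)$ and $\tau_i^{\pm 1}\mathcal{I}^{\Z}_{\tilde{\F}}(\tilde x_i)$. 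Only once such a leaf $\psi_i$ (resp.\ $\psi'_i$) is found does the soft part of your argument apply: its $T_i^{\mp n}$-translates converge in $\overline{\Hy^2}$ to $\gamma_{i,\mp}$ and still meet the trajectory by Lemma~\ref{LemRecurrenceleaves}, which yields both properties 1 and 2. So your reductions (forward-invariant shrinking of $U_{i,+}$, choice of $n_0$, symmetry under $f^{-1}$) are fine, but the core step --- excluding $\gamma_{i,\mp}$ from the closure of the chosen leaf --- requires the barrier trajectories coming from \ref{C1} and is not supplied by your argument.
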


We fix $i \in \Z/2$ and orient $\partial \overline{\Hy^2}$ in such a way that $\gamma_{i+1,-}$ lies in the positively oriented segment of $\partial \Hy^2$ which joins $\gamma_{i,-}$ to $\gamma_{i,+}$. For any points $a$, $b$ on $\partial \Hy^2$, we denote by $[a,b]_{\partial \Hy^2}$ the positively oriented segment of $\partial \Hy^2$ from $a$ to $b$.

\begin{proof}
We will distinguish two cases depending on whether the closed geodesic $\gamma_i$ is simple or not.

As a sidenote, observe that condition \ref{C1} is automatically satisfied by $\mathcal{I}^{\Z}_{\tilde{\F}}(\tilde{x}_i)$ if the geodesic $\gamma_i$ has an autointersection.

\paragraph{First case:} Suppose first that the closed geodesic $\gamma_i$ is not simple. Then there exists a deck transformation $\tau_{i} \in \pi_1(S) \setminus \langle T_i\rangle $ such that the geodesic lines $\tau_i\tilde{\gamma}_i$ and $\tilde{\gamma}_i$ meet. Observe that, for any $n \in \Z$, the geodesic lines $T_i^{n} \tau_i \tilde{\gamma}_i$ and $T_{i}^n \tilde{\gamma}_i=\tilde{\gamma}_i$ also meet. Fix $n \in \Z$. Recall that, by Lemma \ref{LemRecurrentpoint}, the orbit of $x_i$ is periodic so that $\I^{\Z}_{\tilde{\F}}(\tilde{x}_i)$ projects to a closed curve on $S$. Denote by $p$  the minimal positive number such that $T_i^p \mathcal{I}^{\Z}_{\tilde{\F}}(\tilde{x}_i) = \mathcal{I}^{\Z}_{\tilde{\F}}(\tilde{x}_i)$.  By Proposition~\ref{GeomImpliqFeuill}, the trajectories $\mathcal{I}^{\Z}_{\tilde{\F}}(\tilde{x}_i)$ and $T_{i}^{pn} \tau_i \mathcal{I}^{\Z}_{\tilde{\F}}(\tilde{x}_i)$ have an $\tilde{\F}$-transverse intersection.

In what follows, we construct a leaf $\phi_i$ which satisfies the conclusion of the lemma. The construction of $\phi'_i$ is similar and left to the reader.

\underline{Subcase 1:} We suppose that there exists $\tilde y$ in $\mathcal{I}^{\Z}_{\tilde{\F}}(\tilde{x}_i)$ such that $\phi_{\tilde y}$ has not $\gamma_{i,+}$ in its $\omega$-limit.  Suppose the point $\tau_{i}\gamma_{i,-}$ is on the right of the geodesic $\gamma_i$. If the $\alpha$-limit of the leaf $\phi_{\tilde{y}}$ does not contain  $\gamma_{i,+}$ either, for $k$ large enough, the leaf $T_i^{-pk}\phi_{\tilde y}$ can be used as the leaf $\phi_i$ of the lemma's conclusion. Suppose the $\alpha$-limit of the leaf $\phi_{\tilde{y}}$ contains the point $\gamma_{i,+}$.  Then there exists $J>0$ such that, for any $j \geq J$, $\tilde y \in L(T_i^{pj}\tau_i\mathcal{I}^{\Z}_{\tilde{\F}}(\tilde{x}_i))$ and the negative half-leaf in $\phi_{\tilde{y}}$ meets the trajectory $T_i^{pj}\tau_i\mathcal{I}^{\Z}_{\tilde{\F}}(\tilde{x}_i)$. By Lemma~\ref{LemEndleaves}.1 (symmetric version), the $\alpha$-limit of $\phi_{\tilde y}$ is contained in
$$\bigcap_{k \geq 0} \overline{R(T_i^{pk}\tau_i\mathcal{I}^{\Z}_{\tilde{\F}}(\tilde{x}_i))}=\left\{ \gamma_{i,+} \right\}.$$ 
Moreover, by Lemma \ref{LemEndleaves}.1, the $\omega$-limit of $\phi_{\tilde{y}}$ is contained in $\overline{L(T_i^{pJ}\tau_i\mathcal{I}^{\Z}_{\tilde{\F}}(\tilde{x}_i))}$. Hence no end of the leaf $\phi_{\tilde{y}}$ meets $ T_i^{pJ+p}\tau_i \gamma_{i,+}$ nor $ T_i^{pJ+p}\tau_i \gamma_{i,-}$. Then the leaf $\tau_{i}^{-1} T_{i}^{-pJ-p} \phi_{\tilde{y}}$ meets the trajectory $\mathcal{I}^{\Z}_{\tilde{\F}}(\tilde{x}_i)$ and does not meet $\gamma_{i,+}$ nor  $\gamma_{i,-}$. Hence, for $k$ large enough, the leaf $\phi_i=T_{i}^{-pk}\tau_{i}^{-1} T_{i}^{-pJ-p} \phi_{\tilde{y}}$ will satisfy the lemma.

Suppose now that the point $\tau_{i}\gamma_{i,-}$ is on the left of the geodesic $\gamma_{i}$. Then there exists $j>0$ such that $\tilde y \in R(T_i^{pj}\tau_i\mathcal{I}^{\Z}_{\tilde{\F}}(\tilde{x}_i))$. By Lemma~\ref{LemEndleaves}.2, the $\alpha$-limit of $\phi_{\tilde y}$ is contained in $\Hy^2\cup \overline{R(T_i^{pj}\tau_i\mathcal{I}^{\Z}_{\tilde{\F}}(\tilde{x}_i))}$ and for $k$ large enough, the leaf $T_i^{-pk}\phi_{\tilde y}$ can be used as the leaf $\phi_i$ of the lemma's conclusion.
\bigskip

\underline{Subcase 2:} Suppose that for any $\tilde y$ in $\mathcal{I}^{\Z}_{\tilde{\F}}(\tilde{x}_i)$, the leaf $\phi_{\tilde y}$ has $\gamma_{i,+}$ in its $\omega$-limit. Fix $\tilde y$ in $\mathcal{I}^{\Z}_{\tilde{\F}}(\tilde{x}_i)$.
Suppose that the point $\tau_{i} \gamma_{i,-}$ is on the right of the geodesic $\gamma_i$. Then there exists $J>0$ such that for any $j\ge J$, one has $\tilde y \in L(T_i^{pj}\tau_i\mathcal{I}^{\Z}_{\tilde{\F}}(\tilde{x}_i))$ and, by the hypothesis on the $\omega$-limit of $\phi_{\tilde{y}}$, the positive half-leaf in $\phi_{\tilde{y}}$ starting at $\tilde{y}$ has to cross each of those trajectories $T_i^{pj}\tau_i\mathcal{I}^{\Z}_{\tilde{\F}}(\tilde{x}_i)$. But by Lemma ~\ref{LemEndleaves}.2 (symmetric version), this implies that the $\omega$-limit of the leaf $\phi_{\tilde{y}}$ is contained in $\Hy^2$, a contradiction. This case cannot happen.

Suppose now that the point $\tau_{i} \gamma_{i,-}$ is on the left of the geodesic $\gamma_i$. Then there exists $J>0$ such that for any $j\ge J$, one has $\tilde y \in R(T_i^{pj}\tau_i\mathcal{I}^{\Z}_{\tilde{\F}}(\tilde{x}_i))$; this implies that the positive half-leaf starting at $\tilde y$ has to cross $T_i^{pj}\tau_i\mathcal{I}^{\Z}_{\tilde{\F}}(\tilde{x}_i)$. By Lemma~\ref{LemEndleaves}.2, the $\alpha$-limit of $\phi_{\tilde y}$ is contained in $\Hy^2\cup \overline{R(T_i^{pj}\tau_i\mathcal{I}^{\Z}_{\tilde{\F}}(\tilde{x}_i))}$.
Hence, $\phi_{\tilde y}$ crosses $T_i^{pJ+p}\tau_i\mathcal{I}^{\Z}_{\tilde{\F}}(\tilde{x}_i)$ and its ends do not contain $T_i^{pJ+p}\tau_i\gamma_{i,+}$ as, by Lemma \ref{LemEndleaves}.1, the $\omega$-limit of this leaf is equal to
\[\bigcap_{k \geq 0}\overline{L(T_i^{pk}\tau_i\mathcal{I}^{\Z}_{\tilde{\F}}(\tilde{x}_i))}=\left\{ \gamma_{i,+} \right\}.\]
In this case, for $k$ large enough, the leaf $T_i^{-pk}\tau_i^{-1}T_i^{-pJ-p}\phi_{\tilde y}$ can be used as the leaf $\phi_i$ of the lemma's conclusion.

\paragraph{Second case:} Suppose now that the geodesic $\gamma_i$ is simple. Recall that the geodesic $\tilde{\gamma}_i$ shares no endpoint on $\partial \Hy^2$ with one of its translates under an element of $\pi_1(S)$ by Lemma~\ref{commonendpoint}. By condition \ref{C1}, there exists a leaf $\phi$ of $\tilde{\F}$ which meets $\mathcal{I}^{\Z}_{\tilde{\F}}(\tilde{x}_i)$ and $\tau_{i} \mathcal{I}^{\Z}_{\tilde{\F}}(\tilde{x}_i)$. 

For any subset $A$ of $\Hy^2$, we denote by $\overline{A}$ its closure in $\overline{\Hy^2}$. The heart of the proof in this second case is the following lemma.

\begin{lemma} \label{LemGoodleaves}
There exist two leaves $\phi_a$ and $\phi_b$, each one meeting $\mathcal{I}^{\Z}_{\tilde{\F}}(\tilde{x}_i)$, and such that $\overline{\phi_a}$ is disjoint from $\gamma_{i,-}$  and $\overline{\phi_b}$ is disjoint from $\gamma_{i,+}$.
\end{lemma}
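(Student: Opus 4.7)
Plan: I would argue by contradiction, showing the existence of $\phi_a$ (the case of $\phi_b$ being symmetric, obtained by iterating by $T_i^{-r}$ instead of $T_i^r$ and interchanging the roles of $\gamma_{i,-}$ and $\gamma_{i,+}$). Suppose that every leaf $\psi$ of $\tilde{\F}$ that meets $\mathcal{I}^{\Z}_{\tilde{\F}}(\tilde{x}_i)$ satisfies $\gamma_{i,-}\in\overline{\psi}$. I want to contradict condition \ref{C1}.

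First, I would exploit the hypothesis that $\gamma_i$ is simple: by Lemma \ref{commonendpoint}, $\tilde{\gamma}_i$ and $\tau_i\tilde{\gamma}_i$ share no common endpoint, so the four points $\gamma_{i,-},\gamma_{i,+},\tau_i\gamma_{i,-},\tau_i\gamma_{i,+}$ of $\partial\Hy^2$ are pairwise distinct. Pick the leaf $\phi$ given by condition \ref{C1}. Applying the contradiction hypothesis directly to $\phi$ yields $\gamma_{i,-}\in\overline{\phi}$. Applying it to $\tau_i^{-1}\phi$, which meets $\mathcal{I}^{\Z}_{\tilde{\F}}(\tilde{x}_i)$ because $\phi$ meets $\tau_i\mathcal{I}^{\Z}_{\tilde{\F}}(\tilde{x}_i)$, yields $\tau_i\gamma_{i,-}\in\overline{\phi}$. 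Since a leaf has only two ends in $\overline{\Hy^2}$, these two distinct boundary points must be precisely the $\alpha$- and $\omega$-limits of $\phi$ on $\partial\Hy^2$.

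Next I would produce many translates of $\phi$ meeting the trajectory. Applying Lemma \ref{LemRecurrenceleaves} to the single leaf $\phi$ and a small parameter $v'_i<v_i$, one gets infinitely many $n>0$ and integers $r_n\geq v'_i n/\ell(\gamma_i)$ (so $r_n\to+\infty$) such that $T_i^{r_n}\phi$ meets $\mathcal{I}^{\Z}_{\tilde{\F}}(\tilde{x}_i)$ at a point of $[\tilde{f}^{n}(\tilde{x}_i),\tilde{f}^{n+k}(\tilde{x}_i)]_i$, and the intersection point converges to $\gamma_{i,+}$. The leaf $T_i^{r_n}\phi$ has endpoints $T_i^{r_n}\gamma_{i,-}=\gamma_{i,-}$ and $T_i^{r_n}\tau_i\gamma_{i,-}$, with the latter tending to $\gamma_{i,+}$ as $r_n\to\infty$ (since $\tau_i\gamma_{i,-}\neq\gamma_{i,-}$). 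Since the $T_i^{r_n}\phi$ are leaves of the same foliation, they are pairwise disjoint; combined with the common endpoint $\gamma_{i,-}$, they form a nested fan of leaves with free endpoints piling up at $\gamma_{i,+}$.

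The main obstacle will be to turn this fanning structure into a contradiction with the positive transversality of $\tilde{\F}$ to the trajectory. Here is how I would proceed: take any point $\tilde{y}$ of $\mathcal{I}^{\Z}_{\tilde{\F}}(\tilde{x}_i)$ close enough to $\gamma_{i,+}$ so that $\tilde{y}$ lies between two consecutive $T_i^{r_n}\phi$ and $T_i^{r_{n+1}}\phi$ in the ordering on the trajectory. By the contradiction hypothesis, the leaf $\phi_{\tilde{y}}$ satisfies $\gamma_{i,-}\in\overline{\phi_{\tilde{y}}}$. Since $\phi_{\tilde{y}}$ is disjoint from the two surrounding translates of $\phi$, the leaf $\phi_{\tilde{y}}$ is trapped in the crescent region they bound, which forces the second endpoint of $\phi_{\tilde{y}}$ into an arbitrarily small neighborhood of $\gamma_{i,+}$. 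Applying Lemma \ref{LemEndleaves} to the half-leaves of $\phi_{\tilde{y}}$ at $\tilde{y}$ relative to the trajectory, and using that the orientation of $\phi_{\tilde{y}}$ must be consistent with the positive transversality across $\tilde{y}$, I expect the nesting to force an orientation of $\phi_{\tilde{y}}$ incompatible with $\gamma_{i,-}$ being a limit: concretely, as $\tilde{y}$ runs along the trajectory toward $\gamma_{i,+}$, the possible positions of $\overline{\phi_{\tilde{y}}}\cap\partial\Hy^2$ are squeezed into arcs of $\partial\Hy^2$ disjoint from $\gamma_{i,-}$. This contradiction produces the required leaf $\phi_a$, and the symmetric argument produces $\phi_b$.
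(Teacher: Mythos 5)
Your strategy (a global contradiction: assume every leaf meeting $\mathcal{I}^{\Z}_{\tilde{\F}}(\tilde{x}_i)$ has $\gamma_{i,-}$ in its closure, then play this against condition \ref{C1} and the north--south dynamics of $T_i$) differs from the paper's constructive case analysis, but as written it has a genuine gap at its very first step. From $\gamma_{i,-}\in\overline{\phi}$ and $\tau_i\gamma_{i,-}\in\overline{\phi}$ you conclude that these two points \emph{are} the two ends of $\phi$, so that each translate $T_i^{r_n}\phi$ is an ideal arc from $\gamma_{i,-}$ to $T_i^{r_n}\tau_i\gamma_{i,-}$, and these arcs form a nested fan accumulating at $\gamma_{i,+}$. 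This is not justified: a leaf of $\tilde{\F}$ is proper only in $\wt\dom\F$, not in $\Hy^2$. An end of a leaf may be bounded in $\Hy^2$ (accumulating on singularities -- this is exactly what the paper's Claim \ref{ClaimBoundedend} proves for one end of the leaf $\phi$ when the axis of $\tau_i$ does not cross $\tilde{\gamma}_i$), and an unbounded end need not converge to a single point of $\partial\Hy^2$: its limit set can be a continuum meeting $\partial\Hy^2$ in several points, so $\gamma_{i,-}$ and $\tau_i\gamma_{i,-}$ could both lie in the closure of one and the same end while the other end stays bounded. This is precisely why the paper controls leaf ends only \emph{relative to the transverse trajectories} (Lemmas \ref{LemBoundedend} and \ref{LemEndleaves}) and splits into cases according to the position of $\tau_i\tilde{\gamma}_i$ and $\tau_i^{-1}\tilde{\gamma}_i$, rather than treating leaves as geodesic-like ideal arcs. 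Without the two-ideal-endpoint structure, your fan of translates does not exist.

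The second problem is that the contradiction itself is never derived. Even granting the ideal-arc picture, the crescent bounded by $T_i^{r_n}\phi$ and $T_i^{r_{n+1}}\phi$ has $\gamma_{i,-}$ in its closure at infinity, since both leaves share that ideal endpoint; so trapping $\phi_{\tilde{y}}$ in the crescent does not squeeze $\overline{\phi_{\tilde{y}}}$ away from $\gamma_{i,-}$, and the decisive sentence of your argument is only the statement ``I expect the nesting to force an orientation \dots incompatible with $\gamma_{i,-}$ being a limit''. This is where the proof has to happen, and the orientation constraints available (Lemma \ref{LemEndleaves} and its symmetric version) are by themselves compatible with every leaf meeting the trajectory having an end at $\gamma_{i,-}$; ruling this out requires exploiting condition \ref{C1} much more finely (which side of $\tilde{\gamma}_i$ the geodesics $\tau_i\tilde{\gamma}_i$, $\tau_i^{-1}\tilde{\gamma}_i$ lie on, algebraic intersection numbers of $\phi$ with the two trajectories, the relative order of the intersection points, etc.), which is the actual content of the paper's proof. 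The claimed symmetry between producing $\phi_a$ and $\phi_b$ is plausible but secondary; as it stands the proposal does not prove the lemma.
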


\begin{proof}
We need to distinguish two cases, depending on whether $\tilde{\gamma}_i$ separates $\tau_{i}\tilde{\gamma}_i$ and $\tau_{i}^{-1}\tilde{\gamma}_i$ or not.
\bigskip

\underline{First case:} Suppose $\tilde{\gamma}_i$ separates $\tau_{i}\tilde{\gamma}_i$ and $\tau_{i}^{-1}\tilde{\gamma}_i$. Note that this amounts to saying that the axis of $\tau_i$ crosses $\tilde \gamma_i$ (using the fact that $\gamma_i$ is simple).

For notational convenience we suppose in what follows that the geodesic $\tau_{i} \tilde{\gamma}_{i}$ is on the left of $\tilde{\gamma}_i$. 
\bigskip

If $\omega(\phi) \cap \partial \Hy^{2}= \emptyset$ and $\alpha(\phi) \cap \partial \Hy^2=\emptyset$, the lemma holds (for $\phi_a=\phi_b=\phi$).
\bigskip

\begin{figure}
\begin{center}
\includegraphics[scale=1]{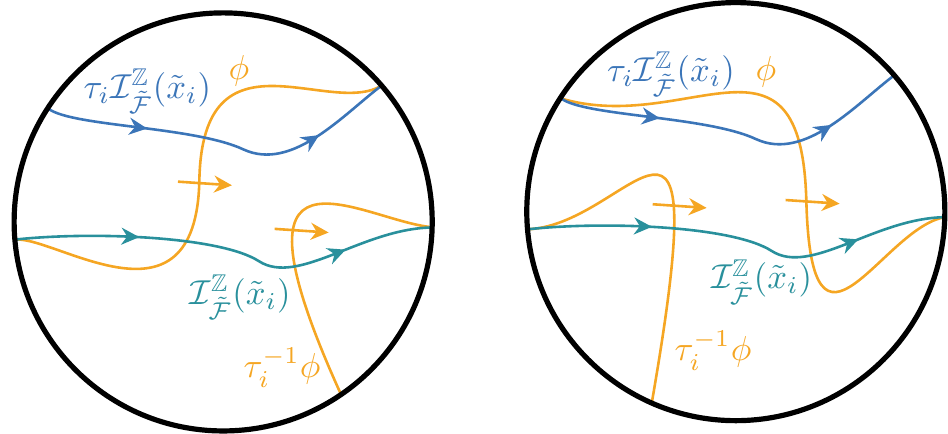}
\caption{Configuration of the first case of the proof of Lemma~\ref{LemGoodleaves}: the two different cases depending whether the trajectory crosses first $\phi$ or $\tau_i\phi$.}\label{FigGoodleaves1}
\end{center}
\end{figure}

Suppose that $\emptyset \neq \omega(\phi) \cap \partial \Hy^2$ is contained in the segment $[\tau_{i} \gamma_{i,+},\tau_{i} \gamma_{i,-}]_{\partial \Hy^2}$ of $\partial \Hy^2$. Then a neighbourhood of $+\infty$ in $\phi$ is contained in $L(\I^{\Z}_{\tilde{\F}}(\tilde{x}_i))$.
If $\alpha(\phi) \subset \Hy^2$, the lemma holds (for $\phi_a=\phi_b=\phi$).
Otherwise, by Lemma~\ref{LemEndleaves}.1. (symmetric version), a neighbourhood of $-\infty$ in $\phi$ is contained in $R(\I^{\Z}_{\tilde{\F}}(\tilde{x}_i))$. Hence the leaf $\tau_{i}^{-1} \phi$ joins the segment $[\tau_{i}^{-1} \gamma_{i,-}, \tau_{i}^{-1} \gamma_{i,+}]_{\partial \Hy^2}$ to the trajectory $\I^{\Z}_{\tilde{\F}}(\tilde{x}_i)$. Observe also that a neighbourhood of $+\infty$ in $\tau_{i}^{-1} \phi$ is disjoint from $R(\I^{\Z}_{\tilde{\F}}(\tilde{x}_i))$ by Lemma \ref{LemBoundedend}. However, either $\phi=\tau_{i}^{-1} \phi$, in which case the claim holds because both ends of $\phi$ are disjoint from the ends of $\gamma_i$ ($\phi_a=\phi_b=\phi$), or the leaves $\phi$ and $\tau_{i}^{-1} \phi$ are disjoint. Suppose the latter holds (see Figure~\ref{FigGoodleaves1}).

If the trajectory $\I^{\Z}_{\tilde{\F}}(\tilde{x}_i)$ meets $\phi$ before it meets $\tau_{i}^{-1} \phi$ (left of Figure~\ref{FigGoodleaves1}), then the set $\tau_{i}^{-1} \overline{\phi} \cap  \overline{R(\I^{\Z}_{\tilde{\F}}(\tilde{x}_i))}$ separates a neighbourhood of $-\infty$ in $\phi$ from $\gamma_{i,+}$ in $\overline{R(\I^{\Z}_{\tilde{\F}}(\tilde{x}_i))}$ and the set $ \overline{\phi} \cap \overline{L(\I^{\Z}_{\tilde{\F}}(\tilde{x}_i))} $ separates a neighbourhood of $+\infty$ in $\tau_{i}^{-1}\phi$ from $\gamma_{i,-}$ in $\overline{L(\I^{\Z}_{\tilde{\F}}(\tilde{x}_i))}$, hence the lemma holds for $\phi_a=\tau_i^{-1}\phi$ and $\phi_b=\phi$. If the trajectory $\I^{\Z}_{\tilde{\F}}(\tilde{x}_i)$ meets $\tau_{i}^{-1}\phi$ before it meets $\phi$, then the set $\tau_{i}^{-1} \overline{\phi} \cap \overline{R(\I^{\Z}_{\tilde{\F}}(\tilde{x}_i))}$ separates a neighbourhood of $-\infty$ in $\phi$ from $\gamma_{i,-}$ in $\overline{R(\I^{\Z}_{\tilde{\F}}(\tilde{x}_i))}$ and the set $ \overline{\phi} \cap \overline{L(\I^{\Z}_{\tilde{\F}}(\tilde{x}_i))}$ separates a neighbourhood of $+\infty$ in $\tau_{i}^{-1}\phi$ from $\gamma_{i,+}$ in $\overline{L(\I^{\Z}_{\tilde{\F}}(\tilde{x}_i))}$, hence the lemma holds for $\phi_a=\phi$ and $\phi_b=\tau_i^{-1}\phi$.
\bigskip

Suppose now that $\emptyset \neq \omega(\phi) \cap \partial \Hy^2$ is not contained in $[\tau_{i} \gamma_{i,+}, \tau_{i} \gamma_{i,-}]_{\partial \Hy^2}$ (such a configuration is depicted in Figure~\ref{FigGoodleaves2}). Then any neighbourhood of $+\infty$ in $\phi$ meets $R(\tau_{i}\I^{\Z}_{\tilde{\F}}(\tilde{x}_i))$. By Lemma \ref{LemEndleaves}.2., $\alpha(\phi)\subset \Hy^2$ and a neighbourhood of $-\infty$ in $\phi$ is disjoint from $L(\tau_{i} \I^{\Z}_{\tilde{\F}}(\tilde{x}_i))$. 

If any neighbourhood of $+\infty$ in $\phi$ also met $L(\tau_i \I^{\Z}_{\tilde{\F}}(\tilde{x}_i))$, then it would have to cross $\tau_i \I^{\Z}_{\tilde{\F}}(\tilde{x}_i)$ and, by Lemma \ref{LemEndleaves}.1., we would have $\omega(\phi) \subset [\tau_{i} \gamma_{i,+}, \tau_{i} \gamma_{i,-}]_{\partial \Hy^2}$, a contradiction. Hence a neighbourhood of $+\infty$ in $\phi$ is disjoint from $L(\tau_{i} \I^{\Z}_{\tilde{\F}}(\tilde{x}_i))$ and $\omega(\phi) \cap (\tau_{i} \gamma_{i,+}, \tau_{i} \gamma_{i,-})_{\partial \Hy^2}=\emptyset$.

If any neighbourhood of $+\infty$ in $\phi$ meets ${R(\I^{\Z}_{\tilde{\F}}(\tilde{x}_i))}$, then by Lemma \ref{LemEndleaves}.1. there exists a neighbourhood of $+\infty$ in $\phi$ included in ${R(\I^{\Z}_{\tilde{\F}}(\tilde{x}_i))}$ and so the lemma is satisfied for $\phi_a=\phi_b=\tau_i^{-1}\phi$, as the set $\tau_{i}^{-1} \overline{\phi}$ meets no ends of the geodesic $\tilde{\gamma}_i$.

Otherwise, a neighbourhood of $+\infty$ in $\phi$ is disjoint from $L(\tau_i \I^{\Z}_{\tilde{\F}}(\tilde{x}_i)) \cup {R(\I^{\Z}_{\tilde{\F}}(\tilde{x}_i))}$. Let us prove that, in this case, $\tau_i \I^{\Z}_{\tilde{\F}}(\tilde{x}_i) \cap \I^{\Z}_{\tilde{\F}}(\tilde{x}_i)\neq \emptyset$. Suppose the contrary. This implies that $\tau_i \I^{\Z}_{\tilde{\F}}(\tilde{x}_i) \subset L\big ( \I^{\Z}_{\tilde{\F}}(\tilde{x}_i)\big)$ and $\I^{\Z}_{\tilde{\F}}(\tilde{x}_i) \subset R\big ( \tau_i \I^{\Z}_{\tilde{\F}}(\tilde{x}_i)\big)$. As $\phi$ meets both trajectories, there exists 
\[\tilde y\in \phi \cap R\big ( \tau_i \I^{\Z}_{\tilde{\F}}(\tilde{x}_i)\big) \cap L\big ( \I^{\Z}_{\tilde{\F}}(\tilde{x}_i)\big).\]
By Lemma~\ref{LemEndleaves}.1, the positive half-leaf $\phi_+$ starting at $\tilde y$ cannot meet $\tau_i \I^{\Z}_{\tilde{\F}}(\tilde{x}_i)$ (otherwise $\omega(\phi)$ would be either bounded, or included in $\overline{L\big ( \tau_i \I^{\Z}_{\tilde{\F}}(\tilde{x}_i)\big)} $). Similarly, by Lemma~\ref{LemEndleaves}.2 (symmetric version), $\phi_+$ cannot meet $\I^{\Z}_{\tilde{\F}}(\tilde{x}_i)$. Hence, the negative half-leaf $\phi_-$ starting at $\tilde y$ meets both $\tau_i \I^{\Z}_{\tilde{\F}}(\tilde{x}_i)$ and $\I^{\Z}_{\tilde{\F}}(\tilde{x}_i)$. 
Using once again Lemma~\ref{LemEndleaves}, we deduce that $\alpha(\phi) \in R\big ( \tau_i \I^{\Z}_{\tilde{\F}}(\tilde{x}_i)\big)^\complement \cap L\big ( \I^{\Z}_{\tilde{\F}}(\tilde{x}_i)\big)^\complement$. The latter set is hence nonempty. This proves that $\tau_i \I^{\Z}_{\tilde{\F}}(\tilde{x}_i) \cap \I^{\Z}_{\tilde{\F}}(\tilde{x}_i)\neq \emptyset$. 

In this case, the intersection $\overline{R(\tau_i\I^{\Z}_{\tilde{\F}}(\tilde{x_i}))}\cap \overline{L(\I^{\Z}_{\tilde{\F}}(\tilde{x_i}))}$ has two unbounded connected components, with respective boundaries in $\partial\Hy^2$ $(\tau_i\gamma_{i,-},\gamma_{i,-})_{\partial\Hy^2}$ and $(\gamma_{i,+}, \tau_i\gamma_{i,+})_{\partial\Hy^2}$ (see Figure~\ref{FigGoodleaves2}). 

Consider the second of these connected components; its boundary in $\Hy^2$ contains pieces of both $\tau_i\I^{\Z}_{\tilde{\F}}(\tilde{x_i})$ and $\I^{\Z}_{\tilde{\F}}(\tilde{x_i})$. Let $\tilde z \in \tau_i\I^{\Z}_{\tilde{\F}}(\tilde{x_i})\cap \I^{\Z}_{\tilde{\F}}(\tilde{x_i})$ on this boundary, and $\phi'$ be the leaf passing by $\tilde z$. Note that $\phi'$ meets both $\overline{R(\tau_i\I^{\Z}_{\tilde{\F}}(\tilde{x_i}))}$ and $\overline{L(\I^{\Z}_{\tilde{\F}}(\tilde{x_i}))}$ in any neighbourhood of $\tilde z$. Denote by $\phi'_+$ the positive half-leaf of $\phi'$ starting at $\tilde z$. 
\begin{itemize}
\item If $\phi'_+$ is not included in $R(\tau_i\I^{\Z}_{\tilde{\F}}(\tilde{x_i}))$, then by Lemma~\ref{LemEndleaves}.1 either $\omega(\phi')\subset\Hy^2$, or $\omega(\phi')\subset L(\tau_i\I^{\Z}_{\tilde{\F}}(\tilde{x_i}))$ (right of Figure~\ref{FigGoodleaves2}). In the latter case, we already proved that the conclusion of the lemma holds.
\item If $\phi'_+$ is included in $R(\tau_i\I^{\Z}_{\tilde{\F}}(\tilde{x_i}))$, but $\phi'_+$ is not included in $L(\I^{\Z}_{\tilde{\F}}(\tilde{x_i}))$, then by Lemma~\ref{LemEndleaves}.2 (symmetric version) $\omega(\phi')\subset\Hy^2$.
\item Otherwise, $\omega(\phi')\subset\Hy^2\cup [\gamma_{i,+}, \tau_i\gamma_{i,+}]_{\partial\Hy^2}$ (left of Figure~\ref{FigGoodleaves2}).
\end{itemize}
Remark also that in all cases, by Lemma~\ref{LemEndleaves}, we have $\alpha(\phi')\cap \partial\Hy^2 \subset \overline{R(\I^{\Z}_{\tilde{\F}}(\tilde{x_i}))}$.

\begin{figure}
\begin{center}
\includegraphics[scale=1]{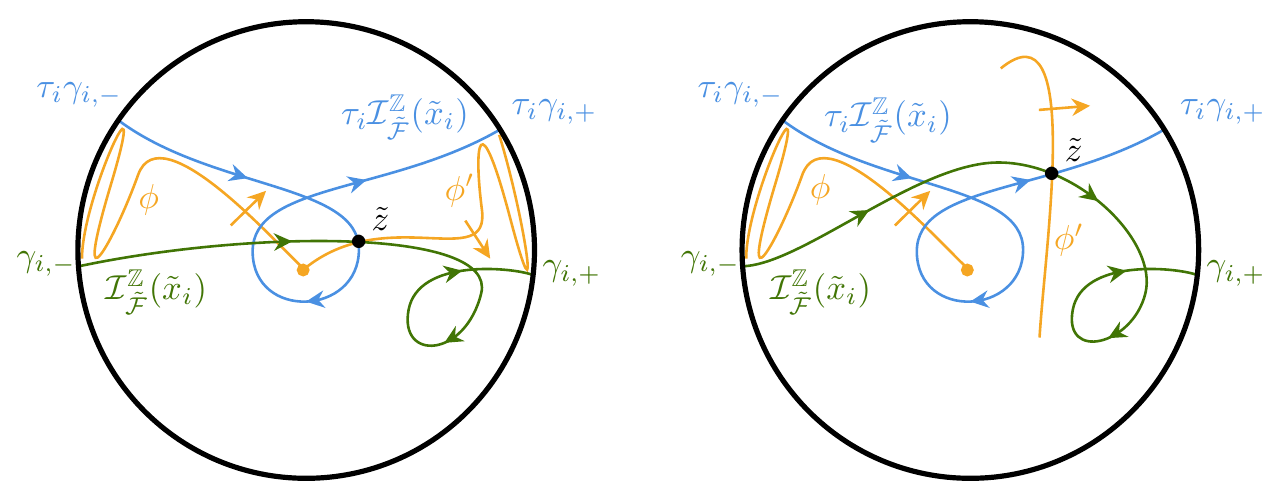}
\caption{Configuration of the first case of the proof of Lemma~\ref{LemGoodleaves}: finding another leaf $\phi'$.}\label{FigGoodleaves2}
\end{center}
\end{figure}

If $\omega(\phi') \subset \Hy^2$, then $\alpha(\phi') \cap \partial \Hy^2 \subset \overline{R(\I^{\Z}_{\tilde{\F}}(\tilde{x}_i))}$, and hence $\alpha(\tau_{i}^{-1} \phi')\cap\partial\Hy^2 \subset [\tau_{i}^{-1} \gamma_{i,-},\tau_{i}^{-1} \gamma_{i,+}]_{\partial \Hy^2}$ and $\omega(\tau_{i}^{-1} \phi') \subset \Hy^2$ so that the lemma holds for $\phi_a=\phi_b = \tau_i^{-1}\phi$.

In the other case, we have $\omega(\phi')\subset\Hy^2\cup [\gamma_{i,+}, \tau_i\gamma_{i,+}]_{\partial\Hy^2}$, and by Lemma \ref{LemEndleaves}.2., $\alpha(\phi')\subset \Hy^2$. We can perform the same construction for the other connected component of $\overline{R(\tau_i\I^{\Z}_{\tilde{\F}}(\tilde{x_i}))}\cap \overline{L(\I^{\Z}_{\tilde{\F}}(\tilde{x_i}))}$ to find another leaf $\phi''$. Again, the only case in which we still have not proved the lemma is when $\omega(\phi'')\subset\Hy^2\cup [\tau_i\gamma_{i,-}, \gamma_{i,-}]_{\partial\Hy^2}$. But in this case $\phi_a=\phi'$ and $\phi_b = \phi''$ make the lemma work.
\bigskip

The case $\omega(\phi) \subset \Hy^2$ and $\alpha(\phi) \cap \partial \Hy^2 \neq \emptyset$ is identical to the previous one, the details are left to the reader.

\bigskip

\underline{Second case:} Suppose that the geodesic line $\tilde{\gamma}_i$ does not separate $\tau_{i} \tilde{\gamma}_i$ and $\tau_{i}^{-1} \tilde{\gamma}_i$. Note that this amounts to saying that the axis of $\tau_i$ is disjoint from $\tilde \gamma_i$ (using the fact that $\gamma_i$ is simple). In this case, we need the following claim.

\begin{claim} \label{ClaimBoundedend}
One end of $\phi$ is contained in singularities of $\tilde{\F}$, \emph{i.e.} it is bounded in $\tilde{S}=\Hy^2$.
\end{claim}

\begin{proof}
Suppose that both ends of $\phi$ meet $\partial \Hy^2$.

Then the leaf $\phi$ meets the trajectories $\mathcal{I}^{\Z}_{\tilde{\F}}(\tilde{x}_i)$ and $\tau_{i} \mathcal{I}^{\Z}_{\tilde{\F}}(\tilde{x}_i)$ at only one point by Lemma \ref{LemBoundedend}. Observe also that, at the point of intersection $\phi \cap \mathcal{I}^{\Z}_{\tilde{\F}}(\tilde{x}_i)$, the leaf $\phi$ must go from one unbounded component of $\Hy^2 \setminus \mathcal{I}^{\Z}_{\tilde{\F}}(\tilde{x}_i) $ to the other one and the same holds at the point $\phi \cap  \tau_i \mathcal{I}^{\Z}_{\tilde{\F}}(\tilde{x}_i)$ for the trajectory $\tau_i \mathcal{I}^{\Z}_{\tilde{\F}}(\tilde{x}_i)$.

As the trajectories $\mathcal{I}^{\Z}_{\tilde{\F}}(\tilde{x}_i)$ and $\tau_{i} \mathcal{I}^{\Z}_{\tilde{\F}}(\tilde{x}_i)$ are both $\tilde{\F}$-transverse, the algebraic intersection number between  each of these trajectories and $\phi$ must be equal to $1$. However, as the geodesic line $\tilde{\gamma}_i$ does not separate $\tau_{i} \tilde{\gamma}_i$ and $\tau_{i}^{-1} \tilde{\gamma}_i$, the algebraic intersection number between $\tilde{\gamma}_{i}$ and $\phi$ and the algebraic intersection number between $\tau_{i} \tilde{\gamma}_i$ and $\phi$ must be opposite to each other: the axis of $\tau_{i}$ does not cross $\tilde{\gamma}_i$. Hence the algebraic intersection number between $\mathcal{I}^{\Z}_{\tilde{\F}}(\tilde{x}_i)$ and $\phi$ and the algebraic intersection number between $\tau_i \mathcal{I}^{\Z}_{\tilde{\F}}(\tilde{x}_i)$ and $\phi$ must be opposite to each other, a contradiction. 
\end{proof}

\begin{figure}
\begin{center}
\includegraphics[width=\linewidth]{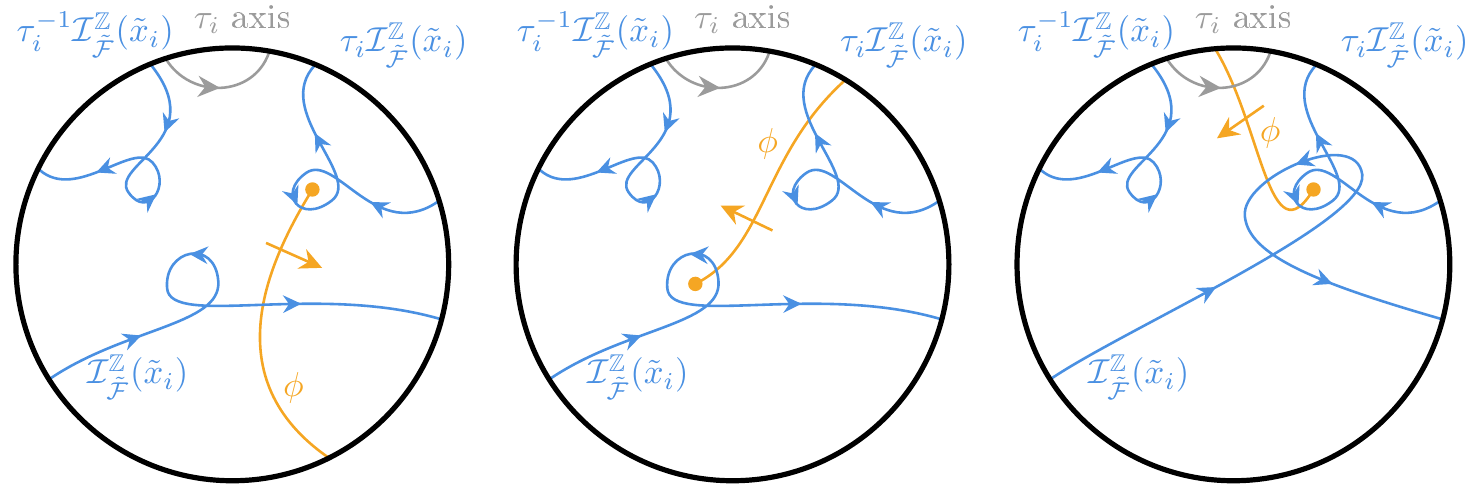}
\caption{Configuration of the second case of the proof of Lemma~\ref{LemGoodleaves} (where the geodesics $\tau_i\tilde\gamma_i$ and $\tau_i^{-1}\tilde\gamma_i$ are on the same side of the geodesic $\tilde\gamma_i$): the three different cases having to be considered.}\label{FigDisjointleavescase1}
\end{center}
\end{figure}

To simplify notation, we suppose that the geodesic $\tau_{i} \tilde{\gamma}_i$ is on the left of $\tilde{\gamma}_i$ and that the geodesic $\tau_{i} \tilde{\gamma}_i$ is above $\tau_{i}^{-1} \tilde{\gamma}_i$ with respect to $\tilde{\gamma}_i$. If $\alpha(\phi) \subset \Hy^{2}$ and $\omega(\phi) \subset \Hy^2$, the lemma holds. Otherwise, by Claim \ref{ClaimBoundedend}, either $\alpha(\phi) \cap \partial \Hy^2 \neq \emptyset$ and $\omega(\phi) \subset \Hy^2$ or $\omega(\phi) \cap \partial \Hy^2 \neq \emptyset$ and $\alpha(\phi) \subset \Hy^2$.

Suppose first that $\alpha(\phi) \cap \partial \Hy^2 \neq \emptyset$ and $\omega(\phi) \subset \Hy^2$. By Lemma \ref{LemEndleaves}.1. (symmetric version), either a neighbourhood at $-\infty$ of $\phi$ is contained in $L(\mathcal{I}^{\Z}_{\tilde{\F}}(\tilde{x}_i))$ or $\alpha(\phi) \cap \partial \Hy^{2} \subset \overline{R(\mathcal{I}^{\Z}_{\tilde{\F}}(\tilde{x}_i))}$. If $\alpha(\phi) \cap \partial \Hy^{2} \subset \overline{R(\mathcal{I}^{\Z}_{\tilde{\F}}(\tilde{x}_i))}$, then the set $\overline{\tau_{i}^{-1} \phi}$ meets none of the ends of the geodesic $\tilde{\gamma}_i$ so that the lemma holds for $\phi_a=\phi_b=\tau_i^{-1}\phi$ (see Figure~\ref{FigDisjointleavescase1}, left).  

In the other case, a neighbourhood at $-\infty$ of $\phi$ is contained in $L(\mathcal{I}^{\Z}_{\tilde{\F}}(\tilde{x}_i))$.
As before, by Lemma \ref{LemEndleaves}.1. (symmetric version), either a neighbourhood at $-\infty$ of $\phi$ is contained in $L(\tau_{i}\mathcal{I}^{\Z}_{\tilde{\F}}(\tilde{x}_i))$, or $\alpha(\phi) \cap \partial \Hy^{2} \subset \overline{R(\tau_{i}\mathcal{I}^{\Z}_{\tilde{\F}}(\tilde{x}_i))}$. In the latter case the lemma holds for $\phi_a=\phi_b=\phi$ (see Figure~\ref{FigDisjointleavescase1}, middle).

Suppose now that a neighbourhood at $-\infty$ of $\phi$ is contained in $L(\mathcal{I}^{\Z}_{\tilde{\F}}(\tilde{x}_i)) \cap  L(\tau_{i}\mathcal{I}^{\Z}_{\tilde{\F}}(\tilde{x}_i))$. Then both trajectories $\mathcal{I}^{\Z}_{\tilde{\F}}(\tilde{x}_i)$ and $\tau_i \mathcal{I}^{\Z}_{\tilde{\F}}(\tilde{x}_i)$ meet.
Indeed, by Lemma~\ref{LemEndleaves}.2. (symmetric version), $\omega(\phi)$ has to be contained in some bounded component of the complement of $\mathcal{I}^{\Z}_{\tilde{\F}}(\tilde{x}_i)$, and in some bounded component of the complement of $\tau_i\mathcal{I}^{\Z}_{\tilde{\F}}(\tilde{x}_i)$. Hence, one of these trajectories meets one bounded component of the complement of the other one, and these two trajectories meet. In particular, the set $\overline{L\big(\mathcal{I}^{\Z}_{\tilde{\F}}(\tilde{x}_i)} \big) \cap \overline{L\big(\tau_i\mathcal{I}^{\Z}_{\tilde{\F}}(\tilde{x}_i)\big)}$ has two unbounded connected components, one intersecting $\partial\Hy^2$ on $ [\tau_{i} \gamma_{i,+}, \gamma_{i,-}]_{\partial \Hy^2}$, the other one on $[ \gamma_{i,+}, \tau_i \gamma_{i,-}]_{\partial \Hy^2}$. 
Thus, either $\alpha(\phi)\cap\partial\Hy^2 \subset [\tau_{i} \gamma_{i,+}, \gamma_{i,-}]_{\partial \Hy^2}$ or $\alpha(\phi) \subset [ \gamma_{i,+}, \tau_i \gamma_{i,-}]_{\partial \Hy^2}$. In the first case, the set $\overline{\phi}$ does not meet $\gamma_{i,+}$ and the set $\tau_{i}^{-1} \overline{\phi}$ does not meet $\gamma_{i,-}$ and the lemma holds for $\phi_a=\tau_i^{-1}\phi$ and $\phi_b=\phi$ (see Figure~\ref{FigDisjointleavescase1}, right). In the second case, the set $\overline{\phi}$ does not meet $\gamma_{i,-}$ and the set $\tau_{i}^{-1} \overline{\phi}$ does not meet $\gamma_{i,+}$ and the lemma holds for $\phi_a=\phi$ and $\phi_b=\tau_i^{-1}\phi$ .

Finally, suppose that $\omega(\phi) \cap \partial \Hy^2 \neq \emptyset$ and $\alpha(\phi) \subset \Hy^{2}$. This case is similar to the previous one so we will give less details. By Lemma \ref{LemEndleaves}.1., either a neighbourhood at $+\infty$ of $\phi$ is contained in $R(\tau_{i}\mathcal{I}^{\Z}_{\tilde{\F}}(\tilde{x}_i))$ or $\omega(\phi) \cap \partial \Hy^{2} \subset \overline{L(\tau_{i}\mathcal{I}^{\Z}_{\tilde{\F}}(\tilde{x}_i))}$ and either a neighbourhood at $+\infty$ of $\phi$ is contained in $R(\mathcal{I}^{\Z}_{\tilde{\F}}(\tilde{x}_i))$ or $\omega(\phi) \cap \partial \Hy^{2} \subset \overline{L(\mathcal{I}^{\Z}_{\tilde{\F}}(\tilde{x}_i))}$. If a neighbourhood at $+\infty$ of $\phi$ is contained in either $R(\tau_{i}\mathcal{I}^{\Z}_{\tilde{\F}}(\tilde{x}_i))$ or $R(\mathcal{I}^{\Z}_{\tilde{\F}}(\tilde{x}_i))$, the lemma holds: take $\phi_a=\phi_b=\phi$ in the first case and $\phi_a=\phi_b=\tau_i^{-1} \phi$ in the second one. Otherwise, $\omega(\phi) \subset \overline{L(\mathcal{I}^{\Z}_{\tilde{\F}}(\tilde{x}_i)) \cap L(\tau_i \mathcal{I}^{\Z}_{\tilde{\F}}(\tilde{x}_i))}$. In this case, by the last part of Lemma \ref{LemEndleaves}.1. (symmetric version), $\alpha(\phi)$ is contained in $ L(\mathcal{I}^{\Z}_{\tilde{\F}}(\tilde{x}_i))^{c} \cap L(\tau_i \mathcal{I}^{\Z}_{\tilde{\F}}(\tilde{x}_i))^{c}$ so that the trajectories $\mathcal{I}^{\Z}_{\tilde{\F}}(\tilde{x}_i)$ and $\tau_i \mathcal{I}^{\Z}_{\tilde{\F}}(\tilde{x}_i)$ meet. Hence, as in the previous case, either $\omega(\phi) \cap \partial \Hy^2 \subset [\tau_{i}\gamma_{i,+}, \gamma_{i,-}]_{\partial \Hy^2}$ or $\omega(\phi) \cap \partial \Hy^2 \subset [\gamma_{i,+}, \tau_{i}\gamma_{i,-}]_{\partial \Hy^2}$ and the lemma holds : take $\phi_a=\tau_i^{-1} \phi$ and $\phi_b=\phi$ in the first case and  $\phi_a= \phi$ and $\phi_b= \tau_i^{-1}\phi$ in the second one.
\end{proof}

By Lemma \ref{LemGoodleaves}, there exist two leaves $\psi_i$ and $\psi'_i$ which meet $\I^{\Z}_{\tilde{\F}}(\tilde{x}_i)$ such that $\overline{\psi}_i$ is disjoint from $\gamma_{i,+}$ and $\overline{\psi}'_i$ is disjoint from $\gamma_{i,-}$. Note that it is possible that $\psi_i=\psi'_i$. By Lemma \ref{LemRecurrenceleaves}, for arbitrarily large $n >0$, the leaf $T_{i}^{- n}  \psi_i$ meets the trajectory $\mathcal{I}^{\Z}_{\tilde{\F}}(\tilde{x}_i)$ and, for arbitrarily large $n >0$, the leaf $T_{i}^{ n}  \psi'_i$ meets the trajectory $\mathcal{I}^{\Z}_{\tilde{\F}}(\tilde{x}_i)$. Moreover,  the sequence $(T_{i}^{-n}\overline{\psi_i})_{n \geq 0}$ of compact subsets of $\overline{\Hy^2}$ converges to $\gamma_{i,-}$. Hence we can take $n_- >0$ sufficiently large so that Lemma \ref{LemDisjointleavescase1} holds with $\phi_{i}=T_{i}^{-n_{-}} \psi_i$. In the same way, take $n_{+} >0$ sufficiently large so that Lemma \ref{LemDisjointleavescase1} holds with $\phi'_{i}=T_{i}^{n_{+}} \psi'_i$.
\end{proof}

Before proving a corollary, we need a geometric lemma.
Let $R= \max(R_1,R_2)$ (see Lemma~\ref{LemBoundeddistance}) and, for $i=1,2$, (see Figure~\ref{FigAi}, see also \eqref{EqDesNbh} for the definition of $R$-neighbourhood)
\begin{align}\label{EqDefAi}
A_i & = \left(\tilde{\gamma}_{i+1}\ \cup T_{i+1}^{-1}\tilde\gamma_i\ 
\cup (T_{i+1}\gamma_{i,-},\gamma_{i,+}) \cup
\!\!\bigcup_{w\in\langle T_{i}^{-1}, T_{i+1}^{-1}\rangle_+}\!\! T_{i+1}^{-1}w \tilde{\gamma}_{i+1}\right)_R,\\
B_i & = \left(\tilde{\gamma}_{i+1}\ \cup T_{i+1}\tilde\gamma_i\ 
\cup (T_{i+1}^{-1}\gamma_{i,-},\gamma_{i,+})\ 
\cup \!\!\bigcup_{w\in\langle T_{i}, T_{i+1}\rangle_+}\!\! T_{i+1}w \tilde{\gamma}_{i+1}\right)_R.\nonumber
\end{align}

The reader will note that there is \emph{a priori} no reason for the set of geodesics $\bigcup_{j=1,2}\bigcup_{w\in\langle T_1,T_2\rangle} w\tilde\gamma_j$ to be a tree in $\Hy^2$ (cases like in Figure~\ref{FigAi} could occur). 

\begin{figure}
\begin{center}
\includegraphics[scale=1]{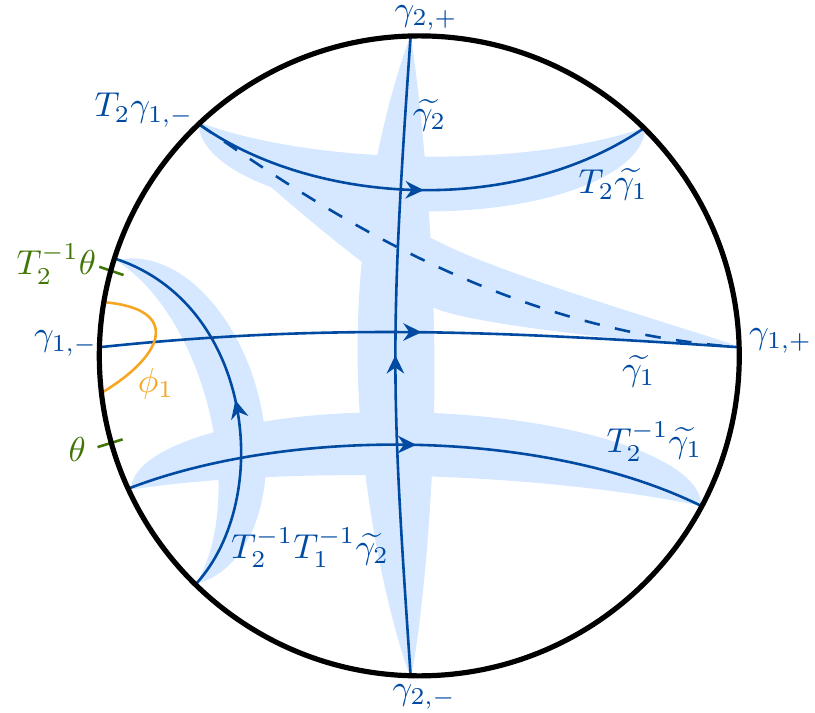}
\caption{A part of the set $A_1$ of \eqref{EqDefAi} (in light blue).}\label{FigAi}
\end{center}
\end{figure}

\begin{lemma}\label{AiBiVoisinages}
The closure of $A_i$ in $\overline{\Hy^2}$ does not meet $\gamma_{i,-}$, and the closure of $B_i$ in $\overline{\Hy^2}$ does not meet $\gamma_{i,+}$
\end{lemma}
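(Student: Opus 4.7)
I will prove the claim for $A_i$; that for $B_i$ follows by the symmetric argument obtained by swapping attracting with repelling fixed points and $T_{i+1}$ with $T_{i+1}^{-1}$.

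The first step is to discard the $R$-neighbourhood. In the Poincaré disk model, the Euclidean diameter of a hyperbolic $R$-ball centred at $y\in\Hy^2$ is comparable to the Euclidean distance from $y$ to $\partial\Hy^2$, and in particular tends to $0$ as $y\to\partial\Hy^2$. Consequently, the Euclidean closure of a hyperbolic $R$-neighbourhood $X_R\subset\Hy^2$ meets $\partial\Hy^2$ at exactly the same set of points as $\overline{X}$. Writing $X$ for the union of geodesics inside $A_i$, it therefore suffices to prove that $\gamma_{i,-}$ does not belong to the Euclidean closure in $\overline{\Hy^2}$ of
\[
X\;=\;\tilde\gamma_{i+1}\,\cup\,T_{i+1}^{-1}\tilde\gamma_i\,\cup\,(T_{i+1}\gamma_{i,-},\gamma_{i,+})\,\cup\!\!\bigcup_{w\in\langle T_i^{-1},T_{i+1}^{-1}\rangle_+}\!\!T_{i+1}^{-1}w\,\tilde\gamma_{i+1}.
\]
Since each geodesic $\eta\subset\Hy^2$ is Euclidean-closed in $\Hy^2$ and adds to its closure in $\overline{\Hy^2}$ exactly its two endpoints in $\partial\Hy^2$, this reduces the problem to showing that $\gamma_{i,-}$ is neither an endpoint of any geodesic in $X$ nor an accumulation point in $\partial\Hy^2$ of such endpoints.

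That $\gamma_{i,-}$ is not an endpoint is immediate for the three finite contributions, which produce only the points $\gamma_{i+1,\pm}$, $T_{i+1}^{\pm 1}\gamma_{i,\pm}$, $\gamma_{i,+}$, all distinct from $\gamma_{i,-}$ because the four points $\gamma_{j,\pm}$ ($j=i,i+1$) are pairwise distinct and $T_{i+1}$ fixes only $\gamma_{i+1,\pm}$. For the endpoints $T_{i+1}^{-1}w(\gamma_{i+1,\pm})$ of the infinite family, one observes that they all belong to the $\pi_1(S)$-orbit of the fixed points of $T_{i+1}$. Lemma~\ref{commonendpoint} applied to the two distinct primitive closed geodesics $\gamma_i$ and $\gamma_{i+1}$ forbids any lift of $\gamma_{i+1}$ from sharing an endpoint on $\partial\Hy^2$ with a lift of $\gamma_i$; in particular $\gamma_{i,-}$, being an endpoint of $\tilde\gamma_i$, is not in this orbit.

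To rule out accumulation, I would use a ping-pong argument. The axes $\tilde\gamma_i,\tilde\gamma_{i+1}$ cross, so the four fixed points $\gamma_{j,\pm}$ on $\partial\Hy^2$ are pairwise distinct; for sufficiently large integers $N_i,N_{i+1}$, one can choose pairwise disjoint closed arc-neighbourhoods $\Lambda_j^{\pm}\subset\partial\Hy^2$ of $\gamma_{j,\pm}$ satisfying the classical ping-pong inclusions
\[
T_j^{\pm N_j}(\partial\Hy^2\setminus\Lambda_j^{\mp})\subset\Lambda_j^{\pm}\qquad(j\in\{i,i+1\}),
\]
small enough that $\Lambda_i^+\cup\Lambda_{i+1}^{-}\cup\Lambda_{i+1}^+$ is contained in the complement of an open arc $U\subset\partial\Hy^2$ containing $\gamma_{i,-}$. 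A straightforward induction on the number of maximal blocks of length $\geq N_j$ of the generator $T_j^{-1}$ in a word $w\in\langle T_i^{-1},T_{i+1}^{-1}\rangle_+$ shows that as soon as $w$ contains at least one such long block, $w(\gamma_{i+1,\pm})$ lies in $\Lambda_i^-\cup\Lambda_{i+1}^-$; applying $T_{i+1}^{-1}$, which sends $\Lambda_i^-\cup\Lambda_{i+1}^-$ (which lies outside $\Lambda_{i+1}^+$) into $\Lambda_{i+1}^-$, the corresponding endpoint lies in $\Lambda_{i+1}^-$, and hence avoids $U$. The remaining "short-block" words form a family whose endpoints are contained in the $\pi_1(S)$-orbit of $\gamma_{i+1,\pm}$; any accumulation of such endpoints at $\gamma_{i,-}$ would amount to a sequence of distinct lifts of $\gamma_{i+1}$ whose endpoints converge to the endpoint $T_{i+1}\gamma_{i,-}$ of the lift $T_{i+1}\tilde\gamma_i$ of $\gamma_i$, which can be ruled out by combining Lemma~\ref{commonendpoint} (no shared endpoint between a lift of $\gamma_i$ and a lift of $\gamma_{i+1}$) with a compactness argument using the properly discontinuous action of the free subgroup $\langle T_1,T_2\rangle$ on $\Hy^2$, since a sufficiently small pair $(\Lambda_i^{\pm},\Lambda_{i+1}^{\pm})$ realises the ping-pong condition for $\langle T_i^{-N_i},T_{i+1}^{-N_{i+1}}\rangle_+$ whose action on $\partial\Hy^2$ is topologically conjugate, near its limit set, to a full shift.

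The main technical obstacle is the "short-block" case: ordinary ping-pong does not control orbits under the generators themselves, only under sufficiently high powers. The structural fact coming from Lemma~\ref{commonendpoint}, together with the freeness and proper discontinuity of the subgroup $\langle T_1,T_2\rangle$, is what permits bridging this gap and confining the endpoints of geodesics in the infinite family to a closed subset of $\partial\Hy^2$ missing a neighbourhood of $\gamma_{i,-}$.
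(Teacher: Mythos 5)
Your reduction to boundary endpoints, the treatment of the three finite pieces, and the exclusion of exact equality (via Lemma~\ref{commonendpoint} and the distinctness of the two closed geodesics) are fine and agree with the paper. The gap is in the non-accumulation step, which is the real content of the lemma. First, the induction you claim is not justified: your ping-pong inclusions only control the powers $T_j^{\pm N_j}$, whereas a word $w\in\langle T_i^{-1},T_{i+1}^{-1}\rangle_+$ interleaves its long blocks with short ones. Already for $w=T_{i+1}^{-1}T_i^{-N_i}$ the point $T_i^{-N_i}\gamma_{i+1,\pm}$ lies in $\Lambda_i^{-}$, a neighbourhood of $\gamma_{i,-}$ itself, and the single letter $T_{i+1}^{-1}$ applied next is not governed by any ping-pong inclusion, so nothing keeps the image in $\Lambda_i^{-}\cup\Lambda_{i+1}^{-}$; for the same reason your final step (``applying $T_{i+1}^{-1}$, which sends $\Lambda_i^{-}\cup\Lambda_{i+1}^{-}$ into $\Lambda_{i+1}^{-}$'') uses an inclusion that was only granted for $T_{i+1}^{-N_{i+1}}$, not for $T_{i+1}^{-1}$. (Note also that a terminal long block of $T_{i+1}^{-1}$ acts trivially on $\gamma_{i+1,\pm}$, so ``containing a long block'' does not by itself give contraction.) Second, and more seriously, your residual ``short-block'' case is an infinite family (e.g.\ $(T_i^{-1}T_{i+1}^{-1})^n$ for all $n$), not a finite list, and the sketch offered for it --- Lemma~\ref{commonendpoint} plus proper discontinuity plus a conjugacy to a full shift --- cannot work as a general principle: endpoints of lifts of $\gamma_{i+1}$ can accumulate at an endpoint of a lift of $\gamma_i$ without ever equalling it (the translates $T_i^{n}\tilde\gamma_{i+1}$, $n\in\Z$, have endpoints accumulating at both fixed points of $T_i$, in particular at $\gamma_{i,-}$). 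Indeed the un-prefixed orbit $\{w\gamma_{i+1,-}\}$ genuinely accumulates at $\gamma_{i,-}$; the lemma is only true because of the prefix $T_{i+1}^{-1}$ in the definition of $A_i$, and your argument never exploits this prefix in a controlled way.

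What is needed --- and what the paper does --- is a uniform confinement statement built around that prefix: the arc $[\gamma_{i,-},\gamma_{i+1,-}]_{\partial\Hy^2}$ is mapped into itself by both $T_i^{-1}$ and $T_{i+1}^{-1}$, so all points $w\gamma_{i+1,-}$ lie in it and all points $T_{i+1}^{-1}w\gamma_{i+1,-}$ lie in a fixed closed arc avoiding $\gamma_{i,-}$; for the endpoints $T_{i+1}^{-1}w\gamma_{i+1,+}$ one writes $w=w'T_i^{-1}$ and uses that every sufficiently long $w'$ drives $T_i^{-1}\gamma_{i+1,+}$ into (a slight enlargement of) the invariant arc, leaving only finitely many words $w'$ of bounded length, each of which is excluded by Lemma~\ref{commonendpoint}. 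Your dichotomy (some long block versus all blocks short) does not reduce the problem to a finite residual family, and the contraction step for words with a long block fails as stated; so, as written, the proposal does not establish that $\gamma_{i,-}$ is not an accumulation point of the endpoints, hence does not prove the lemma.
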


\begin{proof}
We prove the lemma only for $A_i$, the case of $B_i$ being identical. We choose an orientation of $\partial\Hy^2$ such that the points $\gamma_{i,\pm}$ are oriented as in Figure~\ref{FigAi}.

First of all, we obviously have that $\gamma_{i,-}\notin\overline{(\tilde{\gamma}_{i+1})_R \cup (T_{i+1}^{-1}\tilde\gamma_i)_R 
\cup (T_{i+1}\gamma_{i,-},\gamma_{i,+})_R}$.

Let $\theta\in (\gamma_{i,-},\gamma_{i+1,-})_{\partial\Hy^2}$ be such that $T_{i+1}^{-1}\theta\in (\gamma_{i+1,+},\gamma_{i,-})_{\partial\Hy^2}$ (see Figure~\ref{FigAi}). Notice that the arc $[\gamma_{i,-},\gamma_{i+1,-}]_{\partial\Hy^2}$ is positively stable under both $T_i^{-1}$ and $T_{i+1}^{-1}$, so for any $w\in\langle T_{i}^{-1}, T_{i+1}^{-1}\rangle_+$, we have $w\gamma_{i+1,-} \in [\gamma_{i,-},\gamma_{i+1,-}]_{\partial\Hy^2}$ and hence $T_{i+1}^{-1}w\gamma_{i+1,-} \in [\theta,\gamma_{i+1,-}]_{\partial\Hy^2}$.

We now consider the points $T_{i+1}^{-1}w\gamma_{i+1,+}$ and prove that they stay at a uniformly positive distance from $\gamma_{i,-}$. Note that $T_{i+1}^{-1}\gamma_{i+1,+}=\gamma_{i+1,+}$, so we do not lose generality by supposing that $w$ ends with $T_i^{-1}$: we write $w=w'T_i^{-1}$. Note that the set $[\gamma_{i,-},\gamma_{i+1,-}]_{\partial\Hy^2}$ attracts all the points of $[T_i^{-1}\gamma_{i+1,+},\gamma_{i+1,-}]_{\partial\Hy^2}$: there is $\ell>0$ such that if $\len(w')\ge \ell$, then $w'T_i^{-1}\gamma_{i+1,+}\in[T_{i+1}^{-1}\theta,\gamma_{i+1,-}]_{\partial\Hy^2}$. Hence, if $\len(w')\ge \ell$, then $T_{i+1}^{-1}w'T_i^{-1}\gamma_{i+1,+}\in[\theta,\gamma_{i+1,-}]_{\partial\Hy^2}$. This proves that 
\begin{align*}
\inf_{w\in\langle T_{i}^{-1}, T_{i+1}^{-1}\rangle_+} & d_{\partial\Hy^2} \big(\gamma_{i,-},\,T_{i+1}^{-1}w\gamma_{i+1,+}\big)\\
& = \min\Big( d_{\partial\Hy^2}\big(\gamma_{i,-},\theta\big),\, 
\inf_{\substack{w'\in\langle T_{i}^{-1}, T_{i+1}^{-1}\rangle_+\\ \len(w')\le\ell}}d_{\partial\Hy^2}\big(\gamma_{i,-},\,T_{i+1}^{-1}w'T_{i}^{-1}\gamma_{i+1,+}\big)\Big).
\end{align*}
We have reduced the bounding of the distance to a finite number of cases, so it suffices to prove that for any $w'\in\langle T_{i}^{-1}, T_{i+1}^{-1}\rangle_+$ with $\len(w')\le\ell$, we have $\gamma_{i,-}\neq T_{i+1}^{-1}w'T_{i}^{-1}\gamma_{i+1,+}$. But this last statement is a consequence of Lemma~\ref{commonendpoint}: if we had a deck transformation $U$ such that $U\gamma_{2,+} = \gamma_{1,-}$, this would mean that the two geodesic arcs $\gamma_1$ and $\gamma_2$ coincide (as sets), which is a contradiction.
\end{proof} 

From now on, we take $\phi_i$ and $\phi_i'$ the leaves given by Lemma~\ref{LemDisjointleavescase1} for $U_{i,-}$ the connected component of $\overline{\Hy^2}\setminus \overline{A_i}$ containing $\gamma_{i,-}$ and $U_{i,+}$ the connected component of $\overline{\Hy^2}\setminus \overline{B_i}$ containing $\gamma_{i,+}$ (Lemma~\ref{AiBiVoisinages} ensures that these connected components are indeed neighbourhoods of $\gamma_{i,+}$ and $\gamma_{i,-}$).

In the following corollary, we identify words on elements of $\pi_1(S)$ with the deck transformations which are obtained by composing the word's letters.

\begin{corollary} \label{CorDisjointleavescase1}
Let $n \geq 0$.
\begin{enumerate}
\item For any word $w$ in $T_1$ and $T_2$, the trajectory $w \mathcal{I}^{\Z}_{\tilde{\F}}(\tilde{x}_{i+1})$ does not meet $T_{i}^{-n}\phi_{i}$ and, for any word $w$ in $T_1$ and $T_2$ containing $T_{i+1}$, the trajectory $w \mathcal{I}^{\Z}_{\tilde{\F}}(\tilde{x}_{i})$ does not meet $T_{i}^{-n}\phi_{i}$.
\item For any word $w$ in $T_1^{-1}$ and $T_2^{-1}$ starting with $T_{i+1}^{-1}$, the trajectories $w \mathcal{I}^{\Z}_{\tilde{\F}}(\tilde{x}_{i+1})$ and $w \mathcal{I}^{\Z}_{\tilde{\F}}(\tilde{x}_{i})$ do not meet $T_{i}^{-n}\phi_{i}$.
\item For any word $w$ in $T_1^{-1}$ and $T_2^{-1}$, the trajectory $w \mathcal{I}^{\Z}_{\tilde{\F}}(\tilde{x}_{i+1})$ does not meet $T_i^{n}\phi'_{i}$ and, for any word $w$ in $T_1^{-1}$ and $T_2^{-1}$ containing $T_{i+1}^{-1}$, the trajectory $w \mathcal{I}^{\Z}_{\tilde{\F}}(\tilde{x}_{i})$ does not meet $T_i^{n}\phi'_{i}$.
\item For any word $w$ in $T_1$ and $T_2$ starting with $T_{i+1}$, the trajectories $w \mathcal{I}^{\Z}_{\tilde{\F}}(\tilde{x}_{i+1})$ and $w \mathcal{I}^{\Z}_{\tilde{\F}}(\tilde{x}_{i})$ do not meet $T_{i}^{n}\phi'_{i}$.
\end{enumerate}
\end{corollary}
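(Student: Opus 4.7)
The plan is to combine three ingredients. First, by Lemma~\ref{LemBoundeddistance}, every trajectory $w\mathcal{I}^{\Z}_{\tilde{\F}}(\tilde{x}_j)$ stays in the $R$-neighbourhood of its geodesic axis $w\tilde{\gamma}_j$, where $R=\max(R_1,R_2)$. Second, with the neighbourhoods $U_{i,-}$ and $U_{i,+}$ chosen as the connected components of $\overline{\Hy^2}\setminus\overline{A_i}$ and $\overline{\Hy^2}\setminus\overline{B_i}$ containing $\gamma_{i,-}$ and $\gamma_{i,+}$ respectively (which are honest neighbourhoods of those boundary points by Lemma~\ref{AiBiVoisinages}), Lemma~\ref{LemDisjointleavescase1} yields $T_i^{-n}\phi_i\subset U_{i,-}$ and $T_i^n\phi_i'\subset U_{i,+}$ for every $n\ge 0$. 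Third, the sets $A_i$ and $B_i$ are by definition $R$-neighbourhoods of explicit families of geodesics. Putting these together, to prove any single disjointness statement it suffices to check that the axis $w\tilde{\gamma}_j$ belongs to the list of geodesics defining $A_i$ (for items~1 and~2) or $B_i$ (for items~3 and~4): the trajectory then lies in $A_i$ (resp.\ $B_i$), hence disjoint from $U_{i,-}$ (resp.\ $U_{i,+}$), and in particular disjoint from the specified leaf translate.

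The verification is then case by case. For item~2, a word $w\in\langle T_1^{-1},T_2^{-1}\rangle_+$ starting with $T_{i+1}^{-1}$ factors as $w=T_{i+1}^{-1}w'$ with $w'\in\langle T_i^{-1},T_{i+1}^{-1}\rangle_+$, so that $w\tilde{\gamma}_{i+1}=T_{i+1}^{-1}w'\tilde{\gamma}_{i+1}$ appears verbatim as the fourth type of generating geodesic of $A_i$; using the invariance $T_i\tilde{\gamma}_i=\tilde{\gamma}_i$ one likewise reduces $w\tilde{\gamma}_i$ modulo the stabilizer $\langle T_i\rangle$ of $\tilde{\gamma}_i$ to one of the listed geodesics. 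Item~4 is the completely symmetric statement for $B_i$ (with $\phi_i'$ and $T_{i+1}$ in place of $\phi_i$ and $T_{i+1}^{-1}$). For item~1 with $w\in\langle T_1,T_2\rangle_+$, my plan is to strip initial letters of $w$ one at a time: each time $w=T_i w'$, applying $T_i^{-1}$ turns $w\mathcal{I}^{\Z}_{\tilde{\F}}(\tilde{x}_{i+1})\cap T_i^{-n}\phi_i$ into $w'\mathcal{I}^{\Z}_{\tilde{\F}}(\tilde{x}_{i+1})\cap T_i^{-n-1}\phi_i$, which is still controlled by Lemma~\ref{LemDisjointleavescase1} because it holds for every $n\ge 0$; the $T_{i+1}$-invariance of $\mathcal{I}^{\Z}_{\tilde{\F}}(\tilde{x}_{i+1})$ is then used to deal with the remaining $T_{i+1}$-letters, eventually reducing to the trivial word whose axis $\tilde{\gamma}_{i+1}$ appears in the list. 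Item~3 is symmetric.

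The main obstacle is the combinatorial bookkeeping of these reductions for items~1 and~3. In particular, the requirement in their second halves that $w$ contain $T_{i+1}$ (resp.\ $T_{i+1}^{-1}$) is dictated by the fact that $\tilde{\gamma}_i$ itself is \emph{not} listed as a generator of $A_i$, only the translate $T_{i+1}^{-1}\tilde{\gamma}_i$ is; the condition on $w$ is exactly what guarantees that after the reductions the axis $w\tilde{\gamma}_i$ can be brought into the form of a listed translate. The asymmetric appearance of $T_{i+1}^{-1}$ versus $T_{i+1}$ in the definitions of $A_i$ and $B_i$ is what prescribes all the starting-letter and containment conditions imposed on $w$ throughout the corollary, and this asymmetry is precisely the content of Lemma~\ref{AiBiVoisinages}, which ensures that $U_{i,-}$ and $U_{i,+}$ are genuine, disjoint neighbourhoods of the two endpoints of $\tilde{\gamma}_i$.
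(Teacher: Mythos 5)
There is a genuine gap. Your reduction scheme rests on two claims that do not hold. First, the assertion that in each item the axis $w\tilde{\gamma}_j$ can be brought into the list of geodesics defining $A_i$ (resp.\ $B_i$), so that the whole trajectory lies in $A_i$ (resp.\ $B_i$), is only true in one of the eight sub-cases, namely item~2 for the trajectory of $\tilde{x}_{i+1}$, where $w\tilde{\gamma}_{i+1}=T_{i+1}^{-1}w'\tilde{\gamma}_{i+1}$ is indeed a listed generator. In all the other cases the axis is typically \emph{not} in the list: for item~1 take $w=T_i^{k}$, whose axis $T_i^{k}\tilde{\gamma}_{i+1}$ is none of the generators of $A_i$; for the second half of item~2 take $w=T_{i+1}^{-2}$ or $w=T_{i+1}^{-1}T_i^{-1}T_{i+1}^{-1}$, whose axis cannot be reduced to $T_{i+1}^{-1}\tilde{\gamma}_i$ by the stabilizer $\langle T_i\rangle$ (only \emph{trailing} powers of $T_i^{\pm1}$ are absorbed; interior and repeated $T_{i+1}^{-1}$ letters are not). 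Second, the letter-stripping induction you propose for items~1 and~3 invokes the ``$T_{i+1}$-invariance of $\mathcal{I}^{\Z}_{\tilde{\F}}(\tilde{x}_{i+1})$'', which is false: $x_{i+1}$ is only chosen birecurrent (Lemma~\ref{LemRecurrentpoint}), so $T_{i+1}\mathcal{I}^{\Z}_{\tilde{\F}}(\tilde{x}_{i+1})$ is in general the trajectory of a different point, and even in the periodic case the invariance holds only for some power $T_{i+1}^{p}$. Removing a leading $T_{i+1}$ by conjugation also moves the leaf to $T_{i+1}^{-1}T_i^{-n}\phi_i$, which is no longer covered by Lemma~\ref{LemDisjointleavescase1}, so the induction does not close.

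What actually makes the corollary work, and what your write-up is missing, is a separation argument rather than a containment argument: except in the one case above, one shows that the two endpoints of $w\tilde{\gamma}_j$ on $\partial\Hy^2$ lie in the arc cut off by a single barrier geodesic of the family --- $\tilde{\gamma}_{i+1}$ for item~1 with the trajectory of $\tilde{x}_{i+1}$, the geodesic $(T_{i+1}\gamma_{i,-},\gamma_{i,+})$ for item~1 with the trajectory of $\tilde{x}_i$, and $T_{i+1}^{-1}\tilde{\gamma}_i$ for the second half of item~2 --- on the side opposite to $\gamma_{i,-}$ and $\phi_i$. Since the trajectory stays in the $R$-neighbourhood of its axis and the barrier's $R$-neighbourhood is part of $A_i$ (so $\phi_i\subset U_{i,-}$ lies in the other complementary component of that $R$-neighbourhood), the trajectory cannot meet $T_i^{-n}\phi_i$. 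This positional analysis of where positive, respectively negative, words send the endpoints $\gamma_{j,\pm}$ on $\partial\Hy^2$ is the substance of the proof and cannot be replaced by membership of the axis in the defining list of $A_i$ or $B_i$; items~3 and~4 then follow by the symmetric argument with $B_i$ and $\phi_i'$.
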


\begin{proof} We prove the first two points. To prove points 3. and 4., exchange the roles of $T_1$ and $T_{1}^{-1}$, of $T_2$ and $T_2^{-1}$, of $\gamma_{1,+}$ and $\gamma_{1,-}$ and of $\gamma_{2,+}$ and $\gamma_{2,-}$ and change the leaf $\phi_i$ to the leaf $\phi'_i$ in the following proof. Also, to simplify notation, we suppose $n=0$.

1. For any word $w$ in $T_1$ and $T_2$, observe that either the geodesic $w \tilde{\gamma}_{i+1}$ and the leaf $\phi_i$ are separated by the geodesic $\tilde{\gamma}_{i+1}$, or $w \tilde{\gamma}_{i+1}=\tilde{\gamma}_{i+1}$. Moreover, by definition of $R$, $w \I_{\tilde{\F}}^{\Z}(\tilde{x}_{i+1}) \subset \left( w \tilde{\gamma}_{i+1} \right)_{R}$ so that the trajectory $w \I_{\tilde{\F}}^{\Z}(\tilde{x}_{i+1})$ does not meet the connected component of $\overline{\Hy^2} \setminus \overline{(\tilde{\gamma}_{i+1})_R}$ which contains $\phi_{i}$.

Let $w$ be a word on $T_1$ and $T_2$ which contains $T_{i+1}$. Observe that the endpoints of $w \tilde{\gamma}_i$ lie between $T_{i+1} \gamma_{i,-}$ and $\gamma_{i,+}$. So the set $(w \tilde{\gamma}_i)_{R}$ does not meet the connected component of $\overline{\Hy^2} \setminus \overline{(T_{i+1}\gamma_{i,-},\gamma_{i,+})_R}$ containing $\gamma_{i,-}$ and $\phi_{i}$. Hence $w \I^{\Z}_{\tilde{\F}}(\tilde{x}_i)$ does not meet the leaf $\phi_i$.
\medskip

2. Let $w$ be a word on $T_1^{-1}$ and $T_2^{-1}$ which starts with $T_{i+1}^{-1}$. By definition of $A_i$, the set $(w \tilde{\gamma}_{i+1})_{R}$ is included in $A_i$,  and hence does not meet the leaf $\phi_i$. Therefore, the trajectory $w \I_{\tilde{\F}}^{\Z}(\tilde{x}_{i+1})$ does not meet the leaf $\phi_i$. 

Finally, let $w'\in\langle T_1^{-1},T_{2}^{-1}\rangle_+$. Observe that both ends of $w'\tilde\gamma_i$ lie in $[\gamma_{i,-},\gamma_{i,+}]_{\partial\Hy^2}$, so that both ends of $T_{i+1}^{-1}w'\tilde\gamma_i$ lie in the connected component of $\overline{\Hy^2} \setminus \overline{(T_{i+1}^{-1}\tilde{\gamma}_{i} )_R}$ that does not contain the point $\gamma_{i,-}$ and the leaf $\phi_i$. The trajectory $T_{i+1}^{-1}w' \I_{\tilde{\F}}^{\Z}(\tilde{x}_i)$ is hence disjoint from the leaf $\phi_i$.
\end{proof}

\subsubsection{Transverse intersections}

Let $P_i$ be the maximal integer $j \in \Z$ such that the trajectory $\mathcal{I}^{\Z}_{\tilde{\F}}(\tilde{x}_i)$ meets $\tilde{f}^{j}(\tilde{x}_{i})$ strictly before it meets the leaf $\phi_i$ and the trajectory $(-\infty,\tilde{f}^{j}(\tilde{x}_{i}))_i$ is disjoint from the set $A_i$ (defined in \eqref{EqDefAi} page \pageref{EqDefAi}).

Changing the point $\tilde{x}_i$ to $\tilde{f}^{P_i}(\tilde{x}_{i})$ if necessary, we can suppose that $P_1=P_2=0$.

\begin{lemma} \label{LemTransadm}
There exist integers $m_1>0$ and $m_2 >0$, which can be taken arbitrarily large, and integers $r_1>m_1 p_1$, $r_{2}>m_2 p_2$, such that
\begin{enumerate}
\item For any $i \in \Z /2$, the leaf $T_{i}^{r_i} \phi'_i$ meets  the trajectory $\mathcal{I}^{\Z}_{\tilde{\F}}(\tilde{x}_{i})$, the trajectory $[\phi_i,T_{i}^{r_i} \phi'_i]_i$ is admissible of order $m_iq_i$ and, for any $0 \leq j \leq m_i p_i$, the trajectory $[\phi_i,T_{i}^{r_i} \phi'_i]_i$ meets the trajectory $T_{i}^{j} \mathcal{I}^{\Z}_{\tilde{\F}}(\tilde{x}_{i+1})$.
\item For any $i \in \Z /2$  the trajectory $(-\infty,\phi_i]_i$ lies in the connected component of $\overline{\Hy^2} \setminus \overline{ A_i}$ which contains $\gamma_{i,-}$ and the trajectory $[T_i^{r_i} \phi'_i,+\infty)_i$ lies in the connected component of 
$\overline{\Hy^2} \setminus T_{i}^{m_i p_i} \overline{B_i}$
which contains $\gamma_{i,+}$.
\item For any $i\in \Z /2$ and any $0 \leq j \leq m_i p_i$, the transverse paths $[\phi_i, T_{i}^{r_i} \phi'_i]_i$ and $T_{i}^j[\phi_{i+1}, T_{i+1}^{r_{i+1}} \phi'_{i+1}]_{i+1}$ have an $\tilde{\F}$-transverse intersection at some point $\tilde y_j^i$.
\end{enumerate}
\end{lemma}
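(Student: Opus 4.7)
The plan is to construct the segments $[\phi_i,T_i^{r_i}\phi'_i]_i$ as subpaths of the $\tilde{\F}$-transverse trajectory of the orbit of $\tilde x_i$, of length $m_iq_i$ iterates of $\tilde f$. For point~(1), I would first fix a rational $v'_i$ with $\frac{p_i}{q_i}\ell(\gamma_i)<v'_i<v_i$ and invoke Lemma~\ref{LemRecurrentpoint}(2) to conclude that $\tilde f^{m_iq_i}(\tilde x_i)$ has slid along $\tilde\gamma_i$ by at least $m_iq_i v'_i/\ell(\gamma_i)>m_ip_i$ copies of $T_i$ once $m_i$ is sufficiently large. Combining this with Lemma~\ref{LemRecurrenceleaves} applied to the finite family of leaves crossed by the bounded segment running from a point slightly before $\phi_i$ to a point slightly past $\phi'_i$, one locates an integer $r_i>m_ip_i$ for which $T_i^{r_i}\phi'_i$ is crossed by $[\tilde x_i,\tilde f^{m_iq_i}(\tilde x_i)]_i$; the subpath $[\phi_i,T_i^{r_i}\phi'_i]_i$ is then automatically admissible of order $m_iq_i$. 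That this path meets each translate $T_i^j\mathcal{I}^{\Z}_{\tilde{\F}}(\tilde x_{i+1})$ for $0\le j\le m_ip_i$ will follow from the geometric crossing of the geodesics $\tilde\gamma_i$ and $T_i^j\tilde\gamma_{i+1}$ together with Lemma~\ref{LemBoundeddistance}, which keeps both trajectories in a uniform tubular neighbourhood of their supporting geodesics.

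For point~(2), the left assertion is the reason the base points were shifted so that $P_1=P_2=0$: by the very definition of $P_i$, the half-trajectory $(-\infty,\tilde x_i)_i$ is disjoint from $A_i$, and Lemma~\ref{LemDisjointleavescase1}(2) places the even earlier part in the neighbourhood $U_{i,-}$ of $\gamma_{i,-}$; together this puts $(-\infty,\phi_i]_i$ in the component of $\overline{\Hy^2}\setminus\overline{A_i}$ containing $\gamma_{i,-}$. For the right end, $\phi'_i\subset U_{i,+}$ and $U_{i,+}$ is a neighbourhood of the attracting fixed point $\gamma_{i,+}$ of the hyperbolic isometry $T_i$, so that $T_i^{s}\phi'_i\subset U_{i,+}$ for every $s\ge 0$; writing $r_i=m_ip_i+s_i$ with $s_i>0$ and applying $T_i^{m_ip_i}$ places $T_i^{r_i}\phi'_i$ in the component of $\overline{\Hy^2}\setminus T_i^{m_ip_i}\overline{B_i}$ containing $\gamma_{i,+}$, while the corresponding Lemma~\ref{LemDisjointleavescase1}(2) statement sends the tail of the trajectory into that same component.

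The substantive content is point~(3), for which I plan a two-step reduction. First, since the infinite trajectories $\mathcal{I}^{\Z}_{\tilde{\F}}(\tilde x_i)$ and $T_i^j\mathcal{I}^{\Z}_{\tilde{\F}}(\tilde x_{i+1})$ have endpoints on $\partial\Hy^2$ equal to those of the crossing geodesics $\tilde\gamma_i$ and $T_i^j\tilde\gamma_{i+1}$, they are geometrically transverse and therefore share an essential intersection point by Lemma~\ref{LemEssentialpoints}(2). Second, to upgrade this essential intersection into an $\tilde\F$-transverse one via Lemma~\ref{LemEssentialpoints}(3), four witness leaves are needed on the unbounded components of each trajectory minus the intersection that fail to meet the other trajectory. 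On the $\tilde x_i$ side the leaves $\phi_i$ and $T_i^{r_i}\phi'_i$ will play this role, and on the $T_i^j\tilde x_{i+1}$ side the leaves $T_i^j\phi_{i+1}$ and $T_i^jT_{i+1}^{r_{i+1}}\phi'_{i+1}$; the necessary disjointness from the opposite trajectory will be supplied, case by case, by the four items of Corollary~\ref{CorDisjointleavescase1} applied to the translates $T_i^{\pm j}$ and $T_i^{\pm(j-r_i)}T_{i+1}^{\mp r_{i+1}}$. Point~(2) will then feed back here by confining the essential intersection point to the interior of the bounded rectangles $[\phi_i,T_i^{r_i}\phi'_i]_i$ and $T_i^j[\phi_{i+1},T_{i+1}^{r_{i+1}}\phi'_{i+1}]_{i+1}$, since both ends of each trajectory are isolated in disjoint neighbourhoods of the boundary points $\gamma_{i,\pm}$ and $T_i^j\gamma_{i+1,\pm}$.

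The main obstacle will be the disjointness bookkeeping inside point~(3): each translate of the second trajectory by $T_i^j$ has to be rewritten as a word in the alphabet recognised by the four items of Corollary~\ref{CorDisjointleavescase1}, and one must keep track of whether the rewriting starts with $T_{i+1}$ or $T_{i+1}^{-1}$, which determines which item applies. The construction of $A_i$ and $B_i$ via semigroup cosets in~\eqref{EqDefAi} was tailored precisely for this, so I expect the verification to reduce, after an unavoidable case split on the sign of $j-r_i$ and on the indices, to a direct matching of each leaf--trajectory pair against the appropriate item of Corollary~\ref{CorDisjointleavescase1}.
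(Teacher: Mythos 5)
Your overall route is the paper's route (speed from Lemma~\ref{LemRecurrentpoint} plus Lemma~\ref{LemRecurrenceleaves} to locate $r_i>m_ip_i$, then separation properties of $\phi_i,\phi'_i$ and Lemma~\ref{LemEssentialpoints} for point (3)), but there is a genuine gap in your treatment of the right half of point (2), which is where the real work of the lemma lies. You argue that $[\phi'_i,+\infty)_i\subset U_{i,+}$ (Lemma~\ref{LemDisjointleavescase1}(2)) and then ``apply $T_i^{m_ip_i}$'' to control $[T_i^{r_i}\phi'_i,+\infty)_i$. This does not work: the transverse trajectory $\mathcal{I}^{\Z}_{\tilde\F}(\tilde x_i)$ is not $T_i$-equivariant (when $\gamma_i$ is simple, $x_i$ is only birecurrent; when it is periodic it is invariant only under some power), so translating Lemma~\ref{LemDisjointleavescase1}(2) by $T_i^{m_ip_i}$ controls the \emph{other} trajectory $T_i^{m_ip_i}\mathcal{I}^{\Z}_{\tilde\F}(\tilde x_i)$, not the tail of the original one after it crosses $T_i^{r_i}\phi'_i$. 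What you can extract from the cited lemma is only $[T_i^{r_i}\phi'_i,+\infty)_i\subset[\phi'_i,+\infty)_i\subset U_{i,+}$, and lying in the component of $\overline{\Hy^2}\setminus\overline{B_i}$ containing $\gamma_{i,+}$ is strictly weaker than lying beyond the translated barrier $T_i^{m_ip_i}\overline{B_i}$. The paper closes this by a quantitative estimate that your proposal never uses: setting $\lambda=\max\pi_{\tilde\gamma_i}(B_i)$ and using $v_i>\frac{p_i}{q_i}\ell(\gamma_i)$ together with the uniform bound $M_i$ of Lemma~\ref{LemBoundeddistance}, one chooses $n_i$ (hence $m_i$, $r_i$) so large that $\pi_{\tilde\gamma_i}(\tilde f^n(\tilde x_i))>\lambda+M_i+m_ip_i\ell(\gamma_i)$ for all $n\ge n_i$, which forces the whole half-trajectory $[\tilde f^{n_i}(\tilde x_i),+\infty)_i\supset[T_i^{r_i}\phi'_i,+\infty)_i$ to avoid $T_i^{m_ip_i}B_i$. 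This is exactly the point where the hypothesis on the rotation speed enters, and without it point (2) is unproven; since the confinement of the intersections to the middle segments in point (3) (in the paper, via the tail-disjointness of Corollary~\ref{CorTransadm}, which is deduced from point (2)) rests on it, the gap propagates to your point (3) as well — Corollary~\ref{CorDisjointleavescase1} only handles leaf-versus-trajectory disjointness, not the tail-versus-tail disjointness you need there.

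A smaller issue: the subpath $[\phi_i,T_i^{r_i}\phi'_i]_i$ is \emph{not} ``automatically'' admissible of order $m_iq_i$; a subpath of an admissible path need not be admissible of the same order. The paper gets admissibility from Proposition~\ref{PropPasFondLCT} (the subpath has the leaves $\phi_i$ and $T_i^{r_i}\phi'_i$ on its sides, so it cannot fall into the non-admissible alternative), and you should invoke it, or an equivalent argument, explicitly. Apart from these two points, your use of Lemma~\ref{LemEssentialpoints}(2)--(3) with the witness leaves $\phi_i$, $T_i^{r_i}\phi'_i$, $T_i^j\phi_{i+1}$, $T_i^jT_{i+1}^{r_{i+1}}\phi'_{i+1}$ and of Corollary~\ref{CorDisjointleavescase1} for the leaf disjointness matches the paper's argument.
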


In the same way we proved Corollary \ref{CorDisjointleavescase1} from Lemma \ref{LemDisjointleavescase1}, it is possible to prove the following corollary by using the second point of Lemma \ref{LemTransadm}. Note that points 1. and 2. of this corollary are direct consequences of Lemma \ref{LemDisjointleavescase1}.
As the proof of points 3. and 4. are identical to the proof of Corollary \ref{CorDisjointleavescase1}, we leave it to the reader.

\begin{corollary} \label{CorTransadm}
\begin{enumerate}
\item For any word $w$ in $T_1$ and $T_2$, the trajectory $w \mathcal{I}^{\Z}_{\tilde{\F}}(\tilde{x}_{i+1})$ does not meet $(-\infty,\phi_i]_i$ and, for any word $w$ in $T_1$ and $T_2$ which contains $T_{i+1}$, the trajectory $w \mathcal{I}^{\Z}_{\tilde{\F}}(\tilde{x}_{i})$ does not meet $(-\infty,\phi_i]_i$.
\item For any word $w$ in $T_1^{-1}$ and $T_2^{-1}$ which starts with $T_{i+1}^{-1}$, the trajectory $w \mathcal{I}^{\Z}_{\tilde{\F}}(\tilde{x}_{i+1})$ does not meet $(-\infty,\phi_i]_i$ and, for any word $w$ in $T_1^{-1}$ and $T_2^{-1}$ which starts with $T_{i+1}^{-1}$, the trajectory $w \mathcal{I}^{\Z}_{\tilde{\F}}(\tilde{x}_{i})$ does not meet $(-\infty,\phi_i]_i$.
\item For any word $w$ in $T_1^{-1}$ and $T_2^{-1}$, the trajectory $T_{i}^{m_i p_i}w \mathcal{I}^{\Z}_{\tilde{\F}}(\tilde{x}_{i+1})$ does not meet $[T_i^{r_i} \phi'_i,+\infty)_i$ and, for any word $w$ in $T_1^{-1}$ and $T_2^{-1}$ which contains $T_{i+1}^{-1}$, the trajectory $T_{i}^{m_i p_i}w \mathcal{I}^{\Z}_{\tilde{\F}}(\tilde{x}_{i})$ does not meet $[T_i^{r_i} \phi'_i,+\infty)_i$.
\item For any word $w$ in $T_1$ and $T_2$ which starts with $T_{i+1}$, the trajectory $T_{i}^{m_i p_i}w \mathcal{I}^{\Z}_{\tilde{\F}}(\tilde{x}_{i+1})$ does not meet $[T_i^{r_i}\phi'_i,+\infty)_i$ and, for any word $w$ in $T_1$ and $T_2$ which starts with $T_{i+1}$, the trajectory $T_{i}^{m_ip_i} w \mathcal{I}^{\Z}_{\tilde{\F}}(\tilde{x}_{i})$ does not meet $[T_i^{r_i}\phi'_i,+\infty)_i$.
\end{enumerate}
\end{corollary}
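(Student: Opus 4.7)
The plan is to follow the same template as the proof of Corollary \ref{CorDisjointleavescase1}, but replace the individual leaves $T_i^{-n}\phi_i$ and $T_i^n\phi'_i$ by the half-trajectories $(-\infty,\phi_i]_i$ and $[T_i^{r_i}\phi'_i,+\infty)_i$, using point 2 of Lemma \ref{LemTransadm} to enclose these half-trajectories in well-understood regions. Concretely, let $U_{i,-}$ be the connected component of $\overline{\Hy^2}\setminus\overline{A_i}$ containing $\gamma_{i,-}$ and let $U_{i,+}$ be the connected component of $\overline{\Hy^2}\setminus T_i^{m_ip_i}\overline{B_i}$ containing $\gamma_{i,+}$. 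By Lemma \ref{LemTransadm}(2) we have $(-\infty,\phi_i]_i\subset U_{i,-}$ and $[T_i^{r_i}\phi'_i,+\infty)_i\subset U_{i,+}$, so it suffices to show that each trajectory appearing in the statement is disjoint from the corresponding $U_{i,\pm}$.

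The key uniform observation is that every trajectory $v\,\mathcal I^{\Z}_{\tilde\F}(\tilde x_j)$ appearing in the statement (where $v$ is either a word in $T_1^{\pm 1},T_2^{\pm 1}$ or such a word multiplied on the left by $T_i^{m_ip_i}$) is contained in the $R$-neighbourhood $(v\tilde\gamma_j)_R$ of the geodesic $v\tilde\gamma_j$, by Lemma \ref{LemBoundeddistance} and the fact that deck transformations are isometries. To conclude that $(v\tilde\gamma_j)_R$ is disjoint from $U_{i,-}$ (resp.\ $U_{i,+}$), one identifies, in each case, a piece of $A_i$ (resp.\ $T_i^{m_ip_i}B_i$) that separates $v\tilde\gamma_j$ from $U_{i,-}$ (resp.\ $U_{i,+}$). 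This is then a case analysis on the geometric position of the endpoints of $v\tilde\gamma_j$ on $\partial\Hy^2$, exactly like in the proof of Corollary \ref{CorDisjointleavescase1}.

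For points 1 and 2 this runs as follows. Point 1: for $w\in\langle T_1,T_2\rangle_+$ applied to $\mathcal I^{\Z}_{\tilde\F}(\tilde x_{i+1})$, either $w\tilde\gamma_{i+1}=\tilde\gamma_{i+1}\subset A_i$, or $w\tilde\gamma_{i+1}$ lies strictly on the side of $\tilde\gamma_{i+1}$ opposite to $\gamma_{i,-}$, so $(w\tilde\gamma_{i+1})_R$ is separated from $U_{i,-}$ by $(\tilde\gamma_{i+1})_R\subset A_i$; for $w$ containing $T_{i+1}$ applied to $\mathcal I^{\Z}_{\tilde\F}(\tilde x_i)$, both endpoints of $w\tilde\gamma_i$ lie on the arc $(T_{i+1}\gamma_{i,-},\gamma_{i,+})_{\partial\Hy^2}$ whose $R$-neighbourhood is part of $A_i$. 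Point 2: writing $w=T_{i+1}^{-1}w'$ with $w'\in\langle T_1^{-1},T_2^{-1}\rangle_+$, the trajectory $w\,\mathcal I^{\Z}_{\tilde\F}(\tilde x_{i+1})$ sits in $(T_{i+1}^{-1}w'\tilde\gamma_{i+1})_R$, which is included in $A_i$ by its very definition; while $w\,\mathcal I^{\Z}_{\tilde\F}(\tilde x_i)=T_{i+1}^{-1}w'\,\mathcal I^{\Z}_{\tilde\F}(\tilde x_i)$ sits in $(T_{i+1}^{-1}w'\tilde\gamma_i)_R$, and since $w'\in\langle T_1^{-1},T_2^{-1}\rangle_+$ sends the endpoints of $\tilde\gamma_i$ inside $[\gamma_{i,-},\gamma_{i,+}]_{\partial\Hy^2}$, the translate $T_{i+1}^{-1}w'\tilde\gamma_i$ is separated from $U_{i,-}$ by $(T_{i+1}^{-1}\tilde\gamma_i)_R\subset A_i$.

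Points 3 and 4 are obtained by the same argument applied to $T_i^{m_ip_i}B_i$ and $U_{i,+}$: the $T_i^{m_ip_i}$-prefix on the trajectories puts them in the right half-plane of the translated barrier, and the definition of $B_i$ (with $T_{i+1}$ in place of $T_{i+1}^{-1}$ and the arc $(T_{i+1}^{-1}\gamma_{i,-},\gamma_{i,+})_{\partial\Hy^2}$ in place of its symmetric) plays exactly the same role for $\gamma_{i,+}$ as $A_i$ did for $\gamma_{i,-}$. I expect the only mildly delicate point to be the bookkeeping of which arc of $\partial\Hy^2$ contains the endpoints of $v\tilde\gamma_j$ in each sub-case, but Lemma \ref{AiBiVoisinages} (asserting that $\overline{A_i}\not\ni\gamma_{i,-}$ and $\overline{B_i}\not\ni\gamma_{i,+}$) guarantees that $U_{i,-}$ and $U_{i,+}$ are genuine neighbourhoods of the relevant boundary points, so no degenerate configuration occurs.
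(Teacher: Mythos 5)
Your proposal is correct and takes essentially the same route the paper intends: the paper gives no written proof here, asserting only that the argument of Corollary~\ref{CorDisjointleavescase1} carries over once point~2 of Lemma~\ref{LemTransadm} confines the half-trajectories $(-\infty,\phi_i]_i$ and $[T_i^{r_i}\phi'_i,+\infty)_i$ to the connected components of $\overline{\Hy^2}\setminus\overline{A_i}$ and $\overline{\Hy^2}\setminus T_i^{m_ip_i}\overline{B_i}$ containing $\gamma_{i,-}$ and $\gamma_{i,+}$, and your case analysis (via $R$-neighbourhoods of translated geodesics and barriers inside $A_i$, resp.\ $T_i^{m_ip_i}B_i$) is exactly that. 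Only a notational quibble: in your point~1 the endpoints of $w\tilde\gamma_i$ lie on the boundary arc bounded by $T_{i+1}\gamma_{i,-}$ and $\gamma_{i,+}$ \emph{not} containing $\gamma_{i,-}$, and the separating set is the $R$-neighbourhood of the hyperbolic geodesic with these endpoints (not of a boundary arc); this does not affect the argument.
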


\begin{proof} [Proof of Lemma \ref{LemTransadm}]
We will find integers $m_1$ and $m_2$ such that the first two items of the lemma are satisfied. We will then see that the third item is automatically satisfied.

We fix $i=1,2$. Observe that the first part of the second point holds by definition of $\phi_i$ and Lemma \ref{LemDisjointleavescase1}. We parametrize the geodesic $\tilde{\gamma}_i$ by arclength and identify points on $\tilde{\gamma}_i$ with their parameters. Recall that $\pi_{\tilde{\gamma}_i}$ denotes the orthogonal projection on the geodesic $\tilde{\gamma}_i$. Let $\lambda= \max \pi_{\tilde{\gamma}_i}(B_i)$ (it exists by Lemma~\ref{AiBiVoisinages}). Let $k_{i}$ be an integer such that the trajectory $[\tilde{x}_i,\tilde{f}^{k_{i}}(\tilde{x}_i)]_i$ meets the leaves $\phi_{i}$ and $\phi'_{i}$ but not at its endpoints. Finally, fix $v'_i, v''_i \in \big(\frac{p_i}{q_i} \ell(\gamma_i),v_i\big)$ with $v''_i < v'_i$.

By Lemma \ref{LemRecurrentpoint}, for any $n$ sufficiently large,
$$\pi_{\tilde{\gamma}_i}\big(\tilde{f}^n(\tilde{x}_i)\big) \geq \pi_{\tilde{\gamma}_i}(\tilde{x}_i)+n v'_i.$$
Moreover, for any $n$ sufficiently large,
$$\pi_{\tilde{\gamma}_i}(\tilde{x}_i)+n v'_i \geq \lambda+M_i+nv''_i,$$
where $M_i$ is given by Lemma \ref{LemBoundeddistance}. Take $N \in \mathbb{N}$ such that the two above properties hold for any $n \geq N$. By Lemma \ref{LemRecurrenceleaves}, there exists $n_i \geq N$, which can be taken arbitrarily large, such that the segment $[\tilde{f}^{n_i}(\tilde{x}_i),\tilde{f}^{n_i+k_i}(\tilde{x}_i)]_i$ meets $T_{i}^{r_{i}} \phi'_{i}$ for some $r_{i} \geq \frac{v'_i n_i}{\ell(\gamma_i)}$. Let $m_i$ be the smallest integer such that $n_i+k_i \leq m_iq_i$. If $n_i$ is chosen sufficiently large, then $r_i > m_ip_i$ and, for any $n \geq n_i$,
$$ \pi_{\tilde{\gamma}_i}(\tilde{f}^{n}(\tilde{x}_i)) > \lambda+M_i+m_i p_i\ell(\gamma_i).$$
Indeed, when $n_i$ is sufficiently large,
$$n_i v_i'' > \left\lfloor \frac{n_i+k_i}{q_i} \right\rfloor p_i \ell(\gamma_i)=m_i p_i \ell(\gamma_i).$$

This implies that the half-trajectory $[\tilde{f}^{n_i}(\tilde{x}_i),+\infty)_i$ is disjoint from $T_{i}^{m_ip_i}B_i$, and that the segment $[\tilde{x}_i,\tilde{f}^{m_iq_i}(\tilde{x}_i)]_i$ meets the leaf $T_i^{r_i} \phi'_i$. This proves the second point as $[T_{i}^{r_i}\phi'_{i},+\infty)_i \subset[\tilde{f}^{n_i}(\tilde{x}_i),+\infty)_i$.  Moreover, recall that $P_i=0$ so that the segment $[\tilde{x}_i,\tilde{f}^{m_iq_i}(\tilde{x}_i)]_i$ also meets the leaf $\phi_{i}$. Hence, by Proposition~\ref{PropPasFondLCT}, the segment $[\phi_{i},T_{i}^{r_i} \phi'_{i}]_i$ is admissible of order $m_iq_i$. 

Let us prove the first point now. We already saw that $T_{i}^{r_i} \phi'_i$ meets $\I^{\Z}_{\tilde\F}(\tilde{x}_i)$ and that the segment $[\phi_{i},T_{i}^{r_i} \phi'_{i}]_i$ is admissible of order $m_iq_i$, so it remains to prove that for any $0 \leq j \leq m_i p_i$, the trajectory $[\phi_i,T_{i}^{r_i} \phi'_i]_i$ meets the trajectory $T_{i}^{j} \mathcal{I}^{\Z}_{\tilde{\F}}(\tilde{x}_{i+1})$. Recall that, by definition of $R$, for any $j$ with $0 \leq j \leq m_ip_i$, $T_i^{j} \I_{\tilde{\F}}^{\Z}(\tilde{x}_{i+1}) \subset (T_i^{j} \tilde{\gamma}_{i+1})_R$ and that the set $(T_i^{j} \tilde{\gamma}_{i+1})_R$ meets neither the connected component of $\overline{\Hy^2} \setminus \overline{(\tilde{\gamma}_{i+1})_R} \supset \overline{\Hy^2} \setminus \overline{A_i}$ which contains $\gamma_{i,-}$, $\phi_{i}$ and $(-\infty,\phi_i]_i$, by definition of $\phi_i$, nor the connected component of $\overline{\Hy^2} \setminus \overline{(T_{i}^{m_ip_i}\tilde{\gamma}_{i+1})_R} \supset \overline{\Hy^2} \setminus \overline{T_{i}^{m_ip_i}B_i}$ which contains $\gamma_{i,+}$, $T_i^{r_i}\phi'_{i}$ and $[T_{i}^{r_i} \phi'_i,+\infty)_i$. As, for any $0 \leq j \leq m_ip_i$, the trajectory $T_i^{j} \I_{\tilde{\F}}^{\Z}(\tilde{x}_{i+1})$ is disjoint from $(-\infty,\phi_i]_i$ and $[T_i^{r_i} \phi'_i,+\infty)_i$ and meets $\I_{\tilde{\F}}^{\Z}(\tilde{x}_{i})$, we deduce that it meets $[\phi_i,T_i^{r_i}\phi'_i]_i$.
The first point is hence satisfied.

Let us prove now the third point. Let $0 \leq j \leq m_i p_i$. By Lemma \ref{LemDisjointleavescase1}, the set $(T_{i}^j\tilde{\gamma}_{i+1})_{R}$ does not meet the leaf $\phi_{i}$. Recall that $T_{i}^j\I^{\Z}_{\tilde{\F}}(\tilde{x}_{i+1})$ is contained in $(T_i^{j}\tilde{\gamma}_{i+1})_{R}$, by definition of $R$. Hence, the leaf $\phi_{i}$ does not meet the trajectory $T_{i}^j\I^{\Z}_{\tilde{\F}}(\tilde{x}_{i+1})$. In the same way, the trajectory $T_{i}^j\I^{\Z}_{\tilde{\F}}(\tilde{x}_{i+1})$ does not meet the leaf $T_{i}^{r_{i}}(\phi'_{i})$ either, as $j \leq m_i p_i < r_{i}$.

Similarly, by Lemma \ref{LemDisjointleavescase1}, the trajectory $\I^{\Z}_{\tilde{\F}}(\tilde{x}_{i})$ does not meet the leaves $T_{i}^j(\phi_{i+1})$ and $T_{i}^j(\phi'_{i+1})$, as $\I^{\Z}_{\tilde{\F}}(\tilde{x}_{i}) \subset (\gamma_i)_R$. Finally, by Corollary \ref{CorTransadm} (points 1. and 3.), which is deduced from the already proved second point of the lemma,
$$\Big((-\infty,\phi_i]_i \cup [T_i^{r_i} \phi'_i,+\infty)_i\Big)\ 
\cap\ T_{i}^{j}\Big((-\infty,\phi_{i+1}]_{i+1} \cup [T_{i+1}^{r_{i+1}} \phi'_{i+1},+\infty)_{i+1}\Big)=\emptyset$$
and we indeed have an $\tilde{\F}$-transverse intersection by Lemma \ref{LemEssentialpoints}.3.
\end{proof}

\subsubsection{Admissible trajectories}

Let us fix integers $m_1$, $m_2$, $r_1>m_1p_1$ and $r_2>m_2p_2$ such that Lemma \ref{LemTransadm} is satisfied. We let
$$ \alpha = [\phi_1, T_1^{r_1} \phi'_1]_1
\qquad  \text{and}\qquad
\beta = [\phi_2, T_2^{r_2} \phi'_2]_2.$$

Let $I=(i_{n},j_{n})_{n \geq 1}$ be a sequence of couples of integers. 
For any $n \geq 1$,  we let
$$ \left\{ \begin{array}{rcl}
T^{I_{n,1}} & = & T_1^{i_{1}} T_2^{j_{1}} \ldots T_1^{i_{n-1}} T_2^{j_{n-1}}T_1^{i_{n}} \\
T^{I_{n,2}} & = & T_1^{i_{1}} T_2^{j_{1}} \ldots T_1^{i_{n}} T_2^{j_{n}}.
\end{array} \right.
$$
and by convention
$$T^{I_{0,1}}=T^{I_{0,2}}=\Id_{\tilde{S}}.$$

\begin{lemma}\label{LemTransadmconc}
Let $I=(i_{n},j_{n})_{n \geq 0}$ be any sequence with $1 \leq i_{n} \leq m_1 p_1$ and $1 \leq j_{n} \leq m_2 p_2$ for any $n$. Then for any $n \geq 1$, there exists an $\tilde{\F}$-transverse path $\alpha_{n}$ with the following properties.
\begin{enumerate}
\item The path $\alpha_n$ is admissible of order $n(m_1q_1+m_2q_2)$.
\item The path $\alpha_n$ joins the leaf $\phi_1$ to the leaf $T^{I_{n,1}} T_2^{r_2} \phi'_2$.
\item The path $\alpha_n$ is contained in
$$ \bigcup_{ k \leq n} T^{I_{k-1,2}} \alpha \cup T^{I_{k,1}}\beta.$$ 
\item The path $\alpha_n$ has an $\tilde{\F}$-transverse intersection with the path $T^{I_{n,2}} \alpha_n$.
\end{enumerate}
\end{lemma}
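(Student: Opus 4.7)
I will proceed by induction on $n$, building $\alpha_n$ as an alternating concatenation of subpaths of translates of $\alpha$ and $\beta$, each concatenation performed at an $\tilde{\F}$-transverse intersection supplied by Lemma~\ref{LemTransadm}(3) and made admissible via Proposition~\ref{PropFondLCT}. Throughout, I maintain the stronger form of condition (4) that the $\tilde{\F}$-transverse intersection is witnessed between the final piece of $\alpha_n$ lying in $T^{I_{n,1}}\beta$ and the initial piece of $T^{I_{n,2}}\alpha_n$ lying in $T^{I_{n,2}}\alpha = T^{I_{n,1}}T_2^{j_n}\alpha$.

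For the base case $n=1$, Lemma~\ref{LemTransadm}(3) with $i=1$, $j=i_1$ produces an $\tilde{\F}$-transverse intersection between $\alpha$ and $T_1^{i_1}\beta$. I define $\alpha_1$ as the concatenation of the initial piece of $\alpha$ up to this intersection with the terminal piece of $T_1^{i_1}\beta$ ending at $T_1^{i_1}T_2^{r_2}\phi'_2$. Since $\alpha$ is admissible of order $m_1q_1$ and $T_1^{i_1}\beta$ of order $m_2q_2$, Proposition~\ref{PropFondLCT} makes $\alpha_1$ admissible of order $m_1q_1+m_2q_2$, establishing (1)--(3). For (4), Lemma~\ref{LemTransadm}(3) applied with $i=2$, $j=j_1$ and translated by $T_1^{i_1}$ provides an $\tilde{\F}$-transverse intersection between $T_1^{i_1}\beta$ and $T_1^{i_1}T_2^{j_1}\alpha$, i.e.\ between the final piece of $\alpha_1$ and the initial piece of $T^{I_{1,2}}\alpha_1$.

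For the inductive step, I first build an auxiliary admissible path $\gamma_{n+1}$ using the base-case construction with indices $(i_{n+1},j_{n+1})$, so that $T^{I_{n,2}}\gamma_{n+1}$ is admissible of order $m_1q_1+m_2q_2$, begins with a subpath of $T^{I_{n,2}}\alpha$, and ends at $T^{I_{n,2}}T_1^{i_{n+1}}T_2^{r_2}\phi'_2 = T^{I_{n+1,1}}T_2^{r_2}\phi'_2$. The $\tilde{\F}$-transverse intersection from condition (4) of $\alpha_n$ --- lying between the final piece of $\alpha_n$ in $T^{I_{n,1}}\beta$ and the initial piece of $T^{I_{n,2}}\alpha_n$ in $T^{I_{n,2}}\alpha$ --- equally witnesses an $\tilde{\F}$-transverse intersection between $\alpha_n$ and $T^{I_{n,2}}\gamma_{n+1}$. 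Proposition~\ref{PropFondLCT} then yields an admissible concatenation $\alpha_{n+1}$ of order $(n+1)(m_1q_1+m_2q_2)$ joining $\phi_1$ to $T^{I_{n+1,1}}T_2^{r_2}\phi'_2$. Conditions (1)--(3) are immediate, and (4) for $\alpha_{n+1}$ is supplied by Lemma~\ref{LemTransadm}(3) with $i=2$, $j=j_{n+1}$, translated by $T^{I_{n+1,1}}$, yielding an $\tilde{\F}$-transverse intersection between $T^{I_{n+1,1}}\beta$ and $T^{I_{n+1,2}}\alpha$.

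The principal technical challenge is verifying that at each step the invoked $\tilde{\F}$-transverse intersections genuinely lie within the retained subpaths resulting from the Proposition~\ref{PropFondLCT} cuts --- in particular that the intersection of $T^{I_{n+1,1}}\beta$ with $T^{I_{n+1,2}}\alpha$ occurs past the point where $T^{I_{n+1,1}}\beta$ was joined to $T^{I_{n,2}}\alpha$, and dually for $\alpha$. This control comes from Corollary~\ref{CorTransadm}, which ensures the excluded half-trajectories $(-\infty,\phi_i]_i$ and $[T_i^{r_i}\phi'_i,+\infty)_i$ are disjoint from all the relevant translates of trajectories entering the construction, so that no "stray" intersection can obstruct the staged concatenation. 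Combined with the flexibility afforded by Lemma~\ref{LemEssentialpoints}(3) --- an essential intersection point yields $\tilde{\F}$-transverse intersection for any pair of segments containing appropriate leaves --- this guarantees that each stage of the induction can be carried out with the witness for condition (4) in the prescribed location.
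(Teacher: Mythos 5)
Your construction of $\alpha_1$ and the use of Proposition~\ref{PropFondLCT} to concatenate match the paper, but the mechanism you propose for property (4) --- and hence the engine of your induction --- has a genuine gap. You maintain the strengthened invariant that the $\tilde{\F}$-transverse intersection in (4) is witnessed between the final piece of $\alpha_n$ lying in $T^{I_{n,1}}\beta$ and the initial piece of $T^{I_{n,2}}\alpha_n$ lying in $T^{I_{n,2}}\alpha$, the witness being supplied by Lemma~\ref{LemTransadm}(3) applied to the \emph{full} segments $T^{I_{n,1}}\beta$ and $T^{I_{n,1}}T_2^{j_n}\alpha$. Nothing you cite controls where that intersection point lies relative to the points at which these segments were cut when forming $\alpha_n$ and the auxiliary path (on the $\beta$ side the cut is at an essential intersection with a translate of $\mathcal{I}^{\Z}_{\tilde{\F}}(\tilde{x}_1)$, on the $\alpha$ side at an essential intersection with $T_1^{i_{n+1}}\mathcal{I}^{\Z}_{\tilde{\F}}(\tilde{x}_2)$). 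The transverse trajectories only shadow the geodesics up to a bounded error and may cross each translate several times, so no order relation along the trajectory between the Lemma~\ref{LemTransadm}(3) crossing and the cut points is available. Corollary~\ref{CorTransadm} does not help: it only excludes intersections with the half-trajectories $(-\infty,\phi_i]_i$ and $[T_i^{r_i}\phi'_i,+\infty)_i$ beyond the endpoints of $\alpha$ and $\beta$, and says nothing about relative positions \emph{inside} these segments. Lemma~\ref{LemEssentialpoints}(3) does not let you relocate a witness into prescribed subpaths either: it concerns essential intersection points of the full trajectories and the particular leaves met by their unbounded pieces. Finally, even when the crossing point does lie in the retained subpaths, $\F$-transversality of the full segments does not automatically restrict to the subpaths, since the comparison leaves required by Definition~\ref{DefInterTrans} may lie outside them. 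So both the base case and the inductive step of your strengthened form of (4) are unproved, and the induction collapses at exactly the point you flag as the ``principal technical challenge''.

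The paper proves (4), and the crossing needed in the inductive step, by a different route designed to avoid locating witnesses. One extends $\alpha_n$ and the auxiliary path $\alpha_{1,i_{n+1}}$ by half-trajectories to bi-infinite transverse paths $\alpha'_n$ and $\alpha'_{1,i_{n+1}}$, checks by an explicit computation on $\partial\Hy^2$ that the endpoints of $\alpha'_n$ and of $T^{I_{n,2}}\alpha'_{1,i_{n+1}}$ (respectively of $T^{I_{n,2}}\alpha'_n$) link, so the paths are geometrically transverse and admit an essential intersection point by Lemma~\ref{LemEssentialpoints}(2); then Corollaries~\ref{CorDisjointleavescase1} and \ref{CorTransadm}, combined with property (3) of the induction hypothesis (note: the paper's induction never uses property (4) of the hypothesis), show that the end leaves and the added half-trajectories are disjoint from the other path, so Lemma~\ref{LemEssentialpoints}(3) upgrades the essential intersection to an $\tilde{\F}$-transverse intersection located inside the finite paths. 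To repair your argument you should replace your location-of-witness invariant by this boundary-linking plus essential-intersection argument.
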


For any $n \in \Z$, the geodesics $\gamma_1$ and $T_1^n \gamma_2$ meet so that $ \mathcal{I}_{\tilde{\F}}^{\Z}(\tilde{x}_1)$ and $T_{1}^n\mathcal{I}_{\tilde{\F}}^{\Z}(\tilde{x}_2)$ are geometrically transverse. By Lemma~\ref{LemEssentialpoints}.2., there exists $\tilde{y}_n$ an essential intersection point between those two trajectories. 

\begin{proof}
We prove the statement by induction on $n \geq 1$.

\begin{figure}
\begin{center}
\includegraphics[scale=1]{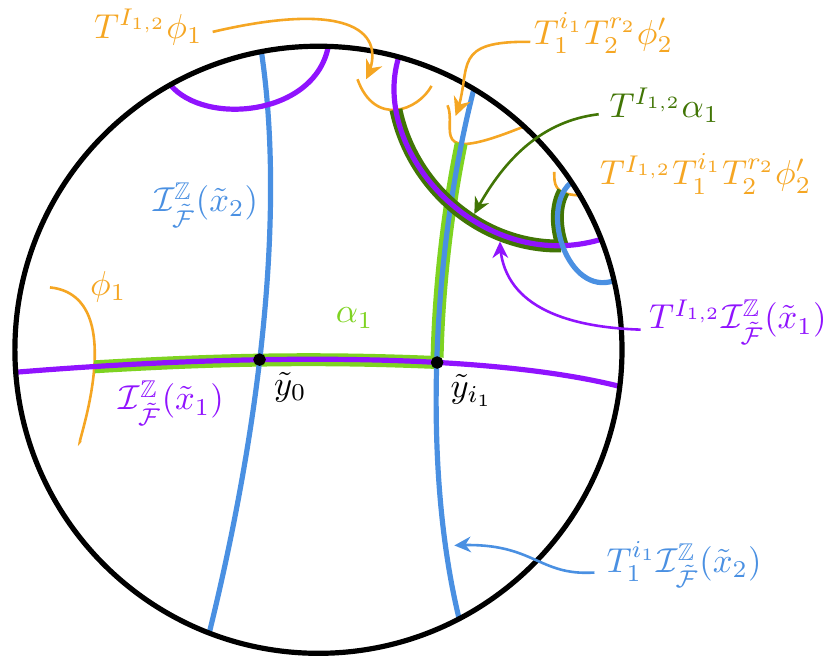}
\caption{The case $n=1$ for the proof of Lemma~\ref{LemTransadmconc}.}\label{FigTransadmconc}
\end{center}
\end{figure}

First, for $n=1$, as $i_1\le m_1p_1$, Lemma \ref{LemTransadm} ensures that the paths $\alpha$ and $T_1^{i_1} \beta$ have an $\tilde{\F}$-transverse intersection. Hence, by Proposition \ref{PropFondLCT}, the transverse path
$$\alpha_1 = [\phi_1,\tilde y_{i_1}]_1\ 
T_1^{i_1}[T_1^{-i_1}\tilde y_{i_1}, T_2^{r_2} \phi'_2]_2$$
satisfies the first three required properties.

Let us check that the fourth property is also satisfied. Notice that the transverse trajectory
$$ \alpha'_1= (-\infty,\phi_1]_1 \alpha_1 T_1^{i_1}[T_2^{r_2} \phi'_2,+\infty)_2$$
and its translate under $T^{I_{1,2}}=T_1^{i_1}T_2^{j_1}$ are geometrically transverse, as the geodesics $(\gamma_{1,-},T_1^{i_1}\gamma_{2,+})$ and its translate under $T^{I_{1,2}}$, which is $(T_1^{i_1}T_2^{j_1} \gamma_{1,-}, T_1^{i_1}T_2^{j_1} T_1^{i_1} \gamma_{2,+})$, meet: observe that the point $T_2^{j_1} \gamma_{1,-}$ lies in $(\gamma_{2,+},\gamma_{1,-})_{\partial\Hy^2}$ and that the point $T_2^{j_1} T_1^{i_1} \gamma_{2,+}$ lies in $(\gamma_{1,-}, \gamma_{2,+})_{\partial\Hy^2}$.

By Corollary \ref{CorDisjointleavescase1} (the numbers correspond to the cases of the corollary),
$$ \begin{array}{rcl}
 \phi_1 \cap T^{I_{1,2}} \alpha'_{1} & = & \emptyset\quad  (1.)\\
T_{1}^{i_1} T_2^{r_2} \phi'_{2} \cap T^{I_{1,2}} \alpha'_1 & = & \emptyset\\
 T^{I_{1,2}} \phi_1 \cap \alpha'_1 & = & \emptyset \quad  (2.)\\
 T^{I_{1,2}} T_{1}^{i_1}T_2^{r_2}\phi'_2 \cap \alpha'_{1} & = & \emptyset \quad  (3.).
 \end{array}$$
To prove the second relation, observe that the leaf $T_2^{r_2-j_1} \phi'_2$ is disjoint from the trajectories $\I^{\Z}_{\tilde{\F}}(\tilde{x}_1)$ (by 3.) and $T_{1}^{i_1} \I^{\Z}_{\tilde{\F}}(\tilde{x}_2)$ (by 4.) as $r_2-j_1 \geq r_2-m_2p_2 \geq 0$.

Moreover, 
$$ \left((-\infty, \phi_1]_1 \cup T_1^{i_1}[T_2^{r_2} \phi'_2,+\infty)_2 \right) \cap T^{I_{1,2}}\left( (-\infty, \phi_1]_1 \cup T_1^{i_1}[T_2^{r_2} \phi'_2,+\infty)_2 \right)=\emptyset,$$
because by Corollary \ref{CorTransadm} (the numbers correspond to the cases of the corollary),
\begin{align*}
(-\infty, \phi_1]_1 \cap T^{I_{1,2}}(-\infty, \phi_1]_1 & = \emptyset \quad  (1.)\\
(-\infty, \phi_1]_1 \cap T^{I_{1,2}}T_1^{i_1}[T_2^{r_2} \phi'_2,+\infty)_2 & = \emptyset \quad  (1.)\\
T_1^{i_1}[T_2^{r_2} \phi'_2,+\infty)_2 \cap T^{I_{1,2}}(-\infty, \phi_1]_1 & = \emptyset \quad  (3.)\\
T_1^{i_1}[T_2^{r_2} \phi'_2,+\infty)_2 \cap T^{I_{1,2}} T_1^{i_1}[T_2^{r_2} \phi'_2,+\infty)_2 & = \emptyset \quad  (3.)
\end{align*}
so that the paths $\alpha_1$ and $T^{I_{1,2}}\alpha_1$ intersect $\tilde{\F}$-transversally by Lemma \ref{LemEssentialpoints}.
\bigskip

Suppose the lemma holds for some integer $n \geq 1$ and let us prove it for $n+1$. The initialization of the induction proves that there exists  a transverse path $\alpha_{1,i_{n+1}}$ which is admissible of order $m_1q_1+m_2q_2$ which is contained in $\alpha \cup T_1^{i_{n+1}} \beta$ and which joins the leaf $\phi_1$ to the leaf $T_1^{i_{n+1}}T_2^{r_2} \phi'_2$.

Let us prove first that the paths $T^{I_{n,2}}\alpha_{1,i_{n+1}}$ and $\alpha_n$ have an $\tilde{\F}$-transverse intersection. Let
$$ \alpha'_n= (-\infty,\phi_1]_1 \alpha_n T^{I_{n,1}}[T_2^{r_2} \phi'_2,+\infty)_2$$
and
$$ \alpha'_{1,i_{n+1}}= (-\infty,\phi_1]_1 \alpha_1 T_{1}^{i_{n+1}}[T_2^{r_2} \phi'_2,+\infty)_2.$$
Observe that the point $T^{I_{n,2}} \gamma_{1,-}$ lies between the points $\gamma_{1,-}$ and $T^{I_{n,1}}\gamma_{2,+}$ on $\partial \Hy^2$ and that the point $T^{I_{n,2}} T_1^{i_{n+1}} \gamma_{2,+}$ lies between the points $T^{I_{n,1}}\gamma_{2,+}$ and $\gamma_{2,-}$ on $\partial \Hy^2$. Hence the trajectories $\alpha'_n$ and $T^{I_{n,2}} \alpha'_{1,i_{n+1}}$ are geometrically transverse. Corollary \ref{CorDisjointleavescase1} and the third property satisfied by $\alpha'_n$ ensure that the leaves $T^{I_{n,2}} \phi_{1}$ and $T^{I_{n,2}} T_1^{i_{n+1}} T_2^{r_2} \phi'_2$ are disjoint from $\alpha'_{n}$. It also ensures that the leaves $\phi_1$ and $T^{I_{n,1}} T_2^{r_2} \phi'_2$ are disjoint from the transverse path $T^{I_{n,2}}\alpha'_{1,i_{n+1}}$.  Moreover, by Corollary \ref{CorTransadm} and the third property from the induction hypothesis,
$$ \left\{ \begin{array}{c}
\left((-\infty,\phi_1]_1  \cup T^{I_{n,1}} [T_2^{r_2} \phi'_2,+\infty)_2 \right) \cap T^{I_{n,2}}\alpha'_{1,i_{n+1}}=\emptyset \\ 
T^{I_{n,2}}\left((-\infty,\phi_1]_1  \cup T_1^{i_{n+1}} [T_2^{r_2} \phi'_2,+\infty)_2 \right) \cap \alpha'_{n}=\emptyset
\end{array} \right.$$
Hence the paths $T^{I_{n,2}} \alpha_{1,i_{n+1}}$ and $\alpha_{n}$ intersect $\tilde{\F}$-transversally by Lemma \ref{LemEssentialpoints}. Finally, Proposition \ref{PropFondLCT} gives a path $\alpha_{n+1}$ which satisfies the three first conditions. By a proof which is similar to the initialization step, we prove that the paths $\alpha_{n+1}$ and $T^{I_{n+1,2}} \alpha_{n+1}$ intersect $\tilde{\F}$-transversally. This completes the induction.
\end{proof}

\begin{corollary} \label{CorTransperorbit}
Let $I=(i_{n},j_{n})_{n \geq 0}$ be any sequence with $1 \leq i_{n} \leq m_1 p_1$ and $1 \leq j_{n} \leq m_2 p_2$ for any $n$. Then, for any integer $n \geq 1$ and any integer $j$ with $0 \leq j \leq m_2 p_2$, there exists points $\tilde{x}_{n,I}$ and $\tilde{x}'_{n,I}$ such that
$$\tilde{f}^{n (m_1q_1+m_2q_2)}(\tilde{x}_{n,I})=T_{2}^{j}T^{I_{n,1}}\tilde{x}_{n,I} \ \text{and} \ \tilde{f}^{n (m_1q_1+m_2q_2)}(\tilde{x}'_{n,I})=T^{I_{n,2}}\tilde{x}'_{n,I}.$$
\end{corollary}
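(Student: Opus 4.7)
My plan is to derive both periodic points from Lemma~\ref{LemTransadmconc} via the standard mechanism in the Le Calvez--Tal forcing theory: an $\tilde\F$-transverse path admissible of order $N$ whose translate under a deck transformation $T$ meets it $\tilde\F$-transversally produces a periodic orbit realising $T$ in time $N$. Throughout, set $N := n(m_1q_1+m_2q_2)$.

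For the point $\tilde{x}'_{n,I}$: by Lemma~\ref{LemTransadmconc}(1),(4), the $\tilde\F$-transverse path $\alpha_n$ is admissible of order $N$ and intersects $T^{I_{n,2}}\alpha_n$ $\tilde\F$-transversally. Projecting to $\dom\F$ gives an $\F$-transverse path $\bar\alpha_n$, admissible of order $N$, with a self-$\F$-transverse intersection whose two local branches lift in $\widetilde{\dom}\F$ to subpaths differing by the deck transformation $T^{I_{n,2}}$. Applying Proposition~\ref{PropFondLCT2} to this self-intersection produces admissible $\F$-transverse loops representing $T^{I_{n,2}}$; the standard fixed-point theorem for admissible loops in forcing theory (the same principle powering Theorems~\ref{ExistSuperCheval} and~\ref{ExistPasSuperCheval}) then yields a point $\tilde{x}'_{n,I}$ with $\tilde{f}^{N}(\tilde{x}'_{n,I}) = T^{I_{n,2}}\tilde{x}'_{n,I}$.

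For the point $\tilde{x}_{n,I}$ with $1\le j\le m_2p_2$: consider the modified sequence $I'$ obtained from $I$ by replacing $j_n$ by $j$, which still satisfies the hypotheses of Lemma~\ref{LemTransadmconc}. The previous paragraph applied to $I'$ gives a point $\tilde{y}$ with $\tilde{f}^{N}(\tilde{y}) = T^{I'_{n,2}}\tilde{y} = T^{I_{n,1}}T_2^{j}\tilde{y}$. Setting $\tilde{x}_{n,I} := T_2^{j}\tilde{y}$ and using that $\tilde{f}$ commutes with deck transformations,
\[
\tilde{f}^{N}(\tilde{x}_{n,I}) \;=\; T_2^{j}\tilde{f}^{N}(\tilde{y}) \;=\; T_2^{j}T^{I_{n,1}}T_2^{j}\tilde{y} \;=\; T_2^{j}T^{I_{n,1}}\tilde{x}_{n,I}.
\]
The remaining case $j=0$ is handled by a minor adaptation of Lemma~\ref{LemTransadmconc}: one stops the inductive construction right after the $T_1^{i_n}$-step, using the $\tilde\F$-transverse intersection between $\alpha$ and $T_1^{i_n}\beta$ guaranteed by Lemma~\ref{LemTransadm}(3) to ensure admissibility of order $N$ and the existence of a self-translate intersection with deck transformation $T^{I_{n,1}}$, after which the same fixed-point argument applies.

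The main obstacle, and where most care is needed, is the passage from an admissible $\F$-transverse loop with prescribed deck-transformation class to an honest periodic point; this is a standard but somewhat technical ingredient of the forcing theory. The rest of the argument is combinatorial bookkeeping on the deck transformations $T^{I_{n,1}}$ and $T^{I_{n,2}}$, together with an invocation of the commutation of $\tilde{f}$ with the action of $\pi_1(S)$.
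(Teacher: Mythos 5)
Your treatment of $\tilde{x}'_{n,I}$ and of $\tilde{x}_{n,I}$ for $1\le j\le m_2p_2$ is correct and rests on the same two ingredients as the paper: Lemma~\ref{LemTransadmconc} (admissibility of order $n(m_1q_1+m_2q_2)$ plus the $\tilde\F$-transverse intersection of $\alpha_n$ with $T^{I_{n,2}}\alpha_n$) and Theorem~\ref{ExistPasSuperCheval} in its ``for every order at least the admissibility order'' form, exactly as it is invoked in Proposition~\ref{PropExistPeriodic}; the detour through Proposition~\ref{PropFondLCT2} and ``admissible loops'' is unnecessary, since Theorem~\ref{ExistPasSuperCheval} applies directly to the pair $(\alpha_n, T^{I_{n,2}}\alpha_n)$. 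Where you genuinely diverge from the paper is the first family: the paper obtains $\tilde{x}_{n,I}$ by exchanging the roles of $T_1$ and $T_2$ in Lemma~\ref{LemTransadmconc} (i.e.\ rerunning the construction for words beginning with $T_2$ and ending with $T_1$), whereas you replace $j_n$ by $j$ in the sequence, get a point realising $T^{I'_{n,2}}=T^{I_{n,1}}T_2^{j}$, and conjugate by $T_2^{j}$. This is correct (the computation $\tilde f^{N}(T_2^{j}\tilde y)=T_2^{j}T^{I_{n,1}}(T_2^{j}\tilde y)$ is exactly the conjugation device the paper itself uses at the end of the proof of Theorem~\ref{Th2transverse}), and it has the merit of reusing Lemma~\ref{LemTransadmconc} verbatim instead of appealing to a symmetric version of the whole construction.

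The gap is the case $j=0$. ``Stopping the induction right after the $T_1^{i_n}$-step'' is not a minor adaptation: what you need is an $\tilde\F$-transverse intersection of the truncated path with its translate by $T^{I_{n,1}}$, and nothing already proved gives this. The transversality in Lemma~\ref{LemTransadmconc} is established via geometric transversality of the extended paths together with the disjointness statements of Corollaries~\ref{CorDisjointleavescase1} and~\ref{CorTransadm} and Lemma~\ref{LemEssentialpoints}, and that argument leans on the word ending with a $T_2$-block (it is the position of the points $T_2^{j_1}\gamma_{1,-}$ and $T_2^{j_1}T_1^{i_1}\gamma_{2,+}$ on $\partial\Hy^2$ that produces the interleaving of endpoints); for a translate by a word ending with $T_1^{i_n}$ the interleaving and disjointness checks have to be redone, which is a genuine piece of work, not a remark. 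Also, the truncated path would only be admissible of order $nm_1q_1+(n-1)m_2q_2$, not $N$, so even granting the transversality you would still need the ``any $q$ at least the order'' formulation to land on period exactly $N$. To be fair, the paper's own one-line proof is thinnest at exactly the same spot (exchanging $T_1$ and $T_2$ literally yields only the words $T_2^{j}T^{I_{n,1}}$ with $j\ge 1$), and the $j=0$ case is never used in the deduction of Theorem~\ref{Th2transverse}; but as written your argument for $j=0$ is an unsupported assertion rather than a proof.
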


\begin{proof} The existence of the points $\tilde{x}'_{n,I}$ is a consequence of Lemma \ref{LemTransadmconc} and of Theorem~\ref{ExistPasSuperCheval}. Exchanging the roles of $T_1$ and $T_2$ in Lemma \ref{LemTransadmconc}, we obtain the existence of the points $\tilde{x}_{n,I}$.
\end{proof}

\begin{proof}[End of the proof of Theorem \ref{Th2transverse} in the first case]
Take any word $w$ in letters $T_1$ and $T_2$ which contains at least one $T_1$ letter and one $T_2$ letter. Of course, we identify such a word with a deck transformation of $\tilde{S}$.
Write
$$w=T_1^{i_1} T_{2}^{j_1} \ldots T_{1}^{i_K}T_{2}^{j_K}$$
with
$$\left\{ 
\begin{array}{l}
K \geq 0 \\
i_{n}, j_{n}>0 \ \text{if} \ 2 \leq n \leq K-1 \\
j_{1} >0 \ \text{and} \ i_{K} >0 \\
i_{1}  \geq 0 \ \text{and} \ j_{K}  \geq 0.
\end{array}
\right.$$

Take integers $m_{1}$ and $m_2$ large enough so that $\max(i_1+i_K, \max_{1 \leq n \leq K}i_{n}) \leq m_1 p_1$ and $\max(\max_{1 \leq n \leq K}j_{n}, j_{1}+j_{K}) \leq m_2 p_2$.

If $i_1>0$ and $j_{K}>0$ or $i_{1}=0$ and $j_{K}=0$, Corollary \ref{CorTransperorbit} gives directly the wanted result.
If $i_{1}= 0$ apply Corollary \ref{CorTransperorbit} to the word $ T_{1}^{i_2} T_2^{j_2} \ldots T_{1}^{i_K}T_{2}^{j_K+j_1}$ to get a periodic point $\tilde{x}$ associated to this deck transformation. The point $T_{2}^{j_{1}}(\tilde{x})$ gives us then the wanted periodic orbit. If $j_{K}=0$ apply Corollary \ref{CorTransperorbit} to the word $ T_{1}^{i_1+i_K} T_2^{j_1} \ldots T_{1}^{i_{K-1}}T_{2}^{j_{K-1}}$ to get a periodic point $\tilde{x}$ associated to this deck transformation. The point $T_{1}^{-i_{K}}(\tilde{x})$ gives us then the wanted periodic orbit.
\end{proof}

\subsection{Case 2}

In this subsection, we prove Theorem \ref{Th2transverse} in the case where one of the trajectories $\mathcal{I}_{\tilde{\F}}^{\mathbb{Z}}(\tilde{x}_i)$ satisfies condition 
\begin{enumerate}
\item[\ref{C2}] for any deck transformation $\tau \in \pi_1(S) \setminus \langle T_i\rangle $, any leaf which meets $\mathcal{I}^{\Z}_{\tilde{\F}}(\tilde{x}_i)$ does not meet $\tau \mathcal{I}^{\Z}_{\tilde{\F}}(\tilde{x}_i)$.
\end{enumerate}
Changing the roles of $x_1$ and $x_2$ if necessary, we can suppose that $\mathcal{I}_{\tilde{\F}}^{\Z}(\tilde{x}_1)$ satisfies condition \ref{C2}.

As in the previous case, we start by choosing carefully the respective lifts $\tilde{x}_1$ and $\tilde{x}_2$ of 
the points $x_1$ and $x_2$ to $\tilde{S}$. In the course of the proof, we will skip details when the arguments are similar to some arguments which were given in the first case. 

The proof in this second case is a bit more complex than in the first one. The first step, made in Paragraph~\ref{ParagChoixPoints}, is a bit less technical than in the first case. However, we will see appearing configurations in which there are no $\F$-transverse intersections between our initial trajectories. This will force us to consider other transverse trajectories, and will complicate the rest of the proof. the last part of the proof, made in Paragraph~\ref{ParagAdmiss2}, requires a very careful study of the possible intersections between leaves and trajectories.

\subsubsection{Choice of the points $\tilde{x}_1$ and $\tilde{x}_2$} \label{ParagChoixPoints}

See Figure~\ref{FigCase2} for these notations. For any $n \in \Z$, the geodesics $T_2^n \tilde\gamma_1$ and $\tilde\gamma_2$ meet so that $T_2^{n} \mathcal{I}_{\tilde{\F}}^{\Z}(\tilde{x}_1)$ and $\mathcal{I}_{\tilde{\F}}^{\Z}(\tilde{x}_2)$ are also geometrically transverse. Let $\tilde{y}'_n$ be the essential intersection point between $T_2^{n} \mathcal{I}_{\tilde{\F}}^{\Z}(\tilde{x}_1)$ and $\mathcal{I}_{\tilde{\F}}^{\Z}(\tilde{x}_2)$ that is minimal for $<_2$ (it exists by Lemma~\ref{LemEssentialpoints}.2.).  As $\mathcal{I}_{\tilde{\F}}^{\Z}(\tilde{x}_1)$ satisfies condition \ref{C2}, the lifts $T_2^{n} \mathcal{I}_{\tilde{\F}}^{\Z}(\tilde{x}_1)$ are pairwise disjoint so that the sequence $(\tilde{y}'_n)_{n \in \Z}$ is increasing for the order relation $<_2$.

For any $n \in \Z$, the geodesics $\tilde\gamma_1$ and $T_1^n \tilde\gamma_2$ meet so that $ \mathcal{I}_{\tilde{\F}}^{\Z}(\tilde{x}_1)$ and $T_{1}^n\mathcal{I}_{\tilde{\F}}^{\Z}(\tilde{x}_2)$ are geometrically transverse. Denote by $\tilde{y}_n$ an essential intersection point between those two trajectories. We also take $\tilde{y}_0=\tilde{y}'_{0}$.

In the whole proof, we orient $\partial \Hy^2$ in such a way that the point $\gamma_{2,-}$ lies on the positively oriented segment of $\partial \Hy^2$ which joins $\gamma_{1,-}$ to $\gamma_{1,+}$.

\begin{figure}
\begin{minipage}{.47\linewidth}
\begin{center}
\includegraphics[width=\linewidth]{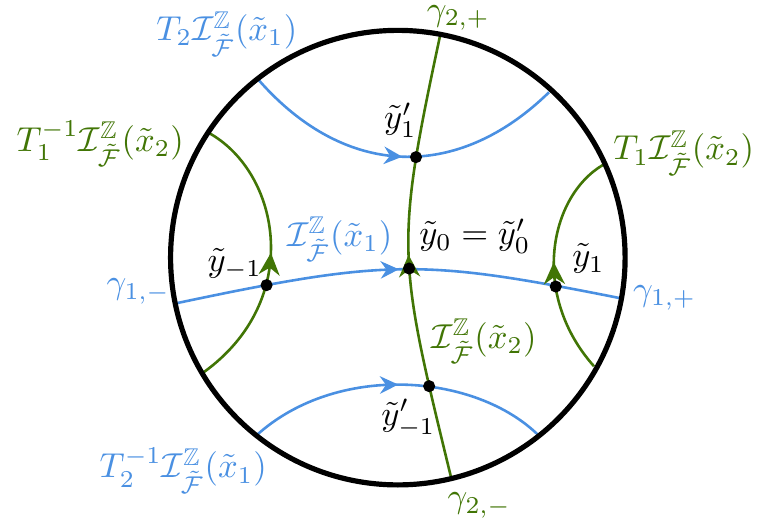}
\caption{Notation of the beginning of Case 2.}\label{FigCase2}
\end{center}
\end{minipage}
\hfill
\begin{minipage}{.47\linewidth}
\begin{center}
\includegraphics[width=\linewidth]{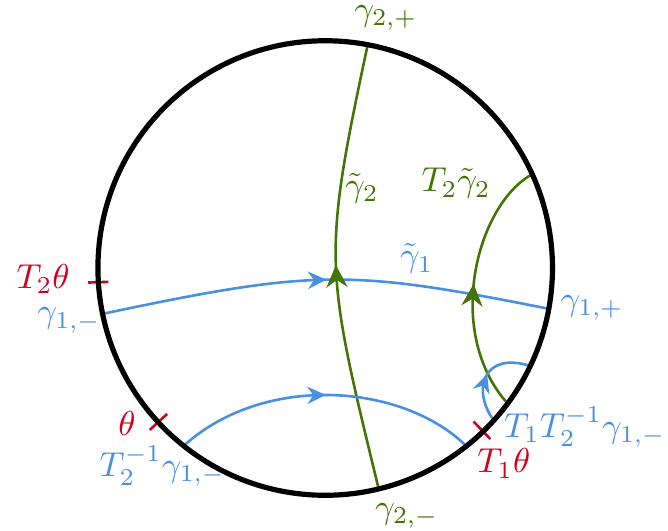}
\caption{Proof of Claim~\ref{ClaimAA}.}\label{FigClaimAA}
\end{center}
\end{minipage}
\end{figure}

The following lemma is the analogue in this case of Lemma \ref{LemDisjointleavescase1}. We need the following notation:
$$ A= \overline{\bigcup_{\epsilon \in \left\{ -1,0,1\right\}}\!\! T_2^{\epsilon}(\tilde{\gamma}_1)_{R}\,  \cup\, \bigcup_{i \neq 0} T_1^{i}(\tilde{\gamma}_2)_R\, \cup\!\!\! \bigcup_{\substack{i \geq 1\\ w \in \langle T_1^{-1},T_2^{-1}\rangle_+}}\!\!\!\! T_1^{i}T_2^{-1}w(\tilde{\gamma}_2)_R }\ \subset \ \overline{\Hy^2}.$$

\begin{align*}
(B_i)_{i \in \Z} & = \overline{\phi_{\tilde{y}'_i} \cup (-\infty,\tilde{y}'_i]_2 \cup T_2^{i}(-\infty,T_2^{-i}\tilde{y}'_i]_1} ,\\
(B'_i)_{i \in \Z} & = \overline{ \bigcup_{n \geq 0} T_2^{n}(\phi_{\tilde{y}'_i} \cup [\tilde{y}'_i,+\infty)_2) } .
\end{align*}

\begin{lemma} \label{LemDisjointleavescase2}
There exists an integer $k'_{0}$ such that the set $B_{-k'_0}$ is included in the connected component of $\overline{\Hy^{2}} \setminus A$ containing $\gamma_{2,-}$, and the set $B'_{k'_0}$ is included in the connected component of $\overline{\Hy^{2}} \setminus A$ containing $\gamma_{2,+}$
\end{lemma}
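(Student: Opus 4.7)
The plan is to proceed in three stages. First, I would establish that $\tilde{y}'_n \to \gamma_{2,-}$ along $\mathcal{I}_{\tilde{\F}}^{\Z}(\tilde{x}_2)$ as $n \to -\infty$ and $\tilde{y}'_n \to \gamma_{2,+}$ as $n \to +\infty$. Since condition \ref{C2} forces the proper lines $T_2^n \mathcal{I}_{\tilde{\F}}^{\Z}(\tilde{x}_1)$ to be pairwise disjoint and since $(\tilde{y}'_n)$ is monotone for $<_2$, no $<_2$-bounded subsequence can exist (otherwise infinitely many pairwise disjoint translates $T_2^n \mathcal{I}_{\tilde{\F}}^{\Z}(\tilde{x}_1)$, each staying within $R$ of $T_2^n\tilde\gamma_1$, would accumulate on a compact piece of $\mathcal{I}_{\tilde{\F}}^{\Z}(\tilde{x}_2)$, contradicting properness together with the bound given by Lemma~\ref{LemBoundeddistance}).

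Second, I would show that $\gamma_{2,-}$ and $\gamma_{2,+}$ are not in the closure of $A$ in $\overline{\Hy^2}$, by the same strategy as Lemma~\ref{AiBiVoisinages}. The three geodesics $T_2^{\epsilon}\tilde{\gamma}_1$, $\epsilon\in\{-1,0,1\}$, have only finitely many endpoints on $\partial\Hy^2$, and by Lemma~\ref{commonendpoint} none of them can equal $\gamma_{2,\pm}$. The endpoints of $T_1^i\tilde{\gamma}_2$, $i\neq 0$, accumulate only at $\gamma_{1,\pm}$, hence remain bounded away from $\gamma_{2,\pm}$. Finally, for the third family $T_1^i T_2^{-1}w\tilde{\gamma}_2$ with $i\ge 1$ and $w\in\langle T_1^{-1}, T_2^{-1}\rangle_+$: the arc between $\gamma_{2,+}$ and $\gamma_{1,-}$ containing $\gamma_{1,+}$ is positively stable under both $T_1^{-1}$ and $T_2^{-1}$, so $w\gamma_{2,\pm}$ lies in it; applying $T_2^{-1}$ and then $T_1^i$ with $i\ge 1$ pushes the image into a proper subarc whose closure avoids $\gamma_{2,\pm}$, by a compactness argument similar to the one in the proof of Lemma~\ref{AiBiVoisinages} (only finitely many short words need to be checked directly, and Lemma~\ref{commonendpoint} rules them out).

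Third, having a neighbourhood $U_-$ of $\gamma_{2,-}$ and $U_+$ of $\gamma_{2,+}$ in $\overline{\Hy^2}$ disjoint from $A$, I would choose $k'_0$ so large that both $\tilde{y}'_{-k'_0}$ and its preimage $T_2^{k'_0}\tilde{y}'_{-k'_0}$ lift to points so close to $\gamma_{2,-}$, resp.\ $\gamma_{2,+}$, that the two half-trajectories $(-\infty,\tilde{y}'_{-k'_0}]_2$ and $T_2^{-k'_0}(-\infty,T_2^{k'_0}\tilde{y}'_{-k'_0}]_1$ are entirely contained in $U_-$. This uses Lemma~\ref{LemBoundeddistance} together with the facts that $(-\infty,\tilde{y}'_n]_2$ stays in the $R$-neighbourhood of the subray of $\tilde{\gamma}_2$ ending at $\gamma_{2,-}$ beyond $\tilde{y}'_n$, and that $T_2^{-k'_0}\tilde{\gamma}_1$ has both endpoints $T_2^{-k'_0}\gamma_{1,\pm}$ converging to $\gamma_{2,-}$ by the north-south dynamics of $T_2$.

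The hard part will be controlling the leaf $\phi_{\tilde{y}'_{-k'_0}}$ itself, which a priori can wander through $\Hy^2$. Here the strategy is to apply Lemma~\ref{LemEndleaves} to the transverse trajectory $T_2^{-k'_0}\mathcal{I}^{\Z}_{\tilde{\F}}(\tilde{x}_1)$: since $\phi_{\tilde{y}'_{-k'_0}}$ crosses this trajectory at $\tilde{y}'_{-k'_0}$, each end of the leaf either has $\alpha$- or $\omega$-limit bounded in $\Hy^2$ or accumulates on the piece of $\partial\Hy^2$ near $T_2^{-k'_0}\gamma_{1,\pm}$, hence on $U_-$ for $k'_0$ large. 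Combining with the symmetric statement relative to $\mathcal{I}^{\Z}_{\tilde{\F}}(\tilde{x}_2)$ forces both ends of $\phi_{\tilde{y}'_{-k'_0}}$ into $U_-$, and hence $B_{-k'_0}\subset U_-$ lies in the component of $\overline{\Hy^2}\setminus A$ containing $\gamma_{2,-}$. The statement for $B'_{k'_0}$ is handled symmetrically, using moreover that $T_2^n$ preserves $\gamma_{2,+}$ to transport every $T_2^n$-translate into $U_+$.
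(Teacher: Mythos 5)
Your overall skeleton (show $\gamma_{2,\pm}$ is not in the closed set $A$, then push $B_{-k'_0}$ and $B'_{k'_0}$ into small neighbourhoods of $\gamma_{2,\mp}$) is the same as the paper's, but two steps fail as written. First, Stage 2: with the paper's orientation the cyclic order is $\gamma_{1,-},\gamma_{2,-},\gamma_{1,+},\gamma_{2,+}$, and the arc between $\gamma_{2,+}$ and $\gamma_{1,-}$ containing $\gamma_{1,+}$ is \emph{not} positively stable under $T_1^{-1}$: since $T_1^{-1}$ attracts to $\gamma_{1,-}$, the point $T_1^{-1}\gamma_{2,+}$ lies in the complementary arc $(\gamma_{2,+},\gamma_{1,-})_{\partial\Hy^2}$, so already $w\gamma_{2,+}$ leaves your arc for $w=T_1^{-1}$. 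More importantly, no soft invariant-arc bookkeeping can give the conclusion, because the closure of $\left\{ w\gamma_{2,\pm} \ | \ w \in \langle T_1^{-1},T_2^{-1}\rangle_+ \right\}$ actually \emph{contains} $\gamma_{2,-}$ (e.g. $T_2^{-m}T_1^{-1}\gamma_{2,+} \rightarrow \gamma_{2,-}$); the whole difficulty is to show that the final composition by $T_2^{-1}$ and then $T_1^{i}$, $i\geq 1$, carries this accumulation uniformly past $\gamma_{2,-}$. In the paper's Claim~\ref{ClaimAA} this is exactly where condition \ref{C2} enters: it forces $T_2^{-1}\tilde{\gamma}_1$ and $T_1T_2^{-1}\tilde{\gamma}_1$ to be disjoint, hence $T_1T_2^{-1}\gamma_{1,-}\in (T_2^{-1}\gamma_{1,+},\gamma_{1,+})_{\partial\Hy^2}$, which permits the choice of $\theta$ with $T_1\theta\in(\gamma_{2,-},\gamma_{1,+})_{\partial\Hy^2}$ and confines all images of long words in the fixed arc $[T_1\theta,\gamma_{1,+}]_{\partial\Hy^2}$, away from both $\gamma_{2,-}$ and $\gamma_{2,+}$ (short words and small $i$ are finitely many and handled by Lemma~\ref{commonendpoint}). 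Your Stage 2 never uses \ref{C2}, and without such an input the avoidance of $\gamma_{2,-}$ is unjustified.

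Second, your ``hard part'': Lemma~\ref{LemEndleaves} only places an end of the leaf $\phi_{\tilde{y}'_{-k'_0}}$ either in a bounded subset of $\Hy^2$ or in $\overline{L}$ or $\overline{R}$ of one trajectory; $\overline{L\big(T_2^{-k'_0}\mathcal{I}^{\Z}_{\tilde{\F}}(\tilde{x}_1)\big)}$ is a half-plane-sized region, not a neighbourhood of $T_2^{-k'_0}\gamma_{1,\pm}$, and a ``bounded'' end could sit anywhere, possibly inside $A$; moreover the lemma requires containing the whole leaf and the half-trajectories forming $B_{-k'_0}$, not merely the limit sets of the leaf's ends. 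The argument that actually works (and is the decisive use of \ref{C2} in the paper's proof) is a sandwiching: since $\phi_{\tilde{y}'_i}$ meets $T_2^{i}\mathcal{I}^{\Z}_{\tilde{\F}}(\tilde{x}_1)$, condition \ref{C2} makes it disjoint from the neighbouring translates $T_2^{i\pm1}\mathcal{I}^{\Z}_{\tilde{\F}}(\tilde{x}_1)$, so the leaf (and the half-trajectory $T_2^{i}(-\infty,T_2^{-i}\tilde{y}'_i]_1$) lies in the band between them; as $\overline{T_2^{n}\mathcal{I}^{\Z}_{\tilde{\F}}(\tilde{x}_1)}$ Hausdorff-converges to $\gamma_{2,\mp}$ when $n\to\mp\infty$ (which also gives your Stage 1 for free, via Lemma~\ref{LemBoundeddistance}), the sets $B_i$ and $B'_i$ Hausdorff-converge to $\gamma_{2,-}$ and $\gamma_{2,+}$, and since $A$ is closed and misses these points, $B_{-k'_0}$ and $B'_{k'_0}$ eventually lie in the desired components; the same sandwiching handles the union of the $T_2^{n}$-translates, $n\geq 0$, appearing in $B'_{k'_0}$. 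As proposed, this step of your proof does not go through.
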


In the sequel, we will denote
\[\phi_2=\phi_{\tilde{y}'_{-k'_0}}
\qquad\text{and}\qquad
\phi'_2=\phi_{\tilde{y}'_{k'_0}}.\]

\begin{proof}
We need to prove the following claim, which is similar to Lemma~\ref{AiBiVoisinages}.

\begin{claim}\label{ClaimAA}
The set $A$ meets neither $\gamma_{2,-}$ nor $\gamma_{2,+}$.
\end{claim}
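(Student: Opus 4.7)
My plan for Claim~\ref{ClaimAA} is to exploit the structure of $A$ as the closure of a union of $R$-neighborhoods of geodesics. Since the closure of $(g)_R$ in $\overline{\Hy^2}$ meets $\partial\Hy^2$ only at the two endpoints of $g$, the claim reduces to two separate statements: (a) none of the listed geodesics has $\gamma_{2,\pm}$ as an endpoint, and (b) the set of such endpoints does not accumulate at $\gamma_{2,\pm}$.

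For (a) I will argue via Lemma~\ref{commonendpoint}: if $T\gamma_{j,\pm}=\gamma_{2,\pm}$ for some $T\in\pi_1(S)$ and $j\in\{1,2\}$, then $T\tilde\gamma_j$ and $\tilde\gamma_2$ share an endpoint and so their axes must coincide. When $j=1$ this contradicts the transverse crossing of $\tilde\gamma_1,\tilde\gamma_2$; when $j=2$ it forces $T$ to be a power of the primitive element whose axis is $\tilde\gamma_2$. Since $\tilde\gamma_1,\tilde\gamma_2$ cross, Lemma~\ref{commonendpoint} also tells us $T_1$ and $T_2$ do not commute, so in the surface group $\pi_1(S)$ they generate a free subgroup of rank two; hence none of the elements $T_1^i$ ($i\neq 0$) or $T_1^iT_2^{-1}w$ (with $i\geq 1$, $w\in\langle T_1^{-1},T_2^{-1}\rangle_+$) is a nontrivial power of $T_2$, which rules out~(a).

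For (b), the finite family $T_2^\epsilon(\tilde\gamma_1)_R$ contributes only finitely many endpoints, and the family $T_1^i(\tilde\gamma_2)_R$ with $i\neq 0$ has endpoints $T_1^i\gamma_{2,\pm}$ which accumulate only at $\gamma_{1,\pm}\neq\gamma_{2,\pm}$. The delicate case is the family $T_1^iT_2^{-1}w\tilde\gamma_2$. I will orient $\partial\Hy^2$ so that $\gamma_{1,-},\gamma_{2,-},\gamma_{1,+},\gamma_{2,+}$ appear in cyclic order and work with the closed arc $I''=[\gamma_{2,+},\gamma_{2,-}]_{\partial\Hy^2}$ passing through $\gamma_{1,-}$. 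A direct check shows that $I''$ is stable under both $T_1^{-1}$ (whose attracting fixed point $\gamma_{1,-}$ is interior to $I''$) and $T_2^{-1}$ (which fixes the endpoints of $I''$ and preserves each complementary arc), so $T_2^{-1}w\gamma_{2,\pm}\in I''$ for every $w\in\langle T_1^{-1},T_2^{-1}\rangle_+$. Combining this with Ping-Pong applied at a sufficiently high power $N$, which exists by freeness of $\langle T_1,T_2\rangle$, and pairwise disjoint small neighborhoods $V^\pm_j$ of $\gamma_{j,\pm}$ (chosen so that $V^+_1\cup V^-_1\cup V^-_2$ avoids $\gamma_{2,\pm}$), one gets: whenever $i\geq N$ or $|w|$ is sufficiently large, the endpoint $T_1^iT_2^{-1}w\gamma_{2,\pm}$ lies in $V^+_1\cup V^-_1\cup V^-_2$ and is therefore uniformly bounded away from $\gamma_{2,\pm}$; only finitely many exceptional pairs $(i,w)$ remain, each contributing a single explicit point distinct from $\gamma_{2,\pm}$ by step~(a). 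The main obstacle I expect is the Ping-Pong bookkeeping, in particular handling the short words $w$ that reduce using $T_2^{-1}\gamma_{2,+}=\gamma_{2,+}$, and choosing the neighborhoods uniformly so as to simultaneously rule out accumulation at both $\gamma_{2,+}$ and $\gamma_{2,-}$.
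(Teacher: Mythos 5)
Your reduction (endpoints plus non-accumulation), your treatment of the first two families, and the observation that the closed arc $I''=[\gamma_{2,+},\gamma_{2,-}]_{\partial\Hy^2}$ through $\gamma_{1,-}$ is preserved by the semigroup $\langle T_1^{-1},T_2^{-1}\rangle_+$ are all fine. The genuine gap is in the third family $T_1^iT_2^{-1}w\tilde\gamma_2$, at the step you yourself flag as ``Ping-Pong bookkeeping''. First, the statement is internally inconsistent: $V_2^-$ is a neighbourhood of $\gamma_{2,-}$, so $V_1^+\cup V_1^-\cup V_2^-$ cannot ``avoid $\gamma_{2,\pm}$'', and concluding that an endpoint lies in that union does not bound it away from $\gamma_{2,-}$; accumulation at $\gamma_{2,-}$ is simply not excluded by what you assert. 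Second, and more seriously, the dichotomy ``$i\ge N$ or $|w|$ large $\Rightarrow$ the point lands in $V_1^+\cup V_1^-\cup V_2^-$'' is the naive ping-pong statement, and it does not address the diagonal interaction between $i$ and $w$, which is the whole difficulty: ping-pong gives $T_1^i(x)\in V_1^+$ only for $x\notin V_1^-$, whereas here $x=T_2^{-1}w\gamma_{2,\pm}$ ranges over an infinite subset of $I''$ whose dangerous part is exactly the sub-arcs $[\gamma_{2,+},\gamma_{1,-}]_{\partial\Hy^2}$ and $[\gamma_{1,-},\gamma_{2,-}]_{\partial\Hy^2}$. For such an $x$, intermediate powers $T_1^ix$ sweep past $\gamma_{2,+}$ (resp.\ $\gamma_{2,-}$), since $T_1^{-i}\gamma_{2,\pm}$, $i\ge 1$, lie in these sub-arcs and accumulate at $\gamma_{1,-}$. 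To exclude accumulation you therefore need a statement uniform in $w$: that the points $T_2^{-1}w\gamma_{2,\pm}$ stay at a definite distance from the closure of $\{T_1^{-i}\gamma_{2,\pm}\mid i\ge 1\}$. Membership in $I''$ is vacuous for this (its endpoints are precisely $\gamma_{2,\pm}$), and ``freeness of $\langle T_1,T_2\rangle$'' as you invoke it only rules out exact coincidences, not accumulation.

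This uniform control is exactly what the paper's proof supplies, using an ingredient you never invoke: Condition~\ref{C2}. The paper rewrites the union by splitting off the last letter ($T_1^iT_2^{-1}wT_1^{-1}$ together with $T_1^i\tilde\gamma_2$), uses \ref{C2} to force $T_2^{-1}\tilde\gamma_1$ and $T_1T_2^{-1}\tilde\gamma_1$ to be disjoint, hence locates $T_1T_2^{-1}\gamma_{1,-}$ in $(\gamma_{2,-},\gamma_{1,+})_{\partial\Hy^2}$, chooses $\theta$ accordingly, and then handles all sufficiently long words at once via the attractor of the semigroup acting on $[T_1^{-1}\gamma_{2,+},\gamma_{1,+}]_{\partial\Hy^2}$, the finitely many remaining pairs $(i,w)$ being dispatched by Lemma~\ref{commonendpoint}. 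If you wanted to bypass \ref{C2} you would need a different substantive input (for instance quasiconvexity of $\langle T_1,T_2\rangle$ in $\pi_1(S)$ and the convergence property of its action on $\partial\Hy^2$), but as written your central step does not follow from the tools you cite.
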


\begin{proof}
First, observe that
\begin{multline*}
\overline{\bigcup_{\epsilon \in \left\{ -1,0,1\right\}} T_2^{\epsilon}(\tilde{\gamma}_1)_{R}  \cup \bigcup_{i \neq 0} T_1^{i}(\tilde{\gamma}_2)_R} \cap \partial \Hy^2\\
= \bigcup_{\epsilon \in \left\{ -1,0,1 \right\}} \left\{ T_2^{\epsilon}\gamma_{1,-}, T_2^{\epsilon}\gamma_{1,+} \right\}
\cup \overline{\left\{ T_1^{i} \gamma_{2,+}, i \neq 0 \right\}}
\cup \overline{\left\{ T_1^{i} \gamma_{2,-}, i \neq 0 \right\}}
\end{multline*}
so that the set
$$\overline{\bigcup_{\epsilon \in \left\{ -1,0,1\right\}} T_2^{\epsilon}(\tilde{\gamma}_1)_{R}  \cup \bigcup_{i \neq 0} T_1^{i}(\tilde{\gamma}_2)_R}$$
meets neither $\gamma_{2,-}$ nor $\gamma_{2,+}$. 
\medskip

By Lemma \ref{commonendpoint}, the set
$${\bigcup_{i \geq 1}\, \bigcup_{\substack{w \in \langle T_1^{-1},T_2^{-1}\rangle_+}}  T_1^{i}T_2^{-1}w(\tilde{\gamma}_2)_R}$$
meets neither $\gamma_{2,-}$ nor $\gamma_{2,+}$ ($\langle T_1,T_2\rangle$ is a free group). So it suffices to prove that this set does not accumulate on $\gamma_{2,-}$ or $\gamma_{2,+}$ either. As the deck transformation $T_2$ preserves $\tilde{\gamma}_2$, observe that
$${\bigcup_{i \geq 1}\, \bigcup_{\substack{w \in \langle T_1^{-1},T_2^{-1}\rangle_+}}  \!\!\!T_1^{i}T_2^{-1}w(\tilde{\gamma}_2)_R}
\ =\ 
{\bigcup_{i \geq 1}\, \bigcup_{\substack{w \in \langle T_1^{-1},T_2^{-1}\rangle_+}} \!\!\! T_1^{i}T_2^{-1}w T_1^{-1}(\tilde{\gamma}_2)_R} \ \cup\ \bigcup_{i \geq 1} T_1^{i}(\tilde{\gamma}_2)_R.$$
We already saw that the set
$$\overline{\bigcup_{i \geq 1} T_1^{i}(\tilde{\gamma}_2)_R}$$
meets neither $\gamma_{2,-}$ nor $\gamma_{2,+}$. 

Condition \ref{C2} forces the geodesics $T_2^{-1} \tilde{\gamma}_{1}$ and $T_1T_2^{-1}\tilde{\gamma}_1$ to be disjoint so that $T_1 T_2^{-1} \gamma_{1,-}$ belongs to $(T_2^{-1} \gamma _{1,+},\gamma_{1,+})_{\partial \Hy^2} \subset (\gamma _{2,-},\gamma_{1,+})_{\partial \Hy^2}$.
By this property, if we take $\theta \in (\gamma_{1,-},T_2^{-1} \gamma_{1,-})_{\partial \Hy^2}$ sufficiently close to $T_2^{-1} \gamma_{1,-}$, then we have $T_1 \theta \in (\gamma_{2,-}, \gamma_{1,+})_{\partial\Hy^2}$ (see Figure~\ref{FigClaimAA}).

Observe that the attractor of the restriction of the action of the semigroup generated by $T_1^{-1}$ and $T_2^{-1}$ to $[T_1^{-1}\gamma_{2,+},\gamma_{1,+}]_{\partial \Hy^2}$ is contained in $[\gamma_{1,-},\gamma_{1,+}]_{\partial \Hy^2}$. Hence there exists an integer $N \geq 0$ such that, for any word $w$ in $T_1^{-1}$ and $T_2^{-1}$ whose length is greater than or equal to $N$, both points $w T_1^{-1} \gamma_{2,+}$ and $w T_1^{-1} \gamma_{2,-}$ belong to $(T_2 \theta,\gamma_{1,+})_{\partial \Hy^2}$. Hence 
$$\overline{\bigcup_{i \geq 1}\, \bigcup_{\substack{w \in \langle T_1^{-1},T_2^{-1}\rangle_+\\ \ell(w) \geq N}} T_1^{i}T_2^{-1}w T_1^{-1}(\tilde{\gamma}_2)_R } \cap \partial \Hy^2 \subset [ T_1 \theta, \gamma_{1,+}]_{\partial \Hy^2} \subset (\gamma_{2,-}, \gamma_{1,+}]_{\partial \Hy^2}.$$

It remains to treat the case where the length of $w$ is smaller than $N$. By Lemma~\ref{commonendpoint}, for any word $w$ in $T_1^{-1}$ and $T_2^{-1}$, neither $T_2^{-1}w T_1^{-1} \gamma_{2,-}$ nor $T_2^{-1}w T_1^{-1} \gamma_{2,+}$ meet $\gamma_{1,-}$. Hence there exists an integer $I \geq 1$ such that, for any $i \geq I$ and any word $w$ in $T_1^{-1}$ and $T_2^{-1}$ whose length is smaller than $N$, the intersection $\overline{(T_1^{i}T_2^{-1} w T_1^{-1} \tilde{\gamma_{2}})_R}$ with $\partial\Hy^2$ is included in $(T_2^{-1} \gamma_{1,+},T_2 \gamma_{1,+})_{\partial \Hy^2}$.

Finally, the set of words $T_1^{i}T_2^{-1} w T_1^{-1}$ with $i\le I$ and the length of $w$ smaller than $N$ is finite, so by Lemma~\ref{commonendpoint} we have
\[\overline{\bigcup_{\substack{i\le I\\ \len(w)\le N}}(T_1^{i}T_2^{-1} w T_1^{-1} \tilde{\gamma_{2}})_R} \, \cap\, \{\gamma_{2,-},\gamma_{2,+}\} = \emptyset.\]
This finishes the proof of the claim.
\end{proof}

Let $i \in \Z$. As the trajectory $\I_{\tilde{\F}}^{\Z}(\tilde{x}_1)$ satisfies condition \ref{C2}, the leaf $\phi_{\tilde{y}'_i}$ and the half trajectory $T_2^{i}(-\infty,T_2^{-i}\tilde{y}'_i]_1$ are disjoint from the trajectories $T_2^{i-1}\I_{\tilde{\F}}^{\Z}(\tilde{x}_1)$ and $T_2^{i+1}\I_{\tilde{\F}}^{\Z}(\tilde{x}_1)$ and lie between them. Moreover, the sequence $\left( \overline{T_2^n \I_{\tilde{\F}}^{\Z}(\tilde{x}_1)} \right)_n$ of compact subsets of $\overline{\Hy^2}$ converges to $\gamma_{2,+}$ when $n \rightarrow +\infty$ and to $\gamma_{2,-}$ when $n \rightarrow -\infty$ for the Hausdorff topology. Therefore, the sequence $(B_i)_{i \in \Z}=\overline{\phi_{\tilde{y}'_i} \cup (-\infty,\tilde{y}'_i]_2 \cup T_2^{i}(-\infty,T_2^{-i}\tilde{y}'_i]_1} $ of subsets of $\overline{\Hy^2}$ converges to $\gamma_{2,-}$ when $i \rightarrow -\infty$ for the Hausdorff topology and the sequence $(B'_i)_{i \in \Z}=\overline{ \bigcup_{n \geq 0} T_2^{n}(\phi_{\tilde{y}'_i} \cup [\tilde{y}'_i,+\infty)_2) }$ of subsets of $\overline{\Hy^2}$ converges to $\gamma_{2,+}$ when $i \rightarrow +\infty$ for the Hausdorff topology. Hence, for $i \geq 0$ sufficiently large, both sets $B_{-i}$ and $B'_i$ avoid the set $A$, which proves the lemma.
\end{proof}

The following is similar to Corollary~\ref{CorDisjointleavescase1}.

\begin{corollary} \label{CorDisjointleavescase2}
Let $n \geq 1$. Recall that $\phi_2=\phi_{\tilde{y}'_{-k'_0}}
\qquad$ and $\phi'_2=\phi_{\tilde{y}'_{k'_0}}$.
\begin{enumerate}
\item For any word $w$ in $T_1$ and $T_2$, the trajectory $w \mathcal{I}^{\Z}_{\tilde{\F}}(\tilde{x}_{2})$ does not meet $T_{1}^{-n}\phi_{2}$ nor $T_1^{-n}T_{2}^{-k'_0}(-\infty,T_2^{k'_{0}}\tilde{y}'_{-k'_0}]_1$.
\item For any word $w$ in $T_1^{-1}$ and $T_2^{-1}$ which starts with $T_{2}^{-1}$, the trajectory $w \mathcal{I}^{\Z}_{\tilde{\F}}(\tilde{x}_{2})$ does not meet $T_{1}^{-n}\phi_{2}$ nor $T_1^{-n}T_{2}^{-k'_0}(-\infty,T_2^{k'_{0}}\tilde{y}'_{-k'_0}]_1$.
\item For any word $w$ in $T_1$ and $T_2$ which starts with $T_1$, the trajectory $w \mathcal{I}^{\Z}_{\tilde{\F}}(\tilde{x}_{2})$ does not meet $T_2^{n-1}\phi'_{2}$ nor $T_2^{n-1}[\tilde{y}'_{k'_0},+\infty)_2$.
\item For any word $w$ in $T_1^{-1}$ and $T_2^{-1}$ which starts with $T_{1}^{-1}$, the trajectory $w \mathcal{I}^{\Z}_{\tilde{\F}}(\tilde{x}_{2})$ does not meet $T_2^{n-1}\phi'_{2}$ nor $T_2^{n-1}[\tilde{y}'_{k'_0},+\infty)_2$..
\end{enumerate}
\end{corollary}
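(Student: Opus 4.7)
My plan is to mirror the proof of Corollary \ref{CorDisjointleavescase1}, substituting Lemma \ref{LemDisjointleavescase2} for Lemma \ref{LemDisjointleavescase1}. Three guiding facts are: (i) for every deck transformation $g$, since $\I^{\Z}_{\tilde{\F}}(\tilde{x}_2)\subset(\tilde{\gamma}_2)_R$ by Lemma \ref{LemBoundeddistance}, one has $g\,\I^{\Z}_{\tilde{\F}}(\tilde{x}_2)\subset(g\tilde{\gamma}_2)_R$; (ii) the forbidden sets in items 1 and 2 are both contained in $T_1^{-n}B_{-k'_0}$, while the forbidden sets in items 3 and 4 are already contained in $B'_{k'_0}$ itself, thanks to the fact that $B'_{k'_0}$ is stable under positive powers of $T_2$ and $n-1\ge 0$; (iii) by Lemma \ref{LemDisjointleavescase2}, $B_{-k'_0}$ and $B'_{k'_0}$ lie in the connected components of $\overline{\Hy^2}\setminus A$ containing respectively $\gamma_{2,-}$ and $\gamma_{2,+}$. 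Consequently, each item reduces to showing that $(g\tilde{\gamma}_2)_R\subset A$ for the appropriate $g$: $g=T_1^n w$ in items 1 and 2, $g=w$ in items 3 and 4.

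Item 2 is the most direct. Writing $w=T_2^{-1}w'$ with $w'\in\langle T_1^{-1},T_2^{-1}\rangle_+$, we get $T_1^n w=T_1^n T_2^{-1}w'$ with $n\ge 1$, which matches exactly the third defining piece of $A$. For the remaining items, I will use $T_2\tilde{\gamma}_2=\tilde{\gamma}_2=T_2^{-1}\tilde{\gamma}_2$ to strip any trailing $T_2^{\pm 1}$-power from $w$, so that the relevant $g\tilde{\gamma}_2$ is unchanged. When the simplified word is a pure $T_1^{\pm k}$-power with $k\ne 0$, the translated geodesic $T_1^{n+k}\tilde{\gamma}_2$ (item 1) or $T_1^{\pm k}\tilde{\gamma}_2$ (items 3, 4) lies in the second defining piece $\bigcup_{i\ne 0}T_1^i(\tilde{\gamma}_2)_R$ of $A$, which settles those subcases.

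The main obstacle is the residual subcase, arising in items 1, 3 and 4, where the reduced $w$ genuinely contains an interior $T_2^{\pm 1}$ factor, so that $g\tilde{\gamma}_2$ does not fit any single defining piece of $A$ at face value. I plan to handle it by analyzing the endpoints $g\gamma_{2,\pm}\in\partial\Hy^2$: condition \ref{C2} guarantees that the geodesics $\tilde{\gamma}_1$, $T_2\tilde{\gamma}_1$ and $T_2^{-1}\tilde{\gamma}_1$ are pairwise disjoint, and their $R$-neighborhoods (all contained in $A$) cut $\overline{\Hy^2}$ into strips. A careful bookkeeping of the north-south dynamics of $T_1$ and $T_2$ on $\partial\Hy^2$, in the spirit of the proof of Claim \ref{ClaimAA}, should place the endpoints $g\gamma_{2,\pm}$ into an arc bounded by one of these translated copies of $\tilde{\gamma}_1$ and separated from $\gamma_{2,-}$ (for items 1 and 3) or from $\gamma_{2,+}$ (for item 4). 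Since $(g\tilde{\gamma}_2)_R$ is connected and its endpoints are contained in such an arc, and since the geodesic itself lies on the appropriate side of the separating strip, the inclusion $(g\tilde{\gamma}_2)_R\subset A$ will follow, completing the proof.
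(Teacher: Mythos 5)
Your preliminary observations (i)--(ii) are correct, and your treatment of item 2 is exactly the paper's proof of that item (the word $T_1^nT_2^{-1}w'$ literally matches the third family defining $A$); the same goes for the subcases of the other items where, after stripping powers of $T_2^{\pm1}$ adjacent to $\tilde\gamma_2$, the acting element is a pure power of $T_1$. The problem is the reduction you build the rest on: the inclusion $(g\tilde\gamma_2)_R\subset A$ is in general \emph{false} as soon as the reduced word has an interior $T_2^{\pm1}$, which is precisely your ``residual subcase'' and covers most of items 1, 3 and 4. For instance, in item 1 with $w=T_2T_1$, the geodesic $T_1^nT_2T_1\tilde\gamma_2$ is not among the axes whose $R$-neighbourhoods make up $A$ (it is neither of the form $T_1^i\tilde\gamma_2$ nor $T_1^iT_2^{-1}v\,\tilde\gamma_2$ with $v\in\langle T_1^{-1},T_2^{-1}\rangle_+$), and since $A$ is closed, $(T_1^nT_2T_1\tilde\gamma_2)_R\subset A$ would force its endpoints to lie in $\overline{A}\cap\partial\Hy^2$, a set which (as computed in the proof of Claim \ref{ClaimAA}) consists only of endpoints of the listed axes and their accumulation points; Lemma \ref{commonendpoint} together with the freeness of $\langle T_1,T_2\rangle$ excludes this. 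Moreover the argument you sketch to obtain it is a non sequitur: knowing that the endpoints $g\gamma_{2,\pm}$ lie in an arc cut off by a translate of $\tilde\gamma_1$ contained in $A$ would show that $(g\tilde\gamma_2)_R$ lies on one side of that strip, not that it lies inside $A$. So for items 1, 3, 4 the key step is aimed at an unprovable intermediate claim and is not carried out.

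What suffices --- and what the paper actually does --- is a separation across a single strip rather than containment in $A$. For item 1: since $T_1^{n}(\tilde\gamma_2)_R\subset A$, Lemma \ref{LemDisjointleavescase2} gives, after applying $T_1^{-n}$, that $T_1^{-n}\phi_2$ and $T_1^{-n}T_2^{-k'_0}(-\infty,T_2^{k'_0}\tilde y'_{-k'_0}]_1$ avoid $(\tilde\gamma_2)_R$, and the boundary point $T_1^{-n}T_2^{-k'_0}\gamma_{1,-}$ identifies the component of $\overline{\Hy^2}\setminus\overline{(\tilde\gamma_2)_R}$ containing them as the one containing $\gamma_{1,-}$; on the other hand, for every word $w$ in $T_1,T_2$ the geodesic $w\tilde\gamma_2$ either equals $\tilde\gamma_2$ or lies on the $\gamma_{1,+}$ side of it, so $w\,\I^{\Z}_{\tilde\F}(\tilde x_2)\subset(w\tilde\gamma_2)_R$ never enters that component. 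Items 3 and 4 are handled the same way with the strips $(T_1\tilde\gamma_2)_R$ and $(T_1^{-1}\tilde\gamma_2)_R$: your remark that the forbidden sets sit in $B'_{k'_0}$ (hence avoid $A$, hence lie in the component of the complement of the strip containing $\gamma_{2,+}$) is exactly what is used, and one then checks that $w\tilde\gamma_2$, for $w$ starting with $T_1^{\pm1}$, either equals $T_1^{\pm1}\tilde\gamma_2$ or lies on its side away from $\gamma_{2,+}$. Replacing your containment-in-$A$ target by this two-sided separation statement is what is needed to close the gap.
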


\begin{proof}
\begin{enumerate}
\item Let $w$ be any word on $T_1$ and $T_2$, and $n\ge 1$. By Lemma \ref{LemDisjointleavescase2}, the leaf $T_{1}^{-n}\phi_{2}$ and the half-trajectory $T_1^{-n}T_{2}^{-k'_0}(-\infty,T_2^{k'_{0}}\tilde{y}'_{-k'_0}]_1$ lie in
$\overline{(\tilde{\gamma}_2)_R^\complement}$. So, as the $\alpha$-limit of $T_1^{-n}T_{2}^{-k'_0}(-\infty,T_2^{k'_{0}}\tilde{y}'_{-k'_0}]_1$ is $T_1^{-n}T_{2}^{-k'_0}\gamma_{1,-}$, and as the segment $[\gamma_{1,-},T_1^{-n}T_{2}^{-k'_0}\gamma_{1,-}]_{\partial \Hy^2}$ meets neither $\gamma_{2,-}$ nor $\gamma_{2,+}$, the leaf $T_{1}^{-n}\phi_{2}$ and the half-trajectory $T_1^{-n}T_{2}^{-k'_0}(-\infty,T_2^{k'_{0}}\tilde{y}'_{-k'_0}]_1$ lie in the connected component of $\overline{\Hy^2} \setminus \overline{(\tilde{\gamma}_2)_R}$ which contains $\gamma_{1,-}$. On the other hand, the geodesic $w \tilde{\gamma}_2$ either lies in the connected component of $\overline{\Hy^2} \setminus \overline{\tilde{\gamma}_2}$ which contains $\gamma_{1,+}$ or $w\tilde{\gamma}_2=\tilde{\gamma}_2$. Hence $w \mathcal{I}^{\Z}_{\tilde{\F}}(\tilde{x}_{2}) \subset (w \tilde{\gamma}_2)_R$ does not meet the connected component of $\overline{\Hy^2} \setminus \overline{(\tilde{\gamma}_2)_R}$ which contains the leaf $T_{1}^{-n}\phi_{2}$ and the half-trajectory $T_1^{-n}T_{2}^{-k'_0}(-\infty,T_2^{k'_{0}}\tilde{y}'_{-k'_0}]_1$.
\item Lemma \ref{LemDisjointleavescase2} implies that, for any word $w$ in $T_1^{-1}$ and $T_2^{-1}$ which starts with $T_2^{-1}$, the neighbourhood $w (\tilde{\gamma}_2)_R$ is disjoint from $T_{1}^{-n}\phi_{2}$ and from $T_1^{-n}T_{2}^{-k'_0}(-\infty,T_2^{k'_{0}}\tilde{y}'_{-k'_0}]_1$. The second point follows as $w \mathcal{I}^{\Z}_{\tilde{\F}}(\tilde{x}_{2}) \subset (w \tilde{\gamma}_{2})_R$. 
\item Let $w$ be any word on $T_1$ and $T_2$ which starts with $T_1$. By Lemma \ref{LemDisjointleavescase2}, the sets $T_2^{n-1}\phi'_{2}$ and $T_2^{n-1}[\tilde{y}'_{k'_0},+\infty)_2$ lie in the connected component of $\overline{\Hy^2} \setminus \overline{(T_1 \tilde{\gamma}_2)_R}$ which contains $T_2^{n-1}\gamma_{2,+} = \gamma_{2,+}$. On the other hand, the geodesic $w\tilde{\gamma}_2$ either lies in the connected component of $\overline{\Hy^2} \setminus \overline{T_1 \tilde{\gamma}_2}$ which does not contain $\gamma_{2,+}$ or $w\tilde{\gamma}_2=T_1 \tilde{\gamma}_2$. As $w \mathcal{I}^{\Z}_{\tilde{\F}}(\tilde{x}_{2}) \subset (w \tilde{\gamma}_{2})_R$, this proves this third point.
\item This last point is proved identically to the third one by changing $T_1$ to $T_1^{-1}$ and $T_2$ to $T_2^{-1}$.
\end{enumerate}

\end{proof}

Changing the point $\tilde{x}_2$ to some other point of its orbit if necessary, we further suppose that the trajectory $(-\infty,\tilde{x}_2]_2$ is disjoint from $\phi_2$, from $(T_2^{j}\tilde{\gamma}_1)_R$ for any $j \geq -2$ and from $(T_1^{i} \tilde{\gamma}_2)_R$ for any $i \neq 0$. The following is similar to Lemma~\ref{LemTransadm}.

\begin{lemma} \label{LemAdmcase2} There exists an integer $m_2 >0$, which can be taken arbitrarily large, and $r_2 > m_2p_2$ such that the two following properties are satisfied. 
\begin{enumerate}
\item The segment of trajectory $[\tilde{x}_{2},\tilde{f}^{m_{2} q_{2}}(\tilde{x}_{2})]_{2}$ meets the leaves $\phi_2$ and $T_2^{r_2} \phi'_2$.
\item The half-trajectories $(-\infty,\tilde{x}_2]_2$ and $[\tilde{f}^{m_2q_2}(\tilde{x}_2),+\infty)_2$ are disjoint from $(T_2^{i} \tilde{\gamma}_1)_R$, for any $i$ with $-2 \leq i \leq m_2p_2+2$ and from $(T_1^{j}\tilde{\gamma}_2)_R$ for any $j \neq 0$.
\end{enumerate}\end{lemma}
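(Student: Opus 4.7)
The plan is to mimic almost verbatim the structure of the proof of Lemma~\ref{LemTransadm}, using the rotation estimate from Lemma~\ref{LemRecurrentpoint}, the bounded displacement of transverse trajectories from Lemma~\ref{LemBoundeddistance}, and the leaf-recurrence Lemma~\ref{LemRecurrenceleaves}. The half of item~2 concerning the negative half-trajectory $(-\infty,\tilde x_2]_2$ is already built into the earlier choice of $\tilde x_2$, since the range $j\ge -2$ prescribed there contains $\{-2,\dots,m_2p_2+2\}$ and the family $\{(T_1^i\tilde\gamma_2)_R : i\neq 0\}$ is the same. So the actual work is to select $m_2$ and $r_2$ controlling the positive side.

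First I would pick an integer $k_2>0$ such that $[\tilde x_2,\tilde f^{k_2}(\tilde x_2)]_2$ crosses both $\phi_2$ and $\phi'_2$ in its interior (possible because those leaves meet $\mathcal I^{\Z}_{\tilde\F}(\tilde x_2)$, and $(-\infty,\tilde x_2]_2$ avoids $\phi_2$ by the choice of $\tilde x_2$). Fix $v'_2,v''_2\in\big((p_2/q_2)\ell(\gamma_2),v_2\big)$ with $v''_2<v'_2$. Parametrize $\tilde\gamma_2$ by arclength and define
\[
\Lambda=\sup\pi_{\tilde\gamma_2}\!\left(\phi_2\cup\bigcup_{-2\le j\le 2}(T_2^j\tilde\gamma_1)_R\cup\bigcup_{j\neq 0}(T_1^j\tilde\gamma_2)_R\right).
\]
The key point is that $\Lambda<+\infty$. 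The finite union is clear; for $\bigcup_{j\neq 0}(T_1^j\tilde\gamma_2)_R$ the endpoints $T_1^j\gamma_{2,\pm}$ cluster at $\gamma_{1,\pm}$, and by Lemma~\ref{commonendpoint} these four points are distinct from $\gamma_{2,+}$, so the projections are uniformly bounded above. Likewise for the $T_2$-family, as only five translates intervene.

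Next, by Lemma~\ref{LemRecurrentpoint}, there exists $N$ with $\pi_{\tilde\gamma_2}(\tilde f^n(\tilde x_2))\ge \pi_{\tilde\gamma_2}(\tilde x_2)+nv'_2\ge \Lambda+M_2+nv''_2$ for every $n\ge N$, where $M_2$ is from Lemma~\ref{LemBoundeddistance}. By Lemma~\ref{LemRecurrenceleaves} applied to the finite sequence of leaves met by $[\tilde x_2,\tilde f^{k_2}(\tilde x_2)]_2$ (including $\phi'_2$), one finds arbitrarily large $n_2\ge N$ and integers $r_2\ge n_2v'_2/\ell(\gamma_2)$ such that $[\tilde f^{n_2}(\tilde x_2),\tilde f^{n_2+k_2}(\tilde x_2)]_2$ crosses $T_2^{r_2}\phi'_2$. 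Let $m_2$ be the smallest integer with $n_2+k_2\le m_2q_2$; taking $n_2$ sufficiently large ensures $r_2>m_2p_2$ and
\[
n_2v''_2>\Big\lfloor\tfrac{n_2+k_2}{q_2}\Big\rfloor p_2\ell(\gamma_2)=m_2p_2\ell(\gamma_2),
\]
so that $\pi_{\tilde\gamma_2}(\tilde f^n(\tilde x_2))>\Lambda+M_2+m_2p_2\ell(\gamma_2)$ for every $n\ge n_2$. In particular the segment $[\tilde x_2,\tilde f^{m_2q_2}(\tilde x_2)]_2$ still meets $\phi_2$ (by the choice of $\tilde x_2$ and of $k_2$) and meets $T_2^{r_2}\phi'_2$, giving item~1.

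For item~2 on the forward side, since $[\tilde f^{m_2q_2}(\tilde x_2),+\infty)_2$ lies in the $M_2$-neighbourhood of $\tilde\gamma_2$, the projection estimate $\pi_{\tilde\gamma_2}\ge \Lambda+M_2+m_2p_2\ell(\gamma_2)$ forces it to be disjoint from $(T_1^j\tilde\gamma_2)_R$ for every $j\neq 0$ (whose $\tilde\gamma_2$-projection is bounded by $\Lambda$) and, after translating by $T_2^{m_2p_2+2}$, from $(T_2^i\tilde\gamma_1)_R$ for $-2\le i\le m_2p_2+2$ (which is $T_2^{m_2p_2}$ applied to the range $-m_2p_2-2\le i\le 2$, hence projects below $\Lambda+m_2p_2\ell(\gamma_2)$). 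The only subtlety I anticipate is verifying that the finitely many sets $(T_2^j\tilde\gamma_1)_R$ with $-2\le j\le 2$ indeed have bounded $\tilde\gamma_2$-projection, but this follows because $\tilde\gamma_1$ and its first few $T_2$-translates are four distinct geodesics sharing no endpoint with $\tilde\gamma_2$ (Lemma~\ref{commonendpoint}).
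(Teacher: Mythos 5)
Your proposal is correct and is essentially the paper's own argument: the paper proves Lemma~\ref{LemAdmcase2} by declaring it "similar to the proof of Lemma~\ref{LemTransadm}", and you carry out exactly that adaptation (negative half handled by the prior normalisation of $\tilde x_2$, then the projection estimate from Lemma~\ref{LemRecurrentpoint} with the $M_2$-slack from Lemma~\ref{LemBoundeddistance}, plus Lemma~\ref{LemRecurrenceleaves} to hit $T_2^{r_2}\phi'_2$, and $T_2$-equivariance of $\pi_{\tilde\gamma_2}$ to cover the whole range $-2\le i\le m_2p_2+2$). The only blemishes are cosmetic: the phrase "lies in the $M_2$-neighbourhood of $\tilde\gamma_2$" should be "lies within $M_2$ of some orbit point $\tilde f^n(\tilde x_2)$, $n\ge m_2q_2$", the translation is by $T_2^{m_2p_2}$ (not $T_2^{m_2p_2+2}$), and including $\phi_2$ in $\Lambda$ is unnecessary (its projection is bounded above by Lemma~\ref{LemDisjointleavescase2}, so harmless).
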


Observe that this lemma implies that all the points $\tilde{y}'_n$, with $-2 \leq n \leq m_2p_2+2$, belong to the segment $[\tilde{x}_2,\tilde{f}^{m_2q_2}(\tilde{x}_2)]_2$.

\begin{proof} 
The proof is similar to the proof of Lemma \ref{LemTransadm}.
\end{proof}

\begin{lemma} \label{LemDisjointtrajectories}
There exists an integer $k_0 >0$ with the following property: for any integer $k$ with $|k| \geq k_0$ and any $j \in \Z$ with $j \neq 0$, the sets $T_{1}^k (\tilde\gamma_2)_R$ and $T_1^{k} T_{2}^{j}(\tilde\gamma_1)_R$ are disjoint from $(\tilde\gamma_2)_R$, and $T_2 T_1^{-k} \tilde \gamma_2 \subset L(\tilde \gamma_1)$.
\end{lemma}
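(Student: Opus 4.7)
The plan is to derive all three statements from the North–South dynamics of the loxodromic isometry $T_1$ on $\overline{\Hy^2}$: as $k\to\pm\infty$, the iterates $T_1^k$ converge to the constant map $\gamma_{1,\pm}$, uniformly on compact subsets of $\overline{\Hy^2}\setminus\{\gamma_{1,\mp}\}$. A basic observation used throughout is that, since $\tilde\gamma_1$ and $\tilde\gamma_2$ cross, $\{\gamma_{1,-},\gamma_{1,+}\}\cap\{\gamma_{2,-},\gamma_{2,+}\}=\emptyset$, and the trace on $\partial\Hy^2$ of $\overline{(\tilde\gamma_2)_R}$ is exactly $\{\gamma_{2,-},\gamma_{2,+}\}$, so $\gamma_{1,\pm}\notin\overline{(\tilde\gamma_2)_R}$.

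The first assertion is then immediate: $\overline{(\tilde\gamma_2)_R}$ is a compact subset of $\overline{\Hy^2}\setminus\{\gamma_{1,\mp}\}$, so $T_1^k(\tilde\gamma_2)_R$ Hausdorff-converges in $\overline{\Hy^2}$ to $\{\gamma_{1,\pm}\}$ as $k\to\pm\infty$, and this limit point lies outside $\overline{(\tilde\gamma_2)_R}$. The second assertion is the main difficulty, because uniformity in $j\in\Z\setminus\{0\}$ is required; I would handle it by considering the set
\[K\ =\ \overline{\bigcup_{j\in\Z\setminus\{0\}}T_2^j(\tilde\gamma_1)_R}\ \subset\ \overline{\Hy^2}\]
(compact as a closed subset of $\overline{\Hy^2}$) and showing $K\cap\{\gamma_{1,-},\gamma_{1,+}\}=\emptyset$. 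The trace $K\cap\partial\Hy^2$ equals $\{T_2^j\gamma_{1,\pm}:j\neq 0\}\cup\{\gamma_{2,-},\gamma_{2,+}\}$, since the endpoints $T_2^j\gamma_{1,\pm}$ accumulate only at $\gamma_{2,\pm}$. Now $\gamma_{1,\pm}\neq\gamma_{2,\pm}$ by crossing, and any equality $T_2^j\gamma_{1,\varepsilon}=\gamma_{1,\varepsilon'}$ with $j\neq 0$ would force, by Lemma~\ref{commonendpoint}, the axes $T_2^j\tilde\gamma_1$ and $\tilde\gamma_1$ (of $T_2^jT_1T_2^{-j}$ and $T_1$) to coincide, hence $T_2^j$ to preserve $\tilde\gamma_1$ setwise, hence (by torsion-freeness of $\pi_1(S)$) $T_2^j\in\langle T_1\rangle$, contradicting $\tilde\gamma_1\neq\tilde\gamma_2$. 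Thus $K$ lies in $\overline{\Hy^2}\setminus\{\gamma_{1,\mp}\}$ for each sign, and North–South dynamics yields that $T_1^kK$ is contained in an arbitrarily small neighborhood of $\gamma_{1,\pm}$ for $|k|$ large; disjointness of $T_1^kT_2^j(\tilde\gamma_1)_R\subset T_1^kK$ from $(\tilde\gamma_2)_R$, uniformly in $j\neq 0$, follows.

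For the third assertion, the chosen orientation of $\partial\Hy^2$ places the four points in cyclic order $\gamma_{1,-},\gamma_{2,-},\gamma_{1,+},\gamma_{2,+}$, so the open arc of $\partial\Hy^2\setminus\{\gamma_{1,-},\gamma_{1,+}\}$ containing $\gamma_{2,+}$ is exactly the trace on $\partial\Hy^2$ of $L(\tilde\gamma_1)$. Since $T_2$ has $\gamma_{2,+}$ as attractor and $\gamma_{2,-}$ as repeller and preserves the cyclic order, it moves each of $\gamma_{1,-}$ and $\gamma_{1,+}$ strictly toward $\gamma_{2,+}$ within its own arc delimited by $\{\gamma_{2,-},\gamma_{2,+}\}$; hence both $T_2\gamma_{1,-}$ and $T_2\gamma_{1,+}$ lie in the open arc described above, i.e.\ in the boundary trace of $L(\tilde\gamma_1)$. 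By North–South dynamics, $T_1^{-k}\tilde\gamma_2$ Hausdorff-collapses in $\overline{\Hy^2}$ to $\gamma_{1,-}$ as $k\to+\infty$ and to $\gamma_{1,+}$ as $k\to-\infty$, so $T_2T_1^{-k}\tilde\gamma_2$ collapses to $T_2\gamma_{1,-}$ or $T_2\gamma_{1,+}$, both strictly inside the boundary trace of $L(\tilde\gamma_1)$; therefore $T_2T_1^{-k}\tilde\gamma_2\subset L(\tilde\gamma_1)$ for $|k|$ large, completing the proof.
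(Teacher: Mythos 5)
Your proof is correct and follows essentially the same route as the paper's: form the closed union of the relevant translated $R$-neighbourhoods, check that its trace on $\partial\Hy^2$ avoids $\gamma_{1,\pm}$, and apply the North--South dynamics of $T_1$, handling the last assertion by locating $T_2\gamma_{1,\pm}$ in the boundary arc of $L(\tilde\gamma_1)$. Your explicit justification that $T_2^{j}\gamma_{1,\varepsilon}\neq\gamma_{1,\varepsilon'}$ for $j\neq 0$ fills in a step the paper leaves implicit; just note that the cleaner conclusion from Lemma~\ref{commonendpoint} is that $T_2^{j}$ would preserve $\tilde\gamma_1$ and hence have axis $\tilde\gamma_1=\tilde\gamma_2$ (a direct contradiction), rather than deducing $T_2^{j}\in\langle T_1\rangle$ from torsion-freeness.
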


By Lemma \ref{LemRecurrentpoint}, the transverse trajectories $\mathcal{I}^{\Z}_{\tilde{\F}}(\tilde{x}_i)$ stay at distance at most $R>0$ from $\tilde{\gamma}_i$, so the conclusions of the lemma stay true when replacing $(\tilde\gamma_i)_R$ by $T_i^{i'}\mathcal{I}^{\Z}_{\tilde{\F}}(\tilde{x}_i)$ for some $i'\in\Z$.

\begin{proof}
Note that if $k$ is large enough, then the set $T_2 T_1^{-k} \tilde \gamma_2 \subset L(\tilde \gamma_1)$ is contained in an arbitrary neighbourhood of $T_2\gamma_{1,+}\subset L(\tilde \gamma_1)$. Similarly, $-k$ is large enough, then the set $T_2 T_1^{-k} \tilde \gamma_2 \subset L(\tilde \gamma_1)$ is contained in an arbitrary neighbourhood of $T_2\gamma_{1,-}\subset L(\tilde \gamma_1)$.

Now, let 
$$\mathcal A = \overline{\bigcup_{j \neq 0} T_{2}^{j} (\tilde\gamma_1)_R} \cup \overline{(\tilde\gamma_2)_R}.$$
As the sequences of points $(T_{2}^{j} \gamma_{1,+})_{ j \in \Z}$ and  $(T_{2}^{j} \gamma_{1,-})_{ j \in \Z}$ both converge to $\gamma_{2,+}$ when $j \rightarrow +\infty$ and to $\gamma_{2,-}$ when $j \rightarrow -\infty$, we have
$\mathcal A \cap \partial \Hy^2 = \big\{ T_{2}^j\gamma_{1,\pm} \ | \ j \neq 0 \big\} \cup \left\{ \gamma_{2,\pm} \right\}$.
In particular, the set $\mathcal A$ meets neither $\gamma_{1,+}$ nor $\gamma_{1,-}$.
Hence the sequence of compact sets $ \left(T_1^{k} \mathcal A \right)_{k \in \Z}$ converges for the Hausdorff topology to the point $\gamma_{1,-}$ when $k \rightarrow -\infty$ and to $\gamma_{1,+}$ when $k \rightarrow +\infty$: we can find an integer $k_0$ so that the first properties of the lemma are satisfied. The proof of the last property follows the same strategy, using the remark made at the beginning of the proof.
\end{proof}

From now on, we fix integers $m_2>0$ and $k_0>0$ such that Lemma~\ref{LemAdmcase2} and \ref{LemDisjointtrajectories} are satisfied.

We denote by $\hat{\F}$, $\hat{f}$, $\hat{x}_1$, $\hat{x}_2$  respective lifts to $\widetilde{\dom} \F$ of $\tilde{\F}$, $\tilde{f}$,  $\tilde{x}_1$, $\tilde{x}_2$ in such a way that  $\mathcal{I}_{\hat{\F}}^{\Z}(\hat{x}_1)$ and $\mathcal{I}_{\hat{\F}}^{\Z}(\hat{x}_2)$ meet at some lift $\hat{y}_0$ of $\tilde{y}_0$ and $\hat{\phi}_2$ and $\wh{T_2^{r_2}\phi'_2}$ meet $\mathcal{I}_{\hat{\F}}^{\Z}(\hat{x}_2)$. We choose the lift $\hat{f}$ of $\tilde{f}$ which is isotopic to the identity.

For any $i \in \mathbb{Z}$, $i\neq 0$, we denote by $\widehat{T_1^i}$ the lift of $T_1^i$ such that $\widehat{T_1^i}\mathcal{I}_{\hat{\F}}^{\Z}(\hat{x}_2)$ meets  $\mathcal{I}_{\hat{\F}}^{\Z}(\hat{x}_1)$ at a lift $\hat{y}_i$ of $\tilde{y}_i$ and by $\widehat{T_2^i}$ a lift of $T_2^i$ such that $ \mathcal{I}_{\hat{\F}}^{\Z}(\hat{x}_2)$ meets $\widehat{T_{2}^i}\mathcal{I}_{\hat{\F}}^{\Z}(\hat{x}_1)$ at some lift $\hat{y}'_i$ of $\tilde{y}'_i$. Note that it is possible that $\wh{T_1^i}\neq \wh{T_1}^i$.
We fix respective lifts $\hat{\phi}_2$ and $\wh{T_2^{r_2}\phi'_2}$ of $\phi_2$ and $T_2^{r_2} \phi'_2$ such that the leaf $\phi_2$  meets $ \mathcal{I}_{\hat{\F}}^{\Z}(\hat{x}_2)$ at the point $\hat{y}'_{-k'_0}$ and the leaf $\wh{T_2^{r_2}\phi'_2}$ meets $ \mathcal{I}_{\hat{\F}}^{\Z}(\hat{x}_2)$.

 To be completely rigorous and so that these objects are uniquely defined, we need to consider parameters on the trajectories instead of actual points, as in the definition of essential intersection points. However, we chose to drop the mention of those parameters to simplify notation.

Let (see Figure~\ref{FigBhat})
\begin{align}\label{EqDefBHat}
\hat{B} & = \hat{f}^{m_2 q_2} \left( \overline{L \left(\widehat{T_1^{-k_{0}}}\hat{\phi}_2\right)} \right) \cup \overline{R \left(\widehat{T_1^{-k_{0}}}\wh{T_2^{r_2}\phi'}_{2} \right)}\\
& = \widehat{T_1^{-k_{0}}}\left(\hat{f}^{m_2 q_2} \left( \overline{L (\hat{\phi}_2)} \right) \cup \overline{R (\wh{T_2^{r_2}\phi'}_{2} )}\right).\nonumber
\end{align}
Observe that the set $\hat{B}$ contains the trajectory $\widehat{T_1^{-k_0}} \I^{\Z}_{\hat{\F}}(\hat{x}_2)$ and hence meets $\I^{\Z}_{\hat{\F}}(\hat{x}_1)$.  In the same way, for any $i \in \Z$, the set $\widehat{T_1^i}\widehat{T_1^{-k_0}}^{-1} \hat{B}$ contains the trajectory $\widehat{T_1^{i}} \I^{\Z}_{\hat{\F}}(\hat{x}_2)$ and hence meets $\I^{\Z}_{\hat{\F}}(\hat{x}_1)$.

Let 
$$\hat{\mathcal C}= \hat{f}^{m_2 q_2} \left( \overline{L \left(\widehat{T_1^{-k_{0}}}\hat{\phi}_2\right)} \right) \setminus L \left(\widehat{T_1^{-k_{0}}}\hat{\phi}_2\right) \subset \hat{B}.$$
As the trajectory $\mathcal{I}_{\tilde{\F}}^{\Z}(\tilde{x}_1)$ satisfies condition \ref{C2}, the leaves $\phi_2$ and $T_2^{r_2}\phi'_2$ and their translates under iterates of $T_1$ do not touch $\mathcal{I}_{\tilde{\F}}^{\Z}(\tilde{x}_1)$. Hence, for any $i \in \Z$,
$$ \widehat{T_1^i} \widehat{T_1^{-k_{0}}}^{-1}\hat{B} \cap \I^{\Z}_{\hat{\F}}(\hat{x}_1)=\widehat{T_1^i} \widehat{T_1^{-k_{0}}}^{-1} \hat{\mathcal C} \cap \I^{\Z}_{\hat{\F}}(\hat{x}_1).$$

\begin{figure}
\begin{minipage}{.4\linewidth}
\begin{center}
\includegraphics[width=\linewidth]{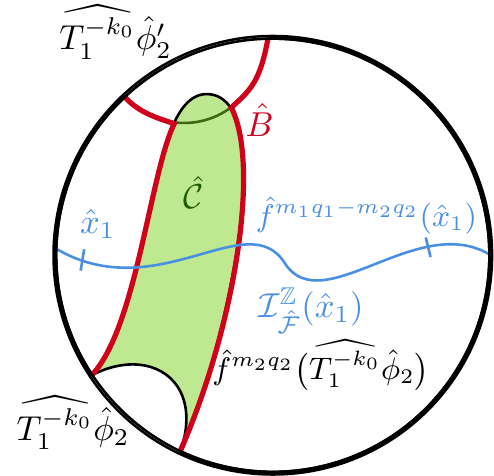}
\caption{The set $\hat B$. The ambient space here is $\wt{\dom\F}\simeq\Hy^2$.}\label{FigBhat}
\end{center}
\end{minipage}
\hfill
\begin{minipage}{.55\linewidth}
\begin{center}
\includegraphics[width=\linewidth]{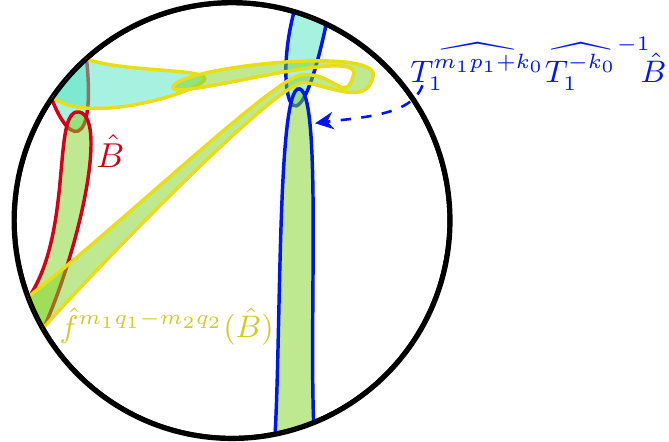}
\caption{Proof of Lemma~\ref{LemNotransautointersection}: the set $\hat B$, its translate by $\widehat{T_1^{m_1p_1+k_0}}$ and its image by $\hat f^{m_1q_1-m_2q_2}$.}\label{FigBhat2}
\end{center}
\end{minipage}
\end{figure}

Let $\tilde{\mathcal C}$ be the projection of $\hat{\mathcal C}$ on $\tilde{S}$ and recall that $\pi_{\tilde{\gamma}_1}$ denotes the orthogonal projection on $\tilde{\gamma}_1$. As before, we parametrize $\tilde{\gamma}_1$ by arclength and identify points of $\tilde{\gamma}_1$ with their corresponding parameters.

Changing $\tilde{x}_1$ to some of its iterates under $\tilde{f}$, we can suppose that 
\begin{equation}\label{Eqx1Min}
\max \left\{ n \in \Z \ | \  \tilde{f}^n( \tilde{x}_1) < \min \pi_{\tilde{\gamma_1}}(\tilde{\mathcal C}) \right\}= 0.
\end{equation}
Indeed, by Lemma~\ref{LemDisjointleavescase2}, $\min \pi_{\tilde{\gamma_1}}(\tilde{\mathcal C})\in\tilde{\gamma_1}$ (it is bigger than $\gamma_{1,-}$).

\begin{lemma}\label{LemAdmcase22}
There exists an integer $m_1 >0$, which can be taken arbitrarily large, such that, for any $i$ with $-k_0 \leq i \leq m_1p_1+k_0$,
$$ \widehat{T_1^i}\widehat{T_1^{-k_0}}^{-1} \hat{B} \cap \I^{\Z}_{\hat{\F}}(\hat{x}_1) = \widehat{T_1^i}\widehat{T_1^{-k_0}}^{-1} \hat{B} \cap (\hat{x}_1,\hat{f}^{m_1q_1-m_2q_2}(\hat{x}_1))_1.$$
In particular, the segment $[\hat{x}_1,\hat{f}^{m_1q_1-m_2q_2}(\hat{x}_1)]_1$ contains the points $\hat{y}_i$, for any $i$ with $-k_0 \leq i \leq k_0+m_1p_1$.
\end{lemma}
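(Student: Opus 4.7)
The plan is a drift argument along the geodesic $\tilde\gamma_1$, exploiting the linear rate $v_1>(p_1/q_1)\ell(\gamma_1)$ from Lemma~\ref{LemRecurrentpoint} combined with the localization provided by condition~\ref{C2}. First I would reduce to $\hat{\mathcal C}$ in place of $\hat B$: the boundary components of $\widehat{T_1^i}\widehat{T_1^{-k_0}}^{-1}\hat B$ project to the leaves $T_1^i\phi_2$ and $T_1^iT_2^{r_2}\phi'_2$, and since $\phi_2$ and $T_2^{r_2}\phi'_2$ already meet the translates $T_2^{\mp k'_0}\mathcal I^{\Z}_{\tilde\F}(\tilde x_1)$, condition~\ref{C2} (which is invariant under $\langle T_1\rangle$) forbids them from meeting $\mathcal I^{\Z}_{\tilde\F}(\tilde x_1)$. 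Consequently $\mathcal I^{\Z}_{\hat\F}(\hat x_1)$ cannot cross the boundary of the window $\widehat{T_1^i}\widehat{T_1^{-k_0}}^{-1}\hat B$, so its intersection with this window coincides with its intersection with $\widehat{T_1^i}\widehat{T_1^{-k_0}}^{-1}\hat{\mathcal C}$.

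Next I would localize the window on $\tilde\gamma_1$: $\tilde{\mathcal C}$ is the strip swept out between $T_1^{-k_0}\phi_2$ and $\tilde f^{m_2q_2}(T_1^{-k_0}\phi_2)$, and both of these have bounded projection on $\tilde\gamma_1$, so $\pi_{\tilde\gamma_1}(\tilde{\mathcal C})\subset[m,M]$ for some $m,M\in\mathbb R$. The translate $T_1^{i+k_0}\tilde{\mathcal C}$ then projects into the shifted interval $[m+(i+k_0)\ell(\gamma_1),M+(i+k_0)\ell(\gamma_1)]$. Any intersection point $\tilde z$ of this translate with $\mathcal I^{\Z}_{\tilde\F}(\tilde x_1)$ lies within distance $M_1+R_1$ of some iterate $\tilde f^{n}(\tilde x_1)$ (Lemma~\ref{LemBoundeddistance}), which pins $\pi_{\tilde\gamma_1}(\tilde f^{n}(\tilde x_1))$ to a bounded window centered at $(i+k_0)\ell(\gamma_1)$. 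Fixing $v'_1\in((p_1/q_1)\ell(\gamma_1),v_1)$, the asymptotic estimate $\pi_{\tilde\gamma_1}(\tilde f^n(\tilde x_1))\ge n v'_1$ for $n$ large (Lemma~\ref{LemRecurrentpoint}), combined with $q_1v'_1>p_1\ell(\gamma_1)$, lets me choose $m_1$ large enough so that
\[\pi_{\tilde\gamma_1}\big(\tilde f^{m_1q_1-m_2q_2}(\tilde x_1)\big)>M+(m_1p_1+2k_0)\ell(\gamma_1)+M_1+R_1,\]
forcing every intersection with the window for $i=m_1p_1+k_0$ to lie strictly before $\hat f^{m_1q_1-m_2q_2}(\hat x_1)$ on the trajectory.

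The lower bound---every such intersection lies strictly after $\hat x_1$ for all $i\ge -k_0$---comes from the normalization~\eqref{Eqx1Min}, which gives $\pi_{\tilde\gamma_1}(\tilde f^n(\tilde x_1))\ge m$ for $n\ge 1$. The main obstacle will be the ``recent past'' iterates $n\le 0$: their projection need not be monotone and could a priori re-enter a window. Since Lemma~\ref{LemRecurrentpoint} yields $\pi_{\tilde\gamma_1}(\tilde f^n(\tilde x_1))\to-\infty$ as $n\to-\infty$, I would address this by further replacing $\tilde x_1$ with a sufficiently earlier iterate, so that the whole half-trajectory $(-\infty,\tilde x_1]_1$ projects strictly below $m-M_1-R_1$; this bound, propagated through Lemma~\ref{LemBoundeddistance}, guarantees that no intersection of $\mathcal I^{\Z}_{\hat\F}(\hat x_1)$ with any $\widehat{T_1^i}\widehat{T_1^{-k_0}}^{-1}\hat{\mathcal C}$ (for $i\ge-k_0$) occurs at or before $\hat x_1$. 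Finally, the concluding assertion about $\hat y_i$ is immediate: $\hat y_i\in\widehat{T_1^i}\mathcal I^{\Z}_{\hat\F}(\hat x_2)\subset\widehat{T_1^i}\widehat{T_1^{-k_0}}^{-1}\hat B$, and the equality just established places these points in the segment $[\hat x_1,\hat f^{m_1q_1-m_2q_2}(\hat x_1)]_1$.
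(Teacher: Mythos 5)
Your proof is correct and follows essentially the same route as the paper's: first reduce from $\hat B$ to $\hat{\mathcal C}$ using condition \ref{C2} (the paper performs exactly this reduction just before the lemma, noting that $\phi_2$, $T_2^{r_2}\phi'_2$ and their $T_1$-translates cannot touch $\I^{\Z}_{\tilde\F}(\tilde x_1)$, so that $\widehat{T_1^i}\widehat{T_1^{-k_0}}^{-1}\hat B\cap\I^{\Z}_{\hat\F}(\hat x_1)=\widehat{T_1^i}\widehat{T_1^{-k_0}}^{-1}\hat{\mathcal C}\cap\I^{\Z}_{\hat\F}(\hat x_1)$ -- note in passing that one boundary piece of $\hat B$ is the $\hat f^{m_2q_2}$-image of a leaf rather than a leaf), and then play the linear drift of Lemma \ref{LemRecurrentpoint}, with $q_1v'_1>p_1\ell(\gamma_1)$ and the diameter bound $M_1$ of Lemma \ref{LemBoundeddistance}, against the windows $T_1^{i+k_0}\tilde{\mathcal C}$ whose projections are translated by at most $(m_1p_1+2k_0)\ell(\gamma_1)$; just be sure your displayed inequality is imposed for every $n\ge m_1q_1-m_2q_2$, not only at that single time, which your appeal to the asymptotic estimate indeed supplies once $m_1$ is large. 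The one point where you genuinely diverge is the backward half: the paper dismisses $(-\infty,\hat x_1]_1$ by citing the normalization \eqref{Eqx1Min}, while you observe that \eqref{Eqx1Min} as written only controls the iterates $n\ge 0$, and you instead push the basepoint further into the past so that the whole backward half-trajectory projects below $\min\pi_{\tilde\gamma_1}(\tilde{\mathcal C})$ with a margin $M_1$. This patch is legitimate: since $\pi_{\tilde\gamma_1}(\tilde f^n(\tilde x_1))\to-\infty$ as $n\to-\infty$ such a shift exists, none of the objects $\tilde y_i$, $\phi_2$, $\phi'_2$, $\hat B$, $\tilde{\mathcal C}$ depends on the basepoint chosen along the orbit of $x_1$, and the subsequent arguments use only the conclusion of Lemma \ref{LemAdmcase22} (and \eqref{Eqx1Min} is used nowhere else), so abandoning \eqref{Eqx1Min} in favour of your stronger normalization costs nothing; it is, if anything, a more careful treatment of the step the paper handles in one line.
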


The last sentence of the lemma comes from the fact that the set $\widehat{T_1^i}\widehat{T_1^{-k_0}}^{-1} \hat{B}$ contains the trajectory $\widehat{T_1^{i}} \I^{\Z}_{\hat{\F}}(\hat{x}_2)$.

\begin{proof}
Fix $v'_1$ with $\frac{p_1}{q_1}<v'_1<v_1$. By Lemma \ref{LemRecurrentpoint}, for any $n$ sufficiently large,
$$ \pi_{\tilde{\gamma}_1}(\tilde{f}^{n}(\tilde{x}_1)) > nv'_1 +\pi_{\tilde{\gamma}_1}(\tilde{x}_1).$$
Moreover, for any $n$ sufficiently large,
$$
nv'_1 + \pi_{\tilde{\gamma}_1}(\tilde{x}_1)   >  \max(\pi_{\tilde{\gamma}_1}(\tilde{\mathcal C}))+(n+m_2q_2) \frac{p_1}{q_1} \ell(\gamma_1)+2k_{0} \ell(\gamma_1)+M_1,
$$
where $M_1$ is an upper bound on the diameters of the paths $\I_{\tilde{\F}}(\tilde{x})$ for $\tilde{x} \in K_1$, and $K_1$ is given by Lemma \ref{LemBoundeddistance}. 

Take an integer $m_1$ sufficiently large so that the two above properties hold for any $n \geq m_1 q_1 -m_2q_2$: note that $m_2$ does not depend on $m_1$ ($m_2$ being fixed, one can choose $m_1$ arbitrarily large). Then, for any $n \geq m_1q_1-m_2q_2$ and any $-k_0 \leq i \leq m_1p_1+k_0$,
$$ \pi_{\tilde{\gamma}_1}(\tilde{f}^n(\tilde{x}_1)) > \max \pi_{\tilde{\gamma}_1}(T_1^{i+k_0}\tilde{\mathcal C})+M_1$$
so that the half-trajectory $[\tilde{f}^{m_1 q_1-m_2q_2}(\tilde{x}_1),+\infty)_1$ does not meet $T_1^{i+k_0}\tilde{\mathcal C}$. Hence the half trajectory $[\hat{f}^{m_1 q_1-m_2 q_2}(\hat{x}_1),+\infty)_1$  does not meet $\widehat{T_1^i}\widehat{T_1^{-k_0}}^{-1} \hat{B}$. Moreover, the choice \eqref{Eqx1Min} of the point $\tilde{x}_1$ ensures that $(-\infty,\hat{x}_1]_{1}$ does not meet $\widehat{T_1^i}\widehat{T_1^{-k_0}}^{-1} \hat{B}$.
\end{proof}

From now on, we fix an integer $m_1>0$ such that Lemma~\ref{LemAdmcase22} is satisfied, and that $m_1q_1-2m_2q_2\ge 0$.

\subsubsection{Transverse intersections}

\begin{figure}
\begin{center}
\includegraphics[scale=1]{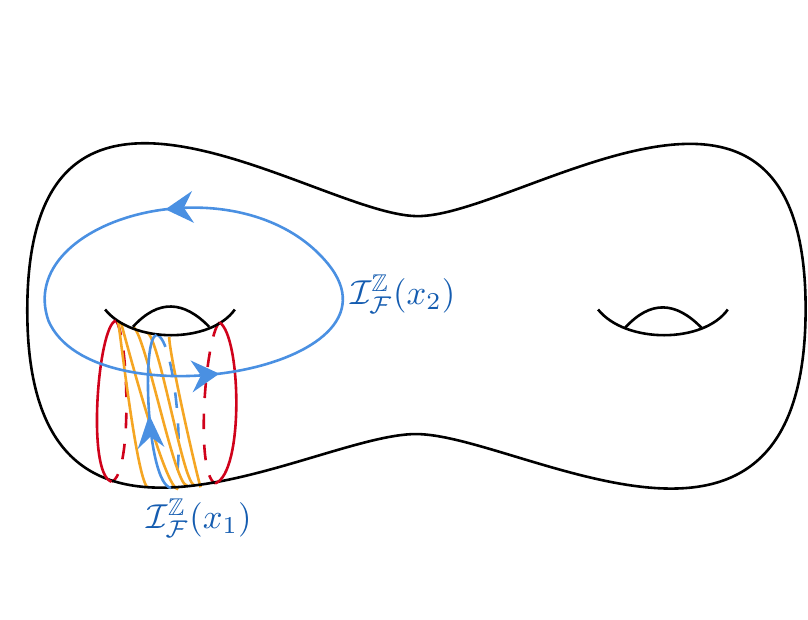}
\hfill
\includegraphics[scale=1]{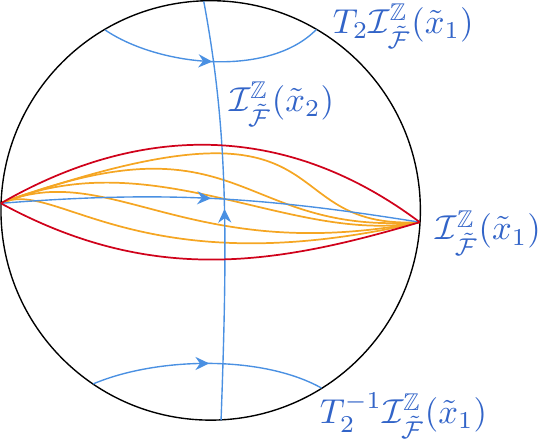}
\caption{Lellouch's example \cite{lellouch} on the genus-2 surface (left) and on the universal cover $\wt\dom(\F)$: the trajectories $\I_{\tilde \F}^\Z(\tilde x_1)$ and $\I_{\tilde \F}^\Z(\tilde x_2)$ have no $\F$-transverse intersection, as $\I_{\tilde \F}^\Z(\tilde x_1)$ is equivalent to a subpath of $\I_{\tilde \F}^\Z(\tilde x_2)$.}\label{Figlellouch}
\end{center}
\end{figure}

As we said earlier, $\mathcal{I}_{\tilde{\F}}^{\Z}(\tilde{x}_1)$ has not necessarily an $\tilde{\F}$-transverse intersection with $\mathcal{I}_{\tilde{\F}}^{\Z}(\tilde{x}_2)$ (see Figure~\ref{Figlellouch}). To overcome this problem, we will find a path $\tilde{\alpha}$ which has an $\tilde{\F}$-transverse intersection with $\mathcal{I}_{\tilde{\F}}^{\Z}(\tilde{x}_2)$ and which contains a large segment of $\mathcal{I}_{\tilde{\F}}^{\Z}(\tilde{x}_1)$. We borrow this idea and the idea of the proof of the following lemma from Lellouch's PhD thesis \cite{lellouch}.

\begin{lemma} \label{LemNotransautointersection}
The transverse path (see Figure~\ref{FigAlpha})
$$ \tilde{\alpha}= T_1^{-k_{0}}[\phi_2,T_{1}^{k_{0}}\tilde{y}_{-k_{0}}]_2 \, [\tilde{y}_{-k_{0}}, \tilde{y}_{m_1p_1+k_{0}}]_1\, T_1^{m_1 p_1+k_0}[T_1^{-m_1 p_1-k_0}\tilde{y}_{m_1p_1+k_{0}},T_2^{r_2} \phi'_2]_2$$
is admissible of order $m_1 q_1$ and has an $\tilde{\F}$-transverse intersection with the transverse path $T_{1}^{i}[\phi_2,T_2^{r_2}\phi'_2]_2$ at the point $\tilde{y}_i$, for any $0 \leq i \leq m_1 p_1$.  
\end{lemma}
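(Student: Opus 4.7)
The proof will proceed in three stages: verifying that $\tilde\alpha$ is a genuine positively $\F$-transverse path, bounding its admissibility order by $m_1q_1$, and establishing the claimed $\F$-transverse intersections at the points $\tilde y_i$.

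First I would check that the concatenation $\tilde\alpha$ is well-defined as an $\F$-transverse path. Its three pieces are individually $\F$-transverse, as subsegments of the trajectories $T_1^{-k_0}\I_{\tilde\F}^{\Z}(\tilde x_2)$, $\I_{\tilde\F}^{\Z}(\tilde x_1)$ and $T_1^{m_1p_1+k_0}\I_{\tilde\F}^{\Z}(\tilde x_2)$, glued at $\tilde y_{-k_0}$ and $\tilde y_{m_1p_1+k_0}$. To see that each gluing is $\F$-transverse rather than just a contact point, I would invoke Lemma \ref{LemEssentialpoints}.3: each $\tilde y_i$ is, by construction, an essential intersection point between $\I_{\tilde\F}^{\Z}(\tilde x_1)$ and $T_1^i\I_{\tilde\F}^{\Z}(\tilde x_2)$, and the disjointness hypotheses needed in Lemma \ref{LemEssentialpoints}.3 are provided by condition \ref{C2} (which separates $\I_1$ from all nontrivial translates of $\I_1$) together with Corollary \ref{CorDisjointleavescase2} (which ensures the leaves through the far ends of the relevant trajectories stay clear of the other trajectory).

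For the $\F$-transverse intersection with $T_1^i[\phi_2,T_2^{r_2}\phi'_2]_2$ at $\tilde y_i$ for $0 \leq i \leq m_1p_1$, I would note that $\tilde y_i$ lies on the middle piece of $\tilde\alpha$: by Lemma \ref{LemAdmcase22} the point $\hat y_i$ sits in the interior of $[\hat y_{-k_0},\hat y_{m_1p_1+k_0}]_1$. The path $T_1^i[\phi_2,T_2^{r_2}\phi'_2]_2$ meets the outer pieces of $\tilde\alpha$ only trivially, because the translates $T_1^{-k_0}\phi_2$, $T_1^{m_1p_1+k_0}T_2^{r_2}\phi'_2$ and the corresponding half-trajectories miss the needed leaves by Corollary \ref{CorDisjointleavescase2}. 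A fresh application of Lemma \ref{LemEssentialpoints}.3 then provides the $\F$-transverse intersection at $\tilde y_i$.

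The delicate point, and the real obstacle, is the admissibility of order exactly $m_1q_1$ (naive concatenation via Proposition \ref{PropFondLCT} would give $m_1q_1 + m_2q_2$, which is too big). Here I would use Lemma \ref{LemAdmcase22} in its sharp form: in $\widetilde{\dom}\F$, both $\hat y_{-k_0}$ and $\hat y_{m_1p_1+k_0}$ lie in $[\hat x_1,\hat f^{m_1q_1-m_2q_2}(\hat x_1)]_1$, which is admissible of order $m_1q_1-m_2q_2$. By Lemma \ref{LemAdmcase2}, the leaves $\phi_2$ and $T_2^{r_2}\phi'_2$ are both crossed by the $\I_2$-segment $[\hat x_2,\hat f^{m_2q_2}(\hat x_2)]_2$, so the first piece of $\tilde\alpha$ corresponds to forward-flowing through a time $\le m_2q_2 - \tau_0$ in $T_1^{-k_0}\I_2$ (where $\tau_0$ is the time at which $\phi_2$ is crossed) and the third piece to forward-flowing through a time $\le \tau_1$ in $T_1^{m_1p_1+k_0}\I_2$ (where $\tau_1$ is the time at which $T_2^{r_2}\phi'_2$ is crossed). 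Iterated applications of Proposition \ref{PropFondLCT} at the two $\F$-transverse gluing points, combined with these sharp bounds $(m_1q_1-m_2q_2) + (m_2q_2-\tau_0) + \tau_1 \le m_1q_1$, give the desired admissibility order. The normalization \eqref{Eqx1Min} of $\tilde x_1$ relative to $\tilde{\mathcal C}$ is what makes this accounting consistent.
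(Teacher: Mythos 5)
Your admissibility argument has a genuine gap, and it is exactly the difficulty that forces the paper into Case 2 in the first place. You glue the three pieces of $\tilde\alpha$ at $\tilde y_{-k_0}$ and $\tilde y_{m_1p_1+k_0}$ and then invoke ``iterated applications of Proposition \ref{PropFondLCT} at the two $\F$-transverse gluing points.'' But in Case 2 there is no reason these gluing points are $\F$-transverse intersections: the $\tilde y_i$ are only \emph{essential} intersection points, and in Lellouch's configuration (Figure \ref{Figlellouch}) $\I_{\tilde\F}^{\Z}(\tilde x_1)$ is equivalent to a subpath of $\I_{\tilde\F}^{\Z}(\tilde x_2)$, so the leaves met by the ends of one trajectory do meet the other and the hypotheses of Lemma \ref{LemEssentialpoints}.3 fail — condition \ref{C2} only controls translates of $\I_{\tilde\F}^{\Z}(\tilde x_1)$ against each other, not against $\I_{\tilde\F}^{\Z}(\tilde x_2)$. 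Moreover, even granting transverse gluings, Proposition \ref{PropFondLCT} only says that orders \emph{add}; there is no ``partial time'' refinement in the theory that would let you charge the first piece $\le m_2q_2-\tau_0$ and the third $\le\tau_1$ and conclude an order $\le m_1q_1$ (admissibility of order $n$ neither implies nor is implied by admissibility of smaller orders, and a subpath of an admissible path need not inherit the order — that is precisely what Proposition \ref{PropPasFondLCT} is for). So your accounting $(m_1q_1-m_2q_2)+(m_2q_2-\tau_0)+\tau_1\le m_1q_1$ does not correspond to any available statement.

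The paper's proof of admissibility avoids forcing entirely and works straight from Definition \ref{DefAdmis}. By Lemma \ref{LemAdmcase22}, both $\hat B$ (defined in \eqref{EqDefBHat}) and its translate $\widehat{T_1^{m_1p_1+k_0}}\widehat{T_1^{-k_0}}^{-1}\hat B$ separate $\hat x_1$ from $\hat f^{m_1q_1-m_2q_2}(\hat x_1)$, whence $\hat f^{m_1q_1-m_2q_2}(\hat B)\cap\widehat{T_1^{m_1p_1+k_0}}\widehat{T_1^{-k_0}}^{-1}\hat B\neq\emptyset$. Writing $\hat B$ as the union of $\hat f^{m_2q_2}\bigl(\overline{L(\widehat{T_1^{-k_0}}\hat\phi_2)}\bigr)$ and $\overline{R(\widehat{T_1^{-k_0}}\wh{T_2^{r_2}\phi'_2})}$, this leaves four possible nonempty intersections; three of them are excluded using the fact that leaves are Brouwer lines for $\hat f$ together with Lemma \ref{LemDisjointleavescase2} and $m_1q_1-2m_2q_2\ge 0$, leaving exactly $\hat f^{m_1q_1}(\widehat{T_1^{-k_0}}\hat\phi_2)\cap\widehat{T_1^{m_1p_1+k_0}}\wh{T_2^{r_2}\phi'_2}\neq\emptyset$, which is the statement that $\tilde\alpha$ is admissible of order $m_1q_1$. (Incidentally, you do not need transverse gluings just to know $\tilde\alpha$ is a transverse path: concatenations of positively transverse paths at a common point are positively transverse.) Your treatment of the second assertion — geometric transversality of the extended paths plus disjointness of the outer leaves and half-trajectories, then Lemma \ref{LemEssentialpoints} — is in the spirit of the paper, but as written it leans only on Corollary \ref{CorDisjointleavescase2}, whereas the verification also needs condition \ref{C2} and the separation arguments via $T_1^iT_2^{-1}\I_{\tilde\F}^{\Z}(\tilde x_1)$ and $T_1^iT_2^{r_2}\I_{\tilde\F}^{\Z}(\tilde x_1)$ coming from Lemmas \ref{LemDisjointtrajectories} and \ref{LemDisjointleavescase2}.
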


\begin{proof}
Observe that, by Lemma~\ref{LemAdmcase22}, both sets $\hat{B}$ and $\widehat{T_1^{m_1p_1+k_{0}}}\widehat{T_1^{-k_0}}^{-1}\hat{B}$ separate the points $\hat{x}_1$ and $\hat{f}^{m_1q_1-m_2 q_2}(\hat{x}_1)$. Hence $\hat{f}^{m_1q_1-m_2q_2}(\hat{B}) \cap \widehat{T_1^{m_1p_1+k_0}} \widehat{T_1^{-k_0}}^{-1} (\hat{B}) \neq \emptyset$ (see Figure~\ref{FigBhat2}). By the definition 
\[\hat{B} = 
\hat{f}^{m_2 q_2} \left( \overline{L \left(\widehat{T_1^{-k_{0}}}\hat{\phi}_2\right)} \right)
\cup
\overline{R \left(\widehat{T_1^{-k_{0}}}\wh{T_2^{r_2}\phi'_2} \right)}
\]
of $\hat B$, this amounts to say that one of the intersections
\begin{align*}
\widehat{T_1^{-k_{0}}}\hat f^{m_1q_1-m_2q_2}\left(\overline{L (\hat{\phi}_2)}\right)
& \cap\, \widehat{T_1^{m_1p_1+k_0}} \overline{L (\hat{\phi}_2)}\\
\widehat{T_1^{-k_{0}}}\hat f^{m_1q_1}\left(\overline{L (\hat{\phi}_2)}\right)
& \cap\, \widehat{T_1^{m_1p_1+k_0}} \overline{R (\wh{T_2^{r_2}\phi'_{2}} )}\\
\widehat{T_1^{-k_{0}}}\hat f^{m_1q_1-2m_2q_2}\left(\overline{R (\wh{T_2^{r_2}\phi'_2})}\right)
& \cap\, \widehat{T_1^{m_1p_1+k_0}} \overline{L (\hat{\phi}_2)}\\
\widehat{T_1^{-k_{0}}}\hat f^{m_1q_1-m_2q_2}\left(\overline{R (\wh{T_2^{r_2}\phi'_2})}\right)
& \cap\, \widehat{T_1^{m_1p_1+k_0}} \overline{R (\wh{T_2^{r_2}\phi'_2})}
\end{align*}
is nonempty.

By Lemma~\ref{LemDisjointleavescase2}, the leaf $\phi_2$ is disjoint from the images of $\I_{\tilde\F}^\Z(\tilde x_2)$ by $T_1^i$ for any $i\neq 0$, hence none of the sets $L(\hat \phi_2)$ and $L(\wh{T_1^i}\hat\phi_2)$ is included in the other one. As the foliation is made of Brouwer lines, the same holds for $L(\hat \phi_2)$ and $L(\hat f^j(\wh{T_1^i}\hat\phi_2))$ for any $j\in\Z$. A similar property holds for the leaf $\hat\phi'_2$, which implies that the first and the last intersections are empty. 
 
For the third intersection, remark that as $m_1q_1-2m_2q_2\ge 0$, we have
\[\hat f^{m_1q_1-2m_2q_2}\left(\overline{R (\wh{T_2^{r_2}\phi'_2})}\right)\subset \overline{R (\wh{T_2^{r_2}\phi'_2})}.\]
Moreover, by Lemma~\ref{LemDisjointleavescase2}, the leaves $\phi_2$ and $\phi'_2$ are disjoint from the images of $\I_{\tilde\F}^\Z(\tilde x_2)$ by $T_1^i$ for any $i\neq 0$, so 
\[\widehat{T_1^{-k_{0}}}\overline{R (\wh{T_2^{r_2}\phi'_{2}})} \cap \widehat{T_1^{m_1p_1+k_0}} \overline{L (\hat{\phi}_2)} = \emptyset.\]
This implies that the third intersection is empty.

Hence the second intersection is nonempty, so
$$\hat{f}^{m_1q_1}(\widehat{T_1^{-k_{0}}}\hat{\phi}_{2}) \cap \widehat{T_1^{m_1p_1+k_0}} \wh{T_2^{r_2}\phi'_{2}} \neq \emptyset.$$
This means that the transverse path $\tilde{\alpha}$ is admissible of order $m_1 q_1$. 
\bigskip

Now, let $0 \leq i \leq m_1p_1$ and let us prove that $\tilde{\alpha}$ has an $\tilde{\F}$-transverse intersection with $T_1^i \tilde{\beta}$ where $\tilde{\beta}= [\phi_2,T_2^{r_2}\phi'_2]_2$. To do that, we want to use Lemma \ref{LemEssentialpoints}.

Let (recall that $k_0'$ comes from Lemma~\ref{LemDisjointleavescase2} and defines the leaf $\phi_2$)
$$ \tilde{\alpha}'=T_1^{-k_0} T_2^{-k'_{0}}( -\infty,T_2^{k'_0}\tilde{y}'_{-k'_0}]_1 \tilde{\alpha} T_1^{m_1p_1+k_0}T_2^{r_2}[ \phi'_2,+\infty)_2$$
and 
\[\tilde{\beta}'= (-\infty,\phi_2]_2 \tilde{\beta} T_2^{r_2}[\phi'_2,+\infty)_2 = \I_{\tilde \F}^\Z(\tilde x_2).\]

The transverse path $\tilde{\alpha}'$ joins $T_1^{-k_0} T_2^{-k'_{0}}\gamma_{1,-} \in (\gamma_{1,-},\gamma_{2,-})_{\partial \Hy^2}$ to $T_1^{m_1p_1+k_0} \gamma_{2,+} \in (\gamma_{1,+},T_1^{m_1p_1} \gamma_{2,+})_{\partial \Hy^2}$. The transverse path $T_1^i\tilde{\beta}'$ joins $T_1^{i} \gamma_{2,-} \in (\gamma_{2,-},\gamma_{1,+})_{\partial \Hy^2}$ to $T_1^{i} \gamma_{2,+} \in ( T_1^{m_1p_1} \gamma_{2,+},\gamma_{1,-})_{\partial \Hy^2}$ so that $\tilde{\alpha}'$ and $T_1^i \tilde{\beta}'$ are geometrically transverse.

To prove that $\tilde{\alpha}$ and $T_1^i \tilde{\beta}$ are $\tilde{\F}$-transverse, it suffices to use Lemma \ref{LemEssentialpoints} and to prove the following statements.
\begin{enumerate}[label=\alph*)]
\item The leaves $T_1^{-k_0} \phi_2$ and $T_1^{m_1p_1+k_0}T_2^{r_2} \phi'_2$ as well as the half-trajectories $T_1^{-k_0} T_2^{-k'_{0}}( -\infty,T_2^{k'_0}\tilde{y}'_{-k'_0}]_1$ and $T_1^{m_1p_1+k_0}T_2^{r_2}[ \phi'_2,+\infty)_2$ are disjoint from $T_1^i \tilde{\beta}'$.
\item The leaves $T_1^{i} \phi_2$ and $T_1^i T_2^{r_2} \phi'_2$ as well as the half trajectories $T_1^{i}(-\infty,\phi_2]_2$ and $T_1^{i} T_2^{r_2}[\phi'_2,+\infty)_2$ are disjoint from $\tilde{\alpha}'$.
\end{enumerate}
The first point a) is a consequence of Corollary \ref{CorDisjointleavescase2} (points 1 and 4).
Let us prove the second point b). 

We first prove that the trajectory $T_1^{i}T_2^{-1} \I^{\Z}_{\tilde{\F}}(\tilde{x}_1)$ separates $T_1^{i}\phi_2$ and $T_1^{i}(-\infty,\phi_2]_2$ from $\tilde{\alpha}'$. By Condition \ref{C2} and as the leaf $T_1^{i}\phi_2$ crosses $T_1^{i} T_2^{-k'_0}\I^{\Z}_{\tilde{\F}}(\tilde{x}_1)$, the leaf $T_1^{i} \phi_2$ does not meet $T_1^{i}T_2^{-1} \I^{\Z}_{\tilde{\F}}(\tilde{x}_1)$. By Lemma \ref{LemDisjointleavescase2}, $T_1^{i}(-\infty,\phi_2]_2$ is also disjoint from $T_1^{i}T_2^{-1} \I^{\Z}_{\tilde{\F}}(\tilde{x}_1)$. Hence the sets $T_1^{i}\phi_2$ and $T_1^{i}(-\infty,\phi_2]_2$ belong to the connected component of $\overline{\Hy^2} \setminus\overline{T_1^{i}T_2^{-1} \I^{\Z}_{\tilde{\F}}(\tilde{x}_1)}$ which contains the point $T_1^{i} \gamma_{2,-}$.  However, by Lemma \ref{LemDisjointtrajectories}, the trajectory $T_1^{i}T_2^{-1} \I^{\Z}_{\tilde{\F}}(\tilde{x}_1)$ is disjoint from $T_1^{-k_0}\I^{\Z}_{\tilde{\F}}(\tilde{x}_2)$ and from $T_1^{m_1p_1+k_0}\I^{\Z}_{\tilde{\F}}(\tilde{x}_2)$, and, by Condition \ref{C2}, it is disjoint from $T_1^{-k_0} T_2^{-k'_0}\I^{\Z}_{\tilde{\F}}(\tilde{x}_1)$ and from $\I^{\Z}_{\tilde{\F}}(\tilde{x}_1)$ so that the trajectory $T_1^{i}T_2^{-1} \I^{\Z}_{\tilde{\F}}(\tilde{x}_1)$ separates $T_1^{i}\phi_2$ and $T_1^{i}(-\infty,\phi_2]_2$ from $\tilde{\alpha}'$.

In the same way, we prove that the trajectory $T_1^{i}T_2^{r_2} \I^{\Z}_{\tilde{\F}}(\tilde{x}_1)$ separates $T_1^{i}T_2^{r_2}\phi'_2$ and $T_1^{i}T_2^{r_2}[\phi'_2,+\infty)_2$ from $\tilde{\alpha}'$.
\end{proof}

\begin{figure}
\begin{minipage}{.55\linewidth}
\begin{center}
\includegraphics[width=\linewidth]{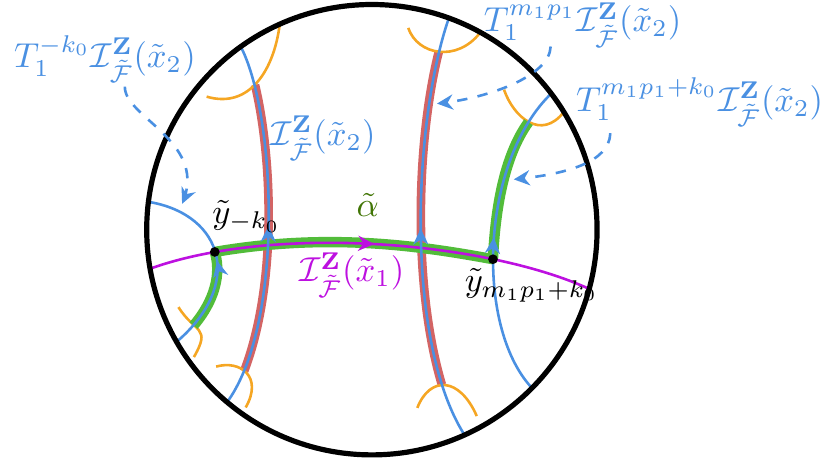}
\caption{The path $\tilde\alpha$ of Lemma~\ref{LemNotransautointersection} (green) has an $\tilde{\F}$-transverse intersection with $T_1^i\tilde\beta = T_1^i [\phi_2,T_2^{r_2}\phi'_2]_2$ (red) for any $0 \leq i \leq m_1 p_1$.}\label{FigAlpha}
\end{center}
\end{minipage}
\hfill
\begin{minipage}{.42\linewidth}
\begin{center}
\includegraphics[width=\linewidth]{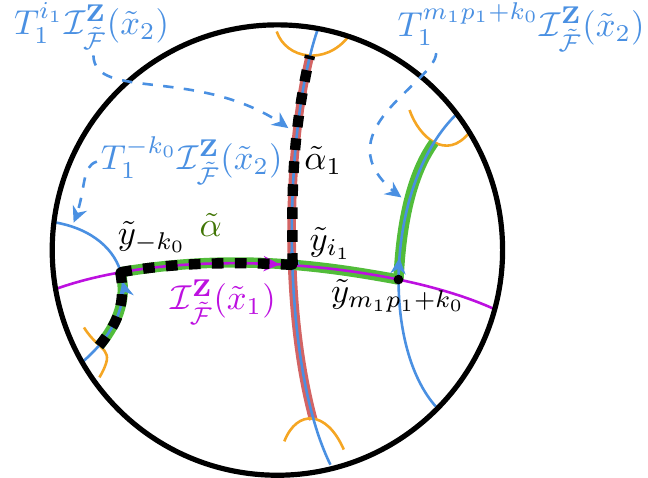}
\caption{Building the path $\tilde\alpha_1$ (dotted) of the base case of Lemma~\ref{LemNotransadmconc} from $\tilde\alpha$ (green) and $T_1^{i_1}\tilde\beta$ (red).}\label{FigAlpha1}
\end{center}
\end{minipage}
\end{figure}

\subsubsection{Admissible trajectories}\label{ParagAdmiss2}

From this point on, using the admissible path $\tilde{\alpha}$, the proof is similar to what we saw in the first case, though not identical. 
Recall that
\[\tilde{\beta}= [\phi_2,T_2^{r_2}\phi'_2]_2.\]

For any sequence $I=(i_{n},j_{n})_{n \geq 0}$ of couples of integers with $1 \leq i_{n} \leq m_1 p_1$ and $1 \leq j_{n} \leq m_2p_2$ for any $n \geq 0$, and, for any $k \geq 0$,  we let
$$T^{I_{k,1}}= T_1^{i_1} T_2^{j_1} T_1^{i_2} T_2^{j_2} \ldots T_1^{i_{k-1}} T_2^{j_{k-1}} T_1^{i_k}$$
and 
$$T^{I_{k,2}}= T_1^{i_1} T_2^{j_1} T_1^{i_2} T_2^{j_2} \ldots T_1^{i_{k}} T_2^{j_{k}}$$
with the convention that
$$T^{I_{0,1}}=T^{I_{0,2}}= \Id_{\Hy^2}.$$

\begin{lemma}\label{LemNotransadmconc}
Let $I=(i_{n},j_{n})_{n \geq 0}$ be any sequence with $1 \leq i_{n} \leq m_1 p_1$ and $1 \leq j_{n} \leq m_2 p_2$ for any $n$. Then, for any $n \geq 1$, there exists an $\tilde{\F}$-transverse path $\tilde{\alpha}_n$ such that:
\begin{enumerate}
\item The path $\tilde{\alpha}_n$ is admissible of order $n(m_1q_1+m_2q_2)$.
\item The path $\tilde{\alpha}_n$ joins the leaf $T_1^{-k_{0}}\phi_2$ to the leaf $T^{I_{n,1}} T_2^{r_2} \phi'_2$.
\item The path $\tilde{\alpha}_n$ is contained in
$ \bigcup_{ k \leq n} \big(T^{I_{k-1,2}} \tilde\alpha \cup T^{I_{k,1}}\tilde\beta\big).$
\item The path $\tilde{\alpha}_n$ has an $\tilde{\F}$-transverse intersection with the path $T^{I_{n,2}} \tilde{ \alpha}_n$.
\end{enumerate}
\end{lemma}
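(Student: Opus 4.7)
The plan is to mimic the inductive argument of Lemma~\ref{LemTransadmconc}, but using the modified path $\tilde{\alpha}$ in place of $[\phi_1, T_1^{r_1}\phi'_1]_1$, with the $\tilde{\F}$-transverse intersection between $\tilde{\alpha}$ and $T_1^{i}\tilde{\beta}$ supplied by Lemma~\ref{LemNotransautointersection}. Throughout, admissibilities will be chained by repeated applications of Proposition~\ref{PropFondLCT}, and the presence of $\tilde{\F}$-transverse intersections between the constructed paths will be extracted from geometric transversality via Lemma~\ref{LemEssentialpoints}, whose hypotheses are verified using Corollary~\ref{CorDisjointleavescase2} together with Lemma~\ref{LemDisjointtrajectories}.

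For the base case $n=1$, I take $\tilde{\alpha}$ (admissible of order $m_1 q_1$ by Lemma~\ref{LemNotransautointersection}) and $T_1^{i_1}\tilde{\beta}$ (admissible of order $m_2 q_2$ by Lemma~\ref{LemAdmcase2}, as $1 \le i_1 \le m_1 p_1$), and invoke the $\tilde{\F}$-transverse intersection at $\tilde{y}_{i_1}$ given by Lemma~\ref{LemNotransautointersection}; Proposition~\ref{PropFondLCT} then produces
\[
\tilde{\alpha}_1 = [T_1^{-k_0}\phi_2,\tilde{y}_{i_1}]_{\tilde{\alpha}} \; T_1^{i_1}[T_1^{-i_1}\tilde{y}_{i_1}, T_2^{r_2}\phi'_2]_2,
\]
which satisfies the first three properties. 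The inductive step is analogous: assuming $\tilde{\alpha}_n$ has been built, the base case applied to the shifted data $(i_{n+1},j_{n+1})$ produces a path $\tilde{\alpha}_{1,i_{n+1}}$ joining $T_1^{-k_0}\phi_2$ to $T_1^{i_{n+1}}T_2^{r_2}\phi'_2$; translating by $T^{I_{n,2}}$ and splicing it with $\tilde{\alpha}_n$ via a $\tilde{\F}$-transverse intersection (established below) yields $\tilde{\alpha}_{n+1}$, whose admissibility of order $(n+1)(m_1 q_1 + m_2 q_2)$ follows from Proposition~\ref{PropFondLCT}.

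The core step at each level is verifying the fourth property, namely that $\tilde{\alpha}_n$ and $T^{I_{n,2}}\tilde{\alpha}_n$ intersect $\tilde{\F}$-transversally (and, in the inductive step, that $\tilde{\alpha}_n$ and $T^{I_{n,2}}\tilde{\alpha}_{1,i_{n+1}}$ do). I will extend both paths at their ends as in Case~1, setting
\[
\tilde{\alpha}'_n = T_1^{-k_0}T_2^{-k'_0}(-\infty, T_2^{k'_0}\tilde{y}'_{-k'_0}]_1 \; \tilde{\alpha}_n \; T^{I_{n,1}} T_2^{r_2}[\phi'_2,+\infty)_2,
\]
and check that $\tilde{\alpha}'_n$ and $T^{I_{n,2}}\tilde{\alpha}'_n$ are geometrically transverse by tracking their endpoints on $\partial\Hy^2$, using the chosen orientation (with $\gamma_{2,-}$ between $\gamma_{1,-}$ and $\gamma_{1,+}$) and the fact that the words $T^{I_{n,2}}$ start with $T_1$ and end with $T_2^{j_n}$. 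Then Lemma~\ref{LemEssentialpoints}.2 produces an essential intersection, and part~3 of that lemma converts it into an $\tilde{\F}$-transverse one, provided I can show the four ``boundary'' pieces of each extended path ($T_1^{-k_0}\phi_2$, the leaf $T^{I_{n,1}}T_2^{r_2}\phi'_2$, and the two infinite half-trajectories) are disjoint from the translate $T^{I_{n,2}}\tilde{\alpha}'_n$, and symmetrically for the other path.

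The main obstacle will be these disjointness verifications, because $\tilde{\alpha}_n$ lives inside $\bigcup_{k\le n}(T^{I_{k-1,2}}\tilde{\alpha} \cup T^{I_{k,1}}\tilde{\beta})$ and $\tilde{\alpha}$ itself contains a long arc of $\mathcal{I}_{\tilde{\F}}^{\Z}(\tilde{x}_1)$; I will therefore need to check that none of the finitely many ``cap'' leaves and half-trajectories of $T^{I_{n,2}}\tilde{\alpha}'_n$ can intersect any subpath of $\tilde{\alpha}$ or $T^{I_{k,1}}\tilde{\beta}$ appearing in $\tilde{\alpha}_n$. Each such potential intersection is of one of the following types: a translate of $\phi_2$ or $T_2^{r_2}\phi'_2$ meeting a translate of $\mathcal{I}_{\tilde{\F}}^{\Z}(\tilde{x}_i)$ (handled by Corollary~\ref{CorDisjointleavescase2}, whose four cases match the structure of admissible words starting or not with $T_1^{\pm 1}$ or $T_2^{\pm 1}$); an infinite half-trajectory in direction $\tilde{\gamma}_2$ meeting a translate of $\mathcal{I}_{\tilde{\F}}^{\Z}(\tilde{x}_1)$ (handled by condition~\ref{C2} together with Lemma~\ref{LemDisjointtrajectories}, since after shifting by $T^{I_{n,2}}$ the indices fall outside the excluded range $[-k_0,k_0]$); and the analogous intersections between two half-trajectories in direction $\tilde{\gamma}_1$ or $\tilde{\gamma}_2$. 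In each case the fact that $T^{I_{n,2}}$ starts with $T_1$ and ends with $T_2^{j_n}$ with $1 \le j_n \le m_2 p_2$ guarantees that the relevant hypotheses of Corollary~\ref{CorDisjointleavescase2} apply, and the proof then closes by induction exactly as in Case~1.
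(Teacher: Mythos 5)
Your overall strategy coincides with the paper's: build $\tilde{\alpha}_1$ by splicing $\tilde{\alpha}$ with $T_1^{i_1}\tilde{\beta}$ at $\tilde y_{i_1}$ via Proposition~\ref{PropFondLCT}, then induct by splicing $\tilde{\alpha}_n$ with $T^{I_{n,2}}\tilde{\alpha}_{1,i_{n+1}}$, extracting each $\tilde{\F}$-transverse intersection from geometric transversality of the extended paths via Lemma~\ref{LemEssentialpoints} and disjointness checks from Corollary~\ref{CorDisjointleavescase2}, Condition~\ref{C2} and Lemma~\ref{LemDisjointtrajectories}. However, there is a genuine gap at the crucial step, which you dismiss as ``tracking endpoints on $\partial\Hy^2$.'' The right endpoint of $T^{I_{n,2}}\tilde{\alpha}'_n$ is easy to locate, but the left endpoint $T^{I_{n,2}}T_1^{-k_0}T_2^{-k'_0}\gamma_{1,-}$ is not: $T_1^{-k_0}T_2^{-k'_0}\gamma_{1,-}$ is the endpoint of a translate of $\tilde\gamma_1$ lying in $(\gamma_{1,-},\gamma_{2,-})_{\partial\Hy^2}$, and applying $T_2^{j_1}$ to it can a priori push it anywhere in the arc $(\gamma_{2,+},\gamma_{2,-})_{\partial\Hy^2}$; naive bookkeeping only yields membership of $T_1^{i_1}T_2^{j_1}T_1^{-k_0}T_2^{-k'_0}\gamma_{1,-}$ in $(T_1^{i_1}\gamma_{2,+},T_1^{i_1}\gamma_{2,-})_{\partial\Hy^2}$, which does not interlace the endpoints of $\tilde{\alpha}'_n$ and hence does not give geometric transversality. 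The paper needs here a separate hyperbolic-geometry argument (its Claim~\ref{LastClaim}): one conjugates $T_1$ and $T_2$ by $T_2^{k'_0}T_1^{k_0}$, shows via Condition~\ref{C2} and Lemma~\ref{LemDisjointtrajectories} that the geodesic $T_2^{k'_0}T_1^{k_0}T_2^{-1}\tilde\gamma_1$ is disjoint from $\tilde\gamma_1$ and $\tilde\gamma_2$ and has both endpoints in $(\gamma_{1,+},\gamma_{2,+})_{\partial\Hy^2}$, deduces that the axes of the conjugated transformations cross, and only then concludes that the image point lies in $(T_1^{i_1}\gamma_{2,+},\gamma_{1,-})_{\partial\Hy^2}$ as required (Equation~\eqref{EqCaseN1}); this estimate is also what makes the inductive geometric-transversality step work. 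Without supplying this argument, your fourth property is unproved.

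A secondary inaccuracy: your claim that every disjointness check reduces to one of the four cases of Corollary~\ref{CorDisjointleavescase2} is not quite right. The intersection of $T_1^{-k_0}\phi_2$ (and of the half-trajectory $T_1^{-k_0}T_2^{-k'_0}(-\infty,T_2^{k'_0}\tilde y'_{-k'_0}]_1$) with $T^{I_{n,2}}T_1^{-k_0}\I^{\Z}_{\tilde\F}(\tilde x_2)$ is not covered by those cases, because the word $T^{I_{n,2}}T_1^{-k_0}$ is neither a positive word nor a negative word with the prescribed first letter; the paper handles it by a separate separation argument using the statement $T_2T_1^{-k_0}\tilde\gamma_2\subset L(\tilde\gamma_1)$ of Lemma~\ref{LemDisjointtrajectories} together with Lemma~\ref{LemDisjointleavescase2}. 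You cite the right lemmas but should make this case explicit rather than folding it into the blanket appeal to the corollary.
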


\begin{proof}
We will prove the lemma by induction on $n$.

\paragraph{Base case:}
Let (see Figure~\ref{FigAlpha1})
\[\tilde{\alpha}_1 = T_1^{-k_{0}}[\phi_2,T_{1}^{k_{0}}\tilde{y}_{-k_{0}}]_2 \,[\tilde{y}_{-k_{0}}, \tilde{y}_{i_1}]_1\,  T_1^{i_1}[T_1^{-i_1}\tilde{y}_{i_1}, T_2^{r_2}\phi_2']_2\]
We want to prove that:
\begin{enumerate}
\item The path $\tilde{\alpha}_1$ is admissible of order $m_1q_1+m_2q_2$.
\item The path $\tilde{\alpha}_1$ joins the leaf ${T_1^{-k_{0}}}\phi_2$ to the leaf $T_1^{i_1} T_2^{r_2} \phi'_2$.
\item The path $\tilde{\alpha}_1$ is contained in
$\tilde\alpha \cup T_1^{i_1}\tilde\beta.$
\item The path $\tilde{\alpha}_1$ has an $\tilde{\F}$-transverse intersection with the path $T_1^{i_1} T_2^{j_1} \tilde{ \alpha}_1$.
\end{enumerate}

By Lemma \ref{LemNotransautointersection}, the transverse path $\tilde{\alpha}$, which is admissible of order $m_1 q_1$, has an $\tilde{\F}$-transverse intersection with $T_1^{i_1}\tilde{\beta}$, which is admissible of order $m_2q_2$. Hence, by Proposition \ref{PropFondLCT}, the transverse path $\tilde{\alpha}_1$ satisfies the first three properties of the lemma. Let us check that the fourth property is also satisfied.

Consider the transverse path (see Figure~\ref{FigAlpha1prim})\footnote{Note that the right extension is made by $T_2^{r_2}[ \phi'_2,+\infty)_2$ and not $[ T_2^{r_2}\phi'_2,+\infty)_2$. Of course, the last point on $\tilde{\alpha}_1$ might not be the first point on $T_1^{i_1}T_2^{r_2}[ \phi'_2,+\infty)_2$ but those two points belong to the same leaf $T_1^{i_1}T_2^{r_2} \phi'_2$: it is possible to find a transverse path which meets the same leaves as this concatenation of paths.}
$$\tilde{\alpha}'_1 = T_1^{-k_0}T_2^{-k'_{0}}(-\infty,T_2^{k'_{0}}{\tilde{y}'_{-k'_{0}}}]_1 \tilde{\alpha}_1 T_1^{i_1}T_2^{r_2}[ \phi'_2,+\infty)_2.$$
This biinfinite path joins the point $T_1^{-k_0}T_2^{-k'_{0}} \gamma_{1,-}$ of $\partial \Hy^2$ to the point $T_{1}^{i_{1}} \gamma_{2,+}$ of $\partial \Hy^2$. We first prove that the transverse paths $\tilde{\alpha}'_1$ and $T^{I_{1,2}} \tilde{\alpha}'_1$ are geometrically transverse.

Remark that the transverse path $T^{I_{1,2}} \tilde{\alpha}'_1 = T_1^{i_1}T_2^{j_1} \tilde{\alpha}'_1$ joins the point the point $T_1^{i_1} T_2^{j_1}T_1^{-k_0}T_2^{-k'_{0}} \gamma_{1,-}$ of $\partial \Hy^2$ to the point $T_1^{i_1} T_{2}^{j_1}T_{1}^{i_{1}} \gamma_{2,+}$ of $\partial \Hy^2$. 

As the point $T_2^{j_1} T_{1}^{i_1} \gamma_{2,+}$ belongs to $(\gamma_{1,+},\gamma_{2,+})_{\partial \Hy^2}$, the point $T_1^{i_1} T_{2}^{j_1}T_{1}^{i_{1}} \gamma_{2,+}$ belongs to $(\gamma_{1,+},\, T_1^{i_1}\gamma_{2,+})_{\partial \Hy^2} \subset (T_1^{-k_0}T_2^{-k_0'}\gamma_{1,-},\, T_1^{i_1}\gamma_{2,+})_{\partial \Hy^2}$.

Consider the conjugates
\[T'_1=T_2^{k'_{0}} T_{1}^{k_0} T_1 T_1^{-k_0} T_2^{-k'_0}=T_2^{k'_{0}} T_1  T_2^{-k'_0}\quad \text{and}\quad T'_2=T_2^{k'_{0}} T_{1}^{k_0} T_2 T_1^{-k_0} T_2^{-k'_0}\]
of resp. $T_1$ and $T_2$ by $T_2^{k'_{0}} T_{1}^{k_0}$.
Observe that the the axis of $T'_1$ is the geodesic $T_{2}^{k'_0}\tilde\gamma_1$, and that the axis of $T'_2$ is the geodesic $T_2^{k'_{0}}T_1^{k_0} \tilde\gamma_2$ (hence, it is disjoint from $\tilde\gamma_2$ and from $\tilde\gamma_1$, by Lemma~\ref{LemDisjointtrajectories}).
Finally, observe that 
$T_1^{i_1} T_2^{j_1}T_1^{-k_0}T_2^{-k'_{0}} \gamma_{1,-}=T_1^{-k_{0}}T_{2}^{-k'_{0}}(T'_1)^{i_1}(T'_2)^{j_1} \gamma_{1,-}$.

\begin{claim}\label{LastClaim}
The endpoints of the geodesic $T_2^{k'_{0}}T_1^{k_0} T_2^{-1} \tilde\gamma_1$ satisfy 
\begin{align*}
T_2^{k'_{0}}T_1^{k_0} T_2^{-1} \gamma_{1,-} 
& \in (\gamma_{1,+},T_2^{k'_{0}}T_1^{k_0} {\gamma_{2,+}})_{\partial\Hy^2} \subset (\gamma_{1,+},\gamma_{2,+})_{\partial\Hy^2} \\
T_2^{k'_{0}}T_1^{k_0} T_2^{-1} \gamma_{1,+}
& \in (T_2^{k'_{0}}T_1^{k_0} \gamma_{2,-}, \gamma_{2,+})_{\partial\Hy^2} \subset (\gamma_{1,+},\gamma_{2,+})_{\partial\Hy^2} .
\end{align*}
\end{claim}

\begin{figure}
\begin{minipage}{.54\linewidth}
\begin{center}
\includegraphics[width=\linewidth]{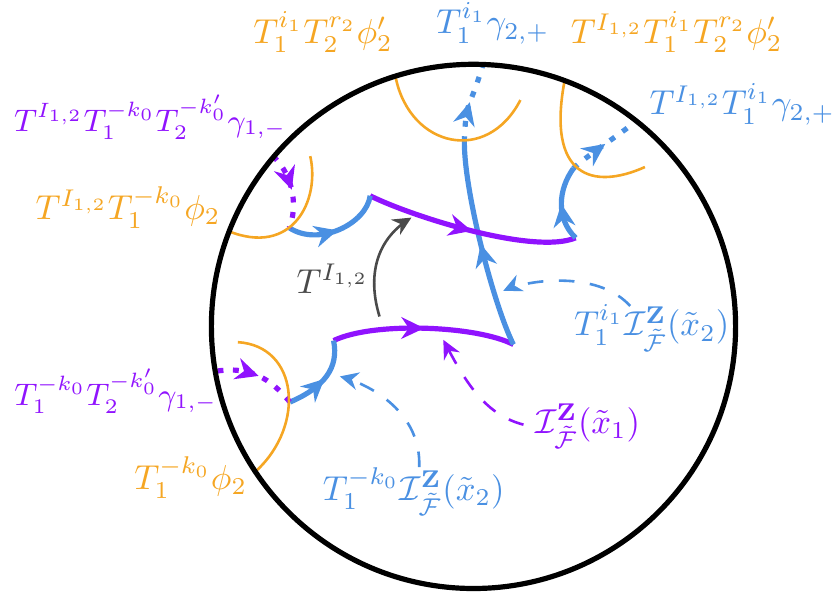}
\caption{The paths $\tilde\alpha_1$ and $T^{I_{1,2}}\tilde\alpha_1$ (thick lines) are prolonged to the paths $\tilde\alpha'_1$ and $T^{I_{1,2}}\tilde\alpha'_1$ by the dotted paths. The paths $\tilde\alpha_1$ and $T^{I_{1,2}}\tilde\alpha_1$ have an $\tilde\F$-transverse intersection.}\label{FigAlpha1prim}
\end{center}
\end{minipage}
\hfill
\begin{minipage}{.42\linewidth}
\begin{center}
\includegraphics[width=\linewidth]{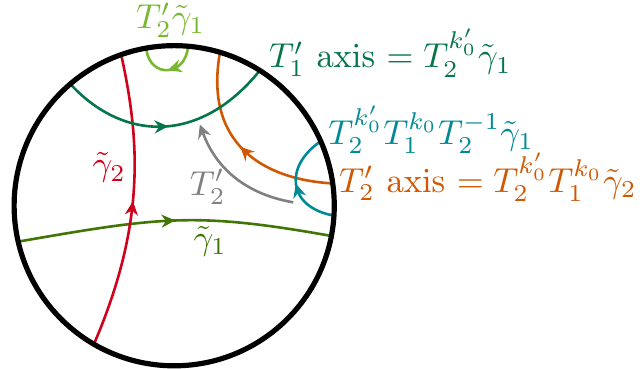}
\caption{Position of geodesics for Claim~\ref{LastClaim}.}\label{FigLastClaim}
\end{center}
\end{minipage}
\end{figure}

\begin{proof}
Consider the geodesic $T_2^{k'_{0}}T_1^{k_0} \tilde\gamma_2$ (see Figure~\ref{FigLastClaim}). We have that  $T_2^{k'_{0}}T_1^{k_0} \gamma_{2,+}\in (\gamma_{1,+},\gamma_{2,+})_{\partial\Hy^2}$. Moreover, this geodesic $T_2^{k'_{0}}T_1^{k_0} \tilde\gamma_2$ is disjoint from both $\tilde\gamma_2$ (by a trivial geometric argument) and $\tilde\gamma_1$ (by Lemma \ref{LemDisjointtrajectories}). So its other endpoint $T_2^{k'_{0}}T_1^{k_0} \gamma_{2,-}$ also lies in $(\gamma_{1,+},\gamma_{2,+})_{\partial\Hy^2}$.

The geodesic $T_2^{k'_0}T_1^{k_0} T_2^{-1}\tilde\gamma_1$ of the claim  crosses the geodesic $T_2^{k'_{0}}T_1^{k_0} \tilde\gamma_2$ of the previous paragraph. Moreover, it is disjoint from both $\tilde \gamma_1$ (by condition \ref{C1}) and $\tilde\gamma_2$ (by Lemma~\ref{LemDisjointtrajectories}). We get the claim by combining it with the orientation of the intersection between $T_2^{k'_0}T_1^{k_0} T_2^{-1}\tilde\gamma_1$ and $T_2^{k'_{0}}T_1^{k_0} \tilde\gamma_2$.
\end{proof}

As a consequence, the geodesic $T_2^{k'_{0}}T_1^{k_0} T_2^{-1} \tilde\gamma_1$ crosses the geodesic axis $T_2^{k'_{0}}T_1^{k_0} \tilde\gamma_2$ of $T'_2$, and so does its image $T_{2}^{k'_{0}}\tilde\gamma_1$ under $T'_2$. In other words, the geodesic axes of $T'_1$ and $T'_2$ cross (as in Figure~\ref{FigLastClaim}).

We also deduce from the claim that the geodesic $\tilde\gamma_1$ is strictly on the left of the geodesic $T_2^{k'_{0}}T_1^{k_0} T_2^{-1} \tilde\gamma_1$. As the image under $T'_2$ of the latter geodesic is $T_{2}^{k'_{0}}\tilde\gamma_1$, we deduce that the geodesic $T'_{2}\tilde\gamma_1$ lies strictly on the left of the geodesic $T_2^{k'_{0}} \tilde\gamma_{1}$. Because of the relative position of the geodesic axes of $T'_1$ and $T'_2$, we deduce that $(T'_1)^{i_1}(T'_{2})^{j_1}\tilde\gamma_1$ also lies strictly on the left of the geodesic axis $T_2^{k'_{0}} \tilde\gamma_{1}$ of $T'_1$. Hence the point $T_1^{i_1} T_2^{j_1}T_1^{-k_0}T_2^{-k'_{0}} \gamma_{1,-}=T_1^{-k_{0}}T_{2}^{-k'_{0}}(T'_1)^{i_1}(T'_2)^{j_1} \gamma_{1,-}$ belongs to $(\gamma_{1,+},\gamma_{1,-})_{\partial \Hy^2}$. However, as the point $T_1^{-k_0}T_2^{-k'_0} \gamma_{1,-}$ belongs to $(\gamma_{2,+},\gamma_{2,-})_{\partial \Hy^2}$, then the point $T_1^{i_1} T_2^{j_1}T_1^{-k_0}T_2^{-k'_0} \gamma_{1,-}$ belongs to $(T_1^{i_1}\gamma_{2,+},T_1^{i_1}\gamma_{2,-})_{\partial \Hy^2}$. Therefore, 
\begin{equation}\label{EqCaseN1}
T_1^{i_1} T_2^{j_1}T_1^{-k_0}T_2^{-k'_0} \gamma_{1,-}\in
(T_1^{i_1}\gamma_{2,+}, \gamma_{1,-})_{\partial \Hy^2} \subset 
(T_1^{i_1}\gamma_{2,+}, T_1^{-k_0}T_2^{-k'_0} \gamma_{1,-})_{\partial \Hy^2}
\end{equation} 
This finishes the proof that the transverse paths $\tilde{\alpha}'_1$ and $T^{I_{1,2}} \tilde{\alpha}'_1$ are geometrically transverse. 
\bigskip

To prove that  the paths $\tilde{\alpha}_1$ and $T^{I_{1,2}}\tilde{\alpha}_1$ are $\tilde{\F}$-transverse, it suffices to use Lemma~\ref{LemEssentialpoints} and to prove the following statements.
\begin{itemize}
\item The leaves $T_1^{-k_0}\phi_2$ and $T_1^{i_1}T_2^{r_2} \phi'_2$ as well as the trajectories $T_1^{-k_0}T_2^{-k'_{0}}(-\infty,T_2^{k'_{0}}\tilde{y}'_{k'_{0}}]_1$ and $T_1^{i_1}T_2^{r_2}[ \phi'_2,+\infty)_2$  do not meet the transverse path $T^{I_{1,2}}\tilde{\alpha}'_1$.
\item The leaves $T^{I_{1,2}}T_1^{-k_0}\phi_2$ and $T^{I_{1,2}}T_1^{i_1}T_2^{r_2} \phi'_2$ as well as the trajectories $T^{I_{1,2}}T_1^{-k_0}T_2^{-k'_{0}}(-\infty,T_2^{k'_{0}}\tilde{y}'_{k'_{0}}]_1$ and $T^{I_{1,2}}T_1^{i_1}T_2^{r_2}[ \phi'_2,+\infty)_2$  do not meet the transverse path $\tilde{\alpha}'_1$.
\end{itemize}

As the leaf $T_1^{-k_0} \phi_2$ meets $T_1^{-k_0}T_2^{-k'_0} \I^{\Z}_{\tilde{\F}}(\tilde{x}_1)$, then, by Condition \ref{C2}, this leaf and $T_1^{-k_0}T_2^{-k'_{0}}(-\infty,T_2^{k'_{0}}\tilde{y}'_{k'_{0}}]_1$  do not meet any other translates of $\I^{\Z}_{\tilde{\F}}(\tilde{x}_1)$. In particular, they do not meet the pieces of $T^{I_{1,2}}\tilde{\alpha}'_1$ which are contained in translates of $\mathcal{I}^{\Z}_{\tilde{\F}}(\tilde{x}_1)$, namely $T^{I_{1,2}}T_{1}^{-k_0}T_2^{-k'_0}\mathcal{I}^{\Z}_{\tilde{\F}}(\tilde{x}_1)$ and $T^{I_{1,2}}\mathcal{I}^{\Z}_{\tilde{\F}}(\tilde{x}_1)$. Moreover, the leaf $T_1^{-k_0} \phi_2$ and the piece of trajectory $T_1^{-k_0}T_2^{-k'_{0}}(-\infty,T_2^{k'_{0}}\tilde{y}'_{k'_{0}}]_1$ do not meet $T^{I_{1,2}}T_1^{i_1}  \I^{\Z}_{\tilde{\F}}(\tilde{x}_2)$ (by 1. of Corollary \ref{CorDisjointleavescase2}) nor $T^{I_{1,2}}T_1^{i_1} T_2^{r_2} \I^{\Z}_{\tilde{\F}}(\tilde{x}_2)$ (by 1. of Corollary \ref{CorDisjointleavescase2}). Finally, the leaf $T_1^{-k_0} \phi_2$ and the piece of trajectory $T_1^{-k_0}T_2^{-k'_{0}}(-\infty,T_2^{k'_{0}}\tilde{y}'_{k'_{0}}]_1$ do not meet $T^{I_{1,2}}T_1^{-k_0} \I^{\Z}_{\tilde{\F}}(\tilde{x}_2)$: indeed, on the one hand, by Lemma~\ref{LemDisjointleavescase2}, these two sets are contained in\footnote{The first $R$ stands for the right of the set while the second one denotes the $R$-neighbourhood.} $R\big((\tilde \gamma_1)_R\big)$; on the other hand, by Lemma~\ref{LemDisjointtrajectories}, we have $T_2 T_1^{-k_0} \tilde \gamma_2 \subset L(\tilde \gamma_1)$, which implies that
\[T^{I_{1,2}}T_1^{-k_0} \I^{\Z}_{\tilde{\F}}(\tilde{x}_2) \cap R\big((\tilde \gamma_1)_R\big) = \emptyset.\]
This proves that the leaf $T_1^{-k_0} \phi_2$ and the piece of trajectory $T_1^{-k_0}T_2^{-k'_{0}}(-\infty,T_2^{k'_{0}}\tilde{y}'_{k'_{0}}]_1$ are disjoint from the transverse path $T^{I_{1,2}}\tilde{\alpha}'_1$.

Let us prove that the leaf $T_1^{i_1}T_2^{r_2} \phi'_2$ and the half-trajectory $T_1^{i_1}T_2^{r_2}[ \phi'_2,+\infty)_2$ are disjoint from the transverse path $T^{I_{1,2}}\tilde{\alpha}'_1$. Observe that the leaf $T_1^{i_1}T_2^{r_2} \phi'_2$ meets $T_1^{i_1}T_2^{r_2+k'_0} \mathcal{I}^{\Z}_{\tilde{\F}}(\tilde{x}_1)$ so that, by Condition \ref{C2}, this leaf does not meet the pieces of $T^{I_{1,2}}\tilde{\alpha}'_1$ which are contained in translates of $\mathcal{I}^{\Z}_{\tilde{\F}}(\tilde{x}_1)$, namely $T^{I_{1,2}}T_{1}^{-k_0}T_2^{-k'_0}\mathcal{I}^{\Z}_{\tilde{\F}}(\tilde{x}_1)$ and $T^{I_{1,2}}\mathcal{I}^{\Z}_{\tilde{\F}}(\tilde{x}_1)$. Moreover, the half-trajectory $T_1^{i_1} T_2^{r_2}[\phi'_2,+\infty)_2$ is disjoint from the pieces of $T^{I_{1,2}}\tilde{\alpha}'_1$ which are contained in translates of $\mathcal{I}^{\Z}_{\tilde{\F}}(\tilde{x}_1)$ because $T_1^{i_1}T_2^{r_2}\I^{\Z}_{\tilde{\F}}(\tilde{x}_1)$ separates this half-trajectory from the pieces of $T^{I_{1,2}}\tilde{\alpha}'_1$ which are contained in translates of $\mathcal{I}^{\Z}_{\tilde{\F}}(\tilde{x}_1)$. Also, the leaf $T_1^{i_1}T_2^{r_2} \phi'_2$ and the half-trajectory $T_1^{i_1}T_2^{r_2}[ \phi'_2,+\infty)_2 $ are disjoint from the pieces of $T^{I_{1,2}}\tilde{\alpha}'_1$ which are contained in translates of $\mathcal{I}^{\Z}_{\tilde{\F}}(\tilde{x}_2)$, namely $T^{I_{1,2}}T_1^{-k_0}\I_{\tilde{\F}}^{\Z}(\tilde{x}_2)$ (by 4. of Corollary~\ref{CorDisjointleavescase2}) and $T^{I_{1,2}}T_1^{i_1}\I_{\tilde{\F}}^{\Z}(\tilde{x}_2)$ (by 3. of Corollary~\ref{CorDisjointleavescase2}).

Using Condition \ref{C2} and Corollary \ref{CorDisjointleavescase2}, we prove similarly that the leaves $T^{I_{1,2}}T_1^{-k_0}\phi_2$ and $T^{I_{1,2}}T_1^{i_1}T_2^{r_2} \phi'_2$ as well as the trajectories $T^{I_{1,2}}T_1^{-k_0}T_2^{-k'_{0}}(-\infty,T_2^{k'_{0}}\tilde{y}'_{k'_{0}}]_1$ and $T^{I_{1,2}}T_1^{i_1}T_2^{r_2}[ \phi'_2,+\infty)_2$  do not meet the transverse path $\tilde{\alpha}'_1$.

This completes the case $n=1$.

\paragraph{Induction:}
Now, suppose that we have constructed a transverse path $\tilde{\alpha}_n$ which satisfies the conditions of the lemma for some $n \geq 1$ and let us construct a transverse path $\tilde{\alpha}_{n+1}$ which satisfies the lemma. Using the $n=1$ case, we can construct a transverse path $\tilde{\alpha}_{1,i_{n+1}}$ with the following properties.
\begin{enumerate}
\item It is admissible of order $m_1q_1+m_2q_2$.
\item It joins $T_1^{-k_0}\phi_2$ to $T_1^{i_{n+1}}T_2^{r_2}\phi'_2$.
\item It is contained in $\tilde{\alpha} \cup T_1^{i_{n+1}} \tilde{\beta}$.
\end{enumerate}
Now, we prove that the transverse paths $\tilde{\alpha}_n$ and $T^{I_{n,2}}\tilde{\alpha}_{1,i_{n+1}}$ have an $\tilde{\F}$-transverse intersection. By Proposition \ref{PropFondLCT}, this will yield a path $\tilde{\alpha}_{n+1}$ which satisfies the first three properties of the lemma. As in the case $n=1$, the strategy is to use Lemma \ref{LemEssentialpoints} to prove that we have a transverse intersection.

Let 
$$\tilde{\alpha}'_{1,i_{n+1}}=T_1^{-k_0}T_2^{-k'_{0}}(-\infty,T_2^{k'_{0}}\tilde{y}'_{-k'_{0}}]_1 \tilde{\alpha}_{1,i_{n+1}} T_1^{i_{n+1}}{T_2^{r_2}}[\phi'_2,+\infty)_2$$
and 
$$\tilde{\alpha}'_{n} =T_1^{-k_0}T_2^{-k'_{0}}(-\infty,T_2^{k'_{0}}\tilde{y}'_{-k'_{0}}]_1 \tilde{\alpha}_{n} T^{I_{n,1}}T_2^{r_2} [\phi'_2,+\infty)_2.$$

Let us prove first that the paths $\tilde{\alpha}'_{n}$ and $T^{I_{n,2}}\tilde{\alpha}'_{1,i_{n+1}}$ are geometrically transverse. The path $\tilde{\alpha}'_{n}$ joins the point $T_1^{-k_0}T_2^{-k'_0} \gamma_{1,-}$ to the point $T^{I_{n,1}} \gamma_{2,+}$. The path $T^{I_{n,2}}\tilde{\alpha}'_{1,i_{n+1}}$ joins the point $T^{I_{n,2}}T_1^{-k_0}T_2^{-k'_0} \gamma_{1,-}$ to the point $T^{I_{n,2}}T_1^{i_{n+1}} \gamma_{2,+}$. 

The point $T_{2}^{j_{n}} T_1^{i_{n+1}} \gamma_{2,+}$ belongs to $(\gamma_{1,+}, \gamma_{2,+})_{\partial \Hy^2}$ so that the point $T^{I_{n,2}} T_1^{i_{n+1}} \gamma_{2,+}$ --- which is the left end of $T^{I_{n,2}}\tilde{\alpha}'_{1,i_{n+1}}$ --- belongs to $(\gamma_{1,+},T^{I_{n,1}}\gamma_{2,+})_{\partial \Hy^2} \subset (T_1^{-k_0}T_2^{-k_0'}\gamma_{1,-},T^{I_{n,1}}\gamma_{2,+})_{\partial \Hy^2}$ (remark that the ends of this interval are the endpoints of $\tilde\alpha'_n$).

We saw during the $n=1$ case (Equation~\eqref{EqCaseN1}) that the point $T_1^{i_{n}}T_2^{j_{n}}T_1^{-k_0}T_2^{-k'_0}\gamma_{1,-}$ belongs to $(T_1^{i_{n}}\gamma_{2,+}, \gamma_{1,-})_{\partial \Hy^2}$. Hence the point $T^{I_{n,2}}T_1^{-k_0}T_2^{-k'_0}\gamma_{1,-}$ belongs to $(T^{I_{n,1}} \gamma_{2,+}, \gamma_{1,-})_{\partial \Hy^2}$ which is contained in $(T^{I_{n,1}} \gamma_{2,+},T_1^{-k_0}T_2^{-k'_0}\gamma_{1,-})_{\partial \Hy^2}$. This proves that the paths $\tilde{\alpha}'_{n}$ and $T^{I_{n,2}}\tilde{\alpha}'_{1,i_{n+1}}$ are geometrically transverse. 
\medskip

It remains to check the following statements.
\begin{enumerate}[label=\alph*)]
\item The leaves $T_1^{-k_0}\phi_2$ and $T^{I_{n,1}}T_2^{r_2} \phi'_2$ as well as the half-trajectories $T_1^{-k_0}T_2^{-k'_{0}}(-\infty,T_2^{k'_{0}}\tilde{y}'_{-k'_{0}}]_1$ and $T^{I_{n,1}}T_2^{r_2} [\phi'_2,+\infty)_2$  do not meet the transverse path $T^{I_{n,2}}\tilde{\alpha}'_{1,i_{n+1}}$.
\item The leaves $T^{I_{n,2}}T_1^{-k_0}\phi_2$ and $T^{I_{n,2}}T_1^{i_{n+1}}T_2^{r_2} \phi'_2$ as well as the half-trajectories $T^{I_{n,2}}T_1^{-k_0}T_2^{-k'_{0}}(-\infty,T_2^{k'_{0}}\tilde{y}'_{-k'_{0}}]_1$ and $T^{I_{n,2}}T_1^{i_{n+1}}T_2^{r_2}[\phi'_2,+\infty)_2$  do not meet the transverse path $\tilde{\alpha}'_n$.
\end{enumerate}

By the $n=1$ case, the leaf $T_1^{i_{n}}T_2^{r_2} \phi'_2$ and the half-trajectory $T_1^{i_{n}}T_2^{r_2} [\phi'_2,+\infty)_2$ do not meet the transverse path $T_1^{i_{n}} T_2^{j_{n}}\tilde{\alpha}'_{1,i_{n+1}}$ so that the leaf $T^{I_{n,1}}T_2^{r_2} \phi'_2$ and the half-trajectory  $T^{I_{n,1}}T_2^{r_2} [\phi'_2,+\infty)_2$ do not meet the transverse path $T^{I_{n,2}}\tilde{\alpha}'_{1,i_{n+1}}$.

By condition \ref{C2}, the leaf $T_1^{-k_0}\phi_2$ (which crosses $T_1^{-k_0}T_2^{-k_0'} \I^{\Z}_{\tilde{\F}}(\tilde{x}_1)$) and the half-trajectory $T_1^{-k_0}T_2^{-k'_{0}}(-\infty,T_2^{k'_{0}}\tilde{y}'_{-k'_{0}}]_1$ do not meet the translates of $\I^{\Z}_{\tilde{\F}}(\tilde{x}_1)$ which are contained in $T^{I_{n,2}}\tilde{\alpha}'_{1,i_{n+1}}$ (translates by $T^{I_{n,2}}T_1^{-k_0}T_2^{-k_0'}$ and $T^{I_{n,2}}$). By Corollary \ref{CorDisjointleavescase2}.1, the leaf $T_1^{-k_0}\phi_2$ and the half-trajectory $T_1^{-k_0}T_2^{-k'_{0}}(-\infty,T_2^{k'_{0}}\tilde{y}'_{k'_{0}}]_1$ do not meet $T^{I_{n,2}} T_1^{i_{n+1}} \I^{\Z}_{\tilde{\F}}(\tilde{x}_2)$. To verify point a), it remains to check that those sets do not meet $T^{I_{n,2}} T_1^{-k_0} \I^{\Z}_{\tilde{\F}}(\tilde{x}_2)$ either, which amounts to showing that neither $\phi_2$ nor $T_2^{-k'_{0}}(-\infty,T_2^{k'_{0}}\tilde{y}'_{-k'_{0}}]_1$ meet $T_1^{k_0}T^{I_{n,2}} T_1^{-k_0} \I^{\Z}_{\tilde{\F}}(\tilde{x}_2)$. 
 By Lemma \ref{LemDisjointtrajectories}, the trajectory $T_2 T_1^{-k_{0}} \mathcal{I}^{\mathbb{Z}}_{\tilde{\F}}(\tilde{x}_2)$ is strictly on the left of $\mathcal{I}^{\mathbb{Z}}_{\tilde{\F}}(\tilde{x}_1)$ so has both endpoints in $[\gamma_{1,+},\gamma_{1,-}]_{\partial \Hy^2}$. Hence both endpoints of $T_1^{k_0}T^{I_{n,2}} T_1^{-k_0} \I^{\Z}_{\tilde{\F}}(\tilde{x}_2) \subset (T_1^{k_0}T^{I_{n,2}} T_1^{-k_0} \tilde{\gamma_{2}})_R$ are contained in $[\gamma_{1,+},\gamma_{1,-}]_{\partial \Hy^2}$. As a consequence, $T_1^{k_0}T^{I_{n,2}} T_1^{-k_0} \I^{\Z}_{\tilde{\F}}(\tilde{x}_2)$ does not meet the connected component of $\overline{\Hy^2} \setminus \overline{(\tilde{\gamma}_1)_{R}}$ which contains $\phi_2$ and $T_2^{-k'_{0}}(-\infty,T_2^{k'_{0}}\tilde{y}'_{-k'_{0}}]_1$: recall that, by Lemma~\ref{LemDisjointleavescase2}, the open set $(\tilde{\gamma}_1)_R$ is disjoint from both sets. This proves point~a).

We now prove point b).
By Condition \ref{C2}, the leaves $T^{I_{n,2}}T_1^{-k_0}\phi_2$ and $T^{I_{n,2}}T_1^{i_{n+1}}T_2^{r_2} \phi'_2$ and the half-trajectory $T^{I_{n,2}}T_1^{-k_0}T_2^{-k'_{0}}(-\infty,T_2^{k'_{0}}\tilde{y}'_{k'_{0}}]_1$ do not meet the translates of $\I^{\Z}_{\tilde{\F}}(\tilde{x}_1)$ which are contained in $\tilde{\alpha}'_n$.

By Lemma~\ref{LemDisjointleavescase2}, the trajectories $T^{I_{n,2}}T_1^{i_{n+1}}T_2^{r_2} \I^{\Z}_{\tilde{\F}}(\tilde{x}_1)$ and $T^{I_{n,2}}T_1^{i_{n+1}}T_2^{r_2} [\phi'_2,+\infty)_2$ are disjoint, and by Condition~\ref{C2}, the trajectory $T^{I_{n,2}}T_1^{i_{n+1}}T_2^{r_2} \I^{\Z}_{\tilde{\F}}(\tilde{x}_1)$ and the translates of $\I^{\Z}_{\tilde{\F}}(\tilde{x}_1)$ which are contained in $\tilde{\alpha}'_{n}$ are disjoint. By looking at the order on the limit points on $\partial\Hy^2$, we deduce that the trajectory $T^{I_{n,2}}T_1^{i_{n+1}}T_2^{r_2} \I^{\Z}_{\tilde{\F}}(\tilde{x}_1)$ separates $T^{I_{n,2}}T_1^{i_{n+1}}T_2^{r_2} [\phi'_2,+\infty)_2$ from the translates of $\I^{\Z}_{\tilde{\F}}(\tilde{x}_1)$ which are contained in $\tilde{\alpha}'_n$. We have proved all the properties of point b) that concern the intersections with translates of $\I^{\Z}_{\tilde{\F}}(\tilde{x}_1)$ which are contained in $\tilde{\alpha}'_n$. So it remains to treat the translates of $\I^{\Z}_{\tilde{\F}}(\tilde{x}_2)$ which are contained in $\tilde{\alpha}'_n$

By Corollary \ref{CorDisjointleavescase2}.2, the leaf $T^{I_{n,2}}T_1^{-k_0}\phi_2$ and the half-trajectory $T^{I_{n,2}}T_1^{-k_0}T_2^{-k'_{0}}(-\infty,T_2^{k'_{0}}\tilde{y}'_{k'_{0}}]_1$ do not meet any of the translates of $\I^{\Z}_{\tilde{\F}}(\tilde{x}_2)$ which are contained in $\tilde{\alpha}'_{n}$. Corollary \ref{CorDisjointleavescase2}.4 implies that the leaf $T^{I_{n,2}}T_1^{i_{n+1}}T_2^{r_2} \phi'_2$ and the half-trajectory $T^{I_{n,2}}T_1^{i_{n+1}}T_2^{r_2} [\phi'_2,+\infty)_2$ do not meet any of the translates of $\I^{\Z}_{\tilde{\F}}(\tilde{x}_2)$ which are contained in $\tilde{\alpha}'_{n}$. This proves point b).

This allows us to use Lemma \ref{LemEssentialpoints} and deduce that $\tilde{\alpha}_n$ and $\tilde{\alpha}_{1,i_{n+1}}$ are $\tilde{\F}$-transverse. Proposition \ref{PropFondLCT} then gives an $\tilde{\F}$ transverse path which satisifies the first three properties. Using similar techniques, we can prove that $\tilde{\alpha}_{n+1}$ and $T^{I_{n+1,2}} \tilde{\alpha}_{n+1}$ have an $\tilde{\F}$-transverse intersection, which completes the induction.
\end{proof}

Observe that the admissible paths that we obtain have an $\tilde{\F}$-transverse intersection with $\tilde{\gamma}$ on one side and an $\tilde{\F}$-transverse intersection with $\mathcal{I}_ {\tilde{\F}}^{\Z}(\tilde{x}_2)$ on the other side. As a consequence of Lemma \ref{LemTransadmconc} and of Theorem~\ref{ExistPasSuperCheval}, we obtain the following corollary, which is similar to Corollary \ref{CorTransperorbit}.

\begin{corollary} \label{CorNotransperorbit}
For any finite sequence $(i_n,j_n)_{1 \leq n \leq K}$, with $K \geq 1$, $1 \leq i_{n} \leq m_1 p_1$, $1 \leq j_{n} \leq m_2 p_2$ for any $n$, there exists points $\tilde{x}$ and $\tilde{y}$ of $\tilde{S}$ such that
$$ \tilde{f}^{K(m_1q_1+m_2q_2)}(\tilde{x})= T_{1}^{i_{1}} T_2^{j_1} \ldots T_1^{i_K}T_2^{j_K}(\tilde{x})$$
and
$$ \tilde{f}^{K(m_1q_1+m_2q_2)+m_1q_1}(\tilde{y})= T_{1}^{i_{0}} T_2^{j_0} \ldots T_1^{i_{K-1}}T_2^{j_{K-1}}T_{1}^{i_K}(\tilde{y}).$$
\end{corollary}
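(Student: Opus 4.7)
The plan is to deduce both assertions from Lemma~\ref{LemNotransadmconc} by invoking Theorem~\ref{ExistPasSuperCheval}, in direct analogy with the derivation of Corollary~\ref{CorTransperorbit} from Lemma~\ref{LemTransadmconc} in Case~1, with one extra wrinkle that is forced by the asymmetric condition~\ref{C2}.

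For the existence of $\tilde x$, I would first apply Lemma~\ref{LemNotransadmconc} to the given sequence $(i_n,j_n)_{1\le n\le K}$, which yields a transverse path $\tilde\alpha_K$ admissible of order $N := K(m_1q_1+m_2q_2)$ and having an $\tilde\F$-transverse self-intersection associated to the deck transformation $T^{I_{K,2}}$. I would then apply Theorem~\ref{ExistPasSuperCheval} (up to working with $f^N$, for which $\tilde\alpha_K$ is admissible of order one) with $T=T^{I_{K,2}}$. This produces a rotational horseshoe whose first-step periodic point (provided by Proposition~\ref{PropConjug2}, item~1) is precisely a point $\tilde x$ with $\tilde f^{N}(\tilde x)=T^{I_{K,2}}(\tilde x)$.

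For the existence of $\tilde y$, the target period $N+m_1q_1=(K+1)m_1q_1+Km_2q_2$ signals a transverse admissible path built from $K+1$ translated copies of $\tilde\alpha$ interleaved with $K$ translated copies of $\tilde\beta$. I would construct such a path $\tilde\alpha_K^+$ by prolonging $\tilde\alpha_K$ at its right end (the leaf $T^{I_{K,1}}T_2^{r_2}\phi'_2$) with an additional copy of the translated path $T^{I_{K,2}}\tilde\alpha$, using Proposition~\ref{PropFondLCT}. The required $\tilde\F$-transverse intersection between $\tilde\alpha_K$ and $T^{I_{K,2}}\tilde\alpha$ is already essentially produced inside the inductive step of the proof of Lemma~\ref{LemNotransadmconc} (where one checks that $\tilde\alpha_n$ meets $T^{I_{n,2}}\tilde\alpha_{1,i_{n+1}}$ transversally, and $\tilde\alpha_{1,i_{n+1}}$ is itself obtained from $\tilde\alpha$). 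Once $\tilde\alpha_K^+$ is in hand, the concatenation yields a path admissible of order $(K+1)m_1q_1+Km_2q_2$ ending at $T^{I_{K,2}}T_1^{m_1p_1+k_0}T_2^{r_2}\phi'_2$; a last verification, identical in spirit to the one carried out in the proof of Lemma~\ref{LemNotransadmconc}.4 (combining Corollary~\ref{CorDisjointleavescase2}, condition~\ref{C2}, and Lemma~\ref{LemEssentialpoints}), shows that $\tilde\alpha_K^+$ has an $\tilde\F$-transverse self-intersection associated to the deck transformation appearing in the statement (up to the obvious reindexing correcting the apparent typo with $i_0,j_0$). A final application of Theorem~\ref{ExistPasSuperCheval} then delivers $\tilde y$.

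The main obstacle I anticipate is not in the shift of indices or the application of Theorem~\ref{ExistPasSuperCheval}, but in the bookkeeping required to verify that the self $\tilde\F$-transverse intersection of $\tilde\alpha_K^+$ survives the extra prolongation: since condition~\ref{C2} breaks the symmetry between the two trajectories, one cannot mimic Case~1 by swapping the roles of $T_1$ and $T_2$, and the separation arguments relying on Lemma~\ref{LemDisjointleavescase2} and Lemma~\ref{LemDisjointtrajectories} must be revisited for the new terminal segment. This is routine but demands care because the extension shifts every relevant leaf, half-trajectory, and neighbourhood by a new deck transformation whose effect on $\partial\Hy^2$ must be controlled as in Claim~\ref{LastClaim}.
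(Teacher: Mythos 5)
Your plan is essentially the paper's own argument: the paper proves this corollary in one line, by combining Lemma~\ref{LemNotransadmconc} with Theorem~\ref{ExistPasSuperCheval} exactly as Corollary~\ref{CorTransperorbit} was deduced in Case~1, which is what you do for $\tilde{x}$, and your treatment of $\tilde{y}$ (appending one more translated copy of $\tilde{\alpha}$ at a transverse crossing to account for the extra $m_1q_1$, then re-running the transversality and separation checks of the lemma's proof, noting that the $T_1\leftrightarrow T_2$ swap of Case~1 is unavailable) is the intended reading of the remark preceding the corollary. So the proposal is correct and follows the same route as the paper, with your sketch for $\tilde{y}$ merely spelling out details the authors leave implicit.
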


\begin{proof}[End of the proof of Theorem \ref{Th2transverse} in the first case]
Take any word $w$ in letters $T_1$ and $T_2$ which contains at least one $T_1$ letter and one $T_2$ letter. Of course, we identify such a word with a deck transformation of $\tilde{S}$.
Write
$$w=T_1^{i_1} T_{2}^{j_1} \ldots T_{1}^{i_K}T_{2}^{j_K}$$
with
$$\left\{ 
\begin{array}{l}
K \geq 0 \\
i_{n}, j_{n}>0 \ \text{if} \ 2 \leq n \leq K-1 \\
j_{1} >0 \ \text{and} \ i_{K} >0 \\
i_{1}  \geq 0 \ \text{and} \ j_{K}  \geq 0.
\end{array}
\right.$$

Take integers $m_{1}$ and $m_2$ large enough so that $\max(i_1+i_K, \max_{1 \leq n \leq K}i_{n}) \leq m_1 p_1$ and $\max(\max_{1 \leq n \leq K}j_{n}, j_{1}+j_{K}) \leq m_2 p_2$.

If $i_1>0$ and $j_{K}>0$, Corollary \ref{CorNotransperorbit} gives directly the wanted result. If $i_1>0$ and $j_{K}=0$, apply Corollary \ref{CorNotransperorbit} to the word $T_1^{i_1+i_K}T_2^{j_1} T_1^{i_2}T_2^{j_2} \ldots T_1^{i_{K-1}}T_2^{j_{K-1}}$ to obtain a point $\tilde{x}$ which satisfies the corollary. The point $T_1^{-i_{K}}\tilde{x}$ gives us the wanted periodic orbit.

If $i_{1}= 0$ apply Corollary \ref{CorNotransperorbit} to the word $ T_{1}^{i_2} T_2^{j_2} \ldots T_{1}^{i_K}T_{2}^{j_K+j_1}$ to get a lift $\tilde{x}$ of a periodic point associated to this deck transformation. The point $T_{2}^{j_{1}}(\tilde{x})$ gives us then the wanted periodic orbit.
\end{proof}

\small

\bibliographystyle{amsalpha}
\bibliography{Biblio}

\end{document}